\tikzset{arrow data/.style 2 args={%
      decoration={%
         markings,
         mark=at position #1 with \arrow{#2}},
         postaction=decorate}
      }%
\definecolor{light}{gray}{.10}
\DeclareMathAlphabet{\mathpzc}{OT1}{pzc}{m}{it}
\newcounter{savecntr}
\newcommand{\mc}[1]{\mathcal{#1}}
\newcommand{\mbf}[1]{\mathbf{#1}}
\DeclareMathOperator{\range}{Ran}
\newcommand{\lp}{\langle}
\newcommand{\rp}{\rangle}
\newcommand{\ve}{\varepsilon}
\DeclareMathOperator*{\argmin}{arg\,min}
\DeclareMathOperator*{\argmax}{arg\,max}
\def\Hess{{\rm Hess\,}}
\def\supp{\mathop{\rm supp} \nolimits} 
\def\Ran{{\rm Ran}}
\def\sspan{{\rm Span}}
\def\and {{\rm \; and \;}}
\def\dim {{\rm \; dim  \;}}
\newcommand{\ft}[1]{\mathsf{#1}} 
\newcommand {\pa}{\partial}
\newtheorem{theorem}{Theorem}
\newtheorem{proposition}{Proposition}
\newtheorem{definition}[proposition]{Definition}
\newtheorem{lemma}[proposition]{Lemma}
\newtheorem{corollary}[proposition]{Corollary}
\newtheorem{remark}[proposition]{Remark}
\titleformat*{\section}{\Large\bfseries}
\titleformat*{\subsection}{\large\bfseries}
\titleformat*{\subsubsection}{\normalsize\bfseries}
\titleformat*{\paragraph}{\normalsize\bfseries}
\titleformat*{\subparagraph}{\normalsize\bfseries}
\title{The exit from a metastable state: concentration of  the exit point distribution on the low energy saddle points}
\author{Giacomo Di Ges\`u\setcounter{savecntr}{\value{footnote}}\thanks{Current affiliation: Institut f\"{u}r Analysis und Scientific Computing, E101-TU Wien, Wiedner Hauptstr. 8, 1040 Wien, Austria. E-mail: \{giacomo.di.gesu,boris.nectoux\}@asc.tuwien.ac.at}$\ $\footnotemark[2]\, , Tony Leli\`evre\setcounter{savecntr}{\value{footnote}}\thanks{CERMICS, \'Ecole des Ponts, Universit\'e  Paris-Est, INRIA, 77455 Champs-sur-Marne, France. E-mail:  tony.lelievre@enpc.fr}\,, Dorian Le Peutrec\setcounter{savecntr}{\value{footnote}}\thanks{Laboratoire de Math\'ematiques d'Orsay, Univ. Paris-Sud, CNRS, Universit\'e Paris-Saclay, 91405 Orsay, France. E-mail: dorian.lepeutrec@math.u-psud.fr}$\, \ $and Boris Nectoux\footnotemark[1]\, \footnotemark[2]}
\begin{document} 

 \maketitle

 \begin{abstract}
We consider  the first exit point distribution from a bounded domain~$\Omega$ of the stochastic process~$(X_t)_{t\ge 0}$ solution to the overdamped Langevin dynamics
$$d X_t = -\nabla f(X_t) d t + \sqrt{h} \ d B_t$$ 
starting from the quasi-stationary distribution in~$\Omega$. In the small temperature regime ($h\to 0$) and under rather general assumptions on $f$  (in particular, $f$ may have several critical points in $\Omega$), it is proven that the support of the distribution of the first exit point concentrates on some points realizing the minimum of~$f$ on $\pa \Omega$.  The proof relies on     tools to study  tunnelling effects in   semi-classical analysis. Extensions of the results to more general initial distributions than the quasi-stationary distribution are also presented.  

\end{abstract}

\tableofcontents


\section{Introduction and main results}\label{sec:intro}

\subsection{Setting and motivation}

\subsubsection{Overdamped Langevin dynamics}
 We are interested in the overdamped Langevin dynamics
\begin{equation}\label{eq.langevin}
d X_t = -\nabla f(X_t) d t + \sqrt{h} \ d B_t,
\end{equation}
where $X_t\in \mathbb R^d$ is a vector in $\mathbb R^d$,~$f: \mathbb R^d \to 
\mathbb R$ is a $C^\infty$ function, $h$ is a positive parameter and 
$(B_t)_{t\geq 0}$ is a standard $d$-dimensional Brownian motion. Such a 
dynamics is prototypical of models used for example in computational 
statistical physics to simulate the evolution of a molecular system at a 
fixed temperature, in which case $f$ is the potential energy function 
and $h$ is proportional to the temperature. It admits as an invariant 
measure the Boltzmann-Gibbs measure (canonical ensemble) $Z^{-1} e^{ 
-\frac 2h f}$ where $Z=\int_{\mathbb R^d}  e^{ 
-\frac 2h f} < \infty$. In the small 
temperature regime $h \to 0$, the stochastic process $(X_t)_{t \ge 0}$ 
is typically metastable: it stays for a very long period of time in a subset of 
$\mathbb R^d$ (called a metastable state) before hopping to another metastable 
state. In the context of statistical physics, this behavior is expected 
since the molecular system typically jumps between various 
conformations, which are indeed these metastable states. For modelling 
purposes as well as for building efficient numerical methods, it is thus 
crucial to be able to precisely describe the exit event from a metastable 
state, namely the law of the first exit time and the first exit point.

The main objective of this work is to address the following question: 
given a metastable domain $\Omega\subset \mathbb R^d$, what are the exit 
points in the small temperature regime $h\to 0$? Compared to the work~\cite{di-gesu-le-peutrec-lelievre-nectoux-16}, we here only identify the support of the first exit 
point distribution, and the relative likelihood of the points in this 
support, whereas in~\cite{di-gesu-le-peutrec-lelievre-nectoux-16}, we also study the exit through 
points which occur with exponentially small probability in the limit $h 
\to 0$. The results here are thus less precise than in~\cite{di-gesu-le-peutrec-lelievre-nectoux-16}, 
but we also work under much more general assumptions on the function $f$ which are made precise in the next section.
 
 \subsubsection{Exit point distribution and purpose of this work}
   
Let us consider a domain~$\Omega \subset \mathbb R^d$ and the associated exit
event from $\Omega$. More precisely, let us introduce
\begin{equation}\label{eq.tau}
\tau_{\Omega}=\inf \{ t\geq 0 | X_t \notin \Omega      \}
\end{equation}
\label{page.tau}
the first exit time from $\Omega$.  The concentration of the law of $X_{\tau_{\Omega}}$  on a subset of $\pa \Omega$ is defined as follows. 

\begin{definition}
Let $\mathcal Y\subset \pa \Omega$.   
The law of $X_{\tau_{\Omega}}$ concentrates on~$\mathcal Y$  in the limit $h\to 0$   if for every  neighborhood $\mathcal  V_{\mathcal Y}$ of ${\mathcal Y}$ in~$\pa \Omega$  
  $$\lim \limits_{h\to 0}\mathbb P  \left [ X_{\tau_{\Omega}} \in \mathcal  V_{\mathcal Y}\right]=1,$$ 
  and if for all $x\in \mathcal Y$ and for all neighborhood $\mathcal  V_x$ of $x$ in~$\pa \Omega$ 
  $$\lim \limits_{h\to 0}\mathbb P  \left [ X_{\tau_{\Omega}} \in \mathcal  V_x\right]>0.$$
    In other words, $\mathcal Y$ is the support of the law of $X_{\tau_{\Omega}}$ in the limit $h\to 0$. 
  \end{definition} 
 
\noindent
\textbf{Previous results on the  behaviour  of the law of $X_{\tau_\Omega}$ when $h\to 0$}. 
They are mainly three kinds of approaches to study where and how the  law of $X_{\tau_\Omega}$ concentrates on $\pa \Omega$ when $h\to 0$.  We refer to \cite{Day} for a comprehensive review of the literature. \\
The first approach  one is based on   formal computations: the concentration of the  law of $X_{\tau_\Omega}$ on  $\argmin_{\pa \Omega}f$ in the small temperature regime ($h\to 0$) has been studied  in~\cite{MS77}  when $\pa_n f >0$ on $\pa \Omega$  and in~\cite{schuss90, MaSc} when considering also the case when $\pa_n f=0$  on  $ \pa \Omega$. \\
The second  approach    is based on rigorous  techniques developed for partial differential equations. 
When it holds,
\begin{equation}\label{eq.prev-res1}
\partial_nf>0 \text{ on } \partial \Omega,
\end{equation}
where $\partial_nf$ is the normal derivative of $f$ on $\pa \Omega$, and 
\begin{equation}\label{eq.prev-res2}
\{x\in \Omega, \vert \nabla f(x)\vert =0\}=\{x_0\} \text{ with } f(x_0)=\min_{\overline \Omega} f \text{ and } \text{det Hess} f(x_0)>0,
 \end{equation}
 the  concentration of the law of~$X_{\tau_{\Omega}}$ in the limit $h\to 0$ on  $\argmin_{\pa \Omega}f$ has been obtained  in~\cite{kamin1979elliptic,Kam,Per,day1984a,day1987r}, when $X_0=x\in \Omega$.\\
Finally, the last approach   is based on techniques developed in large deviation theory. When~\eqref{eq.prev-res1} and~\eqref{eq.prev-res2} hold, and $f$ atteins its minimum on $\pa \Omega$ at one single point $y_0$, it is proved in~\cite[Theorem 2.1]{FrWe} that the law of~$X_{\tau_{\Omega}}$ in the limit $h\to 0$ concentrates on $y_0$, when $X_0=x\in \Omega$. 
 In \cite[Theorem 5.1]{FrWe}, under more general assumptions on $f$, for $ \Sigma\subset \pa \Omega$,   the limit of $h\ln \mathbb P  \left [ X_{\tau_{\Omega}} \in \Sigma\right]$ when $h\to 0$ is related   to a minimization problem  involving  the quasipotential  of the process~\eqref{eq.langevin}.  
%
Let us mention two limitations when applying~\cite[Theorem 5.1]{FrWe} in order to obtain some information on the first exit point distribution. First, this theorem requires to be able to compute the quasipotential in order to get useful information: this is trivial under the assumptions~\eqref{eq.prev-res1} and~\eqref{eq.prev-res2} but more complicated under more general assumptions on $f$ (in particular when $f$ has several critical points in $\Omega$). Second, even when the quasi potential is analytically known, this result only gives the subset of $\partial \Omega$ where exit will not occur on an exponential scale in the limit $h\to 0$. It does not allow to exclude exit points with probability which goes to zero polynomially in $h$ (this indeed occurs, see~Section~\ref{sec.A4}), and it does not give the relative probability to exit through exit points with non-zero probability in the limit $h\to 0$.

 Let us mention that \cite{kamin1979elliptic,Kam,Per,day1984a,day1987r,FrWe} also cover the case of non reversible diffusions.  
 
\medskip

\noindent
\textbf{Purpose of this work: the case when $f$ has several critical points in $\Omega$}. As explained above,  the concentration of the  law of~$X_{\tau_\Omega}$ on  $\argmin_{\pa \Omega}f$ was obtained when~\eqref{eq.prev-res1} and~\eqref{eq.prev-res2} hold (which imply in particular that~$f$ has only one critical point in $\Omega$). Our work  aims at generalizing in the reversible case  the results~\cite{kamin1979elliptic,Kam,Per,day1984a,day1987r} and~\cite[Theorem 2.1]{FrWe}, when $f$ has several critical points in $\Omega$. In particular, we exhibit    more general assumptions on $f$ in which  the law of~$X_{\tau_\Omega}$ concentrates on   points belonging to $\argmin_{\pa \Omega}f$ and we compute the relative probabilities to leave through each of them.    For instance,   we do not assume that $\partial_nf>0$ on~$\partial \Omega$ (i.e. we drop the assumption~\eqref{eq.prev-res1}),  we have no restriction on the number of critical points of~$f$ in~$\Omega$ (i.e. we do not assume~\eqref{eq.prev-res2}) and $f$ is allowed to have critical points in~$\Omega$ (e.g. saddle points or local minima) with  larger energies than  $\min_{\partial \Omega}f$ (however we do not consider the case when~$f$  has critical  points on~$\partial
\Omega$). Here are examples of outputs of our work.

First, we show for example the following result: if   $\{y\in \Omega, \, f(y)<\min_{\pa \Omega}f\}$ is connected and contains all the critical points of~$f$  in~$\Omega$  together with  $\pa_nf>0$ on $\argmin_{\pa \Omega}f$, then,   
 the exit point distribution concentrates on $\argmin_{\partial 
\Omega} f$ when $X_0$ is distributed according to  the quasi-stationary distribution~$\nu_h$ of the process~\eqref{eq.langevin} in~$\Omega$ (see~Definition~\ref{defqsd} below) or $X_0=x\in \{y\in \Omega, \, f(y)<\min_{\pa \Omega}f\}$. This extends the results of~\cite{kamin1979elliptic,Kam,Per,day1984a,day1987r} and~\cite[Theorem 2.1]{FrWe} to a more general geometric 
setting.

Second, we also study situations where critical points of $f$ in $\Omega$ 
are larger in energy than $\min_{\partial \Omega} f$. In such a case, 
again for $X_0$ distributed according to $\nu_h$ or $X_0=x\in K$ 
(where $K$ is a compact subset  of $\Omega$ to be made precise below), the law 
of $X_{\tau_\Omega}$ concentrates when $h\to 0$ on a subset of $\partial \Omega$ which 
can be  strictly included in $\argmin_{\partial \Omega} f$. In particular, we 
show that the following phenomena can occur:
 \begin{itemize}
   \item[(i)]  There exist points $z\in \argmin_{\pa \Omega} f$, $C>0$ and $c>0$, such that  for all sufficiently small neighborhood  $ \Sigma_z$ of $z$ in $\pa \Omega$,  in the limit $h\to 0$: $\mathbb P  \left [ X_{\tau_{\Omega}} \in \Sigma_z\right]\le C\, e^{-\frac ch}$ (see~\eqref{eq.t1} in~Theorem~\ref{thm.main} and the discussion  after the statement of Theorem~\ref{thm.main}).   
  \item[(ii)]  There exist points $z\in \argmin_{\pa \Omega} f$ and $C>0$, for all sufficiently small neighborhood  $ \Sigma_z$ of $z$ in $\pa \Omega$,  $\mathbb P  \left [ X_{\tau_{\Omega}} \in \Sigma_z\right]=C\sqrt{h}\, (1+o(1))$. This is explained in  Section~\ref{sec.A4}. 
  \end{itemize} 
In particular, motivated  by the desire to analyse the metastability of the exit event from $\Omega$, we exhibit explicit  assumptions on $f$  which aim at ensuring  the two following properties:
\begin{itemize}[leftmargin=1.6cm,rightmargin=1.3cm]
\item[\textbf{[P1]}] When $X_0$ is initially distributed according to the quasi-stationary distribution $\nu_h$ of the process~\eqref{eq.langevin} in~$\Omega$ (see~Definition~\ref{defqsd} below), the law of $X_{\tau_\Omega}$ concentrates in the limit $h\to 0$ on some global minima  of~$f$ on~$\pa \Omega$.
\item[\textbf{[P2]}] There exists  a connected component of $\{f<\min_{\pa \Omega}f\}$ such that when $X_0=x$ and $x$ belongs to  this connected component, the law of $X_{\tau_\Omega}$ concentrates in the limit $h\to 0$ on the same points of    $\pa \Omega$ as it does when $X_0\sim \nu_h$. 
\end{itemize} 
Finally,  we give sharp asymptotic estimates when $h\to 0$ on the principal eigenvalue   and the principal eigenfunction of the infinitesimal generator of the diffusion~\eqref{eq.langevin} associated with Dirichlet boundary conditions on $\pa \Omega$, see Section~\ref{se.interm}.    
Let us mention that a simplified version of the results of this work is  presented in~\cite{IHPLLN}. 
 \medskip

 \noindent
\textbf{Organization of the introduction}.  In Section~\ref{qsd}, we introduce the quasi-stationary distribution associated with $\Omega$ and the process~\eqref{eq.langevin}, and we explain why it is relevant to study the exit event from  a metastable domain~$\Omega$ assuming that the process~\eqref{eq.langevin} is initially distributed according to the quasi-stationary distribution. 
In Section~\ref{nota-hypo}, we  introduce  assumptions on $f$  which will be used throughout this paper  and we state   the main result of this work (see Theorem~\ref{thm.main}). Finally, in Section~\ref{discussion-hyp}, we discuss the necessity of the assumptions related to obtain the results of Theorem~\ref{thm.main}.

 \subsection{Metastability and the quasi-stationary distribution} \label{qsd}

The quasi-stationary distribution  is the cornerstone of our analysis. Here and in the following, we assume that~$\Omega
\subset \mathbb R^d$ is smooth, open, bounded and connected. 
Let us give  the definition of the quasi-stationary distribution associated with the overdamped Langevin process \eqref{eq.langevin} and $\Omega$:
 
\begin{definition} \label{defqsd}
Let $\Omega\subset \mathbb R ^d$ and consider the dynamics \eqref{eq.langevin}.  A quasi-stationary distribution is a probability measure $\nu_h$ supported in~$\Omega$ such that for all measurable sets $A\subset \Omega$ and for all $t\ge 0$\label{page.qsd}
\begin{equation}\label{eq.vhA} \nu_h(A)=\frac{\displaystyle\int_{\Omega} \mathbb P_x \left[X_t \in A,   t<\tau_{\Omega}\right]  \nu_h(dx ) }{\displaystyle \int_{\Omega}   \mathbb P_x \left[t<\tau_{\Omega} \right] \nu_h(dx )}.
\end{equation}
\end{definition}
\noindent
Here and in the following, the subscript $x$ indicates that the
stochastic process starts from $x \in \mathbb R^d$: $X_0=x$.
 In words,~\eqref{eq.vhA} means that if
$X_0$ is distributed according to  $\nu_h$, then for all $ t>0$,  $X_t$
is still distributed according to  $\nu_h$ conditionally on 
$X_s \in \Omega$ for all $s \in (0,t)$.  We have the following results from~\cite{le2012mathematical}:
\begin{proposition}\label{pr.tv-conv}
Let $\Omega\subset \mathbb R ^d$ be a bounded domain and consider the
dynamics~\eqref{eq.langevin}.  Then, there exists a probability measure
$\nu_h$ with support in~$\Omega$ such that, whatever the law of the
initial condition $X_0$ with support in~$\Omega$, it holds:
\begin{equation}\label{eq.cv_qsd}
\lim_{t \to \infty} \| {\rm Law}(X_t| t < \tau_\Omega) - \nu_h \|_{TV} = 0.
\end{equation}
\end{proposition}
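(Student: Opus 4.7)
The plan is to use the spectral theory of the infinitesimal generator of \eqref{eq.langevin} with Dirichlet boundary conditions on $\partial \Omega$, since the reversibility of the dynamics makes this operator self-adjoint in a weighted $L^2$ space. Specifically, consider $L_f^D = -\frac{h}{2}\Delta + \nabla f \cdot \nabla$ acting on functions vanishing on $\partial\Omega$. Viewed on $L^2(\Omega, e^{-2f/h} dx)$, this operator is self-adjoint, non-negative, and has compact resolvent because $\Omega$ is bounded and smooth. Hence its spectrum is discrete: $0<\lambda_1(h)<\lambda_2(h)\le\lambda_3(h)\le\cdots\to\infty$, with an associated orthonormal basis $(u_k)_{k\ge 1}$. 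A Krein–Rutman / Perron–Frobenius argument based on the strict positivity of the (sub-)Markov transition kernel gives simplicity of $\lambda_1(h)$ and allows one to choose $u_1>0$ in $\Omega$; elliptic regularity yields $u_1\in C^\infty(\Omega)\cap C(\overline{\Omega})$ with $u_1=0$ on $\partial\Omega$.

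I would then define $\nu_h$ as the probability measure on $\Omega$ with density proportional to $u_1(x)\,e^{-2f(x)/h}$. Using the Feynman–Kac representation
\begin{equation*}
\mathbb{E}_x\bigl[\varphi(X_t)\mathbf{1}_{t<\tau_\Omega}\bigr]=\bigl(e^{-tL_f^D}\varphi\bigr)(x),
\end{equation*}
self-adjointness gives, for any Borel set $A\subset\Omega$,
\begin{equation*}
\int_\Omega \mathbb{P}_x[X_t\in A,\,t<\tau_\Omega]\,\nu_h(dx) = e^{-\lambda_1(h) t}\,\nu_h(A),
\end{equation*}
and the quasi-stationarity relation \eqref{eq.vhA} follows immediately by dividing by the same formula with $A=\Omega$.

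For the convergence \eqref{eq.cv_qsd}, I would first use the smoothing effect of the parabolic equation: for any fixed $t_0>0$, the law of $X_{t_0}$ conditioned on $\{t_0<\tau_\Omega\}$ admits a smooth density $\rho_0$ with respect to $e^{-2f/h}dx$, regardless of the initial distribution. Expanding $\rho_0=\sum_{k\ge1} c_k u_k$ in the eigenbasis with $c_1=\int_\Omega \rho_0\,u_1\,e^{-2f/h}dx>0$ (strict positivity follows because both $\rho_0$ and $u_1$ are positive on $\Omega$), we have for $t>t_0$
\begin{equation*}
\mathrm{Law}(X_t\mid t<\tau_\Omega)(dx)=\frac{\sum_k c_k\,e^{-\lambda_k(h)(t-t_0)}u_k(x)\,e^{-2f(x)/h}dx}{\sum_k c_k\,e^{-\lambda_k(h)(t-t_0)}\int_\Omega u_k\,e^{-2f/h}}.
\end{equation*}
Factoring out $c_1 e^{-\lambda_1(h)(t-t_0)}$ in numerator and denominator, the remainder is controlled by the spectral gap $\lambda_2(h)-\lambda_1(h)>0$, and an $L^2$ bound then upgrades to a total-variation bound via Cauchy–Schwarz against $e^{-2f/h}dx$ restricted to $\Omega$ (which has finite mass).

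The main technical obstacle lies in handling \emph{arbitrary} initial distributions supported in $\Omega$, in particular those that may charge $\partial\Omega$ or have no density. Overcoming it is precisely the purpose of the $t_0$-regularization step: one needs to verify that the conditional law at any positive time is absolutely continuous with a density in the form domain of $L_f^D$ (so that the eigenfunction expansion converges in the right sense), and that $c_1>0$ uniformly from this starting condition. A secondary, more delicate point, is the non-triviality of the spectral gap $\lambda_2(h)-\lambda_1(h)$ for fixed $h>0$, which here is immediate from discreteness but would be crucial for the quantitative rate of convergence in \eqref{eq.cv_qsd}.
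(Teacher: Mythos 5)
Your proof is correct and follows essentially the same route as the paper, which does not prove this proposition itself but cites \cite{le2012mathematical}, where the argument is precisely this one: spectral decomposition of the self-adjoint Dirichlet generator on $L^2_w(\Omega)$, identification of $\nu_h$ with the normalized measure $u_1 e^{-2f/h}dx$, a short-time regularization to place the conditioned law in $L^2_w(\Omega)$ with positive first Fourier coefficient, and the spectral gap plus Cauchy--Schwarz to upgrade the $L^2_w$ decay to total variation. No further comment is needed.
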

\noindent
Here,~${\rm Law}(X_t| t < \tau_\Omega)$ denotes
the law of $X_t$ conditional to the event $\{t<\tau_\Omega\}$.
A corollary of this proposition is that the quasi-stationary distribution
$\nu_h$ exists and is unique.
For a given initial distribution of the process~\eqref{eq.langevin}, if  the convergence in~\eqref{eq.cv_qsd} is much
quicker than the exit from $\Omega$, the exit from the domain~$\Omega$ is said to be metastable.   When the exit from $\Omega$ is metastable,  it is thus relevant to study the exit event from~$\Omega$ assuming that the process \eqref{eq.langevin} is initially distributed according to the quasi-stationary distribution $\nu_h$. 


 
Let us introduce the infinitesimal generator
of the dynamics (\ref{eq.langevin}), which is the differential operator
\begin{equation}\label{eq.L-0}
L^{(0)}_{f,h}=-\nabla f \cdot \nabla  + \frac{h}{2}  \ \Delta.
\end{equation}
\label{page.lofh}
  In the notation $L^{(0)}_{f,h}$, the superscript $(0)$ indicates
that we consider an operator on functions, namely $0$-forms. 
 The basic
observation to define our functional framework is that the operator
$L^{(0)}_{f,h}$ is self-adjoint on the weighted $L^2$ space
$$L^2_w(\Omega)=\left\{u:\Omega \to \mathbb R 
,  \int_\Omega u^2 
 e^{-\frac{2}{h} f } \,   < \infty\right\}$$
(the weighted Sobolev spaces $H^k_w(\Omega)$ are defined
similarly). 
Indeed, for any smooth test functions $u$ and $v$ with compact supports in~$\Omega$, one has
$$\int_\Omega (L^{(0)}_{f,h}u) v\,  e^{-\frac{2}{h} f} = \int_\Omega
(L^{(0)}_{f,h}v) u \, e^{-\frac{2}{h} f} = - \frac{h}{2} \int_\Omega
\nabla u\cdot \nabla v\, e^{-\frac{2}{h} f}.$$
This gives a proper framework to introduce the Dirichlet realization
$L^{D,(0)}_{f,h}$ on $\Omega$ of the operator~$L^{(0)}_{f,h}$: \begin{proposition} \label{fried}
The Friedrich's extension associated with  the quadratic form 
$$\phi \in C^{\infty}_c(\Omega)\mapsto \frac{h}{2} \int_{\Omega}\left
  \vert \nabla \phi  \right\vert^2 e^{-\frac{2}{h}f }  $$ 
is denoted
 by $-L^{D,(0)}_{f,h}$. It is
a  non negative unbounded self adjoint operator on
$L^2_w(\Omega)$ with
domain~$$D\left(L^{D,(0)}_{f,h}\right)=H^1_{w,0}(\Omega)\cap
H^2_w(\Omega),$$
where $H^1_{w,0}(\Omega)=\{u \in H^1_w(\Omega), \, u=0 \text{ on
} \partial \Omega\}$. 
\end{proposition}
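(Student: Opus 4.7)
The plan is to apply the standard Friedrichs extension construction (see e.g.\ Reed–Simon) to the quadratic form
$$Q(\phi) := \frac{h}{2}\int_{\Omega}|\nabla \phi|^{2}\, e^{-\frac{2}{h}f}, \qquad \phi\in C^{\infty}_{c}(\Omega),$$
and then to identify the resulting domain via elliptic regularity. First I would observe that, since $\overline{\Omega}$ is compact and $f\in C^{\infty}$, the weight $e^{-2f/h}$ is bounded above and below by positive constants on $\Omega$. Consequently the weighted spaces $L^{2}_{w}(\Omega)$, $H^{1}_{w}(\Omega)$, $H^{2}_{w}(\Omega)$ have the same underlying vector spaces as $L^{2}(\Omega)$, $H^{1}(\Omega)$, $H^{2}(\Omega)$, with equivalent norms, and in particular $H^{1}_{w,0}(\Omega)$ coincides with the closure of $C^{\infty}_{c}(\Omega)$ in $H^{1}_{w}(\Omega)$.

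Next I would verify the hypotheses of Friedrichs' theorem: $Q$ is non-negative, densely defined on $L^{2}_{w}(\Omega)$, and closable (the closability following at once from the equivalence of norms and the closability of the standard Dirichlet form). Denote its closure by $(\overline{Q},D(\overline{Q}))$; then $D(\overline{Q})=H^{1}_{w,0}(\Omega)$. Friedrichs' theorem then produces a unique non-negative self-adjoint operator $A$ on $L^{2}_{w}(\Omega)$ associated with $\overline{Q}$, with
$$D(A)=\bigl\{u\in H^{1}_{w,0}(\Omega)\ :\ \exists\, g\in L^{2}_{w}(\Omega),\ \overline{Q}(u,v)=\langle g,v\rangle_{w}\ \forall v\in H^{1}_{w,0}(\Omega)\bigr\},$$
and $Au=g$. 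Setting $-L^{D,(0)}_{f,h}:=A$ takes care of self-adjointness, non-negativity and unboundedness (the latter because $Q$ is not bounded on $L^{2}_{w}$). A routine integration by parts on $C^{\infty}_{c}(\Omega)$ shows that $A$ acts in the distributional sense as $-L^{(0)}_{f,h}=\nabla f\cdot\nabla-\tfrac{h}{2}\Delta$, as announced.

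It remains to identify the domain as $H^{1}_{w,0}(\Omega)\cap H^{2}_{w}(\Omega)$. The inclusion $H^{1}_{w,0}(\Omega)\cap H^{2}_{w}(\Omega)\subset D(A)$ is immediate by integration by parts, using that traces vanish on $\partial\Omega$ for elements of $H^{1}_{w,0}(\Omega)$ and that $L^{(0)}_{f,h}u\in L^{2}_{w}(\Omega)$ whenever $u\in H^{2}_{w}(\Omega)$. For the reverse inclusion one takes $u\in D(A)$, so that $u\in H^{1}_{w,0}(\Omega)$ and $u$ is a weak solution of the uniformly elliptic second-order equation
$$-\tfrac{h}{2}\Delta u+\nabla f\cdot\nabla u = g\in L^{2}(\Omega),\qquad u_{|\partial\Omega}=0,$$
with smooth coefficients and smooth boundary $\partial\Omega$. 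Standard elliptic boundary regularity (e.g.\ Gilbarg–Trudinger or Evans) then yields $u\in H^{2}(\Omega)=H^{2}_{w}(\Omega)$.

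The only delicate step is this last one: one must be careful that the weight does not spoil the boundary regularity argument. Because the weight is smooth and bounded away from zero on $\overline{\Omega}$, however, the problem is equivalent to a standard uniformly elliptic Dirichlet problem with smooth coefficients on a smooth bounded domain, for which $H^{2}$-regularity up to the boundary is classical. This completes the identification $D\bigl(L^{D,(0)}_{f,h}\bigr)=H^{1}_{w,0}(\Omega)\cap H^{2}_{w}(\Omega)$.
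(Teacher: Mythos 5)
Your proof is correct. The paper itself gives no explicit proof of Proposition~\ref{fried}: it is invoked as part of the standard functional framework, and later (Section~\ref{sec:LD}) the operator is identified with $-\tfrac{1}{2h}U^{-1}\Delta^{D,(0)}_{f,h}U$ where $U\phi=e^{-f/h}\phi$, so that all its properties follow from those of the Dirichlet Witten Laplacian established in Helffer--Nier. Your argument follows the classical direct route instead: observe that on the compact manifold $\overline\Omega$ with $f\in C^\infty$ the weight $e^{-2f/h}$ is pinched between positive constants, so all weighted Sobolev spaces agree with the unweighted ones up to equivalent norms; apply Friedrichs' theorem to the closable, non-negative, densely defined quadratic form to get the self-adjoint operator with form domain $H^1_{w,0}(\Omega)$; and finally identify the operator domain with $H^1_{w,0}(\Omega)\cap H^2_w(\Omega)$ by $H^2$-regularity up to the boundary for the smooth, uniformly elliptic Dirichlet problem. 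Both approaches are valid; the conjugation route outsources the regularity to known results on $\Delta^{D,(0)}_{f,h}$, whereas yours is self-contained and a touch more elementary.
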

\label{page.ldofh}
The compact injection  $H^1_w(\Omega)\subset L^2_w(\Omega)$ implies that the operator $L^{D,(0)}_{f,h}$ has a compact
resolvent and its spectrum is consequently purely discrete. Let us introduce $\lambda_h >0$ the smallest eigenvalue
 of $-L^{D,(0)}_{f,h}$:
  \begin{equation} \label{eq.lh}
 \lambda_h=\inf \sigma\big (-L^{D,(0)}_{f,h}\big).
\end{equation}
\label{page.lambdah}
The eigenvalue $\lambda_h$ is called the principal eigenvalue of $-L^{D,(0)}_{f,h}$. 
  From standard results on
 elliptic operator (see for example~\cite{MR1814364, Eva}),~$\lambda_h$ is non degenerate and its associated eigenfunction $u_h$
 has a sign on~$\Omega$. Moreover,~$u_h \in C^{\infty} (\overline \Omega)$.
Without loss of generality, one can then assume that:  
 \begin{equation} \label{eq.norma}
 u_{h}>0\ \text{on}\ \Omega\ \ \text{and}\ \ \int_{\Omega}u_{h}^{2} e^{-\frac 2hf} =1.
\end{equation}
\label{page.uh}
The eigenvalue-eigenfunction pair $(\lambda_h,u_h)$ satisfies:
\begin{equation} \label{eq:u-bis}
\left\{
\begin{aligned}
 -L^{(0)}_{f,h}\, u_h &=  \lambda_h u_h    \ {\rm on \ }  \Omega,  \\ 
u_h&= 0 \ {\rm on \ } \partial \Omega.
\end{aligned}
\right.
\end{equation}

The link between the quasi
stationary distribution $\nu_h$ and the function $u_h$
is given by the following proposition (see for example \cite{le2012mathematical}):
\begin{proposition} \label{uniqueQSD}
The unique quasi-stationary distribution $\nu_h$ associated with the
dynamics~\eqref{eq.langevin} and the domain~$\Omega$ is given by:
\begin{equation} \label{eq.expQSD}
\nu_h(dx)=\frac{\displaystyle  u_h(x) e^{-\frac{2}{h}  f(x)}}{\displaystyle \int_\Omega u_h(y) e^{-\frac{2}{h}  f(y)}dy}\,  dx.
\end{equation}
\end{proposition}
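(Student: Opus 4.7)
The strategy is to verify by direct computation that the measure $\mu_h(dx) := Z^{-1} u_h(x) e^{-\frac{2}{h}f(x)} dx$, with $Z := \int_\Omega u_h(y) e^{-\frac{2}{h}f(y)}dy$, satisfies the defining identity~\eqref{eq.vhA}, and then to invoke Proposition~\ref{pr.tv-conv} for uniqueness. Note first that $\mu_h$ is a well-defined probability measure: by~\eqref{eq.norma}, $u_h > 0$ on $\Omega$, so the density is strictly positive and the normalization $Z$ is finite (since $\overline{\Omega}$ is bounded and $u_h$ is smooth up to the boundary).

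The key input is the probabilistic representation of the Dirichlet semigroup. Let $(P_t^D)_{t\geq 0}$ be the strongly continuous semigroup on $L^2_w(\Omega)$ generated by $L^{D,(0)}_{f,h}$. A standard argument using It\^o's formula applied to $v(t-s,X_s)$ stopped at $\tau_\Omega$ shows that for any bounded measurable $\varphi:\Omega \to \mathbb{R}$,
\begin{equation*}
(P_t^D \varphi)(x) = \mathbb{E}_x\bigl[\varphi(X_t)\,\mathbf{1}_{t < \tau_\Omega}\bigr], \qquad x \in \Omega.
\end{equation*}
Applying this with $\varphi = u_h$ and using $-L^{D,(0)}_{f,h} u_h = \lambda_h u_h$ yields
\begin{equation*}
\mathbb{E}_x\bigl[u_h(X_t)\,\mathbf{1}_{t<\tau_\Omega}\bigr] = e^{-\lambda_h t}\,u_h(x), \qquad x \in \Omega.
\end{equation*}

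Now fix a measurable set $A\subset \Omega$ and $t\geq 0$. By Fubini and the above representation,
\begin{equation*}
\int_\Omega \mathbb{P}_x[X_t \in A,\, t<\tau_\Omega]\,u_h(x)\,e^{-\frac{2}{h}f(x)}\,dx
= \int_\Omega \bigl(P_t^D \mathbf{1}_A\bigr)(x)\, u_h(x)\,e^{-\frac{2}{h}f(x)}\,dx.
\end{equation*}
Using the self-adjointness of $L^{D,(0)}_{f,h}$ on $L^2_w(\Omega)$ (Proposition~\ref{fried}), which transfers to self-adjointness of $P_t^D$, one may move the semigroup onto $u_h$ and obtain
\begin{equation*}
\int_\Omega \mathbb{P}_x[X_t \in A,\, t<\tau_\Omega]\, u_h(x)\, e^{-\frac{2}{h}f(x)}\,dx = e^{-\lambda_h t}\int_A u_h(x)\, e^{-\frac{2}{h}f(x)}\,dx.
\end{equation*}
In particular, taking $A=\Omega$ gives $\int_\Omega \mathbb{P}_x[t<\tau_\Omega]\,u_h(x)\,e^{-\frac{2}{h}f}\,dx = e^{-\lambda_h t} Z$. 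Dividing the two identities, the factor $e^{-\lambda_h t}$ cancels and we recover exactly~\eqref{eq.vhA} with $\nu_h$ replaced by $\mu_h$; hence $\mu_h$ is a quasi-stationary distribution.

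Uniqueness then follows immediately from Proposition~\ref{pr.tv-conv}: if $\nu_h'$ were any QSD, then ${\rm Law}(X_t \mid t < \tau_\Omega) = \nu_h'$ for all $t\geq 0$ by definition, so the total variation limit in~\eqref{eq.cv_qsd} forces $\nu_h' = \nu_h$. The main subtlety to handle carefully is the identification $P_t^D\varphi(x) = \mathbb{E}_x[\varphi(X_t)\mathbf{1}_{t<\tau_\Omega}]$ and the self-adjointness transfer to the semigroup; both are classical, so the proof is essentially a clean bookkeeping of eigenfunction calculus combined with Fubini.
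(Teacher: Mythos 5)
Your proof is correct. The paper does not provide its own proof of Proposition~\ref{uniqueQSD} — it simply cites \cite{le2012mathematical} — and the argument you give is the standard one found there: identify $P^D_t\varphi(x)=\mathbb{E}_x[\varphi(X_t)\mathbf{1}_{t<\tau_\Omega}]$ via the Feynman--Kac / It\^o stopping argument, use the eigenfunction equation to get $P^D_t u_h = e^{-\lambda_h t}u_h$, move the semigroup onto $u_h$ by the self-adjointness of $L^{D,(0)}_{f,h}$ on $L^2_w(\Omega)$ (which carries over to $P^D_t$ by the spectral functional calculus), and verify the defining identity~\eqref{eq.vhA}, with uniqueness supplied by Proposition~\ref{pr.tv-conv}. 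The two points you flag as "classical" — the probabilistic representation of $P^D_t$ on bounded measurable (not just smooth) test functions, and the passage from self-adjointness of the generator to self-adjointness of the semigroup — are indeed standard and require no extra care beyond the density/approximation remark you implicitly make.
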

\label{page.qsd}

The next proposition (which can also be found in \cite{le2012mathematical}) characterizes
the law of the exit event from $\Omega$.
\begin{proposition}\label{indep1}
Let us consider the dynamics~\eqref{eq.langevin} and the quasi
stationary distribution $\nu_h$ associated with the domain~$\Omega$. If
$X_0$ is distributed according to $\nu_h$, the random variables
$\tau_{\Omega}$ and $X_{\tau_{\Omega}}$ are independent. Furthermore $\tau_{\Omega}$ is exponentially distributed with parameter $\lambda_h$ and the law of $X_{\tau_{\Omega}}$ has a density with respect to the Lebesgue measure on $\partial \Omega$ given by
\begin{equation}\label{eq.dens}
z\in \partial \Omega \mapsto - \frac{h}{2\lambda_h} \frac{ \partial_n
  u_h(z) e^{-\frac{2}{h} f(z)}}{\displaystyle \int_\Omega u_h(y) e^{-\frac{2}{h} f(y)} dy}.
\end{equation}
\end{proposition}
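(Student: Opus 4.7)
The plan is to combine the probabilistic characterization of $\nu_h$ in \eqref{eq.vhA} with the spectral description of $L^{D,(0)}_{f,h}$ from \eqref{eq:u-bis} and \eqref{eq.expQSD}. For the exponential law, I would introduce the Dirichlet semigroup $P^D_t := e^{tL^{D,(0)}_{f,h}}$ on $L^2_w(\Omega)$, which satisfies the Feynman--Kac identity $\mathbb{P}_x[t<\tau_\Omega] = (P^D_t \mathbf{1}_\Omega)(x)$ for almost every $x\in\Omega$. Integrating against the density $u_h e^{-\frac{2}{h}f}$ of $\nu_h$ (Proposition \ref{uniqueQSD}) and exploiting self-adjointness of $P^D_t$ on $L^2_w(\Omega)$ together with the eigenrelation $P^D_t u_h = e^{-\lambda_h t} u_h$ yields
$$\mathbb{P}_{\nu_h}[t<\tau_\Omega] \;=\; \frac{\int_\Omega (P^D_t \mathbf{1}_\Omega)\, u_h\, e^{-\frac{2}{h}f}\,dx}{\int_\Omega u_h\, e^{-\frac{2}{h}f}\,dy} \;=\; \frac{\int_\Omega (P^D_t u_h)\, e^{-\frac{2}{h}f}\,dx}{\int_\Omega u_h\, e^{-\frac{2}{h}f}\,dy} \;=\; e^{-\lambda_h t},$$
so that $\tau_\Omega$ is indeed exponential with parameter $\lambda_h$ under $\mathbb{P}_{\nu_h}$.

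For the independence, I would invoke the strong Markov property at time $t>0$ together with the QSD invariance \eqref{eq.vhA}. For any Borel $A\subset\partial\Omega$,
$$\mathbb{P}_{\nu_h}[t<\tau_\Omega,\, X_{\tau_\Omega}\in A] \;=\; \mathbb{E}_{\nu_h}\bigl[\mathbf{1}_{t<\tau_\Omega}\, \mathbb{P}_{X_t}[X_{\tau_\Omega}\in A]\bigr] \;=\; \mathbb{P}_{\nu_h}[t<\tau_\Omega]\, \mathbb{P}_{\nu_h}[X_{\tau_\Omega}\in A],$$
where the second equality uses that, conditionally on $\{t<\tau_\Omega\}$, the law of $X_t$ under $\mathbb{P}_{\nu_h}$ is again $\nu_h$. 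Since the survival function determines the law of $\tau_\Omega$, this factorization across all $t$ and $A$ gives the joint independence of $\tau_\Omega$ and $X_{\tau_\Omega}$.

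For the density of $X_{\tau_\Omega}$, the strategy is to test against a smooth $\varphi\in C^\infty(\partial\Omega)$ via a harmonic extension. Let $v\in C^\infty(\overline\Omega)$ be the unique classical solution of $L^{(0)}_{f,h}v=0$ in $\Omega$ with $v=\varphi$ on $\partial\Omega$; by Feynman--Kac, $v(x)=\mathbb{E}_x[\varphi(X_{\tau_\Omega})]$, hence $\mathbb{E}_{\nu_h}[\varphi(X_{\tau_\Omega})] = \int_\Omega v\, d\nu_h$. Writing $L^{(0)}_{f,h}$ in divergence form as $\frac{h}{2}e^{\frac{2}{h}f}\,\mathrm{div}\bigl(e^{-\frac{2}{h}f}\nabla\cdot\bigr)$ and applying Green's identity to the pair $(u_h,v)$, the interior bilinear terms cancel; since $u_h=0$ on $\partial\Omega$ and $L^{(0)}_{f,h}u_h = -\lambda_h u_h$, only one boundary term survives, leading to
$$-\lambda_h \int_\Omega v\, u_h\, e^{-\frac{2}{h}f}\,dx \;=\; \frac{h}{2}\int_{\partial\Omega}\varphi(z)\,\partial_n u_h(z)\, e^{-\frac{2}{h}f(z)}\,dS(z).$$
Dividing through by $\lambda_h \int_\Omega u_h\, e^{-\frac{2}{h}f}$ and invoking \eqref{eq.expQSD} reproduces \eqref{eq.dens}. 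The main obstacle is the clean execution of this Green's identity step: it requires $C^\infty$ regularity of $u_h$ and $v$ up to $\partial\Omega$ (granted by elliptic regularity and smoothness of $\partial\Omega$) and the Feynman--Kac identification of $v$; non-negativity of the resulting density is then ensured by the Hopf lemma, since $u_h>0$ in $\Omega$ vanishes on $\partial\Omega$, forcing $\partial_n u_h\le 0$.
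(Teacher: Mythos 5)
Your proof is correct, and it is the standard argument (the paper itself gives no proof of Proposition~\ref{indep1}, deferring instead to \cite{le2012mathematical}). All three steps are sound: (i) the spectral computation $\mathbb{P}_{\nu_h}[t<\tau_\Omega]=e^{-\lambda_h t}$ via self-adjointness of the Dirichlet semigroup on $L^2_w$ and the eigenrelation $P^D_t u_h=e^{-\lambda_h t}u_h$; (ii) the factorization $\mathbb{P}_{\nu_h}[t<\tau_\Omega,\,X_{\tau_\Omega}\in A]=\mathbb{P}_{\nu_h}[t<\tau_\Omega]\,\mathbb{P}_{\nu_h}[X_{\tau_\Omega}\in A]$ from the Markov property and the defining invariance \eqref{eq.vhA} of the QSD, which together with the $\pi$--$\lambda$ argument (the events $\{\tau_\Omega>t\}$ generate $\sigma(\tau_\Omega)$) yields independence; and (iii) the Green's identity applied to $(u_h,v)$ with $v$ the $L^{(0)}_{f,h}$-harmonic extension of a test function $\varphi$, using the divergence form $L^{(0)}_{f,h}=\frac{h}{2}e^{\frac{2}{h}f}\,\mathrm{div}\bigl(e^{-\frac{2}{h}f}\nabla\cdot\bigr)$ and the boundary conditions $u_h|_{\partial\Omega}=0$, $v|_{\partial\Omega}=\varphi$, which gives exactly \eqref{eq.dens}. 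One small refinement: Hopf's lemma gives $\partial_n u_h<0$ strictly on $\partial\Omega$ (since $u_h>0$ in $\Omega$ attains its minimum $0$ on the smooth boundary and $L^{(0)}_{f,h}$ has no zeroth-order term), so the density in \eqref{eq.dens} is in fact strictly positive, not merely non-negative.
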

Here and in the following,~$\partial_{n}=n\cdot \nabla$ stands for the normal derivative and $n$ is the unit outward normal on~$\partial \Omega$.

\subsection{Hypotheses and main results}
\label{nota-hypo}
This section is dedicated to the statement    of the main result of this work.

\subsubsection{Hypotheses and notation}
 
 In the following, we consider a setting that is more general than the one  of Section~\ref{qsd}: 
 $\overline\Omega$ is a $C^{\infty}$ oriented compact and  connected Riemannian manifold of dimension $d$ with boundary $\partial \Omega$.    
 
 \noindent
The following notation will be used:  for $a\in \mathbb R$,
$$\{f<a\}=\{x\in \overline \Omega, \ f(x)<a\},\ \ \{f\le a\}=\{x\in \overline \Omega,\  f(x)\le a\},$$
and\label{page.fa}
$$\{f=a\}=\{x\in \overline \Omega, \ f(x)=a\}.$$
Let us recall the definition of the  domain of attraction of a subset $D$ of $\Omega$ for the $-\nabla f$ dynamics. Let $f : \overline \Omega \to \mathbb R$ be a $C^{\infty}$ function.  Let $x\in \Omega$ and denote by~$\varphi_{t}(x)$ the solution to the ordinary differential equation 
\begin{equation}\label{hbb}
  \frac{d}{dt}\varphi_{t}(x)=-\nabla f(\varphi_{t}(x)) \text{ with } \varphi_{0}(x)=x,
  \end{equation}
on the interval $t\in [0,t_x]$, where 
$$t_x=\inf  \{t\ge 0, \ \varphi_{t}(x)\notin \Omega\}>0.$$
Let $x\in \Omega$ be such that $t_x=+\infty$. The $\omega$-limit set of $x$, denoted by $\omega(x)$, is defined by
$$\omega(x)=\{y\in \overline \Omega, \, \exists (s_n)_{n\in \mathbb N} \in (\mathbb R_+)^{\mathbb N}, \,\lim_{n\to \infty}s_n=+ \infty, \,\lim_{n\to \infty}\varphi_{s_n}(x)=y \}.$$
Let us recall that the $\omega$-limit set $\omega(x)$ is included in the set of the critical points of $f$ in $\overline \Omega$. 
Moreover, when $f$ has a finite number of critical points in $\overline \Omega$,  
$$\exists y\in \overline \Omega, \  \omega(x)=\{y\}.$$
Let $D$ be a subset of $\Omega$. The domain of attraction of a subset $D$ of $\Omega$ is  defined by\label{page.AD}
\begin{equation}\label{eq.ad}
 \mathcal A(D)=\{ x\in \Omega, \,t_x=+\infty \text{ and } \omega(x)\subset D\}.
 \end{equation}
 Let us now introduce the basic assumption which is used throughout  this work:
%
\begin{equation}
\tag{\textbf{A0}}\label{H-M}
 \left.
    \begin{array}{ll}
        &\text{The function $f : \overline \Omega \to \mathbb R$ is a $C^{\infty}$ function.}   \\
 &\text{For all $x\in \pa \Omega$, $\vert \nabla f(x)\vert \neq 0$.}\\
 &\text{The functions $f$ and $f|_{
 \partial \Omega}$ are Morse functions.}\\
   &\text{Moreover,~$f$   has  at least one local minimum in  $\Omega$.}
    \end{array}
\right \}
 \end{equation}


 \noindent
  A function \label{page.HM} $\phi: \overline \Omega \to \mathbb R$ is a  Morse function if all its  critical points are non degenerate (which implies in particular that $\phi$ has a finite number of critical points since $\overline \Omega$ is compact and a non degenerate critical point is isolated from the other critical points). 
Let us recall that a critical point  $z\in \overline\Omega$ of~$\phi$  is non degenerate if the hessian matrix of~$\phi$ at $z$, denoted by    $\Hess \phi(z)$,  is invertible.
  We refer for example to~\cite[Definition 4.3.5]{Jost:2293721} for a definition of the hessian matrix on a manifold.  
 A non degenerate critical point $z\in \overline\Omega$ of~$\phi$ is  said to have index $p\in\{0,\dots,d\}$
if   $\Hess \phi(z)$ has precisely $p$
negative eigenvalues (counted with multiplicity). In the case $p=1$,   $z$  is called a saddle point.
  
   \medskip
      
   \noindent
For any local minimum $x$ of~$f$ in~$\Omega$, one defines\label{page.hfx}
\begin{equation}\label{eq.Hfx}  
\ft H_f (x):=\,  \inf_{\substack{ \gamma \in C^0([0,1], \overline \Omega)\\ \gamma(0)=x\\ \gamma(1)\in \pa \Omega}} \ \, \max_{t\in [0,1]}\,  f\big ( \gamma(t)\big),  
\end{equation}
where $C^0([0,1], \overline \Omega)$ is the set of continuous paths from $[0,1]$ to $\overline \Omega$.
 In Section~\ref{ft-Ci}, another equivalent definition of $\ft H_f$ is given (see indeed~\eqref{eq.c3} and~\eqref{eq.=delta}). 
Let us now define a set of assumptions which will ensure that   \textbf{[P1]} and \textbf{[P2]} are satisfied  (see indeed Theorem~\ref{thm.main} and Section~\ref{discussion-hyp} for a discussion on these assumptions):
\begin{itemize}[leftmargin=1.3cm,rightmargin=1.1cm]
\item~\eqref{H-M} holds and 
\begin{equation}\tag{\textbf{A1}}\label{eq.hip1}
 \exists ! \ft C_{\ft{max}}  \in \mathcal C \text{ such that } \max\limits_{\ft C\in \mathcal C}  \,  \Big \{  \max_{\overline{\ft C}}f-\min_{\overline{\ft C}} f   \Big\} =  \max_{\overline{\ft C_{\ft{max}} }}f-\min_{\overline{\ft C_{\ft{max}} }} f
\end{equation}
\label{page.cmax}
\noindent
where 
\begin{align}
\label{mathcalC-def}
&\mathcal C:=\big \{ \ft C (x), \, x  \text{ is  a local minimum of $f$ in }\Omega \big \},
\end{align}\label{page.c}
with, for a local minimum $x$ of $f$ in $\Omega$, 
\begin{equation}\label{eq.Cdef2}
\begin{aligned}
\ft C(x)  \text{ is the connected component of } \{f< \ft H_f(x)\} \text{ containing } x. 
\end{aligned}
\end{equation}\label{page.cx}


\item     \eqref{eq.hip1} holds and\label{page.hypo}
\begin{equation}\tag{\textbf{A2}} \label{eq.hip2}
\pa \ft C_{\ft{max}} \cap \pa \Omega\neq \emptyset.
\end{equation}
\item     \eqref{eq.hip1} holds and
  \begin{equation}\tag{\textbf{A3}} \label{eq.hip3}
\pa  \ft C_{\ft{max}} \cap \pa \Omega\subset \argmin_{\pa \Omega} f.
\end{equation}  
 \end{itemize}
More precisely, the assumptions~\eqref{H-M},~\eqref{eq.hip1},~\eqref{eq.hip2}, and~\eqref{eq.hip3} ensure that when $X_{0}\sim\nu_h$ or $X_0=x\in  \ft C_{\ft{max}}$,   the law of  $X_{\tau_{\Omega}}$ concentrates on the set $ \pa \ft C_{\ft{max}} \cap \pa \Omega $, see items~1 and~2 in Theorem~\ref{thm.main}. Finally, let us  introduce the following assumption:
 
 \begin{itemize}[leftmargin=1.3cm,rightmargin=1.1cm]
\item[]     \eqref{eq.hip1} holds and 
 \begin{equation} \tag{\textbf{A4}} \label{eq.hip4}
\pa \ft C_{\ft{max}}  \cap \Omega \text{ contains no  \textit{separating saddle point} of $f$,}
 \end{equation}
where the definition  of a separating saddle point of $f$   is  introduced  below in item 1 in Definition~\ref{de.SSP}. 
 \end{itemize}
 \begin{sloppypar}
 \noindent 
  The assumption~\eqref{eq.hip4} together with~\eqref{H-M},~\eqref{eq.hip1},~\eqref{eq.hip2}, and~\eqref{eq.hip3},  ensures that the probability  that the process~\eqref{eq.langevin} (starting from the quasi-stationary distribution $\nu_h$ or from $x\in  \ft C_{\ft{max}}$) leaves $\Omega$ through any sufficiently small    neighborhood of $z\in \pa \Omega\setminus \pa \ft C_{\ft{max}}$ in~$\pa \Omega$  is exponentially small when $h\to 0$, see indeed item 3 in Theorem~\ref{thm.main}. 
  \medskip
  
  \noindent 
 In Figure~\ref{fig:okay}, one has represented a one-dimensional case where~\eqref{eq.hip1},~\eqref{eq.hip2},~\eqref{eq.hip3}  and~\eqref{eq.hip4} are satisfied. 
In Section~\ref{discussion-hyp}, the    assumptions~\eqref{eq.hip1},~\eqref{eq.hip2},~\eqref{eq.hip3}, and~\eqref{eq.hip4}   are discussed. In particular, it is shown that if one of the assumptions among~\eqref{eq.hip1},~\eqref{eq.hip2}, or~\eqref{eq.hip3}  does not hold, then there exists a function $f$ for which  either \textbf{[P1]} or \textbf{[P2]} is not satisfied.  
Equivalent formulations of the assumptions \eqref{eq.hip1},~\eqref{eq.hip2},~\eqref{eq.hip3}, and~\eqref{eq.hip4}   will be given in Section~\ref{sec:hip}. 
 \end{sloppypar}

\begin{remark}
It is proved in Proposition~\ref{pr.p1} that when \eqref{H-M} holds, for all local minima $x$ of $f$ in $\Omega$, one has $\ft C (x)\subset \Omega$ (see~\eqref{eq.Cdef2}). This implies that for all $y\in \ft C (x)$, $t_y=+\infty$  and  then, $\ft C (x)\subset \mathcal A(\ft C (x))$.  
\end{remark}

  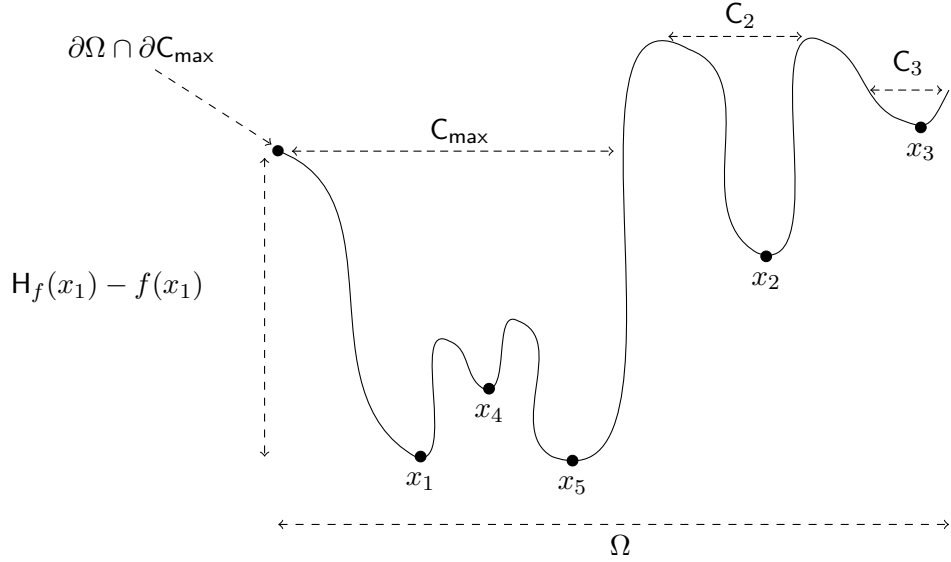
\begin{figure}[h!]
\begin{center}
\begin{tikzpicture}[scale=0.9]
\coordinate (b1) at (0,4.5);
\coordinate (b2) at (2 , 0);
\coordinate (b2a) at (2.5 , 1.7);
\coordinate (b2b) at (3 , 1);
\coordinate (b2c) at (3.5 , 2);
\coordinate (b4) at (4,0);
\coordinate (b5) at (6,6);
\coordinate (b6) at (7,3);
\coordinate (b7) at (8,6.1);
\coordinate (b8) at (9,5);
\coordinate (b9) at (10,6);
  \draw (b8) ..controls (9.5,4.8)   .. (9.8,5.4) ;
\draw [black!100, in=150, out=-10, tension=10.1]
  (b1)[out=-20]   to  (b2)  to (b2a) to (b2b) to (b2c)   to (b4) to (b5) to (b6) to (b7) to (b8);
     \draw [dashed, <->]   (0,-1) -- (9.8,-1) ;
      \draw (5,-1.3) node[]{$\Omega$};
   \draw [dashed, <->]   (-0.2,4.4) -- (-0.2,0) ;
      \draw (2.65,4.8) node[]{$ \ft C_{\ft{max}}$};
         \draw [densely  dashed, <->]  (0.2,4.5) -- (4.9,4.5) ;
          \draw (6.75,6.5) node[]{$ \ft C_2$};
             \draw [densely  dashed, <->]  (5.7,6.19) -- (7.65,6.19) ;
             \draw [densely  dashed, <->]  (8.65,5.4) -- (9.7,5.4) ;
              \draw (9.2,5.8) node[]{$ \ft C_3$};
          \draw (-2.5,2.5) node[]{$ \ft H_f(x_1)- f(x_1)$};
               \draw (-2,6) node[]{$ \pa \Omega \cap \pa \ft C_{\ft{max}}$};
\draw [dashed, ->]   (-1.8,5.7) -- (-0.1,4.6) ;
      \tikzstyle{vertex}=[draw,circle,fill=black,minimum size=4pt,inner sep=0pt]
        \draw (0.0,4.5) node[]{$\bullet$}; 
\draw (2.08,0) node[vertex,label=south: {$x_1$}](v){}; 
\draw (7.13,2.95) node[vertex,label=south: {$x_2$}](v){}; 
\draw (9.39,4.85) node[vertex,label=south: {$x_3$}](v){}; 
\draw (4.3,-0.06) node[vertex,label=south: {$x_5$}](v){}; 
\draw (3.08 , 1) node[vertex,label=south: {$x_4$}](v){}; 
    \end{tikzpicture}
\caption{A one-dimensional case where~\eqref{eq.hip1},~\eqref{eq.hip2},~\eqref{eq.hip3} and~\eqref{eq.hip4} are satisfied.  
On the figure, $f(x_1)=f(x_5)$, $\ft H_f(x_1)=\ft H_f(x_4)=\ft H_f(x_5)$,~$\mathcal C=\{\ft C_{\ft{max}}, \ft C_2,\ft C_3\}$ (where $\mathcal C$ is defined by~\eqref{mathcalC-def}),  $\pa  \ft C_2\cap \pa  \ft C_{\ft{max}}=\emptyset$ and $\pa \ft C_3\cap \pa  \ft C_{\ft{max}}=\emptyset$. Therefore, the assumption~\eqref{eq.hip4} is indeed satisfied. }
 \label{fig:okay}
 \end{center}
\end{figure}

 
\subsubsection{Notation for the local minima and  saddle points of the function~$f$}
\label{se.def-zj}

 The main purpose of this section is to introduce the  local minima   and the  generalized  saddle points of~$f$. These elements of $\overline \Omega$ are used extensively throughout this work and  play a crucial role in our analysis. Roughly speaking, the generalized  saddle points of~$f$ are the saddle points $z\in \overline \Omega$ of the extension of~$f$ by~$-\infty$ outside $\overline \Omega$.  Thus, when the function $f$ satisfies the assumption \eqref{H-M}, a generalized  saddle point of~$f$ (as introduced in~\cite{HeNi1}) is either  a saddle point $z\in \Omega$ of~$f$ or a local minimum $z\in \pa \Omega$ of $f|_{\pa \Omega}$ such that $\pa_nf(z)>0$. \\ 
 Let us assume that the function $f$ satisfies the assumption \eqref{H-M}. 
Let us denote by\label{page.u0omega} 
\begin{equation}\label{mo-omega}
\ft U_0^{\Omega}=\{x_1,\dots,x_{\ft m_{0}^{\Omega}}   \}\subset \Omega 
\end{equation}
the set of local minima of~$f$ in~$\Omega$ where $\ft m_{0}^{\Omega}\in \mathbb N$ is the number of  local  minima of~$f$ in~$\Omega$. Notice that since $f$ satisfies  \eqref{H-M},~$\ft m_0^\Omega\ge 1$.

\medskip

\noindent
The set of saddle points  of~$f$ of index $1$  in~$\Omega$  is denoted by~$\ft U_1^{\Omega}$ and its cardinality by~$\ft m_1^\Omega $\label{page.u1omega}. Let us define 
$$\label{page.u1paomega} 
\ft U_1^{\pa \Omega}:=\{z \in \pa \Omega, \,  z \text{ is a local minimum of } f|_{\pa \Omega} \text{ but }  \text{not a local minimum of~$f$ in~$\overline \Omega$ }\}.
$$
 Notice that an equivalent definition of $\ft U_1^{\pa \Omega}$ is 
 \begin{align}
\label{eq.mathcalU1_bis}
\ft U_1^{\pa \Omega}=\{z \in \pa \Omega, \,  z \text{ is a local minimum of } f|_{\pa \Omega}  \, \text{ and }  \, \pa_nf(z)>0\},
\end{align}
which follows from the fact that $\nabla f(x) \neq 0$ for all $x\in \pa \Omega$. Let us introduce 
\begin{equation}
\label{eq.m1-pa}
\ft m_1^{\pa \Omega}:={\rm Card}( \ft U_1^{\pa \Omega}).
\end{equation}
In addition, one defines:
$$\label{page.u1overlineomega} 
\ft U_1^{\overline \Omega}:=  \ft U_1^{\pa \Omega} \cup \ft U_1^{ \Omega} \ \,  \text{ and } \, \ \ft m_1^{\overline \Omega}:={\rm Card}(\ft U_1^{\overline \Omega})=\ft m_1^{\pa \Omega}+\ft m_1^{ \Omega}.$$
The set $\ft U_1^{\overline \Omega}$ is 
the set of the generalized saddle points of~$f$. If $\ft U_1^{\pa \Omega}$ is not empty, its elements are denoted by:
\begin{equation}\label{eq.U1paOmega}
\ft U_1^{\pa \Omega}=\{z_1,\ldots,z_{\ft m_1^{\pa \Omega}}\}\subset \pa \Omega,
\end{equation}
and if $\ft U_1^{\Omega}$ is not empty, its elements are labeled as follows:
\begin{equation}\label{eq.U1Omega}
\ft U_1^{ \Omega}=\{ z_{\ft m_1^{\pa \Omega}+1},\ldots,z_{\ft m_1^{\overline \Omega}}\}\subset \Omega.
\end{equation}
Thus, one has:
$$\ft U_1^{\overline \Omega}=\{z_1,\ldots,z_{\ft m_1^{\pa \Omega}}, z_{\ft m_1^{\pa \Omega}+1},\ldots,z_{\ft m_1^{\overline \Omega}}\}.$$
We assume that the elements of $\ft U_1^{\pa \Omega}$ are ordered such that:\label{page.k1paomega}
\begin{equation}\label{eq.z11}
\{z_1,\ldots,z_{\ft k_1^{\pa \Omega}}\}=\ft U_1^{\pa \Omega}\cap \argmin_{\pa \Omega} f.
\end{equation}
Notice that  $\ft k_1^{\pa \Omega}\in \{0,\ldots, \ft m_1^{\pa \Omega}\}$. 
 
Let us assume that the assumptions~\eqref{eq.hip1},~\eqref{eq.hip2}, and~\eqref{eq.hip3} are satisfied.  
In this case, let us recall that $\ft C_{\ft{max}}$ is defined by~\eqref{eq.hip1}. Moreover, in this case, one has  $\ft k_1^{\pa \Omega}\ge 1$ and
$$\pa\ft C_{\ft{max}} \cap  \pa \Omega \subset \{z_1,\ldots,z_{\ft k_1^{\pa \Omega}}\}.$$
Indeed, by assumption $ \pa \ft C_{\ft{max}} \cap  \pa \Omega\subset \{f=\min_{\pa \Omega} f\}$ (see~\eqref{eq.hip3}) and there is no local minima of~$f$ in~$\overline \Omega$  on~$\pa \ft C_{\ft{max}}$ (since $\ft C_{\ft{max}}$ is a sublevel set of~$f$). 
 We assume lastly  that the set $\{z_1,\ldots,z_{\ft k_1^{\pa \Omega}}\}$ is  ordered such that:\label{page.k1pacmax}
\begin{equation}\label{eq.k1-paCmax}
\{z_1,\ldots,z_{\ft k_1^{\pa \ft C_{\ft{max}} }}\}=\{z_1,\ldots,z_{\ft k_1^{\pa \Omega}}\} \cap \pa  \ft C_{\ft{max}}.
\end{equation}
Notice that $\ft k_1^{\pa \ft C_{\ft{max}}}\in \mathbb N^*$ and $\ft k_1^{\pa \ft C_{\ft{max}}}\le \ft k_1^{\pa \Omega}$.  
We provide an example in Figure~\ref{fig:shema_nota} to illustrate the notations introduced in this section.  \\

As introduced in~\cite[Section 5.2]{HeNi1},~$\ft U_0^{  \Omega}$  is the set of generalized critical points of~$f$ of index $0$ for  the Witten Laplacian acting on functions   with Dirichlet boundary conditions on $\pa \Omega$, and $\ft U_1^{\overline \Omega}$  is the set of generalized critical points of~$f$ of index $1$ for  the Witten Laplacian acting on $1$-forms  with tangential Dirichlet boundary conditions on~$\pa \Omega$. We refer to Section~\ref{sec:LD} for the definition of these Witten Laplacians.
 \begin{remark}
The assumption \eqref{H-M} implies that~$f$ does not have any saddle point (i.e critical point of index $1$) on $\pa \Omega$. Actually, under \eqref{H-M}, the points~$(z_i)_{i=1,\ldots,\ft m_1^{\pa \Omega}}$ play geometrically the role of saddle points. Indeed, zero Dirichlet boundary conditions are consistent with extending $f
$ by $- \infty$ outside $\overline \Omega$, in which case the point $(z_i)_{i=1,\ldots,\ft m_1^{\pa \Omega}}$ are
geometrically saddle points of $f$ (i.e. $z_i$ is a local minimum of $f|_{\pa \Omega}$ and a  local maximum of~$f|_{D_i}$, where $D_i$ is  the straight line passing through~$z_i$ and orthogonal to  $\pa \Omega$   at~$z_i$).

 \end{remark}
\begin{figure}[h!]
\begin{center}
\begin{tikzpicture}[scale=1]
\tikzstyle{vertex}=[draw,circle,fill=black,minimum size=4pt,inner sep=0pt]
\tikzstyle{ball}=[circle, dashed, minimum size=1cm, draw]
\tikzstyle{point}=[circle, fill, minimum size=.01cm, draw]
\draw [rounded corners=10pt] (1,0.5) -- (-0.25,2.5) -- (1,5) -- (5,6.5) -- (7.6,3.75) -- (6,1) -- (4,-0.3) -- (2,0) --cycle;
\draw [thick, densely dashed,rounded corners=10pt] (1.5,0.5) -- (.25,1.5) -- (0.5,2.5) -- (0.09,3.5) -- (2.75,3.75) -- (3.5,3) -- (2.4,2) -- (2.9,1.5) --cycle;
\draw [thick, densely dashed,rounded corners=10pt]    (3,3.9)  -- (3.4,3) -- (4.8,3.3) --(5.5,2.9)--(6.5,4) --(6.5,5)  -- (5.3,6) -- (3,4.58)  --cycle;
\draw [thick, densely dashed,rounded corners=10pt] (5.9,0.78)--(6,1.7) -- (5,2.3) -- (3.7,1.5) -- (3.4,0.6) --cycle;
 \draw (1.4,1.3) node[]{$\ft C_{\ft{max}}$};
  \draw (5.4,5.5) node[]{$\ft C_2$};
    \draw (4.3,1) node[]{$\ft C_3$};
     \draw  (2.4,4.9) node[]{$\Omega$};
    \draw  (7.8,3) node[]{$\pa \Omega$};
\draw (3.3 ,3.2) node[vertex,label=north east: {$z_5$}](v){};
\draw  (5.9,0.99) node[vertex,label=south east: {$z_4$}](v){};
\draw (1.7 ,2.5) node[vertex,label=north west: {$x_1$}](v){};
\draw (4.9 ,4.4) node[vertex,label=north : {$x_2$}](v){};
\draw (3.1,1.2) node[vertex,label=south : {$z_6$}](v){};
\draw (0.38,1.45) node[vertex,label=south west: {$z_1$}](v){};
\draw (6.2,5.2) node[vertex,label=north east: {$z_3$}](v){};
\draw (0.17,3.4) node[vertex,label=north west: {$z_2$}](v){};
\draw(5,1.5)  node[vertex,label=north: {$x_3$}](v){};
\draw (5.3,2.56)  node[vertex,label=east: {$z_7$}](v){};
\draw (3.8,2.3) node[vertex,label=north : {$y_m$}](v){};
\end{tikzpicture}

\begin{tikzpicture}[scale=0.85]
\tikzstyle{vertex}=[draw,circle,fill=black,minimum size=5pt,inner sep=0pt]
\tikzstyle{ball}=[circle, dashed, minimum size=1cm, draw]
\tikzstyle{point}=[circle, fill, minimum size=.01cm, draw]

\draw [dashed] (-5.4,-0.6)--(6,-0.6);
\draw [dashed,->] (-5.4,-0.6)--(-5.4,3);
\draw [dashed] (6,-0.6)--(6,3);
\draw [densely dashed,<->] (-5.3,-1.3)--(5.8,-1.3);
 \draw (0,-1.6) node[]{$\pa \Omega$};
 \draw (-5.9,2.8) node[]{$f|_{\pa \Omega}$};
 \draw[thick] (-5.4,2) ..controls  (-5.2,1.96).. (-5,1.6);
\draw[thick] (-5,1.6) ..controls  (-3.7,-1.4).. (-2,1.6);
\draw[thick] (-1.5,1.6) ..controls  (-1,-1.6).. (0.2,2.8) ;
\draw[thick] (-2,1.6) ..controls  (-1.87,1.9) and (-1.63,2.1) .. (-1.5,1.6);
\draw [thick] (0.2,2.8)  ..controls  (0.3,3.2) and (0.67,3.4).. (0.8,3);
\draw[thick] (0.8,3) ..controls  (1.8,-1.6).. (2.7,2.4);
\draw[thick] (2.7,2.4) ..controls (2.8,2.7) and (3.1,2.7)..    (3.3,2.4) ;
\draw[thick] (3.3,2.4) ..controls  (4.3,0).. (5.5,1.5);
\draw[thick] (5.5,1.5) ..controls  (5.83,2).. (6,2);

\draw (1.8,-0.58) node[vertex,label=south: {$z_3$}](v){};
\draw (-3.7,-0.6) node[vertex,label=south: {$z_1$}](v){};
\draw (-0.89,-0.6)  node[vertex,label=south: {$z_2$}](v){};
\draw  (4.45,0.45)  node[vertex,label=south: {$z_4$}](v){};
\end{tikzpicture}

\caption{Schematic representation of     $\mathcal C$ (see~\eqref{mathcalC-def}) and $f|_{\pa \Omega}$ when the assumptions~\eqref{H-M}, ~\eqref{eq.hip1},~\eqref{eq.hip2} and~\eqref{eq.hip3} are satisfied. 
  In this representation, $x_1\in \Omega$ is the global minimum of $f$ in $\overline \Omega$ and  the other local minima of $f$ in $\Omega$ are  $x_2$  and $x_3$ (thus $\ft U_0^{ \Omega}=\{x_1,x_2,x_3\}$ and $\ft m_0^\Omega=3$). Moreover, $\min_{\pa \Omega} f=f(z_1)=f(z_2)=f(z_3)=\ft H_f(x_1)=\ft H_f(x_2)<\ft H_f(x_3)=f(z_4)$, $\{f<\ft H_f(x_1)\}$ has two connected components: $\ft C_{\ft{max}}$ (see~\eqref{eq.hip1}) which contains $x_1$ and  $\ft C_2$ which contains $x_2$.  Thus, one has $\mathcal C=\{\ft C_{\ft{max}},\ft C_2, \ft C_3\}$. In addition, $\ft   U_1^{\pa \Omega}=\{z_1,z_2,z_3,z_4\}$ ($\ft m_1^{  \pa \Omega}=4$), $\{z_1,z_2,z_3\}=\argmin_{\pa \Omega} f$ ($\ft k_1^{  \pa \Omega}=3$), $\ft  U_1^{ \Omega}=\{z_5,z_6,z_7\}$ where $\{z_5\}=\overline{\ft C_{\ft{max}}}\cap\overline{\ft  C_2}$ ($\ft m_1^\Omega=3$ and~\eqref{eq.hip4} is not satisfied)  and $\min(f(z_6),f(z_7))>f(z_4)$, $\pa \ft C_{\ft{max}}\cap \pa \Omega=\{z_1,z_2\}$ ($\ft k_1^{\pa \ft C_{ \ft{max}}}=2$). Finally, one has $\ft m_1^{\overline  \Omega}=7$. The point $y_m\in \Omega$ is a local maximum of $f$ with $f(y_m)>f(z_i)$ for all $i\in \{1,\ldots,7\}$.   
}
 \label{fig:shema_nota}
 \end{center}
\end{figure}
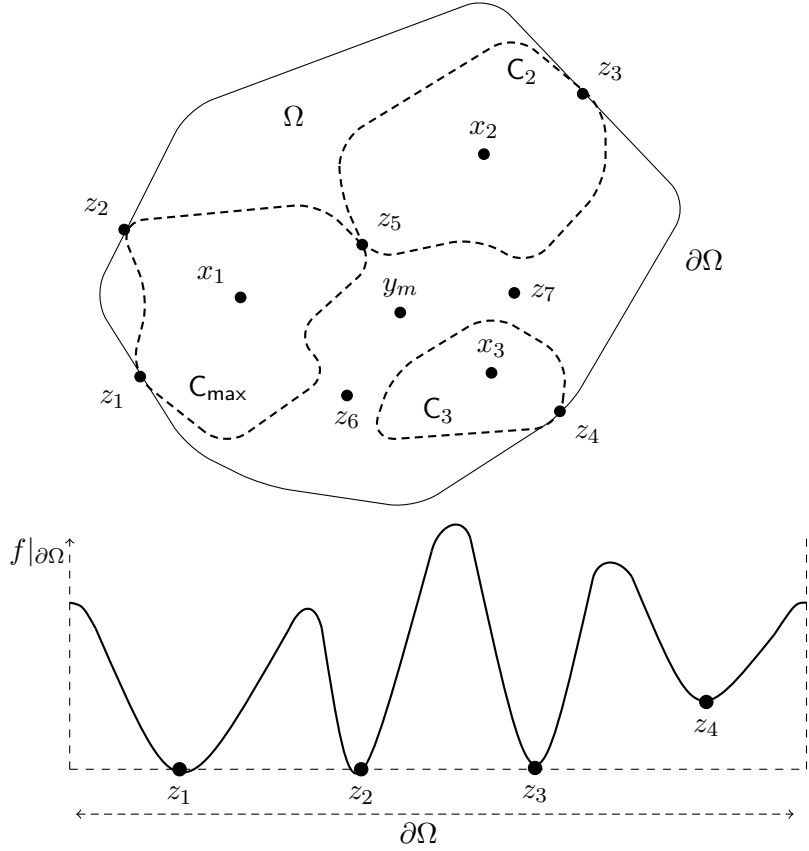

\subsubsection{Main results on the exit point distribution} 
The main result of this work   is the following. 

\begin{theorem}\label{thm.main}
Let us assume  that the assumptions \eqref{H-M},~\eqref{eq.hip1},~\eqref{eq.hip2}, and~\eqref{eq.hip3} are satisfied. 
  Let  $F\in L^{\infty}(\partial \Omega,\mathbb R)$ and $(\Sigma_{i})_{i\in\{1,\dots,\ft k_{1}^{\pa \Omega}\}}$ be a family  of disjoint open subsets of~$\pa \Omega$ such that 
  $$\text{for all } i\in\big \{1,\dots,\ft k_{1}^{\pa \Omega}\big \},  \ z_{i}\in \Sigma_{i},$$ 
  where we recall that $\big \{z_1,\dots,z_{\ft k_{1}^{\pa\Omega}}\big \}= \ft U_1^{\pa \Omega}\cap \argmin_{\pa \Omega} f$ (see~\eqref{eq.z11}).   Let~$K$ be a compact subset of $\Omega$ such that $ K\subset \mathcal A(\ft C_{\ft{max}})$ (see~\eqref{eq.hip1} and~\eqref{eq.ad}). Let~$\mu_0$ be a probability distribution which is  either supported in~$K$ or equals  to the quasi-stationary distribution~$\nu_h$ of the process~\eqref{eq.langevin}  in $\Omega$ (see Definition~\ref{defqsd} and~\eqref{eq.expQSD}).  
  Then:
  \begin{enumerate}[leftmargin=1.3cm,rightmargin=1.3cm]
  \item 
There exists $c>0$ such that in the limit  $h\to 0$:
\begin{equation} \label{eq.t1}
\mathbb E_{\mu_0} \left [ F\left (X_{\tau_{\Omega}} \right )\right]=\sum \limits_{i=1}^{\ft k_1^{\pa \Omega}}
\mathbb E_{\mu_0} \left [ \mathbf{1}_{\Sigma_{i}}F\left (X_{\tau_{\Omega}} \right )\right]  +O\big (e^{-\frac ch}\big )
 \end{equation}
 and
 \begin{equation} \label{eq.t2}
\sum \limits_{i=\ft k_1^{\pa \ft C_{\ft{max}} }+1}^{\ft k_1^{\pa \Omega}}
\mathbb E_{\mu_0} \left [ \mathbf{1}_{\Sigma_{i}}F\left (X_{\tau_{\Omega}} \right )\right]  =O\big (h^{\frac14} \big ),
 \end{equation}
 where we recall that $\big \{z_1,\ldots,z_{\ft k_1^{\pa \ft C_{\ft{max}} }}\big \}=\pa \ft C_{\ft{max}}\cap \pa \Omega$ (see~\eqref{eq.k1-paCmax}). 
\item When for some $i\in\big \{1,\dots,\ft k_{1}^{\pa \ft C_{\ft{max}} }\big \}$ the function  $F$ is $C^{\infty}$ in a neighborhood  of $z_{i}$, one has when $h\to 0$:
\begin{equation} \label{eq.t3}
 \mathbb E_{\mu_0} \left [ \mathbf{1}_{\Sigma_{i}}F\left (X_{\tau_{\Omega}} \right )\right]=F(z_i)\,a_{i} +O(h^{\frac14}),
\end{equation}
 where\label{page.ai}
 \begin{equation} \label{ai}
 a_i=\frac{  \partial_nf(z_i)      }{  \sqrt{ {\rm det \ Hess } f|_{\partial \Omega}   (z_i) }  } \left (\sum \limits_{j=1}^{\ft k_1^{\pa \ft C_{\ft{max}} }} \frac{  \partial_nf(z_j)      }{  \sqrt{ {\rm det \ Hess } f|_{\partial \Omega}   (z_j) }  }\right)^{-1}. \end{equation}
 \item 
When~\eqref{eq.hip4} is satisfied 
 the remainder term  $O( h^{\frac14})$ in \eqref{eq.t2}
 is   of the order  $O\big (e^{-\frac{c}{h}}\big )$ for some $c>0$  
and
the remainder term  $O\big (h^{\frac14}\big )$  in \eqref{eq.t3} is of the order $O(h)$  and admits a full asymptotic expansion in~$h$ (as defined in Remark~\ref{eq.asymptoO(h)} below). 
\end{enumerate} 
Finally,   the constants involved in  the remainder terms in~\eqref{eq.t1},~\eqref{eq.t2}, and~\eqref{eq.t3}  are uniform with respect to the probability distribution $\mu_0$ supported in~$K$. 
\end{theorem}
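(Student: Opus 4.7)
The starting point is the explicit density formula of Proposition~\ref{indep1}: when $\mu_0=\nu_h$, one has
$$
\mathbb E_{\nu_h}\!\left[\mathbf{1}_{\Sigma_i}\,F(X_{\tau_\Omega})\right]
=-\,\frac{h}{2\lambda_h}\,
\frac{\displaystyle\int_{\Sigma_i}F(z)\,\pa_n u_h(z)\,e^{-\frac 2h f(z)}\,d\sigma(z)}
     {\displaystyle\int_{\Omega}u_h(y)\,e^{-\frac 2h f(y)}\,dy},
$$
so every assertion of the theorem reduces to sharp $h\to 0$ asymptotics of the principal Dirichlet pair $(\lambda_h,u_h)$ of $-L^{(0)}_{f,h}$, together with a Laplace-type evaluation of the numerator on each $\Sigma_i$ and of the denominator on $\Omega$. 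The plan is to work throughout in the Witten framework, replacing $u_h$ by $v_h:=u_h\,e^{-f/h}$ (the ground state of the Witten Laplacian on $0$-forms with Dirichlet conditions) and $\nabla u_h\,e^{-f/h}$ by the corresponding $1$-form mode for the Witten Laplacian on $1$-forms with tangential Dirichlet conditions, whose spectrum is governed by the generalized saddles $\ft U_1^{\overline\Omega}$.

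The first main step is the localization of $u_h$. Using the family $\mathcal C$ in~(\ref{mathcalC-def}), I would build quasimodes supported in each $\ft C\in\mathcal C$ and, via Agmon estimates, show that $\lambda_h$ is exponentially separated from the rest of the spectrum of $-L^{D,(0)}_{f,h}$. The assumption~(\ref{eq.hip1}) then forces $v_h$ (up to sign) to be exponentially concentrated in $\ft C_{\ft{max}}$, and in particular essentially constant (in suitable weighted norms) inside this unique deepest well. Laplace's method around $\argmin_{\overline{\ft C_{\ft{max}}}}f$ then yields a sharp equivalent for the denominator involving only data in $\ft C_{\ft{max}}$.

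The second step is to analyse $\pa_n u_h$ at each $z_i\in\ft U_1^{\pa\Omega}\cap\argmin_{\pa\Omega}f$. Using the Witten description of the gradient near $z_i$, combined with a Gaussian model at the boundary saddle and stationary phase on the tangential directions, I would obtain a sharp expression for $\pa_n u_h(z_i)\,e^{-\frac 2h f(z_i)}$. When $z_i\in\pa\ft C_{\ft{max}}\cap\pa\Omega$ (i.e.\ $1\le i\le\ft k_1^{\pa\ft C_{\ft{max}}}$), the Agmon distance from $z_i$ to $\argmin_{\ft C_{\ft{max}}}f$ equals $\ft H_f(x)-\min_{\ft C_{\ft{max}}}f$, and the prefactor comes out proportional to $\pa_n f(z_i)/\sqrt{\det\Hess f|_{\pa\Omega}(z_i)}$. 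Taking the ratio with Step~1 cancels the exponential factor and $\lambda_h$, yielding~(\ref{eq.t3}) with the normalization~(\ref{ai}); summation over $i\in\{1,\dots,\ft k_1^{\pa\ft C_{\ft{max}}}\}$ then gives~(\ref{eq.t1}). For $z_i$ with $i>\ft k_1^{\pa\ft C_{\ft{max}}}$, the Agmon distance from $\ft C_{\ft{max}}$ to $z_i$ is strictly larger than $\ft H_f(x)-\min_{\ft C_{\ft{max}}}f$ unless a saddle of that height bridges $\ft C_{\ft{max}}$ to the component of $z_i$; careful energetic accounting gives the $O(h^{1/4})$ bound of~(\ref{eq.t2}), which improves to $O(e^{-c/h})$ under~(\ref{eq.hip4}), proving item~3.

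Finally, to transfer the conclusions from $\mu_0=\nu_h$ to an arbitrary $\mu_0$ supported in $K\subset\mathcal A(\ft C_{\ft{max}})$, I would compare the two exit laws via the Dirichlet semigroup $e^{tL^{D,(0)}_{f,h}}$: the deterministic flow carries $K$ into a small neighbourhood of $\argmin_{\ft C_{\ft{max}}}f$ in $O(1)$ time with probability $1-O(e^{-c/h})$, after which the spectral gap above $\lambda_h$ from Step~1 forces exponentially fast convergence to $\nu_h$ on a timescale much shorter than $1/\lambda_h$; the strong Markov property then transfers the previous asymptotics to $\mathbb E_{\mu_0}$ with constants uniform in $\mu_0$ supported in $K$. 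I expect the hardest step to be Step~2: the sharp prefactor at each boundary saddle requires a WKB-type quasimode for the Witten Laplacian on $1$-forms with tangential Dirichlet conditions near $z_i$, precise enough to extract $\pa_n f(z_i)/\sqrt{\det\Hess f|_{\pa\Omega}(z_i)}$, and the separation between the exponentially small and merely $O(h^{1/4})$ contributions from the ``bad'' boundary minima requires delicate Agmon comparisons.
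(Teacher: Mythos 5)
Your proposal follows essentially the same architecture as the paper: use the density formula of Proposition~\ref{indep1} to reduce everything to sharp asymptotics of $\lambda_h$, $\int_\Omega u_h e^{-2f/h}$, and $\partial_n u_h$; obtain those from a Witten-Laplacian analysis with quasimodes indexed by local minima and generalized saddles; and then transfer from $\mu_0=\nu_h$ to deterministic initial conditions in $\mathcal A(\ft C_{\ft{max}})$. Steps~1 and~2 in outline match Proposition~\ref{pr.masse}, Theorem~\ref{thm-big0}, and Theorem~\ref{thm-big-pauh}.

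There are two places where the justification as you state it does not quite carry the weight you put on it. First, the explanation of the $O(h^{1/4})$ bound for the ``bad'' $z_i$ with $i>\ft k_1^{\pa\ft C_{\ft{max}}}$ in terms of an Agmon-distance comparison is imprecise: under \eqref{eq.hip3}, all of $z_1,\dots,z_{\ft k_1^{\pa\Omega}}$ sit at the \emph{same} height $\min_{\pa\Omega}f = f(\mathbf j(x_{1,1}))$, so the distinction is not that the Agmon distance to $\ft C_{\ft{max}}$ is strictly larger --- indeed, when an interior separating saddle bridges $\ft C_{\ft{max}}$ to the component of $z_i$, the Agmon distance is equal, yet the contribution is still only $O(h^{1/4})$ rather than $O(1)$. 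The actual mechanism is the mismatch in polynomial prefactors: boundary saddles produce interaction terms of size $h^{-3/4}$ while interior saddles produce $h^{-1/2}$, and the spectral-approximation error $\ve_h$ in Lemma~\ref{nabla-pi-u1} (and thus Corollary~\ref{lemm2}) is $\sqrt h$ generically, giving $\sqrt{\ve_h}=h^{1/4}$ in the cross terms of Theorem~\ref{thm-big-pauh}; it becomes exponentially small only when \eqref{eq.hip4} rules out interior separating saddles on $\partial\ft C_{\ft{max}}$ altogether. ``Careful energetic accounting'' by itself will not produce this polynomial rate --- you need the $1$-form WKB analysis separating the $h^{-3/4}$ and $h^{-1/2}$ scales.

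Second, your transfer from $\nu_h$ to an arbitrary $\mu_0$ supported in $K$ is by a mixing-time argument (semigroup plus spectral gap plus strong Markov). The paper instead proves a deterministic \emph{leveling property} (Proposition~\ref{level}): the exit functional $x\mapsto\mathbb E_x[F(X_{\tau_\Omega})]=v_h(x)$ solves the elliptic equation \eqref{vh}, and a chain of elliptic / Gagliardo--Nirenberg estimates yields $\|\nabla v_h\|_{L^\infty(K')}=O(e^{-c/h})$ on path-connected compacts $K'\subset\ft C_{\ft{max}}$, after which Freidlin--Wentzell brings the process from $K$ into such a $K'$ with exponentially high probability. Your route is plausible but you must quantify, uniformly in $h$, that convergence to the QSD happens on a timescale $\ll 1/\lambda_h$ --- which amounts to controlling $\lambda_{2,h}/\lambda_h$ and the associated constants in the semigroup estimate; this is precisely what the assumption \eqref{eq.hip1} buys (Theorem~\ref{pp} and Corollary~\ref{eq.co.l2}), but the implication is not automatic and requires the same spectral input. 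The PDE leveling route avoids having to quantify convergence in total variation and gives the uniform-in-$\mu_0$ constants directly, so it is strictly easier to make rigorous.
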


\begin{remark}\label{eq.asymptoO(h)}  Let us recall that for $\alpha>0$,~$(r(h))_{h>0}$ admits a full asymptotic expansion in~$h^\alpha$   if there exists a sequence $(a_k)_{k\geq 0}\in \mathbb R^{\mathbb N}$ such that for any $N\in \mathbb N$, it holds in the limit $h\to 0$: \begin{equation*}
r(h)=\sum_{k=0}^Na_kh^{\alpha k}+O\big (h^{\alpha(N+1)}\big ).
\end{equation*}
\end{remark}

\noindent
According to~\eqref{eq.t1}, when the function $F$ belongs to $C^{\infty}(\pa \Omega,\mathbb R)$ and $x\in \mathcal A(\ft C_{\ft{max}})$, one has  in the limit $h\to 0$:
$$
\mathbb E_{x} \left [ F\left (X_{\tau_{\Omega}} \right )\right] =
\sum_{i=1}^{\ft k_{1}^{\pa \ft C_{\ft{max}} } } a_i F(z_i) + O(h^{\frac14})= \frac{\sum \limits_{i=1}^{\ft k_{1}^{\pa \ft C_{\ft{max}} } }  \displaystyle{\int_{\Sigma_i} F \partial_nf \, e^{-\frac{2}{h}f }} }{\sum \limits_{i=1}^{\ft k_{1}^{\pa \ft C_{\ft{max}} } }  \displaystyle{\int_{\Sigma_i}  \partial_nf \, e^{-\frac{2}{h} f }  } }  + o_h(1), 
$$ 
where the order in $h$ of the remainder term $o_h(1)$  depends on the support of $F$ and on whether or not  the assumption~\eqref{eq.hip4} is satisfied.  
This is  reminiscent  of  previous  results  obtained  in~\cite{kamin1979elliptic,Kam,Per,day1984a,day1987r}.

Theorem~\ref{thm.main} implies that in the limit $h\to 0$, when $X_{0}\sim\nu_h$ or $X_0=x\in  \mathcal A(\ft C_{\ft{max}})$,    the law of  $X_{\tau_{\Omega}}$  concentrates on the set $\{z_{1},\dots, z_{\ft k_1^{\ft C_{\ft{max}} }}\}=\pa \Omega \cap \pa \ft C_{\ft{max}}$ with  explicit formulas  for the probabilities  to exit through each of the $z_i$'s. Therefore, \textbf{[P1]} and \textbf{[P2]} are satisfied when the assumptions~\eqref{eq.hip1},~\eqref{eq.hip2}, and~\eqref{eq.hip3} holds. 

Another consequence of   Theorem~\ref{thm.main} is the following.  The probability to exit through a global minimum $z$ of $f|_{\partial \Omega}$ which satisfies $\partial_nf(z)<0$ is exponentially small in the limit $h\to 0$ (see~\eqref{eq.t1}) and when  assuming~\eqref{eq.hip4}, the probability to exit through $ z_{\ft k_1^{\ft C_{\ft{max}}}+1},\dots, z_{\ft k_1^{\pa \Omega}} $ is also exponentially small even though  all these points belong to $\argmin_{\pa\Omega}f$.

%
%

Let us now give   two crucial results used  in the proof of \textbf{[P2]} in Theorem~\ref{thm.main}. The first   result shows that, when the assumptions \eqref{H-M}  and \eqref{eq.hip1} are satisfied, and $\min_{\ft   C_{\ft{max}}}f= \min_{\overline \Omega}f$ (which is automatically the case when \eqref{eq.hip1}, \eqref{eq.hip2}, and \eqref{eq.hip3}~hold, see Lemma~\ref{le.11}),     the quasi-stationary distribution $\nu_h$ (see Proposition~\ref{uniqueQSD})  concentrates in neighborhoods of the global minima of $f$ in $\ft   C_{\ft{max}}$.  This is stated in the following proposition. 
 \begin{proposition}\label{pr.con}
Assume that the assumptions \eqref{H-M}  and \eqref{eq.hip1}  are satisfied.  Furthermore, let us assume that 
$$\min_{\overline{\ft   C_{\ft{max}}}}f= \min_{\overline \Omega}f,$$
where we recall that  $\ft C_{\ft{max}}$ is introduced in~\eqref{eq.hip1}. Let $\ft O$ be an open subset of $\Omega$. 
 Then, if  $\ft O\cap \argmin_{\ft C_{\ft{max}}}f\neq \emptyset$,
 one has in the limit $h\to 0$:
$$
\nu_h\big( \ft O\big)  = \frac{   \sum_{x\in\ft O\cap \argmin_{\ft C_{\ft{max}}}f}  \big( {\rm det \ Hess } f   (x)   \big)^{-\frac12}  }{  \sum_{x\in \argmin_{\ft C_{\ft{max}}}f}  \big( {\rm det \ Hess } f   (x)   \big)^{-\frac12}  }\  \big(1+O(h) \big).
$$
When $\overline{\ft O}\cap \argmin_{\ft C_1}f= \emptyset$,  
 there exists $c>0$ such that  when $h \to 0$:
$$
\nu_h\big( \ft O\big) =O\big ( e^{-\frac{c}{h}} \big).
$$
\end{proposition}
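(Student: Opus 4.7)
}
The starting point is the explicit representation of the quasi-stationary distribution from Proposition~\ref{uniqueQSD}:
$$
\nu_h(\ft O)=\frac{\displaystyle\int_{\ft O} u_h(x)\, e^{-\frac{2}{h}f(x)}\,dx}{\displaystyle\int_{\Omega} u_h(y)\, e^{-\frac{2}{h}f(y)}\,dy}.
$$
Both integrals are amenable to the Laplace method in the regime $h\to 0$, the only non-trivial input being pointwise asymptotic control of the principal eigenfunction~$u_h$. The plan is therefore to reduce the statement to two facts about~$u_h$:
\textbf{(a)} on any compact neighborhood (in $\ft C_{\ft{max}}$) of $\argmin_{\ft C_{\ft{max}}}f$ one has $u_h=\kappa_h\,(1+O(h))$ for some $h$-dependent normalization $\kappa_h>0$; and
\textbf{(b)} there exists $c'>0$ such that $u_h=O(\kappa_h\, e^{-c'/h})$ uniformly on any compact subset of $\overline{\Omega}\setminus\overline{\ft C_{\ft{max}}}$.
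Both \textbf{(a)} and \textbf{(b)} reflect the fact that under~\eqref{eq.hip1} and the depth hypothesis $\min_{\overline{\ft C_{\ft{max}}}}f=\min_{\overline \Omega}f$, the component $\ft C_{\ft{max}}$ realises the strictly deepest well, so that the ground state of the Dirichlet realisation of $-L^{(0)}_{f,h}$ localises inside it (a standard Witten-Laplacian phenomenon, to be justified via Agmon-type estimates combined with quasi-modes supported in~$\ft C_{\ft{max}}$; these tools are developed later in the paper for the sharp spectral analysis of~$\lambda_h$).

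Granting \textbf{(a)} and \textbf{(b)}, I would first treat the denominator. Since $\argmin_{\ft C_{\ft{max}}}f$ lies in the open set $\ft C_{\ft{max}}$ and consists of non-degenerate minima of~$f$ by~\eqref{H-M}, applying Laplace's method at each such point using \textbf{(a)} yields
$$
\int_{\ft C_{\ft{max}}} u_h\, e^{-\frac{2}{h}f}\,dx
=\kappa_h\,(\pi h)^{d/2}\, e^{-\frac{2}{h}\min_{\overline \Omega}f}\!\!\sum_{x\in \argmin_{\ft C_{\ft{max}}}f}\!\!\frac{1}{\sqrt{\det\Hess f(x)}}\bigl(1+O(h)\bigr),
$$
up to the innocuous power of $2$ in the Gaussian factor. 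The complement $\Omega\setminus \ft C_{\ft{max}}$ contributes at most $\kappa_h\, e^{-c'/h}\int_{\Omega\setminus \ft C_{\ft{max}}} e^{-\frac 2 h f}=O(\kappa_h\, e^{-\frac{2}{h}\min_{\overline \Omega}f}\, e^{-c'/h})$ by \textbf{(b)}, hence is exponentially negligible compared with the leading term. The same computation, restricted to $\ft O$, governs the numerator whenever $\ft O\cap\argmin_{\ft C_{\ft{max}}}f\neq\emptyset$: the dominant contribution comes from the minima of $f$ lying in $\ft O\cap \argmin_{\ft C_{\ft{max}}}f$, while the remaining parts of $\ft O$ are either inside $\ft C_{\ft{max}}$ but away from the minimal value (giving an $e^{-c/h}$-suppression of the integrand) or outside $\ft C_{\ft{max}}$ (giving an $e^{-c'/h}$-suppression of $u_h$). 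Dividing numerator by denominator, $\kappa_h$, the Gaussian prefactor and the exponential $e^{-\frac{2}{h}\min_{\overline\Omega}f}$ cancel, and the claimed $(1+O(h))$-asymptotic formula follows.

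When $\overline{\ft O}\cap\argmin_{\ft C_{\ft{max}}}f=\emptyset$, the same dichotomy shows that the numerator is bounded by $C\kappa_h(\pi h)^{d/2} e^{-\frac{2}{h}\min_{\overline\Omega}f} e^{-c/h}$ for some $c>0$: either the minimum of $f$ on $\overline{\ft O}\cap\overline{\ft C_{\ft{max}}}$ is strictly larger than $\min_{\overline\Omega}f$, which produces an extra exponential from Laplace; or $\overline{\ft O}$ lies in the complement of $\overline{\ft C_{\ft{max}}}$, in which case \textbf{(b)} directly supplies the exponential decay. Dividing by the denominator, whose order was established above, yields $\nu_h(\ft O)=O(e^{-c/h})$. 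The main obstacle in implementing this plan is clearly \textbf{(a)}--\textbf{(b)}: a routine normalisation of $u_h$ via Proposition~\ref{uniqueQSD} only yields a crude $L^2_w$-bound, whereas the $O(h)$ precision demanded in the statement requires uniform $L^\infty$-approximation of $u_h$ by a constant on each well of $\ft C_{\ft{max}}$, together with the sharp exponential decay outside. Once the semiclassical toolkit of the paper (Witten complex, Agmon estimates, and quasi-mode constructions) is in place, these estimates are obtained by the same route that underlies the proof of Theorem~\ref{thm.main}.
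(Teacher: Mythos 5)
Your plan correctly reduces the proposition to Proposition~\ref{uniqueQSD} and Laplace asymptotics, and the dichotomy between the regions $\ft O\cap\argmin_{\ft C_{\ft{max}}}f\neq\emptyset$ and $\overline{\ft O}\cap\argmin_{\ft C_{\ft{max}}}f=\emptyset$ is exactly right. However, the central gap you name in \textbf{(a)}--\textbf{(b)} is a real one, and the paper shows it is avoidable: you do not need pointwise $L^\infty$ control of $u_h$ with $O(h)$ precision on a neighborhood of the minima, nor uniform exponential decay pointwise outside $\overline{\ft C_{\ft{max}}}$.

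The paper (Section~\ref{sec.masse}, see also equation~\eqref{eq.uh=} and Corollary~\ref{co.thm4}) works entirely in $L^2_w(\Omega)$: under~\eqref{eq.hip1} the spectral gap~\eqref{l2} gives a rank-one spectral projector $\widetilde\pi^{(0)}_h$, and projecting the explicit cutoff quasi-mode $\widetilde u_1=\chi_1^{\ve,\ve_1}/\|\chi_1^{\ve,\ve_1}\|_{L^2_w}$ (supported near $\ft C_{\ft{max}}=\ft C_1$) onto $\mathrm{Ran}\,\widetilde\pi^{(0)}_h=\mathrm{Span}(u_h)$ yields $u_h=\widetilde u_1+r$ with $\|r\|_{L^2_w}=O(e^{-c/h})$. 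The contribution of the error to the weighted integral is then controlled by Cauchy--Schwarz,
$$
\left|\int_{\ft O} r\,e^{-\frac{2}{h}f}\right|\le \|r\|_{L^2_w}\left(\int_{\ft O}e^{-\frac{2}{h}f}\right)^{1/2}=O\!\left(e^{-\frac{1}{h}(\min_{\overline\Omega}f+c')}\right),
$$
which is exponentially smaller than the main term $\int_{\ft O}\widetilde u_1 e^{-2f/h}\asymp h^{d/4}e^{-\min_{\overline\Omega}f/h}$. The main term and the normalizing denominator are then explicit Laplace integrals of a \emph{known} cutoff function, and the $(1+O(h))$ comes from Laplace's method applied to $\chi_1^{\ve,\ve_1}$, not from any $O(h)$-accurate pointwise knowledge of $u_h$. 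This is the crucial structural observation your proposal misses: the $L^2_w$-bound you dismiss as ``crude'' is in fact sufficient once paired with Cauchy--Schwarz, because the error integrates against the same weight as the comparison function. Pursuing \textbf{(a)}--\textbf{(b)} as $L^\infty$ statements would require Agmon-type pointwise estimates and elliptic bootstrapping on $u_h$ that are neither developed for this purpose nor needed; you should instead substitute the explicit quasi-mode for $u_h$ at the $L^2_w$ level, which removes the gap entirely.
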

Proposition~\ref{pr.con} is a direct consequence of~\eqref{eq.expQSD} and Proposition~\ref{pr.masse} below (see the beginning of  Section~\ref{section-4}).\\
 The second  result used in the proof of Theorem~\ref{thm.main}  connects the law of $X_{\tau_\Omega}$ when $X_0\sim \nu_h$ and $X_0=x\in \mathcal A(\ft C_{\ft{max}})$ in the limit $h\to 0$.  This is stated in the following proposition.      
 \begin{proposition}\label{pr.exp-qsd-x}
Assume that the assumptions \eqref{H-M}  and \eqref{eq.hip1}  are satisfied.  Let us moreover assume that 
$$\min_{\overline{\ft   C_{\ft{max}}} }f= \min_{\overline \Omega}f,$$
where we recall that  $\ft C_{\ft{max}}$ is introduced in~\eqref{eq.hip1}.  
  Let $K$ be  a compact subset of $\Omega$ such that $K\subset \mathcal A(\ft   C_{\ft{max}})$ and let $F\in C^{\infty}(\partial \Omega,\mathbb R)$. Then, there exists $ c>0$ such that for all $x\in K$:
$$
\mathbb E_{\nu_h}  \left [ F\left (X_{\tau_{\Omega}}\right)\right ]=\mathbb E_{x}  \left [ F\left (X_{\tau_{\Omega}}\right)\right ]  +O\big (e^{-\frac{c}{h} }\big )
$$
in the limit $h \to 0$ and uniformly in~$x \in K$.
\end{proposition}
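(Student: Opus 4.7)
The plan is to combine Freidlin--Wentzell large deviations, which drive the process from $K$ into a compact subset $U\subset\ft C_{\ft{max}}$ in finite time, with the exponential equilibration of the conditioned law $\mathrm{Law}(X_t\mid t<\tau_\Omega)$ toward $\nu_h$, quantified via the spectral gap between $\lambda_h$ and the second Dirichlet eigenvalue $\lambda_h^{(2)}$ of $-L^{D,(0)}_{f,h}$. Introduce $v_h(x):=\mathbb E_x[F(X_{\tau_\Omega})]$; this is bounded by $\|F\|_\infty$, satisfies $L^{(0)}_{f,h}v_h=0$ in $\Omega$ with $v_h=F$ on $\partial\Omega$, and $\mathbb E_{\nu_h}[F(X_{\tau_\Omega})]=\int_\Omega v_h\,d\nu_h$. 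The proof therefore reduces to showing $v_h(x)=\int v_h\,d\nu_h+O(e^{-c/h})$ uniformly in $x\in K$.

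First I would reduce from $K$ to a fixed compact subset of $\ft C_{\ft{max}}$. Since $K\subset\mathcal A(\ft C_{\ft{max}})$ is compact, one can pick $T_0>0$ and a compact set $U$ with $U\subset\ft C_{\ft{max}}$ such that $\varphi_{T_0}(K)$ lies in the interior of $U$ and $\varphi_{[0,T_0]}(K)$ stays at positive distance from $\partial\Omega$. A standard Freidlin--Wentzell estimate then yields $c_1>0$ with $\sup_{x\in K}\mathbb P_x(X_{T_0}\notin U\text{ or }\tau_\Omega\le T_0)\le e^{-c_1/h}$, so by the Markov property
\begin{equation*}
v_h(x)=\mathbb E_x\big[v_h(X_{T_0})\mathbf 1_{\{X_{T_0}\in U,\,\tau_\Omega>T_0\}}\big]+O(\|F\|_\infty e^{-c_1/h})
\end{equation*}
uniformly in $x\in K$. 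It remains to prove $v_h(y)=\int v_h\,d\nu_h+O(e^{-c/h})$ uniformly for $y\in U$.

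For this second step, under \eqref{eq.hip1} the sharp semiclassical estimates of Section~\ref{se.interm} supply constants $a>a'>0$ and $C>1$ with $\lambda_h\le Ce^{-a/h}$, $\lambda_h^{(2)}\ge C^{-1}e^{-a'/h}$, and an $L^\infty$ localisation of $u_h$ on $\ft C_{\ft{max}}$ giving $\inf_U u_h\ge C^{-1}\|u_h\|_\infty$ together with $\langle\mathbf 1,u_h\rangle_w u_h(y)=1+O(e^{-c/h})$ on $U$. Pick $T_1(h):=e^{(a+a')/(2h)}$, so that $\lambda_hT_1$ and $e^{-(\lambda_h^{(2)}-\lambda_h)T_1}$ are both $O(e^{-c/h})$. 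Writing $v_h(y)=P_{T_1}^Dv_h(y)+\mathbb E_y[F(X_{\tau_\Omega})\mathbf 1_{\{\tau_\Omega\le T_1\}}]$ with $P_t^D:=e^{tL^{D,(0)}_{f,h}}$, the remainder is bounded by $\|F\|_\infty\mathbb P_y(\tau_\Omega\le T_1)$, itself $O(e^{-c/h})$ on $U$ using $\mathbb P_y(\tau_\Omega>T_1)=P_{T_1}^D\mathbf 1(y)\ge 1-O(e^{-c/h})$. Expanding $v_h$ and $\mathbf 1$ in the $L^2_w$-orthonormal Dirichlet eigenbasis $(u_h^{(k)})_{k\ge 0}$, using the spectral gap and an $L^2_w\to L^\infty$ regularising semigroup step, gives
\begin{equation*}
\frac{P_{T_1}^Dv_h(y)}{P_{T_1}^D\mathbf 1(y)}=\frac{\langle v_h,u_h\rangle_w}{\langle\mathbf 1,u_h\rangle_w}+O(e^{-c/h})=\int v_h\,d\nu_h+O(e^{-c/h})
\end{equation*}
uniformly on $U$, where the last equality follows from $d\nu_h=u_he^{-2f/h}dx/\int u_he^{-2f/h}$. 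Combined with $P_{T_1}^D\mathbf 1(y)=1+O(e^{-c/h})$, this yields $v_h(y)=\int v_h\,d\nu_h+O(e^{-c/h})$ on $U$, and injecting into the first step propagates it uniformly to $K$.

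The hard part of the argument is the semiclassical spectral input: the quantitative gap $\lambda_h^{(2)}-\lambda_h\gtrsim e^{-a'/h}\gg\lambda_h$ and the $L^\infty$ localisation of $u_h$ on $\ft C_{\ft{max}}$. Both are non-trivial statements, but they are precisely what Section~\ref{se.interm} delivers under \eqref{eq.hip1}; the assumption that $\ft C_{\ft{max}}$ is the unique deepest well is what guarantees $a>a'$, while the hypothesis $K\subset\mathcal A(\ft C_{\ft{max}})$ is exactly what the Freidlin--Wentzell step needs in order to transfer the pointwise estimate on $U$ to all of $K$ with an exponentially small error.
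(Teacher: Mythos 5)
Your proof shares the same first step as the paper (Freidlin--Wentzell transport from $K\subset\mathcal A(\ft C_{\ft{max}})$ into a compact subset $U$ of $\ft C_{\ft{max}}$ via the deterministic flow, followed by the Markov property), but the second step is a genuinely different route. The paper's Lemma~\ref{le.exp-qsd-x} writes $\mathbb E_{\nu_h}[F(X_{\tau_\Omega})]=Z_h(\Omega)^{-1}\int_\Omega v_h\,u_h\,e^{-2f/h}$, then splits the integral and invokes two ingredients that are both established in the paper: Proposition~\ref{pr.masse}, an $L^1$-type concentration of the measure $u_h e^{-2f/h}\,dx$ on $\ft C_{\ft{max}}$, and Proposition~\ref{level}, a PDE leveling lemma (Green's formula $+$ Gagliardo--Nirenberg $+$ elliptic regularity) showing that $\nabla v_h$ is $O(e^{-c/h})$ uniformly on compact subsets of $\ft C_{\ft{max}}$. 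This factorization deliberately avoids any pointwise control of $u_h$. You instead run the Dirichlet semigroup $P_{T_1}^D$ over the exponentially long window $T_1=e^{(a+a')/(2h)}$ and use the spectral gap between $\lambda_h$ and $\lambda_{2,h}$ (Theorem~\ref{pp}) to kill the higher modes, which is a clean and natural alternative.

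However, there is a real gap in your second step. You assert ``an $L^\infty$ localisation of $u_h$ on $\ft C_{\ft{max}}$ giving $\inf_U u_h\ge C^{-1}\|u_h\|_\infty$ together with $\langle\mathbf 1,u_h\rangle_w u_h(y)=1+O(e^{-c/h})$ on $U$,'' and claim this is ``precisely what Section~\ref{se.interm} delivers.'' It is not: what the paper proves is $u_h=\widetilde u_1+O(e^{-c/h})$ in $L^2_w(\Omega)$ (equation~\eqref{eq.uh=}) and the $L^1$-concentration of $u_h e^{-2f/h}$ in Proposition~\ref{pr.masse}, neither of which is a pointwise statement. Promoting the $L^2_w$ closeness to a \emph{relative} pointwise estimate $u_h(y)=\widetilde u_1(y)(1+O(e^{-c/h}))$ on $U$ is not automatic: the $L^2_w$ norm carries the weight $e^{-2f/h}$, so naively converting to $L^\infty$ via Sobolev embedding on $U$ costs a factor $e^{(\max_U f)/h}$, and you must then check that $c>\max_U f-\min_{\overline\Omega}f$, track the polynomial-in-$h^{-1}$ losses from elliptic regularity, and verify that the resulting bound is still small compared to $\widetilde u_1(y)\sim h^{-d/4}e^{(\min_{\overline\Omega}f)/h}$. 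This Agmon-type bookkeeping can be made to work under the hypotheses, but it is a nontrivial additional argument, not a citation — and precisely the kind of thing the paper's leveling lemma was designed to sidestep. Your ratio $\tfrac{P_{T_1}^Dv_h(y)}{P_{T_1}^D\mathbf 1(y)}$ also divides by $u_h(y)$ in the leading term, so without a quantitative pointwise \emph{lower} bound on $u_h$ over $U$ the error terms are not controlled. Filling this in (either with an explicit Agmon estimate for the eigenfunction $u_h$, or by observing that the paper's leveling Proposition~\ref{level} can also be applied to $u_h$ itself on $\ft C_{\ft{max}}$) would make your proof complete.
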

\noindent
Proposition~\ref{pr.exp-qsd-x} is a direct consequence of Lemma~\ref{le.exp-qsd-x} below (see Section~\ref{se3}). It gives sufficient conditions to ensure that \textbf{[P2]} is satisfied. 
\medskip

\noindent
Let us end this section with the following  theorem dealing with the case when   $X_0=x\in \mathcal A(\ft C)$, when $\ft C\in \mathcal C$ (see~\eqref{mathcalC-def})    is not necessarily $\ft C_{\ft{max}}$.

\begin{theorem}\label{thm.2}
Let us assume that  \eqref{H-M} holds. Let $\ft C\in \mathcal C$ (see~\eqref{mathcalC-def}). Let us assume that 
\begin{equation}\label{eq.cc1}
\pa \ft C\cap \pa \Omega\neq \emptyset \  \text{ and } \  \vert \nabla f\vert \neq 0 \text{ on } \pa \ft C.
\end{equation}
 Recall that  $\pa \ft C\cap \pa \Omega \subset \ft U_1^{\pa \Omega}$ (see~\eqref{eq.mathcalU1_bis} and~\eqref{eq.U1paOmega}). 
For all $z \in \pa \ft C\cap \pa \Omega$, let  $\Sigma_{z}$ be   an open subset of~$\pa \Omega$ such that $ z\in \Sigma_{z}$. 
Let $K$ be a compact subset of $\Omega$ such that $K\subset \mathcal A(\ft C)$. Then, there exists $c>0$ such that for  $h$ small enough,
$$\sup_{x\in K}\mathbb P_x\Big [X_{\tau_\Omega}\in  \pa \Omega \setminus  \bigcup_{z\in \pa \ft C\cap \pa \Omega} \Sigma_{z}\Big ]\le e^{-\frac ch}.$$
Assume moreover that   the sets $(\Sigma_{z})_{z\in \pa \ft C\cap \pa \Omega}$  are two by two disjoint.  Let $z\in \pa \ft C\cap \pa \Omega$. Then,   it holds for all $x\in K$, 
$$\mathbb P_{x}[X_{\tau_\Omega}\in \Sigma_z]= \frac{  \partial_nf(z)      }{  \sqrt{ {\rm det \ Hess } f|_{\partial \Omega}   (z) }  } \left (\sum \limits_{y\in\pa \ft C \cap \pa \Omega  } \frac{  \partial_nf(y)      }{  \sqrt{ {\rm det \ Hess } f|_{\partial \Omega}   (y) }  }\right)^{-1}(1+O(h)),$$
in the limit $h \to 0$ and uniformly in~$x \in K$.
\end{theorem}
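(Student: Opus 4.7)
The plan is to reduce Theorem~\ref{thm.2} to Theorem~\ref{thm.main} by introducing an auxiliary sub-domain $\widetilde\Omega\subset\Omega$ tailored to $\ft C$, and then transferring the exit analysis from $\widetilde\Omega$ back to $\Omega$ using that $\widetilde\Omega\subset\Omega$.

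First, I will construct $\widetilde\Omega$. Let $x_0$ be a local minimum of $f$ with $\ft C=\ft C(x_0)$. Choose $\delta>0$ small enough so that the interval $(\ft H_f(x_0),\ft H_f(x_0)+\delta]$ contains no critical value of $f$ in $\overline\Omega$ and no critical value of $f|_{\pa\Omega}$, and define $\widetilde\Omega_\delta$ as the connected component of $\{y\in\Omega:\ f(y)<\ft H_f(x_0)+\delta\}$ containing $\ft C$. By the hypothesis $|\nabla f|\neq 0$ on $\pa \ft C$, the set $\pa \ft C$ is a smooth hypersurface and the level set $\{f=\ft H_f(x_0)+\delta\}$ meets $\pa\Omega$ transversally near each $z\in \pa \ft C\cap\pa\Omega$; a standard corner-smoothing near these intersection curves then produces a smooth domain $\widetilde\Omega$ satisfying~\eqref{H-M} for which $\pa\widetilde\Omega\cap\pa\Omega$ is a prescribed neighborhood in $\pa\Omega$ of $\pa \ft C\cap\pa\Omega$. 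Any local minimum $y$ of $f$ in $\widetilde\Omega$ lies in $\ft C$ (no critical values lie in $(\ft H_f(x_0),\ft H_f(x_0)+\delta]$), and any admissible path for $\ft H_f^{\widetilde\Omega}(y)$ extends to one admissible for $\ft H_f(y)$, so $\ft C^{\widetilde\Omega}(y)\subseteq \ft C$. Consequently $\ft C_{\ft{max}}^{\widetilde\Omega}=\ft C$, yielding~\eqref{eq.hip1}; $\pa \ft C\cap\pa\widetilde\Omega=\pa \ft C\cap\pa\Omega\neq\emptyset$ gives~\eqref{eq.hip2}; since $f\equiv \ft H_f(x_0)+\delta$ on the ``interior'' portion of $\pa\widetilde\Omega$ while $f(z)=\ft H_f(x_0)$ for $z\in\pa \ft C\cap\pa\Omega$, the inclusion $\pa \ft C\cap\pa\Omega\subset\argmin_{\pa\widetilde\Omega}f$ yields~\eqref{eq.hip3}; and $|\nabla f|\neq 0$ on $\pa \ft C$ forbids saddle points on $\pa \ft C\cap\widetilde\Omega$, giving~\eqref{eq.hip4}.

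Next, I apply Theorem~\ref{thm.main} to $(\widetilde\Omega,f)$ with the family $(\Sigma_z)_{z\in \pa \ft C\cap\pa\Omega}$, viewed as relatively open subsets of $\pa\widetilde\Omega$ (locally $\pa\widetilde\Omega=\pa\Omega$ near each $z$). The condition $K\subset\mathcal A(\ft C)$, together with the fact that $f$ decreases strictly along the gradient flow off the critical set, implies that, up to enlarging $\delta$ while preserving the previous verification (or, alternatively, via a preliminary Freidlin--Wentzell estimate transporting the process from $K$ into a compact subset of $\ft C\subset \widetilde\Omega$ by a finite time $T$ with probability $1-O(e^{-c/h})$ uniformly in $x\in K$), one may assume $K\subset \widetilde\Omega$ and $K\subset \mathcal A^{\widetilde\Omega}(\ft C_{\ft{max}}^{\widetilde\Omega})$. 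Items~2 and~3 of Theorem~\ref{thm.main} then deliver, uniformly in $x\in K$,
\begin{equation*}
\mathbb P_x\big[X_{\tau_{\widetilde\Omega}}\in\Sigma_z\big]=a_z(1+O(h))\quad\text{and}\quad \mathbb P_x\Big[X_{\tau_{\widetilde\Omega}}\in\pa\widetilde\Omega\setminus{\textstyle\bigcup_{y}}\,\Sigma_y\Big]=O(e^{-c/h}),
\end{equation*}
where $a_z$ is precisely the coefficient stated in Theorem~\ref{thm.2}, since $\partial_n f(z)$ and $\Hess f|_{\pa\Omega}(z)$ are intrinsic to $\pa\Omega=\pa\widetilde\Omega$ near $z$.

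To conclude, I transfer these estimates back to $\Omega$. Since $\widetilde\Omega\subset\Omega$, one has $\tau_{\widetilde\Omega}\le \tau_\Omega$ and $X_{\tau_{\widetilde\Omega}}=X_{\tau_\Omega}$ on $\{X_{\tau_{\widetilde\Omega}}\in\pa\Omega\}$. Therefore $\mathbb P_x[X_{\tau_\Omega}\in\Sigma_z]\geq \mathbb P_x[X_{\tau_{\widetilde\Omega}}\in\Sigma_z]=a_z(1+O(h))$, while $\mathbb P_x\big[X_{\tau_\Omega}\in\pa\Omega\setminus\bigcup_z\Sigma_z\big]$ is dominated by $\mathbb P_x\big[X_{\tau_{\widetilde\Omega}}\in(\pa\Omega\cap\pa\widetilde\Omega)\setminus\bigcup_z\Sigma_z\big]+\mathbb P_x\big[X_{\tau_{\widetilde\Omega}}\in\pa\widetilde\Omega\setminus\pa\Omega\big]=O(e^{-c/h})$. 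Since $\sum_z a_z=1$, the family of lower bounds must saturate up to $O(h)$, yielding both the exponential concentration bound and the precise asymptotics claimed in Theorem~\ref{thm.2}. The main technical obstacle in this plan is the auxiliary construction itself: after rounding the corners where $\{f=\ft H_f(x_0)+\delta\}$ meets $\pa\Omega$, one must ensure that $f|_{\pa\widetilde\Omega}$ remains Morse and that $|\nabla f|\neq 0$ on the entire new boundary. This relies on transversality at the corners (ensured by $|\nabla f|\neq 0$ on $\pa \ft C$) and on the absence of critical values of $f$ and of $f|_{\pa\Omega}$ in a thin slab above $\ft H_f(x_0)$, but is the most delicate step to execute in detail.
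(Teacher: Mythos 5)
Your overall strategy mirrors the paper's: construct a smooth subdomain $\widetilde\Omega$ containing $\ft C$ on which the assumptions \eqref{H-M}, \eqref{eq.hip1}--\eqref{eq.hip4} hold with $\ft C_{\ft{max}}^{\widetilde\Omega}=\ft C$, apply Theorem~\ref{thm.main}, and transfer the exit statistics back to $\Omega$ using $\tau_{\widetilde\Omega}\le\tau_\Omega$ and the identity $\tau_{\widetilde\Omega}=\tau_\Omega$ on $\{X_{\tau_{\widetilde\Omega}}\in\pa\Omega\}$. The transfer step is fine; your ``saturation'' device (lower bounds on each $\mathbb P_x[X_{\tau_\Omega}\in\Sigma_z]$ plus $\sum_z a_z=1$ forcing the upper bound) is a small variation on the paper's direct two-sided decomposition $\Sigma_z=B_{\pa\Omega}(z,\beta)\cup(\Sigma_z\setminus B_{\pa\Omega}(z,\beta))$, and both work.

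The genuine gap is in the construction of $\widetilde\Omega$, and it is not what you identify. You write that ``a standard corner-smoothing near these intersection curves then produces a smooth domain $\widetilde\Omega$ satisfying~\eqref{H-M}'' and later flag the delicacy as ``one must ensure that $f|_{\pa\widetilde\Omega}$ \emph{remains} Morse.'' But it does not start Morse: the interior portion of $\pa\widetilde\Omega_\delta$ is the level set $\{f=\ft H_f(x_0)+\delta\}\cap\Omega$, on which $f$ is \emph{identically constant}. A constant function on a $(d-1)$-dimensional hypersurface is maximally degenerate, not Morse, and no local smoothing near the corners where the level set meets $\pa\Omega$ will change what $f$ does on the interior piece. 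To repair this one must \emph{genuinely perturb the interior surface itself}, not merely round its corners, so that $f$ restricted to the perturbed surface becomes Morse. This is exactly what Proposition~\ref{Lau1} in Appendix~C does: it pushes the boundary surface in the $\nabla f$ direction by a parameter-dependent amount $\lambda(x,v)$ and invokes a parametric transversality (Sard--Thom) argument to show that for almost every small parameter $v$, the perturbed boundary has $f$ restricted to it Morse, while leaving a prescribed piece $S_1'$ of $\pa\Omega$ (where $f$ is already Morse) untouched. Absent this step, your $\widetilde\Omega$ does not satisfy the hypothesis~\eqref{H-M} and Theorem~\ref{thm.main} cannot be applied to it.
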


\noindent
Theorem~\ref{thm.2} implies that when $ \ft C \in \mathcal C$ satisfies~\eqref{eq.cc1} (for instance, this is the case for~$\ft C_3$ on Figures~\ref{fig:okay} and~\ref{fig:shema_nota}),    the law of $X_{\tau_\Omega}$ when $X_0=x\in \mathcal A(\ft C)$ concentrates when $h\to 0$ on  $\pa \ft C \cap \pa \Omega$. Let us  mention that the proof of Theorem~\ref{thm.2} is based on the use of Theorem~\ref{thm.main} with a suitable subdomain of $\Omega$ containing  $\ft C$.

When ${\ft  C}\in \mathcal C$ and does not satisfy~\eqref{eq.cc1},  it is much harder to   exhibit explicit assumptions  on $\ft C$ to give the most probable places of exit from $\Omega$  of the process~\eqref{eq.langevin} when $h\to 0$  (as suggested by one dimensional-examples, see Appendix B).

\subsubsection{Intermediate results on the smallest eigenvalues  and on the principal eigenfunction of  $-L^{D,(0)}_{f,h}$}\label{se.interm}
Let us recall that from~\eqref{eq.dens}, one has:
$$\mathbb E_{\nu_h}\left [ F\left (X_{\tau_{\Omega}} \right )\right]=- \frac{h}{2\lambda_h} \frac{ \displaystyle \int_{\pa \Omega} F\,  \partial_n
  u_h  e^{-\frac{2}{h} f }}{\displaystyle \int_\Omega u_h  e^{-\frac{2}{h} f }  }.$$
Therefore, to obtain  the asymptotic estimates on $\mathbb E_{\nu_h}\left [ F\left (X_{\tau_{\Omega}} \right )\right]$ stated
 in Theorem~\ref{thm.main} when $h\to 0$, i is sufficient to study  the asymptotic behaviour of the quantities 
$$\lambda_h, \ \,  \int_\Omega u_h e^{-\frac 2h f},\ \, \text{ and}\ \,  \pa_nu_h. $$
The intermediate results we obtain  are the following. 
\begin{enumerate}[leftmargin=1.3cm,rightmargin=1.3cm]
\item In Theorem~\ref{pp}, one gives for  $h\to 0$ small enough, a   lower and an upper bound for all the $\ft m_0^\Omega$ small  eigenvalues of   $-L^{D,(0)}_{f,h}$ when~\eqref{H-M} is satisfied.  
\item  In Theorems~\ref{thm-big0} and~\ref{re-specific-case}, one gives a sharp asymptotic equivalent in the limit $h \to 0$  of the smallest eigenvalue $\lambda_h$ of $-L^{D,(0)}_{f,h}$ when~\eqref{H-M} and~\eqref{eq.hip1} are satisfied.   

\item  In Proposition~\ref{pr.masse}, when~\eqref{H-M},~\eqref{eq.hip1}  and 
$\min_{\overline{\ft C_{\ft{max}}}}f= \min_{\overline \Omega}f$ hold, one shows that  $u_h\,e^{-\frac 2h f}$ concentrates in the   $L^1(\Omega)$-norm       on the global minima of~$f$ in~$\ft C_{\ft{max}}$ in the limit $h\to 0$. 

\item \begin{sloppypar}
In Theorem~\ref{thm-big-pauh}, one studies the   concentration  in the limit $h \to 0$ of the normal derivative  of the principal eigenvalue $u_h$ of $-L^{D,(0)}_{f,h}$ on $\pa \Omega$ when~\eqref{H-M},\eqref{eq.hip1}, \eqref{eq.hip2}, and~\eqref{eq.hip3} are satisfied.  In particular, one computes sharp asymptotic equivalents  of $\pa_n u_h$ in neighborhoods of  $\pa \ft C_{\ft{max}}\cap \pa \Omega$  in   $\pa \Omega$. 
\end{sloppypar}
\end{enumerate}


\subsection{Discussion of the hypotheses}
\label{discussion-hyp}
In this section,  we discuss the necessity of the assumptions~\eqref{eq.hip1}, \eqref{eq.hip2}, and \eqref{eq.hip3} to obtain \textbf{[P1]} and \textbf{[P2]}.  We also discuss the necessity of the  assumption \eqref{eq.hip4} in order to get item 3 in Theorem~\ref{thm.main}.

\subsubsection{On the assumption~(\ref{H-M})}
\label{sec.on-A0}

The results of this work actually still hold under a weaker assumption 
than~\eqref{H-M}, namely by simply assuming that $f : \{x \in  \pa \Omega, \pa_n f(x) > 0\} \to \mathbb R$ 
is a Morse function instead of $f|_{ \pa \Omega }$ is a Morse function. Indeed, as 
mentioned in~\cite[Section 7.1]{HeKoSu}, the statement of Lemma~\ref{ran1} (which is the only 
place where we use the fact that $f$ is a Morse function on $\{x \in  \pa \Omega, \pa_n f(x) \le 0\}$, relying on~\cite[Section 3.4]{HeNi1}) still 
holds under this weaker assumption, see Appendix A.


\subsubsection{On the assumption~(\ref{eq.hip1})}
In this section, we discuss the necessity of the assumption \eqref{eq.hip1} in order to obtain the results of Theorem~\ref{thm.main} (or equivalently  \textbf{[P1]} and \textbf{[P2]}). More precisely, one first exhibits a case where  \eqref{eq.hip1} and  \textbf{[P2]} are not satisfied. Then,  one shows that there are cases where \textbf{[P1]} and  \textbf{[P2]} are satisfied but not \eqref{eq.hip1}. Finally, one explains why it is more difficult to analyse  \textbf{[P1]} and  \textbf{[P2]} when  \eqref{eq.hip1} does no hold. 
 \medskip


\noindent
\textbf{An example where \eqref{eq.hip1}  and \textbf{[P2]} are not satisfied}.
\medskip

\noindent
Let us consider $
 z_1>0$,~$ z_2:=-z_1$,~$z=0$
and $f\in C^{\infty}([z_1,z_2],\mathbb R)$ a Morse function such that  
$$
f \text{ is an even function}, \  \{x\in [z_1,z_2], f'(x)=0\}=\{x_1,z,x_2\},
$$
where 
$$
z_1<x_1<z<x_2<z_2, \, f(z_1)=f(z_2), \, f(x_1)=f(x_2)<f(z_1)< f(z).
$$
\begin{sloppypar}
\noindent
Notice that in this case $x_1=-x_2$,~$x_1$ and $x_2$  are the two global minima of  $f$ on $[z_1,z_2]$,~$z$~is the global maximum of~$f$ on $[z_1,z_2]$ and $\ft H_f (x_1)=\ft H_f (x_2)=f(z_1)$. Such a function~$f$ is represented in Figure~\ref{fig:hip1}. 
For such functions $f$, the assumption \eqref{eq.hip1} is not satisfied since $\argmax\,  \big \{ \ft H_f (x)-f(x), \ x \text{ is local minimum of } f \text{ in } \Omega \big\} =\{x_1,x_2\}$ and  $x_1$ belongs to a connected component of $\{f<\ft H_f(x_1)\}$ which differs from the connected component of $\{f<\ft H_f(x_1)\}$ which contains $x_2$.  
\end{sloppypar}

 \begin{figure}[h!]
\begin{center}
\begin{tikzpicture}
  \draw [domain=1.37:3.56*pi, scale=0.7, samples=300, smooth] plot (\x,{2*cos(\x r) });
   \tikzstyle{vertex}=[draw,circle,fill=black,minimum size=5pt,inner sep=0pt]
      \draw [dashed] (1,0.26)--(10,0.26); 
   \draw (12,0.23) node[]{$\{f=\min_{\pa \Omega }f\}$};   
   
    \draw (1,0.26) node[vertex,label=west: {$z_1$}](v){}; 
        \draw (7.8,0.26) node[vertex,label=north: {$z_2$}](v){};
        \draw (4.4,1.4) node[vertex,label=north: {$z$}](v){};  
 \draw (2.17,-1.4) node[vertex,label=south: {$x_1$}](v){}; 
  \draw (6.63,-1.4) node[vertex,label=south: {$x_2$}](v){};  
    \draw [dashed] (2.17,-1.4)--(10,-1.4); 
   \draw (12,-1.4) node[]{$\{f=\min_{  \Omega }f\}$};   
 
    \end{tikzpicture}
    \caption{A one-dimensional example when~\eqref{eq.hip1} and \textbf{[P2]} are not satisfied.}
    \label{fig:hip1}
    \end{center}
\end{figure}
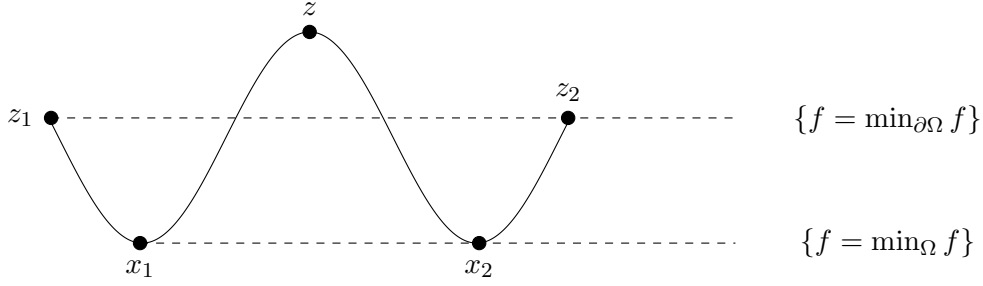

\noindent
Since for $x\in (z_1,z_2)$ and $h> 0$, $\nu_h(x)=\nu_h(-x)$ and $\mathbb P_{x}  [X_{\tau_{(z_1,z_2)}}=z_1 ]=\mathbb P_{-x}  [X_{\tau_{(z_1,z_2)}}=z_2 ]$, one has for all $h> 0$:
$$
\mathbb P_{\nu_h}  [X_{\tau_{(z_1,z_2)}}=z_1 ]=\frac 12  \ \text{ and }\  \mathbb P_{\nu_h}  [ X_{\tau_{(z_1,z_2)}}=z_2 ]=\frac 12. 
$$
However, from~\eqref{exp} below (see the Appendix) together with  Laplace's method,   for $x\in (z_1,z)$, there exists $c>0$ such that in the limit  $h\to 0$:
\begin{equation}\label{discussion1}
\mathbb P_{x}  [X_{\tau_{(z_1,z_2)}}=z_1 ]=1+O( e^{-\frac ch}), \ \text{ and }\  \mathbb P_{x}  [ X_{\tau_{(z_1,z_2)}}=z_2 ]=O( e^{-\frac ch}),
\end{equation}
and for $x\in (z,z_2)$, there exists $c>0$ such that in the limit  $h\to 0$:
$$
\mathbb P_{x}  [X_{\tau_{(z_1,z_2)}}=z_1 ]=O( e^{-\frac ch}), \ \text{ and }\  \mathbb P_{x}  [ X_{\tau_{(z_1,z_2)}}=z_2 ]=1+O( e^{-\frac ch}).
$$
Therefore, in this example, the assumption \textbf{[P2]} is not satisfied.
The domain~$\Omega$ is not metastable  (see Section~\ref{qsd})  for any    deterministic initial conditions $X_0=x\in [z_1,z_2]\setminus\{z\}$.  
\medskip

\noindent
\textbf{There are cases when  \textbf{[P1]} and  \textbf{[P2]} are satisfied but not~\eqref{eq.hip1}.}
\medskip 

 \noindent
In the symmetric case depicted in Figure~\ref{fig:hip1}  the quasi-stationary distribution $\nu_h$ concentrates  in the two wells $(z_1,z)$ and $(z,z_2)$ (see~\cite{LPN2018}): i.e. for any $a_1<b_1$ such that  $(a_1,b_1)\subset (z_1,z)$ and $x_1\in (a_1,b_1)$, and  $a_2<b_2$ such that $(a_2,b_2)\subset (z,z_2)$ and $x_2\in (a_2,b_2)$, it holds
$$\lim_{h\to 0}\nu_h\big( (a_1,b_1)\big)=\frac 12 \ \text{ and }\ \lim_{h\to 0}\nu_h\big((a_2,b_2)\big)=\frac 12.$$ 
However, it is proved in~\cite{LPN2018}, that this equal repartition of~$\nu_h$ when $h\to 0$ is  unstable with respect to perturbations.
Indeed, changing a little bit  the value of the determinant of the hessian matrix  at~$x_1$ or~$x_2$, or the normal derivative  at~$z_1$ or $z_2$ of  the symmetric potential~$f$ depicted in Figure~\ref{fig:hip1} (while keeping the fact that~\eqref{eq.hip1} is not satisfied) makes~$\nu_h$  concentrates in the limit $h\to 0$ in only one of the two  wells $(z_1,z)$ or $(z,z_2)$, and  \textbf{[P1]} and \textbf{[P2]} then also hold.

\begin{remark}  
 The main goal of~\cite{LPN2018} is to study the repartition of~$\nu_h$ when $h\to 0$ in the double-well case (where~\eqref{eq.hip1} does not hold). In particular, in this case, it is shown   that the asymptotic behaviour when $h\to 0$ of $\nu_h$  which generically  happens is the following:~$\nu_h$ concentrates in only one of the two wells in the limit $h\to 0$, and     \textbf{[P1]} and \textbf{[P2]} hold. 
\end{remark}
\medskip 

 \noindent
 \textbf{On the analysis of~\textbf{[P1]} and~\textbf{[P2]} when~\eqref{eq.hip1} does not hold.}
 

\medskip

\noindent
To analyse whether   \textbf{[P1]} or  \textbf{[P2]} is satisfied when \eqref{eq.hip1} does not hold, one needs in particular to  have access to the repartition of $\nu_h$ in neigborhoods  of the local minima of $f$ in $\Omega$ when $h\to 0$.  

When~\eqref{eq.hip1} is not satisfied,   the analysis of the repartition of $\nu_h$  is  tricky. This can be explained as follows. When~\eqref{eq.hip1}  is not satisfied, one has    from~Theorem~\ref{pp} below (see Section~\ref{sec.e2}),
$$\lim_{h\to 0} h\ln \lambda_h=\lim_{h\to 0} h \ln  \lambda_{2,h},$$
where $\lambda_{2,h}$ is the second smallest eigenvalue of $-L^{D,(0)}_{f,h}$. It is difficult to measure the quality of the approximation of $u_h$ by 
projecting an ansatz on ${\rm Span}(u_h)$, since the error is related to the 
ratio of $\lambda_{h}$ over $\lambda_{2,h}$ (see Lemma~\ref{quadra}).
 Moreover, when~\eqref{eq.hip1} is not satisfied,  it is difficult to predict in which well $\nu_h$ concentrates when it does, as explained in~\cite{LPN2018}.   This is again due to the fact that this prediction relies on  a very accurate  comparison between   $\lambda_{h}$ and~$\lambda_{2,h}$.

On the contrary, when  the assumption~\eqref{eq.hip1} is satisfied, one can more easily   obtain  an approximation of  $u_h$ (see~\eqref{eq.uh=} below) since in that case  $\lim_{h\to 0} h\ln \lambda_h<\lim_{h\to 0} h \ln  \lambda_{2,h}$ and thus Lemma~\ref{quadra}  provides a sufficiently accurate error estimate of the 
approximation of $u_h$ by a simple ansatz (namely a cut-off function), see indeed Proposition~\ref{pr.con} above and Proposition~\ref{pr.masse} below. 


%
%
%


\subsubsection{On the assumption~(\ref{eq.hip2})}
\label{sec.A2}
In this section, we discuss the assumption~\eqref{eq.hip2} to obtain the results stated in Theorem~\ref{thm.main}.  To this end, let us consider the  following one-dimensional example. Let $z_1<z_2$ and $f$: $[z_1,z_2]\to \mathbb R$ be a $C^{\infty}$ Morse function. Let us assume that $\{x\in [z_1,z_2], f'(x)=0\}=\{x_1,x_2,c,d\}$ with $z_1<x_1<c<x_2<d<z_2$,~$f(x_2)<f(x_1)<f(z_1)<f(z_2)<f(d)<f(c)$ (see Figure~\ref{fig:hip2}). This implies that $f'(z_2)<0$,~$f(d)-f(x_2)>f(z_1)-f(x_1)$. Moreover, it holds
$$\ft H_f(x_1)=f(z_1),\, \ft H_f(x_2)=f(d),\ f(z_1)=\min_{\pa \Omega}f, \ \ft C_{\ft{max}}\subset (c,d) \, \text{ and } \, \pa \ft C_{\ft{max}}\cap \pa \Omega=\emptyset.$$ 
The assumption~\eqref{eq.hip1} is satisfied but not~\eqref{eq.hip2}.
From~\eqref{discussion2-b} below (see Appendix B), there exists $c>0$ such that in the limit $h\to 0$:
\begin{equation}\label{discussion2}
   \mathbb P_{\nu_h}  [ X_{\tau_{(z_1,z_2)}}=z_2 ]= 1+O ( e^{- \frac{c}{h} }).\end{equation}
Therefore, in the small temperature regime and starting from the quasi-stationary distribution, the process~\eqref{eq.langevin} leaves $\Omega=(z_1,z_2)$    through $z_2$ when $h\to 0$. Notice that $z_2$  is not the global minimum of $f|_{\partial \Omega}$ and is even not a generalized critical point of index $1$. Consequently, the condition~\textbf{[P1]} is not satisfied.


\begin{figure}[h!]
\begin{center}
\begin{tikzpicture}
\draw (-0.082,-0.49) ..controls (-0.2,-1)   .. (-1,-0.49) ;
  \draw [domain=-0.17:2.9*pi, scale=0.5, samples=300, smooth] plot (\x,{6*sin(\x r)*exp(-0.2*\x)});
    \draw[densely dashed, <->] (1.5,0.65)-- (3.6,0.65);
    \draw (2.5,0.92) node[]{$\ft C_{\ft{max}}$};
  \draw (2.2,-1.5) node[]{$x_2$};
  \draw (2.3,-1.19) node[]{$\bullet$};
    \draw (-0.2,-1.2) node[]{$x_1$};
  \draw (-0.25,-0.87) node[]{$\bullet$};
   \draw (-1.26,-0.5) node[]{$z_1$};
  \draw (-1,-0.48) node[]{$\bullet$};  
  \draw (4.9,0) node[]{$z_2$};
  \draw (4.51,0.2) node[]{$\bullet$};
  \draw (3.8,1.1) node[]{$d$};
  \draw (3.8,0.65) node[]{$\bullet$};
      \draw (0.73,2.5) node[]{$c$};
  \draw (0.73,2.23) node[]{$\bullet$};
    \end{tikzpicture}
    \caption{A one-dimensional example when~\eqref{eq.hip1} is satisfied  but not the assumption~\eqref{eq.hip2}. In this example,  \textbf{[P1]} is  not satisfied. }
    \label{fig:hip2}
    \end{center}
\end{figure}
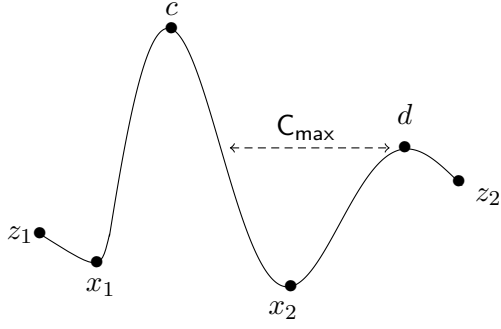


\subsubsection{On the assumption~(\ref{eq.hip3})}
\label{sec.A3}
In this section, we discuss the assumption~\eqref{eq.hip3} to obtain the results of Theorem~\ref{thm.main}. To this end, let us consider the following one-dimensional case. Let $z_1<z_2$ and $f$: $[z_1,z_2]\to \mathbb R$ be a $C^{\infty}$ Morse function. Let us assume that $\{x\in [z_1,z_2], f'(x)=0\}=\{x_1,z,x_2\}$ where $z_1<x_1<z<x_2<z_2$ $f(x_2)<f(x_1)<f(z_1)<f(z_2)<f(z)$ (see Figure~\ref{fig:hip3}). This implies    $f(z_1)-f(x_1)<f(z_2)-f(x_2)$,~$f'(z_1)<0$,~$f'(z_2)>0$,~$x_2$ is the global minimum of~$f$ in~$[z_1,z_2]$,~$x_1$ is a local minimum of~$f$   and $z$ the global maximum of~$f$ in  $[z_1,z_2]$. Then it holds,
$$\ft H_f(x_1)=f(z_1),\, \ft H_f(x_2)=f(z_2),\ f(z_1)=\min_{\pa \Omega}f, \ \pa \ft C_{\ft{max}}\cap \pa \Omega=\{z_2\},     $$
and $\ft C_{\ft{max}}\subset (z,z_2)$. 
The assumptions~\eqref{eq.hip1} and \eqref{eq.hip2} are satisfied but not \eqref{eq.hip3}.
From~\eqref{eq.discussion3-b} below (see  Appendix B),  there exists $c>0$ such that  in the limit $h\to 0$:
\begin{equation}\label{discussion3}
\mathbb P_{\nu_h}  [ X_{\tau_{(z_1,z_2)}}=z_2 ]=1+O(e^{-\frac{c}{h}}).
\end{equation}
Therefore, when $X_0\sim \nu_h$, the law of $X_{\tau_\Omega}$ concentrates  on $z_2$ in the limit $h\to 0$. Since $f(z_2)>\min_{\pa \Omega}f$, the property~\textbf{[P1]} is not satisfied.


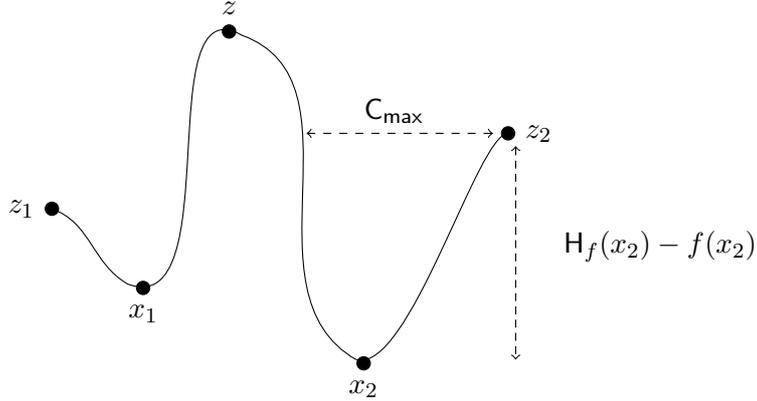
\begin{figure}[h!]
\begin{center}
\begin{tikzpicture}
\coordinate (b1) at (0,3);
\coordinate (b2) at (-2 , 0);
\coordinate (b2a) at (-3.5 , 4.3);
\coordinate (b2b) at (-5 , 1);
\coordinate (b2c) at (-6 , 2);
  \draw (b2) ..controls (-1.3,-0.2) and  (-0.4,3)   ..  (b1)  ;
 
\draw [black!100, in=150, out=-10, tension=10.1]
  (b2c)[out=-20]  to (b2b) to  (b2a) to (b2);
  \draw [dashed, <->]   (-0.2,3) -- (-2.65,3) ;
 \draw (-1.5,3.3) node[]{$ \ft C_{\ft{max}}$};
   \draw [densely  dashed, <->]  (0.1,2.83)-- (0.1,0) ;
 \draw (2,1.5) node[]{$ \ft H_f(x_2)- f(x_2)$};
     \tikzstyle{vertex}=[draw,circle,fill=black,minimum size=5pt,inner sep=0pt]
       \draw (b1) node[vertex,label=east: {$z_2$}](v){}; 
     \draw  (b2c)  node[vertex,label=west: {$z_1$}](v){}; 
     \draw (-1.9,-0.05) node[vertex,label=south: {$ x_2$}](v){}; 
     \draw (-4.8 , 0.95) node[vertex,label=south: {$x_1$}](v){};
          \draw (-3.67 , 4.35) node[vertex,label=north: {$z$}](v){};
    \end{tikzpicture}
\caption{a one-dimensional case where~\eqref{eq.hip1} and~\eqref{eq.hip2} are satisfied but not the assumption~\eqref{eq.hip3}. In this example,  \textbf{[P1]} is  not satisfied.}
 \label{fig:hip3}
 \end{center}
\end{figure}

\subsubsection{On the assumption~(\ref{eq.hip4})}
\label{sec.A4}
In this section, one gives an example to show that when~\eqref{eq.hip4} is 
 not satisfied,  the remainder term  $O( h^{\frac14})$ in \eqref{eq.t2} is not of the order $O(e^{-\frac{c}{h}})$ for some $c>0$. 
To this end, let us consider the following one-dimensional case. Let $z_1<z_2$ and $f$: $[z_1,z_2]\to \mathbb R$ be a $C^{\infty}$ Morse function. Let us assume that $\{x\in [z_1,z_2], f'(x)=0\}=\{x_1,z,x_2\}$ with $z_1<x_1<z<x_2<z_2$ and $f(x_1)<f(x_2)<f(z)=f(z_1)=f(z_2)$ (see Figure~\ref{fig:hip4}). This implies $f'(z_1)<0$,~$f'(z_2)>0$,~$x_1$ is the global minimum of~$f$ in~$[z_1,z_2]$,~$x_2$ is a local minimum of~$f$   and $z$ is a local maximum of~$f$. In this example, it holds:  
$$\ft H_f(x_1)=f(z_1)=\min_{\pa \Omega}f,\  \ft C_{\ft{max}}=(z_1,z),  \, \pa \ft C_{\ft{max}}\cap \pa \Omega=\{z_1\},$$
and 
$$\ft C=(z, z_2),$$
where $ \ft C\neq  \ft C_{\ft{max}}$ is the other connected component of $\{f< \ft H_f(x_1)   \}$. The assumptions~\eqref{eq.hip1}, \eqref{eq.hip2}, and~\eqref{eq.hip3} are satisfied whereas, since $ z\in \pa \ft C_{\ft{max}} \cap \Omega$ is  a separating saddle point of $f$  (see item 1 in Definition~\ref{de.SSP} below), the hypothesis~\eqref{eq.hip4} is not satisfied. 
From~\eqref{exp} (in Appendix B) together with  Laplace's method, for $x\in \ft C_{\ft{max}}$, one has in the limit $h\to 0$:
\begin{equation}\label{discussion4}
\mathbb P_{x}  [ X_{\tau_{(z_1,z_2)}}=z_2 ]=\frac{\sqrt{\vert f''(z)\vert}}{2\vert f'(z_1)\vert \sqrt \pi } \sqrt h  +O(h).
\end{equation}
Moreover, a similar result holds starting from $\nu_h$ (using~Proposition~\ref{pr.exp-qsd-x} above): in the limit $h\to 0$:
$$\mathbb P_{\nu_h}  [ X_{\tau_{(z_1,z_2)}}=z_2 ]=\frac{\sqrt{\vert f''(z)\vert}}{2\vert f'(z_1)\vert \sqrt \pi } \sqrt h  +O(h).$$
In this case, the exit through $z_2$ when $h\to 0$ is not exponentially small but is exactly of the order $\sqrt h$ even though  $z_2$ is a generalized critical point of~$f$ on $\pa \Omega$ (i.e $f(z_2)\in \ft U_1^{\pa \Omega}$, see~\eqref{eq.mathcalU1_bis})  and $f(z_2)=\min_{\pa \Omega}f$. In conclusion,    the remainder term  $O( h^{\frac14})$ in \eqref{eq.t2} is in general not of the order $O(e^{-\frac{c}{h}})$  and is actually exactly of the order $O(\sqrt h)$ in this example.

\begin{remark}\label{re.intro1}This can be generalized to higher-dimensional settings. 
In~\cite[Proposition C.40, item 3]{BN2017}, one shows with some higher-dimensional cases for which the assumption~\eqref{eq.hip4}  does not hold, that the remainder terms $O\big (h^{\frac 14}\big )$ in~\eqref{eq.t2}  and~\eqref{eq.t3} are of the order $O(\sqrt h)$.   We moreover expect that the reminder terms $O\big (h^{\frac 14}\big )$ in~\eqref{eq.t2}  and~\eqref{eq.t3} are of the order $O(\sqrt h)$ in the setting considered in Theorem~\ref{thm.main}. Proving this fact would require some substantially  finer analysis. 
\end{remark}

\begin{figure}[h!]
\begin{center}
\begin{tikzpicture}
  \draw [domain=1.7:2*pi, scale=0.5, samples=300, smooth] plot (\x,{2*cos(\x r) });
    \draw [domain=2*pi:3.5*pi, scale=0.5, samples=300, smooth] plot (\x,{1.6*cos(\x r) +0.4});
 
 \draw (0.3,1)--(0.867,-0.167); 
\draw (5.5,0.2)--(6,1); 
\draw [dashed] (0.3,1)--(7.5,1); 
  \draw (10.6,1) node[]{$\{f=\min_{\pa \Omega}f\}=\{f= \ft H_f(x_1)  \}$};
  \draw (0.06,1.3) node[]{$z_1$};
  \draw (0.3,1) node[]{$\bullet$};
  \draw (6.25,1.3) node[]{$z_2$};
  \draw (6,1) node[]{$\bullet$};
  \draw (3.2,1.4) node[]{$z$};
  \draw (3.2,1) node[]{$\bullet$};
   \draw (1.58,-1.45) node[]{$ x_1$};
  \draw (1.58,-1) node[]{$\bullet$};
    \draw (4.8,-0.89) node[]{$x_2$};
  \draw (4.7,-0.62) node[]{$\bullet$};

   \draw[densely dashed, <->] (0.5,1.2)-- (3, 1.2);
      \draw (1.5,1.5) node[]{$\ft C_{\ft{max}}$};
       \draw[densely dashed, <->](3.3, 1.2)-- (5.8, 1.2) ;
      \draw (4.7,1.5) node[]{$\ft C$};
    \end{tikzpicture}
    \caption{A one-dimensional case where~\eqref{eq.hip1},~\eqref{eq.hip2},  and~\eqref{eq.hip3} hold but not~\eqref{eq.hip4}.}
    \label{fig:hip4}
    \end{center}
\end{figure}
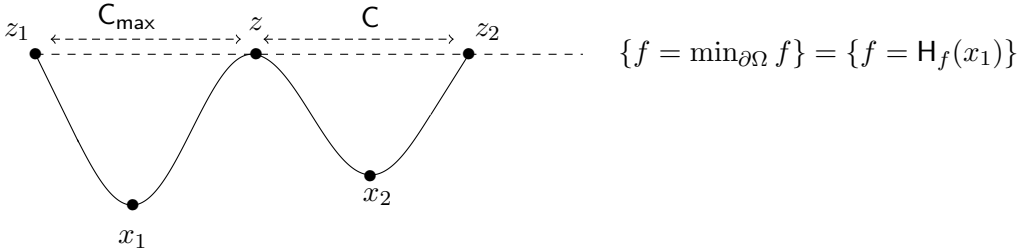


\subsection{Organization of the paper and outline of the proof }
 
 \label{expl-preuve}
The aim of this section is to give an overview of the strategy of the
proof of Theorem~\ref{thm.main}. From~\eqref{eq.dens} and in order to obtain an asymptotic estimate of $\mathbb E_{\nu_h}\left [ F\left (X_{\tau_{\Omega}} \right )\right]$,  we study the asymptotic behaviour when $h\to 0$ of the quantities 
$$\lambda_h, \ \,  \int_\Omega u_h e^{-\frac 2h f}\ \, \text{ and}\ \,  \pa_nu_h,$$
where $\lambda_h$ is defined by~\eqref{eq.lh} and $u_h$ by~\eqref{eq:u-bis}.   \\
To study  $\lambda_h$ and $\pa_nu_h$, the first key point is to notice that the gradient of any eigenfunction  associated with an eigenvalue of $-L^{D,(0)}_{f,h}$ is also a solution to an eigenvalue problem for the same eigenvalue. Let us be more precise. Let $v$ be an eigenfunction associated with $ \lambda\in \sigma(-L^{D,(0)}_{f,h})$. 
The eigenvalue-eigenfunction pair~$(\lambda,v)$ satisfies:
$$
\left\{
\begin{aligned}
 -L^{(0)}_{f,h}\, v &=  \lambda v    \ {\rm on \ }  \Omega,  \\ 
v&= 0 \ {\rm on \ } \partial \Omega.
\end{aligned}
\right.
$$
 By differentiating this relation,
we observe that $\nabla v$
satisfies
\begin{equation}\label{eq:L1_eig}
\left\{
\begin{aligned}
-L^{(1)}_{f,h} \nabla v &=  \lambda \nabla v \text{ on $\Omega$,}\\
\nabla_T v& = 0 \text{ on $\partial \Omega$,}\\
\left(\frac{h}{2} {\rm div} - \nabla f \cdot \right) \nabla v & = 0 \text{ on $\partial \Omega$,}\\
\end{aligned}
\right.
\end{equation}
where
\begin{equation}\label{eq.L-1}
L^{(1)}_{f,h}= \frac{h}{2} \Delta - \nabla f \cdot \nabla - {\rm
  Hess} \, f
\end{equation}
is an operator acting on $1$-forms (namely on vector fields).
Therefore, the  vector field~$\nabla v$ is an eigen-$1$-form 
of  the operator $-L^{D,(1)}_{f,h}$ which is the operator  $-L^{(1)}_{f,h}$ with tangential Dirichlet boundary
conditions (see~\eqref{eq:L1_eig}), associated with the eigenvalue~$\lambda$. \\
The second key point   (see for
example~\cite{HeNi1})  is  that, when~\eqref{H-M} holds,~$-L^{D,(0)}_{f,h} $ admits exactly $\ft m_0^\Omega$ 
eigenvalues smaller than~$\frac{\sqrt{h}}{2}$ (where we recall that~$\ft m_0^\Omega$ is the number of local minima of~$f$ in~$\Omega$)  and that~$-L^{D,(1)}_{f,h} $  admits exactly $\ft m_1^{\overline \Omega}$
eigenvalues smaller than $\frac{\sqrt{h}}{2}$ (where, we recall that~$\ft m_1^{\overline \Omega}$ is the
number generalized saddle points of~$f$ in~$\overline \Omega$).  Actually, all these small
eigenvalues are exponentially small in the regime~$h \to 0$ (namely they are bounded from above by $e^{-\frac ch}$ for some $c>0$),
the other eigenvalues being bounded from below by a constant in this regime. This implies in particular that $\lambda_h$ is an exponentially small eigenvalue of $-L^{D,(1)}_{f,h} $. Let us denote by~$\pi_h^{(0)}$ (resp. $\pi_h^{(1)}$) the projector onto the vector space spanned by the eigenfunctions (resp. eigenforms) associated with the 
$\ft m_0^\Omega$ (resp. $\ft m_1^{\overline \Omega}$)  smallest eigenvalue of $-L^{D,(0)}_{f,h} $ (resp. of $-L^{D,(1)}_{f,h} $). \\
To obtain an asymptotic estimate on $\lambda_h$ when $h\to 0$,  the strategy consists in studying the smallest singular values of  the matrix 
of the gradient operator~$\nabla$ which maps~$\Ran \,\pi_h^{(0)} $, equipped  with the scalar product of $L^2_w(\Omega)$,  to~$\Ran \,\pi_h^{(1)} $. Indeed, from Proposition~\ref{fried}, the squares of the smallest singular values of this matrix are the  smallest eigenvalues of $-\frac 2h L^{D,(0)}_{f,h}$.  Working with    the matrix of~$\nabla|_{\Ran \,\pi_h^{(0)}}$ gives more flexibility than directly working with the matrix of $-L^{D,(0)}_{f,h}|_{\Ran \,\pi_h^{(0)}}$.
To this end, the idea is then to construct an appropriate basis (with so called
quasi-modes) of~$\Ran \,\pi_h^{(0)} $ and~$\Ran \,\pi_h^{(1)} $.  
Moreover, from~\eqref{eq:L1_eig},~$\nabla u_h\in \Ran \,\pi_h^{(1)} $ and thus, to study the asymptotic behaviour of $\pa_nu_h$ on $\pa \Omega$ when $h\to 0$,  one decomposes $\nabla u_h$ along the basis of $\Ran \,\pi_h^{(1)} $. The terms in the decomposition are approximated using quasi-modes. 

\medskip
The paper is organized as follows.  In Section~\ref{sec:j-tilde-j}, one constructs two maps $\mbf{ j}$ and~$\mbf{\widetilde j}$ which will be extensively used  in Section~\ref{section-2}.  These maps are useful in order to understand the different  timescales of the process~\eqref{eq.langevin}   in $\Omega$. 
Section~\ref{section-2} is dedicated to the construction of   quasi-modes for $-L^{D,(0)}_{f,h} $  and $-L^{D,(1)}_{f,h} $. In Section~\ref{section-3}, we study the asymptotic behaviors of the smallest eigenvalues of $-L^{D,(0)}_{f,h} $ (see Theorem~\ref{pp}) and we give an   asymptotic estimate of $\lambda_h$ when $h\to 0$, see Theorem~\ref{thm-big0}. In Section~\ref{section-4},  we give  asymptotic estimates  for $\int_\Omega u_h e^{-\frac 2h f}$ and for $\pa_nu_h$ on $\pa \Omega$ when $h\to 0$ (see Proposition~\ref{pr.masse} and Theorem~\ref{thm-big-pauh}). Finally, Section~\ref{section-5}  is dedicated to the proof of Theorem~\ref{thm.main}.
  \medskip

\noindent
For the ease of the reader, a list of the main notation used in this work is provided at the end of this work.

\section{Association of the local minima of~$f$ with saddle points of~$f$}\label{sec:j-tilde-j}

This section is dedicated to  the construction of two maps: the map $\mbf{j}$ which associates each local minimum of~$f$ with an ensemble  of saddle points of~$f$ and the map $\mbf{\widetilde j}$ which associates each local minimum of~$f$ with a connected component of a sublevel set of~$f$. These maps are useful to define the quasi-modes in  Section~\ref{section-2}. \\
This section is organized as follows. 
In Section~\ref{ft-Ci}, one introduces a set of connected components which play a crucial role in our analysis. 
 The constructions of the maps~$\mbf{ j}$ and~$\mbf{\widetilde j}$  require two preliminary results:  Propositions~\ref{pr.p1} and~\ref{pr.p2} which are respectively  introduced in  Section~\ref{sec.ssp} and Section~\ref{sec.topo}. Then,  the maps $\mbf{j}$ and $\mbf{\widetilde j}$ are defined in Section~\ref{sec:labeling}. Finally, in Section~\ref{sec:hip},  one rewrites the assumptions  (\ref{eq.hip1})-(\ref{eq.hip4})  with the help of the map~$\mbf j$.

\subsection{Connected components associated with the elements of  $\ft U_0^\Omega$  }
\label{ft-Ci}
The aim of this section is to define for each  $x\in \ft U_0^\Omega$,  the connected component  of $\{f< \ft H_f(x)\}$ which contains $x$ (where $\ft H_f(x)$ is defined by~\eqref{eq.Hfx}). For that purpose, let us introduce the following definitions. 

 \begin{definition}
 Let us assume that the assumption \eqref{H-M} holds. For all $x\in \ft U_0^\Omega $ and $\lambda>f(x)$, one defines\label{page.clambdax}
\begin{equation} \label{eq.c1}
\ft C(\lambda,x)\ \ \text{as the connected component of $\{f<\lambda\}$ in~$\overline\Omega$
containing $x$} 
\end{equation}
and \label{page.c+lambdax}
\begin{equation} \label{eq.c2}
\ft C^+(\lambda,x)\ \ \text{as the connected component of $\{f\le\lambda\}$ in~$\overline\Omega$
containing $x$}. 
\end{equation}
Moreover, for all $x\in \ft U_0^\Omega $, one defines\label{page.lambdax}
\begin{equation} \label{eq.c3}
 \lambda(x):=\sup\{\lambda>f(x)\ \text{s.t.}\  \ft C(\lambda,x)\cap\pa\Omega=\emptyset \} \ \ 
\text{ and }\ \ \ft C(x):=\ft  C(\lambda(x),x).
\end{equation}
\end{definition}
\noindent

A direct consequence of Lemma~\ref{le.=delta} below is that for all $x\in \ft U_0^\Omega$,  $\ft C(x)$ defined in~\eqref{eq.c3} coincides with   $\ft C(x)$ introduced in~\eqref{eq.Cdef2} and thus\label{page.c2} 
 \begin{equation}\label{mathcalC}
 \mathcal C=\big \{\ft  C(x),\, x\in \ft U_0^\Omega \big \},
 \end{equation}
where $\mathcal C$ is defined by~\eqref{mathcalC-def} and for $x\in \ft U_0^\Omega$, $\ft C(x)$ is defined by~\eqref{eq.c3}.

Notice that under \eqref{H-M}, for all $x\in \ft U_0^\Omega\subset \Omega$,   $\lambda(x)$ is well defined. Indeed, for all $x\in \ft U_0^\Omega$,  $ \{\lambda>f(x)\ \text{s.t.}\  \ft C(\lambda,x)\cap\pa\Omega=\emptyset \}$ is bounded by~$\sup_{\overline \Omega}f+1$ and nonempty because   for $\beta>0$ small enough $\ft C(f(x)+\beta,x)$ is included in~$\Omega$ (since $x\in \Omega$ and $f$ is Morse).

One has the following results which permits to give another definition of $\ft H_f$ (see~\eqref{eq.Hfx}) which will be easier to handle in the sequel.

\begin{lemma} 
\label{le.=delta}
Let us assume that \eqref{H-M} holds. Then, for 
all $x\in  \ft U_0^\Omega$
\begin{equation}\label{eq.=delta}
\ft H_f(x)=\lambda(x), 
\end{equation}
where $\ft H_f(x)$ is defined by~\eqref{eq.Hfx} and $\lambda(x)$  is defined by~\eqref{eq.c3}.   
\end{lemma}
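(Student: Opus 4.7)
The plan is to prove the two inequalities $\ft H_f(x) \le \lambda(x)$ and $\ft H_f(x) \ge \lambda(x)$ separately, using the definitions of $\ft H_f(x)$ and $\lambda(x)$ together with the following standard topological fact: since $f$ is continuous, the sublevel set $\{f<\lambda\}$ is open as a subset of the manifold-with-boundary $\overline\Omega$, hence any connected component of it is both open and path-connected.

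For the inequality $\ft H_f(x)\le\lambda(x)$, I would argue as follows. Fix $\varepsilon>0$. Since $\lambda(x)+\varepsilon>\lambda(x)$, by the sup-definition of $\lambda(x)$ the value $\lambda(x)+\varepsilon$ cannot satisfy the defining condition, so $\ft C(\lambda(x)+\varepsilon,x)\cap\pa\Omega\neq\emptyset$. Pick $y$ in this intersection. Since $\ft C(\lambda(x)+\varepsilon,x)$ is an open connected subset of $\overline\Omega$, it is path-connected, and there exists a continuous path $\gamma:[0,1]\to\overline\Omega$ with $\gamma(0)=x$, $\gamma(1)=y\in\pa\Omega$, and $\gamma([0,1])\subset\{f<\lambda(x)+\varepsilon\}$. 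Such a $\gamma$ is admissible in the definition of $\ft H_f(x)$ and satisfies $\max_{[0,1]}f\circ\gamma<\lambda(x)+\varepsilon$; hence $\ft H_f(x)<\lambda(x)+\varepsilon$. Letting $\varepsilon\downarrow 0$ gives $\ft H_f(x)\le\lambda(x)$.

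For the reverse inequality $\ft H_f(x)\ge\lambda(x)$, the key observation is a monotonicity property: if $\lambda_1<\lambda_2$, then $\ft C(\lambda_1,x)\subset\ft C(\lambda_2,x)$ (since $\ft C(\lambda_1,x)$ is a connected subset of $\{f<\lambda_2\}$ that contains $x$). Consequently the set $\{\lambda>f(x):\ft C(\lambda,x)\cap\pa\Omega=\emptyset\}$ is an interval of the form $(f(x),\lambda(x))$ or $(f(x),\lambda(x)]$; in particular, for every $\lambda<\lambda(x)$ we have $\ft C(\lambda,x)\cap\pa\Omega=\emptyset$. Now take any continuous path $\gamma\in C^0([0,1],\overline\Omega)$ with $\gamma(0)=x$ and $\gamma(1)\in\pa\Omega$, and suppose by contradiction that $\max_{[0,1]}f\circ\gamma<\lambda$. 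Then $\gamma([0,1])$ is a connected subset of $\{f<\lambda\}$ containing $x$, so $\gamma([0,1])\subset\ft C(\lambda,x)$, contradicting $\gamma(1)\in\pa\Omega$. This forces $\max_{[0,1]}f\circ\gamma\ge\lambda$ for every admissible $\gamma$, so $\ft H_f(x)\ge\lambda$; letting $\lambda\uparrow\lambda(x)$ concludes.

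The only delicate points are to justify that open connected subsets of $\overline\Omega$ are path-connected (which holds because $\overline\Omega$ is a smooth manifold-with-boundary and hence locally path-connected) and to make sure the supremum $\lambda(x)$ is well-posed, i.e.~the set of admissible $\lambda$ is nonempty and bounded above — nonemptiness follows because $x\in\ft U_0^\Omega\subset\Omega$ is a non-degenerate local minimum, so small sublevel components around $x$ are contained in $\Omega$, and boundedness is trivial from $\lambda(x)\le\sup_{\overline\Omega}f$. Neither point requires computation; the whole argument is essentially a rephrasing of mountain-pass heights in terms of connected components of sublevel sets, and I do not expect any serious obstacle.
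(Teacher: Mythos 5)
Your proof is correct and follows essentially the same route as the paper's: both directions rest on the monotonicity of $\lambda\mapsto\ft C(\lambda,x)$ and the path-connectedness of open connected components of sublevel sets in $\overline\Omega$, with your two inequalities corresponding to the paper's two steps merely rephrased (direct argument versus argument by contradiction). The side remarks you flag — well-posedness of the supremum defining $\lambda(x)$ and local path-connectedness of $\overline\Omega$ — are exactly the points the paper also addresses, just before the lemma and implicitly in its proof, respectively.
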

 \begin{proof}
 Let $x\in  \ft U_0^\Omega$. 
By definition of $\ft H_f(x)$ (see~\eqref{eq.Hfx}), for all  $\ve >0$, there exists  $\gamma\in C^0([0,1], \overline \Omega)$ such that $\gamma(0)=x$,~$\gamma(1)\in \pa \Omega$ and 
 $$\ft H_f(x)\le \max_{ t\in [0,1]} f(\gamma(t))  < \ft H_f(x)+\ve.$$
 Therefore,~$\ft C(\ft H_f(x)+\ve,x)\cap \pa \Omega\neq \emptyset$. 
  Then, by definition of $\lambda(x)$ (see~\eqref{eq.c3}) $\lambda(x)\le \ft H_f(x)+\ve$ which implies, letting $\ve \to 0^+$,~$\lambda(x)\le \ft H_f(x)$. To prove that $\lambda(x)=\ft H_f(x)$, we argue by contradiction and we assume that $\lambda(x)<\ft H_f(x)$. Let us consider $\alpha\in (0,\ft H_f(x)-\lambda(x))$. By definition of $\lambda(x)$,~$\ft C(\lambda(x)+\alpha,x)\cap \pa \Omega\neq \emptyset$. Thus,   there exists  $\gamma\in C^0([0,1], \overline \Omega)$ such that $\gamma(0)=x$,~$\gamma(1)\in \pa \Omega$ and for all $t\in [0,1]$, $f(\gamma(t))<\lambda(x)+\alpha$. This implies that,~$\ft H_f(x) <\lambda(x)+\alpha$ which contradicts the definition of $\alpha$. Therefore $\lambda(x)=\ft H_f(x)$.  This concludes the proof of Lemma \ref{le.=delta}. \end{proof}

\begin{definition} 
\label{de.1}
Let us assume  that the assumption  \eqref{H-M} holds. The integer $ \ft N_{1}$ is defined by\label{page.N1}:
\begin{equation}\label{eq.N11}
 \ft N_{1}:={\rm Card} (\mathcal C) ={\rm Card}\big(\{\ft  C(x),\, x\in \ft U_0^\Omega \}\big)\in  \{1,\dots,\ft m_{0}^{\Omega}\},
\end{equation}
where we recall that  $\ft m_0^\Omega ={\rm Card}\, (\ft U_0^\Omega)$ (see~\eqref{mo-omega}), $\ft  C(x)$ is defined by~\eqref{eq.c3} and $\mathcal C=\big \{\ft  C(x),\, x\in \ft U_0^\Omega \big \}$ (see~\eqref{mathcalC-def} and~\eqref{mathcalC}).
Moreover,  the   elements of $\mathcal C=\{  \ft  C(x) ,x\in \ft U_0^\Omega \}$ 
are denoted by~$  \ft C_{1},\dots,   \ft  C_{ \ft N_{1}}$. Finally, for all $\ell\in \{1,\ldots,\ft N_1\}$,~$\ft C_k$ is denoted by
\begin{equation}\label{eq.E1i}
\ft E_{1,\ell}:=\ft C_\ell.
\end{equation}
\end{definition} 
\noindent
For example, on Figure~\ref{fig:okay}, one has $\ft m_{0}^{\Omega}=4$ and $\ft N_1=3$. The notation~\eqref{eq.E1i} will be useful when constructing the maps $\mbf j$ and $\widetilde{\mbf j}$ in Section~\ref{sec:labeling} below. 
 
\subsection{Separating saddle points}\label{sec.ssp}
 
This section is devoted to the proof of Proposition~\ref{pr.p1} below which will be needed when constructing   the maps $\mbf{j}$ and $\mbf{\widetilde j}$   in Section~\ref{sec:labeling}. Let us first  prove the following lemma which will be used in the  proof of Proposition~\ref{pr.p1}.

\begin{lemma}
\label{le.comp-conn}   Let us assume that the function $f : \overline \Omega \to \mathbb R$ is a $C^{\infty}$ function. 
 Let 
  $x\in \ft U_0^\Omega$. For all $\mu>f(x)$,  it holds:
\begin{equation}
\label{eq.conn-equality1}
\ft C(\mu,x)=\bigcup_{\lambda<\mu }\ft C(\lambda,x)
\end{equation} 
and 
\begin{equation}
\label{eq.conn-equality2}
\ft C^{+}(\mu,x)=\bigcap_{\lambda>\mu}\ft C^{+}(\lambda,x),
\end{equation}
where $\ft C(\mu,x)$ and $\ft C^{+}(\mu,x)$  are respectively defined in~\eqref{eq.c1} and~\eqref{eq.c2}.
\end{lemma}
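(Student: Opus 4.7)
The plan is to prove the two set equalities separately, using continuity of $f$ and compactness of $\overline\Omega$ (no Morse assumption is needed here). Throughout, I rely on the fact that $\overline\Omega$ is a smooth manifold with boundary, so it is locally connected, and hence open connected subsets are path-connected, while closed subsets inherit a well-behaved theory of connected components.

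For the first equality \eqref{eq.conn-equality1}, the inclusion $\supseteq$ is easy: for any $\lambda < \mu$, the set $\ft C(\lambda,x)$ is connected, contains $x$, and lies in $\{f<\mu\}$, so it must lie in $\ft C(\mu,x)$. For $\subseteq$, I take $y\in\ft C(\mu,x)$. Since $\ft C(\mu,x)$ is open in the locally path-connected space $\overline\Omega$ and connected, it is path-connected, so there is a continuous path $\gamma:[0,1]\to \ft C(\mu,x)$ from $x$ to $y$. The compact set $\gamma([0,1])$ lies in $\{f<\mu\}$, so $M:=\max_{t\in[0,1]}f(\gamma(t))<\mu$. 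Choosing any $\lambda\in(M,\mu)$, the image $\gamma([0,1])$ lies in $\{f<\lambda\}$, and being connected and containing $x$, it lies in $\ft C(\lambda,x)$; in particular $y\in\ft C(\lambda,x)$.

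For the second equality \eqref{eq.conn-equality2}, the inclusion $\subseteq$ is immediate: whenever $\lambda>\mu$, we have $\{f\le\mu\}\subset\{f\le\lambda\}$, so $\ft C^+(\mu,x)$ is a connected subset of $\{f\le\lambda\}$ containing $x$, hence $\ft C^+(\mu,x)\subset\ft C^+(\lambda,x)$. The delicate inclusion is $\supseteq$; this is the main obstacle since the union of connected sets containing $x$ is connected, but an intersection generally need not be. I will resolve it by exploiting compactness. The key observation is that $\{f\le\lambda\}$ is closed in the compact set $\overline\Omega$, and the connected component $\ft C^+(\lambda,x)$ of a closed subset of a compact Hausdorff space is itself closed, hence compact. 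By monotonicity in $\lambda$, the family $\bigl(\ft C^+(\mu+1/n,x)\bigr)_{n\ge 1}$ is a decreasing sequence of compact connected subsets of $\overline\Omega$, and its intersection equals $\bigcap_{\lambda>\mu}\ft C^+(\lambda,x)$. By the classical result that a nested intersection of compact connected sets in a Hausdorff space is connected, this intersection $K$ is compact and connected, and it contains $x$. Any $z\in K$ satisfies $f(z)\le\mu+1/n$ for all $n$, hence $f(z)\le\mu$, so $K\subset\{f\le\mu\}$. Since $K$ is a connected subset of $\{f\le\mu\}$ containing $x$, we conclude $K\subset\ft C^+(\mu,x)$, which is the desired inclusion.

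The main obstacle is the $\supseteq$ inclusion in \eqref{eq.conn-equality2}; it is the only place where one needs a non-trivial general-topology fact (nested intersections of compact connected sets are connected). Everything else reduces to the continuity of $f$ and the local path-connectedness of the open sublevel sets.
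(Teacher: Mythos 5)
Your proof is correct, and for \eqref{eq.conn-equality2} it takes essentially the same route as the paper's (close each $\ft C^+(\lambda,x)$ inside compact $\overline\Omega$, invoke that a nested intersection of compact connected sets in a Hausdorff space is connected, then squeeze the result into $\{f\le\mu\}$). For \eqref{eq.conn-equality1}, however, you use a genuinely different and arguably more direct argument: the paper shows that $\bigcup_{\lambda<\mu}\ft C(\lambda,x)$ is both open and closed in the connected set $\ft C(\mu,x)$, and hence equals it, whereas you exploit the fact that an open connected subset of the locally path-connected $\overline\Omega$ is path-connected, join $x$ to $y$ by a path inside $\ft C(\mu,x)$, and use compactness of the path image to lower the threshold to some $\lambda<\mu$. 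Your path argument avoids the somewhat delicate bookkeeping in the paper's proof about choosing the threshold in $\ft C(\lambda,y)$ so that it simultaneously lies inside $\ft C(\mu,x)$ and is disjoint from every $\ft C(\lambda,x)$; the paper's clopen argument, on the other hand, does not need path-connectedness (local connectedness alone suffices) and is slightly more robust to the ambient topology. Both are valid; your version is cleaner to read here because $\overline\Omega$ is a manifold with boundary and thus locally path-connected. One small point you rightly noticed but should keep explicit: when you choose $\lambda\in(M,\mu)$ you implicitly have $\lambda>f(x)$ because $x$ is on the path, so $\ft C(\lambda,x)$ is indeed defined; this is needed for the union on the right-hand side to make sense.
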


\begin{proof} The proof is divided into two steps. \\
\textbf{Step 1}. Proof of~\eqref{eq.conn-equality1}.  
\medskip

\noindent
Since $\{f<\lambda\}$ is open in the locally connected space $\overline\Omega$,
the set $\ft C(\lambda,x)\subset\overline\Omega$ is open
for all  $\lambda\in (f(x),\mu)$.
Since moreover $\ft C(\lambda,x)\subset  \ft C(\mu,x)$
for all  $\lambda\in (f(x),\mu)$,
 the union  $\cup_{\lambda\in (f(x),\mu)}\ft C(\lambda,x)$
is an open subset of $\ft C(\mu,x)$. Therefore, since $\ft C(\mu,x)$ is connected,  to obtain~\eqref{eq.conn-equality1},  it is  enough to prove
that the set $\cup_{\lambda\in (f(x),\mu)}\ft C(\lambda,x)$ is  closed in 
$\ft C(\mu,x)$.
To this end, let us show that the complement of $\cup_{\lambda\in (f(x),\mu)}\ft C(\lambda,x)$
in~$\ft C(\mu,x)$ is open. It is obviously the case if it is empty. If $\cup_{\lambda\in (f(x),\mu)}\ft C(\lambda,x)$ is not empty, let us choose  
$$y\in \ft C(\mu,x)\setminus \cup_{\lambda\in (f(x),\mu)}\ft C(\lambda,x).$$ Then, since $y\in \ft C(\mu,x)$,  one has
  $f(y)<\mu$ and  thus  
$y\in \ft C(\lambda,y)\cap \ft C(\mu,x)$ for all $\lambda\in(f(y),\mu)$. Therefore,   it holds $\ft C(\lambda,y)\subset \ft C(\mu,x)$ and $\ft C(\lambda,y)\cap \ft C(\lambda,x)=\emptyset$
for all $\lambda\in(\max\{f(x),f(y)\},\mu)$.
Hence, the open set $\ft C(\lambda,y)$ is included in~$\ft C(\mu,x)$ 
and disjoint from the set $\cup_{\lambda\in (f(x),\mu)}\ft C(\lambda,x)$
 for all $\lambda\in(f(y),\mu)$. This proves that 
 $\cup_{\lambda\in (f(x),\mu)}\ft C(\lambda,x)$ is  closed in 
$\ft C(\mu,x)$. This concludes the proof of~\eqref{eq.conn-equality1}.\medskip

\noindent \textbf{Step 2}. Proof of  \eqref{eq.conn-equality2}.
\medskip

\noindent
 Since for all $\lambda$,~$\ft C^{+}(\lambda,x)$ is a connected component of $\{f\leq \lambda\}$,
it is closed  in this closed set of $\overline\Omega$  and thus it is closed  in~$\overline\Omega$.
It follows that the set 
$\cap_{\lambda>\mu}\ft C^{+}(\lambda,x)$ is connected as a decreasing intersection
 of  compact connected sets. Since $\cap_{\lambda>\mu}\ft C^{+}(\lambda,x)$ is also obviously included in~$\{f\leq \mu\}$
 and contains $x$, it is then  included in~$\ft C^{+}(\mu,x)$ by definition of $\ft C^{+}(\mu,x)$.
The reverse inclusion follows from the fact that  $\ft C^{+}(\mu,x)\subset \ft C^{+}(\lambda,x)$ for all $\lambda>\mu$. This proves~\eqref{eq.conn-equality2} and ends the proof of Lemma~\ref{le.comp-conn}.
 \end{proof}

The constructions of the maps~$\mbf{j}$ and $\mbf{\widetilde j}$ made in Section~\ref{sec:labeling} are based on the notions of {separating saddle points}  and of {critical components}
as introduced  in  \cite[Section 4.1]{HeHiSj} for a case without boundary. Let us define and slightly adapt theses two notions to our setting. 
To this end, let us  first recall that according to~\cite[Section 5.2]{HeNi1}, for any non critical point $z\in\Omega$,  for  $r>0$ small enough
\begin{equation}\label{eq.2-conn1}
\{f<f(z)\}\cap B(z,r) \text{ is connected},
\end{equation}
and  for any critical point $z\in\Omega$ of index $p$ of the Morse function $f$,  for  $r>0$ small enough, 
one has the three possible cases:
\begin{equation}\label{eq.2-conn}
\left\{
    \begin{array}{ll}
        &\text{either }  p=0 \text{ (}z\text{ is a local minimum of }f\text{) 		and  } \{f<f(z)\}\cap B(z,r)=\emptyset,  \\
        &\text{or } p=1 \text{ and  } \{f<f(z)\}\cap B(z,r) \text{ has exactly two connected components}, \\
        &\text{or } p\ge 2 \text{ and  } \{f<f(z)\}\cap B(z,r) \text{ is connected}, 
    \end{array}
\right.
\end{equation}
where $B(z,r):=\{x\in \overline \Omega \ \text{s.t.} \ |x-z|<r\}$. 
The {separating saddle points} of~$f$  and the  {critical components} of~$f$ are defined as follows. 
 \begin{definition}
 \label{de.SSP} 
Assume  \eqref{H-M}. Let $\mathcal C=\{ \ft  C_{1},\dots, \ft  C_{ \ft N_{1}}\}$ be the set of  connected sets  introduced  Definition~\ref{de.1}. 
 \begin{enumerate}[leftmargin=1.3cm,rightmargin=1.3cm]
 \item A point $z\in \ft U_1^{\overline \Omega}$
 is a {\em separating saddle point}  if
 \begin{itemize}
 \item either 
 $z\in \ft U_1^{\Omega}\cap\cup_{i=1}^{ \ft N_{1}}\overline{ \ft  C_{i}}$ and  for $r>0$ small enough, the two connected components of $\{f<f(z)\}\cap B(z,r)$ 
   are contained in different connected components  of
 $\{ f< f(z) \}$, 

 \item or $z\in \ft U_1^{\pa\Omega}  \cap \cup_{i=1}^{\ft  N_{1}}\pa  \ft  C_{i}$. 
 \end{itemize}
 Notice that in the former  case $z\in \Omega$ while in the latter case $z\in \pa \Omega$. 
The set of
 {\em separating saddle points} is denoted by~$\ft U_1^{\ft{ssp}}$\label{page.u1ssp}.
 \item   For any $\sigma \in \mathbb R$, a connected component $\ft E$ of the sublevel set  
 $\{ f< \sigma  \}$ in~$\overline \Omega$ is called {\em a critical connected component}   if $\pa \ft E\cap \ft U_1^{\ft{ssp}}\neq \emptyset$.
 The family of {\em critical connected components} is denoted by~$\mathcal C_{crit}$\label{page.ccrit}.
 \end{enumerate}
 \end{definition}
 \begin{remark}
 It is natural to define generally a separating saddle point of a Morse function $f$ as follows: $z$ is a separating saddle point if for any sufficiently small connected neighborhood $\mathcal V_z$ of $z$, $\mathcal V_z \cap \{f < f(z)\}$ has two connected components included in two  connected components of $\{f<f(z)\}$.
Our definition of separating saddle point is consistent with this general definition when the function  $f$ is extended  by $-\infty$ outside $\overline \Omega$. 
To be more precise, let us introduce some new nonempty set $ X'  $ and let us define the topological 
space $X$ as the disjoint union $ X= \overline \Omega \cup X' $ whose open sets are $$\mathcal O(X)=\{X'\} \cup \{ \text{open sets of } \Omega\} \cup \{U \cup X', \text{ $U$ is an open set of $\overline \Omega$ s.t. $U \cap \partial \Omega \neq \emptyset$} \}.$$
%
%
%
Note that it follows from this definition that $X'$ is connected and that $\partial X' = \partial \Omega$. We denote by $B_X(z,r)$ the ball in $X$: $B_X(z,r)=B(z,r)$ if $B(z,r) \cap \partial \Omega = \emptyset$ and $B_X(z,r)=B(z,r) \cup X'$ if $B(z,r) \cap \partial \Omega \neq \emptyset$.
Moreover, extending $f$ by $-\infty$ on $X'$, the following holds for any $z\in \overline \Omega$
and $r>0$ small enough : $ B_X(z,r) \cap \{f < f(z)\}$ has at least two connected components in $X$ 
iff $z\in \ft U_1^{\overline  \Omega}$, in which case $ B_X(z,r) \cap \{f < f(z)\}$ has precisely two connected 
components in $X$.
Lastly, for $z\in \ft U_1^{\overline \Omega}$, the above two connected components of $ B(z,r) \cap \{f < f(z)\}$ 
in $X$ are contained in different connected components of $\{f < f(z)\}$  in $X$ iff $z\in \ft  U_1^{\ft{ssp}}$.
 \end{remark}

In Figure~\ref{fig:nonSSP}, one gives an example of a saddle point $z$ which is not a {separating saddle point} as introduced in Definition~\ref{de.SSP}. 
 \begin{figure}[h!]
\begin{center}
\begin{tikzpicture}[scale=0.9]
\tikzstyle{vertex}=[draw,circle,fill=black,minimum size=5pt,inner sep=0pt]
\tikzstyle{vertex}=[draw,circle,fill=black,minimum size=6pt,inner sep=0pt]
\draw[thick] (0,0) circle(4.4);
\draw[thick] (-2,2)--(2,-2);
\draw[thick] (-2,-2)--(2,2);
   \draw[thick ] (-2,2) ..controls (-3.3,3.3)  and   (3.3,3.3)  .. (2,2) ;
   \draw[thick ] (-2,-2) ..controls (-3.3,-3.3)  and   (3.3,-3.2)  .. (2,-2) ;
\draw (0,0) node[vertex,label=north: {$z$}](v){};
         \draw (5,5) node[]{$ \{f=f(z) \}$};
          \draw[dashed, ->] (4.8,4.8)--(3.2,3.2);
          \draw[dashed, ->] (4.8,4.8)--(1.8,2.8);
   \draw (0.2,3.7) node[]{ $ f<f(z) $};
   \draw (0.2,2.4) node[]{ $ f>f(z) $};
   \draw (0.2,-2.3) node[]{ $f>f(z) $};
    \draw (-2.8,4.9) node[]{ $f>f(z) $};
 	 \draw (0,-1.5) node[vertex,label=east: {$y_2$}](v){};
	 \draw (0.3,1.6) node[vertex,label=east: {$y_1$}](v){};
	  \draw (-2.4,0) node[vertex,label=east: {$x_1$}](v){};
	   \draw (2.4,0) node[vertex,label=east: {$x_2$}](v){};	   
	   \draw[thick, densely dotted] (0,0) circle(1.2);
	    \draw[dashed, ->] (-5,3)--(-0.7,0);
	    \draw[dashed, ->] (-5,3)--(0.7,0.2);
	    \draw  (0.1,-0.7)node[]{$r$};
	    \draw (-6,3.7) node[]{The two connected components }; 
	     \draw  (-6.65,3.2 )node[]{of $\, \{f<f(z)\}\cap B(z,r)$};
 \draw[->,thick, densely dotted] (0,0)--(0.5,-1);
  
  \draw [thick, red,arrow data={0.25}{stealth},
           arrow data={0.75}{stealth}] (-0.4,0)--(-2.2,-1.5) ;
  \draw [thick, red,arrow data={0.25}{stealth},
           arrow data={0.75}{stealth}] (2.2,-1.5)--(0.6,0);
  \draw [thick, red,arrow data={0.25}{stealth},
           arrow data={0.5}{stealth},
           arrow data={0.75}{stealth}] (-2.2,-1.5) ..controls (-4.3,-4)  and   (4.3,-4)  .. (2.2,-1.5) ;
   \end{tikzpicture}
\caption{Representation of a non {separating saddle point} $z$ in dimension $2$. The points $x_1$ and~$x_2$ are two local minima of~$f$,  and the points $y_1$ and $y_2$ are two local maxima of~$f$.  The two connected components of $\{f<f(z)\}\cap B(z,r)$ 
   are contained in  the same  connected components  of
 $\{ f< f(z) \}$: any two points of these  two connected components can be joined by a   path with values in~$\{ f< f(z) \}$ (see the red path on the figure).}
 \label{fig:nonSSP}
 \end{center}
\end{figure}
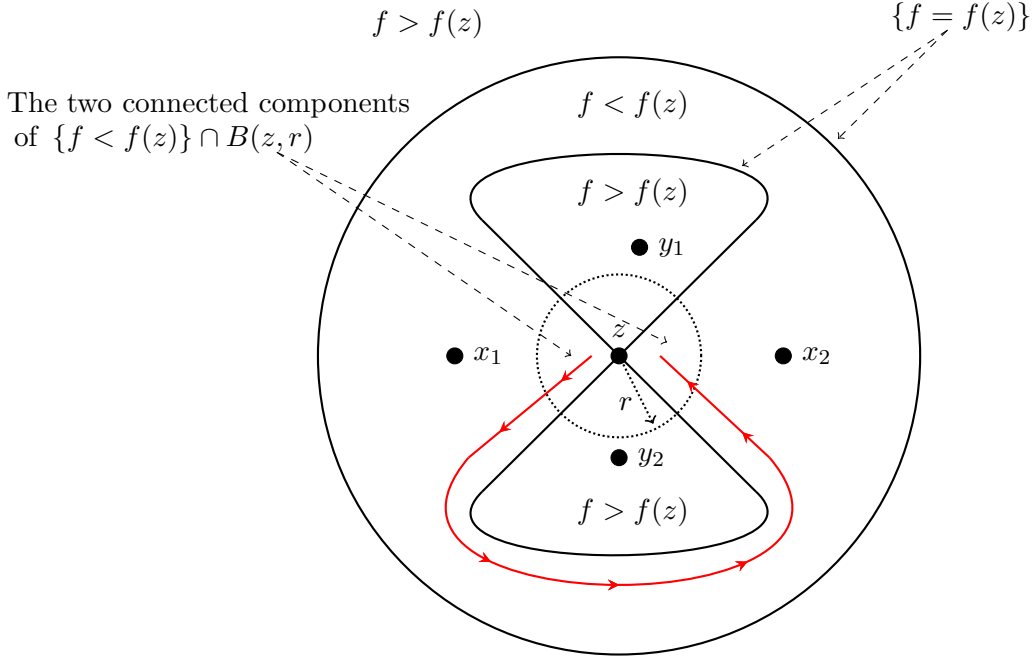

Let us now study the properties of $ \ft  C_{1},\dots, \ft  C_{\ft  N_{1}}$. The following proposition will be used in the first step of the construction of the map  between points in  $\ft U_0^\Omega$ and subsets of~$\ft U_1^{\overline \Omega}$.
\begin{proposition} 
\label{pr.p1}
Let us assume that  \eqref{H-M} holds.  Let $\mathcal C=\{ \ft  C_{1},\dots, \ft  C_{\ft  N_{1}}\}$ be the set of  connected sets  introduced  Definition~\ref{de.1} and let $(k,\ell)\in \{1,\dots, \ft N_{1}\}^2$ with $k\neq \ell$. Then,   
\begin{equation}\label{eq.ck-omega} 
\ft C_k \, \text{ is an open subset of }\,  \Omega \, \text{ and } \, \ft C_k \cap \ft C_\ell=\emptyset.
\end{equation}
In addition, one has 
\begin{equation}\label{eq.ck-omega1} 
\pa \ft C_k\cap \pa\Omega\subset \ft U_1^{\ft{ssp}} \cap \pa \Omega
\ \text{ and } \   \pa  \ft  C_{k}\cap \pa  \ft  C_{\ell} \subset \ft U_1^{\ft{ssp}} \cap \Omega,
 \end{equation}
where the set $\ft U_1^{\ft{ssp}}$ is introduced in item 1 in Definition~\ref{de.SSP}.
Finally,  $\pa  \ft C_{k} \cap \ft U_1^{\ft{ssp}}\neq \emptyset$. 
\end{proposition}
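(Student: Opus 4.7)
The plan is to prove the three assertions in sequence, using Lemma~\ref{le.comp-conn} throughout and relying on a careful local analysis at each point of $\pa\ft C_k$ based on the Morse hypothesis~\eqref{H-M} together with the local description~\eqref{eq.2-conn} of $\{f<f(z)\}$ near a critical point.

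For the openness and inclusion in~\eqref{eq.ck-omega}, I would write $\ft C_k=\bigcup_{\lambda<\lambda(x_k)}\ft C(\lambda,x_k)$ via~\eqref{eq.conn-equality1}: since each $\ft C(\lambda,x_k)$ with $\lambda<\lambda(x_k)$ is disjoint from $\pa\Omega$ by the very definition of $\lambda(x_k)$, we obtain $\ft C_k\subset\Omega$. Openness of $\ft C_k$ in $\overline\Omega$ (and hence in $\Omega$) is automatic as $\ft C_k$ is a connected component of the open set $\{f<\lambda(x_k)\}$ in the locally connected space $\overline\Omega$. For the disjointness when $k\ne\ell$, I would assume without loss of generality $\lambda(x_k)\le\lambda(x_\ell)$ and that $\ft C_k\cap\ft C_\ell\ne\emptyset$: then $\ft C_k\cup\ft C_\ell$ is connected in $\{f<\lambda(x_\ell)\}$ and so contained in $\ft C_\ell$; thus $x_k\in\ft C_\ell$, and~\eqref{eq.conn-equality1} places $x_k$ in some $\ft C(\lambda,x_\ell)=\ft C(\lambda,x_k)$ with $\lambda<\lambda(x_\ell)$ and $\ft C(\lambda,x_k)\cap\pa\Omega=\emptyset$, forcing $\lambda(x_k)\ge\lambda(x_\ell)$, hence equality and $\ft C_k=\ft C_\ell$, a contradiction.

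For~\eqref{eq.ck-omega1}, I would first observe that every $z\in\pa\ft C_k$ satisfies $f(z)=\lambda(x_k)$, since $f(z)<\lambda(x_k)$ would place $z$ in the open set $\{f<\lambda(x_k)\}$ and by local connectedness in $\ft C_k$, contradicting $z\in\pa\ft C_k$. For $z\in\pa\ft C_k\cap\pa\Omega$, I would choose local coordinates around $z$ with $\Omega$ locally given by $\{x_d<0\}$ and Taylor-expand $f$ using $|\nabla f(z)|\ne 0$ from~\eqref{H-M}. A case analysis on the signs of $\pa_nf(z)$ and on whether $\nabla_Tf(z)$ vanishes shows that unless $\pa_nf(z)>0$, $\nabla_Tf(z)=0$ and $z$ is a local minimum of $f|_{\pa\Omega}$, the set $\{f<\lambda(x_k)\}\cap B(z,\varepsilon)\cap\overline\Omega$ is open and connected in $\overline\Omega$ and contains both nearby points of $\ft C_k$ and a nonempty open piece of $\pa\Omega$, which by connectedness would force $\ft C_k\cap\pa\Omega\ne\emptyset$, contradicting the first step; hence $z\in\ft U_1^{\pa\Omega}\cap\pa\ft C_k\subset\ft U_1^{\ft{ssp}}$ by the second bullet of Definition~\ref{de.SSP}. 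For $z\in\pa\ft C_k\cap\pa\ft C_\ell$ with $k\ne\ell$, the same analysis excludes $z\in\pa\Omega$ (as otherwise $\{f<\lambda(x_k)\}$ would be locally connected at $z$, forcing $\ft C_k=\ft C_\ell$), so $z\in\Omega$; then $z$ must be a critical point of $f$ (else local connectedness again), non-degenerate by Morse, of index neither $0$ (since nearby points of $\ft C_k$ accumulate at $z$) nor $\ge 2$ by~\eqref{eq.2-conn}, hence of index $1$, with the two local components of $\{f<f(z)\}$ lying respectively in $\ft C_k$ and $\ft C_\ell$, so $z$ is a separating saddle point in $\Omega$.

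For the final assertion $\pa\ft C_k\cap\ft U_1^{\ft{ssp}}\ne\emptyset$: if $\pa\ft C_k\cap\pa\Omega\ne\emptyset$ the previous step concludes. Otherwise $\overline{\ft C_k}$ is compact in $\Omega$, and I would argue by contradiction assuming no separating saddle point lies on $\pa\ft C_k$. Each $z\in\pa\ft C_k\subset\Omega$ is then either a regular point of $f$, a non-degenerate critical point of index $\ge 2$, or an index-$1$ critical point whose two local components of $\{f<f(z)\}$ both lie in $\ft C_k$. Using Morse coordinates at critical points and the implicit function theorem at regular points, I would cover the compact set $\pa\ft C_k$ by finitely many local charts and control in each chart the local connected component of $\{f<\lambda(x_k)+\varepsilon\}$ containing nearby points of $\ft C_k$; a compactness argument would then yield that $\ft C(\lambda(x_k)+\varepsilon,x_k)$ is contained in an arbitrarily small tubular neighborhood of $\overline{\ft C_k}$ in $\Omega$ for $\varepsilon$ small, contradicting the fact that $\ft C(\lambda,x_k)\cap\pa\Omega\ne\emptyset$ for all $\lambda>\lambda(x_k)$. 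The main obstacle is precisely this last step: propagating the local Morse-chart control into a uniform global inclusion of $\ft C(\lambda(x_k)+\varepsilon,x_k)$ in a tubular neighborhood of $\overline{\ft C_k}$, which is the subtle part and relies crucially on the compactness of $\pa\ft C_k$ that the assumption $\pa\ft C_k\cap\pa\Omega=\emptyset$ provides.
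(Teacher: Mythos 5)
Your treatment of \eqref{eq.ck-omega} and \eqref{eq.ck-omega1} follows essentially the same route as the paper: openness and the inclusion $\ft C_k\subset\Omega$ via the first identity of Lemma~\ref{le.comp-conn}, disjointness by forcing $\lambda(x_k)=\lambda(x_\ell)$, and the boundary statements by the local classification of $\{f<f(z)\}$ near $z$ (the paper quotes \cite[Section 5.2]{HeNi1} for the boundary cases where you redo the Taylor-expansion case analysis by hand; your "connected" claim in the excluded boundary cases should be weakened to "every local component of $\{f<f(z)\}\cap B(z,r)$ meets $\pa\Omega$", which is what the cited classification actually gives, but the conclusion is unaffected). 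These parts are fine.

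The genuine gap is in the last assertion, and you have correctly located it yourself. Your local analysis on $\pa\ft C_k$ (every $z$ has a ball $B(z,r_z)\subset\Omega$ with $B(z,r_z)\cap\{f<\lambda(x_k)\}\subset\ft C_k$) is exactly the first half of the paper's Step~5, but the completion you propose — propagating Morse-chart control to show $\ft C(\lambda(x_k)+\ve,x_k)$ lies in a small tubular neighborhood of $\overline{\ft C_k}$ — is not carried out and is genuinely delicate (controlling a full connected component of a slightly larger sublevel set is a global statement that does not follow chart by chart). The paper closes this differently and more softly: setting $\ft V_k:=\big(\cup_{z\in\pa\ft C_k}B(z,r_z)\big)\cup\ft C_k$, the local analysis gives $\ft V_k\cap\{f\le\lambda(x_k)\}=\overline{\ft C_k}$, so $\overline{\ft C_k}$ is open \emph{and} closed in $\{f\le\lambda(x_k)\}$, hence a connected component of that closed sublevel set. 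One then invokes the \emph{second} identity of Lemma~\ref{le.comp-conn}, $\ft C^{+}_k(\lambda(x_k))=\bigcap_{\mu>\lambda(x_k)}\ft C^{+}_k(\mu)$, together with the fact that each $\ft C^{+}_k(\mu)\supset\ft C(\mu,x_k)$ meets $\pa\Omega$ for $\mu>\lambda(x_k)$: the sets $\ft C^{+}_k(\mu)\cap\pa\Omega$ form a decreasing family of nonempty compacta, so their intersection $\overline{\ft C_k}\cap\pa\Omega$ is nonempty, contradicting $\overline{\ft C_k}\subset\Omega$. You already used the first identity of Lemma~\ref{le.comp-conn}; the missing idea is to use its companion identity for the \emph{closed} sublevel sets, which replaces your quantitative tubular-neighborhood step by a finite-intersection-property argument.
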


\begin{proof} 
The proof of  Proposition~\ref{pr.p1} is divided into $5$ steps. 
\medskip

\noindent  
\textbf{Step 1.} For $k\in\{1,\dots,\ft N_{1}\}$, let us  show
that $\ft C_{k}$ is an open subset of~$\Omega$. To this end, let us first prove that $\ft C_k\subset \Omega$.   From Definition~\ref{de.1} and~\eqref{eq.c1}, there exists $x_k\in \ft U_0^\Omega\cap \ft  C_{k} $ such that  
 $\ft  C_{k}=\ft C(\lambda(x_k),x_k)$ with $\lambda(x_k)>f(x_k)$.  From  Lemma~\ref{le.comp-conn}, it holds
\begin{equation*}
\ft C(\lambda(x_k),x_k)=\bigcup_{\lambda\in (f(x_k),\lambda(x_k))}\ft C(\lambda,x_k).
\end{equation*}
Moreover, since $\lambda\mapsto \ft C(\lambda,x_k) $
is increasing on $(f(x_k),+\infty)$,  one has,  by definition of $\lambda(x_k)$ (see~\eqref{eq.c1}),  
 that $\ft C(\lambda,x_k)\cap\pa\Omega=\emptyset$
for all $\lambda\in(f(x_k),\lambda(x_k))$. Therefore    
$\ft  C(\lambda(x_k),x_k)\subset\Omega$ and thus  $\ft C_{k}\cap\pa\Omega=\emptyset$. Thus,  $\ft C_k\subset \Omega$. Then, the fact that  $\ft C_k$ is  an open subset of $\Omega$ follows from the fact that $\ft C_k$ is   open in $\overline \Omega$. Indeed,   $\overline\Omega$ is locally connected and $\ft  C_{k}$ is   a connected component of the open set $\{f<\lambda(x_k)\}$. 
\medskip

\noindent 
\textbf{Step 2.} Let us now show that the $\ft  C_{k}$'s are two by two disjoint. To this end,
let  $(k,\ell)\in\{1,\dots,\ft N_{1}\}^2$ with $\ell \neq k$ and $\ft  C_{k}\cap \ft  C_{\ell}\neq \emptyset$.  Therefore, since   for $q\in\{k,\ell\}$, there exists $x_q\in \ft U_0^\Omega \cap \ft C_q$ such that $\ft C_q=\ft C(\lambda(x_q),x_q)$ is a connected component of $\{f<\lambda(x_q)\}$ (see Definition~\ref{de.1} and~\eqref{eq.c1}), it holds $\ft C_k=\ft  C_\ell$ if $\lambda(x_k)=\lambda(x_l)$. Let us prove that 
$\lambda(x_k)=\lambda(x_l)$ by contradiction and assume that, without loss of generality,~$\lambda(x_k)<\lambda(x_l)$. Since $\ft  C_{k}\cap \ft  C_{\ell}\neq \emptyset$, this implies  $\ft C_k\subset \ft  C_\ell$. Therefore, for any $\ve \in (0,\lambda(x_l)-\lambda(x_k))$,~$\ft C(\lambda(x_k)+\ve, x_k)\subset\ft  C_\ell$ and by definition of~$\lambda(x_k)$ (see~\eqref{eq.c1}),~$\ft C(\lambda(x_k)+\ve, x_k)$ intersects~$\pa \Omega$. This is in contradiction with the fact that~$\ft  C_\ell\subset \Omega$. Therefore,~$\lambda(x_k)=\lambda(x_l)$ and thus  $\ft C_k=\ft  C_\ell$. 

\medskip

\noindent 
\textbf{Step 3.} Let us prove that for $k\in\{1,\dots,\ft N_{1}\}$,~$\pa \ft C_k\cap \pa\Omega\subset \ft U_1^{\ft{ssp}}\cap\pa  \Omega$ which is equivalent, according to Definition~\ref{de.SSP}, to $\pa \ft  C_{k}\cap \pa\Omega\subset \ft U_1^{\pa \Omega}$ (where $ \ft U_1^{\pa \Omega}$ is defined in Section~\ref{se.def-zj}). If $\pa \ft  C_{k}\cap \pa\Omega\neq \emptyset$, let us  consider $y\in \pa \ft  C_{k}\cap \pa\Omega$.
According to~\cite[Section 5.2]{HeNi1},  if $y$ is not a critical point of $f|_{\pa \Omega}$, then the hypersurfaces $\{f=f(y)\}$ and $\pa \Omega$ interstects transversally in a neighborhood of $y$.  This implies that  for $r>0$ small enough,~$\{f<f(y)\}\cap B(y,r)$ is connected  and~$\{f<f(y)\}\cap B(y,r) \cap \pa \Omega\neq \emptyset$. Therefore, since $\ft C_k$ is a connected component of $\{f<f(y)\}$, one has $\{f<f(y)\}\cap B(y,r)=\ft C_k\cap B(y,r)$ and thus, $\ft C_k\cap \pa \Omega\neq\emptyset$. This is impossible since $\ft C_k\subset \Omega$. Therefore $y$ is a critical point of $f|_{\pa \Omega}$ and according to~\cite[Section 5.2]{HeNi1}, there are three possible different cases:
\begin{enumerate}
\item either $y$ is local minimum of~$f$ 
\item or $y$ is a local minimum of $f|_{\pa \Omega}$ and $\pa_nf(y)>0$, 
\item or for $r>0$ small enough,~$\{f<f(y)\}\cap B(y,r)$ admits one or two connected components with nonempty intersection with $\pa \Omega$. 
\end{enumerate}
The first case is not possible in our setting since $y\in \overline {\ft C_k}$ implies that $y$ is not  a local minimum of~$f$. The  third case is also not possible since $\ft C_k\subset \Omega$. Therefore $y$ is a local minimum of~$f|_{\pa \Omega}$ and $\pa_nf(y)>0$. This proves that~$\pa \ft C_k\cap \pa\Omega\subset \ft U_1^{\ft{ssp}}\cap\pa  \Omega$. 


\medskip

\noindent
\textbf{Step 4.} Let us prove that  for all $(k,l)\in \{1,\dots, \ft N_{1}\}^2$ with $k\neq \ell$,  $\pa  \ft  C_{k}\cap \pa  \ft  C_{\ell} \subset \ft U_1^{\ft{ssp}} \cap \Omega$ or equivalently (see item 1 in Definition~\ref{de.SSP}) that $\pa \ft  C_{k}\cap \pa\ft  C_{\ell}\subset \ft U_1^{\Omega} $ (where $\ft U_1^{\Omega}$  is the set of saddle points of~$f$ in~$\Omega$, see Section~\ref{se.def-zj}). To this end, 
let us assume that $\pa \ft  C_{k}\cap \pa \ft  C_{\ell}\neq \emptyset$ for some $(k,l)\in \{1,\dots, \ft N_{1}\}^2$ with $k\neq \ell$.
First, since   for $q\in\{k,\ell\}$, there exists $x_q\in \ft U_0^\Omega \cap \ft C_q$ such that $\ft C_q$ is a connected component of $\{f<\lambda(x_q)\}$, one has necessarily $\lambda(x_\ell)=\lambda(x_k) $.
Moreover, it holds $\pa \ft  C_{k}\cap \pa \ft  C_{\ell}\subset \Omega$. Indeed,
if there exists  $z\in \pa \ft  C_{k}\cap\pa\Omega $, we  know from the analysis above that $z\in\ft U_1^{\pa \Omega}$.
It follows that for
$r>0$ small enough,~$B(z,r)\cap\{f<\lambda(x_k) \}$ is connected and therefore  $B(z,r)\cap\{f<\lambda(x_k) \}\subset \ft  C_{k}$. This implies that   $B(z,r)\cap \ft  C_{\ell}=\emptyset$ (since we proved that for $k\neq l$,~$\ft C_k\cap \ft  C_\ell=\emptyset$)  and hence $z\notin \overline{\ft  C_{\ell}}$. 
Lastly,
if there exists $z\in \pa \ft  C_{k}\cap \pa \ft  C_{\ell}\cap\Omega$, then one deduces from
\eqref{eq.2-conn} that $z\in \ft U_1^{\Omega}$.
Indeed, for all $r>0$ small enough,~$B(z,r)\cap \{f<\lambda(x_k)\}$ has   two connected components
respectively included in~$\ft  C_{k}$ and~$\ft  C_{\ell}$.\medskip

\noindent 
\textbf{Step 5.} To conclude the proof of Proposition~\ref{pr.p1}, it remains to show that  for all $k\in\{1,\dots,\ft N_{1}\}$,  $\pa  \ft C_{k}\cap   \ft U_1^{\ft{ssp}} \neq \emptyset$. Let us argue by contradiction and assume that  $\pa  \ft C_{k}\cap \ft U_1^{\ft{ssp}}= \emptyset$. 
Since $(\pa  \ft C_{k}\cap \pa \Omega)\cap  \ft U_1^{\ft{ssp}}=\emptyset$, one has  
$\overline{\ft  C_{k}}\subset \Omega$ (indeed we proved above that  $ \ft C_k\subset \Omega $ and  $\pa \ft  C_{k}\cap \pa \Omega\subset \ft U_1^{\ft{ssp}}$). 
Let us recall that $\ft  C_{k}=\ft C(\lambda(x_k),x_k)$ for some $x_k\in \ft U_0^\Omega\cap \ft C_k$ (see   Definition~\ref{de.1},~\eqref{eq.c1},  and~\eqref{eq.c3}). 
Then,  using the fact that 
\begin{equation}\label{eq.lambda==y}
\pa \ft  C_{k}\subset\{f=\lambda(x_k)\}\subset \Omega,
\end{equation}
 and the fact that   the function $f$ is Morse, for all $z\in \pa \ft  C_{k}$, 
\begin{itemize} 

\item either $z\notin \ft U_1^{\Omega}$ in which case there exists $r_z>0$ such that  
$B(z,r_z)\cap \{f <\lambda(x_k)\}$ is connected  (see~\eqref{eq.2-conn1} together with the fact that $f(z)=\lambda(x_k)$) and thus $B(z,r_z)\cap \{f <\lambda(x_k)\}$ is included in~$\ft  C_{k}$, 

\item or $z\in \ft U_1^{\Omega}$
 in which case  there exists $r_z>0$ such that $B(z,r_z)\cap \{f <\lambda(x_k)\}$ has two connected components  both included in
$\ft  C_{k}$  (because we assume that there is no {separating saddle point} on $\pa \ft  C_{k}$).
\end{itemize} 
In all cases, one can assume in addition, choosing $r_z$ smaller, that  $B(z,r_z)\subset \Omega$ and  $B(z,r_z) \cap \ft U_0^{\Omega}=\emptyset$. 
Let us now consider
$$\ft V_k:=\Big(\cup_{z\in \pa \ft  C_{k} } B(z,r_{z})\Big) \bigcup \ft  C_{k}.$$
 Then, the set $\ft V_k$ is an open subset of $\Omega$
 such that~$\overline{ \ft  C_{k}}\subset \ft V_k$ and~$\ft V_k\cap \{f\leq \lambda(x_k)\}=\overline{\ft  C_{k}}$.
Therefore, the connected set $\overline{\ft  C_{k}}$  is  closed and open in~$\{f\leq \lambda(x_k)\}$,  and thus
 \begin{equation}\label{eq.cc-c-lambda}
  \overline{\ft  C_{k}} \text{  is a connected component of } \{f\leq \lambda(x_k) \}.
 \end{equation}
 Let us now denote by~$\ft C^{+}_k(\mu)$, for $\mu\geq \lambda(x_k)$,   the connected component of $\{f\leq \mu \}$ 
 containing $\overline{\ft  C_{k}}$. It then holds, according to 
 Lemma~\ref{le.comp-conn},
 $$
 \bigcap_{\mu>\lambda(x_k)}\ft C_k^{+}(\mu)=\ft C^{+}_k(\lambda(x_k))=\overline{\ft  C_{k}}.
 $$ 
 \begin{sloppypar}
 \noindent
Moreover, by definition of $\lambda(x_k)$ (see~\eqref{eq.c1}),~$\ft C^{+}_k(\mu)$ meets $\pa\Omega$ for all $\mu>\lambda(x_k)$. 
 Hence,~$\cap_{\mu>\lambda(x_k)}\ft C^{+}_k(\mu)\cap\pa\Omega=\overline{\ft  C_{k}}\cap\pa\Omega$ is
 nonempty as
  a decreasing intersection of nonempty compact sets. This contradicts the fact that $\overline{\ft  C_{k}}\subset \Omega$.  In conclusion $\pa  \ft C_{k} \cap \ft U_1^{\ft{ssp}}\neq \emptyset$. This concludes the proof of Proposition~\ref{pr.p1}.
   \end{sloppypar}
   \end{proof}
%
   We end this section with the following lemma which will be needed in the sequel. 
   \begin{lemma}\label{le.oubli}
Let us assume that~\eqref{H-M} is satisfied.  Let $\mathcal C=\{ \ft  C_{1},\dots, \ft  C_{ \ft N_{1}}\}$ be the set of  connected sets  introduced  Definition~\ref{de.1}. Let us consider $\{j_1,\ldots,j_k\}\subset  \{1,\ldots,\ft N_1\}$ with $k\in \{1,\ldots,\ft N_1\}$ and  $j_1<\ldots<j_k$ such that  $\cup_{\ell=1}^{k}\overline{\ft C_{j_\ell}}$ is connected and such that for all $q\in \{1,\ldots,\ft N_1\}\setminus\{j_1,\ldots,j_k\}$, $\overline{\ft C_q}\cap \cup_{\ell=1}^{k}\overline{\ft C_{j_\ell}}=\emptyset$. Then, there exists $z\in \ft U_1^{\ft{ssp}}$  and $\ell_0 \in \{1,\ldots,k\}$ such that 
$$z\in \pa \ft C_{j_{\ell_0}}  \setminus \Big (    \cup_{\ell=1, \ell\neq \ell_0}^{k}\pa {\ft C_{j_\ell}}\Big).$$ 

\end{lemma}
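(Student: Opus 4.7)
The strategy is a proof by contradiction: assume that no such pair $(z,\ell_0)$ exists, so that every separating saddle in $\bigcup_{\ell=1}^k \pa \ft C_{j_\ell}\cap \ft U_1^{\ft{ssp}}$ belongs to at least two of the boundaries $\pa \ft C_{j_\ell}$. The case $k=1$ is immediate from the last assertion of Proposition~\ref{pr.p1}, so we assume $k\ge 2$. A first preliminary step is to show that all $\lambda(x_{j_\ell})$ coincide with a common value $\lambda^*$: using $\ft C_{j_\ell}\cap \ft C_{j_{\ell'}}=\emptyset$ from Proposition~\ref{pr.p1} together with $\pa \ft C_{j_\ell}\cap \Omega\subset \{f=\lambda(x_{j_\ell})\}$, any point of $\overline{\ft C_{j_\ell}}\cap \overline{\ft C_{j_{\ell'}}}$ lying in $\Omega$ forces $\lambda(x_{j_\ell})=\lambda(x_{j_{\ell'}})$. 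The same conclusion at a point of $\pa \Omega$ follows from the local fact that a small ball centered at such a point intersects a single component of $\{f<\max(\lambda(x_{j_\ell}),\lambda(x_{j_{\ell'}}))\}$ in $\overline \Omega$, which, combined with $\ft C_{j_\ell}\cap \ft C_{j_{\ell'}}=\emptyset$, forces $\lambda(x_{j_\ell})=\lambda(x_{j_{\ell'}})$. Connectedness of $\bigcup_{\ell=1}^k \overline{\ft C_{j_\ell}}$ then propagates the equality to all indices.

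Next, boundary separating saddles $z\in \pa \ft C_{j_\ell}\cap \pa \Omega$ are automatically private to $\ell$: for such a $z$ one has $z\in \ft U_1^{\pa \Omega}$ by Proposition~\ref{pr.p1}, and a Taylor expansion of $f$ at $z$ (using $\pa_n f(z)>0$ and that $z$ is a non-degenerate local minimum of $f|_{\pa \Omega}$) shows that $\{f<\lambda^*\}\cap \overline \Omega\cap B(z,r)$ is connected for $r$ small, so lies in a unique component of $\{f<\lambda^*\}$, and hence $z\notin \pa \ft C_{j_{\ell'}}$ for every $\ell'\ne \ell$. Under our contradiction hypothesis this rules out boundary saddles in each $S_\ell:=\pa \ft C_{j_\ell}\cap \ft U_1^{\ft{ssp}}$, so Proposition~\ref{pr.p1} gives $\pa \ft C_{j_\ell}\cap \pa \Omega=\emptyset$ for all $\ell$. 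Consequently $M:=\bigcup_{\ell=1}^k \overline{\ft C_{j_\ell}}$ is a compact subset of $\Omega$ with strictly positive distance to $\pa \Omega$.

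To conclude, consider an interior saddle $z\in \bigcup_\ell S_\ell$: the sublevel set $\{f<\lambda^*\}$ has exactly two local components at $z$, one being $\ft C_{j_\ell}$ and the other some component $E$. The hypothesis of the lemma excludes $E=\ft C_q$ with $q\notin \{j_1,\ldots,j_k\}$ (else $z\in \overline{\ft C_q}\cap M$), and if $E$ is not of the form $\ft C(y)$ for any $y\in \ft U_0^\Omega$---which, by the definition of $\lambda(y)$, would force $E$ to touch $\pa \Omega$---one verifies directly (repeating the above local argument at $z$) that $z$ would be private to $\ell$. Hence under our assumption, $E$ must be some $\ft C_{j_{\ell'}}$ in the family. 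In particular, the boundary of every component of $\{f<\lambda^*\}$ outside the family is disjoint from $\bigcup_\ell \pa \ft C_{j_\ell}$, and, these being compact subsets of the finite level set at critical value $\lambda^*$, they are separated by a positive distance. For $\delta>0$ small enough, the component $\ft C(\lambda^*+\delta,x_{j_1})$ of $\{f<\lambda^*+\delta\}$ containing $x_{j_1}$ is therefore contained in an $O(\sqrt \delta)$-neighborhood of $M$---namely the union of the $\ft C_{j_\ell}$, $O(\sqrt \delta)$-balls around the saddles in $\bigcup_\ell S_\ell$, and $O(\delta)$-collars along the regular portions of each $\pa \ft C_{j_\ell}$---and this neighborhood lies in $\Omega$ since $M\subset \Omega$ is compact. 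Hence $\ft C(\lambda^*+\delta,x_{j_1})\cap \pa \Omega=\emptyset$ for $\delta$ small, contradicting $\lambda(x_{j_1})=\lambda^*$.

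The main technical difficulty lies in this last step, where one must simultaneously bound the diameter of $\ft C(\lambda^*+\delta,x_{j_1})\setminus M$ uniformly in $\delta$ and rule out the absorption of any component of $\{f<\lambda^*\}$ outside the family into $\ft C(\lambda^*+\delta,x_{j_1})$. Both facts rest on \eqref{H-M}, through the finiteness of the critical points of $f$ and the standard Morse-theoretic description of the sublevel sets $\{f\le c\}$ as $c$ crosses the critical value $\lambda^*$.
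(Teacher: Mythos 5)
Your proof follows the same overall skeleton as the paper's: argue by contradiction, use the local structure of $\{f<\lambda^*\}$ at each point of $\bigcup_\ell\pa\ft C_{j_\ell}$ to show that the union $M=\bigcup_\ell\overline{\ft C_{j_\ell}}$ would be "sealed off" inside $\Omega$, and contradict the definition of $\lambda(x_{j_1})$. The treatment of boundary points (they are automatically private to a single $\ft C_{j_\ell}$ because the local sublevel set is connected there) and the reduction to the case $M\subset\Omega$ match the paper's case split. Two remarks on the body of the argument: first, your parenthetical "$E$ not of the form $\ft C(y)$ forces $E$ to touch $\pa\Omega$" is not immediate — the sub-case $\lambda(y)>\lambda^*$ would give $E\cap\pa\Omega=\emptyset$; it is excluded, but only because it would make $\ft C(y)\in\mathcal C$ strictly contain $\ft C_{j_\ell}$, contradicting the pairwise disjointness in Proposition~\ref{pr.p1}. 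Second, and more importantly, your concluding step is where you diverge from the paper and where the write-up is genuinely incomplete: the claim that $\ft C(\lambda^*+\delta,x_{j_1})$ is contained in an $O(\sqrt\delta)$-neighborhood of $M$ requires both a gluing of the local Morse-lemma estimates into a global barrier and an argument that the component cannot be enlarged by absorbing pieces of $\{\lambda^*\le f<\lambda^*+\delta\}$ far from $M$; you flag this yourself as "the main technical difficulty" but do not resolve it.

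The paper closes this step without any quantitative estimate. Its local analysis (identical in content to yours) produces, by compactness of $\pa M$, a finite union of balls $\ft V\supset M$ with $\ft V\cap\{f\le\lambda^*\}=M$, so that $M$ is open and closed in $\{f\le\lambda^*\}$, i.e.\ a connected component of $\{f\le\lambda^*\}$. It then invokes Lemma~\ref{le.comp-conn} (identity~\eqref{eq.conn-equality2}): $M=\ft C^+(\lambda^*,x_{j_1})=\bigcap_{\mu>\lambda^*}\ft C^{+}(\mu,x_{j_1})$, and since each $\ft C^{+}(\mu,x_{j_1})$ contains $\ft C(\mu,x_{j_1})$ and hence meets $\pa\Omega$ for $\mu>\lambda^*=\lambda(x_{j_1})$, the decreasing intersection of the nonempty compact sets $\ft C^{+}(\mu,x_{j_1})\cap\pa\Omega$ is nonempty, so $M\cap\pa\Omega\neq\emptyset$ — the desired contradiction. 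I recommend you replace your final quantitative step by this argument (or, equivalently, by the barrier observation that $f\ge\lambda^*+\delta_0$ on the boundary of a slightly shrunk $\ft V'$, so that $\ft C(\lambda^*+\delta,x_{j_1})\subset\ft V'\subset\Omega$ for $\delta<\delta_0$); either route closes the gap without any Morse-lemma asymptotics.
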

\begin{proof} 
There are two cases to consider: either $\cup_{\ell=1}^{k}\overline{\ft C_{j_\ell}}\cap \pa \Omega \neq \emptyset$ or $\cup_{\ell=1}^{k}\overline{\ft C_{j_\ell}}\cap \pa \Omega = \emptyset$. Let us consider the case $\cup_{\ell=1}^{k}\overline{\ft C_{j_\ell}}\cap \pa \Omega \neq \emptyset$. Using~\eqref{eq.ck-omega} and~\eqref{eq.ck-omega1}, the result stated in Lemma~\ref{le.oubli} follows from the fact that for all $\ell\in \{1,\ldots,k\}$,  the sets  $\overline{\ft C_{j_\ell}}\cap \pa \Omega= \pa {\ft C_{j_\ell}}\cap \pa \Omega$ are two by two disjoint,  together with the definition of $ \ft U_1^{\ft{ssp}}$ (see the second point of item~1 in Definition~\ref{de.SSP}). Let us now consider the case   $\cup_{\ell=1}^{k}\overline{\ft C_{j_\ell}}\cap \pa \Omega = \emptyset$. From~\eqref{eq.ck-omega1}, one has $\cup_{q,\ell=1, q\neq \ell }^{k}\pa {\ft C_{j_\ell}}\cap  \pa {\ft C_{j_q}}\subset \ft U_1^{\ft{ssp}} \cap  (\cup_{\ell=1  }^{k}\pa {\ft C_{j_\ell}} )$ and this inclusion is an equality if the statement of Lemma~\ref{le.oubli} is not satisfied. 
To prove Lemma~\ref{le.oubli}, let us argue by contradiction, i.e.  let us assume that 
\begin{equation}\label{eq.contra-1}
\bigcup_{q,\ell=1, q\neq \ell }^{k}\pa {\ft C_{j_\ell}}\cap  \pa {\ft C_{j_q}}= \ft U_1^{\ft{ssp}} \cap \Big (\bigcup_{\ell=1  }^{k}\pa {\ft C_{j_\ell}} \Big).
\end{equation} 
Notice  that 
there exists $x\in \ft U_0^\Omega$ such that for all $\ell \in \{1,\ldots,k\}$, $\ft C_{j_\ell}$ is a connected component of $\{f<\lambda(x)\}$. 
Let us prove, using the same arguments as those used to prove~\eqref{eq.cc-c-lambda},   that  $\cup_{\ell=1}^{k}\overline{\ft C_{j_\ell}}$ is a connected component of $\{f\le \lambda(x)\}$.
To this end, let us consider $z\in \cup_{\ell=1  }^{k}\pa {\ft C_{j_\ell}}$. If $z$ is not a separating  saddle point,  there exists $r_z>0$ such that $B(z,r_z)\subset \Omega$ and  $B(z,r_z)\cap \{f <\lambda(x)\}$ is included in  $\cup_{\ell=1}^{k} {\ft C_{j_\ell}}$. Else,  $z$ is  a   separating saddle point and thus, from~\eqref{eq.contra-1},   there exists $(\ell,q) \in \{1,\ldots,k\}^2$, $\ell\neq q$, such that $z\in \pa \ft C_{j_\ell}\cap \pa \ft C_{j_q}$. Thus, again,  there exists $r_z>0$ such that  $B(z,r_z)\subset \Omega$ and $B(z,r_z)\cap \{f <\lambda(x)\}$ is included in  $\cup_{\ell=1}^{k} {\ft C_{j_\ell}}$. Therefore, the same arguments as those used to prove~\eqref{eq.cc-c-lambda}  imply  that  $\cup_{\ell=1}^{k}\overline{\ft C_{j_\ell}}$ is a connected component of $\{f\le \lambda(x)\}$.
 Using in addition~\eqref{eq.conn-equality2} together with the  fact that for all $\mu> \lambda(x)$, the connected component of $\{f\le \mu\}$ which contains $\cup_{\ell=1}^{k}\overline{\ft C_{j_\ell}}$ intersects $\pa \Omega$ (by definition of $\lambda(x)$), one obtains that $\cup_{\ell=1}^{k}\overline{\ft C_{j_\ell}}\cap \pa \Omega$ is not empty. This is a contradiction. Therefore, the set  $\bigcup_{q,\ell=1, q\neq \ell }^{k}\pa {\ft C_{j_\ell}}\cap  \pa {\ft C_{j_q}}$ is strictly included in $\ft U_1^{\ft{ssp}} \cap  (\cup_{\ell=1  }^{k}\pa {\ft C_{j_\ell}} )$. This concludes the  proof of Lemma~\ref{le.oubli}. 
\end{proof}

\subsection{A topological result under the assumption  (\ref{H-M})}
 \label{sec.topo}
This section is devoted to the proof of Proposition~\ref{pr.p2} which will be needed when constructing   the maps $\mbf{j}$ and $\mbf{\widetilde j}$   in Section~\ref{sec:labeling}.   

\begin{proposition}
\label{pr.p2}
Let us assume that the assumption \eqref{H-M} is satisfied. Let us consider~$\ft C_q$ for $q\in \{1,\ldots,\ft N_1\}$ (see~Definition~\ref{de.1}). From~\eqref{eq.c1} and~\eqref{eq.c3}, there exists  $x_q\in \ft U_0^\Omega\cap \ft C_q$ such that $\ft C_q=\ft C(x_q,\lambda(x_q))$. 
Let $\lambda\in(\min_{\overline{\ft C_q}}f,\lambda(x_q)]$ and  $\ft C$ be a 
 connected component of $\ft C_q\cap \{f<\lambda\}$.
Then, 
\begin{equation}
\label{eq.C-U1}
\big( \ft C\cap \ft U_1^{\ft{ssp}} \neq \emptyset\big)\ \  \text{ iff }\ \ \ 
\text{$\ft C\cap\ft U_{0}^{\Omega}$ contains more than one point}. 
\end{equation}  
Moreover, let us define 
$$  \sigma:=\max_{y\in \ft C\cap \ft U_1^{\ft{ssp}}}f(y)$$
 with the convention
$\sigma=\min_{\overline{\ft C}} f$ when $\ft C\cap \ft U_1^{\ft{ssp}} = \emptyset$. Then, the following assertions hold.
\begin{enumerate}
\item For all $\mu\in(\sigma,\lambda]$, the set $\ft C\cap \{f<\mu\}$ is a connected component  of $\{f<\mu\}$.
\item If $\ft C\cap \ft U_1^{\ft{ssp}} \neq \emptyset$, one has 
$\ft C\cap\ft U_{0}^{\Omega}\subset\{f<\sigma\} $
and  
the connected components of  $\ft C\cap \{f<\sigma\}$   belong to~$\mathcal C_{crit}$.
\end{enumerate}
\end{proposition}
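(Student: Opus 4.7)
My plan is to combine Lemma~\ref{le.comp-conn} with the Morse-local dichotomy~\eqref{eq.2-conn1}--\eqref{eq.2-conn} and the definition of separating saddle points. I begin with a key reduction: since $\{f<\lambda\}\subset\{f<\lambda(x_q)\}$ and $\ft C_q$ is a connected component of the latter, every connected component of $\{f<\lambda\}$ that meets $\ft C_q$ is included in $\ft C_q$; hence $\ft C$ is actually a connected component of $\{f<\lambda\}$ in $\overline\Omega$, and moreover $\ft C\subset\ft C_q\subset\Omega$ by Step~1 of the proof of Proposition~\ref{pr.p1}. In particular, for every $\mu\in(\min_{\overline{\ft C}}f,\lambda]$, the set $\ft C\cap\{f<\mu\}$ is a union of connected components of $\{f<\mu\}$, so it is a connected component of $\{f<\mu\}$ iff it is connected.

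I would first prove assertion~1 (the statement about $\mu\in(\sigma,\lambda]$). Introduce
\[
A:=\bigl\{\mu\in(\sigma,\lambda]\ :\ \ft C\cap\{f<\mu\}\ \text{is connected}\bigr\},
\]
which contains $\lambda$, and show $A=(\sigma,\lambda]$ by proving $A$ is open and closed in $(\sigma,\lambda]$. Closedness would follow by writing $\ft C\cap\{f<\mu\}=\bigcup_{\mu'<\mu}\ft C\cap\{f<\mu'\}$ as an increasing union of connected sets (cf.~\eqref{eq.conn-equality1}); one-sided limits from above can be handled via~\eqref{eq.conn-equality2} applied to the connected component of $\{f\le\mu\}$ containing a fixed local minimum of $f$ in $\ft C$. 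Openness is the crucial step: on a small interval $[\mu-\varepsilon,\mu+\varepsilon]$ around $\mu\in A$, the topology of $\ft C\cap\{f<\mu'\}$ can only change at critical values of $f$ in $\ft C$, and the Morse classification~\eqref{eq.2-conn1}--\eqref{eq.2-conn} (combined with a standard negative-gradient deformation, available here since $\ft C\subset\Omega$) shows that a decrease in $\mu'$ can split connected components only at a point $z\in\ft C\cap\ft U_1^\Omega$ whose two local $\{f<f(z)\}$-pieces lie in different connected components of $\{f<f(z)\}$, i.e.\ at a point of $\ft C\cap\ft U_1^{\ft{ssp}}$. Since such critical values are all $\le\sigma$ by definition, connectedness is preserved on $(\sigma,\lambda]$.

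Next I would prove the equivalence~\eqref{eq.C-U1}. For $(\Leftarrow)$, given two distinct $x_1,x_2\in\ft C\cap\ft U_0^\Omega$, consider the mountain-pass value $c:=\inf_\gamma\max_{t\in[0,1]} f(\gamma(t))$ taken over continuous paths $\gamma:[0,1]\to\ft C$ joining $x_1$ to $x_2$; since $x_1,x_2$ are strict local minima and $\ft C$ is connected and contained in $\Omega$, one has $\max(f(x_1),f(x_2))<c<\lambda$, and a standard deformation argument via the flow of $-\nabla f$ produces a critical point $z\in\ft C$ of index $1$ with $f(z)=c$ whose two local $\{f<c\}$-pieces land respectively in the components of $\{f<c\}$ containing $x_1$ and $x_2$ (otherwise $c$ could be lowered), so $z\in\ft C\cap\ft U_1^{\ft{ssp}}$. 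For $(\Rightarrow)$, if $z\in\ft C\cap\ft U_1^{\ft{ssp}}$, then by definition the two components of $B(z,r)\cap\{f<f(z)\}$ lie in different connected components $D_1\ne D_2$ of $\{f<f(z)\}$, each contained in $\ft C$ by the reduction above; since $\overline{D_i}\subset\overline{\ft C}\subset\Omega$ is compact, $f|_{\overline{D_i}}$ attains its infimum at an interior local minimum of $f$, giving two distinct points of $\ft C\cap\ft U_0^\Omega$.

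Finally I would prove assertion~2 of part~2. Let $z\in\ft C\cap\ft U_1^{\ft{ssp}}$ with $f(z)=\sigma$. For the inclusion $\ft C\cap\ft U_0^\Omega\subset\{f<\sigma\}$, take $x\in\ft C\cap\ft U_0^\Omega$; for $\mu\in(\sigma,\lambda]$ part~1 gives that $\ft C\cap\{f<\mu\}$ is a single connected component, and letting $\mu\searrow\sigma$ via~\eqref{eq.conn-equality2} combined with the Morse-local analysis near $z$ at level $\sigma$ shows that a local minimum of $f$ in $\ft C$ at level $\ge\sigma$ would force the connected component of $\ft C\cap\{f<\mu\}$ containing $x$ to survive undivided across $\sigma$, contradicting the splitting caused by $z$. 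For the second statement, if $\ft D$ is a connected component of $\ft C\cap\{f<\sigma\}$, then by paragraph~1 it is a connected component of $\{f<\sigma\}$ itself; as $\mu$ decreases from a value slightly above $\sigma$ (where $\ft C\cap\{f<\mu\}$ is a unique component) through $\sigma$, the splitting into components of $\{f<\sigma\}$ occurs at separating saddles at level $\sigma$ in $\overline{\ft C}$, so $\pa\ft D$ contains at least one such $z\in\ft U_1^{\ft{ssp}}$ and $\ft D\in\mathcal C_{crit}$. The main obstacle is making these Morse/deformation arguments rigorous---in particular, the openness step in part~1 and the mountain-pass assertion for $(\Leftarrow)$ in~\eqref{eq.C-U1}, both of which require carefully controlling the flow of $-\nabla f$ along level sets in $\ft C$ and classifying the topological changes via~\eqref{eq.2-conn1}--\eqref{eq.2-conn}; the subtlest point is the strict inequality $f(x)<\sigma$ in part~2, where one must rule out the degenerate coincidence of a local minimum of $f$ and a separating saddle point at the common level $\sigma$.
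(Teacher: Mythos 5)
Your route is genuinely different from the paper's: you lean on gradient-flow deformation and mountain-pass type existence, whereas the paper argues purely topologically. For the reverse implication of~\eqref{eq.C-U1} and for assertion~1, the paper introduces, for $x\in\argmin_{\ft C}f$ and $y\in\ft C\cap\ft U_0^\Omega\setminus\{x\}$, the value $\lambda_x(y):=\sup\{\mu>f(y):x\notin\ft C(\mu,y)\}$ (in spirit, your mountain-pass level), and produces a separating saddle on $\pa\ft C(\lambda_x(y),y)$ by a contradiction argument resting only on Lemma~\ref{le.comp-conn} (expressing the closure as a nested intersection of connected compacts) and the local Morse classification~\eqref{eq.2-conn1}--\eqref{eq.2-conn}; no deformation lemma or flow estimates are needed. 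Assertion~1 is then obtained by defining $\tilde\sigma:=\max_y\lambda_x(y)$, observing that for $\mu>\tilde\sigma$ all points of $\ft C\cap\ft U_0^\Omega$ lie in the single component $\ft C(\mu,y)$, and identifying $\tilde\sigma=\sigma$ -- avoiding your open-closed argument on the interval $(\sigma,\lambda]$ altogether. And the strict inclusion $\ft C\cap\ft U_0^\Omega\subset\{f<\sigma\}$ drops out immediately: $f(x)\le f(y)<\lambda_x(y)\le\sigma$, with the strictness $f(y)<\lambda_x(y)$ coming from non-degeneracy of $y$ as in~\eqref{eq.lambda-y-lambda-bis}.

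The steps you flag as open are genuine gaps, and one of your auxiliary claims fails as stated. Beyond the openness step and the fact that a mountain-pass-type saddle must be \emph{separating} in the sense of Definition~\ref{de.SSP} (both of which need a carefully controlled deformation lemma inside $\ft C$), your derivation of $\ft C\cap\ft U_0^\Omega\subset\{f<\sigma\}$ does not work: you argue that ``a local minimum at level $\ge\sigma$ would force the component of $\ft C\cap\{f<\mu\}$ containing $x$ to survive undivided across $\sigma$,'' but a local minimum at level exactly $\sigma$ (which the Morse hypothesis does not rule out) simply gives birth to a new component at $\sigma$ somewhere else in $\ft C$ and in no way obstructs the splitting caused by $z$. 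The two events are independent, so there is no contradiction to extract. The paper's topological route via $\lambda_x(y)$ and Lemma~\ref{le.comp-conn} sidesteps all of these issues, which is precisely why it is the more economical argument here.
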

\begin{proof}  Notice that from~\eqref{eq.ck-omega}, the set~$\ft C$ is an open subset of $\Omega$.  The proof of Proposition~\ref{pr.p2} is divided into three steps.
 
 \medskip
 \noindent
\textbf{Step 1}.  Proof of \eqref{eq.C-U1}. 
\medskip

\noindent
The fact that
\begin{equation}
\label{eq.sens1-C}
\big( \ft C\cap \ft U_1^{\ft{ssp}} \neq \emptyset\big)\ \ \ \text{implies}\ \ \ 
\text{$\ft C\cap\ft U_{0}^{\Omega}$ contains more than one point} 
\end{equation}  
is straightforward. Indeed, let $z\in \ft C\, \cap \, \ft U_1^{\ft{ssp}}\subset \Omega$. Then,  $z\in \ft U_1^\Omega\cap\ft C_q$ and for $r>0$ small enough,  the two connected components of $\{f<f(z)\}\cap B(z,r)$ 
   are contained in different connected components  of
 $\{ f< f(z) \}$ (see item~1 in Definition~\ref{de.SSP}). 
 Then, since the set~$\ft C$ is a  connected component  of
 $\{ f< \lambda \}$,~$\ft C$ contains at least two   open connected components $\ft A_{1}$ and $\ft  A_{2}$
of $\{f<f(z)\}$. Moreover, for $k\in\{1,2\}$,~$\pa \ft  A_{k}\subset\{f=f(z)\}$. Thus, for $k\in\{1,2\}$,  
the global minimum of~$f$ on $\overline {A_{k}}$ is reached in 
$A_{k}$ and hence at some $y_{k}\in A_{k}\cap \ft U_{0}^{ \Omega}$.
This implies that~$\ft C\cap\ft U_{0}^{\Omega}$ contains at least two elements,~$y_{1}$ and~$y_{2}$.\\
Let us now prove the reverse implication in~\eqref{eq.sens1-C}. 
To this end, let us assume  that there exist two points $x\neq y$ in 
$C\cap\ft U_{0}^{\Omega}$. 
One can assume without loss of generality that   
$$x\in\argmin_{\overline{\ft C}} f=\argmin_{ \ft C} f\in \ft U_{0}^{\Omega} \ \text{ and } \ y \in \ft  C\cap \ft U_{0}^{\Omega}\setminus\{x\}.$$
Let us recall that (see~\eqref{eq.c1}) $
\ft C(\mu,y)$ is  the connected component of $\{f<\mu\}$ containing~$y$. Let us define 
\begin{equation}\label{eq.lambda-y}
\lambda_x(y):=\sup\{\mu>f(y)\ \text{s.t.}\  x\notin \ft C(\mu,y) \}.
\end{equation}
Let us show that
\begin{equation}\label{eq.lambda-y-lambda-bis}
f(y)<\lambda_x(y)<\lambda.
\end{equation}
Notice first that $\lambda_x(y)$ is well defined since $\{\mu>f(y)\ \text{s.t.}\  x\notin C(\mu,y) \}$ is nonempty and bounded. Indeed, since $y$ is a non degenerate local minimum of~$f$, for $\beta>0$ sufficiently small,~$f(w)>f(y)$ for all $w\in \ft C(f(y)+\beta,y)$,~$w\neq y$. Therefore,~$x\notin \ft C(f(y)+\beta,y)$ (because $x\neq y$ and $f(x)\le f(y)$). Moreover   for all $\eta\in \{\mu>f(y)\ \text{s.t.}\  x\notin \ft C(\mu,y) \}$,~$\eta< \lambda$ (because $x,y\in \ft C$  implies $\ft C(\lambda,y)=\ft C$ since~$\ft C(\lambda,y)$ and $\ft C$ are both connected components of $\{f<\lambda\}$). Therefore,~$\lambda_x(y)$ is well defined and satisfies $f(y)<\lambda_x(y) \le \lambda$ (which proves the first inequality in~\eqref{eq.lambda-y-lambda-bis}).
Since $\mu\mapsto \ft  C(\mu,y) $
is increasing on $(f(y),+\infty)$, it holds 
 $x\notin \ft C(\mu,y)$
for all $\mu\in(f(y),\lambda_x(y))$ by definition of~$\lambda_x(y)$. Thus, since
according to Lemma~\ref{le.comp-conn} (see~\eqref{eq.conn-equality1}),
$$
\ft C(\lambda_x(y),y)=\bigcup_{\mu\in(f(y),\lambda_x(y))}\ft C(\mu,y),
$$
the set 
$
\ft C(\lambda_x(y),y)$
does not contain~$x$ and hence $\lambda_x(y)<\lambda$. This proves~\eqref{eq.lambda-y-lambda-bis}. Notice that~\eqref{eq.lambda-y-lambda-bis} implies 
  \begin{equation}\label{eq.lambda-y-inclusion}
 \overline{\ft C(\lambda_x(y),y)} \subset \ft C   
 \end{equation}
Let us now prove that $\pa \ft C(\lambda_x(y),y) \cap \ft U_1^{\ft{ssp}}\neq \emptyset$ which will conclude the proof  of~\eqref{eq.sens1-C}. Let us prove it by contradiction and let us assume that~$\pa \ft  C(\lambda_x(y),y) \cap \ft U_1^{\ft{ssp}}=\emptyset$.
Then, using in addition the fact the function $f$ is Morse and the fact that 
\begin{equation}\label{eq.lambda==y}
\pa \ft C(\lambda_x(y),y) \subset \{f=\lambda_x(y)\}\subset   \Omega 
\end{equation}
the same arguments 
as those used to prove~\eqref{eq.cc-c-lambda} apply and lead to the fact that $\overline{\ft C(\lambda_x(y),y)}$ is  a closed and open connected set in~$\{f\leq \lambda_x(y)\}$. Thus, 
$$
 \overline{\ft C(\lambda_x(y),y)} \ \text{ is a connected component of } \ \{f\leq \lambda(y)\}.
$$
For $\mu\geq \lambda(y)$, let us now denote by~$\ft C^{+}_y(\mu)$ the connected component of $\{f\leq \mu\}$ 
 containing $\overline{\ft C(\lambda_x(y),y)}$. It then holds, according to 
 Lemma~\ref{le.comp-conn}  (see~\eqref{eq.conn-equality2}),
\begin{equation}
\label{eq.muu}
 \bigcap_{\mu>\lambda_x(y)}\ft C^{+}_y(\mu)=\ft C^{+}_y(\lambda_x(y))=\overline{\ft C(\lambda_x(y),y)}.
\end{equation}
In addition,  for all $\mu>\lambda(y)$,~$x\in \ft C(\mu,y)$ by definition of $\lambda_x(y)$ (see~\eqref{eq.lambda-y}), and $\ft C(\mu,y) \subset \ft C^{+}_y(\mu)$.
Thus, using~\eqref{eq.muu},~$x\in \overline{\ft C(\lambda_x(y),y)}$ and hence, since $x\notin \ft C(\lambda_x(y),y)$, it holds~$f(x)=\lambda_x(y)>f(y)$. This contradicts the fact that   that $x\in\argmin_{\overline C} f$. Therefore, we have proven that 
\begin{equation}
\label{eq.pa-lambda-y}
  \pa \ft C(\lambda(y),y) \cap \ft U_1^{\ft{ssp}}\neq \emptyset.
\end{equation}
Using~\eqref{eq.lambda-y-inclusion}, this implies that~$\ft C\cap \ft U_1^{\ft{ssp}} \neq \emptyset$ which concludes the proof of  the reverse implication in~\eqref{eq.sens1-C} and thus the proof of~\eqref{eq.C-U1}.
 
  \medskip
 \noindent
\textbf{Step 2}. Proof of item 1  in Proposition~\ref{pr.p2}.\\
\noindent 
  Let us first deal with the case $\ft U_1^{\ft{ssp}}\cap \ft C=\emptyset$. In that case, the set $\ft C\cap \ft U_{0}^{\Omega}$ 
 is reduced to one element. This implies that for all $\lambda\in (\min_{\overline{\ft C}}f, \lambda]$,
the set~$\ft C\cap\{f<\lambda\}$ is connected since each of its connected components necessarily contains at least one element of
$\ft U_{0}^{\Omega}$. \\
Let us now deal with the case $\ft U_1^{\ft{ssp}}\cap \ft C\neq\emptyset$.  Let us then consider $x\in \argmin_{\ft C}f\subset \ft U_{0}^{\Omega}$ and, for every
$y\in \ft C\cap \ft U_{0}^{\Omega}\setminus\{x\} $, let $\lambda_x(y)$  be as defined in~\eqref{eq.lambda-y}. 
Let us also define 
\begin{equation}\label{eq.sigma=def}
\tilde \sigma:=\max\big \{\lambda_x(y),\,  y\in \ft C\cap \ft U_{0}^{\Omega}\setminus\{x\}\big \},
\end{equation}
which is well defined since the set $ \{y\in \ft C\cap \ft U_{0}^{\Omega}\setminus\{x\}\}$ is nonempty (by \eqref{eq.C-U1}) and contains a finite number of elements (since $f$ is Morse). 
Then, from~\eqref{eq.lambda-y-lambda-bis},~\eqref{eq.pa-lambda-y} and the first inclusion in~\eqref{eq.lambda==y}, one has  
\begin{equation}\label{sigma-ssp}
\tilde \sigma\in (\min_{\overline{\ft C}}f,\lambda)\bigcap \big \{f(z),\, z\in \ft U_1^{\ft{ssp}}\big \}.
\end{equation}
Then,  since for all $\mu\in (\tilde \sigma, \lambda]$ and  for all~$y\in \ft  C\cap \big(\ft U_{0}^{\Omega}\setminus\{x\} \big)$,~$x\in \ft  C(\mu,y)$ (because $\mu> \lambda_x(y)$ and by definition of $\lambda_x(y)$, see~\eqref{eq.lambda-y})  and since the $\ft C(\mu,y)$'s are connected components of $\{f<\mu\}$,   one obtains that $\ft C(\mu,y)= \ft C(\mu,w)$ for all $y,w\in \ft C\cap \big(\ft U_{0}^{\Omega}\setminus\{x\} \big)$. Thus, one has
\begin{equation}\label{C-mi-uo}
\text{ for all } \,  \mu\in (\tilde\sigma, \lambda] \, \text{ and for all} \ y\in \ft C\cap \big(\ft U_{0}^{\Omega}\setminus\{x\} \big), \ \ \ft C\cap \ft U_{0}^{\Omega} \subset \ft C(\mu,y).
\end{equation}
This implies
that  for any $y\in\ft  C\cap \big(\ft U_{0}^{\Omega}\setminus\{x\} \big)$,~$\{f<\mu\}\cap \ft C$ is equal to $\ft C(\mu,y)$ (since every connected component of~$\{f<\mu\}\cap \ft C$ contains at least one element of $ \ft U_{0}^{\Omega}$). Therefore, one has 
\begin{equation}\label{eq.C-connecte}
 \forall \mu\in (\tilde  \sigma, \lambda],\  \{f<\mu\}\cap \ft C  \, \text{ is connected.} 
\end{equation}
Moreover,  it holds  
\begin{equation}\label{eq.sigma=max}
\tilde \sigma=\max_{x\in \ft U_1^{\ft{ssp}}\cap \ft C}f(x),
\end{equation}
and thus $\tilde \sigma= \sigma$, where we  recall that  $\sigma$ is defined in Proposition~\ref{pr.p2}. 
Indeed, if it is not the case, then from~\eqref{sigma-ssp}, one has $\tilde  \sigma<\max_{x\in \ft U_1^{\ft{ssp}}\cap \ft C}f(x)$ and thus~$\ft C$ contains  at least two  connected components
 of $\{f<\max_{\ft U_1^{\ft{ssp}}\cap \ft C}f\}$ with $\max_{\ft U_1^{\ft{ssp}}\cap \ft C}f>\tilde \sigma$,  which  
 contradicts~\eqref{eq.C-connecte}. 
The fact that $\ft C \cap \{f<\mu\}$ is connected follows from~\eqref{eq.C-connecte} and~\eqref{eq.sigma=max}.   
\medskip
 
 \noindent
Let us now prove that $\ft C \cap \{f<\mu\}$ is a connected component of $\{f<\mu\}$ for all $\mu \in (\sigma, \lambda]$. Since $ \ft C \cap \{f<\mu\}$ is connected, one can consider  the connected component~$\ft V$~of~$\{f<\mu\}$ which contains $\ft C \cap \{f<\mu\}$. Then, since $\ft C$ is a connected  component of $\{f<\lambda\}$ and $\mu\le \lambda$, it holds $\ft V\subset \ft C$ and thus, $\ft V\subset \ft C \cap \{f<\mu\}$. Therefore,  one has $\ft V=  \ft C \cap \{f<\mu\}$ is a connected component of $\{f<\mu\}$.
This concludes the proof of item 1  in Proposition~\ref{pr.p2}.  
 \medskip
 
 \noindent
\textbf{Step 3.} Proof of item 2  in Proposition~\ref{pr.p2}.
\medskip

\noindent
 Let us assume that $\ft U_1^{\ft{ssp}}\cap \ft C\neq\emptyset$. Then, using~\eqref{eq.C-U1},~$\ft C\cap  \ft U_{0}^{\Omega}$ contains at least two elements. 
  Let $x\in \argmin_{\ft C}f$ and  
 $y\in \ft C\cap \ft U_{0}^{\Omega}\setminus\{x\}$. Then $f(x)\leq f(y)$ and according to~\eqref{eq.lambda-y-lambda-bis}, it holds $ f(y)<\lambda_x(y)$. From~\eqref{eq.sigma=max} and~\eqref{eq.sigma=def}, it holds moreover
 
 $$\sigma:=\max_{\ft U_1^{\ft{ssp}}\cap \ft C}f\geq \lambda(y).$$
Therefore $f(x)\le f(y)< \sigma$ and thus 
\begin{equation}\label{eq.inclu-CC}
\ft C\cap\ft U_{0}^{\Omega}\subset\{f<\sigma\}.
\end{equation}
 This proves the first statement of item 2 in  Proposition~\ref{pr.p2}. \\
 Let us now prove that each connected component
 of $\ft C\cap\{f<\sigma\}$ is  a {critical connected component} (as introduced in item 2 in Definition~\ref{de.SSP}). Let us first notice that $\ft C\cap\ft U_{0}^{\Omega}\subset\{f<\sigma\} $ implies 
\begin{equation}\label{eq.cup-sigma}
\ft C\cap \{f<\sigma\} =\bigcup_{w\in \ft  U_{0}^{\Omega}\cap \ft C }\ft C(\sigma,w),
\end{equation}
where $\ft C(\sigma,w)$ is defined in~\eqref{eq.c1} (since every connected component of~$\ft C\cap \{f<\sigma\}$ contains at least one element of $ \ft U_{0}^{\Omega}$). 
Let us  consider a connected  component of $\ft C\cap\{f<\sigma\}$. From~\eqref{eq.cup-sigma}, this  component has  the form $\ft C(\sigma,y)$
 for some
 $y\in  \ft U_{0}^{\Omega}$.  
 Since $\sigma\in f(\ft U_1^{\ft{ssp}})$ (see~\eqref{sigma-ssp}),~$\ft C\cap \{f<\sigma\} $ contains at least two connected components, and thus 
 $
 (\ft C\cap \ft U_{0}^{\Omega})\setminus \ft C(\sigma,y) \neq\emptyset$. Let $w\in (\ft C\cap \ft U_{0}^{\Omega})\setminus \ft C(\sigma,y)$.  
Let us  assume that $\ft C(\sigma,y)$ is not a {critical connected component}, i.e that $\pa \ft C(\sigma,y) \cap \ft U_1^{\ft{ssp}}=\emptyset$. Then, the  arguments used to prove~\eqref{eq.cc-c-lambda} imply  that $\overline{\ft C(\sigma,y)}$ is a connected component of $\{f\leq \sigma\}$. Thus, using  Lemma~\ref{le.comp-conn}  (see~\eqref{eq.conn-equality2}), it holds
 $$
 \bigcap_{\lambda>\sigma}\ft C^{+}(\lambda,y)=\overline{\ft C(\sigma,y)}.$$
Moreover, since for all $\lambda>\sigma$,~$w\in \ft C(\lambda,y)$ (see~\eqref{C-mi-uo})  and $\ft C(\lambda,y)\subset  \ft C^{+}(\lambda,y)$, it holds
 $w\in \overline{\ft C(\sigma,y)}$. Therefore, since $f(w)<\sigma$ (see~\eqref{eq.inclu-CC}), one has $w\in \ft C(\sigma,y) $ which contradicts the fact that $w\in (\ft C\cap \ft U_{0}^{\Omega})\setminus \ft C(\sigma,y)$.  This ends the proof of Proposition~\ref{pr.p2}.   
\end{proof}

\subsection{Constructions of the maps $\mbf{j}$ and $\mbf{\widetilde j}$}
\label{sec:labeling}

In this section we construct, under  \eqref{H-M}, two maps $\mbf{j}$ and $\mbf{\widetilde j}$. These maps are  constructed using an association between the local minima of~$f$ and the  (generalized) saddle points $\ft U_1^{\overline \Omega}$. Such maps have been introduced in \cite{BEGK,BGK} and  \cite{HKN,HeHiSj} in the boundaryless case in order to give sharp asymptotic estimates of the eigenvalues of the involved operators. This   has been generalized in \cite{HeNi1} to the boundary case (where the authors introduced the notion of generalized saddle points for $\Delta^{D,(0)}_{f,h}$).\\
Let us recall (see~Lemma~\ref{ran1} below), that~$L^{D,(0)}_{f,h}$ has exactly $\ft m_0^{ \Omega}$ eigenvalues smaller than $\sqrt h$ for sufficiently small $h$.  Actually, from~\cite{HeSj4,HeNi1}, it can be shown that these $\ft m_0^{  \Omega}$ eigenvalues are exponentially small. The goal of the map $\mbf j$ is to associate each local minimum $x$ of~$f$ with a set of generalized saddle points $\mbf j(x)\subset  \ft U_1^{\overline \Omega}$ such that  $f$ is constant over $\mbf j(x)$ and such that, for sufficiently small $h$, there exists at least one eigenvalue of $-L^{D,(0)}_{f,h}$ whose exponential rate of decay is $2\big ( f(\mbf j(x))-f(x)\big )$ i.e.
$$\exists \lambda \in \sigma\big (-L^{D,(0)}_{f,h}\big) \, \text{ such that }\, \lim_{h\to 0}h\ln \lambda=-2 \big (f(\mbf j(x))-f(x) \big ).$$
The map $\widetilde {\mbf j}$   associates  each local minimum $x$ of~$f$ with the connected component of $\{f<f(\mbf j(x))\}$ which contains $x$. 
To construct the maps $\mbf{j}$ and $\mbf{\widetilde j}$, the procedure  relies on the  analysis made in Section~\ref{sec.topo} and on the analysis 
of the sublevel sets of~$f$  following the general analysis of the sublevel sets of
a Morse function on  a manifold without boundary of~\cite[Section 4.1]{HeHiSj} which generalizes  the procedure described  in~\cite{HKN}. 
 To build the maps $\mbf{j}$ and $\mbf{\widetilde j}$, one considers  the connected components of $\{f< \lambda\}\cap \ft U_1^{\ft{ssp}}$ appearing when  $\lambda$ is decreasing from $\max_{  \cup_{k=1}^{\ft N_1}\overline{\ft C_k}}f$ to $-\infty$. Each time a new connected component appears in $\cup_{k=1}^{\ft N_1}\ft C_k$, one picks arbitrarily a local minimum in it and then, one associates this local minimum with the {separating saddle points} on the boundary of this new connected component. 
\\
 
 Let assume that the assumption  \eqref{H-M} holds\label{page.j}. 
 The  constructions of the maps~$\mbf{j}$ and~$\mbf{\widetilde j}$  are made  recursively   as follows:
\begin{enumerate}[leftmargin=1.3cm,rightmargin=1.3cm]

\item  \textbf{Initialization ($q=1$).} We consider $\ft E_{1,\ell}=\ft C_\ell$ for $\ell\in\{1,\dots,\ft  N_1 \}$ (see~\eqref{eq.E1i}).
\medskip

\noindent
For each~$\ell\in\{1,\dots,\ft  N_1 \}$, 
$x_{1,\ell}$
 denotes one point 
in~$\argmin_{\overline{\ft E_{1,\ell}}}f=\argmin_{\ft  E_{1,\ell}}f$.
Then we define, for all~$k\in\{1,\dots,\ft  N_1 \}$,\label{page.xkl}
\begin{equation}\label{jx1}
\sigma_{1,\ell}:= \max_{\overline{\ft  E_{1,\ell}  } } f,\ \  \widetilde{ \mathbf j}  (x_{1,\ell}):=\ft E_{1,\ell},\ \  \text{and}\ \ 
\mathbf j (x_{1,\ell}):= \pa \ft  E_{1,\ell}\cap\ft   U_1^{\ft{ssp}}.
\end{equation}
Notice  that according to Proposition~\ref{pr.p1} and item 2 in Definition~\ref{de.SSP}, it holds
$$\mathbf j(x_{1,\ell})\neq \emptyset,\ \ \pa \ft  E_{1,\ell}\subset\{f=\sigma_{1,\ell}\},\  \widetilde{ \mathbf j}  (x_{1,\ell}) \in \mathcal C_{crit},$$
and 
$$
 \bigcup_{\ell=1}^{\ft  N_1 } \mathbf j (x_{1,\ell})\cap \pa\Omega\subset \ft  U_1^{ \pa\Omega}.
$$
Moreover, one has from Proposition~\ref{pr.p1} (and more precisely the second inclusion in~\eqref{eq.ck-omega1}),
\begin{equation}\label{eq.inter-E}
\forall \ell\neq q\in \{1,\dots,\ft N_1\},\ \pa \ft E_{1,\ell} \cap \pa \ft E_{1,q} \subset \ft U_1^{\ft{ssp}}\cap {\Omega}.
\end{equation}

\item  
 \textbf{First step ($q=2$)}. If $\ft N_1< \ft m_0^\Omega $, we consider $\{f<\lambda\}\bigcap \Big(\cup_{\ell=1}^{\ft N_1}\ft E_{1,\ell}\Big)$ for $\lambda<\max_{ \ell\in\{1,\dots,\ft N_1\}}\sigma_{1,\ell}$.\\
\begin{sloppypar}
\noindent
From Proposition~\ref{pr.p2}, for each $\ell\in\{1,\dots,\ft N_1\}$, 
$ \ft E_{1,\ell} \cap \ft U_0^\Omega\neq \{x_{1,\ell}\}$ if and only if
$\ft U_1^{\ft{ssp}}\cap \ft E_{1,\ell}\neq~\emptyset$. As a consequence, one has: 
 $$\ft U_1^{\ft{ssp}}\bigcap \Big(\cup_{\ell=1}^{\ft N_1}\ft E_{1,\ell}\Big)\neq \emptyset \ \text{ iff }\ 
 \{x_{1,1},\dots,x_{1,\ft N_1}\}\neq \ft U_0^\Omega.$$
If $\ft U_1^{\ft{ssp}}\bigcap \Big(\cup_{\ell=1}^{\ft N_1}\ft E_{1,\ell}\Big)= \emptyset $ (then $\ft N_1=\ft m_0^\Omega$), the constructions of the maps $\widetilde{ \mathbf j}$ and ${ \mathbf j}$ are finished and one goes   to item 4 below. If $\ft U_1^{\ft{ssp}}\bigcap \Big(\cup_{\ell=1}^{\ft N_1}\ft E_{1,\ell}\Big)\neq \emptyset $, one defines
  $$
 \sigma_{2}:= \max_{x\in \ft U_1^{\ft{ssp}}  \bigcap \big(\cup_{\ell=1}^{\ft N_1}\ft E_{1,\ell}\big) } f(x) \ \in\ \Big(\min_{\cup_{\ell=1}^{\ft N_1}\ft E_{1,\ell} }f,  \max_{\ell\in\{1,\dots,\ft N_1\}} \sigma_{1,\ell}  \Big).
 $$ 
The  set $$  \bigcup_{\ell=1}^{\ft N_1}\Big (\ft E_{1,\ell}\cap \{ f<  \sigma_{2} \}\Big)$$ is then the union of 
 finitely many connected components. We denote by  $\ft E_{2,1},\dots,\ft  E_{2,\ft N_2}$\label{page.ekl2} (with $\ft N_2\geq 1$\label{page.n2}) the  connected components of $\bigcup_{\ell=1}^{\ft N_1}\big (\ft E_{1,\ell}\cap \{ f<  \sigma_{2} \}\big)$ which do not contain any of the minima~$ \{x_{1,1},\dots,x_{1,\ft N_1}\}$. 
 From items 1 and 2 in Proposition~\ref{pr.p2} (applied for each $ \ell\in \{1,\ldots,\ft N_1\}$  with  $\ft C= \ft E_{1,\ell}\cap \{ f<  \sigma_{2} \}$ there),
$$\forall \ell \in \{1,\ldots,\ft N_2\}, \ \ft E_{2,\ell} \in \mathcal C_{crit}.~$$
Notice that  the other connected components (i.e. those 
 containing the~$x_{1,\ell}$'s) may be not   {critical}. 
 Let us associate with each~$\ft E_{2,\ell}$,~$1\leq \ell\leq\ft  N_{2}$, 
 one point~$x_{2,\ell}$ arbitrarily chosen in~$\argmin_{\overline{\ft E_{2,\ell}}} f=\argmin_{\ft E_{2,\ell}} f$ (the last equality follows from the fact that  $\pa\ft  E_{2,\ell}\subset \{f=\sigma_{2}\}$).
For~$\ell\in \{1,\ldots, \ft N_{2}\}$, let us define:
 $$\widetilde{ \mathbf j}(x_{2,\ell}):=\ft E_{2,\ell}
 \ \ \text{and}\ \ \mathbf j(x_{2,\ell}):=\pa \ft E_{2,\ell}\cap \ft U_1^{\ft{ssp}}  \subset \{f=\sigma_{2}\}. $$

\item  
 \textbf{Recurrence ($q\ge 3$)}. 
 \medskip
 
 \noindent
If all the local minima of $f$ in $\Omega$ have been labeled at the end of the previous step above ($q=2$), i.e. if $\cup_{j=1}^{2}\{x_{j,1},\ldots,x_{j,\ft N_j}\}= \ft U_0^\Omega$ (or equivalently  if $\ft N_1+\ft N_2= \ft m_0^\Omega $), the constructions of the maps $\widetilde{ \mathbf j}$ and ${ \mathbf j}$ are finished, all the local minima of $f$ have been labeled and one goes  to item 4 below. 
If it is not the case, from Proposition~\ref{pr.p2}, there exists $m \in \mathbb N^*$  such that
\begin{equation}\label{eq.mm}
\text{for all $q\in \{2,\ldots,m+1\}$}, \ \ft U_1^{\ft{ssp}}\bigcap \bigcup_{\ell=1}^{\ft N_1}\Big (\ft E_{1,\ell}\cap \{ f<  \sigma_{q}\}\Big) \neq\ \emptyset
\end{equation}
 where one defines recursively  the decreasing sequence $( \sigma_{q})_{q=3,\ldots,m+2}$  by
$$
 \sigma_{q}:= \max_{x\in \ft U_1^{\ft{ssp}}\bigcap \bigcup_{\ell=1}^{\ft N_1}\big (\ft E_{1,\ell}\cap \{ f<  \sigma_{q-1}\}\big)     } f(x)\ \in\ \Big(\min_{\cup_{\ell=1}^{\ft N_1}\ft E_{1,\ell} }f, \, \sigma_{q-1}  \Big),
  $$
   for $q\in \{3,\ldots,m+2\}$.
   Let us now consider $m^*\in \mathbb N^*$ the larger integer among the integers $m\in \mathbb N^*$ such that~\eqref{eq.mm} holds. 
   Notice that $m^*$ is well defined since the cardinal of $\ft U_0^\Omega$ is finite. By definition of $m^*$, one has:
\begin{equation}\label{eq.m*}
 \ft U_1^{\ft{ssp}}\bigcap \bigcup_{\ell=1}^{\ft N_1}\Big(\ft E_{1,\ell}\cap \{ f<  \sigma_{m^*+2}\}\Big) =\ \emptyset.
 \end{equation}
Then, one  repeats   recursively $m^*$ times  the procedure described above defining $\big (\ft E_{2,\ell},\mbf j(x_{2,\ell}), \widetilde{\mbf j}(x_{2,\ell})\big )_{ 1\le \ell\le \ft N_2  }$ : 
for  $q\in \{2,\ldots,m^*+1\}$, one defines $(\ft E_{q+1,\ell})_{\ell\in \{1,\ldots,\ft N_{q+1}\}}$ as the set of  connected components of 
$$  \bigcup_{\ell=1}^{\ft N_1}\Big (\ft E_{1,\ell}\cap \{ f<  \sigma_{q+1} \}\Big)$$ 
which do not contains any of the local minima $\cup_{j=1}^{q}\{x_{j,1},\ldots,x_{j,\ft N_j}\}$ of~$f$ in~$\Omega$ which have been previously chosen.  
From items 1 and 2 in Proposition~\ref{pr.p2} (applied for each $ \ell\in \{1,\ldots,\ft N_{1}\}$  with  $\ft C= \ft E_{1,\ell}\cap \{ f<  \sigma_{q+1} \}$ there),
$$\forall \ell \in \{1,\ldots,\ft N_{q+1}\}, \ \ft E_{q+1,\ell} \in \mathcal C_{crit}.$$
For~$\ell\in \{1,\ldots,\ft  N_{q+1}\}$, we  associate with each~$\ft E_{q+1, \ell }$, 
 one point~$x_{q+1, \ell }$ arbitrarily chosen in~$\argmin_{\ft E_{q+1, \ell }} f$ .
For~$  \ell \in \{1,\ldots,\ft N_{q+1}\}$, let us define:
 $$\widetilde{ \mathbf j}(x_{q+1, \ell }):=\ft E_{q+1, \ell }
 \ \ \text{and}\ \ \mathbf j(x_{q+1, \ell }):=\pa \ft E_{q+1, \ell }\cap \ft U_1^{\ft{ssp}}  \subset \{f=\sigma_{q+1}\}. $$

From~\eqref{eq.m*} and Proposition~\ref{pr.p2}, $\ft U_0^\Omega=\cup_{j=1}^{m^*+2}\{x_{j,1},\ldots,x_{j,\ft N_j}\}$ and thus, all the local minima of $f$ in $\Omega$ are labeled. The constructions of the maps of the maps $\widetilde{ \mathbf j}$ and ${ \mathbf j}$ are finished and one goes to item 4 below. 

\end{sloppypar}

\item  \textbf{Properties of the  maps $\widetilde{ \mathbf j}$ and  ${ \mathbf j}$.}
\medskip

\noindent
 Let us now give   important features of the map $\mbf j$ which follows directly from its construction and which are used in the sequel. 
Two maps have been defined:
 \begin{equation}
 \label{de.j}
 \widetilde{ \mathbf j}\ :\ \ft U_0^\Omega\longrightarrow \mathcal C_{crit} \quad\text{and}
 \quad \mathbf j\ :\ \ft U_0^\Omega\longrightarrow\mathcal P(\ft U_1^{\overline \Omega}) 
 \end{equation}
 which are clearly injective.
  Notice that the $\mathbf j(x)$,~$x\in \ft U_0^\Omega$, are not   disjoint
in general. 
  For all $x\in\ft U_0^\Omega$,~$f(\mathbf j(x))$
 contains exactly one value, which will be denoted by~$f(\mathbf j(x))$. Moreover,
 since $\cup_{\ell=1}^{\ft N_1}\ft E_{1,\ell}\subset \Omega$ (see the first statement in~\eqref{eq.ck-omega}), one has for all $x\in \ft U_0^\Omega$, 
 $$\widetilde{ \mathbf j}(x)\subset \Omega.$$
 Moreover, it holds
  \begin{equation}
 \label{eq.j-x}
 \forall x  \in \ft U_0^\Omega\setminus \{x_{1,1},\dots,x_{1,\ft N_1}\}, \ \ \mathbf j(x)\subset \Omega\cap \ft U^{\ft{ssp}}_1.
  \end{equation}
 Finally, for all $x\in\ft U_0^\Omega$,  
 $$f(\mathbf j(x))-f(x)>0$$
  and for all $  x\in\ft U_0^\Omega\setminus\{x_{1,1},\dots, x_{1,\ft N_1}\}$,
  \begin{equation}
 \label{eq.strict-sep}
   f(\mathbf j(x))-f(x)  < \min_{ \ell=1,\ldots,\ft N_1  } f(\mathbf j(x_{1,\ell}))-f(x_{1,\ell}).
 \end{equation}
%
%
 \end{enumerate} 
\noindent
 In Figure~\ref{fig:construction-1}, one gives the constructions of the maps $\mbf j$ and $\widetilde{\mbf j}$ for a one-dimensional example. 
 Since,  one can pick   a minimum or  another in a {critical connected component} at each step of the construction of $\mbf j$ and~$\widetilde{\mbf j}$, the maps are not uniquely defined if over one of the connected components $\ft E_{k,\ell}$ ($k\ge 1, \, \ell\in \{1,\ldots,\ft N_k\}$), $\argmin f$ contains more than one point. As will be clear below, this non-uniqueness has no  influence on the results proven hereafter (in particular Theorem~\ref{thm.main}).  
 In Figure~\ref{fig:construction-arbitrary}, we give an example for which two  constructions of the   maps~$\mbf j$ and~$\widetilde{\mbf j}$ are possible. 
 
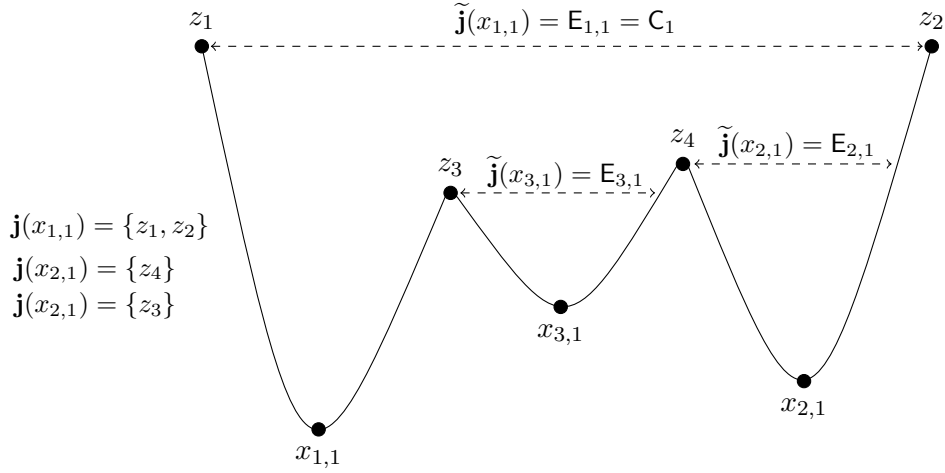
\begin{figure}[!h]
   \begin{center}
  \begin{tikzpicture}[scale=0.8]
\tikzstyle{vertex}=[draw,circle,fill=black,minimum size=5pt,inner sep=0pt]
\draw (-6,4) ..controls  (-4.3,-4).. (-2,1.5)  ;
\draw (-1.8,1.5) ..controls  (0,-1).. (1.8,2)  ;
\draw (6,4)  ..controls  (4,-3).. (2,2)  ;
\draw (-2,1.5) ..controls  (-1.91,1.6).. (-1.8,1.5)   ;
\draw (2,2) ..controls  (1.91,2.1).. (1.8,2)   ;

\draw (-6,4) node[vertex,label=north: {$z_1$}](v){};
\draw (6,4) node[vertex,label=north: {$z_2$}](v){};

\draw (-1.91,1.57)  node[vertex,label=north: {$z_3$}](v){}; 
\draw (1.91,2.05)  node[vertex,label=north: {$z_4$}](v){};
\draw (3.9,-1.55) node[vertex,label=south: {$x_{2,1}$}](v){};
\draw (-4.08,-2.35) node[vertex,label=south: {$x_{1,1}$}](v){};
\draw (-0.1,-0.32)  node[vertex,label=south: {$x_{3,1}$}](v){};
 \draw[dashed, <->]  (-5.85,4) -- (5.85,4);
  \draw (0,4.44) node[]{\small{$\widetilde{\mbf{j}}(x_{1,1})=\ft E_{1,1}=\ft C_1$}};
 \draw [dashed, <->]  (2.1,2.05) -- (5.34,2.05) ;
   \draw (3.8,2.4) node[]{\small{$\widetilde{\mbf{j}}(x_{2,1})=\ft E_{2,1}$}};
  \draw [dashed, <->]  (-1.75,1.57) -- (1.45,1.57) ;
     \draw (0,1.9) node[]{\small{$ \widetilde{\mbf{j}}(x_{3,1})=\ft E_{3,1}$}};
     \draw (-7.5,1) node[]{\small{$\mbf{j}(x_{1,1})=\{z_1,z_2\}$}};
       \draw (-7.75,0.3) node[]{\small{$\mbf{j}(x_{2,1})=\{z_4\}$}};
       \draw (-7.75,-0.30) node[]{\small{$\mbf{j}(x_{2,1})=\{z_3\}$}};

  \end{tikzpicture}
 \caption{ The maps~$\mbf j$ and $\widetilde{\mbf   j}$ in a one-dimensional example. Here the  the maps are uniquely defined and the construction requires three steps.  }
 \label{fig:construction-1}
 \end{center}
 \end{figure}

  \begin{remark}
  In the case when for all local minima $x$  of~$f$,~$\mbf j(x)$ is a single point,~$\mbf j(x)\cap \mbf j(y)=\emptyset$ for all $x\neq y$ and when all the heights $(f(\mbf j(x))-f(x))_{x\in \ft U_1^{\overline \Omega}}$ are distinct, the map $\mbf j$ is exactly the one constructed in~\cite{HeNi1}.  
   \end{remark}  

\begin{figure}[!h]
   \begin{center}
  \begin{tikzpicture}[scale=0.8]
\tikzstyle{vertex}=[draw,circle,fill=black,minimum size=5pt,inner sep=0pt]

\draw (-7,2) ..controls  (-5.8,3.5).. (-5,2) ;
\draw (-5,2) ..controls  (-3.6,-0.9).. (-2,1 )  ;
\draw (-1.5,1 ) ..controls  (0,-0.9).. (1.8,2)  ;
\draw (5,2.05)  ..controls  (4,-3).. (2,2)  ;
\draw (-2,1 ) ..controls  (-1.75,1.33).. (-1.5,1)   ;
\draw (2,2) ..controls  (1.91,2.1).. (1.8,2)   ;

\draw  (5,2.05) node[vertex,label=north: {$z_1$}](v){};
\draw (-7,2)  node[vertex,label=north: { }](v){}; 
\draw (-1.74,1.25)  node[vertex,label=north: {$z_3$}](v){}; 
\draw (1.91,2.05)  node[vertex,label=north: {$z_2$}](v){};
\draw (3.9,-1.72) node[vertex,label=south: {$x_{1,1}$}](v){};
\draw (-3.48,-0.3) node[vertex,label=south: {$x_{2,1}$}](v){};
\draw (-0.1,-0.3)  node[vertex,label=south: {$x_{1,2}$}](v){};
 \draw[dashed, <->]  (2.2,2.05) -- (4.8,2.05);
 \draw (3.6,2.3) node[]{\small{$\ft E_{1,1}=\ft C_1$}};
  \draw[dashed, <->]  (-4.85,2.05) -- (1.6,2.05);
 \draw (-1.6,2.3) node[]{\small{$\ft E_{1,2}=\ft C_2$}};
   \draw [dashed, <->]  (-4.5,1.25) -- (-1.95,1.25)  ;
 \draw (-3,1.5) node[]{\small{$\ft  E_{2,1}$}};
  \end{tikzpicture}
   \begin{tikzpicture}[scale=0.8]
\tikzstyle{vertex}=[draw,circle,fill=black,minimum size=5pt,inner sep=0pt]

\draw (-7,2) ..controls  (-5.8,3.5).. (-5,2) ;
\draw (-5,2) ..controls  (-3.6,-0.9).. (-2,1 )  ;
\draw (-1.5,1 ) ..controls  (0,-0.9).. (1.8,2)  ;
\draw (5,2.05)  ..controls  (4,-3).. (2,2)  ;
\draw (-2,1 ) ..controls  (-1.75,1.33).. (-1.5,1)   ;
\draw (2,2) ..controls  (1.91,2.1).. (1.8,2)   ;

\draw  (5,2.05) node[vertex,label=north: {$z_1$}](v){};
\draw (-7,2)  node[vertex,label=north: { }](v){}; 
\draw (-1.74,1.25)  node[vertex,label=north: {$z_3$}](v){}; 
\draw (1.91,2.05)  node[vertex,label=north: {$z_2$}](v){};
\draw (3.9,-1.72) node[vertex,label=south: {$x_{1,1}$}](v){};
\draw (-3.48,-0.3) node[vertex,label=south: {$x_{1,2}$}](v){};
\draw (-0.1,-0.3)  node[vertex,label=south: {$x_{2,1}$}](v){};
 \draw[dashed, <->]  (2.2,2.05) -- (4.8,2.05);
 \draw (3.6,2.3) node[]{\small{$\ft E_{1,1}=\ft C_1$}};
  \draw[dashed, <->]  (-4.85,2.05) -- (1.6,2.05);
 \draw (-1.6,2.3) node[]{\small{$\ft E_{1,2}=\ft C_2$}};
   \draw [dashed, <->]    (-1.6,1.25)-- (1.2,1.25)   ;
 \draw (0,1.5) node[]{\small{$\ft  E_{2,1}$}};

  \end{tikzpicture}

 \caption{A one-dimensional example for which two  constructions of the  maps~$\mbf j$ and~$\widetilde{\mbf   j}$ are possible. This is due to the fact that  two choices for~$x_{1,2}$ can be made in~$\ft E_{1,2}$ at the first step of the construction since $f$ admits two global minima  in $\ft E_{1,2}$ ($f(x_{2,1})=f(x_{1,2})$).  Both  constructions require two steps.}
 \label{fig:construction-arbitrary}
 \end{center}
 \end{figure}
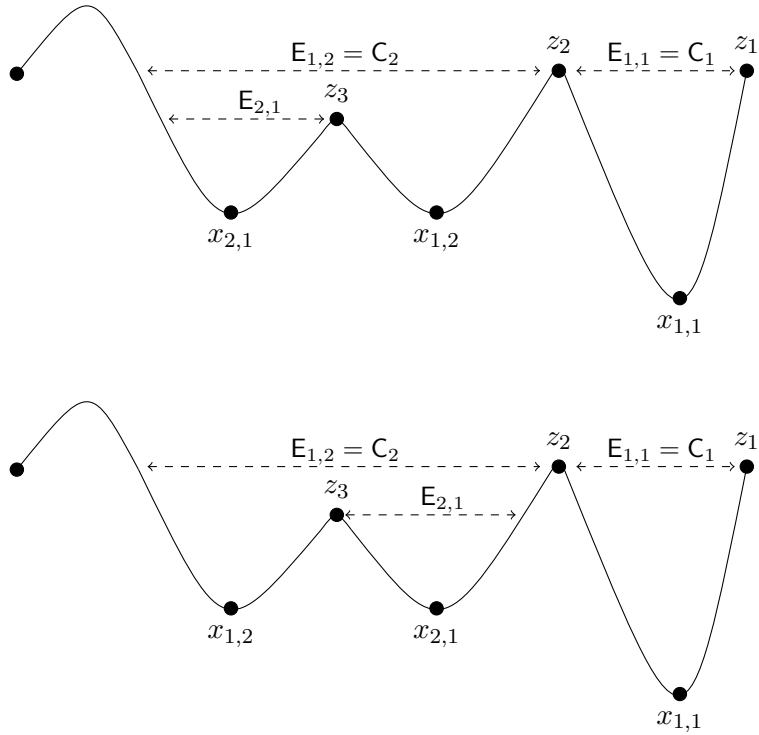

\noindent
The next definition will be used  in Section~\ref{sec:QM-L} to construct   the quasi-modes.

\begin{definition}\label{de.ve-E}
Let us assume that the assumption  \eqref{H-M} is satisfied. Let $\ve$ be such that 
\begin{equation}\label{eq.ve-1}
0\le \ve < \min_{k\ge 1, \, \ell\in \{1,\ldots,\ft N_k\} }\Big( \max_{\overline{\ft E_{k,\ell}} } f-\max_{\ft U_1^{\ft{ssp}} \cap \ft E_{k,\ell}} f\Big),
\end{equation}
where the family $(\ft E_{k,\ell})_{ k\ge 1, \, \ell\in \{1,\ldots,\ft N_k\} }$ is defined  in the construction of the map $\mbf j$ above. 
For~$k\ge 1$ and $\ell\in \{1,\ldots,\ft N_k\}$, one defines 
\begin{equation}\label{eq.ve-E}
\ft E_{k,\ell}(\ve)=\ft E_{k,\ell}\cap  \big \{f< \max_{\overline{\ft E_{k,\ell}} } f   -\ve\big \},
\end{equation}
which is   a connected  component of $\big \{f< \max_{\overline{\ft E_{k,\ell}} } f   -\ve\big \}$ according to item 1 in  Proposition~\ref{pr.p2}. 
\end{definition}

\subsection{Rewriting the assumptions  (\ref{eq.hip1})-(\ref{eq.hip4})  in terms of  the map~$\mbf j$} 
\label{sec:hip}
In this section, one rewrites the assumptions  \eqref{eq.hip1}, \eqref{eq.hip2}~\eqref{eq.hip3},  and~\eqref{eq.hip4}  with the map~$\mbf j$ constructed in Section~\ref{sec:labeling}. To this end, let us prove the following lemma.

\begin{lemma}\label{equiv-hipo}
Let us assume that the hypothesis \eqref{H-M} is satisfied. Then,  the assumption  \eqref{eq.hip1}   is equivalent to the fact that there exists $\ell\in \{1,\ldots,\ft N_1\}$ such that 
 for all $k\in \{1,\ldots,\ft N_1\}\setminus\{ \ell\}$,
$$
f(\mbf j(x_{1,k })) - f( x_{1,k })<f(\mathbf j(x_{1,\ell}))-f(x_{1,\ell}).
$$
Thus, when \eqref{eq.hip1}  holds,  the elements of $\mathcal C=\{ \ft  C_{1} ,\dots,\ft  C_{\ft N_{1}} \}$ (see Definition~\ref{de.1})  are ordered such that $\ell=1$, i.e for all $k\in \{2,\ldots,\ft N_1\}$:
\begin{equation}\label{hs12-b}
f(\mbf j(x_{1,k })) - f( x_{1,k })<f(\mathbf j(x_{1,1}))-f(x_{1,1}).
\end{equation}
Moreover, under  \eqref{eq.hip1}  (or equivalently \eqref{hs12-b}), one has  $\ft C_1=\ft C_{\ft{max}}$, 
where $\ft C_{\ft{max}}$ is defined by~\eqref{eq.hip1}.
\end{lemma}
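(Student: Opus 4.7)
The plan is to reduce Lemma~\ref{equiv-hipo} to the identification, for each $k\in\{1,\dots,\ft N_{1}\}$, of the quantity $f(\mbf j(x_{1,k}))-f(x_{1,k})$ with $\max_{\overline{\ft C_{k}}}f-\min_{\overline{\ft C_{k}}}f$. Once this equality is established, assumption~\eqref{eq.hip1}, which asserts the unique attainment of the maximum of $\ft C\mapsto \max_{\overline{\ft C}}f-\min_{\overline{\ft C}}f$ over $\mathcal C=\{\ft C_{1},\dots,\ft C_{\ft N_{1}}\}$, rewrites verbatim as the strict inequality in the lemma. The last statement (namely $\ft C_{1}=\ft C_{\ft{max}}$ after reordering) is just a relabelling convention.

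To compute $f(\mbf j(x_{1,k}))$, I would use the initialization step of the construction carried out in Section~\ref{sec:labeling}: by~\eqref{jx1}, $\mbf j(x_{1,k})=\pa\ft E_{1,k}\cap\ft U_{1}^{\ft{ssp}}\subset\pa \ft E_{1,k}\subset \{f=\sigma_{1,k}\}$, where $\sigma_{1,k}=\max_{\overline{\ft E_{1,k}}}f=\max_{\overline{\ft C_{k}}}f$ (using $\ft E_{1,k}=\ft C_{k}$, see~\eqref{eq.E1i}). The nontrivial point to check is that this maximum coincides with $\lambda(x_{1,k})=\ft H_{f}(x_{1,k})$: indeed, by Definition~\ref{de.1} and~\eqref{mathcalC}, $\ft C_{k}=\ft C(\lambda(y),y)$ for some $y\in \ft U_{0}^{\Omega}\cap \ft C_{k}$, and a short argument (mimicking Step~2 in the proof of Proposition~\ref{pr.p1}) shows that $\lambda(x_{1,k})=\lambda(y)$, since $x_{1,k}\in \ft C_{k}\subset\Omega$ and $\ft C(\lambda(y)+\varepsilon,x_{1,k})=\ft C(\lambda(y)+\varepsilon,y)$ intersects $\pa\Omega$ for every $\varepsilon>0$. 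Combined with $\pa\ft C(\lambda(x_{1,k}),x_{1,k})\subset \{f=\lambda(x_{1,k})\}$, this yields $\sigma_{1,k}=\lambda(x_{1,k})$.

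For the computation of $f(x_{1,k})$, it suffices to recall from the initialization of the construction that $x_{1,k}$ is chosen in $\argmin_{\overline{\ft E_{1,k}}}f=\argmin_{\overline{\ft C_{k}}}f$, hence $f(x_{1,k})=\min_{\overline{\ft C_{k}}}f$. Putting the two identities together gives
\begin{equation*}
f(\mbf j(x_{1,k}))-f(x_{1,k})=\max_{\overline{\ft C_{k}}}f-\min_{\overline{\ft C_{k}}}f\qquad\text{for every }k\in\{1,\dots,\ft N_{1}\},
\end{equation*}
and the equivalence with~\eqref{eq.hip1} follows at once, together with the labelling convention $\ft C_{1}=\ft C_{\ft{max}}$ stated in~\eqref{hs12-b}. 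The only mild obstacle in the argument is the identification $\sigma_{1,k}=\lambda(x_{1,k})$, which is essentially bookkeeping built on Proposition~\ref{pr.p1} and Lemma~\ref{le.=delta}; everything else is unpacking of definitions.
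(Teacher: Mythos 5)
Your proposal is correct and follows essentially the same route as the paper's proof: both unpack the initialization step of the construction of $\mbf j$, use Lemma~\ref{le.=delta} to identify $\ft H_f$ with $\lambda$, and derive the key identity $\sup_{\ft C_k}f - \min_{\overline{\ft C_k}}f = f(\mbf j(x_{1,k})) - f(x_{1,k})$ before concluding that \eqref{eq.hip1} is just the uniqueness of the maximum over $\mathcal C$. The only difference is that you spell out the step showing that $\lambda$ (equivalently $\ft H_f$) takes the same value at any local minimum of $f$ lying in a given $\ft C_k$ --- a minor but genuine point that the paper's proof leaves implicit by citing Lemma~\ref{le.=delta} and the construction; your care here is harmless and, if anything, slightly more precise.
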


\begin{proof} Assume that the hypothesis \eqref{H-M} is satisfied.  Let us recall that the set $\mathcal C$ (defined by~\eqref{mathcalC-def}) satisfies  from~\eqref{mathcalC} and Definition~\ref{de.1}: 
 \begin{equation}
 \mathcal C=\{\ft  C(x),\, x\in \ft U_0^\Omega \}=\{\ft C_1,\ldots,\ft C_{\ft N_1}\}.
 \end{equation}
Let $\ft C\in \mathcal C$ and let   $k\in \{1,\ldots,\ft N_1\}$, such that $\ft C=\ft C_k$.   Then, from~\eqref{eq.=delta}  and  the first step of the construction of $\mbf j$ in Section~\ref{sec:labeling}, one has for all $q\in\{1,\ldots,\ft N_1\}$: $\ft H_f(x_{1,q})=\lambda(x_{1,q})=f(\mbf j(x_{1,q}))=\sup_{\ft C_q}f$ and $f(x_{1,q})=\min_{\overline{\ft C_q}}f$. Thus,  it holds 
$$\sup_{\ft C}f-\min_{\overline{\ft C}}f=f(\mbf j(x_{1,k })) - f( x_{1,k }).$$
This implies the results stated in  Lemma~\ref{equiv-hipo}. 
  \end{proof}

\noindent
In view of Lemma~\ref{equiv-hipo} and by construction of the map $\mbf j$ (see the first step in Section~\ref{sec:labeling}), one can rewrite the assumptions \eqref{eq.hip1}, \eqref{eq.hip2}~\eqref{eq.hip3},  and~\eqref{eq.hip4}  with the map $\mbf j$ as follows:  
\begin{sloppypar}
\begin{itemize}[leftmargin=1.3cm,rightmargin=1.3cm]

\item  The assumption~\eqref{eq.hip1} is equivalent to the fact that, up to reordering the elements of $\mathcal C=\{ \ft  C_{1} ,\dots,\ft  C_{\ft N_{1}} \}$ (see Definition~\ref{de.1})  such that~\eqref{hs12-b},
it holds   for all $x\in \{x_{1,2},\ldots,x_{1,\ft N_1}\}$:
\begin{equation} \tag{\textbf{A1j}}
\label{eq.hip1-j}
 f(\mathbf j(x ))-f(x )<f(\mathbf j(x_{1,1}))-f(x_{1,1}).
\end{equation}
Furthermore, in this case,~$\ft C_1=\ft C_{\ft{max}}$, 
where $\ft C_{\ft{max}}$ is defined by~\eqref{eq.hip1}.  

\item  The assumption~\eqref{eq.hip2} rewrites when \eqref{eq.hip1-j} holds,
  \begin{equation} \tag{\textbf{A2j}} 
\label{eq.hip2-j}
\pa\ft  C_{1}\cap\pa\Omega\neq \emptyset.
\end{equation}

\item  The assumption~\eqref{eq.hip3} rewrites when \eqref{eq.hip1-j} holds,
\begin{equation} 
\tag{\textbf{A3j}} 
\label{eq.hip3-j}
\pa\ft  C_{1}\cap \pa \Omega\subset \argmin_{\pa\Omega}f.
\end{equation}

\item  When~\eqref{eq.hip1-j} holds, the assumption~\eqref{eq.hip4} is equivalent to
\begin{equation} 
\tag{\textbf{A4j}} 
\label{eq.hip4-j}
\mbf j(x_{1,1})\subset \pa\Omega.
\end{equation}
This equivalence follows from~\eqref{eq.hip1-j} together with the fact that $\mbf j(x_{1,1})=\pa \ft C_1\cap \ft U_1^{\ft{ssp}}$ (see~\eqref{jx1}) and by definition of a separating saddle point.  
  \end{itemize}
  \end{sloppypar}
 \noindent
 From now on, we work with the formulations~\eqref{eq.hip1-j},~\eqref{eq.hip2-j},~\eqref{eq.hip3-j},  and~\eqref{eq.hip4-j}  of 
 the assumptions \eqref{eq.hip1}, \eqref{eq.hip2},~\eqref{eq.hip3},  and~\eqref{eq.hip4}.
 \medskip
 
 \noindent
 Notice that under~\eqref{eq.hip1-j}, it holds from~\eqref{eq.strict-sep}, for all $x\in \ft U_0^\Omega\setminus \{x_{1,1}\}$, 
\begin{equation}
\label{eq.sj}
 f(\mathbf j(x ))-f(x )<f(\mathbf j(x_{1,1}))-f(x_{1,1}).
\end{equation}
In Figure~\ref{fig:A1}, ones gives an example when \eqref{eq.hip1-j} holds but not \eqref{eq.hip2-j}, \eqref{eq.hip3-j} and \eqref{eq.hip4-j}. 
  In Figure~\ref{fig:H2}, one gives an example when \eqref{eq.hip1-j}  and \eqref{eq.hip2-j} hold but not \eqref{eq.hip3-j} and \eqref{eq.hip4-j}. 
   In Figure~\ref{fig:eH3+}, one gives a case when \eqref{eq.hip1-j}, \eqref{eq.hip2-j} and \eqref{eq.hip3-j} hold but not \eqref{eq.hip4-j}. In  Figure~\ref{fig:H3}, one   gives a case when~\eqref{eq.hip1-j}, \eqref{eq.hip2-j}, \eqref{eq.hip3-j},  and~\eqref{eq.hip4-j} hold.
\medskip

\noindent   
When \eqref{eq.hip1-j} and \eqref{eq.hip2-j} are satisfied,    from Definition~\ref{de.SSP} and Proposition~\ref{pr.p1} (see the first inclusion in~\eqref{eq.ck-omega1} and~\eqref{jx1}), one has
\begin{equation}\label{eq.k1-paC1-0}
\pa \Omega\cap \mbf j(x_{1,1})=\pa \Omega\cap  \pa \ft C_1   =\ft  U_1^{\pa \Omega}\cap \pa \ft C_1.
\end{equation}
In that case, we assume that the elements $\{z_1,\ldots,z_{\ft m_1^{\pa \Omega}}\}$ of $\ft U_1^{\pa \Omega}$ (see~\eqref{eq.U1paOmega}) are ordered such that 
\begin{equation}
\label{eq.k1-paC1}
\pa \Omega\cap  \pa \ft C_1=\{z_1,\ldots,z_{\ft k_1^{\pa\ft C_1}}\}
\end{equation}
where $\ft k_1^{\pa\ft C_1}\in \mathbb N^*$ satisfies $\ft k_1^{\pa \ft C_1}\le \ft m_1^{\pa \Omega}$  (see~\eqref{eq.U1paOmega}). Notice that from Lemma~\ref{equiv-hipo}, this labeling implies   when \eqref{eq.hip3-j} is satisfied:
\begin{equation}
\label{eq.k1-paC-m}
\ft k_1^{\pa \ft C_1}=\ft k_1^{\pa \ft C_{\ft{max}}},
\end{equation}
where $\ft k_1^{\pa \ft C_{\ft{max}}}$ is defined by~\eqref{eq.k1-paCmax}.
%
%
%
%
%
%
%
%
Let us finally prove the following result which will be used in the sequel. 

\begin{lemma}\label{le.11}
Let us assume that the assumptions \eqref{H-M}, \eqref{eq.hip1-j}, \eqref{eq.hip2-j} and \eqref{eq.hip3-j} are satisfied. Then, one has 
$$
 \min_{\Omega}f= \min_{\overline\Omega}f<\min_{\pa \Omega}f
$$
and 
\begin{equation}\label{hs2-c1}
 \argmin_{\ft C_1}f=\argmin_{\overline \Omega}f.
\end{equation}
\end{lemma}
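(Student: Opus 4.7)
The plan is first to anchor the height $\lambda(x_{1,1})$ using the geometric assumptions, and then to chain it with \eqref{eq.hip1-j} in order to obtain a strict separation of the local minima of $f$. Recall that $\ft C_1=\ft C(\lambda(x_{1,1}),x_{1,1})$ is a connected component of the open set $\{f<\lambda(x_{1,1})\}$, so by continuity of $f$ one has $\partial\ft C_1\subset\{f=\lambda(x_{1,1})\}$. By \eqref{eq.hip2-j} I may pick $z\in\partial\ft C_1\cap\partial\Omega$; then $f(z)=\lambda(x_{1,1})$, while \eqref{eq.hip3-j} gives $f(z)=\min_{\partial\Omega}f$. Hence
\[
\lambda(x_{1,1})=\min_{\partial\Omega}f.
\]

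Next, for every $x\in\ft U_0^\Omega$, Lemma~\ref{le.=delta} identifies $\lambda(x)$ with $\ft H_f(x)$, and any continuous path $\gamma:[0,1]\to\overline\Omega$ with $\gamma(0)=x$ and $\gamma(1)\in\partial\Omega$ satisfies $\max_t f(\gamma(t))\ge f(\gamma(1))\ge\min_{\partial\Omega}f$, so $\lambda(x)\ge\lambda(x_{1,1})$. Combining this with the strict inequality \eqref{eq.sj} (which follows from \eqref{eq.hip1-j} and \eqref{eq.strict-sep}) yields, for every $x\in\ft U_0^\Omega\setminus\{x_{1,1}\}$,
\[
f(x)-f(x_{1,1})>\lambda(x)-\lambda(x_{1,1})\ge 0.
\]

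From this strict separation both claims of the lemma follow. Since $x_{1,1}\in\ft C_1\subset\Omega$ and $f(x_{1,1})<\lambda(x_{1,1})=\min_{\partial\Omega}f$, one has $\min_{\overline\Omega}f\le f(x_{1,1})<\min_{\partial\Omega}f$, so the infimum of $f$ over $\overline\Omega$ is attained in $\Omega$ and $\min_\Omega f=\min_{\overline\Omega}f<\min_{\partial\Omega}f$. As $f$ is Morse, every element of $\argmin_{\overline\Omega}f$ is a non-degenerate local minimum of $f$ lying in $\Omega$, hence in $\ft U_0^\Omega$, and the displayed strict inequality forces $\argmin_{\overline\Omega}f=\{x_{1,1}\}$. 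The same argument applied inside $\ft C_1$ (whose minimizers are necessarily local minima in $\ft U_0^\Omega\cap\ft C_1$) gives $\argmin_{\ft C_1}f=\{x_{1,1}\}$, and therefore $\argmin_{\ft C_1}f=\argmin_{\overline\Omega}f$.

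The only subtle point is isolating the height $\lambda(x_{1,1})=\min_{\partial\Omega}f$ from the geometric conditions \eqref{eq.hip2-j}--\eqref{eq.hip3-j}; once this is done, everything else follows formally from \eqref{eq.hip1-j} together with the path definition of $\ft H_f$ provided by Lemma~\ref{le.=delta}.
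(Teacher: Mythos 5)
Your second paragraph contains the gap. Rearranging \eqref{eq.sj} gives
$f(x)-f(x_{1,1})>f(\mathbf j(x))-f(\mathbf j(x_{1,1}))$,
and you correctly have $f(\mathbf j(x_{1,1}))=\lambda(x_{1,1})$. But you then replace $f(\mathbf j(x))$ by $\lambda(x)$ on the right, and this is not legitimate: for a local minimum $x=x_{k,\ell}$ labeled at a step $k\ge 2$, one has $f(\mathbf j(x))=\sigma_{k}<\lambda(x)$ strictly, because $\mathbf j(x)$ lives on the boundary of the critical component $\ft E_{k,\ell}$, which appears below the height $\lambda(x)$ at which the component of $x$ first meets $\partial\Omega$. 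Since the replacement \emph{increases} the right-hand side, the chain $f(x)-f(x_{1,1})>\lambda(x)-\lambda(x_{1,1})$ is not a consequence of \eqref{eq.sj}, and it is in fact false in general. For instance, in dimension one, if $\ft C_1$ contains two non-degenerate minima $a$ and $b$ at the same level separated by a saddle strictly below $\min_{\partial\Omega}f$, then choosing $x_{1,1}=a$ and $x_{2,1}=b$ gives $f(x_{2,1})=f(x_{1,1})$ and $\lambda(x_{2,1})=\lambda(x_{1,1})$, so your displayed inequality reads $0>0$; yet \eqref{eq.sj} holds because $f(\mathbf j(x_{2,1}))<f(\mathbf j(x_{1,1}))$.

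This also shows that your conclusion $\argmin_{\overline\Omega}f=\{x_{1,1}\}$ is strictly stronger than the lemma and not true in general; the lemma asserts only the \emph{set} equality $\argmin_{\ft C_1}f=\argmin_{\overline\Omega}f$, precisely because $\argmin_{\ft C_1}f$ may contain several points. The paper sidesteps the issue by taking $x\in\argmin_\Omega f$, locating it in some $\ft C_k=\ft C(\lambda(x),x)$, and comparing the \emph{representative} $x_{1,k}$ of that component with $x_{1,1}$: for $x_{1,k}$ one does have $f(\mathbf j(x_{1,k}))=\lambda(x_{1,k})=\lambda(x)$ and $f(x_{1,k})=f(x)$, so $\lambda(x)-f(x)=f(\mathbf j(x_{1,k}))-f(x_{1,k})\ge f(\mathbf j(x_{1,1}))-f(x_{1,1})$, which by \eqref{eq.hip1-j} forces $k=1$. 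Your first paragraph sets up exactly the ingredient $\lambda(x_{1,1})=\min_{\partial\Omega}f$ and the bound $\lambda(x)\ge\lambda(x_{1,1})$ needed for this, so the fix is to restrict \eqref{eq.sj} to the representatives $x_{1,\ell}$, $\ell\ne 1$, where $\lambda=f(\mathbf j(\cdot))$, and then use $f|_{\ft C_\ell}\ge f(x_{1,\ell})$ to conclude that all global minimizers lie in $\ft C_1$, from which the set equality follows.
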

\begin{proof}
The fact that $\min_{\overline\Omega}f<\min_{\pa \Omega}f$ is obvious. Let us prove~\eqref{hs2-c1}. 
Let $k\in \{1,\ldots,\ft N_1\}$ and let us recall that from~Definition~\ref{de.1},  there exists $x\in \ft U_0^\Omega\cap \ft C_k$ such that $\ft  C_{k} =\ft  C(\lambda(x),x)$. Let us assume that  $x\in\argmin_{\Omega}f$. Then, by  definition of the map $\mbf j$ and by definition of $\lambda(x)$ (see~\eqref{eq.c3}) together with the fact that~\eqref{eq.hip1-j}, \eqref{eq.hip2-j} and \eqref{eq.hip3-j} hold,  one has   $f(\mathbf j(x_{1,k}))=\lambda(x)\geq \min_{\pa\Omega}f=f(\mathbf j(x_{1,1}))$. Thus, if $f(x_{1,k})=f(x)\le f(x_{1,1})$, it holds
 $$\lambda(x)-f(x)=f(\mathbf j(x_{1,k}))-f(x_{1,k})\geq f(\mathbf j(x_{1,1}))-f(x_{1,1}).$$
  This implies $\ft C_{k}=\ft C_{1}$ from the assumption \eqref{eq.hip1-j}.  This concludes the proof of~\eqref{hs2-c1}. 
  \end{proof}

\begin{figure}[h!]
\begin{center}
\begin{tikzpicture}
  \draw [domain=5:2*pi, scale=0.5, samples=300, smooth] plot (\x,{2*cos(\x r) });
    \draw [domain=2*pi:3.5*pi, scale=0.5, samples=300, smooth] plot (\x,{1.3*cos(\x r) +0.7});
\draw (5.43,0.26)--(6.8,2); 
    \draw (6.8,2) ..controls (7.1,2.4) and  (7.6,2.25) ..  (8,1.6);
       \draw (-0.9,1) node[]{$\mbf{j}(x_{1,1})=\{z_1\}$};
  \draw (2.9,1.25) node[]{$z_{1}$};
  \draw (3.2,1) node[]{$\bullet$};

    \draw (4.8,-0.58) node[]{$x_{1,1}$};
  \draw (4.8,-0.3) node[]{$\bullet$};
 
       \draw[densely dashed, <->](3.5, 1)-- (5.9, 1) ;
      \draw (4.7,1.3) node[]{\small{$\ft E_{1,1}=\ft C_1=\ft C_{\ft{max}}$}};
    \end{tikzpicture}
    \caption{A one-dimensional example when \eqref{eq.hip1-j} holds but not \eqref{eq.hip2-j}, \eqref{eq.hip3-j} and \eqref{eq.hip4-j}. }
    \label{fig:A1}
    \end{center}
\end{figure}
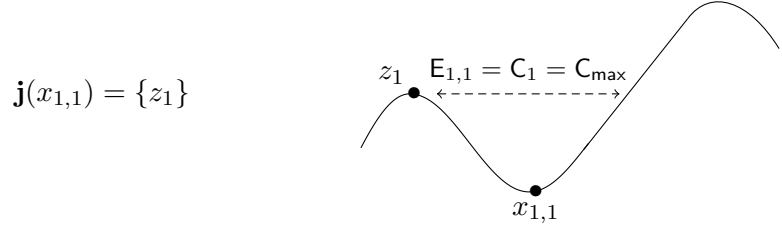
 
\begin{figure}[h!]
\begin{center}
\begin{tikzpicture}
   \draw[densely dashed, <->] (0.7,0.4) -- (0.7,-0.93);
     \draw (-1.1,-0.5) node[]{$f(\mbf{j} (x_{1,2}))-f(x_{1,2})$};
     \draw[densely dashed, <->] (8.6,2.2) -- (8.6,-1.8);
     \draw (10.4,-0) node[]{$f(\mbf{j}(x_{1,1}))-f(x_{1,1})$};
     
    \draw  (4.89,2) ..controls (3,-2) and  (2,-1)  ..  (1.05,0.5) ;
    \draw  (5.38,2) ..controls (6.5,-3) and  (7.2,-3) .. (8,2.2) ;
    \draw  (4.89,2) ..controls (5.02,2.26) and  (5.3,2.25) ..  (5.38,2);
   \draw (-0.3,3.5) node[]{$\mbf{j}(x_{1,1})=\{z_1,z_2\}$};
   \draw (-0.55,2.8) node[]{$\mbf{j}(x_{1,2})=\{z_3\}$};
  \draw (0.7,0.65) node[]{$z_3$};
  \draw (1.05,0.5) node[]{$\bullet$};
  
  \draw (2.6,-1.1) node[]{$x_{1,2}$};
    \draw (2.6,-0.83) node[]{$\bullet$};
    
     \draw (6.85,-2) node[]{$x_{1,1}$};
    \draw (6.8,-1.75)node[]{$\bullet$};
    
      \draw (8,2.2) node[]{$\bullet$};
      \draw (8.4,2.5)  node[]{$z_1$};
      \draw (5.2,2.19) node[]{$\bullet$};
      \draw (4.9,2.4) node[]{$z_2$};
  
   \draw[densely dashed, <->] (1.22,0.6)-- (4.1,0.6);
 \draw (2.6,1) node[]{$\ft E_{1,2}=\ft C_2$};
 \draw[densely dashed, <->] (5.38,2.2)-- (7.9,2.2);
     \draw (6.62,2.5) node[]{\small{$\ft E_{1,1}=\ft C_1=\ft C_{\ft{max}}$}};
    \end{tikzpicture}
    \caption{A one-dimensional example  when \eqref{eq.hip1-j} and \eqref{eq.hip2-j} hold. 
       In this example, \eqref{eq.hip3-j}  and \eqref{eq.hip4-j}  do not hold. }
    \label{fig:H2}
    \end{center}
\end{figure}
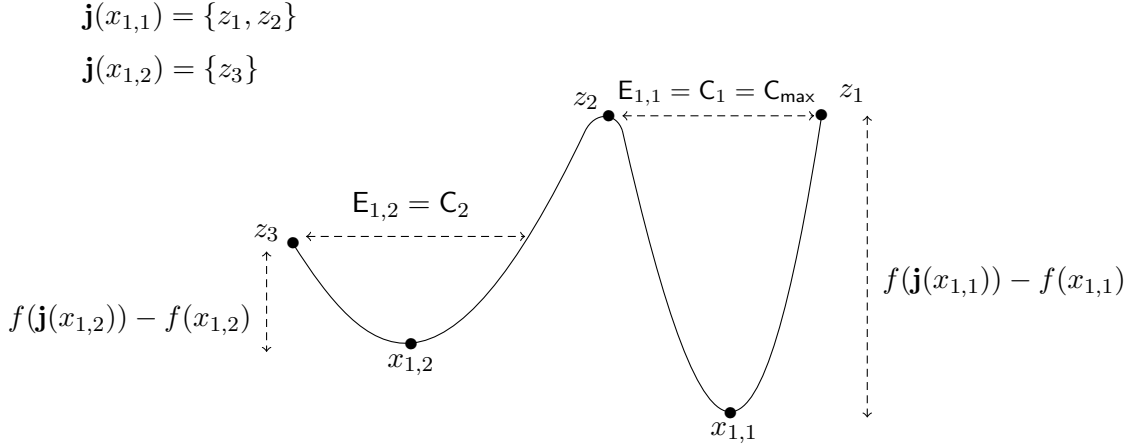

\begin{figure}[h!]
\begin{center}
\begin{tikzpicture}
  \draw [domain=1.7:2*pi, scale=0.5, samples=300, smooth] plot (\x,{2*cos(\x r) });
    \draw [domain=2*pi:3.5*pi, scale=0.5, samples=300, smooth] plot (\x,{1.3*cos(\x r) +0.7});
\draw (0.3,1)--(0.867,-0.167); 
\draw (5.43,0.26)--(6.8,2); 
    \draw (6.8,2) ..controls (7.1,2.4) and  (7.6,2.25) ..  (8,1.6);
       \draw (-0.3,3.5) node[]{$\mbf{j}(x_{1,1})=\{z_1,z_2\}$};
   \draw (-0.51,2.8) node[]{$\mbf{j}(x_{1,2})=\{z_2\}$};
\draw [dashed] (-0.5,1)--(9.3,1); 
  \draw (10.9,1) node[]{$\{f=\min_{\pa \Omega}f\}$};
  \draw (0,1.3) node[]{$z_1$};
  \draw (0.3,1) node[]{$\bullet$};  
  \draw (3.2,1.25) node[]{$z_{2}$};
  \draw (3.2,1) node[]{$\bullet$}; 
   \draw (1.58,-1.26) node[]{$x_{1,1}$};
  \draw (1.58,-1) node[]{$\bullet$};
  
    \draw (4.8,-0.58) node[]{$x_{1,2}$};
  \draw (4.8,-0.3) node[]{$\bullet$};
   \draw[densely dashed, <->] (0.5,1.2)-- (2.8, 1.2);
      \draw (1.7,1.5) node[]{\small{$\ft E_{1,1}=\ft C_1=\ft C_{\ft{max}}$}};
       \draw[densely dashed, <->](3.5, 1.2)-- (5.8, 1.2) ;
      \draw (4.7,1.5) node[]{$\ft E_{1,2}=\ft C_2$};
    \end{tikzpicture}
    \caption{A one-dimensional example when \eqref{eq.hip1-j}, \eqref{eq.hip2-j} and \eqref{eq.hip3-j} hold but not \eqref{eq.hip4-j}. }
    \label{fig:eH3+}
    \end{center}
\end{figure}
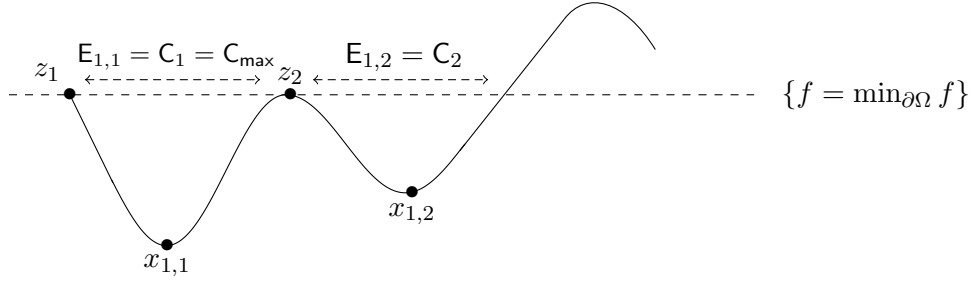

\begin{figure}[h!]
\begin{center}
\begin{tikzpicture}
   \draw[densely dashed, <->] (0.7,0.4) -- (0.7,-1.7);
     \draw (-1.1,-0.5) node[]{$f(\mbf{j} (x_{1,1}))-f(x_{1,1})$};
     \draw[densely dashed, <->] (7.8,2.2) -- (7.8,1.2);
     \draw (9.6,1.7) node[]{$f(\mbf{j} (x_{1,2}))-f(x_{1,2})$};    
    \draw  (4.89,2) ..controls (3,-3) and  (2,-2)  ..  (1.05,0.5) ;
    \draw (5.38,2) ..controls (5.7,1.6) and  (6.3,0.5) .. (7.5,2.2) ;
    \draw (4.89,2) ..controls (5.02,2.26) and  (5.21,2.25) ..  (5.38,2);
  \draw [dashed] (1.05,0.5)--(9,0.5); 
   \draw (10.5,0.5) node[]{$\{f=\min_{\pa \Omega }f\}$};
   \draw (-0.3,3.5) node[]{$\mbf{j} (x_{1,1})=\{z_1\}$};
   \draw (0,2.8) node[]{$\mbf{j} (x_{1,2})=\{z_2,z_3\}$};
  \draw (0.45,0.7) node[]{$z_1$};
  \draw (1.05,0.5) node[]{$\bullet$};
  
  \draw (2.6,-1.86) node[]{$x_{1,1}$};
    \draw (2.6,-1.58) node[]{$\bullet$};
    
     \draw (6.4,1) node[]{$x_{1,2}$};
    \draw (6.3,1.25)node[]{$\bullet$};
    
      \draw (7.5,2.2) node[]{$\bullet$};
      \draw (7.68,2.5)  node[]{$z_3$};
      \draw (5.1,2.19) node[]{$\bullet$};
      \draw (4.8,2.4) node[]{$z_2$};
  
   \draw[densely dashed, <->] (1.25,0.7)-- (4.3,0.7);
 \draw (2.6,1) node[]{\small{$\ft E_{1,1}=\ft C_1=\ft C_{\ft{max}}$}};
 \draw[densely dashed, <->] (5.3,2.2)-- (7.35,2.2);
     \draw (6.3,2.5) node[]{$\ft E_{1,2}=\ft C_2$};
    \end{tikzpicture}
    \caption{A one-dimensional example   when \eqref{eq.hip1-j}, \eqref{eq.hip2-j}, \eqref{eq.hip3-j},  and~\eqref{eq.hip4-j} are satisfied.  }
    \label{fig:H3}
    \end{center}
\end{figure}
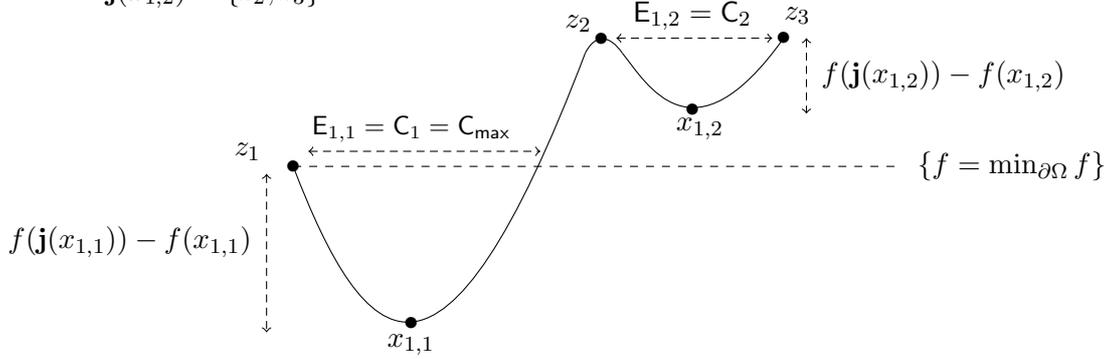
 
\section{Construction of the quasi-modes}
\label{section-2}
This section is dedicated to  the construction of two families of quasi-modes: 
a family of functions which aims at approximating the vector space spanned by the eigenfunctions associated with the  smallest eigenvalues of $-L^{D,(0)}_{f,h}$  and a family of $1$-forms which aims at approximating the vector space spanned by the eigenforms associated with the   smallest eigenvalues of $-L^{D,(1)}_{f,h}$. This construction is made  using the maps ~$\mbf j$ and~$\widetilde{\mbf j}$ previously constructed. 

This section is organized as follows. In Section~\ref{sec.nota}, we introduce   the notations    used throughout this paper for operators, and   the properties   of Witten Laplacians and  of the operators $L_{f,h}^{D,(p)}$ needed in our analysis. 
The maps ~$\mbf j$ and~$\widetilde{\mbf j}$ constructed in the previous section are then used to build the quasi-modes in Section~\ref{sec:QM-L}.

\subsection{Notations and Witten Laplacian}
\label{sec.nota}
 
In  Section~\ref{sec:nota}, one introduces the notations for the Sobolev spaces which are used in this paper.   Section~\ref{sec:LD} is dedicated to the properties   of Witten Laplacians and  of the operators~$L_{f,h}^{D,(p)}$ needed in our analysis.



\subsubsection{Notation for Sobolev spaces}
\label{sec:nota}

For $p\in\{0,\ldots,d\}$, one denotes by
$\Lambda^pC^{\infty}(\overline{\Omega})$ the space of $C^{\infty}$ $p$-forms on
$\overline\Omega$\label{page.cinfty}. Moreover,~$\Lambda^pC^{\infty}_T(\overline\Omega)$ is the set of
$C^{\infty}$ $p$-forms $v$ such that $\mbf tv=0$ on $\partial \Omega$, where $\mathbf t$ denotes the tangential trace on forms\label{page.cinftyt}. 
For $p\in \{0, \ldots,d\}$ and $q\in \mathbb N$\label{page.wsobolevq}, one denotes by
$\Lambda^pH^q_w(\Omega)$ the weighted Sobolev spaces of $p$-forms with
regularity index $q$, for the weight  function~$e^{-\frac{2}{h} f(x)} $ on
$\Omega$: $v\in \Lambda^pH^q_w(\Omega)$ if and only if for all multi-index $\alpha$ with $\vert \alpha \vert \le q$, the $\alpha$ derivative of $v$ is in~$\Lambda^pL^2_w(\Omega)$ where $\Lambda^pL^2_w(\Omega)$ is the completion of the space $\Lambda^pC^{\infty}(\overline{\Omega})$ for the norm 
$$w\in \Lambda^pC^{\infty}(\overline{\Omega})\mapsto \sqrt{\int_{\Omega} \vert w\vert^2e^{-\frac 2h f}}.$$
The subscript $w$ in the notation $L^2_w(\Omega)$ refers to the
fact that the weight  function~$x\in \Omega \mapsto e^{-\frac{2}{h} f(x)} $ appears in the inner product. 
 See for example~\cite{GSchw} for an introduction to Sobolev spaces on manifolds with boundaries. For $p\in\{0,\ldots,d\}$ and $q> \frac{1}{2}$, the set $\Lambda^pH^q_{w,T}(\Omega)$ is defined by 
$$\Lambda^pH^q_{w,T}(\Omega):= \left\{v\in \Lambda^pH^q_w(\Omega)
  \,|\,  \mathbf  tv=0 \ {\rm on} \ \partial \Omega\right\}\label{page.wsobolevqt}.$$
Notice that $\Lambda^pL^2_w(\Omega)$ is the space
$\Lambda^pH^0_w(\Omega)$, and that $\Lambda^0H^1_{w,T}(\Omega)$ is
the space $H^{1}_{w,0}(\Omega)$ than we introduced in Proposition~\ref{fried}. 
We will denote by~$\Vert . \Vert_{H^q_w}$  the norm on the weighted space $\Lambda^pH^q_w
(\Omega)$. Moreover $\langle
\cdot , \cdot\rangle_{L^2_w}$ denotes the scalar product in~$\Lambda^pL^2_w (\Omega)$\label{page.psl2w}.

Finally, we will also use the same notation without the index $w$ to
denote the standard Sobolev spaces defined with respect to the
Lebesgue measure on $\Omega$\label{page.psl2}.


\subsubsection{The Witten Laplacian and the infinitesimal generator of the diffusion~(\ref{eq.langevin}) }
\label{sec:LD}
In this section, we  recall some basic properties of Witten Laplacians, as well as the link between those and the operators $L_{f,h}^{(p)} $ introduced above (see~\eqref{eq.L-0} and~\eqref{eq.L-1}).\\
For $p\in\{0,\dots,n\}$,   one defines  the distorted  exterior derivative \`a la Witten   $d^{(p)}_{f,h}:\Lambda^{p} \,C^{\infty}(\Omega)\to \Lambda^{p+1} \,C^{\infty}(\Omega)$  and   its formal adjoint:
$d_{f,h}^{(p)*}:\Lambda^{p+1} \,C^{\infty}(\Omega)\to \Lambda^{p} \,C^{\infty}(\Omega)$  by 
$$d_{f,h}^{(p)} := e^{-\frac{1}{h}f}\, h\, d^{(p)}\, e^{\frac{1}{h}f} \, \text{ and }  d_{f,h}^{(p)*} := e^{\frac{1}{h}f}\,h\,d^{(p)*}\,e^{-\frac{1}{h}f}.$$
The Witten Laplacian, firstly introduced in \cite{Wit},
is then defined similarly as the Hodge Laplacian $\Delta^{(p)}_H:=(d+ d^*)^2:\Lambda^{p} \,C^{\infty}(\Omega)\to \Lambda^{p} \,C^{\infty}(\Omega)$ by
$$
\Delta_{f,h}^{(p)} := (d_{f,h}+ d_{f,h}^*)^2= d_{f,h}d^{*}_{f,h}+d^{*}_{f,h}d_{f,h}\ :\ \Lambda^{p} \,C^{\infty}(\Omega)\to \Lambda^{p} \,C^{\infty}(\Omega).\label{page.wlaplacien}
$$
The Dirichlet   realization of $\Delta_{f,h}^{(p)}$ on $\Lambda^{p} L^{2}(\Omega)$ is denoted by~$\Delta_{f,h}^{D,(p)}$ and its domain is   
$$D\left (\Delta^{D,(p)}_{f,h}\right )=\left\{  w \in    \Lambda^pH^2\left (\Omega\right )  | \ \mathbf  tw=0, \ \mathbf  t d^*_{f,h}w=0   \right \}.$$
The operator $\Delta^{D,(p)}_{f,h}$ is self-adjoint\label{page.wlaplaciend}, nonnegative, and its associated quadratic form is given by 
$$
\phi\in \Lambda^{p}H^1_{T}(\Omega)\, 
\mapsto\,  \Vert d^{(p)}_{f,h}\phi \Vert_{ L^2 }^2 +\Vert d^{(p)*}_{f,h}\phi\Vert_{L^2}^2,
$$
where
$$\Lambda^{p}H^1_{T}(\Omega)=\left\{  w \in    \Lambda^pH^1\left (\Omega\right )  | \ \mathbf  tw=0 \right \}.$$
We refer in particular to \cite[Section 2.4]{HeNi1} for a comprehensive definition of Witten Laplacians with Dirichlet tangential boundary conditions and statements on their properties.
The link between the Witten Laplacian and the infinitesimal generator $L^{(0)}_{f,h}$ of the diffusion~\eqref{eq.langevin} is   the following:
since
\begin{equation}\label{eq.Witten0}
L^{(0)}_{f,h} = -\nabla f \cdot \nabla - \frac{h}{2}  \ \Delta_H^{(0)}\, \text{ and }\,
\Delta^{(0)}_{f,h}= h^2 \Delta_H^{(0)} + \vert \nabla f\vert^2 + h\Delta_H^{(0)}f
,
\end{equation}
one has:
$$
\Delta^{D,(0)}_{f,h}=  -2\, h \, U\,  L^{D,(0)}_{f,h} \,\, U^{-1}
$$
where $U$ is the unitary operator
\begin{equation}\label{U}
U: \left\{
    \begin{array}{ll}
        \Lambda^pL^2_w\left (\Omega\right )\,  \rightarrow\ \Lambda^pL^2\left (\Omega\right)  \\
        \phi \mapsto e^{-\frac{1}{h}f} \phi.
    \end{array}
\right.
 \end{equation}

 In particular, the operator $L^{D,(0)}_{f,h}$
 has a natural extension to  $p$-forms defined by the relation\label{page.generatord}
 \begin{equation}\label{eq.unitary} 
L^{D,(p)}_{f,h}=  -\frac{1}{2 h} \,U^{-1}\,  \Delta^{D,(p)}_{f,h} \,\, U.
\end{equation}
For $p=1$, one recovers the operator $L^{(1)}_{f,h}$ with tangential Dirichlet boundary conditions defined by~\eqref{eq:L1_eig} and~\eqref{eq.L-1}. 
The operator $L^{D,(p)}_{f,h}$ with domain
$$D\left (L^{D,(p)}_{f,h}\right )= 
U^{-1}\,D\left (\Delta^{D,(p)}_{f,h}\right )= 
\left\{  w \in    \Lambda^pH^2_w\left (\Omega\right )  | \ \mathbf  tw=0, \ \mathbf  t d^*_{\frac{2f}h,1}w=0   \right \},$$
 is then self-adjoint on $\Lambda^pL^2_w\left (\Omega\right )$, non positive and its associated quadratic form is
$$
\Lambda^{p}H^1_{T}(\Omega)\ \ni\ \phi\ \mapsto\  -\frac{h}{2} \Big [\big\Vert d^{(p)}\phi \big\Vert_{  L^2_w }^2 +\big\Vert d_{\frac{2f}h,1}^{(p)*}\phi \big\Vert_{L^2_w }^2\Big].
$$
Let us also recall that $-L^{D,(p)}_{f,h}$
(and equivalently $\Delta^{D,(p)}_{f,h}$) has a compact resolvent. 
From general results on elliptic operators
when $p=0$,~$-L^{D,(0)}_{f,h}$ (and $\Delta^{D,(0)}_{f,h}$) admits a non degenerate smallest eigenvalue with an associated eigenfunction which has a sign on $\Omega$. 
Denoting moreover by~$\pi_{E}(L^{D,(p)}_{f,h})$ the spectral projector
  associated with $L^{D,(p)}_{f,h}$ and  some Borel set 
$E\subset \mathbb R$\label{page.piea}, the following commutation relations hold
on   $\Lambda^{p}H^1_{T}(\Omega)$:
\begin{equation}\label{eq.comutation}
d^{(p)}\, \pi_{E}(L^{D,(p)}_{f,h})= \pi_{E}(L^{D,(p+1)}_{f,h}(\Omega))\, d^{(p)}\ \text{and}\   
d_{\frac{2f}h,1}^{(p)*}\,\pi_{E}(L^{D,(p)}_{f,h})= \pi_{E}(L^{D,(p-1)}_{f,h})\,d_{\frac{2f}h,1}^{(p)*}.
\end{equation}
Let us recall that from the elliptic regularity of $L^{D,(p)}_{f,h}$, for any bounded Borel set $E\subset \mathbb R$,~$\Ran\,\pi_{E}(L^{D,(d)}_{f,h})\subset \Lambda^{p}\,C^{\infty}_T(\overline{\Omega})$,  the relation~\eqref{eq.comutation}  then 
leads to 
the following complex structure: 
$$\{0\} \longrightarrow 
\Ran\,\pi_{E}(L^{D,(0)}_{f,h}) \xrightarrow{\ d_{f,h}\  } \Ran\,\pi_{E}(L^{D,(1)}_{f,h}) \xrightarrow{\ d_{f,h}\  } \,\cdots\, \xrightarrow{\ d_{f,h}\  } \Ran\,\pi_{E}(L^{D,(d)}_{f,h}) \xrightarrow{\ d_{f,h}\  }\{0\}$$
and
$$\{0\} \xleftarrow{d_{\frac{2f}{h},1}^* } \Ran\,\pi_{E}(L^{D,(0)}_{f,h}) \xleftarrow{d_{\frac{2f}{h},1}^* } \Ran\,\pi_{E}(L^{D,(1)}_{f,h}) \xleftarrow{d_{\frac{2f}{h},1} ^*} \cdots\xleftarrow{d_{\frac{2f}{h},1} ^*} \Ran\,\pi_{E}(L^{D,(d)}_{f,h}) \longleftarrow \ \{0\}.$$
For ease of notation,  
one defines:\label{page.pihp}
\begin{equation}
\label{eq.proj-p}
\forall p\in \{0,\dots,d\}\,,\ \  \pi^{(p)}_h:=\pi_{[0,\frac{\sqrt h}2)}(-L^{D,(p)}_{f,h}).
\end{equation}
The following result, instrumental in our investigation of the smallest eigenvalue $\lambda_h$ of $-L_{f,h}^{D,(0)}$,  is an immediate consequence of \cite[Theorem 3.2.3]{HeNi1} together with~\eqref{eq.unitary}.
\begin{lemma} \label{ran1}
Under assumption \eqref{H-M}, there exists $h_0>0$ such that for all $h\in (0,h_0)$,
$$
\dim \Ran \, \pi^{(0)}_h= {\ft m_{0}^{\Omega}} \, \text{ and }  \dim \Ran \,  \pi^{(1)}_h= \ft m_1^{\overline \Omega},
$$
where ${\ft m_{0}^{\Omega}}={\rm Card}(\ft U_0^\Omega)$ and $\ft m_1^{\overline \Omega}={\rm Card}(\ft U_1^{\overline \Omega})$
are defined in Section~\ref{se.def-zj}.
\end{lemma}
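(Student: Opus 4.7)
The proof is essentially a transfer, through the unitary conjugation $U$ of equation \eqref{U}, of a known counting result for the small eigenvalues of the Dirichlet Witten Laplacians. I would proceed in three short steps.

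First, recall the identity \eqref{eq.unitary}, namely $\Delta^{D,(p)}_{f,h}=-2h\,U\,L^{D,(p)}_{f,h}\,U^{-1}$. Since $U$ is unitary from $\Lambda^p L^2_w(\Omega)$ onto $\Lambda^p L^2(\Omega)$ and maps $D(L^{D,(p)}_{f,h})$ onto $D(\Delta^{D,(p)}_{f,h})$, one has the spectral correspondence $\mu\in\sigma(\Delta^{D,(p)}_{f,h})\iff \mu/(2h)\in\sigma(-L^{D,(p)}_{f,h})$, with eigenprojectors intertwined by $U$. In particular,
\begin{equation*}
\dim \Ran\,\pi^{(p)}_h=\dim\Ran\,\pi_{[0,\,h\sqrt h)}\bigl(\Delta^{D,(p)}_{f,h}\bigr),
\end{equation*}
so it suffices to count the eigenvalues of $\Delta^{D,(p)}_{f,h}$ in the window $[0,h^{3/2})$ for $p\in\{0,1\}$.

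Second, I invoke Theorem 3.2.3 of \cite{HeNi1}. Under assumption \eqref{H-M}, that theorem asserts that for $h$ small enough the operator $\Delta^{D,(p)}_{f,h}$ admits exactly $m_p$ eigenvalues in $[0,h^{3/2})$, where $m_p$ is the number of \emph{generalized critical points of $f$ of index $p$} in the sense of the Dirichlet tangential realization. The remaining eigenvalues are bounded below by a positive constant independent of $h$, and moreover the counted eigenvalues are in fact exponentially small. For $p=0$, the generalized critical points of index $0$ are exactly the local minima of $f$ in $\Omega$ (a point of $\partial\Omega$ cannot be a generalized minimum because of the Dirichlet condition, which corresponds to extending $f$ by $-\infty$ outside $\overline\Omega$), so $m_0=\ft m_0^\Omega$. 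For $p=1$, the generalized saddle points consist of the interior saddle points of $f$, together with points $z\in\partial\Omega$ at which $f|_{\partial\Omega}$ has a local minimum and $\partial_n f(z)>0$; combining \eqref{mo-omega}, \eqref{eq.mathcalU1_bis} and the definition of $\ft U_1^{\overline\Omega}$, this yields $m_1=\ft m_1^{\overline\Omega}$.

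Third, I combine the two points: the spectral window $[0,\sqrt h/2)$ for $-L^{D,(p)}_{f,h}$ corresponds precisely to $[0,h^{3/2})$ for $\Delta^{D,(p)}_{f,h}$, whence
\begin{equation*}
\dim\Ran\,\pi^{(0)}_h=\ft m_0^\Omega\quad\text{and}\quad \dim\Ran\,\pi^{(1)}_h=\ft m_1^{\overline\Omega},
\end{equation*}
for all $h\in(0,h_0)$ with $h_0>0$ small enough. There is no real obstacle beyond correctly identifying the generalized critical points with those described in Section \ref{se.def-zj}; this identification, in the boundary case, is the substantive content of the work of Helffer--Nier that we are citing, and once it is accepted the lemma follows immediately from unitary equivalence.
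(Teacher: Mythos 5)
Your proof is correct and takes essentially the same route as the paper: the paper states that the lemma is an immediate consequence of \cite[Theorem 3.2.3]{HeNi1} together with the unitary relation~\eqref{eq.unitary}, which is exactly the chain of reasoning you spell out (conjugate by $U$ to rescale the spectral window from $[0,\sqrt h/2)$ to $[0,h^{3/2})$, then invoke Helffer--Nier's count of generalized critical points of index $0$ and $1$ for the Dirichlet Witten Laplacians). The only cosmetic slip is citing~\eqref{mo-omega} (the list of local minima) when identifying $m_1$; the relevant references there are the definitions of $\ft U_1^{\Omega}$, $\ft U_1^{\pa\Omega}$ and~\eqref{eq.mathcalU1_bis}, but this does not affect the argument.
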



\noindent
In the sequel, with a slight abuse of notation, one denotes the exterior differential $d$ acting on functions by~$\nabla$. 
Note that it follows from the above considerations and Lemma~\ref{ran1} that
under \eqref{H-M}, it holds
\begin{equation}
\label{eq.nablain}
u_{h}\in \,\Ran \, \pi^{(0)}_h\ \ \text{and}\ \ \nabla u_{h}\in\Ran \, \pi^{(1)}_h.
\end{equation}
Moreover,  from~\eqref{eq.unitary}, it is equivalent to study the spectrum of $L_{f,h}^{D,(0)}$ or the spectrum of $\Delta_{f,h}^{D,(0)}$.  We end this section with the following lemma which will be frequently used throughout this work.
\begin{lemma} \label{quadra}
Let $(A,D\left (A\right ))$ be a non negative self adjoint operator on a Hilbert space $\left(\mc H, \Vert\cdot\Vert\right)$ with associated quadratic form $q_A(x)=(x,Ax)$ whose domain is $Q\left (A\right )$. It then holds,  for any $u\in Q\left (A\right )$ and $b>0$,
$$\left\Vert \pi_{[b,+\infty)} (A) \, u\right\Vert^2 \leq \frac{q_A(u)}{b},$$
where, for a Borel set $E\subset \mathbb R$,~$\pi_{E}(A)$ is the spectral projector associated with $A$
and $E$.
\end{lemma}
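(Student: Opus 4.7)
My plan is to use the spectral theorem for non-negative self-adjoint operators. Since $A$ is non-negative and self-adjoint, it admits a spectral resolution $(E_\lambda)_{\lambda \geq 0}$ supported in $[0,+\infty)$. I will work with the scalar spectral measure $d\mu_u(\lambda) := d\langle E_\lambda u, u\rangle$ associated with a fixed $u \in Q(A)$.

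The first step is to rewrite both sides of the inequality in terms of this spectral measure. For $u \in Q(A)$, the standard characterization of the form domain gives
\begin{equation*}
q_A(u) = \int_{[0,+\infty)} \lambda \, d\mu_u(\lambda),
\end{equation*}
and this integral is finite precisely because $u \in Q(A)$. Similarly, the spectral projector $\pi_{[b,+\infty)}(A)$ corresponds to the indicator $\mathbf{1}_{[b,+\infty)}$, so that
\begin{equation*}
\bigl\|\pi_{[b,+\infty)}(A) u\bigr\|^2 = \int_{[b,+\infty)} d\mu_u(\lambda).
\end{equation*}

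The second step is the elementary estimate at the level of the scalar measure: on $[b,+\infty)$ we have $\lambda \geq b$, hence
\begin{equation*}
q_A(u) \;\geq\; \int_{[b,+\infty)} \lambda \, d\mu_u(\lambda) \;\geq\; b \int_{[b,+\infty)} d\mu_u(\lambda) \;=\; b\, \bigl\|\pi_{[b,+\infty)}(A) u\bigr\|^2,
\end{equation*}
from which the lemma follows upon dividing by $b > 0$. There is no real obstacle here: the only point requiring a little care is the justification of the identity $q_A(u) = \int \lambda \, d\mu_u$ for $u \in Q(A)$ (as opposed to $u \in D(A)$), but this is exactly the standard definition of the form domain $Q(A) = D(A^{1/2})$ via the spectral calculus, so it can be invoked directly.
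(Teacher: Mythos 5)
Your argument is correct: it is the standard spectral-theoretic proof, writing $q_A(u)=\int\lambda\,d\mu_u$ and $\|\pi_{[b,\infty)}(A)u\|^2=\int_{[b,\infty)}d\mu_u$ and bounding $\lambda\ge b$ on $[b,\infty)$. The paper does not actually give a proof of this lemma (it is stated as a standard fact), so there is no alternative argument to compare against; yours is the canonical one, and the only point deserving the care you already give it is the extension of $q_A(u)=\int\lambda\,d\mu_u$ from $D(A)$ to the form domain $Q(A)=D(A^{1/2})$.
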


\subsection{Construction  of the quasi-modes for~$-L_{f,h}^{D,(p)}$,~$p\in\{0,1\}$}
\label{sec:QM-L}
Let us recall that from Lemma~\ref{ran1}, one has for any $h$ small enough 
$$
\dim \Ran \, \pi^{(0)}_h= {\ft m_{0}^{\Omega}} \, \text{ and }  \dim \Ran \,  \pi^{(1)}_h= \ft m_1^{\overline \Omega},
$$
 where we recall that~$\ft m_0^{ \Omega}$ is the number of local minima of~$f$ in~$\Omega$ and~$\ft m_1^{\overline \Omega}$ is the number generalized saddle points of~$f$ in~$\overline\Omega$, see Section~\ref{se.def-zj}.
To prove Theorem~\ref{thm.main}, the strategy  consists in constructing a family  of $\ft m_0^{  \Omega}$ quasi-modes  in order to approximate  $\Ran\,\pi_h^{(0)}$ and  a family  of $\ft m_1^{\overline \Omega}$ quasi-modes in order to approximate  $\Ran\,\pi_h^{(1)}$, see~\eqref{eq.proj-p}.

Since the construction of the quasi-modes rely  on the one made for  Witten Laplacians in~\cite{HKN, HeNi1,HeHiSj}, we first construct quasi-modes for the Witten Laplacians $\Delta_{f,h}^{D,(0)}$ (Section~\ref{sec.construction-qm0}) and   $\Delta_{f,h}^{D,(1)}$ (Section~\ref{sec:delta-1-qm}). The  quasi-modes for  $-L_{f,h}^{D,(0)}$ and   $-L_{f,h}^{D,(1)}$ are then obtained using~\eqref{eq.unitary} (Section~\ref{sec.LOAA}).

\subsubsection{Quasi-modes for the Witten Laplacian $\Delta_{f,h}^{D,(0)}$}\label{sec.construction-qm0}
 \begin{sloppypar}
 Let us assume that  the assumption \eqref{H-M} is satisfied.
 Let us recall that from  Lemma~\ref{ran1} and~\eqref{eq.unitary}, there exists $h_0>0$ such that for any $h\in (0,h_0)$: 

 $$\dim \Ran\, \pi_{[0,h^{\frac 32} )}\big (\Delta^{D,(0)}_{f,h}\big)=\ft m_0^{  \Omega},$$
where we recall that $\ft m_0^{  \Omega}$ is the number of local minima of~$f$ in~$\Omega$.
In this section, one constructs using the maps $\mbf j$ and $\widetilde{\mbf j}$ constructed in Section~\ref{sec:labeling},  a family  of $\ft m_0^{\Omega}$  functions  whose span approximates   $\Ran\, \pi_{[0,h^{\frac 32})}\big (\Delta^{D,(0)}_{f,h}\big)$.
The  properties of this family which are listed in this section will be useful to prove Proposition~\ref{ESTIME1-base} below and Propositions \ref{gamma1} and  \ref{ESTIME1} in the next section. Following \cite{HKN,HeNi1,HeHiSj}, we   associate each critical point $x\in\ft U_0^\Omega$
with a quasi-mode for~$\Delta_{f,h}^{D,(0)}$. The notation follows the one introduced   in Section~\ref{sec:labeling}.\\
\end{sloppypar}

Let us first introduce two parameters $\ve_1>0$ and $\ve>0$ which will be  used to define the quasi-modes for~$\Delta_{f,h}^{D,(0)}$. In the following,~$d$ is the geodesic distance on $\overline \Omega$  for the initial metric.  Let us consider~$\ve_{1}>0$   small enough such that 
\begin{equation}
\label{eq.epsilon1}
\forall z,z'\in \ft U_1^{\overline \Omega} ,\  z\neq z'\ \text{implies}\ d(z,z')\geq 6\ve_{1}
\end{equation}
 and for all $z\in \ft U_1^{\overline \Omega} $,
\begin{equation}
\label{eq.epsilon1'}
z\in  \ft U^{\Omega}_1\ \ \text{and}\ \{f<f(z)\}\cap B(z,2\ve_{1})\  \text{has two connected components
(see~\eqref{eq.2-conn}),}
\end{equation}
or
\begin{equation}
\label{eq.epsilon1''}
z\in  \ft U^{\pa\Omega}_1\ \ \text{and}\ \{f<f(z)\}\cap B(z,2\ve_{1})\  \text{is connected.}
\end{equation}
The parameter~$\ve_1>0$ will be successively reduced a finite number of times in this section and in Section~\ref{sec:delta-1-qm}, and it will be kept fixed from the end of Section~\ref{sec:delta-1-qm}.

Let $\ve>0$ be such that 
$$0<\ve < \frac 12 \ \min_{k\ge 1, \, \ell\in \{1,\ldots,\ft N_k\} } \Big ( \max_{\overline{\ft E_{k,\ell}} } f-\max_{\ft U_1^{\ft{ssp}} \cap \ft E_{k,\ell}} f\Big),$$
which ensures in particular that $\ft E_{k,\ell}(\ve)$ and $\ft E_{k,\ell}(2\ve)$ are connected for all $k\ge 1$ and $\ell \in\{1,\dots,\ft N_1\}$, see~\eqref{eq.ve-E}. The parameter~$\ve>0$ will   be  further reduced a finite number of times in the following sections so that $\pa \chi_{k,\ell}^{\ve,\ve_1}$ is as close as necessary to $\pa \ft E_{k,\ell}$ near $\mbf j(x_{k,\ell})$, where $\chi_{k,\ell}^{\ve,\ve_1}$ is used to define   the quasi-mode for~$\Delta_{f,h}^{D,(0)}$ associated to $x_{k,\ell}$.

\begin{definition}\label{de.v1} Let us assume that  the assumption \eqref{H-M} holds.  
For~$k\geq 1$ and $\ell\in\{1,\dots, \ft N_{k}\}$, the quasi-mode associated with  $x_{k,\ell}$ is  defined by:\label{page.tilevkl} 
\begin{equation}
\label{eq.v1}
\forall \ell\in\{1,\dots,\ft N_k\},\  \widetilde v_{k,\ell}  :=\,  \frac{\chi_{k,\ell}^{\ve,\ve_1}\,e^{- \frac{1}{h} f} }{\big \|\chi_{k,\ell}^{\ve,\ve_1}\,e^{- \frac{1}{h} f}\big\|_{L^2}}\,,
\end{equation}
where the functions $\chi_{k,\ell}^{\ve,\ve_1}\in C_c^{\infty}(\Omega,\mathbb R^+)$. There exists $\ve_1^0>0$ such that for all $\ve_1\in (0,\ve_1^0]$, there exists $\ve^0>0$ such that for all $\ve\in (0,\ve^0]$,  the functions $\chi_{k,\ell}^{\ve,\ve_1}$ satisfy the following properties:
\begin{enumerate}[leftmargin=1.3cm,rightmargin=1.3cm]
\item[a)]  It holds
\begin{equation}
\label{eq.v1-support1}
\left\{
    \begin{array}{ll}
        &\overline {\ft E_{k,\ell}(2\ve)}\subset \{\chi_{k,\ell}^{\ve,\ve_1}=1\} \ \text{ and}  \\
        &\supp \chi_{k,\ell}^{\ve,\ve_1}\subset \Omega\cap\big \{x\in\overline\Omega, \, d(x,\overline{\ft E_{k,\ell}})\leq 3\ve_{1}\big \}\setminus \mathbf j(x_{k,\ell}),
            \end{array}
\right.
\end{equation}
see~\eqref{eq.E1i} for the definition of $\ft E_{k,\ell}$ and~\eqref{eq.ve-E} for the definition of  $\ft E_{k,\ell}(2\ve)$. 
\item[b)]
For all $y\in\supp\chi^{\ve,\ve_1}_{k,\ell}$,  
$$
f(y)\ \leq\ f(\mathbf{j}(x_{k,\ell}))\ \text{implies}\  y \in  \overline{\ft E_{k,\ell}}
$$
and hence, according to \eqref{eq.v1-support1},
\begin{equation}
\label{eq.v1-support2}
\left\{
    \begin{array}{ll}
&\argmin_{\supp \chi_{k,\ell}^{\ve,\ve_1}} f=\argmin_{\ft E_{k,\ell}} f\ \,\text{and}\\
 &  \min_{ {\rm supp} \, \nabla \chi_{k,\ell}^{\ve,\ve_1}}f\geq f(\mathbf j(x_{k,\ell}))-2\ve.
  \end{array}
\right.
\end{equation}

\item[c)] For all $z\in\mathbf j(x_{k,\ell})\cap\Omega$, it holds
\begin{equation}
\label{eq.v1-support3}
\supp\chi^{\ve,\ve_1}_{k,\ell}\cap B(z,2\ve_{1}) \neq \emptyset \text{ and } \supp\chi^{\ve,\ve_1}_{k,\ell}\cap B(z,2\ve_{1}) \subset \ft E_{k,\ell}.
\end{equation}
\item[d)] For all $z\in \ft U_1^{\overline \Omega}\setminus \mathbf j(x_{k,\ell})$, it holds
\begin{equation}
\label{eq.v1-support4}
\left\{
    \begin{array}{ll}
 & z\in \overline{\ft E_{k,\ell}}\ \text{and}\ \overline{B(z,2\ve_{1})}\subset \{\chi_{k,\ell}^{\ve,\ve_1}=1\}\,  \text{ or} \\
& z\notin \overline{\ft E_{k,\ell}}\ \text{and}\ \overline{B(z,2\ve_{1})}\subset \{\chi_{k,\ell}^{\ve,\ve_1}=0\}.
\end{array}
\right.
\end{equation}
\item[e)] For all $q\in\{1,\dots,\ft N_k\}\setminus\{\ell\}$, it holds $\supp \chi_{k,q}^{\ve,\ve_1}\cap \supp \chi_{k,\ell}^{\ve,\ve_1} =\emptyset$.

\item[f)] For $k\ge 2$ and for any $(k', \ell')\in \{1,\dots,k-1\}\times \{1,\dots,\ft N_{k'}\}$   such that
$\ft E_{k,\ell}\subset \ft E_{k',\ell'} $, it holds
\begin{equation}\label{eq.suport-inclusion}
\supp \chi^{\ve,\ve_1}_{k,\ell}\subset \{ \chi^{\ve,\ve_1}_{k',\ell'}=1\}.
\end{equation}
\begin{sloppypar}
Notice   that  by a conexity argument,  it holds ${\ft E_{k,\ell}}\subset \ft E_{k',\ell'} $
or $\ft E_{k,\ell}\cap \ft E_{k',\ell'}  =\emptyset$.
\end{sloppypar}
%
\end{enumerate}
\end{definition}
\noindent
In Figures~\ref{fig:pa_ssp_cutoff},~\ref{fig:ssp_cutoff} and~\ref{fig:non_ssp_cutoff}, for $k\geq 1$,  one gives a schematic representation of   the cut-off function $\chi^{\ve,\ve_1}_{k,\ell}$  near $z\in \ft U_1^{\overline \Omega}$ respectively in the three situations:
\begin{itemize}
\item $k=1$, $\ell\in \{1,\ldots,\ft N_1\}$ and  $z\in \mathbf j(x_{1,\ell})\cap \pa \Omega$.
\item  $\ell\in\{1,\dots, \ft N_{k}\}$ and $z\in\mathbf j(x_{k,\ell})\cap \Omega$.
\item $\ell\in \{1,\ldots,\ft N_k\}$, $z\in \big(\ft U_1^{\overline \Omega}\setminus \mathbf j(x_{k,\ell})\big )\cap \pa\ft  E_{k,\ell}$.
\end{itemize} 
%
%

For the ease of notation, we do not indicate the dependance on the parameters $\ve$ and $\ve_1$ in the notation of the functions $\widetilde v_{k,\ell}$ for $k\geq 1$,~$\ell\in \{1,\ldots,\ft N_k\}$, introduced in Definition~\ref{de.v1}.

  The  following 
 lemma will be useful to estimate when $h\to 0$ the quantities $$\big \Vert (1-\pi_h^{(0)})\, \widetilde v_{k,\ell} \big  \Vert_{L^2},$$ 
 for ${k\geq 1}$ and ${\ell\in \{1,\ldots,\ft N_k\}}$ (where the $\ft m_0^{  \Omega}$ functions $(\widetilde v_{k,\ell})_{k\geq 1,\,\ell\in \{1,\ldots,\ft N_k\}}$ are introduced in Definition~\ref{de.v1}),  see indeed item 2a in Proposition~\ref{ESTIME1-base} below. 
 \begin{lemma}\label{le.dfh-v}
Let us assume that  the assumption \eqref{H-M} holds.  Then, 
 for $k\geq 1$ and $\ell \in\{1,\dots,\ft N_{k}\}$, there exist  $c>0$,~$C>0$ and $h_0>0$ such that for all $h \in (0,h_0)$,
\begin{equation}
\label{eq.dv'}
 \big \|d_{f,h}\widetilde v_{k,\ell} \big \|_{L^{2}}= \frac{ \big  \|h\,e^{-\frac1hf}\,d\chi^{\ve, \ve_1}_{k,\ell} \big \|_{L^{2}}  }{   \big  \| e^{-\frac1hf} \, \chi^{\ve, \ve_1}_{k,\ell} \big \|_{L^{2}}  }  \le C\, e^{-\frac{1}{h}(f(\mathbf{j}(x_{k,\ell}))-f(x_{k,\ell})-c\ve)},
\end{equation}
where the function~$\widetilde v_{k,\ell}$ is introduced in Definition~\ref{de.v1}.
\begin{proof}
This estimate follows from \eqref{eq.v1}--\eqref{eq.v1-support2}, 
and   Laplace's method applied
to $\|\chi^{\ve, \ve_1}_{k,\ell}\,e^{- \frac{1}{h} f}\|_{L^{2}}$.
\end{proof}

\end{lemma}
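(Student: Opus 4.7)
The plan is to first verify the identity in \eqref{eq.dv'}, then bound the numerator and denominator of the resulting ratio separately using the support properties of $\chi^{\ve,\ve_1}_{k,\ell}$ and Laplace's method. For the identity, I use the intertwining relation $d_{f,h} = e^{-f/h}\,h\,d\,e^{f/h}$. Applying $d_{f,h}$ to $\widetilde v_{k,\ell} = \chi^{\ve,\ve_1}_{k,\ell}\,e^{-f/h}/\|\chi^{\ve,\ve_1}_{k,\ell}\,e^{-f/h}\|_{L^2}$ and noting that $e^{f/h}$ cancels the exponential factor, one finds
$$d_{f,h}\widetilde v_{k,\ell} \;=\; \frac{h\,e^{-f/h}\,d\chi^{\ve,\ve_1}_{k,\ell}}{\|\chi^{\ve,\ve_1}_{k,\ell}\,e^{-f/h}\|_{L^2}},$$
which gives the first equality after taking $L^2$-norms.

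For the numerator, I would use property~\eqref{eq.v1-support2}: the support of $d\chi^{\ve,\ve_1}_{k,\ell}$ is contained in $\{x\in \Omega : f(x) \ge f(\mathbf j(x_{k,\ell})) - 2\ve\}$. Since $\chi^{\ve,\ve_1}_{k,\ell}$ is smooth and compactly supported (with $\ve$, $\ve_1$ fixed), $\|d\chi^{\ve,\ve_1}_{k,\ell}\|_\infty$ and $|\supp d\chi^{\ve,\ve_1}_{k,\ell}|$ are bounded by constants independent of $h$, so
$$\big\|h\,e^{-f/h}\,d\chi^{\ve,\ve_1}_{k,\ell}\big\|_{L^2} \;\le\; C\,h\,e^{-\frac{1}{h}(f(\mathbf j(x_{k,\ell})) - 2\ve)}.$$

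For the denominator, I would apply Laplace's method. From property~\eqref{eq.v1-support1}, the cut-off $\chi^{\ve,\ve_1}_{k,\ell}$ equals $1$ on the open set $\overline{\ft E_{k,\ell}(2\ve)}$, which contains a neighborhood of each point of $\argmin_{\ft E_{k,\ell}} f$ by~\eqref{eq.v1-support2}. Since $f$ is a Morse function on $\overline\Omega$ by~\eqref{H-M} and each such minimum is non-degenerate with value $f(x_{k,\ell})$, Laplace's method near these minima provides
$$\big\|\chi^{\ve,\ve_1}_{k,\ell}\,e^{-f/h}\big\|_{L^2}^{2} \;=\; C_0\, h^{d/2}\,e^{-\frac{2}{h}f(x_{k,\ell})}\,(1+o(1)),$$
with $C_0>0$ (a sum of Gaussian integrals over the minima in $\argmin_{\ft E_{k,\ell}}f$). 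In particular there exist $c_0>0$ and $h_0>0$ such that for all $h\in (0,h_0)$,
$$\big\|\chi^{\ve,\ve_1}_{k,\ell}\,e^{-f/h}\big\|_{L^2} \;\ge\; c_0\, h^{d/4}\,e^{-\frac{1}{h}f(x_{k,\ell})}.$$

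Taking the ratio yields
$$\|d_{f,h}\widetilde v_{k,\ell}\|_{L^2} \;\le\; \tfrac{C}{c_0}\,h^{1-d/4}\,e^{-\frac{1}{h}(f(\mathbf j(x_{k,\ell})) - f(x_{k,\ell}) - 2\ve)}.$$
Finally, since for any fixed $\alpha>0$ one has $h^{1-d/4}\le e^{\alpha/h}$ for $h$ small enough, the polynomial prefactor is absorbed into the exponential by enlarging the constant in front of $\ve$; writing $c := 2 + \alpha/\ve$ (for a convenient choice of $\alpha$ depending only on $d$ and $\ve$) gives the claimed bound. The only substantive point is the lower bound on the denominator via Laplace's method, which is routine given that $f$ is Morse and $\chi^{\ve,\ve_1}_{k,\ell}\equiv 1$ on a fixed neighborhood of each minimum in $\argmin_{\ft E_{k,\ell}}f$; no part of the argument presents a real obstacle.
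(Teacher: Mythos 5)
Your proof is correct and follows the same route the paper's one-line sketch points to: you verify the identity in \eqref{eq.dv'} from the intertwining relation $d_{f,h}=e^{-f/h}\,h\,d\,e^{f/h}$, bound the numerator by $C\,h\,e^{-\frac{1}{h}(f(\mathbf j(x_{k,\ell}))-2\ve)}$ using the support property in \eqref{eq.v1-support2}, and get a lower bound $\geq c_0\,h^{d/4}\,e^{-f(x_{k,\ell})/h}$ on the denominator by Laplace's method around the non-degenerate minima in $\argmin_{\ft E_{k,\ell}}f$, on a neighborhood of which $\chi^{\ve,\ve_1}_{k,\ell}\equiv 1$ by \eqref{eq.v1-support1}. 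Be a bit more careful at the very end: the constant $c$ in the lemma must not blow up as $\ve\to 0$, since the whole point of the $c\ve$ form is that item 2a of Proposition~\ref{ESTIME1-base} later shrinks $\ve$ to make $c\ve$ arbitrarily small. Writing $c=2+\alpha/\ve$ with ``$\alpha$ depending on $\ve$'' obscures this; the cleaner observation is that for any fixed $\ve>0$ one has $h^{1-d/4}\le e^{\ve/h}$ for $h$ small enough, so one may simply take $c=3$ (a constant independent of $\ve$, with $h_0$ allowed to depend on $\ve$).
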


The following lemma ensures that the family  $(\widetilde v_{k,\ell})_{k\geq 1,\,\ell \in \{1,\ldots,\ft N_k\}}$  is uniformly  linearly independent for any $h$ small enough.

\begin{figure}[h!]
\begin{center}
\begin{tikzpicture}
\tikzstyle{vertex}=[draw,circle,fill=black,minimum size=7pt,inner sep=0pt]
\draw[thick] (-5,4)--(-1,1);
\draw[thick] (-5,-4)--(-1,-1);
\draw (0,-2.5)--(0,2.5);
 \draw (0,2.8) node[]{$\pa \Omega$};

  \draw (0,0)  node[vertex,label=east: {$z$}](v){};
   \draw (-6.7,0) node[]{$\ft E_{1,\ell}$};
      \draw (-5.4,4.2) node[]{$\pa \ft E_{1,\ell}$};
        \draw (-5.4,-4.2) node[]{$\pa \ft E_{1,\ell}$};
  \draw (-2.6,0) node[]{$\ft E_{1,\ell}(2\ve)$};
  \draw[thick]  (-1,1) ..controls (0.3,0)   .. (-1,-1) ;
   \draw[dashed] (-5,2.1) ..controls (0.4,0)   .. (-5,-2.1) ;
   \draw[ultra thick] (-5.6,2.8) ..controls (1,0)   .. (-5.6,-2.8) ;
   \draw (0,-1.7) arc (-90:-270:1.7cm);
   \draw[ultra thick] (2.3,2)--(3.3,2);
   \draw[->] (0,0)--(-0.7,1.5);
   \draw (4.4,2)node[]{${\rm supp} \, \nabla \chi^{\ve, \ve_1}_{1,\ell}$};
      \draw[dashed] (2.3,2.6)--(3.3,2.6);
   \draw (4.3,2.6) node[]{$\pa \ft E_{1,\ell}(2\ve)$};
   \draw  (-5,1) node[]{$\chi^{\ve, \ve_1}_{1,\ell}=1$};
    \draw (-0.73,1.85) node[]{$2\ve_1$};
    \draw  (-4.3,2.75) node[]{\small{$\chi^{\ve, \ve_1}_{1,\ell}=0$}};
    \draw  (-4.3,-2.8) node[]{\small{$\chi^{\ve, \ve_1}_{1,\ell}=0$}};
\end{tikzpicture}
\caption{Schematic representation of the cut-off function $\chi^{\ve, \ve_1}_{1,\ell}$ near  $z\in\mathbf j(x_{1,\ell})\cap \pa \Omega \subset \ft U_1^{\pa \Omega} \cap \ft U_1^{\ft{ssp}}  $ for $\ell\in \{1,\ldots,\ft N_1\}$. }
 \label{fig:pa_ssp_cutoff}
 \end{center}
\end{figure}

\begin{lemma}\label{le.indepp}
\begin{sloppypar}
Let us assume that  the assumption \eqref{H-M} holds. 
 The family $\mathcal B_{v}=(\widetilde v_{k,\ell})_{k\ge 1, \, \ell\in \{1,\ldots,\ft N_k\}} $ introduced in Definition~\ref{de.v1}, is linearly independent,
uniformly with respect to $h$ small enough. This is equivalent to: for some (and hence for any) orthonormal  (for the $L^2$-scalar product)  family  
$\mathcal B_{o}$
spanning $\sspan(\mathcal B_{v})$, for any matrix norm $\Vert \cdot\Vert $ on $\mathbb R^{\ft m_0^\Omega \times \ft m_0^\Omega}$, there exist  $C>0$ and $h_0>0$ such that for all $h \in (0,h_0)$,
\begin{equation}
\label{eq.unif-indpt}
\Vert \text{Mat}_{\mathcal B_{o}}\mathcal B_{v} \Vert \le C \quad
\text{and}
\quad
\Vert \text{Mat}_{\mathcal B_{v}}\mathcal B_{o}\Vert \le C.
\end{equation}
\end{sloppypar}
\end{lemma}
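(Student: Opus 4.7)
The plan is to compute the Gram matrix $G_h := (\langle \widetilde v_{k,\ell}, \widetilde v_{k',\ell'}\rangle_{L^2})_{(k,\ell),(k',\ell')}$ of the family $\mathcal B_v$ and to show that it is uniformly invertible for $h$ small. This yields \eqref{eq.unif-indpt}: for any orthonormal basis $\mathcal B_o$ of $\sspan(\mathcal B_v)$, the matrix $M := \text{Mat}_{\mathcal B_o} \mathcal B_v$ satisfies $M^T M = G_h$, so $\|M\|$ and $\|M^{-1}\| = \|\text{Mat}_{\mathcal B_v}\mathcal B_o\|$ are controlled by $\|G_h^{1/2}\|$ and $\|G_h^{-1/2}\|$, and the equivalence of norms in finite dimension concludes.

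Laplace's method applied to $\int \chi_{k,\ell}^{\ve,\ve_1}\chi_{k',\ell'}^{\ve,\ve_1} e^{-2f/h}$ combined with the cut-off properties of Definition~\ref{de.v1} gives the entries of $G_h$ as $h\to 0$: the diagonal entries are $1$ by normalization, and the off-diagonal ones split into three cases. First, if $k=k'$ and $\ell\neq\ell'$, item~(e) gives disjoint supports and thus a zero entry. Second, if $\ft E_{k,\ell}\cap \ft E_{k',\ell'}=\emptyset$ with overlapping $3\ve_1$-fattenings (item~(a)), item~(b) forces $f \geq \min(f(\mathbf j(x_{k,\ell})), f(\mathbf j(x_{k',\ell'})))$ almost everywhere on the overlap, so after division by the denominator norms (which behave like $h^{d/4}e^{-f(x_{k,\ell})/h}$ and $h^{d/4}e^{-f(x_{k',\ell'})/h}$ by Laplace) the entry is $O(e^{-c/h})$ for some $c>0$. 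Third, if $k>k'$ and $\ft E_{k,\ell}\subset \ft E_{k',\ell'}$, item~(f) yields $\chi_{k',\ell'}^{\ve,\ve_1}\equiv 1$ on $\supp \chi_{k,\ell}^{\ve,\ve_1}$, so the entry reduces to the ratio $\|\chi_{k,\ell}^{\ve,\ve_1}e^{-f/h}\|_{L^2}/\|\chi_{k',\ell'}^{\ve,\ve_1}e^{-f/h}\|_{L^2}$, which converges to a constant in $[0,1)$ (the ratio of Laplace volumes); it is moreover exponentially small unless $\argmin_{\ft E_{k,\ell}} f \subset \argmin_{\ft E_{k',\ell'}} f$.

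Thus $G_h = G_\infty + O(e^{-c/h})$ where $G_\infty$ is a matrix of Laplace-volume ratios, and the remaining point is to check that $\det G_\infty \neq 0$. To this end, I would decompose each quasi-mode at leading order in $h$ as
\[
\widetilde v_{k,\ell} = \sum_{x \in \argmin_{\ft E_{k,\ell}} f} c_{k,\ell}^x \, \phi_x + O(e^{-c/h}),
\]
where $\phi_x$ is the normalized Gaussian cut-off at $x \in \ft U_0^\Omega$ and the coefficients $c_{k,\ell}^x > 0$ have squared sum $1$. The family $(\phi_x)_{x\in \ft U_0^\Omega}$ is asymptotically orthonormal, so $G_\infty$ is invertible if and only if the coefficient matrix $(c_{k,\ell}^x)_{(k,\ell),x}$ is. Ordering the quasi-modes by decreasing level $k$, this matrix becomes block lower triangular with invertible diagonal blocks, which is proved by induction over the tree structure of the nested family $(\ft E_{k,\ell})_{k,\ell}$: at each level, new quasi-modes necessarily involve at least one minimum that is not already represented at deeper levels, thanks to the labeling convention in Section~\ref{sec:labeling} which excludes previously chosen minima from new $\ft E_{k,\ell}$'s.

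The main obstacle is the third case above, in which off-diagonal entries of $G_h$ do not decay exponentially and the invertibility of $G_\infty$ must be justified combinatorially through a careful tracking of the tree structure produced by the labeling procedure; all remaining estimates are routine applications of Laplace's method on the cut-offs $\chi_{k,\ell}^{\ve,\ve_1}$.
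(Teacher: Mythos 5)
The paper's own ``proof'' of this lemma is a one-line pointer to \cite[Section 4.2]{HeHiSj}, so there is no in-paper argument to compare against; your reconstruction supplies the expected Gram-matrix argument and its overall structure is correct. Reducing to the invertibility of the limit $G_\infty$ of the Gram matrix, decomposing each $\widetilde v_{k,\ell}$ at leading order into the normalized localized Gaussians $\phi_x$ centred at $x\in\argmin_{\ft E_{k,\ell}}f$, and exploiting the exclusion of previously chosen minima from deeper $\ft E_{k,\ell}$'s to get a triangular coefficient matrix $(c_{k,\ell}^x)$ with positive diagonal is exactly the right mechanism and does yield the uniform bounds \eqref{eq.unif-indpt} via $M^TM=G_h$. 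One can even strengthen your remark: with the natural pairing $\widetilde v_{k,\ell}\leftrightarrow x_{k,\ell}$ and ordering by level, the diagonal blocks within a fixed level $k$ are themselves diagonal (item~(e) of Definition~\ref{de.v1} makes $x_{k,\ell'}\notin\ft E_{k,\ell}$ for $\ell'\neq\ell$), so the coefficient matrix is genuinely triangular, not merely block triangular.

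One step as written has a gap that should be flagged. In your second case (disjoint $\ft E_{k,\ell}$, $\ft E_{k',\ell'}$ with overlapping fattenings), the bound $f\geq M:=\min\bigl(f(\mathbf j(x_{k,\ell})),f(\mathbf j(x_{k',\ell'}))\bigr)$ a.e.\ on the overlap does \emph{not} by itself force exponential decay of the normalized inner product: after dividing by the two norms $\sim h^{d/4}e^{-f(x_{k,\ell})/h}$ and $\sim h^{d/4}e^{-f(x_{k',\ell'})/h}$ one is left with a factor of order $h^{-d/2}e^{-(2M-f(x_{k,\ell})-f(x_{k',\ell'}))/h}$, and the exponent need not be positive --- it fails, for instance, when $\ft E_{k',\ell'}$ is a shallow high-lying well with $f(x_{k',\ell'})>f(\mathbf j(x_{k,\ell}))$. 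The case is in fact saved by a different mechanism: when $k\neq k'$ and $\ft E_{k,\ell}\cap\ft E_{k',\ell'}=\emptyset$, the closures $\overline{\ft E_{k,\ell}}$ and $\overline{\ft E_{k',\ell'}}$ are at positive distance. Indeed either their boundaries sit in distinct level sets $\{f=\sigma_k\}\neq\{f=\sigma_{k'}\}$ and cannot meet, or --- when both closures approach a common separating saddle point $z$ --- item~(c) of Definition~\ref{de.v1} forces each cut-off to retreat inside its own component near $z$. Consequently, for $\ve_1$ small enough (only finitely many constraints are involved), the $3\ve_1$-fattenings are disjoint and the off-diagonal entry vanishes identically, so there is nothing to estimate. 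With this correction the argument is complete.
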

\begin{proof}
The proof of Lemma~\ref{le.indepp} is made in \cite[Section 4.2]{HeHiSj}. 
\end{proof}

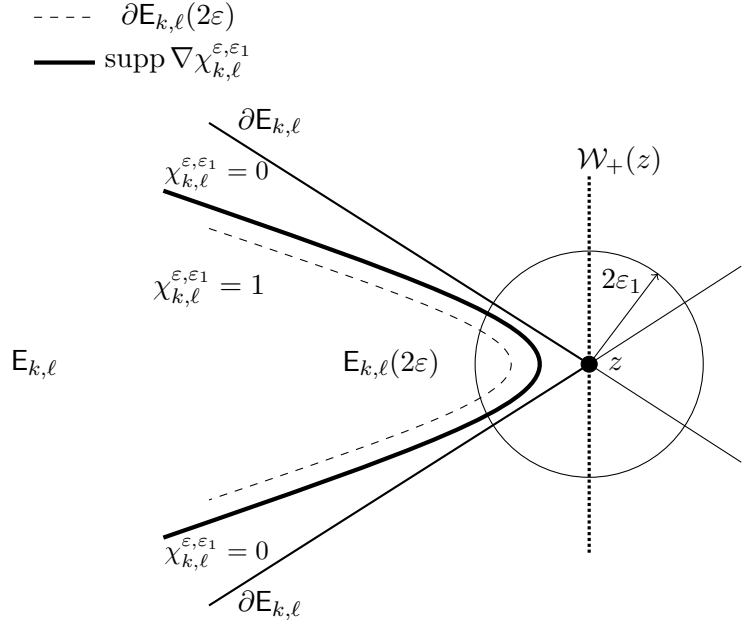
\begin{figure}[h!]
\begin{center}
\begin{tikzpicture}
\tikzstyle{vertex}=[draw,circle,fill=black,minimum size=6pt,inner sep=0pt]
\draw[thick] (-5,3.2)--(0,0);
\draw[thick] (-5,-3.2)--(0,0);
\draw (2,1.3)--(0,0);
\draw  (2,-1.3)--(0,0);
\draw[densely dotted, very thick] (0,-2.5)--(0,2.5);
 \draw (0.4,2.7) node[]{$\mathcal W_+(z)$};

  \draw (0,0)  node[vertex,label=east: {$z$}](v){};
   \draw (-7.3,0) node[]{$\ft E_{k,\ell}$};
      \draw (-4.2,3.2) node[]{$\pa \ft E_{k,\ell}$};
         \draw (-4.2,-3.2) node[]{$\pa \ft E_{k,\ell}$};
  \draw (-2.6,0) node[]{$\ft E_{k,\ell}(2\ve)$};
   \draw[dashed] (-5,1.8) ..controls (0.3,0)   .. (-5,-1.8) ;
   \draw[ultra thick] (-5.6,2.3) ..controls (1,0)   .. (-5.6,-2.3) ;
   \draw (0,0) circle(1.5);
   \draw[ultra thick] (-7.3,4)--(-6.5,4);
   \draw[->] (0,0)--(0.9,1.2);
   \draw (-5.4,4) node[]{${\rm supp} \, \nabla \chi^{\ve, \ve_1}_{k,\ell}$};
   \draw  (-5,1) node[]{$\chi^{\ve, \ve_1}_{k,\ell}=1$};
    \draw (0.43,1.1) node[]{$2\ve_1$};
      \draw[dashed] (-7.3,4.6)--(-6.5,4.6);
   \draw (-5.4,4.6) node[]{$\pa \ft E_{k,\ell}(2\ve)$};
\draw  (-4.9,2.52) node[]{\small{$\chi^{\ve, \ve_1}_{k,\ell}=0$}};
\draw  (-4.9,-2.52) node[]{\small{$\chi^{\ve, \ve_1}_{k,\ell}=0$}};
\end{tikzpicture}
\caption{Schematic representation of the cut-off function $\chi^{\ve, \ve_1}_{k,\ell}$ near  $z\in\mathbf j(x_{k,\ell})\cap \Omega \subset \ft U_1^{\Omega}$ for $k\ge 1$ and $\ell \in \{1,\ldots,\ft N_k\}$. The point $z$ is a {separating saddle point} as introduced in Definition~\ref{de.SSP}.
The set $\mathcal W_+$ is the stable manifold of the saddle point $z$. }
 \label{fig:ssp_cutoff}
 \end{center}
\end{figure}

\begin{figure}[h!]
\begin{center}
\begin{tikzpicture}[scale=0.8]
\tikzstyle{vertex}=[draw,circle,fill=black,minimum size=7pt,inner sep=0pt]
\draw[thick] (0,0) circle(5.8);
\draw[thick] (-2.3,2)--(2.3,-2);
\draw[thick] (-2.3,-2)--(2.3,2);
   \draw[thick ] (-2.3,2) ..controls (-3.3,3.3)  and   (3.3,3.3)  .. (2.3,2) ;
   \draw[thick ] (-2.3,-2) ..controls (-3.3,-3.3)  and   (3.3,-3.3)  .. (2.3,-2) ;
\draw[densely dotted, very thick] (0,-2.3)--(0,2.3);
 \draw (0,2.55) node[]{{\small $\mathcal W_+(z)$}};
\draw (0,0) node[vertex,label=east: {$z$}](v){};
         \draw (5,5) node[]{$\pa \ft E_{k,\ell}$};
          \draw[->] (4.8,4.8)--(4.2,4.2);
          \draw[->] (4.8,4.8)--(1.8,2.8);
  \draw (-3,0) node[]{$\ft E_{k,\ell}(2\ve)$};
  \draw (3,0) node[]{$\ft E_{k,\ell}(2\ve)$};
   \draw (0.2,4.5) node[]{$\ft E_{k,\ell}$};
   \draw[dashed] (-3.8,1.4) ..controls (-0.2,0)   .. (-3.8,-1.4) ;
   \draw[dashed] (3.8,1.4) ..controls (0.2,0)   .. (3.8,-1.4) ;
   \draw (0,0) circle(1.5);
   \draw[->] (0,0)--(0.9,1.2);
    \draw (0.43,1.1) node[]{$2\ve_1$};
          \draw (-6.8,3) node[]{${\rm supp} \, \nabla \chi^{\ve, \ve_1}_{k,\ell}$};
   \draw  (-4.4,1) node[]{$\chi^{\ve, \ve_1}_{k,\ell}=1$};
     \draw  (4.4,1) node[]{$\chi^{\ve, \ve_1}_{k,\ell}=1$};
      \draw[ultra thick] (-9,3)--(-8.2,3);      
      \draw[ultra thick] (-3.5,1.7) ..controls (-2.1,0.8) ..  (-1.8,1);
       \draw[ultra thick]  (-1.8,1) ..controls (0,2.1)   .. (1.8,1) ;
       \draw[ultra thick] (3.5,1.7) ..controls (2.1,0.8) ..  (1.8,1);
             \draw[ultra thick] (-3.5,-1.7) ..controls (-2.1,-0.8) .. (-1.8,-1);
       \draw[ultra thick]  (-1.8,-1) ..controls (0,-2.1)   .. (1.8,-1) ;
       \draw[ultra thick] (3.5,-1.7) ..controls (2.1,-0.8) ..  (1.8,-1);

         \draw[dashed] (-8.8,3.6)--(-8,3.6);
   \draw (-6.9,3.6) node[]{$\pa \ft E_{k,\ell}(2\ve)$};
\end{tikzpicture}
\caption{Schematic representation of the cut-off function $\chi^{\ve,\ve_1}_{k,\ell}$ near $z\in \big (\ft U_1^{\overline \Omega}\setminus \mathbf j(x_{k,\ell})\big )\cap \pa \ft E_{k,\ell}$. The point $z$ is a saddle point on $\pa \ft E_{k,\ell}$ but is not a {separating saddle point} as introduced in Definition~\ref{de.SSP}. }
 \label{fig:non_ssp_cutoff}
 \end{center}
\end{figure}
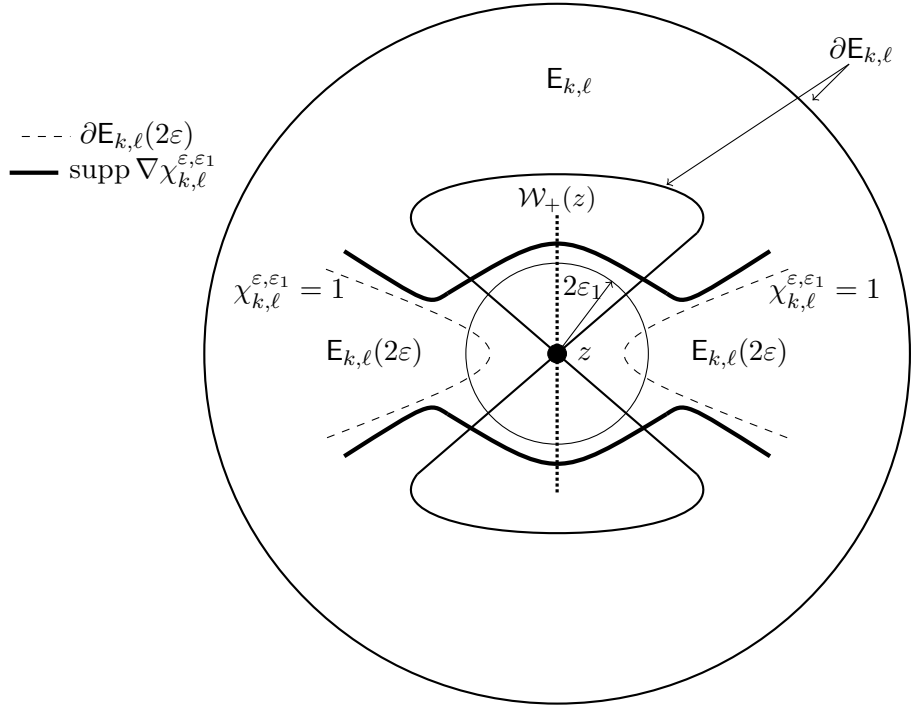
 
\subsubsection{Quasi-modes  for the Witten Laplacian $\Delta_{f,h}^{D,(1)}$} 
\label{sec:delta-1-qm}
 Let us assume that  the assumption \eqref{H-M} is satisfied. 
Let us recall that from  Lemma~\ref{ran1} and~\eqref{eq.unitary}, there exists $h_0>0$ such that for any $h\in (0,h_0)$: 
$$\dim \Ran\, \pi_{[0,h^{\frac 32} )}\big (\Delta^{D,(1)}_{f,h}\big)=\ft m_1^{\overline \Omega},$$
where we recall that~$\ft m_1^{\overline \Omega}$ is the number of generalized saddle points of~$f$ in~$\overline \Omega$, see Section~\ref{se.def-zj}.
 In this section, one constructs    
a family of $1$-forms $(\widetilde \phi_j)_{j\in \{1,\dots,\ft m_1^{\overline \Omega}\}}$ which aims at approximating    $\Ran\, \pi_{[0,h^{\frac 32} )}\big (\Delta^{D,(1)}_{f,h}\big)$. To this end, for each $z\in  \ft U^{\overline \Omega}_1$, one constructs a $1$-form locally supported in a neighborhood  of $z$ in~$\overline \Omega$.  More precisely, one proceeds as follows:
\begin{enumerate}
\item for  each $z\in \ft U^{\Omega}_1$, the  $1$-form associated with $z$ is  constructed following the procedure in \cite{HKN,HeHiSj} and,
\item  for  each $z\in \ft U^{\pa \Omega}_1$, the  $1$-form associated with $z$ is  constructed  as in \cite{HeNi1}.
\end{enumerate}
Let us recall these constructions and some estimates which will be used throughout this work. \\

\noindent
\textbf{Quasi-mode associated with $z\in  \ft U^{\Omega}_1$}.\\  
Let us recall that from \eqref{eq.U1Omega},
 $$ \ft U_1^\Omega=\{ z_{\ft m_1^{\pa \Omega}+1},\ldots,z_{\ft m_1^{\overline \Omega}}\}\subset \Omega,$$
is the set of saddle points of~$f$ in~$\Omega$. Let $j\in  \{\ft m_1^{\pa \Omega}+1,\dots,\ft m_1^{\overline \Omega} \big \}$ and $z_{j}\in\ft U_1^\Omega$. 
Let~$\mathcal V_{j}$ be some small smooth  neighborhood  of $z_{j}$
such that $\overline {\mathcal V_{j}}\cap \pa\Omega=\emptyset$ and for $x\in~\overline{\mathcal V_{j}}$,~$\vert \nabla f(x)\vert=0$ if and only if $x=z_j$.
Let us now consider the full Dirichlet realization $\Delta_{f,h}^{FD,(1)}(\mathcal V_{j})$
of the Witten Laplacian $\Delta_{f,h}^{(1)}$ in~$\mathcal V_{j}$ whose
domain is
$$D\Big(\,\Delta_{f,h}^{FD,(1)}\big(\mathcal V_{j})\,\Big)= \left\{  w \in    \Lambda^1H^2\left (\mathcal V_{j}\right ), \,   w|_{\pa\mathcal V_{j}}=0   \right \},$$
where the superscript $FD$ stands for full Dirichlet boundary conditions. Let us recall that according to~\cite[Section 2]{HeSj4}, 
   there exists, choosing if necessary $\mathcal V_{j}$ smaller,  a $C^{\infty}$ non negative solution $\Phi_j:\mathcal V_j\to \mathbb R^+$ to the eikonal equation\label{page.Phij}
 \begin{equation} \label{eq.eikoj0}
\vert  \nabla \Phi_j \vert =  \vert  \nabla f \vert   \ {\rm in \ }  \mathcal V_{j}
\quad\text{such that  
$\Phi_j(y)= 0$ iff $y=z_{j}$}.
\end{equation}
Moreover,~$\Phi_j$ is the unique non negative solution to~\eqref{eq.eikoj0}  in the sense that if $\widetilde \Phi_j:\widetilde{\mathcal V_j}\to \mathbb R^+$ is  another  non negative $C^{\infty}$ solution to~\eqref{eq.eikoj0} on a neighborhood $\widetilde{\mathcal V_j}$ of~$z_j$, then $\widetilde \Phi_j=\Phi_j$ on $\widetilde{\mathcal V_j}\cap \mathcal V_j$. 
\begin{remark}\label{re.Ag}
The function $\Phi_j$ is actually   the Agmon distance to $z_j$, i.e. $\Phi_j$ is the distance to $z_j$   in $\overline \Omega$ associated with the metric $\vert  \nabla f \vert^2dx^2$, where $dx^2$ is the Riemannian metric on~$\overline \Omega$ (see~\cite[Section~1]{HeSj4}). 
\end{remark}

The next proposition, which follows from \cite[Theorem~1.4 and Lemma~1.6]{HeSj4}, gathers all the estimates one needs in the following on the operator $\Delta_{f,h}^{FD,(1)}(\mathcal V_{j})$.

\begin{proposition}\label{pr.zj-omega}
 Let us assume that  the assumption \eqref{H-M} is satisfied. 
Then,  the operator $\Delta_{f,h}^{FD,(1)}(\mathcal V_{j})$ is  self-adjoint, has compact resolvent and is positive. Moreover:
\begin{itemize}[leftmargin=1.3cm,rightmargin=1.3cm]
\item  There exist $\ve_0>0$ and $h_0>0$ such that for all $h\in (0,h_0)$:
\begin{equation}\label{eq.dim11}
\dim \Ran\, \pi_{[0,\ve_0h )}\big (\Delta_{f,h}^{FD,(1)}(\mathcal V_{j})\big)=1.
\end{equation}
\item  The smallest eigenvalue $\lambda_{h}(\mathcal V_{j})$ of 
$\Delta_{f,h}^{FD,(1)}(\mathcal V_{j})$ is exponentially small: there exist $C>0$,~$c>0$ and $h_0>0$ such that for any  $h\in (0,h_0)$:

\begin{equation}
\label{eq.lambdah-0}
\lambda_{h}(\mathcal V_{j})\le Ce^{-\frac ch}. 
\end{equation}
\item Any  $L^2$-normalized eigenform $w_{j}$\label{page.wj}  associated with the smallest eigenvalue~$\lambda_{h}(\mathcal V_{j})$ of $\Delta_{f,h}^{FD,(1)}(\mathcal V_{j})$  satisfies  the following Agmon estimates (see Remark~\ref{re.Ag}): for all $ \ve>0$, there exist $C_{\ve}>0$ and $h_0>0$ such that for any  $h\in (0,h_0)$, it holds:
\begin{equation}
\label{eq.Agmon10}
\big \|e^{\frac1h \Phi_{j}}w_j\big \|_{H^1(\mathcal V_{j})}\leq C_{\ve}\,e^{\frac \ve h}.
\end{equation} 

\end{itemize}
\end{proposition}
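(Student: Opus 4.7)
The four assertions—self-adjointness with compact resolvent and positivity, the multiplicity statement \eqref{eq.dim11}, the exponential smallness \eqref{eq.lambdah-0}, and the Agmon estimate \eqref{eq.Agmon10}—will be obtained respectively by the Friedrichs construction, a WKB quasi-mode combined with an IMS localization, and the standard exponential-weight commutator identity. This follows the pattern of \cite{HeSj4}.

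The \emph{self-adjointness, compactness of the resolvent, and positivity} come from realizing $\Delta_{f,h}^{FD,(1)}(\mathcal V_j)$ as the Friedrichs extension of the closed non-negative quadratic form $w \mapsto \|d^{(1)}_{f,h} w\|^2_{L^2} + \|d^{(1)*}_{f,h} w\|^2_{L^2}$ with form domain $\Lambda^1 H^1_0(\mathcal V_j)$. The Rellich embedding on the bounded domain $\mathcal V_j$ provides the compactness of the resolvent, hence the pure discreteness of the spectrum. Strict positivity follows from a cohomological argument on the topological ball $\mathcal V_j$: a $1$-form $w$ in the kernel satisfies $d^{(1)}_{f,h} w = d^{(1)*}_{f,h} w = 0$ with vanishing trace on $\partial \mathcal V_j$, and the associated Hodge-type relative cohomology is trivial on a contractible set with Dirichlet condition.

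For \eqref{eq.lambdah-0} and \eqref{eq.dim11}, let $\hat e_j^{\flat}$ denote the metric dual of a unit negative eigenvector of $\hess f(z_j)$. One constructs a WKB ansatz
\[
v_j = \chi(x)\, a(x,h)\, e^{-\Phi_j(x)/h}, \qquad a(x,h) \sim \sum_{k\ge 0} h^k a_k(x),\ \ a_0(z_j) = \hat e_j^{\flat},
\]
with $\chi \in C_c^\infty(\mathcal V_j)$ identically equal to $1$ near $z_j$, solving the transport equations arising from
\[
\Delta^{(1)}_{f,h}\!\left(a(\cdot,h)\, e^{-\Phi_j/h}\right) = O(h^\infty)\, e^{-\Phi_j/h}
\]
in a neighborhood of $z_j$; solvability rests on the eikonal identity $|\nabla \Phi_j| = |\nabla f|$ and on the non-degeneracy of $z_j$. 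Since $\Phi_j \ge c_1 > 0$ on $\supp \nabla \chi$, truncation by $\chi$ introduces only an $O(e^{-c/h})$ error, so by the min-max principle one gets \eqref{eq.lambdah-0}. To upgrade this to $\dim \le 1$ in \eqref{eq.dim11}, one performs an IMS partition of unity $\chi_{in}^2 + \chi_{out}^2 = 1$ with $\chi_{in}$ localizing near $z_j$: on $\supp \chi_{in}$ the harmonic oscillator approximation of $\Delta_{f,h}^{(1)}$ at the index-$1$ critical point $z_j$ has a simple ground state with a spectral gap of order $h$ above it (this is the classical computation of the spectrum of the model Witten Laplacian on $1$-forms); on $\supp \chi_{out}$ the formula $\Delta_{f,h}^{(1)} = -h^2 \Delta_H + |\nabla f|^2 + h\,\hess f \cdot$ combined with $|\nabla f|^2 \ge c_2 > 0$ gives a lower bound $\ge c_3 > 0$. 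Subtracting the IMS localization error $O(h^2)$ yields an effective gap $\ge \varepsilon_0 h$ above the unique exponentially small eigenvalue.

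The \emph{Agmon estimate} \eqref{eq.Agmon10} is proved by testing $\Delta_{f,h}^{(1)} w_j = \lambda_h(\mathcal V_j) w_j$ against $e^{2\Phi_j/h} w_j$, regularized close to $\partial \mathcal V_j$ by a cutoff supported in $\{\Phi_j < \varepsilon\}$ in order to preserve the full Dirichlet condition. Integration by parts yields an identity whose bulk term $\int_{\mathcal V_j}(|\nabla f|^2 - |\nabla \Phi_j|^2)e^{2\Phi_j/h}|w_j|^2$ \emph{vanishes} thanks to the eikonal identity, leaving an inequality of the schematic form
\[
h^2 \|e^{\Phi_j/h}\nabla w_j\|^2_{L^2} \le Ch\, \|e^{\Phi_j/h} w_j\|^2_{L^2} + C e^{\varepsilon/h},
\]
where the $Ch$-term comes from the zeroth-order $\hess f$ coupling and the $e^{\varepsilon/h}$-term from the boundary cutoff localized in $\{\Phi_j < \varepsilon\}$; the contribution of $\lambda_h(\mathcal V_j) \|e^{\Phi_j/h} w_j\|^2$ is absorbed using \eqref{eq.lambdah-0}. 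A Grönwall-type absorption then gives \eqref{eq.Agmon10}.

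\emph{Main obstacle.} The delicate point is the exact multiplicity in \eqref{eq.dim11}: obtaining a \emph{uniform} spectral gap of size $\varepsilon_0 h$ above the unique exponentially small ground state demands a precise matching via IMS between the harmonic-oscillator model at $z_j$—where one must exploit the specific spectrum of the $1$-form Witten Laplacian at an index-$1$ critical point—and the region where $|\nabla f|$ is uniformly bounded below, while controlling the cross-terms uniformly in $h$. This is exactly the technical content of \cite[Theorem~1.4]{HeSj4}.
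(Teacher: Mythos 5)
Your overall architecture matches the paper's: the paper proves nothing here itself and simply cites \cite[Theorem~1.4, Lemma~1.6]{HeSj4}, and your Friedrichs/WKB/IMS/Agmon sketch is exactly the content of that reference, which you acknowledge at the end. The self-adjointness and compact-resolvent part is fine (positivity is more simply deduced from unique continuation for the elliptic operator with full Dirichlet data than from a relative-cohomology argument, but your conclusion is correct), and the WKB quasi-mode plus IMS localization correctly produces \eqref{eq.lambdah-0} and \eqref{eq.dim11}.

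However, your argument for the Agmon estimate \eqref{eq.Agmon10} has a genuine gap. Testing against $e^{2\Phi_j/h}w_j$ with the \emph{exact} eikonal weight $\Phi_j$ makes the bulk term $\int(|\nabla f|^2-|\nabla\Phi_j|^2)e^{2\Phi_j/h}|w_j|^2$ vanish identically, which is not a help but the source of the problem: the resulting identity
\begin{equation*}
\bigl\|d_{f,h}(e^{\Phi_j/h}w_j)\bigr\|^2 + \bigl\|d^*_{f,h}(e^{\Phi_j/h}w_j)\bigr\|^2 = \bigl(\lambda_h(\mathcal V_j)-O(h)\bigr)\bigl\|e^{\Phi_j/h}w_j\bigr\|^2_{L^2}
\end{equation*}
places both the unknown $L^2$-weighted norm and the $\hess f$-coupling $O(h)$-term on the same side with indefinite sign, so there is no absorption mechanism; in particular, invoking \eqref{eq.lambdah-0} to ``absorb'' $\lambda_h\|e^{\Phi_j/h}w_j\|^2$ presupposes the very bound you are trying to prove. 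The standard fix, which is what Helffer--Sj\"ostrand actually do, is to use a strictly sub-eikonal weight $\phi=(1-\delta)\Phi_j$: then $|\nabla f|^2-|\nabla\phi|^2\ge\delta|\nabla f|^2$ is strictly positive away from $z_j$ and dominates the $O(h)$ and $\lambda_h$ corrections on $\{|\nabla f|^2\ge Ch\}$, while on the complementary $O(\sqrt h)$-ball around $z_j$ the weight $e^{2\phi/h}$ is $O(1)$, which closes the $L^2$ bound and then, by back-substitution, the gradient bound; since $\Phi_j$ is bounded on $\overline{\mathcal V_j}$, letting $\delta\to0$ recovers the stated $C_\varepsilon e^{\varepsilon/h}$ for any $\varepsilon>0$. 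Also note that, since $w_j$ itself vanishes on $\partial\mathcal V_j$, no boundary regularization is needed, and a cutoff supported in $\{\Phi_j<\varepsilon\}$ would localize near $z_j$ rather than near $\partial\mathcal V_j$, so that remark should be deleted.
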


\noindent
Choosing $\ve_{1}$ smaller if necessary, one assumes that there exists $\alpha>0$ such that 
$$ B(z_{j},2\ve_{1}+\alpha)  \subset \mathcal V_{j}.$$
Let us now define the quasi-mode
associated with $z_{j}\in \ft U_1^\Omega$.

\begin{definition}  \label{de.tildephi}
 Let us assume that  the assumption \eqref{H-M} is satisfied. 
Let $j\in  \{\ft m_1^{\pa \Omega}+1,\dots,\ft m_1^{\overline \Omega} \big \}$ and $z_{j}\in\ft U_1^\Omega$.  The quasi-mode
associated with $z_{j}$ is  defined by\label{page.tildephij}
\begin{equation}
\label{eq.tildephi}
\widetilde\phi_{j} := \frac{\theta_j\,w_{j}}{\left\|\theta_j\,w_{j}\right\|_{L^2}}\in \Lambda^1C^\infty_c(\Omega),
\end{equation}
where $w_{j}$ is a $L^2$-normalized eigenform associated with the smallest eigenvalue $\lambda_{h}(\mathcal V_{j})$ of 
$\Delta_{f,h}^{FD,(1)}(\mathcal V_{j})$ and $\theta_j$ is a smooth non negative cut-off function satisfying, 
$\supp \theta_j\subset B(z_{j},2\ve_{1}) \subset \mathcal V_j $
and $\theta_j=1$ on $B(z_{j},\ve_{1})$.
\end{definition}
Notice that both $w_j$ and $-w_j$ can be used to build a quasi-mode and the choice of the sign is determined in Proposition~\ref{compa_wkb-bis}. Moreover,    using~\eqref{eq.Agmon10} together with the fact that    for all $j\in  \{\ft m_1^{\pa \Omega}+1,\dots,\ft m_1^{\overline \Omega} \big \}$, $\inf_{\text{supp } (1-\theta_j) \cap \mathcal V_j} \Phi_j>0$   (see~\eqref{eq.eikoj0}), one has when $h\to 0$:
\begin{equation}\label{eq.norme-phij}
\big \|(1-\theta_j)\,w_{j}\big \|_{L^2(\mathcal V_j)}=O\big(e^{-\frac ch}\big) \text{ and therefore, }\,  \big\|\theta_j\,w_{j}\big\|_{L^2}=1+O\big(e^{-\frac ch}\big),
\end{equation}
for some $c>0$ independent of $h$. 

Using~Proposition~\ref{pr.zj-omega} and~\eqref{eq.norme-phij}, one deduces the following estimate on the quasi-mode~$\widetilde \phi_j$ introduced in Definition~\ref{de.tildephi}.
\begin{corollary}
\label{co.dfh-phi-j}
    Let us assume that  the assumption~\eqref{H-M} holds. Let  
$\widetilde\phi_{j}$ be the quasi-mode associated with $z_j\in U_1^\Omega$ ($j\in  \{\ft m_1^{\pa \Omega}+1,\dots,\ft m_1^{\overline \Omega} \big \}$), see Definition~\ref{de.tildephi}. 
Then, there exist $C>0$,~$c>0$ and $h_0>0$ such that for any  $h\in (0,h_0)$:
\begin{equation}
\label{eq.delta-tildephi}
 \big  \|d_{f,h}\widetilde\phi_{j} \big \|_{L^{2}}\,+\, \big \|d^{*}_{f,h}\widetilde\phi_{j} \big \|_{L^{2}}\le C e^{-\frac ch}.
\end{equation}
\end{corollary}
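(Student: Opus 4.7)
The plan is to expand $d_{f,h}$ and $d^*_{f,h}$ acting on the product $\theta_j w_j$ using Leibniz-type identities, to treat the two resulting terms separately: one controlled by the eigenvalue equation and its exponentially small eigenvalue $\lambda_h(\mathcal{V}_j)$, and the other controlled by the Agmon estimate \eqref{eq.Agmon10} combined with the fact that $d\theta_j$ is supported away from $z_j$.

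More precisely, first I would write, using the Leibniz rule for $d$ and its formal adjoint $d^*$,
\begin{equation*}
d_{f,h}(\theta_j w_j) = \theta_j\, d_{f,h} w_j + h\, (d\theta_j)\wedge w_j, \qquad d^*_{f,h}(\theta_j w_j) = \theta_j\, d^*_{f,h} w_j - h\, \iota_{\nabla\theta_j} w_j,
\end{equation*}
where $\iota$ denotes interior product. For the first summand on each right-hand side, I would use the eigenvalue equation $\Delta^{FD,(1)}_{f,h}(\mathcal{V}_j) w_j = \lambda_h(\mathcal{V}_j) w_j$, which by the quadratic form associated to $\Delta^{FD,(1)}_{f,h}(\mathcal{V}_j)$ yields
\begin{equation*}
\|d_{f,h} w_j\|_{L^2(\mathcal{V}_j)}^2 + \|d^*_{f,h} w_j\|_{L^2(\mathcal{V}_j)}^2 = \lambda_h(\mathcal{V}_j)\, \|w_j\|_{L^2(\mathcal{V}_j)}^2 = \lambda_h(\mathcal{V}_j),
\end{equation*}
which is $O(e^{-c/h})$ by \eqref{eq.lambdah-0}. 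Multiplying by $\theta_j$ (which is uniformly bounded) leaves this term exponentially small.

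The second summand is the main obstacle: $d\theta_j$ is supported in $B(z_j,2\ve_1)\setminus B(z_j,\ve_1)$, and I need to show that $w_j$ is exponentially small there. This is precisely where the Agmon estimate enters. By \eqref{eq.eikoj0}, $\Phi_j$ vanishes only at $z_j$ and is positive elsewhere; by continuity and compactness, there exists $\delta > 0$ (depending only on $\ve_1$ and the geometry) such that $\Phi_j \ge \delta$ on $\supp(d\theta_j)$. Hence
\begin{equation*}
\|h(d\theta_j)\wedge w_j\|_{L^2} \le h\, \|d\theta_j\|_\infty\, \|w_j\|_{L^2(\supp d\theta_j)} \le C\, h\, e^{-\delta/h}\, \|e^{\Phi_j/h} w_j\|_{L^2(\mathcal{V}_j)},
\end{equation*}
and applying \eqref{eq.Agmon10} with $\ve := \delta/2$ yields a bound of the form $C\,h\,e^{-\delta/(2h)}$. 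The same treatment applies to $\iota_{\nabla\theta_j} w_j$.

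Combining the two estimates, we get $\|d_{f,h}(\theta_j w_j)\|_{L^2} + \|d^*_{f,h}(\theta_j w_j)\|_{L^2} = O(e^{-c/h})$ for some $c > 0$. Dividing by $\|\theta_j w_j\|_{L^2} = 1 + O(e^{-c/h})$ from \eqref{eq.norme-phij} preserves the exponential decay, giving \eqref{eq.delta-tildephi}.
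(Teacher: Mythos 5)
Your proof is correct and fills in exactly the details that the paper leaves implicit when it states the corollary "using Proposition~\ref{pr.zj-omega} and~\eqref{eq.norme-phij}": the Leibniz split of $d_{f,h}(\theta_j w_j)$ and $d^*_{f,h}(\theta_j w_j)$ into an eigenvalue-controlled piece (handled by $\lambda_h(\mathcal V_j)=O(e^{-c/h})$ via the quadratic form of $\Delta^{FD,(1)}_{f,h}(\mathcal V_j)$) and a cutoff-supported piece (handled by the Agmon estimate on $\supp d\theta_j$ where $\Phi_j$ is bounded below by a positive constant), followed by the normalization via~\eqref{eq.norme-phij}. This is precisely the intended argument; nothing is missing.
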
 
Let us now  recall the construction of a WKB approximation of $w_j$ made in~\cite{HeSj4} and which will be needed in the following.
Let us denote by~$\mathcal  W_{+}(z_{j})$ and $\mathcal  W_{-}(z_{j})$
respectively the stable and unstable manifolds of $z_{j}$ associated with the flow of $-\nabla f$ which are defined as follows. Denoting by~$\varphi_{t}(y)$ the solution of $ \frac{d}{dt}\varphi_{t}(y)=-\nabla f(\varphi_{t}(y))$
with initial condition $\varphi_{0}(y)=y$,  
\begin{equation}
\label{eq.W+}
\mathcal  W_{\pm}(z_{j}):= \big \{y\in\Omega, \varphi_{t}(y)\underset{t\to\pm \infty}{\longrightarrow }z_{j}  \big \}\,.
\end{equation}
It then holds (see indeed \cite[Section~2]{HeSj4} and \cite[Section 4.2]{HKN}): $\dim \mathcal  W_{+}(z_{j})=d-1 $,~$\dim \mathcal  W_{-}(z_{j})=1 $, and 
for all   $y\in \mathcal V_{j}$ (assuming $\mathcal V_{j}$ small enough),
\begin{equation}
\label{eq.stable-manifold1}
|f(y)-f(z_{j})|\leq \Phi_{j}(y) \  \text{and}\  
|f(y)-f(z_{j})|= \Phi_{j}(y)\ \text{iff}\ y\in \mathcal  W_{+}(z_{j})\cup  \mathcal  W_{-}(z_{j})
\end{equation}
with moreover
\begin{equation}
\label{eq.stable-manifold2}
\Phi_{j}= \pm(f-f(z_{j}))\, \text{on}\ \mathcal  W_{\pm}(z_{j})
\ \text{and}
\
\det  \Hess \Phi_{j}(z_{j})= \big |\det  \Hess f(z_{j})\big |. 
\end{equation}
Additionally, there exists from \cite[Proposition~1.3 and Section 2]{HeSj4}
 a $C^{\infty}(\overline{\mathcal V_j})$ $1$-form $ a_j(x,h)=\widetilde a_j(x)+O(h)$ such that   $a_j(z_{j},h)=\widetilde a_j(z_{j})=n(z_{j}) $, where $n(z_{j})$ is a unit normal 
to $\mathcal  W_{+}(z_{j})$,  and such that  the $1$-form 
$
u^{(1)}_{j,wkb}=a_{j} e^{-\frac{1}{h}\Phi_j}$\label{page.u1jwkb} 
satisfies
$$\big (\Delta_{f,h}^{(1)}-\mu(h)\big)u^{(1)}_{j,wkb}=O\big (h^{\infty}\big )\, e^{-\frac{1}{h}\Phi_j}\
\text{in} \ 
\mathcal V_{j},$$
where $\mu(h)\sim h^2 \sum_{k=0}^\infty\mu_kh^k$. Moreover, one has in the limit $h\to 0$ (see \cite[Section~2]{HeSj4}):
\begin{equation}\label{eq.norme-wkb0}
\|\theta_j u^{(1)}_{j,wkb}\|_{L^2}= \frac{ (\pi h)^{\frac{d}{4}} }{ |{\rm det\, \Hess} f(z_j)  |^{\frac 14}  }\, \big(1+O(h)\big),
\end{equation}
where the remainder terms $O(h)$   admits a full asymptotic expansion in~$h$.
Using in addition the fact that $\Phi_j>0$ on ${\rm supp} \, \nabla \theta_j$, there exists $c>0$ such that for $h$ small enough:
$$\Big\Vert \big (\Delta_{f,h}^{(1)}-\mu(h)\big)(\theta_j\, u^{(1)}_{j,wkb})\Big\Vert _{L^2}= O\big (h^{\infty}\big )+ O\big (e^{-\frac ch}\big )=O\big (h^{\infty}\big ).$$
From~\eqref{eq.norme-wkb0},  one then obtains that $\Delta_{f,h}^{FD,(1)}(\mathcal V_{j})$ admits an eigenvalue which equals $\mu(h)+O(h^\infty)$. Since $\mu(h)=O(h^2)$, from~\eqref{eq.dim11} and~\eqref{eq.lambdah-0}, one deduces that $\mu_k=0$ for all $k\ge 0$ and thus $\mu(h)=O(h^\infty)$. Finally, one has:
\begin{equation} \label{eq.wWKB10}
\Delta_{f,h}^{(1)}u^{(1)}_{j,wkb} =O\big (h^{\infty}\big )\, e^{-\frac{1}{h}\Phi_j}\
\text{in} \ 
\mathcal V_{j}.
\end{equation}
In the following proposition,  $w_j$ and $ u^{(1)}_{j,wkb}$ are compared. 

\begin{proposition}\label{compa_wkb-bis}
         Let us assume that  the assumption~\eqref{H-M} is satisfied.  Let $w_j$ be a $L^2$-normalized eigenform  associated with the smallest eigenvalue~$\lambda_{h}(\mathcal V_{j})$ of $\Delta_{f,h}^{FD,(1)}(\mathcal V_{j})$ ($j\in  \{\ft m_1^{\pa \Omega}+1,\dots,\ft m_1^{\overline \Omega} \big \}$). Then, there exists $h_0>0$ such that for all $h\in (0,h_0)$ one has:
\begin{equation}
\label{eq.a-priori-comparison0}
\left\|\theta_j\big (w_j-c_j(h)u^{(1)}_{j,wkb}\big )\right\|_{H^{1}}=O\big (h^{\infty}\big )
\end{equation}
where
$$
 c_j(h)^{-1}=\big \langle  w_j, \theta_j u^{(1)}_{j,wkb}\big  \rangle_{L^2 }.
$$
In addition, up to replacing $w_j$ by~$-w_j$, one can assume that $c_j(h)\geq0$ for $h$ small enough and then,  in the limit $h\to 0$, one has\label{page.cj}: 
\begin{equation}
\label{eq.ch0}
 c_j(h)=\frac{ \big |{\rm det\, \Hess} f(z_j)  \big |^{\frac 14}}{(\pi h)^{\frac{d}{4}}  }\, \big(1+O(h)\big),
\end{equation}
where the remainder terms $O(h)$   admits a full asymptotic expansion in~$h$.
 \end{proposition}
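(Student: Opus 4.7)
The plan is to combine the WKB equation \eqref{eq.wWKB10} with the spectral gap \eqref{eq.dim11} for $\Delta_{f,h}^{FD,(1)}(\mathcal V_j)$ and the Agmon estimates \eqref{eq.Agmon10} to show that $\theta_j u^{(1)}_{j,wkb}$ is $O(h^\infty)$-close (in $L^2$) to its projection onto the one-dimensional ground state eigenspace spanned by $w_j$, and then to upgrade this to an $H^1$ estimate.

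First, I would control $\Delta_{f,h}^{FD,(1)}(\mathcal V_j)(\theta_j u^{(1)}_{j,wkb})$ in $L^2$. Expanding this as $\theta_j \Delta_{f,h}^{(1)} u^{(1)}_{j,wkb} + [\Delta_{f,h}^{(1)},\theta_j] u^{(1)}_{j,wkb}$, the first term is $O(h^\infty)$ in $L^2$ by \eqref{eq.wWKB10} together with the fact that $e^{-\Phi_j/h}$ is bounded in $L^2$ on $\mathcal V_j$, while the commutator is supported in $\supp\nabla\theta_j\subset\{|x-z_j|\geq\ve_1\}$, where, by \eqref{eq.eikoj0} and the Morse property at $z_j$, $\Phi_j\geq c>0$, making this second term $O(e^{-c/h})$ in $L^2$. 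Hence $\|\Delta_{f,h}^{FD,(1)}(\mathcal V_j)(\theta_j u^{(1)}_{j,wkb})\|_{L^2}=O(h^\infty)$.

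Next, I would apply Lemma~\ref{quadra} with $A=\Delta_{f,h}^{FD,(1)}(\mathcal V_j)$ and $b=\ve_0 h$. Using $\|\theta_j u^{(1)}_{j,wkb}\|_{L^2}=O(h^{d/4})$ from \eqref{eq.norme-wkb0} together with the previous step, the quadratic form $\langle \theta_j u^{(1)}_{j,wkb},\Delta_{f,h}^{FD,(1)}(\mathcal V_j)\theta_j u^{(1)}_{j,wkb}\rangle$ is $O(h^\infty)$, and Lemma~\ref{quadra} combined with \eqref{eq.dim11} yields
$$\theta_j u^{(1)}_{j,wkb}=\langle w_j,\theta_j u^{(1)}_{j,wkb}\rangle_{L^2}\,w_j + r_h=c_j(h)^{-1}w_j+r_h \quad\text{with}\quad \|r_h\|_{L^2}=O(h^\infty).$$
Taking the squared $L^2$-norm (using the orthogonality $\langle w_j,r_h\rangle=0$) and comparing with \eqref{eq.norme-wkb0} gives $c_j(h)^{-2}=\frac{(\pi h)^{d/2}}{|\det\Hess f(z_j)|^{1/2}}(1+O(h))$; replacing $w_j$ by $-w_j$ if necessary, I obtain \eqref{eq.ch0}, and the full expansion in $h$ is inherited from the one of \eqref{eq.norme-wkb0}.

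To finish, I multiply the identity above by $c_j(h)$ to obtain
$$\theta_j\bigl(c_j(h) u^{(1)}_{j,wkb}-w_j\bigr)=-(1-\theta_j)w_j-c_j(h)r_h.$$
The term $(1-\theta_j)w_j$ is $O(e^{-c/h})$ in $H^1$ by \eqref{eq.Agmon10}, because $\Phi_j\geq c>0$ on $\supp(1-\theta_j)\cap\mathcal V_j$. For $c_j(h)r_h=O(h^{\infty-d/4})\,w_j$-orthogonal piece of $\theta_j u^{(1)}_{j,wkb}$, I would use semiclassical elliptic regularity: since $\Delta_{f,h}^{FD,(1)}(\mathcal V_j)r_h=\pi_{[\ve_0 h,\infty)}\Delta_{f,h}^{FD,(1)}(\mathcal V_j)(\theta_j u^{(1)}_{j,wkb})$ is $O(h^\infty)$ in $L^2$, and $r_h$ itself is $O(h^\infty)$ in $L^2$, the standard estimate $\|v\|_{H^1}^2\lesssim h^{-2}\bigl(\|v\|_{L^2}\|\Delta_{f,h}^{(1)}v\|_{L^2}+\|v\|_{L^2}^2\bigr)$ (which follows from the principal-symbol ellipticity of $\Delta_{f,h}^{(1)}=h^2\Delta_H^{(1)}+\mathrm{l.o.t.}$) gives $\|r_h\|_{H^1}=O(h^\infty)$, hence $\|c_j(h)r_h\|_{H^1}=O(h^\infty)$, which yields \eqref{eq.a-priori-comparison0}.

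The main obstacle is the passage from $L^2$ to $H^1$ control of the error: whereas the $L^2$ bound follows cleanly from a variational argument applied to the spectral gap, the $H^1$ bound requires either the semiclassical elliptic regularity sketched above or a careful use of the form identity together with Agmon-type weighted estimates. In the applications of Section~\ref{sec:QM-L} and beyond, this $H^1$ control is what allows one to manipulate $\nabla w_j$ via $\nabla(\theta_j u^{(1)}_{j,wkb})$ and hence perform explicit boundary computations with the WKB ansatz.
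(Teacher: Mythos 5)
Your argument is correct and follows the same skeleton as the paper's: project $\theta_j u^{(1)}_{j,wkb}$ onto the one-dimensional low-lying spectral subspace of $\Delta_{f,h}^{FD,(1)}(\mathcal V_j)$ using Lemma~\ref{quadra} together with \eqref{eq.dim11} and \eqref{eq.wWKB10}, then upgrade the resulting $L^2$ bound to $H^1$. The paper writes $\alpha_j=\theta_j\bigl(u^{(1)}_{j,wkb}-k_j(h)w_j\bigr)=k_j(h)(1-\theta_j)w_j+\pi_{[\ve_0h,\infty)}(\theta_j u^{(1)}_{j,wkb})$, which is exactly your decomposition with your $r_h$ replaced by $\pi_{[\ve_0h,\infty)}(\theta_j u^{(1)}_{j,wkb})$; it then applies Gaffney's inequality to $\alpha_j$ as a whole, bounding $d_{f,h}\alpha_j$ and $d^*_{f,h}\alpha_j$ directly. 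You instead apply the equivalent Gaffney-based estimate (which is what your ``semiclassical elliptic regularity'' really is, since one needs the tangential trace condition on $\pa\mathcal V_j$ to invoke it) to $r_h$ alone; both routes use the same ingredients. Two small remarks: your sign in $\theta_j(c_j(h)u^{(1)}_{j,wkb}-w_j)=(1-\theta_j)w_j+c_j(h)r_h$ is flipped in the draft, though this does not affect the norm estimate; and your derivation of \eqref{eq.ch0} via the orthogonal Pythagoras identity $c_j(h)^{-2}=\|\theta_j u^{(1)}_{j,wkb}\|_{L^2}^2+O(h^\infty)$ is actually slightly cleaner than the paper's route through $\|\theta_j(u^{(1)}_{j,wkb}-k_j(h)w_j)\|_{L^2}^2$, and inherits the full expansion from \eqref{eq.norme-wkb0} in the same way. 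In short: same approach, with a minor reorganization that is arguably a small simplification.
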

\noindent
 
 \begin{proof}
Let us define $k_j(h):=\big \langle  w_j, \theta_j u^{(1)}_{j,wkb}\big \rangle_{L^2 }=c_j(h)^{-1}$. If $k_j(h)<0$, then one
changes $w_j$ to $-w_j$ so that one can suppose without
loss of generality that $k_j(h) \geq 0$. For $h$ small enough, one
has from~\eqref{eq.dim11}:
$$\pi_{[0,\ve_0h)}\big(\Delta_{f,h}^{FD,(1)}  (\mathcal V_{j}) \big)  \big ( \theta_j u^{(1)}_{j,wkb}\big )= k_j(h) w_j.$$
Let us define  the following $1$-form
$$\alpha_j:= \theta_j  \big ( u^{(1)}_{j,wkb} -k_j(h)\, w_j \big) .$$ 
Thus, the following identity holds for $h$ small enough
$$\alpha_j=  k_j(h) \left(1-\theta_j\right)w_j+\pi_{[\ve_0h,+\infty)}\big (\Delta_{f,h}^{FD,(1)}(\mathcal V_{j})\big )\big   ( \theta_j u^{(1)}_{j,wkb}\big ).$$
Notice that, from~\eqref{eq.norme-wkb0}, there exist $C>0$ and  $h_0>0$ such that for all $ h\in (0,h_0)$
$$\big \vert k_j(h)\big \vert \leq Ch^{\frac{d}{4}}.$$
Therefore, using Lemma~\ref{quadra},~\eqref{eq.norme-phij},  and~\eqref{eq.wWKB10},  there exist $c>0$ and $C>0$ such that for~$h$ small enough:
\begin{align*}
\big \Vert  \alpha_j\big  \Vert^2_{L^2 } &\leq 2  k_j(h)^2 \, \big \Vert
\left(1-\theta_j \right)w_j\big \Vert^2_{L^2 }  \, +\, 2 \left\Vert \pi_{[\ve_0h,+\infty)}\big (\Delta_{f,h}^{FD,(1)}(\mathcal V_{j})\big )  ( \theta_j u^{(1)}_{j,wkb}) \right\Vert^2_{L^2 }\\
&\leq Ch^{\frac{d}{2}}   \,  e^{-\frac{c}{h} }   +   C h^{-1} O\big (h^{\infty}\big ) = O\big (h^{\infty}\big ).
\end{align*}
  Moreover, since $d_{f,h} =hd+df\wedge$ and $d_{f,h}^* =hd^*+\mbf{i}_{\nabla
  f}$, one obtains  using  the Gaffney
inequality (see~\cite{GSchw}):
\begin{align*}
\Vert \alpha_j\Vert_{H^1 }^2
&\leq C\big ( \Vert d\alpha_j\Vert^2_{L^2 }+\Vert d^*\alpha_j\Vert^2_{L^2 }+\Vert \alpha_j\Vert^2_{L^2 } \big )\\
&\leq Ch^{-2} \left(\big  \|d_{f,h}\alpha_j \big \|_{L^{2}}^2\,+\, \big \|d^{*}_{f,h}\alpha_j\big \|_{L^{2}}^2+\Vert \alpha_j\Vert^2_{L^2 } \right).
\end{align*}
Furthermore, from~\eqref{eq.delta-tildephi}, it holds
$$ \big  \|d_{f,h}(\theta_jw_j )\big \|_{L^{2}}\,+\, \big \|d^{*}_{f,h}(\theta_jw_j)  \big \|_{L^{2}}\le C e^{-\frac ch}$$
   and from~\eqref{eq.wWKB10}
   $$ \big  \|d_{f,h}(\theta_j u^{(1)}_{j,wkb} )\big \|_{L^{2}}\,+\, \big \|d^{*}_{f,h}(\theta_j u^{(1)}_{j,wkb})\big \|_{L^{2}}=O(h^\infty).$$
Thus, there exists $C>0$ such that:
\begin{align*}
\Vert \alpha_j\Vert_{H^1 }^2
&= O\big (h^{\infty}\big ).
\end{align*}
This concludes the proof of~\eqref{eq.a-priori-comparison0}.   Finally, since $ \Vert \theta_j w_j
\Vert_{L^2 } =1+O(e^{-\frac ch})$ (see~\eqref{eq.norme-phij}), by considering $ \Vert \theta_j (  u^{(1)}_{j,wkb}-  k_j(h) w_j
)\Vert^2_{L^2 } =O\big (h^{\infty}\big )$, one gets using~\eqref{eq.norme-wkb0}:  
\begin{align*}
 k_j(h) ^2  & = \frac{\Vert \theta_j u^{(1)}_{j,wkb}
  \Vert^2_{L^2 } + O\big (h^{\infty}\big )}{2-\Vert \theta_j w_j\Vert^2_{L^2 }}= \frac{ (\pi h)^{\frac{d}{2}} }{ |{\rm det\, \Hess} f(z_j)  |^{\frac 12}  }\, \big(1+O(h)\big).
\end{align*}
Since $k_j(h)
\geq 0$, one has  
$k_j(h)=\frac{ (\pi h)^{\frac{d}{4}} }{ |{\rm det\, \Hess} f(z_j)  |^{\frac 14}  }\, \big(1+O(h)\big)$. This concludes the proof of~\eqref{eq.ch0} since $c_j(h)=k_j(h)^{-1}$.
 \end{proof}
 

\noindent
\textbf{Quasi-mode associated with  $z\in  \ft U^{\pa \Omega}_1$}.\\      
Let us recall that from \eqref{eq.U1paOmega},
 $$ \ft  U_1^{\pa \Omega}=\{ z_{ 1},\ldots,z_{\ft m_1^{\pa  \Omega}}\}\subset \pa \Omega.$$
Let $j\in  \{ 1,\dots,\ft m_1^{\pa \Omega} \big \}$ and $z_{j}\in\ft U_1^{\pa \Omega}$.  To construct a $1$-form locally supported in a neighborhood  of $z_j$ in~$\overline \Omega$,  one proceeds in the same way as in  \cite[Section 4.3]{HeNi1}.
Let~$\mathcal V_{j}$ be a small   neighborhood   of
$z_{j}$ in~$\overline\Omega$ such that~$\mathcal V_{j}$  satisfies:  $\vert \nabla f\vert>0$ on $\overline{ \mathcal V_{j}}$, for all $x\in \pa  \mathcal V_{j} \cap \pa \Omega$,~$\vert \nabla_T f (x)\vert= 0$ if and only if $x=z_j$, and  $\partial_n f >0$ on $\pa\Omega\cap \pa \mathcal V_{j}$. 
Let us now consider the mixed 
 full Dirichlet--tangential Dirichlet
realization  $\Delta_{f,h}^{MD,(1)}(\mathcal V_{j})$
of the Witten Laplacian $\Delta_{f,h}^{(1)}$ in~$\mathcal V_{j}$ whose
domain is
\begin{align*}
D\big (\Delta_{f,h}^{MD,(1)}(\mathcal V_{j})\big )= \big\{&  w \in    \Lambda^1H^2\left (\mathcal V_{j}\right ), \,   w|_{\pa\mathcal  V_j\cap \Omega  }=0 ,\ \mbf{t}w|_{\pa\mathcal  V_j\cap \pa \Omega  }=0  \\
&  \text{ and }  \mbf{t}d^*_{f,h}w|_{\pa \mathcal V_j\cap \pa \Omega   }=0  \big \},
\end{align*}
where the superscript $MD$ stands for  mixed 
 full Dirichlet--tangential Dirichlet  boundary conditions (see \cite[Remark 4.3.1]{HeNi1} for the characterization of its domain).  Since $\partial_n f >0$ on $\pa\Omega\cap \mathcal V_{j}$, from  \cite[Section 4.2]{HeNi1}, one has that,  choosing~$\mathcal V_{j}$ small enough,  there exists a $C^{\infty}( \mathcal V_{j},\mathbb R^+)$ non negative solution $\Phi_j$ to the eikonal equation \label{page.Phij2}
 \begin{equation} \label{eq.eikoj}
\left.
    \begin{array}{ll}
&\vert  \nabla \Phi_j \vert =  \vert  \nabla f \vert   \ {\rm in \ }  \Omega \cap \mathcal V_{j} \\ 
&\Phi_j = f -f(z_j)\ {\rm on \ } \partial \Omega \cap \mathcal V_{j} \\
&\partial_n \Phi_j=-\partial_n f \ {\rm on \ } \partial \Omega \cap \mathcal V_{j} \end{array}
\right \} \quad\text{ and such that  
$\Phi_j(y)= 0$ iff $y=z_{j}$.}
\end{equation}
Moreover,~$\Phi_j$ is the unique non negative solution to~\eqref{eq.eikoj}  in the sense that if $\widetilde \Phi_j:\widetilde{\mathcal V_j}\to \mathbb R^+$ is  another  non negative $C^{\infty}$ solution to~\eqref{eq.eikoj} on a neighborhood $\widetilde{\mathcal V_j}$ of~$z_j$, then $\widetilde \Phi_j=\Phi_j$ on $\widetilde{\mathcal V_j}\cap \mathcal V_j$. 
\begin{remark}
The function $\Phi_j$ is actually   the Agmon distance to $z_j$, see~\cite[Section~3]{di-gesu-le-peutrec-lelievre-nectoux-16} for a precise definition of the Agmon distance in a bounded domain.
\end{remark}
\noindent
Choosing $\ve_{1}$ smaller if necessary, one can assume that there exists $\alpha>0$ such that 
$$
B(z_{j},2\ve_{1}+\alpha)\cap \overline \Omega\subset \mathcal V_{j}.
$$
The next proposition, which follows from \cite[Proposition 4.3.2]{HeNi1}, gathers all the estimates one needs in the following on the operator $\Delta_{f,h}^{MD,(1)}(\mathcal V_{j})$.

\begin{proposition}\label{pr.zj-omega-2}
    Let us assume that  the assumption~\eqref{H-M}  is satisfied. Then,  the operator $\Delta_{f,h}^{MD,(1)}(\mathcal V_{j})$ is  self-adjoint, has compact resolvent and is positive. Moreover:
\begin{itemize}[leftmargin=1.3cm,rightmargin=1.3cm]
\item  There exists $h_0>0$ such that for all $h\in (0,h_0)$:
\begin{equation}\label{eq.dim11-bis}
\dim \Ran\, \pi_{[0,h^{\frac 32} )}\big (\Delta_{f,h}^{MD,(1)}(\mathcal V_{j})\big)=1.
\end{equation}
\item  The smallest eigenvalue $\lambda_{h}(\mathcal V_{j})$ of 
$\Delta_{f,h}^{MD,(1)}(\mathcal V_{j})$ is exponentially small: there exist $C>0$,~$c>0$ and $h_0>0$ such that for any  $h\in (0,h_0)$:

\begin{equation}
\label{eq.lambdah-0-bis}
\lambda_{h}(\mathcal V_{j})\le Ce^{-\frac ch}. 
\end{equation}
\item Any  $L^2$-normalized eigenform $w_{j}$\label{page.wj2} associated with the smallest eigenvalue~$\lambda_{h}(\mathcal V_{j})$ of $\Delta_{f,h}^{MD,(1)}(\mathcal V_{j})$  satisfies  the following Agmon estimates:   there exist $C >0$,~$n\in \mathbb N$ and $h_0>0$ such that for any  $h\in (0,h_0)$, it holds:
\begin{equation}
\label{eq.Agmon1}
\big \|e^{\frac1h \Phi_j}w_j \big  \|_{H^1(B(z_{j},2\ve_{1})\cap \Omega)} \leq Ch^{-n}.
\end{equation} 

\end{itemize}
\end{proposition}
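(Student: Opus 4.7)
The statement is claimed to follow from \cite[Proposition~4.3.2]{HeNi1}, so the plan is to check that the geometric hypotheses behind that result are met by our neighborhood $\mathcal V_j$, and then to recall how each of the three items is obtained.

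\textbf{Self-adjointness, compact resolvent, positivity.} The operator $\Delta_{f,h}^{MD,(1)}(\mathcal V_j)$ is the Friedrichs extension of the nonnegative quadratic form
\begin{equation*}
\phi \in \Lambda^1 H^1_T(\mathcal V_j,\pa \mathcal V_j\cap \Omega) \ \longmapsto \ \big\|d_{f,h}^{(1)}\phi\big\|_{L^2}^2 + \big\|d_{f,h}^{(1)*}\phi\big\|_{L^2}^2 \,,
\end{equation*}
where the form domain consists of $1$-forms vanishing on $\pa \mathcal V_j\cap\Omega$ (full Dirichlet) and with vanishing tangential part on $\pa \mathcal V_j\cap\pa\Omega$ (tangential Dirichlet). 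Since $\mathcal V_j$ is bounded and smooth and the embedding $\Lambda^1 H^1(\mathcal V_j)\hookrightarrow \Lambda^1 L^2(\mathcal V_j)$ is compact, the operator has compact resolvent, and positivity is immediate from the form.

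\textbf{Upper bound via a WKB quasi-mode, which gives both (\ref{eq.dim11-bis}) and (\ref{eq.lambdah-0-bis}).} The key input is the non-negative solution $\Phi_j\in C^\infty(\mathcal V_j,\mathbb R^+)$ of the eikonal system \eqref{eq.eikoj}: the third equation $\partial_n \Phi_j=-\partial_n f$ on $\pa\Omega\cap\mathcal V_j$ is exactly what is needed so that the WKB $1$-form
\begin{equation*}
u_{j,wkb}^{(1)} = a_j(x,h)\,e^{-\Phi_j/h}
\end{equation*}
satisfies the tangential Dirichlet boundary conditions on $\pa\Omega\cap\mathcal V_j$ (this is a direct computation using $d_{f,h}^*=h\,d^*+\mbf i_{\nabla f}$ and $d_{f,h}=hd+df\wedge$); the amplitude $a_j$ is constructed as in the interior case with $a_j(z_j,0)$ normal to $\pa\Omega$ at $z_j$. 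After cutting off by $\theta_j$ supported in $B(z_j,\ve_1)\cap\overline\Omega$ one obtains a test form in the form domain whose $L^2$-norm is of order $h^{d/4}$ and for which
\begin{equation*}
\big\|\Delta_{f,h}^{MD,(1)}(\mathcal V_j)\,(\theta_j u_{j,wkb}^{(1)})\big\|_{L^2} = O(h^\infty) + O\big(e^{-c/h}\big),
\end{equation*}
the exponential term coming from the derivatives of $\theta_j$ which are supported where $\Phi_j\ge c>0$. Applying Lemma~\ref{quadra} to the quadratic form yields at least one eigenvalue $\le Ce^{-c/h}$, which in particular is $\le h^{3/2}$ for small $h$.

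\textbf{Lower bound on the rest of the spectrum, which concludes (\ref{eq.dim11-bis}).} Here lies the main obstacle: one must rule out a second eigenvalue below $h^{3/2}$. Following \cite[Sections~3 and 4]{HeNi1}, one uses an IMS/Persson type partition of $\mathcal V_j$ and the commutation $d_{f,h}\,\pi_E=\pi_E\,d_{f,h}$ to reduce the bound from below to a local analysis near each point of $\overline{\mathcal V_j}$. At every point $x\neq z_j$ of $\overline{\mathcal V_j}$ one has either $|\nabla f(x)|\neq 0$ (giving a harmonic-oscillator style lower bound $\gtrsim h$ on the $1$-form spectrum of $\Delta_{f,h}^{(1)}$), or one is on $\pa\Omega\cap\pa\mathcal V_j$ where the condition $\partial_n f>0$ together with $|\nabla_T f|>0$ makes the boundary problem strictly positive at scale $h$. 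Only at $z_j$ does one recover an exponentially small eigenvalue, and the local model there (a Morse minimum of $f|_{\pa\Omega}$ with $\partial_n f>0$, treated in \cite[Section~3.4]{HeNi1}) contributes exactly one such eigenvalue. Combining these local bounds with a partition of unity yields the dimension count \eqref{eq.dim11-bis}.

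\textbf{Agmon estimates \eqref{eq.Agmon1}.} With the eikonal function $\Phi_j$ in hand, the estimate is obtained by the standard Agmon/integration-by-parts procedure: one tests the eigenvalue equation against $e^{2\Phi_j/h}\chi^2 w_j$ for a cutoff $\chi$ equal to $1$ on $B(z_j,2\ve_1)\cap\overline\Omega$ and supported in $\mathcal V_j$, uses $|\nabla\Phi_j|=|\nabla f|$ to absorb first-order cross terms, controls the boundary contributions via the boundary conditions in \eqref{eq.eikoj} (tangential and normal), and absorbs the exponentially small eigenvalue $\lambda_h(\mathcal V_j)$ by \eqref{eq.lambdah-0-bis}. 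This yields \eqref{eq.Agmon1} up to polynomial losses $h^{-n}$ arising from the lack of a uniform Hessian control at the boundary critical point $z_j$.
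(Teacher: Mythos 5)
Your plan is correct and consistent with the paper, which does not prove this proposition itself but simply invokes \cite[Proposition 4.3.2]{HeNi1}; your sketch faithfully reconstructs the standard arguments behind that reference (Friedrichs extension and compact embedding for the functional-analytic claims, a WKB quasi-mode for the upper bound, IMS localization for the dimension count, and Agmon estimates via the eikonal function $\Phi_j$). One small slip worth fixing: the tangential Dirichlet condition $\mathbf{t}\,u^{(1)}_{j,wkb}=0$ on $\pa\Omega\cap\mathcal V_j$ follows from the second eikonal condition $\Phi_j=f-f(z_j)$ on $\pa\Omega\cap\mathcal V_j$ (which annihilates the tangential part of $d(f-\Phi_j)$, the leading amplitude in \eqref{eq.WKBj}), not from the third condition $\pa_n\Phi_j=-\pa_nf$, whose role is rather to make $\Phi_j$ positive away from $z_j$ and to yield the approximate condition on $\mathbf{t}\,d^*_{f,h}u^{(1)}_{j,wkb}$.
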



Let us now define the quasi-mode
associated with $z_{j}\in U_1^{\pa \Omega}$.
\begin{definition}   \label{de.zj-pa-omega}
    Let us assume that  the assumption~\eqref{H-M} holds. 
Let $j\in  \{1,\dots,\ft m_1^{\pa \Omega} \big \}$ and $z_{j}\in\ft U_1^{\pa \Omega}$.  The quasi-mode
associated with $z_{j}$ is  defined by\label{page.tildephij2}
\begin{equation}
\label{eq.tildephi'}
\widetilde\phi_{j}\ :=\ \frac{\theta_j\,w_{j}}{\left\|\theta_j\,w_{j}\right\|_{L^{2}}}\in  \Lambda^1H^1_T(\Omega) \cap   \Lambda^1 C^{\infty}\left (\overline \Omega\right ),
\end{equation}
where
$w_{j}$ is a $L^2$-normalized eigenform associated with the first eigenvalue $\lambda_{h}(\mathcal V_{j})$ of 
$\Delta_{f,h}^{(1),MD}(\mathcal V_{j})$ and
 $\theta_j$ is a smooth non negative cut-off function satisfying 
$\supp \theta_j\subset B(z_{j},2\ve_{1})\cap\overline\Omega \subset \mathcal V_j$,
$\{z_{j}\}=\argmin_{\supp\theta_{j}\cap\pa\Omega}f$,
and $\theta_j=1$ on $B(z_{j},\ve_{1})\cap\overline\Omega$.
\end{definition}

\noindent

Notice again that both $w_j$ and $-w_j$ can be used to build a quasi-mode and the choice of the sign is determined in Proposition~\ref{compa_wkb-bis2}. Notice also that the fact that $\widetilde\phi_{j}\in  \Lambda^1 C^{\infty}\left (\overline \Omega\right )$ follows from standard    elliptic regularity results. In addition,   for all $j\in  \{1,\dots,\ft m_1^{\pa \Omega} \big \}$, using~\eqref{eq.Agmon1} together with the fact that $\inf_{\text{supp }(1-\theta_j)\cap \mathcal V_j} \Phi_j>0$   (see~\eqref{eq.eikoj}), there exists $c>0$ such that    when $h\to 0$:
\begin{equation}\label{eq.norme-phij2}
\big \|(1-\theta_j)\,w_{j}\big \|_{L^2(\mathcal V_j)}=O\big(e^{-\frac ch}\big) \text{ and thus, } \,  \big \|\theta_j\,w_{j}\big \|_{L^2}=1+O\big(e^{-\frac ch}\big).
\end{equation}

Using~Proposition~\ref{pr.zj-omega-2} and~\eqref{eq.norme-phij2}, one deduces the following estimate on the quasi-mode $\widetilde \phi_j$ introduced in Definition~\ref{de.tildephi}.
\begin{corollary}
     Let us assume that  the assumption~\eqref{H-M} holds. Let  
$\widetilde\phi_{j}$ be the quasi-mode associated with $z_j\in U_1^{\pa \Omega}$ ($j\in  \{1,\dots,\ft m_1^{\pa \Omega} \big \}$), see Definition~\ref{de.zj-pa-omega}. 
Then, there exist $C>0$,~$c>0$ and $h_0>0$ such that for any  $h\in (0,h_0)$:
\begin{equation}
\label{eq.delta-tildephi'}
 \big  \|d_{f,h}\widetilde\phi_{j} \big \|_{L^{2}}\,+\, \big \|d^{*}_{f,h}\widetilde\phi_{j} \big \|_{L^{2}}\le C e^{-\frac ch}.
\end{equation}
\end{corollary}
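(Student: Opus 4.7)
The plan is to exploit the Leibniz rule for the twisted differentials $d_{f,h} = h\, d + df\wedge$ and $d_{f,h}^{*} = h\, d^{*} + \mbf{i}_{\nabla f}$ applied to the product $\theta_j w_j$, and then to separately control the two resulting contributions using, respectively, the eigenvalue equation for $w_j$ and the Agmon estimate~\eqref{eq.Agmon1}.

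First, I would use the identities
\begin{equation*}
d_{f,h}(\theta_j w_j) \;=\; \theta_j\, d_{f,h} w_j \,+\, h\, d\theta_j \wedge w_j,
\qquad
d_{f,h}^{*}(\theta_j w_j) \;=\; \theta_j\, d_{f,h}^{*} w_j \,-\, h\, \mbf{i}_{\nabla\theta_j} w_j,
\end{equation*}
which hold since $\theta_j$ is a smooth scalar function. The first term on each right-hand side is bounded using the eigenvalue relation: since $w_j$ is a normalized eigenform of $\Delta_{f,h}^{MD,(1)}(\mathcal V_j)$ associated with the eigenvalue $\lambda_h(\mathcal V_j)$, one has
\begin{equation*}
\|d_{f,h} w_j\|_{L^2(\mathcal V_j)}^{2} \,+\, \|d_{f,h}^{*} w_j\|_{L^2(\mathcal V_j)}^{2} \;=\; \lambda_h(\mathcal V_j)\,\|w_j\|_{L^2(\mathcal V_j)}^{2} \;\le\; C\, e^{-\frac{c}{h}},
\end{equation*}
by~\eqref{eq.lambdah-0-bis}, so that the contributions $\theta_j d_{f,h} w_j$ and $\theta_j d_{f,h}^{*} w_j$ are $O(e^{-\frac{c}{2h}})$ in $L^2$.

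Next I would control the remaining terms $h\,d\theta_j\wedge w_j$ and $h\,\mbf{i}_{\nabla\theta_j}w_j$, both of which are supported in $\supp\nabla\theta_j \subset \big(B(z_j,2\ve_1)\setminus B(z_j,\ve_1)\big)\cap\overline\Omega \subset \mathcal V_j$. By the properties of the eikonal solution in~\eqref{eq.eikoj} and the fact that $\Phi_j(y) = 0$ iff $y = z_j$, one has $\inf_{\supp\nabla\theta_j} \Phi_j \ge c' > 0$ for some constant $c'>0$. Combining this lower bound with the Agmon estimate~\eqref{eq.Agmon1} yields
\begin{equation*}
\|w_j\|_{L^2(\supp\nabla\theta_j)} \;\le\; e^{-\frac{c'}{h}}\,\big\|e^{\frac{1}{h}\Phi_j} w_j\big\|_{L^2(B(z_j,2\ve_1)\cap\Omega)} \;\le\; C\, h^{-n}\, e^{-\frac{c'}{h}},
\end{equation*}
which absorbs the prefactor $h$ and the polynomial $h^{-n}$ into a slightly smaller exponential rate.

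Putting the two pieces together gives $\|d_{f,h}(\theta_j w_j)\|_{L^2} + \|d_{f,h}^{*}(\theta_j w_j)\|_{L^2} \le C\,e^{-\frac{\tilde c}{h}}$ for some $\tilde c>0$. Finally, dividing by $\|\theta_j w_j\|_{L^2} = 1 + O(e^{-c/h})$ (see~\eqref{eq.norme-phij2}) preserves the exponential decay and yields~\eqref{eq.delta-tildephi'}. There is no real obstacle here: the argument is entirely parallel to the one sketched for Corollary~\ref{co.dfh-phi-j} in the interior case, the only point requiring a moment's care being the replacement of the sharp Agmon bound $e^{\ve/h}$ from~\eqref{eq.Agmon10} by the polynomial-loss version $Ch^{-n}$ of~\eqref{eq.Agmon1}, which is harmless since it is multiplied by a genuinely exponentially small factor $e^{-c'/h}$.
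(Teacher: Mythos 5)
Your argument is correct and is precisely the argument implicit in the paper's one-line deduction from Proposition~\ref{pr.zj-omega-2} and~\eqref{eq.norme-phij2}: the Leibniz rule splits the twisted derivatives into a term controlled by the exponentially small eigenvalue $\lambda_h(\mathcal V_j)$ (via the quadratic form identity, which holds because the mixed Dirichlet/tangential-Dirichlet boundary conditions kill the Green boundary terms) and a term localized on $\supp\nabla\theta_j$, controlled by the Agmon estimate together with $\inf_{\supp\nabla\theta_j}\Phi_j>0$. Your remark that the polynomial loss $h^{-n}$ in~\eqref{eq.Agmon1} is harmless because it is beaten by the genuine exponential gain is exactly the point to note; no changes are needed.
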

\noindent
Let us now  give  the corresponding versions
of the WKB approximation estimates~\eqref{eq.wWKB10}--\eqref{eq.ch0} for the quasi-mode $\widetilde \phi_j$ introduced in Definition~\ref{de.zj-pa-omega}.
From \cite[Section~4.2]{HeNi1},  there exists
 a $C^{\infty}(\overline{\mathcal V_j})$ function $a_j(x,h)=\widetilde a_{j}(x)+O(h)$ with $a_j\equiv \widetilde a_{j} \equiv 1$ on $\partial \Omega \cap \overline{\mathcal V_{j}}$ such that  the $1$-form \label{page.u1jwkb2}
\begin{equation} \label{eq.WKBj}
u^{(1)}_{j,wkb}=d_{f,h}\,\left ( a_j(x,h)e^{-\frac{1}{h}\Phi_j}\right)=\big(\widetilde a_{j}\,d(f-\Phi_{j})+O(h)\big)e^{-\frac{1}{h}\Phi_j},
\end{equation} 
 satisfies 
\begin{equation} \label{eq.wWKB1}
\left\{
\begin{aligned}
\Delta_{f,h}^{(1)}u^{(1)}_{j,wkb} &=O\big (h^{\infty}\big )e^{-\frac{1}{h}\Phi_j}\
\text{in} \ 
\mathcal V_{j}\\
\mathbf{t}u^{(1)}_{j,wkb}&=0\ \text{on}\ 
\partial \Omega \cap \overline{\mathcal V_{j}}\\
\mathbf{t}d_{f,h}^{*}u^{(1)}_{j,wkb}&= 
O\big  (h^{\infty}\big )\, e^{-\frac{1}{h}\Phi_j} \ \text{on}\ 
\partial \Omega \cap \overline{\mathcal V_{j}}.
\end{aligned}\right.
\end{equation}

\noindent
Moreover, one has in the limit $h\to 0$ (see \cite[Section~4.2]{HeNi1}):
$$
\big \|\theta_j u^{(1)}_{j,wkb}\big \|_{L^2}=\frac{   \pi^{\frac{d-1}{4}} \, \sqrt{2\partial_nf(z_j)}    }{\big({\rm det \ Hess} f|_{ \partial \Omega}(z_j)\big)^{\frac 14}    } \, h^{\frac{d+1}{4}}\big(1+O(h)\big),
$$
where the remainder terms $O(h)$   admits a full asymptotic expansion in~$h$.
In the following proposition, $w_j$ and $ u^{(1)}_{j,wkb}$ are compared. 

\begin{proposition}\label{compa_wkb-bis2}
     Let us assume that  the assumption~\eqref{H-M} holds.  Let $w_j$ be a $L^2$-normalized eigenform  associated with the smallest eigenvalue~$\lambda_{h}(\mathcal V_{j})$ of $\Delta_{f,h}^{FD,(1)}(\mathcal V_{j})$ ($j\in  \{1,\dots,\ft m_1^{\pa \Omega} \big \}$). Then, there exists $h_0>0$ such that for all $h\in (0,h_0)$ one has:
\begin{equation}
\label{eq.a-priori-comparison}
\left\|\theta_j\big (w_j-c_j(h)u^{(1)}_{j,wkb}\big )\right\|_{H^{1}}=O\big  (h^{\infty}\big )
\end{equation}
where
$$
 c_j(h)^{-1}=\big \langle  w_j, \theta_j u^{(1)}_{j,wkb}\big \rangle_{L^2 }.
$$
In addition, up to replacing $w_j$ by~$-w_j$, one can assume that $c_j(h)\geq0$ for $h$ small enough and then,  in the limit $h\to 0$, one has\label{page.cj2}: 
\begin{equation}
\label{eq.ch}
c_j(h)=\frac{\big({\rm det \ Hess} f|_{ \partial \Omega}(z_j)\big)^{\frac 14}}{\pi^{\frac{d-1}{4}} \, \sqrt{2\partial_nf(z_j)} } \, h^{-\frac{d+1}{4}}\big(1+O(h)\big),
\end{equation}
where the remainder terms $O(h)$   admits a full asymptotic expansion in~$h$.
 \end{proposition}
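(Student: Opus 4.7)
The strategy is a direct adaptation of the argument given for Proposition~\ref{compa_wkb-bis}: the WKB $1$-form $u^{(1)}_{j,wkb}$ satisfies the eigenvalue equation up to an error $O(h^\infty)e^{-\Phi_j/h}$ together with the appropriate boundary conditions on $\pa\Omega\cap\mathcal V_j$ (see \eqref{eq.wWKB1}), so its projection onto $\mathrm{Ran}\,\pi_{[0,h^{3/2})}(\Delta_{f,h}^{MD,(1)}(\mathcal V_j))$ will carry almost all its mass. More precisely, set $k_j(h):=\langle w_j,\theta_j u^{(1)}_{j,wkb}\rangle_{L^2}=c_j(h)^{-1}$ and, changing $w_j$ into $-w_j$ if necessary, assume $k_j(h)\geq 0$. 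By~\eqref{eq.dim11-bis}, for $h$ small enough $\pi_{[0,h^{3/2})}(\Delta_{f,h}^{MD,(1)}(\mathcal V_j))(\theta_j u^{(1)}_{j,wkb})=k_j(h)w_j$, so that
\[
\alpha_j\;:=\;\theta_j\bigl(u^{(1)}_{j,wkb}-k_j(h)\,w_j\bigr)\;=\;k_j(h)(1-\theta_j)w_j\;+\;\pi_{[h^{3/2},+\infty)}\bigl(\Delta_{f,h}^{MD,(1)}(\mathcal V_j)\bigr)\bigl(\theta_j u^{(1)}_{j,wkb}\bigr).
\]

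First I would verify that $\theta_j u^{(1)}_{j,wkb}$ lies in the quadratic form domain of $\Delta_{f,h}^{MD,(1)}(\mathcal V_j)$: the tangential trace vanishes by~\eqref{eq.wWKB1}, and $\theta_j$ vanishes near $\pa\mathcal V_j\cap\Omega$. I would then use Lemma~\ref{quadra} together with~\eqref{eq.wWKB1} and the cut-off localization (using $\Phi_j>0$ away from $z_j$ to absorb the $\nabla\theta_j$ contributions, exactly as in~\eqref{eq.delta-tildephi'}) to conclude
\[
\bigl\|d_{f,h}(\theta_j u^{(1)}_{j,wkb})\bigr\|_{L^2}^{2}+\bigl\|d^{*}_{f,h}(\theta_j u^{(1)}_{j,wkb})\bigr\|_{L^2}^{2}=O(h^{\infty}),
\]
whence $\|\pi_{[h^{3/2},+\infty)}(\theta_j u^{(1)}_{j,wkb})\|_{L^2}^2\leq h^{-3/2}\,O(h^\infty)=O(h^\infty)$. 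Combined with $\|(1-\theta_j)w_j\|_{L^2}=O(e^{-c/h})$ from \eqref{eq.norme-phij2} and the a priori control $|k_j(h)|\leq C h^{(d+1)/4}$ obtained from the explicit $L^2$-norm of $\theta_j u^{(1)}_{j,wkb}$, this yields $\|\alpha_j\|_{L^2}=O(h^\infty)$.

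To upgrade to the $H^1$ norm I would apply the Gaffney inequality
\[
\|\alpha_j\|_{H^1}^{2}\leq C\bigl(\|d\alpha_j\|_{L^2}^2+\|d^{*}\alpha_j\|_{L^2}^2+\|\alpha_j\|_{L^2}^2\bigr)\leq C h^{-2}\bigl(\|d_{f,h}\alpha_j\|_{L^2}^2+\|d^{*}_{f,h}\alpha_j\|_{L^2}^2+\|\alpha_j\|_{L^2}^2\bigr),
\]
and control the two Witten-differential norms by~\eqref{eq.wWKB1} (for the $\theta_j u^{(1)}_{j,wkb}$ piece) and by~\eqref{eq.delta-tildephi'} (for the $\theta_j w_j$ piece); the boundary term $\mathbf{t}d^{*}_{f,h}u^{(1)}_{j,wkb}=O(h^\infty)e^{-\Phi_j/h}$ does not produce extra contributions since $\alpha_j$ satisfies homogeneous tangential trace. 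The resulting $\|\alpha_j\|_{H^1}=O(h^\infty)$ is exactly~\eqref{eq.a-priori-comparison}. Finally, expanding $\|\theta_j u^{(1)}_{j,wkb}-k_j(h)\theta_j w_j\|_{L^2}^2=O(h^\infty)$, using $\|\theta_j w_j\|_{L^2}=1+O(e^{-c/h})$, and inserting the asymptotic $\|\theta_j u^{(1)}_{j,wkb}\|_{L^2}=\pi^{(d-1)/4}\sqrt{2\pa_n f(z_j)}\,({\rm det\,Hess}\,f|_{\pa\Omega}(z_j))^{-1/4}\,h^{(d+1)/4}(1+O(h))$ stated just before the proposition, one obtains the claimed asymptotics~\eqref{eq.ch} for $c_j(h)=k_j(h)^{-1}$, with full expansion in $h$ inherited from that of the WKB norm.

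The main technical obstacle, compared with the interior case, is that the Agmon estimate~\eqref{eq.Agmon1} only yields polynomial rather than $e^{\varepsilon/h}$ control on $e^{\Phi_j/h}w_j$; however this loss is inert here because $w_j$ enters only through $(1-\theta_j)w_j$, localized in a region where $\Phi_j$ is uniformly positive, so the loss $h^{-n}$ is absorbed by an exponential gain $e^{-c/h}$. All other steps are the same as in Proposition~\ref{compa_wkb-bis}; the boundary conditions entering in $\Delta_{f,h}^{MD,(1)}$ are compatible with $\theta_j u^{(1)}_{j,wkb}$ thanks to the last two lines of~\eqref{eq.wWKB1}.
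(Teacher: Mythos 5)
Your proof is correct and takes essentially the same route as the paper: the paper simply states that Proposition~\ref{compa_wkb-bis2} is ``proved exactly as Proposition~\ref{compa_wkb-bis}'', and your argument is precisely that adaptation, with the boundary-specific ingredients (the spectral window $[0,h^{3/2})$ from~\eqref{eq.dim11-bis}, the WKB properties~\eqref{eq.wWKB1}, the localization~\eqref{eq.norme-phij2}, the Agmon estimate~\eqref{eq.Agmon1}, and the $L^2$-norm of the boundary WKB ansatz) substituted for their interior counterparts. Your observation that the polynomial loss $h^{-n}$ in~\eqref{eq.Agmon1}, compared with the $e^{\varepsilon/h}$ control of~\eqref{eq.Agmon10}, is harmless because it is always paired with the exponential gain on $\mathrm{supp}(1-\theta_j)$ is a correct and useful clarification of a point the paper leaves implicit.
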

 Proposition~\ref{compa_wkb-bis2} is proved exactly as Proposition~\ref{compa_wkb-bis}.\\

In conclusion, a family of $1$-forms 
$(\widetilde \phi_j)_{j\in \{1,\dots,\ft m_1^{\overline \Omega}\}}$ has been constructed in this section. Since \eqref{eq.epsilon1} guarantees that $\overline{B(z,2\ve_{1})}\cap \overline{B(z',2\ve_{1})}=\emptyset$ 
for all $z\neq z'\in \ft U_1^{\overline \Omega}$,
the family $(\widetilde \phi_{j})_{j\in\{1,\dots,\ft m_1^{\overline \Omega}\}}$ is orthonormal in $L^2(\Omega)$.  From now on, the parameter $\ve_{1}$  is fixed and  $\ve>0$ will be successively reduced a finite number of times in the following. \\

\noindent
\textbf{WKB approximation of the quasi-modes $(\widetilde \phi_j)_{j\in \{1,\dots,\ft m_1^{\overline \Omega}\}}$}.\\  
 For upcoming computations, one needs the following definition. 
\begin{definition} Let us assume that  the assumption \eqref{H-M}  is satisfied. 
For all $j\in \{1,\ldots,\ft m_1^{\overline \Omega}\}$, one defines: \label{page.phiwkb}
\begin{equation}\label{tildephiwkb}
\widetilde \phi_{j,wkb}:=c_j(h) \, \theta_j \, u_{j,wkb}^{(1)},
\end{equation}
where for $j\in  \{ 1,\dots,\ft m_1^{\pa \Omega} \big \}$,~$u_{j,wkb}^{(1)}$ satisfies~\eqref{eq.wWKB1} and $\theta_j$ is introduced in Definition~\ref{de.zj-pa-omega} and, for $j\in  \{\ft m_1^{\pa \Omega}+1,\dots,\ft m_1^{\overline \Omega} \big \}$,~$u_{j,wkb}^{(1)}$ satisfies~\eqref{eq.wWKB10} and $\theta_j$ is introduced in Definition~\ref{de.tildephi}.
\end{definition}

   From~\eqref{eq.norme-phij}, Proposition~\ref{compa_wkb-bis},~\eqref{eq.norme-phij2},  and~Proposition~\ref{compa_wkb-bis2}  one has the following lemma. 
   \begin{lemma}\label{le.comp-wkb}
   Let us assume that  the assumption \eqref{H-M}  is satisfied. 
For $j\in  \{ 1,\dots,\ft m_1^{\pa \Omega} \big \}$, let $\widetilde \phi_j$ be as defined in~\eqref{eq.tildephi'}, and for $j\in  \{\ft m_1^{\pa \Omega}+1,\dots,\ft m_1^{\overline \Omega} \big \}$, let $\widetilde \phi_j$ be as defined in~\eqref{eq.tildephi}. Moreover, for $j\in  \{1,\dots,\ft m_1^{\overline \Omega} \big \}$, let $\widetilde \phi_{j,wkb}$ be as defined in~\eqref{tildephiwkb}. Then, one has:
$$
 \big \Vert \widetilde \phi_j-\widetilde \phi_{j,wkb}    \big  \Vert_{H^1}=O\big (h^{\infty}\big ).
$$
   \end{lemma}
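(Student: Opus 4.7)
The plan is to reduce the lemma directly to Propositions~\ref{compa_wkb-bis} and~\ref{compa_wkb-bis2}, up to controlling the normalization factor $\|\theta_j w_j\|_{L^2}$ that distinguishes $\widetilde\phi_j$ from $\theta_j w_j$. The natural decomposition is
\begin{equation*}
\widetilde\phi_j-\widetilde\phi_{j,wkb}
=\bigl(\theta_j w_j-c_j(h)\,\theta_j u^{(1)}_{j,wkb}\bigr)
+\Bigl(\frac{1}{\|\theta_j w_j\|_{L^2}}-1\Bigr)\theta_j w_j,
\end{equation*}
which splits the problem into two terms of very different nature.

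For the first term, Propositions~\ref{compa_wkb-bis} and~\ref{compa_wkb-bis2} supply the bound $\|\theta_j(w_j-c_j(h)u^{(1)}_{j,wkb})\|_{H^1}=O(h^\infty)$ directly, so nothing more is needed there. For the second term, the normalization identities~\eqref{eq.norme-phij} and~\eqref{eq.norme-phij2} give $\|\theta_j w_j\|_{L^2}=1+O(e^{-c/h})$, hence the scalar prefactor is $O(e^{-c/h})$. One then needs a polynomial in $1/h$ bound on $\|\theta_j w_j\|_{H^1}$: this is provided by the Agmon estimates~\eqref{eq.Agmon10} (in the interior case) and~\eqref{eq.Agmon1} (in the boundary case), which together with $\Phi_j\ge 0$ yield $\|\theta_j w_j\|_{H^1}\le C h^{-n}$ for some $n\in\mathbb N$ independent of $h$. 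The product of an exponentially small factor and a polynomially controlled one is again $O(h^\infty)$.

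Combining the two estimates via the triangle inequality gives the claim $\|\widetilde\phi_j-\widetilde\phi_{j,wkb}\|_{H^1}=O(h^\infty)$. There is essentially no obstacle: the argument is a routine bookkeeping exercise once the comparison propositions and Agmon decay are in hand. The only small subtlety is to verify that $\|\theta_j w_j\|_{H^1}$ is indeed at most polynomial in $h^{-1}$, which is immediate from the Agmon bounds since $\theta_j$ is smooth, compactly supported where $\Phi_j\ge 0$, and satisfies the cut-off requirements in Definitions~\ref{de.tildephi} and~\ref{de.zj-pa-omega}; no further regularity argument beyond those already quoted in Propositions~\ref{pr.zj-omega} and~\ref{pr.zj-omega-2} is required.
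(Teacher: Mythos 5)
Your proof is essentially the argument the paper intends: the paper itself proves Lemma~\ref{le.comp-wkb} only by citing~\eqref{eq.norme-phij}, Proposition~\ref{compa_wkb-bis},~\eqref{eq.norme-phij2}, and Proposition~\ref{compa_wkb-bis2}, and your decomposition $\widetilde\phi_j-\widetilde\phi_{j,wkb}=\theta_j(w_j-c_j(h)u^{(1)}_{j,wkb})+(\|\theta_j w_j\|_{L^2}^{-1}-1)\theta_j w_j$ together with the triangle inequality is exactly how those four facts combine. One small imprecision: in the interior case $j>\ft m_1^{\pa\Omega}$, the Agmon bound~\eqref{eq.Agmon10} gives $\|e^{\Phi_j/h}w_j\|_{H^1}\le C_\ve e^{\ve/h}$ for every $\ve>0$, which (after writing $e^{\Phi_j/h}\nabla w_j=\nabla(e^{\Phi_j/h}w_j)-h^{-1}(\nabla\Phi_j)e^{\Phi_j/h}w_j$ and using $\Phi_j\ge 0$) yields $\|\theta_j w_j\|_{H^1}\le C_\ve h^{-1}e^{\ve/h}$ rather than a purely polynomial bound $Ch^{-n}$; this still closes the estimate since $e^{-c/h}\cdot C_\ve h^{-1}e^{\ve/h}=O(h^\infty)$ for $\ve<c$, but the statement as written is slightly off for that case. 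An alternative that avoids this subtlety is to bound $\|\theta_j w_j\|_{H^1}$ via Proposition~\ref{compa_wkb-bis}/\ref{compa_wkb-bis2} itself: $\|\theta_j w_j\|_{H^1}\le c_j(h)\|\theta_j u^{(1)}_{j,wkb}\|_{H^1}+O(h^\infty)$, and the right-hand side is explicitly polynomial in $h^{-1}$ by~\eqref{eq.ch0},~\eqref{eq.ch}, and the explicit WKB form of $u^{(1)}_{j,wkb}$.
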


\subsubsection{Quasi-modes for $-L_{f,h}^{D,(p)}$,~$p\in\{0,1\}$} 
\label{sec.LOAA}
Before defining the quasi-modes for   $L^{D,(0)}_{f,h}$ and $L^{D,(1)}_{f,h}$, let us label the quasi-modes for~$\Delta^{D,(0)}_{f,h}$ and the local minima of~$f$ using the lexicographic  order.
\label{page.lexico}
\begin{definition}\label{de.label-l}
Let us assume that  the  assumption \eqref{H-M}  is satisfied. 
Then, the family of   {critical connected components} $(\ft E_{k,\ell} )_{k\geq 1,\,\ell\in \{1,\ldots,N_k\}}$ introduced in Section~\ref{sec:labeling},  the local minima $(x_{k,\ell} )_{k\geq 1,\,\ell\in \{1,\ldots,N_k\}}$ of~$f$ labeled in Section~\ref{sec:labeling}, the family of cut-off functions  $( \chi_{k,\ell}^{\ve,\ve_1})_{k,\ell} )_{k\geq 1,\,\ell\in \{1,\ldots,N_k\}}$  introduced  in Definition~\ref{de.v1} and  the family of quasi-modes $(\widetilde v_{k,\ell} )_{k\geq 1,\,\ell\in \{1,\ldots,N_k\}}$   introduced  in Definition~\ref{de.v1} are labeled in the lexicographic order:
$$(\ft E_k)_{k\in  \{1,\ldots,\ft m_0^{ \Omega} \}},\ (  \chi_{k}^{\ve,\ve_1})_{k\in  \{1,\ldots,\ft m_0^{ \Omega} \}}, \ (\widetilde v_{k})_{k\in  \{1,\ldots,\ft m_0^{ \Omega} \}}\ \text{and } \ (x_{k})_{k\in  \{1,\ldots,\ft m_0^{  \Omega} \}}.$$  
\end{definition}
Let us recall that the  lexicographic order is defined by~$(k',\ell')< (k,l)$ if and only if $k'< k$ or if $k'=k$,~$\ell'< \ell$. From now on, one uses the labeling introduced in Definition~\ref{de.label-l}.

According to~\eqref{eq.unitary}, the quasi-modes for $L^{D,(0)}_{f,h}$ and $L^{D,(1)}_{f,h}$ are obtained from those constructed previously for $\Delta^{D,(0)}_{f,h}$ and $\Delta^{D,(1)}_{f,h}$ using the unitary transformation $  U$ defined in~\eqref{U}.

\begin{definition}\label{de.qm-L}
Let us assume that  the assumption \eqref{H-M}  is satisfied. 
Let  $(\widetilde v_{k})_{k\in  \{1,\ldots,\ft m_0^{ \Omega} \}}$ be the family of quasi-modes for $\Delta^{D,(0)}_{f,h}$  introduced  in Definition~\ref{de.v1} (and labeled  in the lexicographic order, see Definition~\ref{de.label-l}) and let   $(\widetilde \phi_j)_{j\in \{1,\dots,\ft m_1^{\overline \Omega}\}}$   be the family of quasi-modes for $\Delta^{D,(1)}_{f,h}$  introduced  in Definitions~\ref{de.tildephi} and~\ref{de.zj-pa-omega}. The family of quasi-modes  $(\widetilde u_{k})_{k\in  \{1,\ldots,\ft m_0^{ \Omega} \}}$ for $-L^{D,(0)}_{f,h}$ and the  family of quasi-modes $(\widetilde \psi_j)_{j\in \{1,\dots,\ft m_1^{\overline \Omega}\}}$  for $-L^{D,(1)}_{f,h}$   are defined  by:  for $k\in \{1,\ldots,\ft m_0^\Omega\}$, and for $j\in \{1,\dots,\ft m_1^{\overline \Omega}\}$:\label{page.qm}
\begin{equation}
\label{eq.weighted-v-psi}
\widetilde u_{k} := e^{\frac 1h f}\, \widetilde v_{k} \in \Lambda^0 H^1_{w,T}\left (\Omega\right )
\quad\text{and} \quad 
\widetilde \psi_{j} :=  e^{\frac 1 h f} \, \widetilde\phi_{j}\in \Lambda^1 H^1_{w,T}\left (\Omega\right ).
\end{equation}
\end{definition}
Notice that, according to~\eqref{eq.v1} and~\eqref{eq.weighted-v-psi}, for all $k\in \{1,\ldots,\ft m_0^\Omega\}$,  
$$\widetilde u_{k} \in   C^{\infty}_c\left (\Omega\right ),$$
and according to~\eqref{eq.tildephi} and~\eqref{eq.tildephi'}, for all $j\in \{1,\dots,\ft m_1^{\overline \Omega}\}$,
$$\widetilde \psi_{j} \in \Lambda^1 C^{\infty}\left (\overline \Omega\right ).$$

\subsection{Bases of $\range   \pi_h^{(0)}$ and $\range   \pi_h^{(1)}$}
Let us recall, that from~\eqref{eq.proj-p}, 
$$
  \pi^{(0)}_h =\pi_{[0,\frac{\sqrt h}2)}\big (-L^{D,(0)}_{f,h}\big )\ \text{ and } \ \pi^{(1)}_h =\pi_{[0,\frac{\sqrt h}2)}\big (-L^{D,(1)}_{f,h}\big ).
$$
In this section, one proves that the quasi-modes introduced in  Definition~\ref{de.qm-L} form two bases of $\range   \pi_h^{(0)}$ and $\range   \pi_h^{(1)}$. In the following, the finite dimensional spaces $\Ran \, \pi^{(0)}_h$ and $\Ran \, \pi^{(1)}_h$ are endowed with the scalar product $ \lp  \cdot , \cdot    \rp_{L^2_w}$. 

\begin{proposition} \label{ESTIME1-base}
Let us assume that the assumption \eqref{H-M} holds. Let $(\widetilde u_{k})_{k\in  \{1,\ldots,\ft m_0^{ \Omega} \}}$ be the family of quasi-modes for $-L^{D,(0)}_{f,h}$ and let   $(\widetilde \psi_j)_{j\in \{1,\dots,\ft m_1^{\overline \Omega}\}}$   be the family of quasi-modes for $-L^{D,(1)}_{f,h}$  introduced  in Definition~\ref{de.qm-L}. 
 Then,

\begin{enumerate}[leftmargin=1.2cm,rightmargin=1.2cm]

\item For all $k\in \big \{1,\dots,{\ft m_{0}^{\Omega}} \big \}$ and $ j\in  \big\{1,\dots,\ft m_1^{\overline \Omega} \big\}$,~$\widetilde u_k\in \Lambda^0 H^1_{w,T}\left (\Omega\right )$,~$\widetilde \psi_j\in \Lambda^1 H^1_{w,T}\left (\Omega\right )$ and
$$ \big  \|\widetilde u_k \big  \| _{L^2_w}= \big  \|\widetilde \psi_j \big  \| _{L^2_w} = 1\ \ \text{and}\ \  
\forall i \in  \big \{1,\dots,\ft m_1^{\overline \Omega} \big  \}\setminus\{j\}\,,\ \ 
 \big \lp \widetilde \psi_j, \widetilde \psi_i \big \rp_{L^2_w}= 0\,.$$
%
\item\begin{itemize} 
\item[a)]  For any $\delta>0$, one can
choose the parameter $\ve$ in~\eqref{eq.v1} (see also \eqref{eq.weighted-v-psi})  small enough such that for all $k\in \{1,\dots,{\ft m_{0}^{\Omega}}\}$, in the limit $h\to 0$:
 $$  \big\|     (1-\pi_h^{(0)} ) \widetilde u_k \big  \|_{L^2_w}^2\ \leq\  h^{\frac12}\,  \big \|   \nabla  \widetilde u_k \big \|_{L^2_w}^2      =    O \left(e^{-\frac{2}{h}(f(\mathbf{j}(x_k))-f(x_k) - \delta)}\right )\,.$$
 In particular,  
choosing the parameter $\ve>0$ small enough  in~\eqref{eq.v1}, there exists $c>0$ such that  in the limit $h\to 0$:
  $$\pi_h^{(0)}\widetilde u_k = \widetilde u_k + O \big (e^{-\frac{c}{h}} \big) \ \text{  in }\, L^2_w(\Omega).$$
\item[b)]   There exist $c>0$ such that for all $ j\in \big \{1,\dots,\ft m_1^{\overline \Omega}\big \}$, one has in the limit $h\to 0$:
\begin{equation*}
   \big\|   (1-\pi_h^{(1)} ) \widetilde \psi_j  \big \|_{H^1_w}^2=   O \big (e^{-\frac{c}{h}} \big).
     \end{equation*} 
\end{itemize}

\item \begin{itemize} 
\item[a)]  The family $(  \widetilde u_k)_{k=1,\dots,{\ft m_{0}^{\Omega}}}$ is uniformly linearly independent (for the $L^2_w$-scalar product) for all $h$ sufficiently small 
(as defined in Lemma~\ref{le.indepp}). 
\item[b)] 
For all $(i,j) \in  \big \{1,\dots,\ft m_1^{\overline \Omega} \big  \}^2$,
$$\big\lp \pi_h^{(1)}  \widetilde  \psi_i,  \pi_h^{(1)}  \widetilde \psi_j  \big \rp_{L^2_w}= \delta_{i,j}+ O (e^{-\frac{c}{h}}).$$
 \end{itemize}
In particular, there exists $h_0>0$ such that for all $h\in (0,h_0)$:
$$ \range   \pi_h^{(0)}= {\rm Span}\big( \pi_h^{(0)}\widetilde u_k,\ k=1,\dots,{\ft m_{0}^{\Omega}}\big )$$
and
$$ \range   \pi_h^{(1)}= {\rm Span}\big ( \pi_h^{(1)}\widetilde \psi_i,\ i=1,\dots,\ft m_1^{\overline \Omega}\big ).$$

  \end{enumerate}
 \end{proposition}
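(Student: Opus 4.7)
\medskip

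\noindent\textbf{Proof proposal.}
Item 1 is essentially by construction. The maps $U\colon L^{2}_{w}\to L^{2}$ (for $0$-forms) and its tensorial analogue on $1$-forms are isometries, so
$\|\widetilde u_{k}\|_{L^{2}_{w}}=\|\widetilde v_{k}\|_{L^{2}}=1$ and
$\|\widetilde\psi_{j}\|_{L^{2}_{w}}=\|\widetilde\phi_{j}\|_{L^{2}}=1$, and the mutual orthogonality of the $\widetilde\psi_{j}$'s follows from the choice of $\varepsilon_{1}$ in~\eqref{eq.epsilon1}, which guarantees that the supports of the cut-offs $\theta_{j}$ localizing $\widetilde\phi_{j}$ around distinct generalized saddle points are pairwise disjoint. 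The membership statements $\widetilde u_{k}\in\Lambda^{0}H^{1}_{w,T}(\Omega)$ and $\widetilde\psi_{j}\in\Lambda^{1}H^{1}_{w,T}(\Omega)$ are direct consequences of the definitions (compact support in $\Omega$ for $\widetilde u_{k}$, and tangential vanishing of $w_{j}$ on $\partial\Omega$ for $\widetilde\psi_{j}$).

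For item 2, I would systematically translate the Witten-side estimates to the $L^{2}_{w}$-side via the intertwining $L^{D,(p)}_{f,h}=-\tfrac{1}{2h}U^{-1}\Delta^{D,(p)}_{f,h}U$ from~\eqref{eq.unitary}. A direct computation gives
$d_{f,h}\widetilde v_{k}=h\,e^{-f/h}\,d\widetilde u_{k}$
so that
$\|d_{f,h}\widetilde v_{k}\|_{L^{2}}^{2}=h^{2}\|\nabla\widetilde u_{k}\|_{L^{2}_{w}}^{2}$,
and analogously
$\tfrac{1}{2h}\bigl(\|d_{f,h}\widetilde\phi_{j}\|_{L^{2}}^{2}+\|d^{*}_{f,h}\widetilde\phi_{j}\|_{L^{2}}^{2}\bigr)=\tfrac{h}{2}\bigl(\|d\widetilde\psi_{j}\|_{L^{2}_{w}}^{2}+\|d^{*}_{2f/h,1}\widetilde\psi_{j}\|_{L^{2}_{w}}^{2}\bigr)=\langle-L^{D,(1)}_{f,h}\widetilde\psi_{j},\widetilde\psi_{j}\rangle_{L^{2}_{w}}$. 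Combining this with Lemma~\ref{quadra} applied to $A=-L^{D,(p)}_{f,h}$ with threshold $b=\tfrac{\sqrt h}{2}$ yields
\begin{equation*}
\|(1-\pi_{h}^{(0)})\widetilde u_{k}\|_{L^{2}_{w}}^{2}\le \sqrt h\,\|\nabla\widetilde u_{k}\|_{L^{2}_{w}}^{2}=\tfrac{\sqrt h}{h^{2}}\|d_{f,h}\widetilde v_{k}\|_{L^{2}}^{2}.
\end{equation*}
Plugging the key bound~\eqref{eq.dv'} of Lemma~\ref{le.dfh-v} and choosing $\varepsilon$ small enough to absorb the $c\varepsilon$ and the polynomial prefactor $h^{-3/2}$ into the arbitrary $\delta$ concludes 2a. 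The $L^{2}_{w}$ part of 2b is obtained the same way, using the exponentially small estimates~\eqref{eq.delta-tildephi} and~\eqref{eq.delta-tildephi'}.

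The $H^{1}_{w}$ upgrade in 2b is the only genuinely nontrivial point and where I expect the main obstacle. The idea is to write $r_{j}:=(1-\pi_{h}^{(1)})\widetilde\psi_{j}$, transform it back to $\rho_{j}:=Ur_{j}\in\Ran(1-\pi_{[0,\sqrt h/2)}(\Delta^{D,(1)}_{f,h}))$, and use that $\Delta^{D,(1)}_{f,h}$ is elliptic of order $2$ with tangential Dirichlet boundary conditions: by a Gaffney-type inequality in the spirit of~\cite{GSchw} combined with the relations $d_{f,h}=hd+df\wedge$ and $d^{*}_{f,h}=hd^{*}+\mathbf i_{\nabla f}$,
\begin{equation*}
h^{2}\|\rho_{j}\|_{H^{1}}^{2}\le C\bigl(\|d_{f,h}\rho_{j}\|_{L^{2}}^{2}+\|d^{*}_{f,h}\rho_{j}\|_{L^{2}}^{2}+\|\rho_{j}\|_{L^{2}}^{2}\bigr)=C\bigl(\langle\Delta^{D,(1)}_{f,h}\rho_{j},\rho_{j}\rangle_{L^{2}}+\|\rho_{j}\|_{L^{2}}^{2}\bigr).
\end{equation*}
Since $\rho_{j}=U\,r_{j}$ and $\rho_{j}$ is orthogonal to $\Ran\pi_{[0,\sqrt h/2)}(\Delta^{D,(1)}_{f,h})$, the commutation of spectral projectors with $d_{f,h},d^{*}_{f,h}$ controls the right-hand side by $\|d_{f,h}\widetilde\phi_{j}\|^{2}+\|d^{*}_{f,h}\widetilde\phi_{j}\|^{2}+\|\widetilde\phi_{j}\|^{2}\cdot O(e^{-c/h})$, and going back through $U^{-1}$ yields the exponential $H^{1}_{w}$-bound. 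Care is needed at the boundary because $\widetilde\psi_{j}$ with $z_{j}\in\partial\Omega$ is not in the full Dirichlet domain; however the tangential Dirichlet conditions of $\Delta^{D,(1)}_{f,h}$ suffice for the above elliptic/Gaffney estimate.

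Finally for item 3: 3a follows by transporting Lemma~\ref{le.indepp} through the $L^{2}_{w}\to L^{2}$ isometry $U$, since Gram matrices are preserved. For 3b, expand
\begin{equation*}
\langle\pi_{h}^{(1)}\widetilde\psi_{i},\pi_{h}^{(1)}\widetilde\psi_{j}\rangle_{L^{2}_{w}}=\langle\widetilde\psi_{i},\widetilde\psi_{j}\rangle_{L^{2}_{w}}-\langle\widetilde\psi_{i},(1-\pi_{h}^{(1)})\widetilde\psi_{j}\rangle_{L^{2}_{w}}
\end{equation*}
and apply Cauchy--Schwarz together with item 1 and the $L^{2}_{w}$-part of item 2b. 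The spanning assertion $\Ran\pi_{h}^{(0)}=\mathrm{Span}(\pi_{h}^{(0)}\widetilde u_{k})_{k}$ and $\Ran\pi_{h}^{(1)}=\mathrm{Span}(\pi_{h}^{(1)}\widetilde\psi_{j})_{j}$ are then consequences of 3a--3b and the dimension count from Lemma~\ref{ran1}: the families $(\pi_{h}^{(0)}\widetilde u_{k})$ and $(\pi_{h}^{(1)}\widetilde\psi_{j})$ have exactly $\ft m_{0}^{\Omega}$ and $\ft m_{1}^{\overline\Omega}$ elements respectively, and they are uniformly linearly independent for $h$ small enough (for the first family via 3a combined with the second part of 2a; for the second family directly from the Gram matrix being $I+O(e^{-c/h})$ by 3b).
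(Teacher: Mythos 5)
Your proposal is correct and follows essentially the same route as the paper: item~1 by construction and disjointness of the $\theta_{j}$-supports, item~2a from Lemma~\ref{quadra} together with the bound~\eqref{eq.dv'} of Lemma~\ref{le.dfh-v} (with the $h^{-3/2}$ prefactor absorbed into the arbitrary $\delta$ by shrinking $\ve$), item~2b via a Gaffney-type estimate after converting to the Witten side with $U$, and item~3 from Lemma~\ref{le.indepp}, the Gram-matrix expansion, and the dimension count of Lemma~\ref{ran1}. The only organizational difference is in the $H^{1}_{w}$ upgrade of 2b: you control $\|d_{f,h}\rho_{j}\|^{2}+\|d^{*}_{f,h}\rho_{j}\|^{2}=\langle\Delta^{D,(1)}_{f,h}\rho_{j},\rho_{j}\rangle$ by spectral monotonicity, whereas the paper uses the commutation of $\pi_{[0,h^{3/2})}$ with $d_{f,h}$ and $d^{*}_{f,h}$ (note also your $\pi_{[0,\sqrt h/2)}(\Delta^{D,(1)}_{f,h})$ should read $\pi_{[0,h^{3/2})}(\Delta^{D,(1)}_{f,h})$, since the spectral window $[0,\sqrt h/2)$ for $-L^{D,(1)}_{f,h}$ rescales by $2h$ under the conjugation~\eqref{eq.unitary}); these two steps are logically interchangeable.
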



\begin{proof}
The proof of Proposition~\ref{ESTIME1-base} is divided into two steps. 
\medskip

\noindent 
\textbf{Step 1}. Proofs of items 1 and 2. 
\medskip

\noindent
The first item  is
immediate according to the definition of the families
$(  \widetilde u_k)_{k=1,\dots,{\ft m_{0}^{\Omega}}}$ and $(\widetilde \psi_i)_{i=1,\dots,\ft m_1^{\overline \Omega}}$ introduced in Definition~\ref{de.qm-L}. 
\medskip

\noindent 
The first inequality appearing in 2a is a direct consequence of Lemma~\ref{quadra}
applied to $A=-L_{f,h}^{D,(0)} $ whose associated quadratic form
is given by~$\frac h2\lp\nabla \cdot ,\nabla \cdot\rp_{L^2_w}$ on $H^1_{w,T}(\Omega)$.
The second inequality in 2a   follows from   Laplace's methods and from the properties of the cut-off functions used to define the quasi-modes $ \widetilde u_k$ (see   Definition~\ref{de.qm-L} and Lemma~\ref{le.dfh-v}). Indeed,
it is just a rewriting of \eqref{eq.dv'} using Definition~\ref{de.qm-L} and the labeling introduced in Definition~\ref{de.label-l}.\medskip

\noindent
Let us now deal with  2b.
First, Lemma~\ref{quadra} together with~\eqref{eq.delta-tildephi} and~\eqref{eq.delta-tildephi'} implies the existence of some $c>0$ such that for all $i\in \{1,\dots, \ft m_1^{\overline \Omega}\}$ and $h$ small enough, 
\begin{equation}
\label{eq.estim-L2}
\left\|  \Big ( \  1-\pi_{[0,h^{\frac32} )} ( \Delta^{D,(1)}_{f,h} ) \Big) \widetilde \phi_i \right\|_{L^2}= O (e^{-\frac{c}{h}}).
\end{equation}
Consequently,
using again \eqref{eq.delta-tildephi} and \eqref{eq.delta-tildephi'}, and owing to the following
 relations on $\Lambda^{1}H^{1}_{T}(\Omega)$,
$$d_{f,h}\Big (1-\pi_{[0,h^{\frac32} )}  \big( \Delta^{D,(1)}_{f,h} \big ) \Big) =\Big (1-\pi_{[0,h^{\frac32} )} \big ( \Delta^{D,(2)}_{f,h} \big )\Big)  d_{f,h},\,~$$
$$
d^{*}_{f,h}\Big (1-\pi_{[0,h^{\frac32} )}  \big ( \Delta^{D,(1)}_{f,h} \big )\Big) =\Big (1-\pi_{[0,h^{\frac32} )}  \big ( \Delta^{D,(0)}_{f,h} \big  ) \Big ) d^{*}_{f,h},$$
$d^*_{f,h}=hd^*+\mbf i_{\nabla f}$, and $d_{f,h}=hd+\nabla f \wedge$, 
one obtains the existence of $c>0$ such that in the limit $h\to 0$:
\begin{equation}
\label{eq.estim-d-L2}
\left\| d \Big ( \  1-\pi_{[0,h^{\frac32} )} ( \Delta^{D,(1)}_{f,h} ) \Big) \widetilde \phi_i \right\|_{L^2}+\left\| d^* \Big ( \  1-\pi_{[0,h^{\frac32} )} ( \Delta^{D,(1)}_{f,h} ) \Big) \widetilde \phi_i \right\|_{L^2}= O (e^{-\frac{c}{h}}).
\end{equation}
Since $\widetilde \phi_{i}\in \Lambda^{1}H^{1}_{T}(\Omega)$, the estimates \eqref{eq.estim-L2} and \eqref{eq.estim-d-L2}  then lead,
owing to Gaffney's inequality (see \cite[Corollary 2.1.6]{GSchw}),
 to
$$
\left\|  \Big ( \  1-\pi_{[0,h^{\frac32} )} ( \Delta^{D,(1)}_{f,h} ) \Big) \widetilde \phi_i \right\|_{H^1}= O(e^{-\frac{c}{h}} ).
$$
Therefore, we deduce from the relation
$\|u\|_{H^1_w} \le \frac{C}{h} \|u \, e^{-\frac{1}{h}f}\|_{H^1}$,
valid for all $u \in \Lambda^p H^1(\Omega)$ and $h>0$, and from
$$
\pi_{[0,h^{\frac32} )}  \big( \Delta^{D,(1)}_{f,h} \big)= e^{-\frac{1}{h}f} \pi_h^{(1)}e^{\frac{1}{h}f},
$$
resulting from \eqref{eq.unitary} and \eqref{eq.proj-p},
that
there exists $c>0$ such that for all $i\in \{1,\dots, \ft m_1^{\overline \Omega}\}$ and $h$ small enough, 
$$\left \|     (1-\pi_h^{(1)}) \widetilde \psi_i\right \|_{H^1_w} \le \frac{C}{h} \left\|  \Big( \  1-\pi_{[0,h^{\frac32} )} ( \Delta^{D,(1)}_{f,h} ) \Big) \widetilde \phi_i \right\|_{H^1}=O (e^{-\frac{c}{h}}).$$
This ends the  proof of 2b. 

\medskip

\noindent
\textbf{Step 2}. Proof of item 3.
\medskip

\noindent
The fact that the family $(  \widetilde u_k)_{k=1,\dots,{\ft m_{0}^{\Omega}}}$ is   uniformly linearly independent is a consequence of  Lemma~\ref{le.indepp} together with~\eqref{eq.weighted-v-psi}. 
Item 3b follows from items~1 and~2.b together with the relation
\begin{equation}\label{fg}
  \big \lp \pi_h^{(\ell)}  f , \pi_h^{(\ell)} g   \big \rp_{L^2_w} = -  \big \lp (\pi_h^{(\ell)}  -1)f ,   (\pi_h^{(\ell)}   -1) g   \big\rp_{L^2_w} +  \big \lp f,  g  \big \rp_{L^2_w}
 \end{equation}
holding for $f,g$ in~$\Lambda^{\ell}L^2_w(\Omega)$ and $\ell\in\{0,1\}$. Finally, the fact that for $h$ small enough, $ \range   \pi_h^{(0)}= {\rm Span}\big( \pi_h^{(0)}\widetilde u_k,\ k=1,\dots,{\ft m_{0}^{\Omega}}\big )$
and $ \range   \pi_h^{(1)}= {\rm Span}\big ( \pi_h^{(1)}\widetilde \psi_i,\ i=1,\dots,\ft m_1^{\overline \Omega}\big )$ are consequences of items 2a, 3a and 3b together with Lemma~\ref{ran1}. 
  \end{proof}

\section{On the smallest eigenvalue of $-L_{f,h}^{D,(0)}$}
\label{section-3}

This section is dedicated to the proof of the following theorem. 

\begin{theorem} \label{thm-big0}
Assume that the assumptions \eqref{H-M} and~\eqref{eq.hip1-j} are satisfied. Let~$\lambda_h$ be the principal eigenvalue of $-L_{f,h}^{D,(0)}$ (see~\eqref{eq.lh}). Then, denoting by~$\lambda_{2,h}$ the second smallest eigenvalue of  $-L_{f,h}^{D,(0)}$, there exists $c>0$ such that in the limit $h\to 0$:\label{page.lambda2h}
\begin{equation}\label{l2}
 \lambda_h= \lambda_{2,h}\ O(e^{-\frac ch}).
\end{equation}
Moreover, when \eqref{eq.hip2-j} is satisfied,  one has  in the limit $h\to 0$:
\begin{equation}\label{eq.lambda_h}
\lambda_h=  \frac{\displaystyle \sum \limits_{z\in \pa \ft C_1\cap \pa \Omega}   \partial_nf(z)  \big({\rm det \ Hess } f|_{\partial \Omega}   (z) \big)^{-\frac12} } {\sqrt{\pi\,h}\,\sum\limits_{x\in \argmin_{\ft C_1}f}  \big( {\rm det \ Hess } f   (x)   \big)^{-\frac12}  }\,e^{-\frac{2}{h}\big (f(\mbf j(x_{1}))-f(x_1)\big )}\big ( 1+ O(\sqrt{h}) \big )
\end{equation}
where we recall that   $x_1\in \argmin_{\ft C_1}f$. 
Finally, when \eqref{eq.hip4-j}  holds, the remainder term $O(\sqrt{h})$ in \eqref{eq.lambda_h}
is actually of order~$O(h)$ and admits a full asymptotic expansion in~$h$.
 \end{theorem}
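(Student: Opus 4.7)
My plan is to view $\lambda_h$ as $\frac{h}{2}$ times the square of the smallest singular value of the gradient
$\nabla:\Ran\pi_h^{(0)}\longrightarrow\Ran\pi_h^{(1)}$,
both spaces being endowed with the $L^2_w$ inner product. Indeed, from $\|u_h\|_{L^2_w}=1$ one has $\lambda_h=\frac{h}{2}\|\nabla u_h\|^2_{L^2_w}$, and~\eqref{eq.nablain} gives $\nabla u_h\in\Ran\pi_h^{(1)}$; hence $-L^{D,(0)}_{f,h}|_{\Ran\pi^{(0)}_h}$ is unitarily equivalent to $\frac{h}{2}\mathcal B^*\mathcal B$, where $\mathcal B$ is the matrix of $\nabla$ in any orthonormal bases. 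By Proposition~\ref{ESTIME1-base}, the projected quasi-modes $(\pi_h^{(0)}\widetilde u_k)_k$ and $(\pi_h^{(1)}\widetilde\psi_j)_j$ furnish almost orthonormal bases of $\Ran\pi_h^{(0)}$ and $\Ran\pi_h^{(1)}$, with entries $\mathcal B_{jk}=\langle\nabla\widetilde u_k,\widetilde\psi_j\rangle_{L^2_w}$ up to errors $O(e^{-c/h})$. The exponential gap~\eqref{l2} is then a direct consequence of Theorem~\ref{pp} (which yields $h\log\lambda_{k,h}\to-2(f(\mbf j(x_k))-f(x_k))$ for each $k$) together with the strict hierarchy $f(\mbf j(x_1))-f(x_1)>f(\mbf j(x_k))-f(x_k)$ for $k\ge 2$ provided by~\eqref{eq.hip1-j}.

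For the sharp formula~\eqref{eq.lambda_h} under~\eqref{eq.hip2-j}, I would analyse the first column of $\mathcal B$: the entry $\mathcal B_{j1}$ is non-negligible only when $z_j\in\mbf j(x_1)=\pa\ft C_1\cap\ft U_1^{\ft{ssp}}$, since otherwise the supports of $\nabla\chi_1^{\ve,\ve_1}$ and of the Agmon-localized form $\widetilde\psi_j$ are incompatible with the exponential weight $e^{-(f(\mbf j(x_1))-f(x_1))/h}$. The hierarchy~\eqref{eq.sj} forces $\mathcal B$ to be essentially block-triangular with respect to an ordering inherited from the map $\mbf j$, so that $\sigma_{\min}(\mathcal B)^2=\|\mathcal B e_1\|^2(1+o(1))=\sum_j|\mathcal B_{j1}|^2(1+o(1))$, and by~\eqref{eq.k1-paC1-0} the leading contributions come from $z_j\in\pa\ft C_1\cap\pa\Omega$, a nonempty set thanks to~\eqref{eq.hip2-j}. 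The core calculation is the Laplace-type evaluation of each such $\mathcal B_{j1}$: writing $\widetilde u_1=e^{f/h}\widetilde v_1$ and $\widetilde\psi_j=e^{f/h}\widetilde\phi_j$ one gets $\langle\nabla\widetilde u_1,\widetilde\psi_j\rangle_{L^2_w}=\frac{1}{h}\langle d_{f,h}\widetilde v_1,\widetilde\phi_j\rangle_{L^2}$; substituting the WKB approximation $\widetilde\phi_j\approx c_j(h)\theta_j d_{f,h}(a_j e^{-\Phi_j/h})$ from Proposition~\ref{compa_wkb-bis2} and Lemma~\ref{le.comp-wkb} (with $c_j(h)\sim h^{-(d+1)/4}$ by~\eqref{eq.ch}) and invoking the boundary eikonal relations $\Phi_j=f-f(z_j)$ and $\pa_n\Phi_j=-\pa_n f$ on $\pa\Omega\cap\mathcal V_j$ from~\eqref{eq.eikoj}, I would perform Laplace's method in local coordinates straightening $\pa\Omega$ at $z_j$ and diagonalizing $\Hess f|_{\pa\Omega}(z_j)$. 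Combined with the standard Laplace expansion $\|\chi_1^{\ve,\ve_1}e^{-f/h}\|_{L^2}^2=(\pi h)^{d/2}\sum_{x\in\argmin_{\ft C_1}f}(\det\Hess f(x))^{-1/2}e^{-2f(x_1)/h}(1+O(h))$ for the normalization, this produces the prefactor $\pa_n f(z_j)/\sqrt{\det\Hess f|_{\pa\Omega}(z_j)}$ in $\mathcal B_{j1}$ and, after squaring and summing, yields~\eqref{eq.lambda_h}.

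The improved remainder under~\eqref{eq.hip4-j} follows because then $\mbf j(x_1)\subset\pa\Omega$: every contributing saddle is a boundary point, and the WKB ansatz $a_j(x,h)=\widetilde a_j(x)+O(h)$ admits a full expansion in $h$ which, combined with the full Laplace expansion of the normalization, gives a full asymptotic expansion for $\lambda_h$ with $O(h)$ remainder. When~\eqref{eq.hip4-j} fails, interior saddles $z_j\in\mbf j(x_1)\cap\Omega$ contribute to $\|\mathcal B e_1\|^2$ with WKB normalization $c_j(h)\sim h^{-d/4}$ (see~\eqref{eq.ch0}) instead of $h^{-(d+1)/4}$, differing by a factor of $\sqrt h$ from the boundary contributions and hence producing the suboptimal $O(\sqrt h)$ remainder. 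The main obstacle in the whole argument is the Laplace computation of $\mathcal B_{j1}$ for boundary saddles: while conceptually a standard Eyring--Kramers-type calculation, it requires chaining together the quasi-mode approximation of $u_h$, the WKB approximation of $\widetilde\phi_j$, and the local boundary geometry at $z_j$, while controlling the error at each step to extract the sharp prefactor.
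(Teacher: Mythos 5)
Your overall framing — view $\lambda_h$ as $\tfrac{h}{2}$ times the square of the smallest singular value of $\nabla\colon\Ran\pi_h^{(0)}\to\Ran\pi_h^{(1)}$, compute interactions via the WKB form of $\widetilde\phi_j$ and Laplace's method, and read off the sharp prefactor from the boundary saddles — is indeed the paper's strategy (Proposition~\ref{ESTIME1} handles exactly the Laplace/WKB computation you describe, and the proof of \eqref{eq.lambda_h} proceeds through singular-value estimates). However, there is a concrete gap at the linear-algebra step where you pass from the matrix of $\nabla$ in the quasi-mode bases to $\sigma_{\min}$.

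You assert that the projected $0$-form quasi-modes $(\pi_h^{(0)}\widetilde u_k)_k$ furnish an \emph{almost orthonormal} basis of $\Ran\pi_h^{(0)}$. This is false. Proposition~\ref{ESTIME1-base} only gives that this family is \emph{uniformly linearly independent} (item~3a); the $1$-form family $(\pi_h^{(1)}\widetilde\psi_j)_j$ is indeed almost orthonormal (item~3b), but the $0$-form family is not. The paper stresses this explicitly in the remark after Lemma~\ref{le.fan1}: the Gramian of $\widetilde u_1,\widetilde u_2$ may converge as $h\to0$ to a matrix with off-diagonal entry $c_1\in(0,1)$, so the change-of-basis matrix $C_0$ in \eqref{eq.C0C1} does not tend to the identity. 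Consequently, the matrix $\mathcal B$ you propose to write down (with entries $\langle\nabla\widetilde u_k,\widetilde\psi_j\rangle_{L^2_w}$ ``up to errors $O(e^{-c/h})$'') is \emph{not} the matrix of $\nabla$ in orthonormal bases, and its singular values are not those of the restricted gradient — they can differ by an $O(1)$ factor, which is fatal for a sharp Eyring--Kramers prefactor.

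This is precisely the technical obstruction the paper spends Lemmas~\ref{le.fan1}, \ref{le.fan2}, and the bulk of Section~\ref{sec.e3} overcoming. The Fan inequalities in Lemma~\ref{le.fan1} only yield $\lambda_{k,h}=\tfrac{h}{2}\,\eta_{\ft m_0^\Omega+1-k}^2(\widetilde S C_0)\,(1+O(e^{-c/h}))$, with $C_0$ a nontrivial $O(1)$ matrix. To extract the sharp constant one cannot simply set $C_0=I$; the paper replaces $C_0$ by $\widetilde C_0$ (Lemma~\ref{le.fan2}) using a specially chosen orthonormal basis whose first vector is proportional to $\pi_h^{(0)}\widetilde u_1$, and then proves the nontrivial estimate $\|y^*_\beta\|_2=O(e^{-\mu/h})$ for the minimizing unit vector $y^*=(y^*_\alpha,y^*_\beta)$, using the block structure of $\widetilde C=\widetilde S D^{-1}$ together with the coercivity of $[\widetilde C]_2$ (a corollary of Lemma~\ref{le.C-tilde-coercive}) and the exponential separation $D_{1,1}/D_{k,k}=O(e^{-c/h})$. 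Only then does one obtain $\eta_{\ft m_0^\Omega}^2(\widetilde S\widetilde C_0)=\sum_{j:\,z_j\in\mbf j(x_1)\cap\pa\Omega}\widetilde S_{j,1}^2\,(1+O(\sqrt h))$ and hence \eqref{eq.lambda_h}. Your one-line claim ``$\sigma_{\min}(\mathcal B)^2=\|\mathcal B e_1\|^2(1+o(1))$'' is the conclusion of that argument, not something that follows from the hierarchy \eqref{eq.sj} alone when $C_0\neq I$. The rest of your proposal (the Laplace/WKB mechanism for $\langle\nabla\widetilde u_1,\widetilde\psi_j\rangle_{L^2_w}$, the role of \eqref{eq.eikoj}, and the explanation of the $O(\sqrt h)$ vs.\ $O(h)$ remainder via $p_{j,k}=-\tfrac12$ for interior saddles vs.\ $-\tfrac34$ for boundary ones) matches the paper's computations and is correct.
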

 
\begin{remark}
Without the assumption \eqref{eq.hip4-j}, we  are not able to prove  an asymptotic expansion in~$\sqrt h$ of the remainder term  $O(\sqrt h)$    in \eqref{eq.lambda_h} except in   some specific cases, see Theorem~\ref{re-specific-case} below or  \cite[Proposition C.40]{BN2017}. 
\end{remark}
\noindent
Let us mention that sharp asymptotic estimates when $h\to 0$ of the principal eigenvalue of $-L_{f,h}^{D,(0)}$ have been obtained in \cite{HeNi1, LeNi,di-gesu-le-peutrec-lelievre-nectoux-16} in the Dirichlet case and in  \cite{Lep} in the Neumann case. However, these results do not apply under the assumptions considered  in Theorem~\ref{thm-big0}. Let us also mention that when  $\Omega=\mathbb R^d$ or when $\Omega$ is a compact Riemannian manifold, sharp asymptotic estimates of the second smallest eigenvalue of~$-L_{f,h}^{(0)}$ have been obtained in~\cite{BGK,HKN,michel2017small,landim2017dirichlet,HeHiSj,HKS,miclo-95}. 
\medskip
 
\noindent
The analysis led in this section will also allow us to give a   lower and an upper bound for all the $\ft m_0^\Omega$ small eigenvalues of   $-L^{D,(0)}_{f,h}$ (and not only $\lambda_h$). This is the purpose of  Theorem~\ref{pp} below. 

\begin{remark}\label{eq.expectation}
Combining Theorem~\ref{thm-big0} and Proposition~\ref{indep1}, under the assumptions \eqref{H-M}, \eqref{eq.hip1-j} and \eqref{eq.hip2-j}, one obtains   that in the limit $h\to 0$: 
\begin{align*}
 \mathbb E_{\nu_h}[\tau_{\Omega}]&=\frac{1}{\lambda_h} =\frac{  \sqrt{\pi\,h}\!\!\! \sum\limits_{x\in \argmin_{\ft C_1}f}  \big( {\rm det \ Hess } f   (x)   \big)^{-\frac12} }{\sum\limits_{z\in \pa \ft C_1\cap \pa \Omega} \partial_nf(z) \big({\rm det \ Hess } f|_{\partial \Omega}   (z) \big)^{-\frac12}}\,e^{\frac{2}{h}(f(\mbf j(x_{1}))-f(x_1))}\big ( 1+ O(\sqrt{h}) \big ).
\end{align*}
 
\end{remark}

\noindent
In some specific cases, one can  drop the assumption~\eqref{eq.hip2-j} in Theorem~\ref{thm-big0}   and still obtain a sharp asymptotic equivalent of $\lambda_h$ when $h\to 0$. Indeed, in view of the proof of Theorem~\ref{thm-big0}, one has the following result.

 \begin{theorem}\label{re-specific-case}
Assume that the assumptions \eqref{H-M} and \eqref{eq.hip1-j}  are satisfied. Assume moreover that  for all $j\in \{2,\ldots,\ft N_1\}$, $\pa \ft C_1\cap \pa \ft C_j=\emptyset$ (this last assumption is for instance satisfied when $\ft N_1=1$). Let us define,
$$\text{when } \pa \ft C_1\cap \pa \Omega\neq \emptyset, \, a_1:=\frac{\displaystyle \sum \limits_{z\in \pa \ft C_1\cap \pa \Omega}   \partial_nf(z)  \big({\rm det \ Hess } f|_{\partial \Omega}   (z) \big)^{-\frac12} } {\sqrt{\pi}\,\sum\limits_{x\in \argmin_{\ft C_1}f}  \big( {\rm det \ Hess } f   (x)   \big)^{-\frac12}  }, \text{ else, } a_1:=0,$$
and
$$\text{when } \pa \ft C_1\cap \ft U^{\ft{ssp}}_1\cap \Omega\neq \emptyset,\,  a_2:=\frac{1}{2\pi}\frac{\displaystyle \sum \limits_{z\in \pa \ft C_1\cap \ft U^{\ft{ssp}}_1\cap \Omega}   |\lambda_{-}(z)|\big({\rm det \ Hess } f    (z) \big)^{-\frac12} } {\sum\limits_{x\in \argmin_{\ft C_1}f}  \big( {\rm det \ Hess } f   (x)   \big)^{-\frac12}  }, \text{ else, } a_2:=0, $$
where $\lambda_{-}(z_{j})$ is the negative eigenvalue of $\Hess f(z)$ (notice that  $a_1$ and $a_2$ cannot be both equal to $0$ since from  Proposition~\ref{pr.p1}, $\pa  \ft C_{1} \cap \ft U_1^{\ft{ssp}}\neq \emptyset$).
Then, one has when $h\to 0$:
$$\lambda_h=\left[\frac{a_1}{\sqrt{h}} \, (1+O(h))\,+\, a_2\, (1+O(h))\right]\,e^{-\frac{2}{h}\big (f(\mbf j(x_{1}))-f(x_1)\big )},$$
where the two remainder terms  $O(h)$ admit a full asymptotic expansion in $h$. 
 \end{theorem}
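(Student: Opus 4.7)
The plan is to adapt the strategy of Theorem~\ref{thm-big0} to the situation where $\pa \ft C_1\cap\pa\ft C_j=\emptyset$ for $j\in\{2,\dots,\ft N_1\}$ allows one to combine cleanly the contributions of boundary separating saddle points on $\pa\ft C_1\cap\pa\Omega$ and interior separating saddle points on $\pa\ft C_1\cap \ft U_1^{\ft{ssp}}\cap\Omega$. Recall from Proposition~\ref{ESTIME1-base} that bases of $\Ran\pi_h^{(0)}$ and $\Ran\pi_h^{(1)}$ are obtained by projecting the quasi-modes $\widetilde u_k$ and $\widetilde\psi_j$ of Section~\ref{sec:QM-L}, so that $\lambda_h$ equals $h/2$ times the square of the smallest singular value of $\nabla$ acting from $\Ran\pi_h^{(0)}$ to $\Ran\pi_h^{(1)}$ equipped with the $L^2_w$ scalar product. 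Since by~\eqref{eq.hip1-j} and~\eqref{eq.sj} the quasi-mode $\widetilde u_1$ attached to $\ft C_1$ is exponentially ``smaller'' than the others, the proof of Theorem~\ref{thm-big0} already shows that, modulo exponentially negligible corrections,
\[
\lambda_h \;=\; \frac{h}{2\,\|\pi_h^{(0)}\widetilde u_1\|_{L^2_w}^{2}}\sum_{z_j\in \pa\ft C_1\cap \ft U_1^{\ft{ssp}}}\bigl|\bigl\langle\nabla\widetilde u_1,\widetilde\psi_j\bigr\rangle_{L^2_w}\bigr|^{2}\,\bigl(1+o(1)\bigr),
\]
the sum being restricted to separating saddle points on $\pa\ft C_1$ thanks to the extra assumption $\pa\ft C_1\cap\pa\ft C_j=\emptyset$, which prevents any other critical component from sharing a saddle with $\ft C_1$ at the exponential scale $e^{-2(f(\mbf j(x_1))-f(x_1))/h}$.

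Next, I would compute each overlap $\langle\nabla\widetilde u_1,\widetilde\psi_j\rangle_{L^2_w}$ by replacing $\widetilde\psi_j$ by its WKB approximation $\widetilde\psi_{j,wkb}$ (Lemma~\ref{le.comp-wkb}, up to $O(h^{\infty})$) and localizing the integration near $z_j$, since $\nabla\widetilde u_1$ is supported where $\chi_1^{\ve,\ve_1}$ transitions, hence near $\pa\ft C_1$. Two regimes arise:
\begin{itemize}
\item If $z_j\in\pa\ft C_1\cap\pa\Omega$, applying Laplace's method in the tangential direction exactly as in the proof of Theorem~\ref{thm-big0} under~\eqref{eq.hip2-j} yields a contribution proportional to $h^{1/2}\,\partial_n f(z_j)/\sqrt{\det\Hess f|_{\pa\Omega}(z_j)}$, with a complete expansion in $h$.
\item If $z_j\in\pa\ft C_1\cap \ft U_1^{\ft{ssp}}\cap\Omega$, applying Laplace's method in a full neighborhood of $z_j$ along the unstable direction, as in the Eyring--Kramers analysis of the boundaryless case~\cite{HKN,HeHiSj}, yields a contribution proportional to $h\,|\lambda_-(z_j)|/\sqrt{|\det\Hess f(z_j)|}$, again with a complete expansion in $h$.
\end{itemize}
Summing over $z_j\in\pa\ft C_1\cap \ft U_1^{\ft{ssp}}$ and using Laplace's method on $\|\widetilde v_1\|_{L^2}^{2}\sim(\pi h)^{d/2}\sum_{x\in\argmin_{\ft C_1} f}|\det\Hess f(x)|^{-1/2}$ to normalize, one obtains the announced formula with the two prefactors $a_1/\sqrt h$ and $a_2$, each multiplied by the same exponential factor $e^{-2(f(\mbf j(x_1))-f(x_1))/h}$ and each admitting a full asymptotic expansion in $h$.

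The main obstacle will be to justify that the two families of contributions add without cross terms of the same order, and that no saddle point lying on $\pa\ft C_j$ for $j\neq 1$ interferes at the critical exponential order. This is exactly where the hypothesis $\pa\ft C_1\cap\pa\ft C_j=\emptyset$ is crucial: it forces the supports of the WKB quasi-modes $\widetilde\psi_{j,wkb}$ attached to saddles on $\pa\ft C_1$ to lie, for $\ve_1$ small enough, in regions where $\chi_k^{\ve,\ve_1}\equiv 0$ for $k\neq 1$, and symmetrically that $\nabla\widetilde u_1$ meets only those $\widetilde\psi_{j,wkb}$ with $z_j\in\pa\ft C_1$. Once this is established, the estimates on the denominator $\|\pi_h^{(0)}\widetilde u_1\|_{L^2_w}^{2}=1+O(e^{-c/h})$ from Proposition~\ref{ESTIME1-base} and on the interference between the $\widetilde\psi_j$'s (Proposition~\ref{ESTIME1-base}, item 3b) imply that the remainder terms in the two summands are of order $h$ and admit full expansions in $h$, as claimed.
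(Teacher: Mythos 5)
Your route is the one the paper itself intends (the theorem is stated ``in view of the proof of Theorem~\ref{thm-big0}''), and your identification of the role of the hypothesis $\pa\ft C_1\cap\pa\ft C_j=\emptyset$ and of the two families of contributions (boundary saddles giving $a_1/\sqrt h$, interior separating saddles giving $a_2$, both read off from the overlaps already computed in Proposition~\ref{ESTIME1}) is correct.

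There is, however, one step you assert that is precisely what is \emph{not} contained in the proof of Theorem~\ref{thm-big0}: the reduction
$\lambda_h=\frac h2\sum_{z_j\in \mbf j(x_1)}\langle\nabla\widetilde u_1,\widetilde\psi_j\rangle_{L^2_w}^2\,(1+O(e^{-c/h}))$.
In that proof the upper bound \eqref{eq.est1_etah} does keep all of $\mbf j(x_1)$, but the lower bound \eqref{eq.est2_etah} retains only the boundary saddles $\mbf j(x_1)\cap\pa\Omega$; the resulting mismatch is exactly the source of the $O(\sqrt h)$ remainder in \eqref{eq.lambda_h} when $\mbf j(x_1)\cap\Omega\neq\emptyset$. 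What must actually be redone is the lower bound on the smallest singular value $\eta_{\ft m_0^\Omega}(\widetilde S\widetilde C_0)$: in the block decomposition \eqref{eq.tilde-C}, enlarge $[\widetilde C]_1$ from the rows indexed by $\mbf j(x_1)\cap\pa\Omega$ to all rows indexed by $\mbf j(x_1)$, and verify that the corresponding off-diagonal block is still zero, i.e.\ that no $z_j\in\mbf j(x_1)\cap\Omega$ lies in $\mbf j(x_k)$ for $k\neq 1$. This is exactly what $\pa\ft C_1\cap\pa\ft C_j=\emptyset$ guarantees: a saddle at the level $f(\mbf j(x_1))$ could only be shared with some $\mbf j(x_{1,j})$, $2\le j\le\ft N_1$, through $\pa\ft C_1\cap\pa\ft C_j$, the deeper $\mbf j(x_k)$'s living at strictly lower levels by \eqref{eq.strict-sep}. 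One then reruns the argument ($\|y^*_\beta\|_2=O(e^{-\mu/h})$ followed by \eqref{eq.est2_etah} with $\mbf j(x_1)$ in place of $\mbf j(x_1)\cap\pa\Omega$) and gets matching upper and lower bounds up to $1+O(e^{-c/h})$. Your support-disjointness remark is the correct reason why the individual entries $\widetilde S_{j,k}$ vanish for $z_j\notin\mbf j(x_k)$, but by itself it only controls overlaps; the passage from the entries of $\widetilde S$ to its smallest singular value is the crux and must be carried out. Two minor points: since \eqref{eq.hip2-j} is not assumed here, $q_1$ in \eqref{eq.D} may equal $-\frac12$ instead of $-\frac34$ (this changes nothing); and the powers of $h$ you quote for the individual contributions ($h^{1/2}$ and $h$) are each off by a factor of $h$ from the actual contributions $h^{-1/2}$ and $h^{0}$ to $\lambda_h\,e^{\frac2h(f(\mbf j(x_1))-f(x_1))}$, although your final formula is the right one.
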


This section is organized as follows. In Section~\ref{qm-propo}, one gives   the quasi-modal estimates  which are used to prove Theorem~\ref{thm-big0}. 
Section~\ref{sec:S} is then dedicated to the proof of Theorem~\ref{thm-big0}.


\subsection{Estimates of interactions between quasi-modes}
\label{qm-propo}
The main result of this section is Proposition~\ref{ESTIME1} which  gives the quasi-modal estimates in   $L^2_w(\Omega)$ needed  to prove Theorem~\ref{thm-big0}. This section is divided into two parts. In Section~\ref{sec.p1}, one gives the asymptotic estimates of the boundary terms $$\Big (\displaystyle \int_{\Sigma}   F \,  \widetilde \psi_i \cdot n  \,   e^{- \frac{2}{h} f}\Big)_{j\in\{1,\dots,\ft m_1^{\overline \Omega}  \}},$$ which are then used in the proof of Proposition~\ref{ESTIME1}. In Section~\ref{sec.p2}, one states and proves Proposition~\ref{ESTIME1}. 

For all  $j\in  \big  \{1,\dots,\ft m_1^{\overline \Omega} \big  \}$, let us define the constant
  \label{page.bj}
\begin{equation}
\label{eq.Bi}
B_{j} = \begin{cases}\pi ^{\frac{d-1}{4}} \sqrt{2\, \partial_n f(z_j) }  \,  \big( {\rm det \ Hess }f|_{ \partial \Omega}(z_j)   \big)^{-1/4}   & \text{if $z_{j}\in \pa\Omega$},\\
\pi ^{\frac{d-2}{4}} \sqrt{|\lambda_{-}(z_{j})| }  \,  | {\rm det \ Hess }f(z_j)   |^{-1/4}
& \text{if $z_{j}\in \Omega$},
\end{cases}
\end{equation}
where $\lambda_{-}(z_{j})$ is the negative eigenvalue of $\Hess f(z_{j})$. These constants will appear in the upcoming computations.

\subsubsection{Asymptotic estimates of boundary terms for   $(\widetilde \psi_i )_{i\in \{1,\ldots,\ft m_1^{\overline \Omega} \}}$} 
\label{sec.p1}
The following boundary estimates will be used several times in the sequel. 

\begin{proposition}\label{gamma1}  
Let us assume that the assumption \eqref{H-M} is satisfied. Let us consider  
$j\in\{1,\dots,\ft m_1^{\overline \Omega}\}$, 
an open set $\Sigma$ of $\pa\Omega$, 
 and  $F\in L^{\infty}(\partial \Omega,\mathbb R)$. Then, there exists $c>0$ such that one has  in the limit $h\to 0$:
\begin{equation*}
  \int_{\Sigma}   F \,  \widetilde \psi_j \cdot n  \,   e^{- \frac{2}{h} f}  \     =\begin{cases}  0   &   \text{ if } j\in  \big \{\ft m_1^{\pa \Omega}+1,\dots,\ft m_1^{\overline \Omega} \big \}, \\
O \big(e^{-\frac{1}{h} (f(z_j)+c)} \big)   &  \text{ if } j\in  \big \{1,\dots,\ft m_1^{\pa \Omega} \big \} \text{ and } z_{j}\notin\overline\Sigma, \\
O \big(h^{\frac{d-3}{4}}     e^{-\frac{1}{h} f(z_j)} \big)  &  \text{ if } j\in  \big \{1,\dots,\ft m_1^{\pa \Omega} \big \} \text{ and } z_{j}\in \Sigma ,
  \end{cases} 
  \end{equation*}
 where $\widetilde \psi_j$ is introduced in~\eqref{eq.weighted-v-psi}   and $m_1^{\pa \Omega}$ is defined  in~\eqref{eq.m1-pa}.
 Moreover, when $j\in  \big \{1,\dots,\ft m_1^{\pa\Omega} \big \}$,~$z_{j}\in \Sigma $, and  $F$ is  $C^{\infty}$ in a neighborhood  of $z_j$, it holds
  \begin{equation*}
  \int_{\Sigma}   F \,  \widetilde \psi_j \cdot n  \   e^{- \frac{2}{h} f}  \     =  h^{\frac{d-3}{4}}   \,     e^{-\frac{1}{h} f(z_j)}  \,    \big(   B_{j} \, F(z_j)   +     O(h )    \big) ,
  \end{equation*}
  where the remainder terms $O(h)$   admits a full asymptotic expansion in~$h$  (as defined in Remark~\ref{eq.asymptoO(h)}), and $B_j$ is defined by~\eqref{eq.Bi}
  \end{proposition}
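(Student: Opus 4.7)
The plan is to split the proof into three cases corresponding to the three branches of the statement, exploiting the relation $\widetilde\psi_j\cdot n = e^{\frac{1}{h}f}\,\widetilde\phi_j\cdot n$ (so that the weighted boundary integral reduces to $\int_{\Sigma} F\,\widetilde\phi_j\cdot n\, e^{-\frac{1}{h}f}$) together with the properties of the cut-off $\theta_j$, the Agmon estimates of Propositions~\ref{pr.zj-omega} and~\ref{pr.zj-omega-2}, and the WKB comparisons of Lemma~\ref{le.comp-wkb}.

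\textbf{Case $j\in \{\ft m_{1}^{\pa\Omega}+1,\dots,\ft m_{1}^{\overline\Omega}\}$.} By construction in Definition~\ref{de.tildephi}, $\widetilde\phi_{j}$ (and hence $\widetilde\psi_{j}$) has support in $B(z_{j},2\ve_{1})\subset \mathcal V_{j}\subset\Omega$. Since $\overline{\mathcal V_{j}}\cap\pa\Omega=\emptyset$, the trace of $\widetilde\psi_{j}$ on $\pa\Omega$ vanishes identically and the integral is $0$.

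\textbf{Case $j\in\{1,\dots,\ft m_{1}^{\pa\Omega}\}$, $z_{j}\notin\overline\Sigma$.} Here the key observation is that on $\pa\Omega\cap\mathcal V_{j}$ the eikonal equation~\eqref{eq.eikoj} gives $\Phi_{j}=f-f(z_{j})$, so that $e^{-\frac{1}{h}f} = e^{-\frac{1}{h}f(z_{j})}\,e^{-\frac{1}{h}\Phi_{j}}$ on the boundary. Since $z_{j}\notin\overline\Sigma$ and $\Phi_{j}$ vanishes only at $z_{j}$, we have $c_{0}:=\inf_{\overline\Sigma\cap \supp\theta_{j}}\Phi_{j}>0$. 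Combining a standard trace inequality with the Agmon bound~\eqref{eq.Agmon1}, the quantity $e^{\frac{1}{h}\Phi_{j}}\widetilde\phi_{j}$ is bounded in $L^{2}(\pa\Omega\cap\supp\theta_{j})$ by $Ch^{-n'}$. The Cauchy--Schwarz inequality then yields
\begin{equation*}
\Big|\int_\Sigma F\,\widetilde\phi_j\cdot n\,e^{-\frac{1}{h}f}\Big| \,\le\, \|F\|_{\infty}\,e^{-\frac{1}{h}f(z_{j})}\int_{\Sigma}\big|e^{\frac{1}{h}\Phi_{j}}\widetilde\phi_{j}\big|\,e^{-\frac{2}{h}\Phi_{j}} \,\le\, Ch^{-n''}\,e^{-\frac{1}{h}(f(z_{j})+c_{0})},
\end{equation*}
which produces the claimed $O(e^{-\frac{1}{h}(f(z_{j})+c)})$ estimate for any $c\in(0,c_{0})$.

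\textbf{Case $j\in\{1,\dots,\ft m_{1}^{\pa\Omega}\}$, $z_{j}\in\Sigma$, $F$ smooth near $z_{j}$.} We substitute the WKB approximation: by Lemma~\ref{le.comp-wkb}, $\widetilde\phi_{j}=\widetilde\phi_{j,wkb}+r_{j}$ with $\|r_{j}\|_{H^{1}}=O(h^{\infty})$, so its boundary contribution is $O(h^{\infty})\,e^{-\frac{1}{h}f(z_{j})}$ after applying the trace theorem and factoring $e^{-\frac{1}{h}f}=e^{-\frac{1}{h}f(z_{j})}e^{-\frac{1}{h}\Phi_{j}}\le e^{-\frac{1}{h}f(z_{j})}$ on $\pa\Omega\cap\supp\theta_j$. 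From~\eqref{eq.WKBj} and the boundary conditions in~\eqref{eq.eikoj} (i.e.\ $\Phi_{j}=f-f(z_{j})$ on $\pa\Omega$ and $\pa_{n}\Phi_{j}=-\pa_{n}f$, together with $a_{j}\equiv 1$ there), one has on $\pa\Omega\cap\mathcal V_{j}$
\begin{equation*}
u^{(1)}_{j,wkb}\cdot n \,=\,\bigl(\pa_{n}(f-\Phi_{j})+O(h)\bigr)e^{-\frac{1}{h}\Phi_{j}} \,=\,\bigl(2\pa_{n}f+O(h)\bigr)e^{-\frac{1}{h}\Phi_{j}}.
\end{equation*}
Therefore
\begin{equation*}
\int_{\Sigma}F\,\widetilde\phi_{j,wkb}\cdot n\,e^{-\frac{1}{h}f} = c_{j}(h)\,e^{-\frac{1}{h}f(z_{j})}\int_{\Sigma}F\,\theta_{j}\bigl(2\pa_{n}f+O(h)\bigr)e^{-\frac{2}{h}\Phi_{j}}.
\end{equation*}
Since $\Phi_{j}|_{\pa\Omega}=f|_{\pa\Omega}-f(z_{j})$ and $f|_{\pa\Omega}$ is Morse with non--degenerate minimum $z_{j}\in\Sigma$ on $\pa\Omega$ (with Hessian $\Hess(f|_{\pa\Omega})(z_{j})$), Laplace's method applied on the $(d{-}1)$--dimensional surface $\pa\Omega$ gives
\begin{equation*}
\int_{\Sigma}F\,\theta_{j}(2\pa_{n}f)\,e^{-\frac{2}{h}\Phi_{j}} \,=\, 2\pa_{n}f(z_{j})\,F(z_{j})\,\frac{(\pi h)^{\frac{d-1}{2}}}{\sqrt{\det \Hess f|_{\pa\Omega}(z_{j})}}\bigl(1+O(h)\bigr),
\end{equation*}
with a full asymptotic expansion in $h$. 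Multiplying by the asymptotics~\eqref{eq.ch} of $c_{j}(h)$ and simplifying yields precisely $h^{\frac{d-3}{4}}B_{j}F(z_{j})(1+O(h))$ in front of $e^{-\frac{1}{h}f(z_{j})}$. When $F$ is only bounded, the same computation bounding $|F|$ by $\|F\|_{\infty}$ yields the uniform $O(h^{\frac{d-3}{4}}e^{-\frac{1}{h}f(z_{j})})$ bound.

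The main technical nuisance (rather than a conceptual obstacle) is the case $z_{j}\notin\overline\Sigma$: one must combine the Agmon estimate, which controls a weighted $H^{1}$ norm in $B(z_{j},2\ve_{1})\cap\Omega$, with the boundary trace in a way that preserves the exponential weight $e^{\frac{1}{h}\Phi_{j}}$; this is handled by inserting the cut-off $\theta_{j}$ and passing to the boundary by the standard trace inequality applied to $e^{\frac{1}{h}\Phi_{j}}\widetilde\phi_{j}$.
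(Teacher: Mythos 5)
Your proof is correct and follows essentially the same route as the paper's: vanishing of $\widetilde\psi_j$ on $\pa\Omega$ for interior saddle points, a trace/Agmon bound away from $z_j$, and the WKB substitution combined with Laplace's method on the $(d{-}1)$-dimensional boundary near $z_j$. The only (cosmetic) difference is in the second case: the paper applies Cauchy--Schwarz directly with the plain $H^1$ bound on $\widetilde\phi_j$ against $\big(\int_{\supp\theta_j\cap\Sigma}e^{-2f/h}\big)^{1/2}$ and then invokes that $z_j=\argmin_{\supp\theta_j\cap\pa\Omega}f$ to produce the exponential gain, whereas you carry the Agmon weight $e^{\Phi_j/h}$ through the trace theorem and extract the gain from $\inf_{\overline\Sigma\cap\supp\theta_j}\Phi_j>0$; these are two equivalent bookkeeping choices of the same estimate.
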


 \begin{proof} 
 Let $F\in L^{\infty}(\partial \Omega,\mathbb R)$.
From~\eqref{eq.weighted-v-psi} and~\eqref{eq.tildephi}, the quasi-mode $\widetilde \psi_{j}$ is supported in~$\Omega$
for $j\in \{\ft m_1^{\pa \Omega}+1\dots \ft m_1^{\overline \Omega}\}$ and thus:
\begin{equation}
\label{eq.int-nulle}
\forall j\in \big \{\ft m_1^{\pa \Omega}+1,\dots, \ft m_1^{\overline \Omega} \big \}, \int_{\Sigma}   F \,  \widetilde \psi_j \cdot n  \   e^{- \frac{2}{h} f}  \    =0.
\end{equation}
Let us now consider  the case $j\in \{1,\dots, \ft m_1^{\pa \Omega}\}$. Notice that one has for all $h$ small enough, from the trace theorem, \eqref{eq.weighted-v-psi}, \eqref{eq.tildephi'},  and \eqref{eq.Agmon1},
\begin{align}
\nonumber
\int _{\Sigma} F \,   \widetilde \psi_j \cdot n  \   e^{- \frac{2}{h}f}   &=\int _{\supp \theta_{j}\cap\Sigma}  F  \, \widetilde \phi_j \cdot n  \   e^{- \frac{1}{h}f}   \\
\nonumber
&= O(\|\widetilde \phi_j\|_{H^{1}})
\Big(\int _{\supp \theta_{j}\cap\Sigma}   e^{- \frac{2}{h}f}   \Big)^{\frac12}\\
\label{eq.int-nulle2}
&=O \big(h^{-p} \big )
\Big(\int _{\supp \theta_{j}\cap\Sigma}   e^{- \frac{2}{h}f }   \Big)^{\frac12},
\end{align}
where $p$ is independent of $h$.
Therefore, since $z_j$ is the only minimum of~$f$ on ${\rm supp} \,\theta_j \cap \partial \Omega$, if $z_{j}\notin\overline\Sigma$, 
one has in the limit $h\to 0$:
\begin{equation}
\label{eq.int-nulle3}
\int _{\Sigma} F \,   \widetilde \psi_j \cdot n  \   e^{- \frac{2}{h}f}     = 
O \big (e^{-\frac{1}{h} (f(z_j)+c)} \big )   
\end{equation}
for some $c>0$ independent of $h$.\medskip

\noindent Let us now now consider  the case $j\in \{1,\dots, \ft m_1^{\pa \Omega}\}$ and $z_j\in \Sigma$. One has:
\begin{align}
\nonumber
\int _{\Sigma} F \,   \widetilde \psi_j \cdot n  \   e^{- \frac{2}{h}f }   &=\int _{\Sigma}  F \, \widetilde \phi_j \cdot n  \   e^{- \frac{1}{h}f }   
\\
\label{eq.wkb-dec}
&= \int _{\Sigma}   F\,  \widetilde \phi_{j,wkb} \cdot n  \ e^{- \frac{1}{h}f}   +\int _{\Sigma} F \,  \big  (\widetilde \phi_i-\widetilde \phi_{j,wkb}  \big ) \cdot n \  e^{- \frac{1}{h}f}  , 
\end{align}
where $\widetilde \phi_{j,wkb}=c_j(h)\theta_{j} u_{j,wkb}^{(1)}$ is defined in \eqref{tildephiwkb}.
From \eqref{eq.WKBj}, let us recall that  in the limit $h\to 0$:
$$u_{j,wkb}^{(1)}=e^{-\frac{1}{h}\Phi_j}\, \big(\widetilde a_j\,d(f-\Phi_j)+O(h)\big) \ \text{on }  \supp\theta_{j}$$
with  $\widetilde a_{j}= 1$ on $\pa\Omega\cap \supp\theta_{j}$. Thus on $\pa\Omega\cap\supp\theta_{j}$, using also \eqref{eq.eikoj},
$$n\cdot u_{j,wkb}^{(1)}=e^{-\frac{1}{h}\Phi_j} \, \pa_{n}(f-\Phi_j) \, \big(1+O(h)\big)=2\partial_n f\,e^{-\frac{1}{h}(f-f(z_j))} \, \big(1+O(h)\big).$$
Thus, the term $\displaystyle \int _{\Sigma}  F\,   \widetilde \phi_{j,wkb} \cdot n  \  e^{- \frac{1}{h}f}   $ appearing in the right-hand side of \eqref{eq.wkb-dec}  satisfies in the limit $h\to 0$:
\begin{align}
\nonumber
\int _{\Sigma}  F\,   \widetilde \phi_{j,wkb} \cdot n  \  e^{- \frac{1}{h}f}   
&=c_j(h)\int _{\Sigma\cap \supp\theta_{j}}  F\,\theta_j   \, u_{j,wkb}^{(1)} \cdot n \  e^{- \frac{1}{h}f}  
\\
\label{eq.int-laplace0}
&=c_j(h)\int _{\Sigma\cap \supp\theta_{j}}  2\partial_nf\,F\,\theta_j \, e^{-\frac{1}{h}(2f-f(z_j))}   \  \big(1+O(h)\big)
\\
\nonumber
&=O\Big(c_j(h)\int _{\pa \Omega \cap \supp\theta_{j}}  2\partial_nf\,\theta_j \, e^{-\frac{1}{h}(2f-f(z_j))}   \Big)\\
\label{eq.int-laplace1}
&=O\Big(c_j(h)\,  h ^{\frac{d-1}{2} }\, e^{- \frac{1}{h}f(z_j)}\Big)=O\big(  h ^{\frac{d-3}{4} }\, e^{- \frac{1}{h}f(z_j)}\big),
\end{align} 
where the last line follows from $\{z_{j}\}=\argmin_{\pa \Omega \cap \supp\theta_{j} }f$,  $\theta_{j}(z_{j})=1$, Laplace's method, and $c_j(h)=O(h ^{-\frac{d+1}{4}})$ according to \eqref{eq.ch}. When 
 $z_{j}\in \Sigma$ and  $F$ is  $C^{\infty}$ in a neighborhood  of $z_j$, the same arguments applied to  
 \eqref{eq.int-laplace0} yield,  in the limit $h\to 0$:
\begin{align}
\label{eq.int-laplace2}
\int _{\Sigma}  F\,   \widetilde \phi_{j,wkb} \cdot n  \  e^{- \frac{1}{h}f}   
= c_j(h)\,F(z_j)\, \frac{2\,\partial_nf(z_j)\, \pi ^{\frac{d-1}{2} }} {  \sqrt{ {\rm  det \ Hess }f|_{ \partial \Omega}(z_j)  }}  h ^{\frac{d-1}{2} }\ e^{- \frac{1}{h}f(z_j)}  \big(1+O(h)\big),
\end{align}
 where the remainder terms $O(h)$   admits a full asymptotic expansion in~$h$ (which follows from  Laplace's method).
 \medskip

\noindent
Besides, from the trace theorem and Lemma~\ref{le.comp-wkb},  
the second term in the right-hand side  of \eqref{eq.wkb-dec} satisfies in the limit $h\to 0$:
\begin{equation}
\label{eq.int-O2}
 \int_{\Sigma} F\,\big (\widetilde \phi_j-\widetilde \phi_{j,wkb} \big  ) \cdot n \ e^{- \frac{1}{h}f}  =O\big (h^{\infty}\big ) \ e^{- \frac{1}{h}f(z_j)}.
 \end{equation}
The first part of Proposition~\ref{gamma1} then results from 
\eqref{eq.int-nulle}--\eqref{eq.wkb-dec}, \eqref{eq.int-laplace1}, and 
\eqref{eq.int-O2}, and its second part from
 \eqref{eq.wkb-dec}, \eqref{eq.int-laplace2}--\eqref{eq.int-O2}, and from
the  asymptotic estimate 
of $c_j(h)$ given in~\eqref{eq.ch} 
 which yields, when $h\to0$
$$\int _{\Sigma}  F\,   \widetilde \psi_{j} \cdot n  \, e^{- \frac{2}{h}f}   =  \frac{ \pi ^{\frac{d-1}{4}} \sqrt{2\,\partial_n f(z_j) }   }{ \big( {\rm det \ Hess }f|_{ \partial \Omega}(z_j)   \big)^{1/4}  } \  h^{\frac{d-3}{4}} \ e^{- \frac{1}{h}f(z_j)} \big( F(z_j)+O(h)\big). $$
This ends the proof of Proposition~\ref{gamma1}.
\end{proof}

\subsubsection{Quasi-modal estimates in  $L^2_w(\Omega)$}
\label{sec.p2}

We are now in position to prove  Proposition~\ref{ESTIME1} which will be crucial to prove Theorem~\ref{thm-big0}. This proposition  allows indeed to study accurately the small singular values of the restricted differential $\nabla :\, \Ran \, \pi^{(0)}_h\to \Ran \, \pi^{(1)}_h$. The square of the smallest singular values is indeed~
 $\frac 2h \lambda_h$, where $\lambda_h$ is the principal eigenvalue of $-L^{D,(0)}_{f,h}$ (see \eqref{eq.lh} and Proposition~\ref{fried}). 

\begin{proposition} \label{ESTIME1}
Let us assume that the assumption \eqref{H-M} holds. Let $(\widetilde u_{k})_{k\in  \{1,\ldots,\ft m_0^{ \Omega} \}}$ be the family of quasi-modes for $-L^{D,(0)}_{f,h}$ and let   $(\widetilde \psi_j)_{j\in \{1,\dots,\ft m_1^{\overline \Omega}\}}$   be the family of quasi-modes for $-L^{D,(1)}_{f,h}$  introduced  in Definition~\ref{de.qm-L}. Then, there exists $\ve_0>0$ such that   for  for all $\ve\in (0,\ve_0)$, for all $k\in   \big\{1,\dots,{\ft m_{0}^{\Omega}}   \big\}$ and $j\in  \big  \{1,\dots,\ft m_1^{\overline \Omega}  \big\}$, there exists  $\ve_{j,k}\in \{-1,1\}$  independent of $h$ such that in the limit $h\to 0$,\label{page.pjk}
\begin{equation*}
  \lp       \nabla \widetilde u_k ,    \widetilde \psi_j \rp_{L^2_w} =\begin{cases} \ve_{j,k} C_{j,k} \ h^{p_{j,k}}\,  e^{-\frac{1}{h}(f(\mathbf{j}(x_{k}))- f(x_k))}   \    \big(  1  +     O(h )   \big) \!\!\!  \!\!\! &  \text{ if } z_j \in  \mathbf{j}(x_k)  ,\\
 0   &   \text{ if } z_j\notin \mathbf{j}(x_k),
  \end{cases} 
  \end{equation*}
  where  all the remainder terms $O(h)$   admits a full asymptotic expansions in~$h$ (as defined in Remark~\ref{eq.asymptoO(h)}), 
  \begin{equation*}p_{j,k}=
  \begin{cases} -\frac12 &\text{ if } z_{j}\in \mathbf{j}(x_k)\cap \Omega,\\ 
  -\frac34 &\text{ if } z_{j}\in \mathbf{j}(x_k)\cap \pa\Omega.
  \end{cases}
  \end{equation*}
and  
  \begin{equation}
\label{eq.Cip}
C_{j,k}= \frac{B_{j}\,  \pi^{-\frac{d}{4}}}{\Big( \sum\limits_{x\in \argmin_{\ft E_{k}}f}  \big( {\rm det \ Hess } f   (x)   \big)^{-\frac12}  \Big)^{1/2}},
\end{equation}
where the constant $B_j$ is defined in~\eqref{eq.Bi}, and $(\ft E_k)_{k\in   \{1,\dots, \ft m_{0}^{\Omega}   \}  }$ is defined  in Section~\ref{sec:labeling} and labeled in Definition~\ref{de.label-l}. Finally, if $z_j\in \mbf j(x_k)\cap \pa \Omega$ (and thus, it holds  necessarily  $k\in \{1,\ldots,\ft N_1\}$, see~\eqref{eq.j-x} where $\ft N_1$ is defined by~\eqref{eq.N11}), one has $$\ve_{j,k}=-1.$$
 \end{proposition}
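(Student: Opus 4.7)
The identity $\widetilde u_k = \chi_k^{\ve,\ve_1}/N_k$ with $N_k := \|\chi_k^{\ve,\ve_1} e^{-f/h}\|_{L^2}$ yields
$$\langle \nabla \widetilde u_k, \widetilde \psi_j\rangle_{L^2_w} = \frac{1}{N_k}\int_\Omega \nabla \chi_k^{\ve,\ve_1}\cdot \widetilde \phi_j\, e^{-f/h},$$
and a standard Laplace evaluation around $\argmin_{\ft E_k}f$ gives $N_k = (\pi h)^{d/4}\bigl(\sum_{x \in \argmin_{\ft E_k} f}|\det \Hess f(x)|^{-1/2}\bigr)^{1/2}e^{-f(x_k)/h}(1+O(h))$. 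The case $z_j \notin \mathbf j(x_k)$ is immediate: property~(d) in Definition~\ref{de.v1} forces $\chi_k^{\ve,\ve_1}$ to be locally constant on $\overline{B(z_j, 2\ve_1)}$, which contains $\supp \widetilde \phi_j$; hence the integrand vanishes identically.

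When $z_j \in \mathbf j(x_k)$, I first invoke Lemma~\ref{le.comp-wkb} to replace $\widetilde \phi_j$ by its WKB approximation $\widetilde \phi_{j,wkb} = c_j(h)\theta_j u^{(1)}_{j,wkb}$, producing an $O(h^\infty)e^{-(f(\mathbf j(x_k))-f(x_k))/h}$ correction. The remaining integral is then evaluated by Laplace's method with phase $f+\Phi_j$. The structural input at the heart of the analysis is the anti-parallelism $\nabla \Phi_j = -\nabla f$ along the ``instanton'' curve issuing from $z_j$ (namely the unstable manifold $\mathcal W_-(z_j)$ when $z_j \in \Omega$, and the integral curve of $-\nabla f$ emanating from $z_j$ into $\Omega$ when $z_j \in \partial\Omega$): this forces $f+\Phi_j \equiv f(z_j)$ along the curve, and in Morse-type coordinates adapted to it, $f+\Phi_j - f(z_j)$ is positive-definite quadratic in the $(d-1)$ transverse directions, with Hessian given by the transverse part of $\Hess f(z_j)$ in the interior case, or by $\Hess f|_{\partial\Omega}(z_j)$ in the boundary case. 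Laplace's method transversally produces the factor $(\pi h)^{(d-1)/2}$ together with the appropriate determinantal prefactor, while the longitudinal integral of $\nabla \chi_k^{\ve,\ve_1}$ reduces to the jump $\pm 1$ coming from $\chi_k^{\ve,\ve_1}$ passing from $0$ near $z_j$ to $1$ inside $\ft E_k$.

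For interior saddles, $u^{(1)}_{j,wkb} = a_j e^{-\Phi_j/h}$ with $a_j(z_j) = n(z_j)$ (unit normal to $\mathcal W_+(z_j)$), so that $\nabla \chi_k^{\ve,\ve_1}\cdot a_j$ reduces to the derivative along $\mathcal W_-(z_j)$; combining with $c_j(h) = |\det \Hess f(z_j)|^{1/4}/(\pi h)^{d/4}(1 + O(h))$ from Proposition~\ref{compa_wkb-bis} and the expression of $N_k$ above recovers the announced $h^{-1/2}$-scaling. For boundary saddles, $u^{(1)}_{j,wkb} = d_{f,h}(a_j e^{-\Phi_j/h})$ and the identity $\nabla(f-\Phi_j)(z_j) = 2\partial_n f(z_j)\,n$ (from the boundary eikonal $\partial_n\Phi_j = -\partial_n f$ on $\partial\Omega$) produce an additional prefactor $2\partial_n f(z_j)$; combining with $c_j(h) = ({\det \Hess} f|_{\partial\Omega}(z_j))^{1/4}/(\pi^{(d-1)/4}\sqrt{2\partial_n f(z_j)})\, h^{-(d+1)/4}(1+O(h))$ from Proposition~\ref{compa_wkb-bis2} gives the $h^{-3/4}$-scaling; the sign $\ve_{j,k} = -1$ arises because $\partial_n f(z_j) > 0$ while $\chi_k^{\ve,\ve_1}$ increases as one moves into $\Omega$, so $\nabla \chi_k^{\ve,\ve_1}\cdot\nabla(f-\Phi_j) < 0$ throughout the transition region.

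The main technical obstacle is justifying Laplace's method in the present setting, where the transition of $\chi_k^{\ve,\ve_1}$ occurs at distance $O(\ve)$ (fixed, macroscopic) from $z_j$ rather than at scale $\sqrt h$. This relies crucially on the instanton cancellation $f+\Phi_j \equiv f(z_j)$ persisting to \emph{all} orders along the curve (and not merely to quadratic order at $z_j$), so that the factorized ``transverse Gaussian $\times$ longitudinal jump'' representation remains valid in a tubular neighborhood containing the whole transition region, uniformly as $h \to 0$ for $\ve \in (0, \ve_0]$ sufficiently small. Once this is established, the full asymptotic expansion in $h$ follows from the standard Laplace expansion combined with the asymptotic expansion in $h$ of $a_j(x,h)$ in the WKB ansatz and of $c_j(h)$ provided by Propositions~\ref{compa_wkb-bis} and~\ref{compa_wkb-bis2}.
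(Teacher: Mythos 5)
Your handling of the case $z_j \notin \mathbf j(x_k)$ and your identification of the final constants are correct, but the central step --- evaluating $\int_\Omega \nabla\chi_k^{\ve,\ve_1}\cdot\widetilde\phi_{j,wkb}\, e^{-f/h}$ by a direct $d$-dimensional Laplace method with a ``transverse Gaussian $\times$ longitudinal jump'' factorization --- has a genuine gap, and the paper takes a different route precisely to avoid it. The paper first integrates by parts, writing $\nabla\chi_k^{\ve,\ve_1} = -\nabla(1-\chi_k^{\ve,\ve_1})$ and using $d^*(e^{-f/h}\widetilde\phi_j) = \tfrac1h e^{-f/h}d_{f,h}^*\widetilde\phi_j$: since $\widetilde\phi_j$ is a cut-off eigenform, $d_{f,h}^*\widetilde\phi_j = O(e^{-c/h})$ in $L^2$, so the bulk term is negligible and the whole integral collapses, up to $O(e^{-(f(z_j)+c')/h})$, to a boundary integral over the $(d-1)$-dimensional stable manifold $\mathcal W_+(z_j)$ when $z_j\in\Omega$, or over $\pa\Omega$ when $z_j\in\pa\Omega$. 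That boundary term is then a single $(d-1)$-dimensional Laplace integral whose phase $2f-f(z_j)$ is minimized exactly at $z_j$, so no longitudinal integration is ever required.

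Your diagnosis of the obstacle (the identity $f+\Phi_j\equiv f(z_j)$ persisting along $\mathcal W_-(z_j)$) is incomplete: that identity is exact by \eqref{eq.stable-manifold2}, but it only fixes the \emph{phase} along the curve and does not by itself justify the factorization. As one moves along $\mathcal W_-(z_j)$ the transverse Hessian of $f+\Phi_j$ varies (so the normalization produced by the transverse Gaussian integral depends on the longitudinal position $p$), and the WKB amplitude $a_j(p)$ is only pinned to the unit normal $n(z_j)$ of $\mathcal W_+(z_j)$ at $p=z_j$; away from $z_j$ it is governed by the transport equation of the WKB construction and need not be the unit tangent to $\mathcal W_-(z_j)$. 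For the longitudinal integral to collapse to the jump $\pm 1$ with the determinant prefactor frozen at its value at $z_j$, you need a Liouville-type conservation law along the flow of $-\nabla\Phi_j$, which you neither state nor prove. A second, independent issue: the $O(h^\infty)e^{-(f(\mathbf{j}(x_k))-f(x_k))/h}$ correction you attribute to Lemma~\ref{le.comp-wkb} is not justified by that lemma's \emph{unweighted} $H^1$ bound --- once integrated against $\nabla\chi_k^{\ve,\ve_1}\, e^{-f/h}$, which is supported where $f \ge f(z_j)-2\ve$, Cauchy--Schwarz leaves a loss $e^{O(\ve)/h}$ that overwhelms $h^\infty$. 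Both difficulties disappear after the paper's integration by parts, because the WKB substitution and Laplace asymptotics are then needed only on $\mathcal W_+(z_j)$, where the exponential weight attains its maximum at $z_j$.
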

\begin{proof}
The proof of Proposition~\ref{ESTIME1} is divided into three steps. 
\medskip

\noindent
\textbf{Step 1.}
\medskip

\noindent
Let $k\in \big \{   1,\ldots,\ft m_0^\Omega \big \}$ and  $j\in  \big  \{1,\dots,\ft m_1^{\overline \Omega} \big  \}$. 
Let us consider the case $z_j \notin \mathbf{j}(x_{k})$. According to Definitions~\ref{de.tildephi}, \ref{de.zj-pa-omega} and \ref{de.qm-L}, one has that for all $j\in \{1,\dots,\ft m_1^{\overline \Omega}\}$, the quasi-mode $\widetilde \psi_j$ is supported in  $B(z_j,2\ve_{1})\cap \overline \Omega$. Therefore, from~\eqref{eq.v1},~\eqref{eq.v1-support4}, and Definition~\ref{de.qm-L}, one has 
$\supp \nabla \chi_{k}^{\ve,\ve_1}\cap \overline{B(z_{j},2\ve_{1})}=\emptyset$ and thus
$$ \big \lp       \nabla \widetilde u_k ,    \widetilde \psi_j  \big \rp_{L^2_w}=0.$$
\medskip  

\noindent
\textbf{Step 2.}
\medskip

\noindent
 Let us now deal with the computation of the terms $ \lp       \nabla \widetilde u_k ,    \widetilde \psi_j\rp_{L^2_w}$ for $k\in \big \{   1,\ldots,\ft m_0^\Omega \big \}$ and  $j\in  \big  \{1,\dots,\ft m_1^{\overline \Omega} \big  \}$ such that $z_j\in \mbf j(x_k)\cap \Omega$. In this case, these computations  
 follow from the analysis led in the proof of \cite[Proposition 6.4]{HKN}, the
only difference arising from the fact~$\mathbf{j}(x_{k})$ and $\argmin_{\supp \chi^{\ve,\ve_1}_{{k}}} f$ were both reduced to one single point there. Let us give a proof
 for the sake of 
 completeness.
 One has:
\begin{equation}
\label{eq.interaction'}
 \big \lp       \nabla \widetilde u_k ,    \widetilde \psi_j  \big \rp_{L^2_w}= \int_{\Omega} \nabla  \widetilde u_k  \cdot \widetilde \phi_j \  e^{- \frac{1}{h}f}   = \frac{ \displaystyle{\int_{\Omega} \nabla  \chi_{k}^{\ve,\ve_1}\cdot \widetilde \phi_j \  e^{- \frac{1}{h}f}}  } { \sqrt{\displaystyle{\int_{\Omega} (\chi_{k}^{\ve,\ve_1})^2 e^{-\frac{2}{h}  f} } }    }=\frac{\displaystyle{ \int_{B(z_j,2\ve_{1})} \nabla  \chi_{k}^{\ve,\ve_1} \cdot \widetilde \phi_j \  e^{- \frac{1}{h}f}}  } { \sqrt{\displaystyle{\int_{\Omega} (\chi_{k}^{\ve,\ve_1})^2 e^{-\frac{2}{h}  f} } }    }.
\end{equation}
From~\eqref{eq.v1-support1}--\eqref{eq.v1-support2}, it holds $\argmin_{\supp \chi^{\ve,\ve_1}_{ {k}}} f=\argmin_{\ft E_k} f$. Thus, using Laplace's method together with the fact that $\min_{\overline{\ft E_k}} f=f(x_k)$,  one has in the limit $h\to 0$:
\begin{equation}
\label{eq.Lapl-u-k}
\int_{\Omega} (\chi_{k}^{\ve,\ve_1})^2 e^{-\frac{2}{h}  f} =(\pi \, h)^{\frac{d}{2} } e^{-\frac{2}{h}  f(x_{k})}\!\!\!\!\!\!\!\!
 \sum_{x\in\argmin_{\ft E_k }f} 
\big( \det \Hess  f   (x)   \big)^{-\frac12} \ \big(1+O(h) \big)\,.
\end{equation} 
Let us now give the estimate of the numerator of the right-hand side of \eqref{eq.interaction'}.\medskip

\noindent
To prepare this computation, let
us first recall that the set $B(z_j,2\ve_{1})\cap\{f<f(z_j)\}$ has, according to \eqref{eq.epsilon1'}, two connected components. Since $z_j\in \ft U_1^{\ft{ssp}}$ (see  Definition~\ref{de.SSP}),
exactly one of these two connected components intersects -- and is then included in --  the {critical connected component} $\widetilde{\mathbf j}(x_{k})=\ft E_{k}$ associated with $x_k$ (see Definition~\ref{de.SSP} and~\eqref{de.j}). Moreover, the set $B(z_j,2\ve_{1})\setminus \mathcal W_{+}(z_j)$, where the stable manifold $\mathcal W_{+}(z_j)$ has been defined in~\eqref{eq.W+}, has 
also
two connected components and one of them contains the connected 
component of $B(z_j,2\ve_{1})\cap\{f<f(z_j)\}$ which intersects  $\ft E_k$, namely  $B(z_j,2\ve_{1})\cap \ft E_k$.
Let us denote by 
$$
\ft B_{j,k}  \text{ the connected component of } B(z_j,2\ve_{1})\setminus \mathcal W_{+}(z_j) \text{ which contains } \ft E_k.
$$ 
Since $\supp \widetilde \phi_j \subset B(z_j,2\ve_{1})$, one has using in addition~\eqref{eq.v1-support3}:
$$\supp \big (\nabla  \chi_{k}^{\ve,\ve_1} \cdot\widetilde \phi_j\big )\subset B(z_j,2\ve_{1})\cap \ft E_k \subset \ft B_{j,k}.$$
Therefore, using   an integration by parts, it holds:
\begin{align}
\nonumber
 \int_{\Omega} \nabla  \chi_{k}^{\ve,\ve_1} \cdot \widetilde \phi_j \  e^{- \frac{1}{h}f} &= - \int_{\ft B_{j,k}  } \! \nabla(1- \chi_{k}^{\ve,\ve_1}) \cdot \widetilde \phi_j \  e^{-\frac 1h f}   \\
\nonumber
&= -  \int_{\ft B_{j,k}  } \! \big (1-\chi_{k}^{\ve,\ve_1}\big )  \, d^*\big( e^{-\frac 1h f}\,  \widetilde \phi_j \big)
- \int_{\partial \ft B_{j,k}   }   \!\!\!\big (1-\chi_{k}^{\ve,\ve_1}\big )\widetilde \phi_j\cdot n \  e^{-\frac 1h f}  \\
\label{eq.interaction'''}
&= - \frac1h\int_{\ft B_{j,k}  }  \big (1-\chi_{k}^{\ve,\ve_1}\big )  \, e^{-\frac 1h f}\, d_{f,h}^*  \widetilde \phi_j - \int_{\partial \ft B_{j,k}  \cap\mathcal W_{+}(z_j)}  \widetilde \phi_j\cdot n \  e^{-\frac 1h f},
\end{align}
since $\widetilde \phi_j=0$ on $\pa B(z_j,2\ve_1)$. 
From \eqref{eq.delta-tildephi}, it holds for $h$ small enough,  
$$d_{f,h}^* \widetilde \phi_j =O( e^{-\frac{c}{h}})\text{ in }  L^2(\Omega),$$
where $c>0$ is independent of $h$. 
 Since moreover $f\geq f(z_j)-2\ve$ on $\ft B_{j,k} \cap \supp(1-  \chi_{k}^{\ve,\ve_1})$
by \eqref{eq.v1-support1} and~\eqref{eq.v1-support2}, there exist $c'>0$  and $\ve_0>0$ such that   for   $\ve\in (0,\ve_0)$, in the limit $h\to 0$:
\begin{equation}
\label{eq.interaction''''}
\frac 1h\int_{\ft B_{j,k}  } \left (1- \chi_{k}^{\ve,\ve_1}\right )\, e^{-\frac 1h f}\, d_{f,h}^*\widetilde \phi_j =O\big( e^{-\frac{1}{h}(f(z_j)+c')}\big).
\end{equation} 
Lastly, using Lemma~\ref{le.comp-wkb} and  the trace theorem, one obtains in the limit $h\to 0$: 
\begin{align*}
\int_{\partial \ft B_{j,k}  \cap\mathcal W_{+}(z_j)}\widetilde \phi_j\cdot n \  e^{-\frac 1h f}  &=\int_{\partial \ft B_{j,k}  \cap\mathcal W_{+}(z_j)} \!\! c_j(h)\,\theta_j\,u^{(1)}_{j,wkb}\cdot n \  e^{-\frac 1h f} +O(h^{\infty}e^{- \frac{1}{h}f(z_j)})\\
&= \pm\frac{c_j(h)(\pi h)^{\frac {d-1}2}}{\big(\det \Hess f|_{\mathcal W_{+}(z_j)} (z_j)\big)^{\frac12}}e^{- \frac{1}{h}f(z_j)}\big(1+O(h)\big)\\
&= \pm\frac{(\pi h)^{\frac {d-2}4}|\lambda_{-}(z_j)|^{\frac12}}{|\det \Hess f(z_j)|^{\frac14}}e^{- \frac{1}{h}f(z_j)}\big(1+O(h)\big) \,,
\end{align*}
where   $\lambda_{-}(z_j)$ denotes the negative eigenvalue of $\Hess f(z_j)$.
The second equality follows from   Laplace's method  and from the fact that
$$u^{(1)}_{j,wkb}\cdot n=
e^{-\frac{1}{h}\Phi_j}|_{\mathcal W_{+}(z_j)}\  \big(\widetilde a_j\cdot n + O(h)\big)=
e^{-\frac{1}{h}(f-f(z_j))}|_{\mathcal W_{+}(z_j)}\  \big(\widetilde a_j\cdot n + O(h)\big)
$$
and  $ \widetilde a_j(z_j)\cdot n=\pm1$,  see indeed the lines between \eqref{eq.stable-manifold2} and \eqref{eq.norme-wkb0}. 
The last line  follows  from \eqref{eq.ch0}.
The asymptotic estimate of the term $\lp       \nabla \widetilde u_k ,    \widetilde \psi_j \rp_{L^2_w}$is a consequence of the latter estimate together with \eqref{eq.interaction'}--\eqref{eq.interaction''''} which gives in the limit $h\to 0$:
$$
 \lp       \nabla \widetilde u_k ,    \widetilde \psi_j \rp_{L^2_w}=\pm\frac{ \pi ^{-\frac{1}{2}} |\lambda_{-}(z_j)|^{\frac12}  \,h^{-\frac{1}{2}}    \, e^{-\frac{1}{h}(f(z_j)- f(x_k))} }{ \big | {\rm det \ Hess }f(z_j)  \big  |^{\frac 14} \Big( \sum_{x\in\argmin_{\ft E_k}f}  \big( {\rm det \ Hess } f   (x)   \big)^{-\frac12}   \Big )^{\frac 12} }\,\big ( 1+O(h)\big),
$$
where   the remainder terms $O(h)$   admits a full asymptotic expansion in~$h$ (which follows from  Laplace's method).
This proves  Proposition~\ref{ESTIME1} for all  $k\in \big \{\ft  1,\ldots,\ft m_0^\Omega \big \}$ and  $j\in  \big  \{1,\dots,\ft m_1^{\overline \Omega} \big  \}$ such that $z_j\in \mbf j(x_k)\cap \Omega$.
\medskip

\noindent
\textbf{Step 3.} 
\medskip

\noindent
  Let us now deal with the computation of the terms $ \lp       \nabla \widetilde u_k ,    \widetilde \psi_j\rp_{L^2_w}$ for $k\in \big \{\ft  1,\ldots,\ft m_0^\Omega \big \}$ and   $j\in  \big  \{1,\dots,\ft m_1^{\overline \Omega} \big  \}$ when   $z_j \in \mbf j (x_k)\cap \pa \Omega$. Notice that according to Definition~\ref{de.qm-L} and \eqref{eq.v1} and by definition of lexicographic labeling introduced in Definition~\ref{de.label-l}, this situation can only occur when
$$k\in \big \{1,\dots,\ft N_1  \big \}.$$  
  One has
\begin{equation}
\label{eq.interaction}
 \lp       \nabla \widetilde u_k ,    \widetilde \psi_j\rp_{L^2_w}= \int_{\Omega} \nabla  \widetilde u_k  \cdot \widetilde \phi_j \  e^{- \frac{1}{h}f}   = \frac{ \displaystyle{\int_{\Omega} \nabla   \chi_{k}^{\ve,\ve_1} \cdot \widetilde \phi_j \  e^{- \frac{1}{h}f} } } { \sqrt{\displaystyle{\int_{\Omega} ( \chi_{k}^{\ve,\ve_1})^2 e^{-\frac{2}{h}  f} } }    }.
\end{equation}
Notice that~\eqref{eq.Lapl-u-k} also holds here for $\int_{\Omega} ( \chi_{k}^{\ve,\ve_1})^2 e^{-\frac{2}{h}  f}$. 
Since $\supp \widetilde \phi_j \subset B(z_j,2\ve_{1})\cap\overline \Omega $, the numerator of the right-hand side of~\eqref{eq.interaction} can be rewritten as
\begin{align}
\nonumber
\int_{\Omega} \nabla  \chi_{k}^{\ve,\ve_1} \cdot \widetilde \phi_j \  e^{-\frac 1h f}  
&= - \int_{B(z_j,2\ve_{1})\cap \overline \Omega}  \nabla(1-  \chi_{k}^{\ve,\ve_1}) \cdot \widetilde \phi_j \  e^{-\frac 1h f}    \\
\nonumber
&=  -\int_{B(z_j,2\ve_{1})\cap \overline \Omega}   \big (1- \chi_{k}^{\ve,\ve_1}\big) \ d^{*}\big ( \widetilde \phi_j \  e^{-\frac 1h f}\big) \\
\nonumber
&\quad   - \int_{\partial (B(z_j,2\ve_{1})\cap \overline \Omega)}   (1- \chi_{k}^{\ve,\ve_1}  ) \widetilde \phi_j\cdot n \  e^{-\frac 1h f}    \\
\label{eq.numerator}
&= - \frac1h\int_{B(z_j,2\ve_{1})\cap \overline \Omega}   (1- \chi_{k}^{\ve,\ve_1}  )  \, e^{-\frac 1h f}\, d_{f,h}^*  \widetilde \phi_j  - \int_{\partial B(z_j,2\ve_{1})\cap\pa\Omega } \widetilde \phi_j \cdot n \  e^{-\frac 1h f}  .
\end{align}
From~\eqref{eq.delta-tildephi'}, there exists $c>0$ such that for $h$ small enough,  $d_{f,h}^* \widetilde \phi_j =O ( e^{-\frac{c}{h}} )$ in~$L^2(\Omega)$. Since  $f\geq f(z_j)-2\ve$ on $ B(z_j,2\ve_{1})\cap \supp(1-  \chi_{k}^{\ve,\ve_1})$ by 
\eqref{eq.epsilon1''}  and \eqref{eq.v1-support1},
there exist $c'>0$ and $\ve_0>0$ such that for  $\ve\in (0,\ve_0)$,   in the limit $h\to 0$:
\begin{equation}
\label{eq.expo-term}
\frac 1h\int_{B(z_j,2\ve_1)\cap \overline \Omega} \big (1- \chi_{k}^{\ve,\ve_1}\big )\, e^{-\frac 1h f}\, d_{f,h}^*\widetilde \phi_j \,=\, O\big ( e^{-\frac{1}{h}(f(z_j)+c')}\big).
\end{equation}
Furthermore, applying Proposition~\ref{gamma1}  with $\Sigma=\pa\Omega$ and $F=1_{\pa B(z_j,2\ve_1)\cap\partial \Omega}$, one has in  the limit $h\to 0$:
\begin{equation}
\label{eq.integral-term}
\int_{\partial B(z_j,2\ve_1) \cap \pa\Omega } \widetilde \phi_j \cdot n \  e^{-\frac 1h f} =\frac{ \pi ^{\frac{d-1}{4}} \sqrt{2\, \partial_n f(z_j) }   }{ \big( {\rm det \ Hess }f|_{ \partial \Omega}(z_j)   \big)^{1/4}  }\, \,h^{\frac{d-3}{4}} e^{-\frac{1}{h}f(z_{1})} \big ( 1+O(h)\big).
\end{equation}
Therefore, injecting the estimates~\eqref{eq.Lapl-u-k}, \eqref{eq.expo-term} and \eqref{eq.integral-term} into \eqref{eq.interaction}, one obtains in the limit $h\to 0$:
$$  \lp       \nabla \widetilde u_k ,    \widetilde \psi_j \rp_{L^2_w}=-\frac{ \pi ^{-\frac 14} \sqrt{2\, \partial_n f(z_j) }  \,h^{-\frac 34}     e^{-\frac{1}{h}(f(z_{j})- f(x_k))} }{ \big( {\rm det \ Hess }f|_{ \partial \Omega}(z_j)   \big)^{\frac 14} \left( \sum_{x\in\argmin_{\ft E_k}f}  \big( {\rm det \ Hess } f   (x)   \big)^{-\frac12}   \right )^{\frac 12} }\,\big ( 1+O(h)\big)
\,,$$
where the remainder terms $O(h)$   admits a full asymptotic expansion in~$h$ (which follows from  Laplace's method).
This  is the desired estimate according to \eqref{eq.Cip}--\eqref{eq.Bi}.  
This ends  the proof of Proposition~\ref{ESTIME1}. 
 
\end{proof}

\subsection{Restricted differential $\nabla:\,  \Ran \, \pi^{(0)}_h\to \Ran \, \pi^{(1)}_h$ }\label{sec:S}
This section is devoted to the proof of Theorem~\ref{thm-big0}. In this section, one also gives   lower and upper bounds on the~$\ft m_0^\Omega$ first eigenvalues of $-L^{D,(0)}_{f,h}$, see  Theorem~\ref{pp} below.  According to Lemma~\ref{ran1} and Proposition~\ref{fried}, the square of the~$\ft m_0^\Omega$ singular values of the  restricted  differential~$\nabla:\,  \Ran \, \pi^{(0)}_h\to \Ran \, \pi^{(1)}_h$ ($\Ran \, \pi^{(0)}_h$ and $\Ran \, \pi^{(1)}_h$ being endowed with the $L^2_w$ inner product) are the first~$\ft m_0^\Omega$  first eigenvalues of $-L^{D,(0)}_{f,h}$ times $\frac 2h$. Therefore, the strategy consists in estimating in the limit $h\to 0$ the~$\ft m_0^\Omega$ singular values of the  restricted  differential $\nabla:\,  \Ran \, \pi^{(0)}_h\to \Ran \, \pi^{(1)}_h$. 

 This section is organized as follows. Section~\ref{sec.e1} is dedicated to the definition of the matrix of the restricted  differential $\nabla:\,  \Ran \, \pi^{(0)}_h\to \Ran \, \pi^{(1)}_h$ and   preliminary asymptotic estimates on its coefficients. In Section~\ref{sec.e2},   lower and upper bounds for the  $\ft m_0^\Omega$ smallest eigenvalues of $-L^{D,(0)}_{f,h}$ are obtained. 
Finally,  one proves Theorem~\ref{thm-big0} in   Section~\ref{sec.e3}.

\subsubsection{Matrix of the restricted  differential $\nabla:\,  \Ran \, \pi^{(0)}_h\to \Ran \, \pi^{(1)}_h$}
\label{sec.e1}
Let us  introduce  the matrix of the restricted  differential $\nabla:\,  \Ran \, \pi^{(0)}_h\to \Ran \, \pi^{(1)}_h$ in a basis of projected quasi-modes.

\begin{definition}\label{de.S}
Let us assume that the assumption \eqref{H-M} is satisfied. Let $(\widetilde u_{k})_{k\in  \{1,\ldots,\ft m_0^{ \Omega} \}}$ be the family of quasi-modes for $-L^{D,(0)}_{f,h}$ and let   $(\widetilde \psi_j)_{j\in \{1,\dots,\ft m_1^{\overline \Omega}\}}$   be the family of quasi-modes for $-L^{D,(1)}_{f,h}$, both  introduced  in Definition~\ref{de.qm-L}. 
Let us denote by~$S=(S_{j,k})_{j,k}$ the $ \ft m_1^{\overline \Omega}\times {\ft m_{0}^{\Omega}}$ matrix defined by: for all $k\in \{1,\ldots,\ft m_0^\Omega\}$ and for all $j\in \{1,\dots,\ft m_1^{\overline \Omega}\}$\label{page.sjk}
\begin{equation}
\label{eq.S}
S_{j,k} := \big   \lp       \nabla  \pi_h^{(0)} \widetilde u_k ,  \pi_h^{(1)}  \widetilde \psi_j \big  \rp_{L^2_w}.
\end{equation}
\end{definition}
\noindent
Notice that  from \eqref{eq.comutation}, it holds for all $k\in \{1,\ldots,\ft m_0^\Omega\}$ and for all $j\in \{1,\ldots,\ft m_1^{\overline \Omega}\}$:
$$S_{j,k} = \big   \lp       \nabla  \pi_h^{(0)} \widetilde u_k ,  \pi_h^{(1)}  \widetilde \psi_j \big  \rp_{L^2_w}=\big   \lp       \nabla  \widetilde u_k ,  \pi_h^{(1)}  \widetilde \psi_j \big  \rp_{L^2_w}.$$
Then, using the identity  
$$ \big   \lp       \nabla \widetilde u_k ,  \pi_h^{(1)}  \widetilde \psi_j \big  \rp_{L^2_w}=  \big \lp       \nabla  \widetilde u_k ,   \widetilde \psi_j \big \rp_{L^2_w}+  \big \lp       \nabla  \widetilde u_k ,  \big ( \pi_h^{(1)} -1 \big)  \widetilde \psi_j \big  \rp_{L^2_w}$$
together with item 2 in Proposition~\ref{ESTIME1-base} and Proposition~\ref{ESTIME1}, one gets the following estimates of the coefficients of $S$:

\begin{proposition}\label{pr.S} Let us assume that the assumption~\eqref{H-M}  is satisfied. Let $S$ be the matrix introduced in Definition~\ref{de.S}. Let   $k\in \{1,\ldots,\ft m_0^\Omega\}$ and $j\in \{1,\dots,\ft m_1^{\overline \Omega}\}$. 
Then, there exists $\ve_0>0$ such that   for  $\ve\in (0,\ve_0)$ (where $\ve$ is introduced in~\eqref{eq.v1}),
there exists $c>0$ 
such that in the limit $h\to 0$:
$$
   \text{ if } z_{j}\notin \mathbf j(x_k), \ \  S_{j,k} =O \left(e^{-\frac{1}{h}(f(\mathbf{j}(x_k))- f(x_k)+c)}\right )
  $$
  and
\begin{align*}
 \text{ if } z_{j}\in  \mathbf j(x_k), \ \ S_{j,k} &= \big \lp       \nabla \widetilde u_k ,    \widetilde \psi_j \big \rp_{L^2_w}\big(1+O(e^{-\frac ch})\big)\\
 &=  \ve_{j,k}\, C_{j,k} \, h^{p_{j,k}}\,  e^{-\frac{1}{h}(f(\mathbf{j}(x_k))- f(x_k))}   \big   (  1  +     O(h )    \big   )   
 \end{align*}
  where we recall that, from  Proposition~\ref{ESTIME1},  $\ve_{j,k}\in\{-1,1\}$,  
    \begin{equation}
  \label{eq.pjk}
  p_{j,k}=
  \begin{cases} -\frac12 &\text{ if } z_{j}\in \mathbf{j}(x_k)\cap \Omega,\\ 
  -\frac34 &\text{ if } z_{j}\in \mathbf{j}(x_k)\cap \pa\Omega,
  \end{cases}
  \end{equation} 
 and  $C_{j,k}$ is defined in~\eqref{eq.Cip}. Moreover, when $z_j\in \mbf j(x_k)\cap \pa \Omega$ (and thus $k\in \{1,\ldots,\ft N_1\}$), one has~$\ve_{j,k}=-1$.
\end{proposition}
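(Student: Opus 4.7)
The plan is to follow exactly the decomposition announced in the paragraph preceding the statement, i.e.\ to reduce $S_{j,k}$ to the quasi-modal interactions already computed in Proposition~\ref{ESTIME1} and control the correction term via the spectral approximation estimate of Proposition~\ref{ESTIME1-base}. More precisely, the commutation relation \eqref{eq.comutation} applied to $\widetilde u_k \in \Lambda^0 H^1_{w,T}(\Omega)$ yields $\nabla \pi_h^{(0)}\widetilde u_k = \pi_h^{(1)} \nabla \widetilde u_k$, so that, since $\pi_h^{(1)}$ is an orthogonal projector for $\langle \cdot,\cdot\rangle_{L^2_w}$, one gets
\begin{equation*}
S_{j,k} \;=\; \langle \nabla \widetilde u_k,\, \pi_h^{(1)}\widetilde\psi_j\rangle_{L^2_w}
\;=\; \langle \nabla \widetilde u_k,\, \widetilde\psi_j\rangle_{L^2_w} \;+\; \langle \nabla \widetilde u_k,\, (\pi_h^{(1)}-1)\widetilde\psi_j\rangle_{L^2_w}.
\end{equation*}
The first term is estimated directly by Proposition~\ref{ESTIME1}: it vanishes when $z_j\notin \mathbf j(x_k)$, and otherwise produces the claimed main asymptotics with the prefactor $C_{j,k}$, the power $p_{j,k}$ given by \eqref{eq.pjk}, the sign $\ve_{j,k}$ (with $\ve_{j,k}=-1$ when $z_j\in \mathbf j(x_k)\cap\pa\Omega$), and a full expansion in $h$ for the remainder.

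The core step will then be to show that the correction term $\langle \nabla \widetilde u_k,(\pi_h^{(1)}-1)\widetilde\psi_j\rangle_{L^2_w}$ is exponentially negligible with respect to the main term. I will estimate it by Cauchy--Schwarz in $L^2_w$, and combine:
\begin{itemize}
\item the bound $\|(1-\pi_h^{(1)})\widetilde\psi_j\|_{L^2_w} \leq \|(1-\pi_h^{(1)})\widetilde\psi_j\|_{H^1_w} = O(e^{-c/h})$ coming from item 2b of Proposition~\ref{ESTIME1-base};
\item the quasi-modal bound on $\|\nabla \widetilde u_k\|_{L^2_w}$ from item 2a of Proposition~\ref{ESTIME1-base}, which, for any prescribed $\delta>0$, gives $\|\nabla \widetilde u_k\|_{L^2_w} = O\bigl(h^{-1/4} e^{-(f(\mathbf j(x_k))-f(x_k)-\delta)/h}\bigr)$ provided $\ve$ in Definition~\ref{de.v1} is chosen small enough.
\end{itemize}
Choosing $\delta<c/2$ one obtains $|\langle \nabla\widetilde u_k,(\pi_h^{(1)}-1)\widetilde\psi_j\rangle_{L^2_w}|=O\bigl(e^{-(f(\mathbf j(x_k))-f(x_k)+c')/h}\bigr)$ for some $c'>0$, independently of $j$.

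Combining these two points gives Proposition~\ref{pr.S} at once: in the case $z_j\notin \mathbf j(x_k)$ the main term vanishes and we recover exactly the stated $O\bigl(e^{-(f(\mathbf j(x_k))-f(x_k)+c)/h}\bigr)$ bound; in the case $z_j\in \mathbf j(x_k)$ the correction is exponentially smaller than the polynomial-times-exponential main term provided by Proposition~\ref{ESTIME1}, hence can be absorbed in the displayed factor $(1+O(e^{-c/h}))$, and the final expression with the full expansion $(1+O(h))$ follows. The only mildly delicate point is the coordination of the constants: one must first fix $\ve$ small enough (as in Proposition~\ref{ESTIME1-base} item 2a) so that the quasi-modal gradient decay exponent $f(\mathbf j(x_k))-f(x_k)-\delta$ strictly exceeds $f(\mathbf j(x_k))-f(x_k)-c$; since $\delta$ can be chosen arbitrarily small independently of the $h$-independent constant $c$ from item 2b, there is no real obstacle, and this is genuinely a routine assembly of previously established estimates rather than a new computation.
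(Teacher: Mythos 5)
Your proposal is correct and follows essentially the same route as the paper, which presents the argument as a short remark immediately after Definition~\ref{de.S}: one uses the commutation relation \eqref{eq.comutation} (together with the fact that $\pi_h^{(1)}$ is $L^2_w$-orthogonal) to reduce $S_{j,k}$ to $\lp\nabla\widetilde u_k,\pi_h^{(1)}\widetilde\psi_j\rp_{L^2_w}$, splits off $\lp\nabla\widetilde u_k,(\pi_h^{(1)}-1)\widetilde\psi_j\rp_{L^2_w}$, controls the correction by Cauchy--Schwarz via items~2a and~2b of Proposition~\ref{ESTIME1-base}, and reads the leading term off Proposition~\ref{ESTIME1}. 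Your bookkeeping of the constants (choose $\delta<c$ in item~2a, noting that the constant $c$ in item~2b does not depend on the parameter $\ve$ used to build the $\widetilde u_k$) is the right way to justify the order of quantifiers $\ve_0$ then $c$ in the statement.
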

\noindent
To study the singular values of the  matrix $S$, one defines the following matrices:

\begin{itemize} [leftmargin=1.3cm,rightmargin=1.3cm]
\item let $\widetilde S= \big(\widetilde S_{j,k} \big )_{j,k}$ be the real value $  \ft m_1^{\overline \Omega} \times {\ft m_{0}^{\Omega}}$   matrix defined by\label{page.tildesjk}
\begin{equation}
\label{eq.S-tilde}
\widetilde S_{j,k} := \begin{cases} S_{j,k}    &  \text{ if } z_{j}\in  \mathbf{j}(x_k)  ,\\
0   &   \text{ if } z_j\notin  \mathbf{j}(x_k).
  \end{cases}
  \end{equation}

\item let $D$ be the diagonal ${\ft m_{0}^{\Omega}}\times {\ft m_{0}^{\Omega}}$ matrix 
defined by \label{page.dkk}
\begin{equation}
\label{eq.D}
 \left\{
    \begin{array}{ll} 
    &\forall k\in\{1,\dots,{\ft m_{0}^{\Omega}}\},\ 
D_{k,k}:=h^{q_{k}}  e^{-\frac{1}{h}(f(\mathbf{j}(x_k))- f(x_k))}\ \ \text{where }\\ 
& q_{k}:=\min_{j}\{p_{j,k}\},
  \end{array}
\right.
\end{equation}
and where  $p_{j,k}$ is defined in~\eqref{eq.pjk}. \\Notice that when  the assumptions~\eqref{eq.hip1-j} and \eqref{eq.hip2-j} are  satisfied,  one has in the limit $h\to 0$ (see~\eqref{eq.sj} together with the fact $q_1=-\frac 34$ since  $\mbf j(x_1)\cap \pa \Omega=\pa \ft C_1\cap \pa \Omega\neq\emptyset$ which follows from~\eqref{jx1} and~\eqref{eq.hip2-j}):
\begin{equation}
\label{eq.D''} 
D_{1,1}=h^{-\frac34}  e^{-\frac{1}{h}(f(\mbf j(x_{1}) )- f(x_1))},
\end{equation}
and for some $c>0$ independent of $h$, it holds:
\begin{equation}
\label{eq.D'''}
\text{for all } k\in \{2,\ldots,\ft m_0^\Omega\}, \ \frac{ D_{1,1} }{D_{k,k}}=O \big (e^{-\frac c h}   \big ).
\end{equation}

\item let $\widetilde C$\label{page.tildec} be the real value $ \ft m_1^{\overline \Omega} \times  \ft m_{0}^\Omega$ matrix defined by 
\begin{equation}
\label{eq.C-tilde}
\widetilde C:=\widetilde S  \, D^{-1}. 
\end{equation}
The matrix $\widetilde C$ is the  $ \ft m_1^{\overline \Omega} \times {\ft m_{0}^{\Omega}}$   matrix  whose coefficients are given by:
$$  \widetilde C_{j,k}=\frac{\widetilde S_{j,k}}{D_{k,k}},$$
for all $(j,k)\in  \big\{1,\dots,\ft m_1^{\overline \Omega} \big\}\times \big \{1,\dots,{\ft m_{0}^{\Omega}} \big \}$. It satisfies, according to Proposition~\ref{pr.S} and~\eqref{eq.D}, in the limit $h\to 0$:
\begin{equation}
\label{eq.C-tilde-estimate}
  \widetilde C_{j,k} =\begin{cases} \ve_{j,k}\,C_{j,k} \,  h^{p_{j,k}-q_{k}}\,  \big(  1  +     O(h )   \big)    &  \text{ if } z_{j}\in  \mathbf{j}(x_k)  ,\\
0   &   \text{ if } z_j\notin  \mathbf{j}(x_k),
  \end{cases}
  \end{equation}
where  $p_{j,k}$ is defined by~\eqref{eq.pjk},~$\ve_{j,k}\in \{-1,1\}$ and $C_{j,k}$ is defined in~\eqref{eq.Cip}. 
From~\eqref{eq.C-tilde-estimate}, one has when $h\to 0$,~$\widetilde C=O(1)$  which means that there exist $K>0$ and $h_0>0$ such that for all $h\in (0,h_0)$:
 \begin{equation}
\label{eq.C-tilde-borne}
\sup_{(j,k)\in   \{1,\dots,\ft m_1^{\overline \Omega}  \}\times   \{1,\dots,{\ft m_{0}^{\Omega}}\}} \big \vert \widetilde C_{j,k}\big \vert\le K.
\end{equation}
\end{itemize}

\noindent
Under~\eqref{H-M}, by definition of the matrices $S$,~$\widetilde S$,~$D$ and $\widetilde C$ (see Definition~\ref{de.S} and Equations~\eqref{eq.S-tilde},~\eqref{eq.D},~\eqref{eq.C-tilde}), there exists $c>0$ such that the  matrix $\big (S-\widetilde S\big) D^{-1}$ satisfies in the limit $h\to 0$:
\begin{equation}
\label{eq.expo-R}
\big (S-\widetilde S\big) D^{-1}=O(e^{-\frac ch}).
\end{equation}
The following Lemma will be needed in the sequel. 

\begin{lemma}\label{le.C-tilde-coercive}
Let us assume that the assumption \eqref{H-M} holds. Let $\widetilde{C}$ be the matrix defined in~\eqref{eq.C-tilde}. Then,    there exist    $c>0$ and $h_0>0$ such that for all 
$h \in (0,h_0)$:  
\begin{equation}
\label{eq.C}
\forall x\in \mathbb R^{{\ft m_{0}^{\Omega}}},\  \ \big \|\widetilde C x \big \|_{2}\geq c \big \| x \big\|_{2},
\end{equation}
where $\Vert.\Vert_2$ denotes the usual Euclidean norm on $\mathbb R^K$,~$K\in \mathbb N^*$.
\end{lemma}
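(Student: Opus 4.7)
The plan is to extract, for $h$ small enough, a square $\ft m_0^\Omega\times\ft m_0^\Omega$ submatrix $M$ of $\widetilde C$ which is uniformly invertible as $h\to 0$; since selecting a subset of rows only decreases the $\ell^2$-norm, the bound $\|Mx\|_2\ge c\|x\|_2$ yields~\eqref{eq.C}. More precisely, $M$ will be obtained via an injection $k\in\{1,\dots,\ft m_0^\Omega\}\mapsto j(k)\in\{1,\dots,\ft m_1^{\overline\Omega}\}$, and will turn out to be diagonal up to an $O(h)$ additive perturbation, with diagonal entries uniformly bounded below in absolute value.

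The injection $k\mapsto j(k)$ must satisfy two conditions. First, a \emph{compatibility} condition: $z_{j(k)}\in\mbf j(x_k)$ with $p_{j(k),k}=q_k$, so that by Proposition~\ref{pr.S} and~\eqref{eq.C-tilde-estimate} the diagonal entry $M_{k,k}=\widetilde C_{j(k),k}=\ve_{j(k),k}\,C_{j(k),k}\,(1+O(h))$ is of order $1$ and uniformly bounded below by $C_{j(k),k}>0$. Second, a \emph{privateness} condition: $z_{j(k)}\notin\mbf j(x_{k'})$ for every $k'\ne k$ in $\{1,\dots,\ft m_0^\Omega\}$, so that the off-diagonal entries $M_{k',k}=\widetilde C_{j(k'),k}$ vanish whenever $k'\ne k$. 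If both hold, then $M=\mathrm{diag}(\alpha_k)+O(h)$ with $|\alpha_k|\ge c>0$, whence~\eqref{eq.C}.

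For compatibility, observe that by~\eqref{eq.pjk} one has $p_{j,k}=-\tfrac12$ if $z_j\in\Omega$ and $p_{j,k}=-\tfrac34$ if $z_j\in\pa\Omega$; consequently $q_k=-\tfrac34$ precisely when $\mbf j(x_k)\cap\pa\Omega\ne\emptyset$, which by~\eqref{eq.j-x} forces $k\in\{1,\dots,\ft N_1\}$. For such $k$ one selects $z_{j(k)}\in\mbf j(x_k)\cap\pa\Omega$, and for all remaining $k$ one may pick any $z_{j(k)}\in\mbf j(x_k)\subset\ft U_1^{\ft{ssp}}\cap\Omega$. For privateness, the key point is that $f(z_{j(k)})$ equals the height $\sigma_{q(k)}$ at which $x_k$ was selected in the construction of $\mbf j$; since the $\sigma_q$'s are strictly decreasing in $q$, a saddle of height $\sigma_{q(k)}$ can belong to $\mbf j(x_{k'})$ only if $q(k')=q(k)$, which reduces privateness to a within-step selection. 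Boundary saddles are automatically private between distinct components $\ft C_\ell$: if $z\in\pa\ft C_\ell\cap\pa\Omega\subset\ft U_1^{\pa\Omega}$, then the argument of Step 3 of the proof of Proposition~\ref{pr.p1} shows that for $r>0$ small, $\{f<f(z)\}\cap B(z,r)$ is connected and contained in a single $\ft C_\ell$, so $z$ lies on the boundary of exactly one $\ft C_\ell$. For interior saddles at any step $q$, a private saddle for each new component $\ft E_{q,\ell}$ is supplied by iterated application of Lemma~\ref{le.oubli}.

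The main obstacle is the combinatorial aspect of this last step: Lemma~\ref{le.oubli} only yields \emph{one} private component-saddle pair per application, whereas a distinct such pair is required for every new component of step $q$ simultaneously. I would handle this by induction on the cardinality of the configuration of step-$q$ components: Lemma~\ref{le.oubli} supplies a pair $(\ft E_{q,\ell_0},z^\ast)$ with $z^\ast$ private to $\ft E_{q,\ell_0}$; removing this pair leaves a strictly smaller admissible configuration (connected subunions remain connected after excising one extremal component), so the procedure may be iterated until every new component of step $q$ has received its own private saddle. Combined with the compatibility choice of boundary saddles at step~$1$, this produces the injection $k\mapsto j(k)$, and the coercivity~\eqref{eq.C} then follows from standard perturbation of the diagonal matrix~$M$.
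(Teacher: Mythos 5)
Your overall plan—extracting a square $\ft m_0^\Omega\times\ft m_0^\Omega$ submatrix $M$ of $\widetilde C$ which is uniformly invertible—is a reasonable starting point, but the structural claim on which it rests is false: a fully ``private'' injection $k\mapsto j(k)$ does not exist in general. Consider three wells $\ft C_1,\ft C_2,\ft C_3$ at a common level $\lambda(x_1)=\lambda(x_2)=\lambda(x_3)$, arranged in a chain so that $\overline{\ft C_1}\cap\overline{\ft C_2}=\{z_a\}$, $\overline{\ft C_2}\cap\overline{\ft C_3}=\{z_b\}$ with $z_a,z_b\in\ft U_1^{\ft{ssp}}\cap\Omega$, and with $\ft C_1$ touching $\pa\Omega$ at a single boundary separating saddle $z_c$, these being the only separating saddle points. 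This configuration is fully consistent with~\eqref{H-M}. Then $\mbf j(x_2)=\pa\ft C_2\cap\ft U_1^{\ft{ssp}}=\{z_a,z_b\}$, yet $z_a\in\mbf j(x_1)$ and $z_b\in\mbf j(x_3)$, so $\ft C_2$ has no private saddle and your submatrix $M$ cannot be made diagonal up to $O(h)$. The iteration of Lemma~\ref{le.oubli} that you invoke does not repair this: once the pair $(\ft C_{\ell_0},z^\ast)$ is removed, the remaining collection no longer satisfies the hypotheses of Lemma~\ref{le.oubli} (the closure $\overline{\ft C_{\ell_0}}$ still meets the closures of the remaining components, precisely because the group was connected), and indeed in the counterexample above no private saddle for $\ft C_2$ can be produced. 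Note also that Lemma~\ref{le.oubli} is formulated only for the step-one components $\ft C_\ell=\ft E_{1,\ell}$, not for $\ft E_{q,\ell}$ with $q\ge2$, so even the one application per step that you envisage is not available as stated.

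The paper's proof avoids this obstruction by aiming for a \emph{triangular}, not diagonal, structure. After splitting $\widetilde C$ into blocks associated with $[\widetilde C]_a$ (step-one minima) and $[\widetilde C]_c$ (later steps, delegated to~\cite{HeHiSj}), the coercivity of $[\widetilde C]_a$ is proved by a chained induction: Lemma~\ref{le.oubli} is used exactly once per maximal connected group to place a well $\ft C_1$ with a genuinely private saddle first, and the remaining $\ft C_k$'s are then labeled so that $\bigcup_{j\le k}\overline{\ft C_j}$ is connected for every $k$. At the inductive step, the saddle $z_j$ used to extract $y_{k+1}$ is permitted to lie on $\pa\ft C_\ell$ for some $\ell\le k$; the resulting row of $A$ reads $(Ay)_j=A_{j\ell}\,y_\ell+\ve_{j,k+1}C_{j,k+1}\,y_{k+1}$, and the already-established control on $y_\ell$ from $\mathcal P_k$ suffices to bound $y_{k+1}$. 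Your approach would become salvageable if you weakened ``private'' to ``private with respect to later indices'' under such a connectedness-adapted ordering—this is precisely the structure implicit in the paper's induction—but as written the proposal requires a property that can fail.
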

\begin{proof}
The proof of Lemma~\ref{le.C-tilde-coercive} is divided into two steps.

\medskip

\noindent
\textbf{Step 1.} Block-diagonal decomposition of $\widetilde C$. 
\medskip

\noindent 
According to~\eqref{eq.C-tilde},~\eqref{eq.D} and \eqref{eq.S-tilde},   $\widetilde C$ has the form, up to  reordering   the $z_{i}$ for $i\in\{1,\dots,\ft m_1^{\overline\Omega} \}$:  
 \begin{equation*}
\widetilde C  = 
\begin{pmatrix}
0&0\\
[\widetilde C]_a &  0 \\
0 & \widetilde  [\widetilde C]_c 
\end{pmatrix},
\end{equation*}
where:
\begin{itemize} [leftmargin=1.3cm,rightmargin=1.3cm]
\item the block matrix $(0,0)$ on the first line corresponds to the rows of $\widetilde C$ associated with $j\in \{1,\ldots,\ft m_1^{\overline \Omega}\}$ such that $z_j\notin \cup_{k=1}^{\ft m_0^\Omega}\mbf j(x_k)$.   

\item $[\widetilde C]_a $ is a matrix of size $   {\rm Card}\big(\cup_{k=1}^{\ft N_1}\mathbf j(x_{k})\big)   \times \ft N_1$ (where $\ft N_1$ is defined in~\eqref{eq.N11}). The coefficients $([\widetilde C]_a )_{j,k}=\widetilde C_{j,k}$  are associated with  0-forms $\widetilde u_{k} $,  for $k\in\{1,\dots,\ft N_1\}$ (see Definition~\ref{de.qm-L} and~\eqref{eq.v1}) and  with 1-forms $\widetilde \psi_{j} $ for $j\in \{1,\ldots,\ft m_1^{\overline \Omega}\}$ such that $z_j\in  \cup_{k=1}^{\ft N_1}\mathbf j(x_{k}) $. 

\item  $[\widetilde C]_c$ is a matrix of size $   {\rm Card}\big(\cup_{k=\ft N_1+1}^{\ft m_0^\Omega}\mathbf j(x_{k})\big)   \times (\ft m_0^{ \Omega}-\ft N_1)$, 
which  has the following block diagonal form:
$$[\widetilde C]_c=\begin{pmatrix}
    [\widetilde C]_{c,1}& 0 &\dots  & 0 \\
    0 & [\widetilde C]_{c,2} & \dots  & 0 \\
    \vdots  & \vdots & \ddots & \vdots \\
    0 & 0 &  \dots  & [\widetilde C]_{c,N_1}
\end{pmatrix},
$$
\begin{sloppypar}
where for $\ell\in  \big \{1,\ldots,\ft  N_1 \big \}$,~$ [\widetilde C]_{c,\ell}$ is a matrix of size 
$${\rm Card}\left ( \bigcup_{k, \, \mbf j(x_k)\subset  \ft C_l} \mbf j(x_k) \right)\times\Big ( {\rm Card}\big (\argmin_{\overline{\ft  C_\ell}} f\big )~-1\Big),$$
 with the convention that $ [\widetilde C]_{c,\ell}$ does not exist if $\argmin_{\overline{\ft  C_\ell}} f =\{x_\ell\}$. Let us recall that  for $\ell \in  \big \{1,\ldots,\ft N_1 \big \}$,~$\ft C_\ell$ is introduced  in Definition~\ref{de.1}. 
For all $\ell \in \{ 1,\ldots,\ft N_1\}$,~$[\widetilde C]_{c,\ell}$ contains the non zero terms of~$\widetilde{C}$ associated with 0-forms
$\widetilde{u}_k$ and 1-forms $\widetilde \psi_j$ with:
\begin{enumerate}
\item  $\widetilde{u}_k$ such that $\supp (\widetilde{u}_k) \subset
\{ \widetilde \chi_\ell=1\}$  (according to~\eqref{eq.suport-inclusion}),
\item for those $\widetilde{u}_k$,~$j$ is such that $z_j \in \mbf j(x_k)\subset \ft C_\ell$.
\end{enumerate}
This explains in particular the block structure of $[\widetilde C]_{c}$ since by
construction $\widetilde S_{j,k}=0$ if $z_j \not\in \mbf j(x_k)$ (see~\eqref{eq.C-tilde} and~\eqref{eq.S-tilde}).
  \end{sloppypar}
  
\end{itemize}
From \cite[ Section 7.3 and Equation (7.4) in Section  7.2]{HeHiSj}, for all  $\ell\in  \big \{1,\ldots,\ft N_1 \big \}$  there exist $c_\ell>0$ and $h_0>0$ such that  for all $h\in (0,h_0)$ and for all $z\in \mathbb R^{{\rm Card}\big (\argmin_{\overline{\ft C_\ell}} f\big )~-1}$,
$$
\big\Vert \widetilde [\widetilde C]_{c,\ell} \,z \big\Vert_2\geq c_\ell\Vert z\Vert_2.
$$
Thus, there exist $\alpha >0$ and $h_0>0$ such that  for all $h\in (0,h_0)$ and for all $z\in \mathbb R^{\ft m_0^\Omega-\ft N_1}$,
\begin{equation}
\label{eq.TiTi}
\big\Vert \widetilde [\widetilde C]_{c} \,z \big\Vert_2\geq \alpha\Vert z\Vert_2.
\end{equation}
For any $x=\, (y,z)^T\in \mathbb R^{\ft m_0^\Omega}$ ($y\in \mathbb R^{\ft N_1}$ and $z\in \mathbb R^{\ft m_0^\Omega-\ft N_1}$), it holds 
$$\big\Vert \widetilde C \,x \big\Vert_2^2=  \big\Vert  [\widetilde C]_{a} \, y \big\Vert_2^2+ \big\Vert [\widetilde C]_c\,z \big \Vert_2^2\geq   \big\Vert  [\widetilde C]_{a} \,y \big\Vert_2^2+ \alpha^2\Vert z\Vert_2^2.$$ 
Therefore, to prove~\eqref{eq.C}, let show that  there exist $\beta >0$ and $h_0>0$ such that  for all $h\in (0,h_0)$ and for all $y\in \mathbb R^{\ft N_1}$,
\begin{equation}
\label{eq.TiTi-b}
\big\Vert  [\widetilde C]_{a} \, y \big\Vert_2\geq \beta\Vert y\Vert_2.
\end{equation}
\medskip

\noindent
\textbf{Step 2.} Proof of~\eqref{eq.TiTi-b}. 
\medskip

\noindent
Let us divide the family $(\ft C_k)_{k\in \{1,\ldots,\ft N_1\}}$ into $\ft K$  groups  ($\ft K\le \ft N_1$):
$$\{\ft C_1,\ldots,\ft C_{\ft N_1}\}=\bigcup_{\ell=1}^{\ft K}  \{\ft C_1^\ell,\ldots,\ft C^\ell_{\ft k_\ell}\} $$
which are such that for all $\ell\in \{1,\ldots,\ft K\}$, 
$$\text{the set } \, \bigcup_{j=1}^{k_\ell} \overline{\ft C_j^\ell} \ \text{ is connected},$$
and for all $q\in \{1,\ldots,\ft N_1\}$ such that $\ft C_q \notin \{\ft C_1^\ell,\ldots,\ft C^\ell_{\ft k_\ell}\}$, 
$$\overline{\ft C_q}  \cap \bigcup_{j=1}^{k_\ell} \overline{\ft C_j^\ell}  =\emptyset.$$
Then, by definition of the matrix $[\widetilde C]_a $ (see \textbf{Step 1} above), up to  a reordering,~$[\widetilde C]_a $ has the block-diagonal form
$$[\widetilde C]_a=\begin{pmatrix}
    [\widetilde C]_{a,1}& 0 &\dots  & 0 \\
    0 & [\widetilde C]_{a,2} & \dots  & 0 \\
    \vdots  & \vdots & \ddots & \vdots \\
    0 & 0 &  \dots  & [\widetilde C]_{a,\ft K}
\end{pmatrix},
$$
\begin{sloppypar}
\noindent
where for $\ell\in  \big \{1,\ldots,\ft K \big \}$,~$ [\widetilde C]_{a,\ell}$ is a matrix of size ${\rm Card}\Big (    \bigcup_{k=1, \,x_k\in \cup_{j=1}^{k_\ell} \ft C_j^\ell }^{\ft N_1} \mbf j(x_{k}) \Big)\times k_\ell$. For $\ell\in  \big \{1,\ldots,\ft K \big \}$, the coefficients $([\widetilde C]_{a,\ell} )_{j,k}=\widetilde C_{j,k}$  are associated with  0-forms $\widetilde u_{k} $,  for $k\in\{1,\dots,\ft N_1\}$  such that $x_k\in \cup_{j=1}^{k_\ell} \ft C_j^\ell    $  and  with 1-forms $\widetilde \psi_{j} $ for $j\in \{1,\ldots,\ft m_1^{\overline \Omega}\}$ such that $z_j\in  \bigcup_{k=1,\,  x_k\in \cup_{j=1}^{k_\ell} \ft C_j^\ell }^{\ft N_1} \mbf j(x_{k}) $. 
\end{sloppypar}
\noindent
Therefore, to prove~\eqref{eq.TiTi-b}, let us  show that for $\ell\in  \big \{1,\ldots,\ft K \big \}$,  there exist $\beta_\ell >0$ and $h_0>0$ such that  for all $h\in (0,h_0)$ and for all $y\in \mathbb R^{k_\ell}$,
$$
\big\Vert  [\widetilde C]_{a, \ell } \,y \big\Vert_2\geq \beta_\ell \Vert y\Vert_2.
$$
In view of the block structure of $ [\widetilde C]_a$, to prove it, one can assume,  without loss of generality, that $\ft K=1$ which is equivalent to the fact that the set  $\bigcup_{j=1}^{\ft N_1} \overline{\ft C_j}$ is connected. Let us thus assume that $\bigcup_{j=1}^{\ft N_1} \overline{\ft C_j}$ is connected and let us then write 
\begin{equation}
\label{eq.AA}  
[\widetilde C]_{a}= A +O(h),
\end{equation}
where $A $ is a matrix which has the same size as $[\widetilde C]_{a}$, and which satisfies,  from~\eqref{eq.C-tilde-estimate}, for all $k\in \{1,\ldots,\ft N_1\}$ and all $j$ such that $z_j\in  \bigcup_{k=1 }^{\ft N_1} \mbf j(x_{k})$,
\begin{equation}
\label{eq.1-1}
\text{if }  \mbf j(x_k)\cap \pa \Omega\neq \emptyset \text{ one has }  A _{j,k} =\begin{cases} \ve_{j,k}\,C_{j,k}      &  \text{ if } z_{j} \in  \mathbf{j}(x_k)  \cap \pa \Omega,\\
O\big (h^{\frac 14}\big )    &  \text{ if } z_{j}\in  \mathbf{j}(x_k)\cap  \Omega,  \\
0   &   \text{ if } z_j\notin  \mathbf{j}(x_k),
  \end{cases}
\end{equation}
 and  
\begin{equation}
\label{eq.1-2}
 \text{if }  \mbf j(x_k)\cap \pa \Omega=  \emptyset \text{ one has }  A _{j,k} =\begin{cases}  
 \ve_{j,k}\,C_{j,k}   &  \text{ if } z_{j} \in  \mathbf{j}(x_k) ,  \\
0   &   \text{ if } z_j\notin  \mathbf{j}(x_k),
  \end{cases}
\end{equation}
where $\ve_{j,k}\in \{-1,1\}$ and $\,C_{j,k} > 0$ for all $k\in \{1,\ldots,\ft N_1\}$ and $j$ such that $z_j\in \mathbf j(x_k)$. To prove~\eqref{eq.TiTi-b}, it sufficient to show that~\eqref{eq.TiTi-b} holds for $A $ instead of $[\widetilde C]_{a}$, i.e. that there exist $\beta >0$ and $h_0>0$ such that  for all $h\in (0,h_0)$ and for all $y\in \mathbb R^{\ft N_1}$,
\begin{equation}
\label{eq.T}
\big\Vert A \, y \big\Vert_2\geq \beta\Vert y\Vert_2.
\end{equation}
Before proving~\eqref{eq.T}, let us label the family $(\ft C_k)_{k\in \{1,\ldots,\ft N_1\}}$ as follows. According to Lemma~\ref{le.oubli}, one can assume without loss of generality that  $\ft C_1$ is such that there exists    $z\in \ft U_1^{\ft{ssp}}$ such that  \begin{equation}\label{eq.C1-ssp0}
z\in \pa \ft C_{1}  \setminus \Big (    \cup_{\ell=2}^{\ft N_1}\pa {\ft C_{\ell}}\Big).
\end{equation}
Let us now label $(\ft C_j)_{j\in \{2,\ldots,\ft N_1\}}$ such that for all $k\in \{1,\ldots,\ft N_1\}$, $\bigcup_{j=1}^{k} \overline{\ft C_j}$ is connected.
Let   us prove~\eqref{eq.T} by induction on $k\in \{1,\ldots,\ft N_1\}$ (the proof is similar to the proof made in  \cite[Section 7.3]{HeHiSj} in a different context). For $k\in \{1,\ldots,\ft N_1\}$, one denotes by~$\mathcal P_{k}$ the following property: there exists $\alpha _k>0$ and $h_0>0$ such that  for all $h\in (0,h_0)$ and for all $y\in \mathbb R^{\ft N_1}$, 
$$\big\Vert A \, y \big\Vert_2\geq \alpha_k\sum_{\ell=1}^k y_\ell^2.$$ 
Let us prove~$\mathcal P_{1}$. 
Using~\eqref{eq.C1-ssp0}  together with~\eqref{eq.1-1} and~\eqref{eq.1-2}, the   $j$-th row of $A$ equals  
$$
\big (\ve_{j,1}\,C_{j,1}, 0,\ldots,0 \big ),
$$
where $j$ is such that $z_j\in \pa \ft C_{1}  \setminus \Big (    \cup_{\ell=2}^{\ft N_1}\pa {\ft C_{\ell}}\Big)$.
Thus, one has  for $y\in \mathbb R^{\ft N_1}$, $\big\Vert A \, y \big\Vert_2\ge \vert\ve_{j,1}\,C_{j,1} y_1\vert  $. Therefore,~$\mathcal P_{1}$ is satisfied. Let us now assume that~$\mathcal P_{k}$ is satisfied for some $k\in \{1,\ldots,\ft N_1-1\}$ and let us prove $\mathcal P_{k+1}$.  If $\pa \ft C_{k+1}\cap \pa \Omega\neq \emptyset$,  there exists $j\in \{1,\ldots,\ft N_1\}$, such that $z_j\in \pa \ft C_{k+1}\cap \pa \Omega$. Thus, using~\eqref{eq.1-1}, one has
$$(Ay)_j=  \ve_{j,k+1}C_{j,k+1}\, y_{k+1} .$$
Therefore, one obtains $ y_{k+1}^2C_{j,k+1}^2 \le   \big \vert (Ay)_j  \big \vert^2 \le  \big \Vert Ay  \big \Vert^2_2$. 
Applying the property~$\mathcal P_{k}$, one gets 
$$ \min(C_{j,k+1}^2, \alpha_k) \sum_{\ell=1}^{k+1} y_\ell^2 \le  2\big \Vert Ay  \big \Vert^2_2.$$
This implies that the property~$\mathcal P_{k+1}$ is satisfied. Let us now consider the case  $\pa \ft C_{k+1}\cap \pa \Omega= \emptyset$. 
Using~\eqref{eq.1-2} together with the fact that  the set $\bigcup_{j=1}^{k+1} \overline{\ft C_j}$ is connected, there exist $\ell \in \{1,\ldots,k\}$ and  $j$ such that $z_j\in\pa \ft C_\ell \cap \pa \ft C_{k+1}$. Thus,    
$$(Ay)_j= A_{j\ell} \, y_\ell +\ve_{j,k+1}C_{j,k+1}\, y_{k+1} .$$
Thus, since the exists $M>0$ such that $\vert A_{j\ell}\vert\le M$ for all $h$ small enough, one obtains using the triangular inequality and the property~$\mathcal P_{k}$,
$$ y_{k+1}^2  \le  \frac{2}{C_{j,k+1}^2} \Big(\big \vert (Ay)_j  \big \vert^2+A_{j\ell} ^2\, y_\ell^2 \Big)\le   \frac{2( 1+M^2) }{C_{j,k+1}^2} \big \Vert Ay  \big \Vert^2_2.$$
Using the property~$\mathcal P_{k}$, one gets that $ \min\Big(\frac{ C_{j,k+1}^2 }{2( 1+M^2) }, \alpha_k\Big) \sum_{\ell=1}^{k+1} y_\ell^2 \le  2\big \Vert Ay  \big \Vert^2_2$. Therefore, the property $\mathcal P_{k+1}$ is satisfied. This ends the proof of~\eqref{eq.T} by induction. Together with~\eqref{eq.TiTi} and~\eqref{eq.TiTi-b}, one then obtains~\eqref{eq.C}. This  concludes the proof of Lemma~\ref{le.C-tilde-coercive}.
\end{proof}

As a consequence of~\eqref{eq.C}, the rectangular matrix $\widetilde C$ admits a left inverse $\widetilde C^{-1}$ which satisfies 
  \begin{equation}
\label{eq.C-1-tilde-borne}
\sup_{(j,k)\in     \{1,\dots,{\ft m_{0}^{\Omega}}\}\times  \{1,\dots,\ft m_1^{\overline \Omega}  \}} \vert (\widetilde C^{-1})_{j,k}\vert\le M,
\end{equation}
for some $M>0$ independent of $h$. 
This implies that, using~\eqref{eq.expo-R} and~\eqref{eq.C-tilde}:
\begin{equation}
\label{eq.decomp-S}
S= \big( I+R \big)\,\widetilde S 
\quad\text{where}\quad R= \big (S-\widetilde S\big) D^{-1}\widetilde {C}^{-1}=  O(e^{-\frac ch}).
\end{equation}

\subsubsection{On the $\ft m_0^\Omega$ smallest eigenvalues of $-L^{D,(0)}_{f,h}$ }
\label{sec.e2}
This section is dedicated to the proof of the following proposition which aims at giving    lower  bound  and an upper bounds for   the  $\ft m_0^\Omega$ smallest eigenvalues of $-L^{D,(0)}_{f,h}$.


\begin{theorem}\label{pp}
Let us assume that the assumption~\eqref{H-M} holds. Let  $\mathbf j$ be the map constructed in Section~\ref{sec:labeling}.   Let us  reorder the  set $\{x_{ 1},\ldots,x_{\ft m_0^\Omega}\}$    such that the sequence \label{page.ftsk}
\begin{equation}
\label{eq.Sk-def}
 \big (\ft S_{k} \big )_{k\in\{1,\dots,{\ft m_{0}^{\Omega}}\}}:= \big (f(\mathbf j(x_{k}))-f(x_{k}) \big )_{k\in\{1,\dots,{\ft m_{0}^{\Omega}}\}}
 \end{equation}
is decreasing,
and, on any $\mathcal I\subset \{1,\dots,{\ft m_{0}^{\Omega}}\} $ such that $ \big (\ft S_{k} \big )_{k\in\mathcal I}$ is constant, the sequence 
$(q_{k})_{k\in\mathcal I}$ is decreasing (where the $q_{k}$'s are introduced in~\eqref{eq.D}). Finally let us denote by~$\lambda_{k,h}$, for $k\in \mathbb N^{*}$,  the $k$-th eigenvalue of $-L_{f,h}^{D,(0)}$ counted with multiplicity (with these notations,~$\lambda_{1,h}=\lambda_h$, see~\eqref{eq.lh}). Then, there  exist $C>0$ and $h_0>0$ such that   for all $k\in \big \{1,\dots,{\ft m_{0}^{\Omega}} \big \}$ and  for all $h\in (0,h_0)$,
$$
C^{-1} \,h^{1+2q_{k}}\, e^{-\frac{2}{h}\ft S_{k}} \leq \lambda_{k,h} \leq C \,h^{1+2q_{k}}\,e^{-\frac{2}{h}\ft S_{k}}\,.
$$ 

\end{theorem}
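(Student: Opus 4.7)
The plan is to identify $\lambda_{k,h}$ with a singular value of the explicit matrix $S$ of Definition~\ref{de.S}, to read off these singular values from the factorization $S=(I+R)\widetilde CD$ of~\eqref{eq.decomp-S} together with Lemma~\ref{le.C-tilde-coercive}, and finally to match the result with the ordered diagonal entries $D_{k,k}$.

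First, I would apply the min-max principle. By Proposition~\ref{fried}, the quadratic form of $-L^{D,(0)}_{f,h}$ is $\tfrac{h}{2}\|\nabla\cdot\|_{L^2_w}^{2}$; by the commutation relation~\eqref{eq.comutation}, the gradient sends $\Ran\pi_h^{(0)}$ into $\Ran\pi_h^{(1)}$; and by Lemma~\ref{ran1}, $\Ran\pi_h^{(0)}$ coincides with the span of the eigenfunctions associated with $\lambda_{1,h},\ldots,\lambda_{\ft m_0^\Omega,h}$ (the only eigenvalues below $\tfrac{\sqrt h}{2}$). The Courant-Fischer min-max principle then yields $\lambda_{k,h}=\tfrac{h}{2}\sigma_k^{2}$, where $\sigma_1\le\ldots\le\sigma_{\ft m_0^\Omega}$ are the singular values of the restricted gradient $\nabla:\Ran\pi_h^{(0)}\to\Ran\pi_h^{(1)}$, both spaces being endowed with the $L^2_w$ inner product. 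By Proposition~\ref{ESTIME1-base}, the families $(\pi_h^{(0)}\widetilde u_k)_k$ and $(\pi_h^{(1)}\widetilde\psi_j)_j$ are bases of $\Ran\pi_h^{(0)}$ and $\Ran\pi_h^{(1)}$ whose Gram matrices are uniformly in $h$ equivalent to the identity, and in these bases the restricted gradient is represented by $S$. A standard min-max comparison then provides a constant $C>0$ independent of $h$ such that, for all $k\in\{1,\ldots,\ft m_0^\Omega\}$ and all $h$ small enough,
$$C^{-1}\sigma_k(S)^{2}\le \sigma_k^{2}\le C\,\sigma_k(S)^{2},$$
where $\sigma_k(S)$ denotes the $k$-th smallest singular value of $S$.

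Next, I would exploit the factorization $S=(I+R)\widetilde CD$ with $R=O(e^{-c/h})$, which yields $\|Sx\|_2=(1+O(e^{-c/h}))\|\widetilde CDx\|_2$ uniformly in $x\in\mathbb R^{\ft m_0^\Omega}$. Combining Lemma~\ref{le.C-tilde-coercive} with the uniform upper bound~\eqref{eq.C-tilde-borne} produces constants $c_1,K_1>0$ independent of $h$ with $c_1\|Dx\|_2\le \|\widetilde CDx\|_2\le K_1\|Dx\|_2$. Since $D$ is diagonal with positive entries $D_{k,k}=h^{q_k}e^{-\ft S_k/h}$, the labeling hypothesis, namely that $(\ft S_k)_k$ is non-increasing and that $(q_k)_k$ is strictly decreasing on each maximal constancy set of $\ft S_k$, forces $(D_{k,k})_k$ to be strictly increasing for $h$ small enough, so that the $k$-th smallest singular value of $D$ is exactly $D_{k,k}$. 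A further application of the min-max principle then gives $\sigma_k(S)\asymp D_{k,k}$ with $h$-independent constants, and combining this with the previous paragraph delivers the desired two-sided estimate
$$C^{-1}h^{1+2q_k}e^{-2\ft S_k/h}\le \lambda_{k,h}\le C\,h^{1+2q_k}e^{-2\ft S_k/h}.$$

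The only slightly delicate point is the passage from $\sigma_k(S)$ to $\sigma_k(D)$: the matrix $\widetilde C$ is rectangular since $\ft m_1^{\overline\Omega}$ may exceed $\ft m_0^\Omega$, and what is really used is that $\widetilde C$ admits a uniformly bounded left inverse. Lemma~\ref{le.C-tilde-coercive} is exactly the statement providing this, and its proof, carried out through an induction on the critical connected components and exploiting the separating saddle point structure, constitutes the geometric core of the argument. Once this uniform coercivity is available, all other steps reduce to Courant-Fischer comparisons and to the quasi-modal estimates already gathered in Proposition~\ref{ESTIME1-base} and Proposition~\ref{pr.S}.
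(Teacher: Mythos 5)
Your argument is correct and follows essentially the same route as the paper's proof, namely Lemma~\ref{le.fan1} followed by the estimate~\eqref{eq.ChS}: both reduce $\lambda_{k,h}$ to the singular values of the restricted gradient via Proposition~\ref{fried} and Lemma~\ref{ran1}, pass to the quasi-modal matrix $S$ using the uniform-equivalence-to-identity of the Gram matrices from Proposition~\ref{ESTIME1-base} (the paper's $C_0,C_1$), exploit the factorization $S=(I+R)\widetilde CD$ together with the coercivity of $\widetilde C$ from Lemma~\ref{le.C-tilde-coercive}, and finish with Fan/min-max comparison of singular values to those of the ordered diagonal matrix $D$. Your version is slightly more streamlined in that it merges the $1+O(e^{-c/h})$ comparison of Lemma~\ref{le.fan1} with the constant-comparison step into one two-sided bound up to $h$-independent constants, which is all Theorem~\ref{pp} needs; the paper keeps the finer $1+O(e^{-c/h})$ bookkeeping because it is also needed later for the sharp estimate of $\lambda_h$ in Theorem~\ref{thm-big0}. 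One small inaccuracy: the ordering hypothesis only forces $(D_{k,k})_k$ to be non-decreasing (not strictly increasing, since $q_k$ takes only two values), but this does not affect the conclusion that $D_{k,k}$ is the $k$-th smallest singular value of $D$.
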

\noindent
 The reordering of $\{x_{ 1},\ldots,x_{\ft m_0^\Omega}\}$  introduced in~\eqref{eq.Sk-def} is only used in  Theorem~\ref{pp}. One recalls that, except in Theorem~\ref{pp},  the labelling  of $\{x_{ 1},\ldots,x_{\ft m_0^\Omega}\}$ is the one introduced in Definition~\ref{de.label-l}.
  
 A direct consequence of Theorem~\ref{pp} is the following. 
\begin{corollary}\label{eq.co.l2}
Let us assume that the assumptions \eqref{H-M} and \eqref{eq.hip1-j} are satisfied. Then, the estimate~\eqref{l2} is satisfied. 
\end{corollary}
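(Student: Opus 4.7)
The plan is to deduce Corollary \ref{eq.co.l2} directly from Theorem \ref{pp} together with the strict gap provided by assumption \eqref{eq.hip1-j}. Let me describe the argument concretely.

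First I would reorder the local minima $\{x_1,\ldots,x_{\ft m_0^\Omega}\}$ as in the statement of Theorem \ref{pp}, so that the sequence $\ft S_k = f(\mathbf{j}(x_k))-f(x_k)$ is non-increasing. Under assumption \eqref{eq.hip1-j}, the minimum $x_{1,1}$ of the lexicographic labeling (the one sitting in $\ft C_1 = \ft C_{\ft{max}}$) satisfies the \emph{strict} inequality \eqref{eq.sj}, namely
\[
f(\mathbf{j}(x)) - f(x) < f(\mathbf{j}(x_{1,1})) - f(x_{1,1}) \qquad \text{for all } x \in \ft U_0^\Omega \setminus \{x_{1,1}\}.
\]
Hence, in the reordering of Theorem \ref{pp}, $x_1$ coincides (up to relabeling) with $x_{1,1}$, and one has $\ft S_1 > \ft S_2$. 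Setting $c_0 := \ft S_1 - \ft S_2 > 0$, this strict gap is the only ingredient needed beyond Theorem \ref{pp}.

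Next I would invoke the two-sided bound of Theorem \ref{pp} for $k=1$ and $k=2$: there exist $C>0$ and $h_0>0$ such that, for every $h\in(0,h_0)$,
\[
\lambda_h = \lambda_{1,h} \leq C\, h^{1+2q_1}\, e^{-\frac{2}{h}\ft S_1} \qquad \text{and} \qquad \lambda_{2,h} \geq C^{-1}\, h^{1+2q_2}\, e^{-\frac{2}{h}\ft S_2}.
\]
Dividing these two inequalities yields
\[
\frac{\lambda_h}{\lambda_{2,h}} \leq C^2\, h^{2(q_1-q_2)}\, e^{-\frac{2}{h}(\ft S_1-\ft S_2)} = C^2\, h^{2(q_1-q_2)}\, e^{-\frac{2 c_0}{h}}.
\]
Since the polynomial prefactor $h^{2(q_1-q_2)}$ is absorbed by any exponentially small factor with a smaller rate, for any $c \in (0, 2c_0)$ one has $\lambda_h/\lambda_{2,h} = O(e^{-c/h})$ as $h \to 0$, which is precisely the estimate \eqref{l2}. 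This concludes the proof.

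There is no real obstacle here beyond Theorem \ref{pp} itself; the entire content of the corollary is the observation that \eqref{eq.hip1-j} guarantees a \emph{strict} separation between the first exponential rate $\ft S_1$ and all the other rates $\ft S_k$, $k \geq 2$, which turns the ratio of the two lowest eigenvalues into a genuinely exponentially small quantity.
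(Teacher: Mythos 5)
Your proof is correct and follows exactly the route the paper intends: the paper presents Corollary~\ref{eq.co.l2} with only the phrase ``A direct consequence of Theorem~\ref{pp} is the following,'' and your argument — identifying $x_1$ in the Theorem~\ref{pp} ordering with $x_{1,1}$ via the strict inequality~\eqref{eq.sj}, extracting the gap $\ft S_1 > \ft S_2$, and dividing the upper bound on $\lambda_{1,h}$ by the lower bound on $\lambda_{2,h}$ — is precisely the computation that makes that ``direct consequence'' explicit.
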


Before starting the proof of  Theorem~\ref{pp}, let us recall the Fan inequalities, which is the purpose of Lemma~\ref{le.fan} below (see for instance \cite[Theorem~1.6]{simon1979trace} or \cite{le2009small}) and its consequences, see Lemma~\ref{le.fan1} below. \label{page.mui}
\begin{lemma}
\label{le.fan}
Let  $A\in M_{m,m}(\mathbb C)$,~$B\in M_{m,n}(\mathbb C)$ and $C\in M_{n,n}(\mathbb C)$. Then, it holds 
$$
\forall i\in\{1,\ldots,n\},\  \eta_{i}(A\,B\,C)\leq\big \|A\big\|\,\big\|C\big\|\,\eta_{i}(B),
$$
where, for any matrix $T\in  M_{m,n}(\mathbb C)$, $\|T\|=\eta_{1}(T)\geq\cdots\geq \eta_{n}(T) $ denote the singular values of the matrix $T$ and where $\big\|T\big\|:= \sqrt{ \max \sigma( \, ^{t}TT)}$ is the spectral norm of  $T$.
\end{lemma}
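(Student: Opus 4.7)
The plan is to give the standard proof of the Fan inequality via the low-rank approximation characterization of singular values, adapted to the rectangular setting. Since the statement concerns $i \in \{1,\dots,n\}$, we will only need to work with singular values up to index $n = \min(m,n)$ (as $B \in M_{m,n}(\mathbb{C})$ with $m \geq n$ implicit, or else the statement applies to the first $\min(m,n)$ singular values; the same argument handles both cases).

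First I would recall the Eckart--Young--Mirsky characterization of the $i$-th singular value:
\begin{equation*}
\eta_i(T) \;=\; \min\bigl\{\|T - R\|\ :\ R \text{ matrix of the same size as } T,\ \mathrm{rank}(R) < i\bigr\},
\end{equation*}
which follows from the singular value decomposition of $T$. In particular, writing an SVD $B = \sum_{j=1}^{\mathrm{rank}(B)} \eta_j(B)\, u_j v_j^*$, the truncation
\begin{equation*}
B_{(i)} \;:=\; \sum_{j=1}^{i-1} \eta_j(B)\, u_j v_j^*
\end{equation*}
has rank strictly less than $i$ and satisfies $\|B - B_{(i)}\| = \eta_i(B)$.

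Then I would observe that $A B_{(i)} C$ is a matrix of the same size as $ABC$ whose rank is bounded by $\mathrm{rank}(B_{(i)}) < i$. Applying the characterization above to $T = ABC$ with the rank-$(i-1)$ competitor $R = A B_{(i)} C$ gives
\begin{equation*}
\eta_i(ABC) \;\leq\; \|ABC - A B_{(i)} C\| \;=\; \|A (B - B_{(i)}) C\|.
\end{equation*}

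Finally, using submultiplicativity of the spectral norm (which is immediate from $\|Mx\| \leq \|M\|\|x\|$), one has
\begin{equation*}
\|A (B - B_{(i)}) C\| \;\leq\; \|A\|\,\|B - B_{(i)}\|\,\|C\| \;=\; \|A\|\,\|C\|\,\eta_i(B),
\end{equation*}
which yields the claimed inequality. The argument has no real obstacle: the only subtle point is verifying the Eckart--Young--Mirsky formula for rectangular matrices, which is standard and requires only the SVD together with Weyl's interlacing (or equivalently Courant--Fischer) to check that any matrix of rank $< i$ is at spectral distance at least $\eta_i(T)$ from $T$.
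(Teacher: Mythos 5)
Your proof is correct. Note that the paper does not actually prove Lemma~\ref{le.fan}: it is stated as a recalled fact with references to the literature (Simon's book on trace ideals, where singular values are essentially handled through their characterization as approximation numbers). Your argument via the Eckart--Young--Mirsky identity $\eta_i(T)=\min\{\|T-R\| : \mathrm{rank}(R)<i\}$, a truncated SVD of $B$, the rank bound $\mathrm{rank}(AB_{(i)}C)\le\mathrm{rank}(B_{(i)})<i$, and submultiplicativity of the spectral norm is exactly the standard route to this inequality, and you correctly flag the one nontrivial ingredient (the lower-bound half of Eckart--Young--Mirsky, via Courant--Fischer). The edge cases $i>\min(m,n)$ are also handled, since both sides then vanish. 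Nothing is missing.
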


Let us recall that from item 3 in Proposition~\ref{ESTIME1-base}, there exists $h_0>0$ such that for all $h\in (0,h_0)$,
$$ 
\range   \pi_h^{(0)}= {\rm Span}\big( \pi_h^{(0)}\widetilde u_k,\ k=1,\dots,{\ft m_{0}^{\Omega}}\big )$$
and
$$ \range   \pi_h^{(1)}= {\rm Span}\big ( \pi_h^{(1)}\widetilde \psi_i,\ i=1,\dots,\ft m_1^{\overline \Omega}\big )
$$
\begin{sloppypar}
\noindent
where the projectors $\pi^{(0)}_h$ and $\pi^{(1)}_h$ are defined in~\eqref{eq.proj-p}. Let us define~$\widetilde\Upsilon:=\big( \pi^{(0)}_h\widetilde u_k\big)_{1\leq k \leq {\ft m_{0}^{\Omega}}}$ and 
$\widetilde\Psi:=\big(\pi^{(1)}_h\widetilde \psi_j\big)_{1\leq j \leq \ft m_1^{\overline \Omega}}$.  For $i\in\{0,1\}$, let  
$\mathcal B_{i}$ be an orthonormal basis of $\Ran \, \pi^{(i)}_h$ and let us define  the matrices\label{page.c0c1}
\begin{equation}
\label{eq.C0C1}
C_{0}:=\text{Mat}_{{\widetilde\Upsilon}}\, \mathcal B_{0} \, \text{ and } \,  C_{1}:=\text{Mat}_{ \widetilde\Psi}\, \mathcal B_{1}.
\end{equation}
Notice that from item 3 in Proposition~\ref{ESTIME1-base}, there exist $K>0$ and $h_0>0$ such that for all $h\in (0,h_0)$:
 \begin{equation}
\label{eq.C0-tilde-borne}
\sup_{(l,k)\in     \big \{1,\dots,{\ft m_{0}^{\Omega}}\big\}^2 }\big \vert (C_{0})_{l,k}\big\vert + \sup_{(i,j)\in    \big \{1,\dots,{\ft m_{1}^{\overline \Omega}}\big\}^2 } \big\vert (C_{1})_{i,j}\big\vert  \le K
\end{equation}
and 
 \begin{equation}
\label{eq.C0-1-tilde-borne}
\sup_{(l,k)\in    \big \{1,\dots,{\ft m_{0}^{\Omega}}\big\}^2 }\big \vert (C_{0}^{-1})_{l,k}\big\vert+ \sup_{(i,j)\in   \big  \{1,\dots,{\ft m_{1}^{\overline \Omega}}\big\}^2 } \big\vert (C_{1}^{-1})_{i,j}\big \vert  \le K.
\end{equation}
A  consequence of the Fan inequalities  is the following. 
\end{sloppypar}

\begin{lemma}\label{le.fan1}
Let us assume that the assumption \eqref{H-M} holds. Let us denote by~$\lambda_{k,h}$, for $k\in \mathbb N^{*}$,  the $k$-th eigenvalue of $-L_{f,h}^{D,(0)}$ counted with multiplicity and let  $\tilde S$ be the matrix defined in~\eqref{eq.S-tilde}.  Then, there exists $c>0$ such that for all $k\in \{1,\ldots,\ft m_0^\Omega\}$, one has  in the limit $h\to 0$:\label{page.lambdakh}
\begin{equation}
\label{eq.fan1}
\lambda_{k,h}=\frac h2\,  \eta_{\ft m_{0}^{\Omega}+1-k}^2\big( \widetilde S C_0\big) \big(1+O(e^{-\frac c h})\big),
\end{equation}
where   $\|\widetilde S C_0\|=\eta_{1}(\widetilde S C_0)\geq\cdots\geq \eta_{ \ft m_0^\Omega}(\widetilde S C_0) $ denote the singular values of $\widetilde S C_0$.
\end{lemma}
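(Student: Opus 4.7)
The plan is to identify $\lambda_{k,h}$ with the squared singular values (times $h/2$) of an appropriate matrix representation of the restricted differential $\nabla : \Ran\,\pi^{(0)}_h \to \Ran\,\pi^{(1)}_h$, and then transfer this representation to the matrix $\widetilde{S}C_0$ via a series of controlled changes of basis. The key point is that all intermediate conjugations are \emph{almost isometric}, with corrections exponentially small in $h$, so that the Fan inequalities (Lemma~\ref{le.fan}) can be applied to yield the multiplicative $(1+O(e^{-c/h}))$ precision required by~\eqref{eq.fan1}.

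First, since $\dim \Ran\,\pi^{(0)}_h = \ft m_0^\Omega$ (Lemma~\ref{ran1}), the eigenvalues $\lambda_{1,h},\ldots,\lambda_{\ft m_0^\Omega,h}$ of $-L^{D,(0)}_{f,h}$ are precisely those of the restriction $-L^{D,(0)}_{f,h}\big|_{\Ran\,\pi^{(0)}_h}$. Thanks to the quadratic form identity $\langle -L^{D,(0)}_{f,h}u,v\rangle_{L^2_w}= \tfrac{h}{2}\langle \nabla u,\nabla v\rangle_{L^2_w}$ from Proposition~\ref{fried} and the commutation relation~\eqref{eq.comutation} guaranteeing that $\nabla u \in \Ran\,\pi^{(1)}_h$ when $u\in\Ran\,\pi^{(0)}_h$, the restricted operator equals $\tfrac{h}{2}T^{*}T$ where $T:=\nabla|_{\Ran\,\pi^{(0)}_h}\!\to\!\Ran\,\pi^{(1)}_h$. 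Letting $M$ denote the matrix of $T$ in the orthonormal bases $\mathcal B_0,\mathcal B_1$, it follows that $\lambda_{k,h}=\tfrac{h}{2}\,\eta^{2}_{\ft m_0^\Omega+1-k}(M)$, the $k$-th smallest eigenvalue being associated with the $(\ft m_0^\Omega+1-k)$-th largest singular value.

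The next step is to relate $M$ to $\widetilde{S}C_0$. Starting from the definition~\eqref{eq.S} of $S_{j,k}$, a direct change-of-basis computation (expanding $\pi^{(1)}_h\nabla\widetilde u_k$ in the basis $\widetilde\Psi$ and using that the Gram matrix of $\widetilde\Psi$ with respect to $L^2_w$ equals $(C_1^{-1})^T C_1^{-1}$) gives the identity $M = C_1^T\, S\, C_0$. Combining this with the decomposition $S=(I+R)\widetilde S$ with $R=O(e^{-c/h})$ from~\eqref{eq.decomp-S}, one obtains $M = A\,(\widetilde S C_0)$ where $A:=C_1^T(I+R)$. The crucial observation is that item~3b of Proposition~\ref{ESTIME1-base} yields $C_1^T C_1 = G_1^{-1} = I + O(e^{-c/h})$, hence
\[
A^T A \;=\; (I+R)^T\,C_1C_1^T\,(I+R) \;=\; I + O(e^{-c/h}).
\]
All singular values of $A$ therefore lie in $1+O(e^{-c/h})$, so $\|A\|$ and $\|A^{-1}\|$ both equal $1+O(e^{-c/h})$.

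Finally, applying Lemma~\ref{le.fan} to both directions $M = A(\widetilde S C_0)$ and $\widetilde S C_0 = A^{-1}M$ yields
\[
\eta_i(M) \;\le\; \|A\|\,\eta_i(\widetilde S C_0) \quad\text{and}\quad \eta_i(\widetilde S C_0)\;\le\; \|A^{-1}\|\,\eta_i(M),
\]
which combine to $\eta_i(M) = \eta_i(\widetilde S C_0)(1+O(e^{-c/h}))$ for every index~$i$. Substituting into $\lambda_{k,h}=\tfrac{h}{2}\eta^{2}_{\ft m_0^\Omega+1-k}(M)$ gives~\eqref{eq.fan1}. The main technical obstacle is ensuring that the $O(e^{-c/h})$ estimates are \emph{multiplicative} rather than additive: this is precisely what the almost-isometric character of $A$ delivers via Fan's inequality, and it is essential because the singular values $\eta_i(\widetilde S C_0)$ are themselves exponentially small, so a Weyl-type additive perturbation would be insufficient.
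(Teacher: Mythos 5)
Your argument is correct and is essentially the proof the paper gives: identify the small eigenvalues with $\tfrac{h}{2}$ times the squared singular values of $Q = {}^{t}C_1\, S\, C_0$, substitute $S=(I+R)\widetilde S$, and invoke the Fan inequalities together with the fact that both ${}^{t}C_1$ and $I+R$ are almost-isometric (norms and inverse-norms $=1+O(e^{-c/h})$, the $C_1$ estimate coming from item~3b of Proposition~\ref{ESTIME1-base}); bundling ${}^{t}C_1(I+R)$ into a single matrix $A$ instead of bounding the two factors separately is a purely cosmetic variation. One small slip in the write-up: from the orthonormality relation $I=C_1^{T}G_1 C_1$ it is $C_1C_1^{T}$ (not $C_1^{T}C_1$) that equals $G_1^{-1}$; this is harmless, since the uniform bounds on $C_1$ and $C_1^{-1}$ give $C_1^{T}C_1 = I + O(e^{-c/h})$ as well, and you in fact use the correct product $C_1C_1^{T}$ in your displayed computation of $A^{T}A$.
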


\begin{proof}
The ${\ft m_{0}^{\Omega}}$ smallest eigenvalues of $-L^{D,(0)}_{f,h}$ are the  eigenvalues of 
$$-L^{D,(0)}_{f,h}|_{\Ran \, \pi^{(0)}_h}=\frac h2\,d^{*}_{\frac{2f}{h},1}|_{\Ran \, \pi^{(1)}_h}\,d|_{\Ran \, \pi^{(0)}_h}. $$ 
Moreover, since the  $L^{2}_{w}$-adjoint of $d|_{\Ran \, \pi^{(0)}_h}:\Ran \, \pi^{(0)}_h\to \Ran \, \pi^{(1)}_h$  is  $d^{*}_{\frac{2f}{h},1}|_{\Ran \, \pi^{(1)}_h}:\Ran \, \pi^{(1)}_h\to \Ran \, \pi^{(0)}_h$, one has that the ${\ft m_{0}^{\Omega}}$ smallest eigenvalues of $-L^{D,(0)}_{f,h}$ are given by~$\frac h2$ times the squares of the singular values of $d|_{\Ran \, \pi^{(0)}_h}:\Ran \, \pi^{(0)}_h\to \Ran \, \pi^{(1)}_h$.   Thus, 
the eigenvalues of 
$-L^{D,(0)}_{f,h}|_{\Ran \, \pi^{(0)}_h}$ are  given by~$\frac h2$ times the squares of the singular values
of the matrix $Q$ defined by 

$$Q:=\text{Mat}_{   \mathcal B_{0}, \mathcal B_{1}     }\big( d|_{\Ran \, \pi^{(0)}_h}\big).$$           
 In addition, by definition of the matrices $C_0$ and $C_1$ (see~\eqref{eq.C0C1}), one has
$$
Q= \, ^{t}C_{1} \,S\,C_{0}.
$$
By~\eqref{eq.decomp-S}, it holds
$$Q=\, ^{t}C_{1}\,\big(\,I+R\,\big)\,\widetilde S\,C_{0}.$$
Furthermore, from~\eqref{eq.decomp-S}, there exists $c>0$ such that   in the limit $h\to 0$
$$\|I+R\|=1+O(e^{-\frac c h}), \ \ \|(I+R)^{-1}\|=1+O(e^{-\frac c h}),$$
and from item 3b in Proposition~\ref{ESTIME1-base}, 
$$\|\, ^{t}C_{1}\|=1+O(e^{-\frac c h}), \ \ \|(\, ^{t}C_{1} )^{-1}\|=1+O(e^{-\frac c h}),$$
where we recall that $\big\|T\big\|:= \sqrt{ \max \sigma( \, ^{t}TT)}$ is the spectral norm of  a matrix $T$. 
Therefore, it follows from the Fan inequalities (see Lemma~\ref{le.fan}) that
the singular values of $Q$ are, up to multiplication by~$1+O(e^{-\frac c h})$, the singular values
of $\widetilde S\,C_{0}$. This concludes the proof of Lemma~\ref{le.fan1}. 
\end{proof}
\begin{remark}
Notice that in general, the spectral norm of the matrix  $C_0$ defined in~\eqref{eq.C0C1} does not equal $1+O(e^{-\frac c h})$ when $h\to 0$.  
For instance,  in the case when $f$ is a one-dimensional symmetric double-well potential with the saddle point  lower than $\min_{\pa \Omega}f$,  it can be checked that the Gramian matrix of the functions $\widetilde u_1$ and $\widetilde u_2$ introduced in Definition~\ref{de.qm-L} converges when $h\to 0$ towards the matrix 
$$\begin{pmatrix}
1 &  c_1 \\
  c_1 &  1 
\end{pmatrix},$$
where $0<c_1<1$. 
\end{remark}

Let us now prove Theorem~\ref{pp}.
 
\begin{proof}
Theorem~\ref{pp} is equivalent, according  Lemma~\ref{le.fan1}, to the existence of $C>0$ and $h_0>0$ such that for all $k\in \big \{1,\dots,{\ft m_{0}^{\Omega}} \big \}$ and for all $h\in (0,h_0)$
\begin{equation}
\label{eq.ChS}
C^{-1} h^{ q_{k}}e^{-\frac{1}{h}\ft S_{k}} \leq  \eta_{{\ft m_{0}^{\Omega}}+1-k}(\widetilde S C_0)\leq C h^{ q_{k}}e^{-\frac{1}{h}\ft S_{k}}\,.
\end{equation}
Let us prove~\eqref{eq.ChS}.
 According to~\eqref{eq.D}  and to the ordering  of $k\in\{1,\dots,{\ft m_{0}^{\Omega}}\}$ introduced in the statement of   
 Theorem~\ref{pp},   the singular values of~$D$ satisfy for $h$ small enough (see~\eqref{eq.D} and~\eqref{eq.Sk-def}):
\begin{equation}
\label{eq.sing-D}
\forall k\in\{1,\dots,{\ft m_{0}^{\Omega}}\},\  \eta_{{\ft m_{0}^{\Omega}}+1-k}(D)= D_{k,k}=h^{q_{k}}e^{-\frac{\ft S_{k}}{h}},
\end{equation}
Using the fact that $\widetilde S C_0=\widetilde C D C_0$ (see~\eqref{eq.C-tilde}) together with~\eqref{eq.C-tilde-borne},~\eqref{eq.C0-tilde-borne} and Lemma \ref{le.fan}, one obtains that for all  $k\in\{1,\dots,{\ft m_{0}^{\Omega}}\}$
\begin{equation}
\label{eq.sens1}
\eta_{{\ft m_{0}^{\Omega}}+1-k}(\widetilde S C_0)\leq \big\|\widetilde C\big\|\,\big\|C_{0}\big\|\,\eta_{{\ft m_{0}^{\Omega}}+1-k}(D) =O\big(D_{k,k}\big),
\end{equation}
which provide the required upper bound in~\eqref{eq.ChS}. To obtain a lower bound on the singular values of $\widetilde SC_0$, we write
$$
D=\widetilde C  ^{-1}\,\widetilde S\,C_{0}\, C_{0} ^{-1}.
$$
Using~\eqref{eq.C-1-tilde-borne},~\eqref{eq.C0-1-tilde-borne} and Lemma \ref{le.fan}, one has for all  $k\in\{1,\dots,{\ft m_{0}^{\Omega}}\}$

\begin{equation}
\label{eq.sens2}
 \eta_{{\ft m_{0}^{\Omega}}+1-k}(D)\leq\big \| \widetilde C ^{-1}\big\| \,\big\| C_{0}^{-1}\big\|\,\eta_{{\ft m_{0}^{\Omega}}+1-k}(\widetilde S\,C_{0})=O\big (\eta_{{\ft m_{0}^{\Omega}}+1-k}\big ).
\end{equation}
Then,~\eqref{eq.ChS} follows from \eqref{eq.sing-D},~\eqref{eq.sens2} and \eqref{eq.sens1}. This concludes the proof of Theorem~\ref{pp}.\end{proof}

To prove Theorem~\ref{thm-big0} and to  ease  upcoming computations, we replace in the Fan inequalities~\eqref{eq.fan1} the matrix $C_0$ by  another matrix which has a simpler form than $C_0$: 
this  is the purpose of Lemma~\ref{le.fan2}. Before stating Lemma~\ref{le.fan2}, let us choose a particular  orthonormal basis $\mathcal B_0$ of $\Ran \, \pi_h^{(0)}$ to define $C_0$ in~\eqref{eq.C0C1}. Recall that the  when the assumption \eqref{eq.hip1-j} is  satisfied  the well~$\ft  C_{1}$ (see Definition~\ref{de.1}) satisfies: for all $x\in \ft U_0^\Omega\setminus\{ x_1\}$,
$$
 f(\mathbf j(x ))-f(x )<f(\mathbf j(x_{1}))-f(x_{1}).
$$
Let us define 
$$e_1:=\frac{ \pi_h^{(0)} \widetilde u_1}{\Vert  \pi_h^{(0)} \widetilde u_1\Vert_{L^2}}.$$
According to item 2a in Proposition~\ref{ESTIME1-base}, there exists $c>0$ such that   in the limit $h\to 0$: 
$$
e_{1}=\big(1+O(e^{-\frac ch})\big)\,\pi^{(0)}_h\widetilde u_1.$$
Then, let us choose $\{e_{2},\dots,e_{{\ft m_{0}^{\Omega}}}\}$ such that 
$$
 \mathcal B_{0}:=\{e_1,e_2,\dots,e_{{\ft m_{0}^{\Omega}}}\}
$$
 is an orthonormal basis of $\Ran \,\pi^{(0)}_h$.  In that case, the matrix   
$   C_0$ defined in~\eqref{eq.C0C1}, satisfies in the limit $h\to 0$:
\begin{equation}
\label{eq.k-l-C0}
\forall k \in\{1,\dots ,{\ft m_{0}^{\Omega}}\},\ \ (  C_{0})_{k,1}=\begin{cases} 1 +O(e^{-\frac ch}) & \text{ if } k=1,\\
0 & \text{ if } k>1.
\end{cases}
\end{equation}
Let us now define the $\ft m_{0}^{\Omega}\times \ft m_{0}^{\Omega}$ matrix $\widetilde C_0$ by:
\begin{equation}
\label{eq.C0-tilde1}
\forall k \in\{1,\dots ,{\ft m_{0}^{\Omega}}\}, \ \ (  \widetilde C_{0})_{k,1}:=\begin{cases} 1 & \text{ if } k=1,\\
0 & \text{ if } k>1,
\end{cases}
\end{equation}
and 
\begin{equation}
\label{eq.C0-tilde2}
\forall (k,\ell)\in\{1,\dots ,{\ft m_{0}^{\Omega}}\}\times\{ 2,\ldots,\ft m_{0}^{\Omega}  \}, \ \ (  \widetilde C_{0})_{k,\ell}:=(   C_{0})_{k,\ell}.
\end{equation}
\begin{lemma}
\label{le.fan2}
Let us assume that the assumptions \eqref{H-M} and~\eqref{eq.hip1-j} are satisfied. Let us denote by~$\lambda_{k,h}$, for $k\in \mathbb N^{*}$,  the $k$-th eigenvalue of $-L_{f,h}^{D,(0)}$ counted with multiplicity and let~$\tilde S$ be the matrix defined in~\eqref{eq.S-tilde}.  Then, there exists $c>0$ such that in the limit $h\to 0$:
$$
\lambda_{k,h}=\frac h2\,  \eta_{\ft m_{0}^{\Omega}+1-k}^2\big( \widetilde S\, \widetilde C_0\big) \big(1+O(e^{-\frac c h})\big),
$$
where   $\|\widetilde S\widetilde C_0\|=\eta_{1}(\widetilde S C_0)\geq\cdots\geq \eta_{ \ft m_0^\Omega}(\widetilde S \widetilde C_0) $ denote the singular values of $\widetilde S \,\widetilde C_0$ and $\lambda_{1,h}=\lambda_{h}$ is the principal eigenvalue of $-L^{D,(0)}_{f,h}$ (see~\eqref{eq.lh}).
\end{lemma}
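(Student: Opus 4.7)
The plan is to show that replacing $C_0$ by $\widetilde{C}_0$ perturbs the singular values of $\widetilde{S} C_0$ only by a multiplicative factor $1 + O(e^{-c/h})$, and then to plug this into Lemma~\ref{le.fan1}. The starting observation is that by \eqref{eq.k-l-C0}, \eqref{eq.C0-tilde1}, and \eqref{eq.C0-tilde2}, the difference $\Delta := C_0 - \widetilde{C}_0$ is a matrix whose only nonzero entry is in position $(1,1)$ and equals $(C_0)_{1,1} - 1 = O(e^{-c/h})$ for some $c>0$. In particular $\|\Delta\| = O(e^{-c/h})$.

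First I would check that $\widetilde{C}_0$ is invertible with uniformly bounded inverse for $h$ small enough. Writing $\widetilde{C}_0 = C_0(I - C_0^{-1}\Delta)$, we use \eqref{eq.C0-tilde-borne} and \eqref{eq.C0-1-tilde-borne} to get $\|C_0^{-1}\Delta\| = O(e^{-c/h})$, whence $I - C_0^{-1}\Delta$ is invertible by Neumann series with $\|(I - C_0^{-1}\Delta)^{-1}\| = 1 + O(e^{-c/h})$. Hence $\widetilde{C}_0^{-1}$ exists and is uniformly bounded.

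Next I would write the algebraic identity
\begin{equation*}
\widetilde{S} C_0 \;=\; \widetilde{S}\widetilde{C}_0 \,+\, \widetilde{S}\Delta \;=\; \widetilde{S}\widetilde{C}_0\,(I + M), \qquad M := \widetilde{C}_0^{-1}\Delta,
\end{equation*}
which holds because $\widetilde{S}\widetilde{C}_0(I+M) = \widetilde{S}\widetilde{C}_0 + \widetilde{S}\widetilde{C}_0\widetilde{C}_0^{-1}\Delta = \widetilde{S}\widetilde{C}_0 + \widetilde{S}\Delta$. From the bound on $\widetilde{C}_0^{-1}$ and $\|\Delta\| = O(e^{-c/h})$ we get $\|M\| = O(e^{-c/h})$, so $I+M$ is invertible with $\|I+M\|, \|(I+M)^{-1}\| = 1 + O(e^{-c/h})$.

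I would then apply the Fan inequalities of Lemma~\ref{le.fan} to the factorizations $\widetilde{S}C_0 = \widetilde{S}\widetilde{C}_0 (I+M)$ and $\widetilde{S}\widetilde{C}_0 = \widetilde{S}C_0 (I+M)^{-1}$, which yield for every $i \in \{1,\dots,\ft m_0^\Omega\}$ the two-sided bound
\begin{equation*}
\eta_i(\widetilde{S}\widetilde{C}_0)\,\bigl(1 + O(e^{-c/h})\bigr)^{-1} \;\leq\; \eta_i(\widetilde{S}C_0) \;\leq\; \eta_i(\widetilde{S}\widetilde{C}_0)\,\bigl(1 + O(e^{-c/h})\bigr),
\end{equation*}
hence $\eta_i^2(\widetilde{S}C_0) = \eta_i^2(\widetilde{S}\widetilde{C}_0)(1 + O(e^{-c/h}))$. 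Substituting this into the conclusion \eqref{eq.fan1} of Lemma~\ref{le.fan1} gives the claimed estimate of Lemma~\ref{le.fan2}. There is no serious obstacle here: the entire argument is a short perturbation computation, the only mild point being to keep $\widetilde{C}_0^{-1}$ under control, which follows from the uniform bounds on $C_0$ and $C_0^{-1}$ already recorded in \eqref{eq.C0-tilde-borne}--\eqref{eq.C0-1-tilde-borne}. The assumption \eqref{eq.hip1-j} enters only through the very definition of $e_1$ and hence of $\widetilde{C}_0$ (it ensures that the first column of $C_0$ singled out in \eqref{eq.k-l-C0} corresponds to the principal eigenfunction).
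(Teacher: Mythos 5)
Your proof is correct and follows essentially the same approach as the paper: the key matrix $I + M = I + \widetilde C_0^{-1}\Delta$ that you introduce is exactly $\widetilde C_0^{-1} C_0$, and the paper likewise establishes $\|\widetilde C_0^{-1}C_0\| = 1 + O(e^{-c/h})$ and $\|C_0^{-1}\widetilde C_0\| = 1 + O(e^{-c/h})$ before concluding via the Fan inequalities and Lemma~\ref{le.fan1}. The only superficial difference is that the paper obtains these norm estimates by an explicit $2\times 2$ block inversion of $\widetilde C_0$, whereas you get them by a Neumann-series argument; the two are interchangeable here.
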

\begin{proof}
Let us prove that there exists $c>0$ such that in the limit $h\to 0$, 
\begin{equation}\label{eq.1++}
\big \|( \widetilde C_0)^{-1} C_0\big \|=1+O(e^{-\frac c h}) \text{  and  }\big  \|  C_0^{-1}\widetilde C_0 \big \|=1+O(e^{-\frac c h}) .
\end{equation}
From~\eqref{eq.k-l-C0}, the $\ft m_0^\Omega\times \ft m_0^\Omega$ matrix $C_0$ has the form
$$
C_0= \begin{pmatrix}
 1 +O(e^{-\frac ch}) &  [C_0]_4 \\
0 &  [ C_0]_2
\end{pmatrix}
$$
for some $c>0$. 
 Moreover, according to~\eqref{eq.C0-tilde1} and~\eqref{eq.C0-tilde2}, the $\ft m_0^\Omega\times \ft m_0^\Omega$ matrix $\widetilde C_0$ has the form
$$
\widetilde C_0= \begin{pmatrix}
1 &  [ C_0]_4 \\
0 &  [ C_0]_2
\end{pmatrix}.
$$
Let us recall that by definition of $C_0$ (see~\eqref{eq.C0C1}) and from  item 3a in    Proposition~\ref{ESTIME1-base},~$C_0$ is invertible and thus~$[ C_0]_2$ is invertible.  Therefore, one has
 $$
\widetilde C_0^{-1}= \begin{pmatrix}
1  & - [ C_0]_4 \, [ C_0]_2^{-1} \\
0 &  [ C_0]_2^{-1}
\end{pmatrix}
$$
and thus,
$$
\widetilde C_0^{-1}C_0= \begin{pmatrix}
1 +O(e^{-\frac ch})  & 0 \\
0 &  I_{\ft m_0^\Omega-1}
\end{pmatrix}.
$$
This proves~\eqref{eq.1++}.  Lemma~\ref{le.fan2} is then a consequence of~\eqref{eq.1++} together with Lemma~\ref{le.fan1} and Lemma~\ref{le.fan}. \end{proof}

\subsubsection{Proof of Theorem~\ref{thm-big0}}
\label{sec.e3}
This section is dedicated to the proof of Theorem~\ref{thm-big0} which gives the asymptotic estimate of the principal eigenvalue of $-L^{D,(0)}_{f,h}$ under the assumptions \eqref{eq.hip1-j} and \eqref{eq.hip2-j}. 


\begin{proof}[Proof of Theorem~\ref{thm-big0}]
Let us assume that the assumptions \eqref{H-M} and~\eqref{eq.hip1-j}  hold. 
The spectral gap~\eqref{l2} has  already been proved, see Corollary~\ref{eq.co.l2}. It thus remains to prove~\eqref{eq.lambda_h}.  
The proof of~\eqref{eq.lambda_h} is partly inspired by the analysis led in \cite[Section~7.4]{HeHiSj}. According to Lemma~\ref{le.fan2}, there exists $c>0$ such that in the limit $h\to 0$:
\begin{equation}
\label{eq.eta_lambda}
\lambda_{1,h}=\frac h2 \eta^{2}_{{\ft m_{0}^{\Omega}}}(\widetilde S\widetilde C_0) \big(1+O(e^{-\frac ch})\big),
\end{equation}
where $\widetilde C_0$ is defined in~\eqref{eq.C0-tilde1} and~\eqref{eq.C0-tilde2}. 
Therefore, the analysis of the estimate of $\lambda_h$ is then reduced to precisely computing $\eta_{{\ft m_{0}^{\Omega}}}(\widetilde S\widetilde C_0) $. One has:  
\begin{equation}
\label{eq.sing-val}
\eta_{{\ft m_{0}^{\Omega}}}(\widetilde S\widetilde C_0) = \min_{y\in \mathbb R^{{\ft m_{0}^{\Omega}}} \, ;\, \|y\|_2=1}\big  \|\widetilde S\, \widetilde C_0\, y\big \|_2.
\end{equation}
Let us assume in addition to~\eqref{H-M} and \eqref{eq.hip1-j} that \eqref{eq.hip2-j} holds. 
  Recall that \eqref{eq.hip1-j}  and \eqref{eq.hip2-j}   consists in assuming  that    
 for all $x\in \ft U_0^\Omega\setminus\{ x_1\}$,
$$
 f(\mathbf j(x ))-f(x )<f(\mathbf j(x_{1}))-f(x_{1})
$$
and 
$$
\pa\ft  C_{1}\cap\pa\Omega\neq \emptyset.$$
Then,  it holds
 $$\mbf j(x_1)\cap \pa \Omega=\pa\ft  C_{1}\cap\pa\Omega\neq \emptyset \ \text{(see~\eqref{eq.k1-paC1-0})}.$$
Thus, using in addition~Proposition~\ref{pr.S} and~\eqref{eq.S-tilde}, one has in the limit $h\to 0$:
\begin{equation}
\label{eq.fin21}
\sum_{j=1}^{\ft m_1^{\overline \Omega}}  \widetilde S^{2}_{j,1}=\left ( \ \sum_{j:  z_j\in \mbf j(x_1) \cap \pa \Omega}  \widetilde S^{2}_{j,1} + \sum_{j:  z_j\in \mbf j(x_1) \cap   \Omega}  \widetilde S^{2}_{j,1}\ \right)\big (  1+O(e^{-\frac ch}) \big )
\end{equation}
for some $c>0$ independent of $h$ and where
\begin{equation}
\label{eq.fin2}
\begin{cases}
&\sum_{j:  z_j\in \mbf j(x_1) \cap \pa \Omega}  \widetilde S^{2}_{j,1}= h^{-\frac 32}   \left(\sum_{j:  z_j\in \mbf j(x_1) \cap \pa \Omega} C_{j,1}^2  \right)\, e^{-\frac 2h (f(\mathbf j(x_{1}))-f(x_{1}))} (1+O(h)) ,\\
 &\sum_{j: z_j\in \mbf j(x_1)  \cap \Omega} \widetilde S^{2}_{j,1}=h^{-1 }  \left ( \sum_{j: z_j\in \mbf j(x_1)  \cap \Omega} C_{j,1}^2 \right )\, e^{-\frac 2h  (f(\mathbf j(x_{1}))-f(x_{1}))} (1+O(h)),
 \end{cases}
\end{equation}
 where the constants $C_{j,1}$'s are defined in~\eqref{eq.Cip} and where all the remainder terms~$O(h)$  admits a full expansion in~$h$. 
 
Let us first obtain an upper bound on $\eta_{{\ft m_{0}^{\Omega}}}(\widetilde S\widetilde C_0) $. Let us denote by~$y_0$ the vector $\,^t(1,0,\dots,0)$. Then, it holds from~\eqref{eq.sing-val}, 
$$\eta_{{\ft m_{0}^{\Omega}}}(\widetilde S\widetilde C_0)^{2}\ \leq \big\|\widetilde S\,C_{0}\, y_0\big\|_2^2.$$
Using in addition the fact from~\eqref{eq.C0-tilde1}, one has $\widetilde C_0 y_0=y_0$, one obtains 
\begin{equation}\label{eq.est1_etah}
\eta_{{\ft m_{0}^{\Omega}}}(\widetilde S\widetilde C_0)^{2}\ \leq \big\|\widetilde S y_0\big\|_2^2=   \sum_{j=1}^{\ft m_1^{\overline \Omega}} \widetilde S^{2}_{j,1}.
\end{equation}
 This provides the required upper bound. 
 Notice that~\eqref{eq.est1_etah},~\eqref{eq.fin21},  and~\eqref{eq.fin2} imply  that  in the limit $h\to 0$
\begin{equation}\label{eq.est1_etah-bis}
\eta_{{\ft m_{0}^{\Omega}}}(\widetilde S\widetilde C_0)= O\big (h^{-\frac34}  e^{-\frac{1}{h}(f(\mathbf j(x_{1}))-f(x_{1}))}\big).
\end{equation}
Let us now give a lower bound on $\eta_{{\ft m_{0}^{\Omega}}}(\widetilde S\widetilde C_0)$. To this end let us consider $y^* \in \mathbb R^{  \ft m_{0}^{\Omega} } $ with~$\|y^*\|_2=1$, realizing the minimum in~\eqref{eq.sing-val}. Let us write $y^*=\, ^t( y^*_{\alpha},y^*_{\beta})$, where $y^*_{\alpha}\in \mathbb R $ and~$y^*_{\beta}$ is a row vector of size ${\ft m_{0}^{\Omega}}-1$. We claim that  
 there exists $\mu>0$ such that for $h$ small enough,
\begin{equation}
\label{eq.ybeta}
\|y^*_{\beta}\|_2= O \big(e^{-\frac{\mu}{h}} \big).
\end{equation}
 Let us prove~\eqref{eq.ybeta}. By definition of $y^*$ and according to~\eqref{eq.C-tilde}, one has
 $$\eta_{{\ft m_{0}^{\Omega}}}(\widetilde S\widetilde C_0)\, = \big\|\widetilde S \widetilde C_0 y^*\big\|_2= \big\| \widetilde C D  \widetilde C_0 y^*\big\|_2.$$
To prove~\eqref{eq.ybeta}, we use the block structure of the matrices $\widetilde C$,~$\widetilde C_0$ and $D$. Let us recall that from~\eqref{eq.k1-paC1-0} and~\eqref{eq.k1-paC1}, since~\eqref{eq.hip2-j} holds, 
$$
 \ft k_1^{\pa \ft C_1}={\rm Card}\big( \mathbf j(x_{1})\cap \pa \Omega \big)\ge 1.
 $$
Then, according to~\eqref{eq.C-tilde},~\eqref{eq.D} and \eqref{eq.S-tilde}, the $  \ft m_1^{\overline \Omega} \times \ft m_{0}^{\Omega}$ matrix $\widetilde C$ has the form, up to  reordering   the~$z_{i}$,~$i\in\{1,\dots,\ft m_1^{\overline\Omega} \}$,
\begin{equation}
\label{eq.tilde-C}
\widetilde C= 
\begin{pmatrix}
 [\widetilde C] _1 &  0 \\
 [\widetilde C] _3   &  [\widetilde C]_2
\end{pmatrix},
\end{equation}
where:
\begin{itemize} [leftmargin=1.3cm,rightmargin=1.3cm]
\item  $[\widetilde C]_1$  is a matrix of size $ \ft k_1^{\pa \ft C_1}  \times 1$ where we recall that $\ft k_1^{\pa \ft C_1}$ is defined in~\eqref{eq.k1-paC1}. The coefficients $([\widetilde C]_1)_{j,1}=\widetilde C_{j,1}$  are associated with the function~$\widetilde u_{1} $ (see Definition~\ref{de.qm-L} and~\eqref{eq.v1}) and  with  1-forms $\widetilde \psi_{j} $ for $j\in \{1,\ldots,\ft k_1^{\pa \ft C_1}\}$ (or equivalently, $j$ such that $z_j\in \mbf j(x_1)\cap \pa \Omega$).

\item $[\widetilde C]_3$ is  a matrix of size $\big(\ft m_1^{\overline \Omega}- \ft k_1^{\pa \ft C_1} \big) \times 1$ . The coefficients $([\widetilde C]_3)_{j,k}=\widetilde C_{j,k}$  are associated with the function~$\widetilde u_{1} $ and  with  1-forms $\widetilde \psi_{j} $ for $j\in \{\ft k_1^{\pa \ft C_1}+1,\ldots,\ft m_1^{\overline \Omega}\}$ (or equivalently, $j$ such that $z_j\notin \mbf j(x_1)\cap \pa \Omega$).

\item $[\widetilde C]_2$ is  a matrix of size $\big(\ft m_1^{\overline \Omega}-   \ft k_1^{\pa \ft C_1} \big) \times \big( \ft m_0^{\Omega}-1\big) $. The coefficients $([\widetilde C]_2)_{j,k}=\widetilde C_{j,k}$  are associated with  0-forms~$\widetilde u_{k} $,  for $k\in\{2,\ldots,\ft m_0^\Omega\}$  and  with  1-forms~$\widetilde \psi_{j} $ for $j\in \{\ft k_1^{\pa \ft C_1}+1,\ldots,\ft m_1^{\overline \Omega}\}$ (or equivalently, $j$ such that $z_j\notin \mbf j(x_1)\cap \pa \Omega$).
\end{itemize}
From \eqref{eq.C} and \eqref{eq.tilde-C}, 
 $[\widetilde C]_2$ is injective 
and satisfies, for some constant $c>0$ and  for all~$h>0$ small enough,
\begin{equation}
\label{eq.C'}
\forall x\in \mathbb R^{{\ft m_{0}^{\Omega}}-1},\ \ \big\|[\widetilde C]_2 x\big\|_{2}\geq c\big\|x\big\|_{2}.
\end{equation}
This is indeed obvious by applying~\eqref{eq.C}  to the vector $\,^t(0,x)$.  
Let us now decompose the square matrices $D$ and $\widetilde C_0$ in  blocks which are compatible with the decomposition of $\widetilde C$ made in~\eqref{eq.tilde-C}. According to~\eqref{eq.D},~\eqref{eq.C0-tilde1},  and~\eqref{eq.C0-tilde2}, one has
\begin{equation}\label{eq.C0-dec}
  D= 
\begin{pmatrix}
D_{1,1}&  0 \\
0  &  [ D]_\beta
\end{pmatrix} \ \text{ and } \widetilde C_0=\begin{pmatrix}
 1 &  [\widetilde C_0]_\gamma  \\
0  &  [\widetilde C_0]_\beta
\end{pmatrix},
\end{equation}
where for a  square matrix $U$ of size $ \ft m_0^\Omega$:
$$   [U]_{\beta}=(U_{i,j})_{2\leq i,j\leq {\ft m_{0}^{\Omega}}},\ 
\text{and}\ [U]_{\gamma}=(U_{1,j})_{2\leq j\leq {\ft m_{0}^{\Omega}}}.$$
Notice that from~\eqref{eq.D''}, it holds
\begin{equation}\label{eq.D-alpha}
D_{1,1}=h^{-\frac34}  e^{-\frac{1}{h}(f(\mathbf j(x_{1}))-f(x_{1}))},
\end{equation}
and from~\eqref{eq.C0-1-tilde-borne}, there exists $M>0$ such that when $h>0$ 
\begin{equation}\label{eq.C0-alpha}
 \big \Vert [\widetilde  C_{0}]_{\beta} ^{-1} \big \Vert \le M.
\end{equation}
We are now in position to prove~\eqref{eq.ybeta}. 
Le us recall that by definition of $y^*$ , one has
$$\eta_{{\ft m_{0}^{\Omega}}}(\widetilde S\,\widetilde C_{0})=  \big  \|\widetilde S\,\widetilde C_{0}\,^t (y^*_{\alpha},y^*_{\beta} )\big \|_2\geq \big\| \widetilde S\,\widetilde C_{0}\, ^t(0,y^*_{\beta})\big\|_2-\big\|\widetilde S\,\widetilde  C_{0}\, ^t(y^*_{\alpha},0)\big\|_2.$$
Therefore, since $\widetilde C_{0}\,^t(y^*_{\alpha},0)=\, ^t(y^*_{\alpha},0)$ (see~\eqref{eq.C0-tilde1}) and $\widetilde S=\widetilde CD$ (see~\eqref{eq.C-tilde}), one has using~\eqref{eq.est1_etah-bis},~\eqref{eq.tilde-C},  and~\eqref{eq.D-alpha} together with the fact that $|y^*_\alpha |\le 1$ and~$[\widetilde C]_1=O(1)$
(see~\eqref{eq.tilde-C} and~\eqref{eq.C-tilde-borne}), 
\begin{align}
\nonumber
\| \widetilde S\,\widetilde C_{0}\, ^t(0,y^*_{\beta})\|_2 \le \eta_{{\ft m_{0}^{\Omega}}}(\widetilde S\,\widetilde C_{0}) +\big\|\widetilde CD \, ^t(y^*_{\alpha},0)\big\|_2 &\le  \eta_{{\ft m_{0}^{\Omega}}}(\widetilde S\,\widetilde C_{0})+\big\|[\widetilde C]_1D_{1,1} y^*_{\alpha}\big\|_2 \\
\label{eq.S-1}
&=O\big (h^{-\frac34}  e^{-\frac{1}{h}(f(\mathbf j(x_{1}))-f(x_{1}))}\big).
\end{align}
Moreover, using~\eqref{eq.D-alpha} and since $[\widetilde C]_3=O(1)$ (see~\eqref{eq.tilde-C} and~\eqref{eq.C-tilde-borne}) and $[\widetilde C_{0}]_{\gamma}=O(1)$ (since $\widetilde C_0=C_0+O(e^{-\frac ch})$ and $C_0=O(1)$ see~\eqref{eq.C0-tilde1},~\eqref{eq.C0-tilde2},  and~\eqref{eq.C0-tilde-borne}), one has 
\begin{align*}
\big  \|  \widetilde S\,\widetilde C_{0}\, ^t(0,y^*_{\beta})\big \|_2&=
\left(  \big  \|[\widetilde C]_1\,D_{1,1} \, [\widetilde C_{0}]_{\gamma}\,^ty^*_{\beta} \big \|^{2}_{2} + \big \|  [\widetilde C]_3\,D_{1,1} \, [\widetilde C_{0}]_{\gamma}\,^ty^*_{\beta} +  [\widetilde C]_2\, [D]_{\beta}\, [\widetilde C_{0}]_{\beta}\,^ty^*_{\beta} \big\|^{2}_{2}\right)^{\frac12}\\
&\geq
\big \|[\widetilde C]_2\,  [D]_{\beta}\,[\widetilde C_{0}]_{\beta}\, ^ty^*_{\beta}\big \|_2-\big \| [\widetilde C]_3\, D_{1,1} \, [\widetilde C_{0}]_{\gamma}\,^ty^*_{\beta}\big\|_2\\
&=\big  \|[\widetilde C]_2\,  [D]_{\beta}\,[\widetilde C_{0}]_{\beta}\, ^ty^*_{\beta} \big \|_2+ O\big(h^{-\frac34}  e^{-\frac{1}{h}(f(\mathbf j(x_{1}))-f(x_{1}))}\big ).
\end{align*}
Therefore, one deduces  from the latter inequality and from~\eqref{eq.S-1} and~\eqref{eq.C'} that
\begin{align}\label{eq.D-1}
\big \| [D]_{\beta}\,[\widetilde C_{0}]_{\beta}\, y^*_{\beta}\big\|_2= O\left (\|[\widetilde C]_2[D]_{\beta}\,[C_{0}]_{\beta}\, ^ty^*_{\beta}\|_2\right) =O\big(h^{-\frac34}  e^{-\frac{1}{h}(f(\mathbf j(x_{1}))-f(x_{1}))}\big),
\end{align}
In addition, since $  [\widetilde C_{0}]_{\beta}^{-1}= O(1)$ (which follows from~$\widetilde C_0=C_0+O(e^{-\frac ch})$, see indeed~\eqref{eq.C0-tilde1},~\eqref{eq.C0-tilde2},~\eqref{eq.C0-1-tilde-borne},  and~\eqref{eq.C0-dec}) and since there exists $c>0$ such that it holds: 
$$
[D]_{\beta} ^{-1}
= O\big (e^{\frac{1}{h}(f(\mathbf j(x_{1}))-f(x_{1})-c)}\big),$$
which follows from~\eqref{eq.D} and \eqref{eq.D'''}, one obtains from~\eqref{eq.D-1} that  
there exists $\mu>0$ such that for $h$ small enough,
$$
\| ^ty^*_{\beta}\|_2= O(e^{-\frac{\mu}{h}}).
$$
This ends the proof of~\eqref{eq.ybeta}.  We are now in position to give a lower bound on $\eta_{{\ft m_{0}^{\Omega}}}(\widetilde S\widetilde C_0)$. 
Notice that from~\eqref{eq.ybeta} together with the fact that $\|y^* \|_2=1$,  one has
\begin{equation}\label{eq.yalpha}
\vert y^*_{\alpha}\vert = 1+ O(e^{-\frac{\mu}{h}}).
\end{equation}
Using~\eqref{eq.tilde-C} and~\eqref{eq.C0-dec}, there exists $c>0$ such that
$$\eta^{2}_{{\ft m_{0}^{\Omega}}}(\widetilde S\widetilde C_0) \geq \sum_{j=1}^{\ft k_1^{\pa \ft C_1}}(\widetilde C\,D\,\widetilde C_{0}\, y^*)^{2}_{j} =  \sum_{j=1}^{\ft k_1^{\pa \ft C_1}}
 D_{1,1}^{2} \widetilde C_{j,1}^{2}\left(y^*_{\alpha}  +\sum_{\ell=2}^{{\ft m_{0}^{\Omega}}}(\widetilde C_{0})_{1,\ell}\, y^*_{\ell}\right)^{2},$$
where we recall that $\ft k_1^{\pa \ft C_1}$ is defined by~\eqref{eq.k1-paC1}. 
Using in addition~\eqref{eq.ybeta} and~\eqref{eq.yalpha} together with the fact that $\widetilde C_{0}=O(1)$, there exists $c>0$ such that in the limit $h\to 0$: 
$$
\eta^{2}_{{\ft m_{0}^{\Omega}}} (\widetilde S\widetilde C_0) \geq   D_{1,1}^{2}\sum_{j=1}^{\ft k_1^{\pa \ft C_1}}\widetilde  C_{j,1}^2  \big(1+ O (e^{-\frac{c}{h}})\big)^{2}.$$
By definition of~$\ft k_1^{\pa \ft C_1}$ (see~\eqref{eq.k1-paC1}) it holds
\begin{align*}
D_{1,1}^{2}\sum_{j=1}^{\ft k_1^{\pa \ft C_1}}\widetilde  C_{j,1}^2  \big(1+ O (e^{-\frac{c}{h}})\big)^{2}&=  \sum_{j:\, j\in \mbf j(x_1)\cap \pa  \Omega} \widetilde   C_{j,1}^2D_{1,1}^2 \big(1+ O (e^{-\frac{c}{h}})\big)^{2}\\
&\quad= \sum_{j: \,j\in \mbf j(x_1)\cap \pa  \Omega} \widetilde  S_{j,1}^2 \big(1+ O (e^{-\frac{c}{h}})\big)^{2},
\end{align*}
where the last equality follows from~\eqref{eq.C-tilde}.
Thus, one obtains the following lower bound:
\begin{equation}
\label{eq.est2_etah}
\eta^{2}_{{\ft m_{0}^{\Omega}}} (\widetilde S\widetilde C_0) \geq \sum_{j: \,j\in \mbf j(x_1)\cap \pa  \Omega} \widetilde  S_{j,1}^2 \big(1+ O (e^{-\frac{c}{h}})\big)^{2}.
\end{equation}
In conclusion, from~\eqref{eq.est1_etah} and \eqref{eq.est2_etah}, one has for some $c>0$, in the limit $h\to 0$:
\begin{equation}
\label{eq.fin1}
\sum_{j: \, j\in \mbf j(x_1)\cap \pa  \Omega}\!\!\widetilde  S_{j,1}^2 \big(1+ O (e^{-\frac{c}{h}})\big)^{2}\leq \eta_{{\ft m_{0}^{\Omega}}}^{2}(\widetilde S\widetilde C_0) \leq
\sum_{j=1}^{\ft m_1^{\overline \Omega}} \widetilde S^{2}_{j,1}.
\end{equation}
Using~\eqref{eq.fin21}  and~\eqref{eq.fin2}, one gets 
\begin{equation*}
\sum\limits_{j=1}^{\ft m_1^{\overline \Omega}} \widetilde S^{2}_{j,1}=
\begin{cases}
\sum\limits_{j: j\in \mbf j(x_1)\cap \pa  \Omega} \widetilde  S_{j,1}^2  & \text{ if } \mathbf j(x_{1})\cap \Omega=\emptyset,\\
\sum\limits_{j: j\in \mbf j(x_1)\cap \pa  \Omega} \widetilde  S_{j,1}^2 \big(1+O(\sqrt h)\big) & \text{ if } \mathbf j(x_{1})\cap \Omega\neq\emptyset.
\end{cases}
\end{equation*}
Thus, since $\lambda_{1,h}=\lambda_h$, using in addition~Proposition~\ref{pr.S},~\eqref{eq.eta_lambda},  and~\eqref{eq.fin1}, it holds int the limit $h\to 0$:
\begin{equation} \label{eq.besoin}
\lambda_h= \begin{cases}\frac h2\displaystyle  \sum\limits_{j: j\in \mbf j(x_1)\cap \pa  \Omega} \big \lp       \nabla \widetilde u_1,    \widetilde \psi_j \big \rp_{L^2_w}^2 \big(1+ O (e^{-\frac{c}{h}})\big) &\!\!\!\! \!\!\text{ if }  \mathbf j(x_{1})\cap \Omega =\emptyset,\\
\frac h2 \displaystyle  \sum\limits_{j: j\in \mbf j(x_1)\cap \pa  \Omega} \big \lp       \nabla \widetilde u_1 ,    \widetilde \psi_j \big \rp_{L^2_w}^2\big(1+O(\sqrt h)\big) &\!\!\!\!\!\!\text{ if }  \mathbf j(x_{1})\cap \Omega \neq\emptyset.
\end{cases}
\end{equation}

Then,~\eqref{eq.besoin} together with   Proposition~\ref{ESTIME1} and the fact that
$$\{z_1,\ldots,z_{\ft k_1^{\pa \ft C_1}}\}=\mbf j(x_1)\cap \pa \Omega, \ (\text{see~\eqref{eq.k1-paC1},  and~\eqref{eq.k1-paC1-0}}),$$
 imply when $h\to 0$:
 \begin{equation*} 
\lambda_h= \begin{cases}\frac h2 \displaystyle \sum\limits_{j: j\in \mbf j(x_1)\cap \pa  \Omega}C_{j,1} ^2h^{-\frac 32}\,  e^{-\frac{2}{h}(f(\mathbf{j}(x_{1}))- f(x_1))} \big(1+ O (h)\big) &\!\!\!\! \!\!\text{ if }  \mathbf j(x_{1})\cap \Omega =\emptyset,\\
\frac h2 \displaystyle \sum\limits_{j: j\in \mbf j(x_1)\cap \pa  \Omega}C_{j,1} ^2h^{-\frac 32}\,  e^{-\frac{2}{h}(f(\mathbf{j}(x_{1}))- f(x_1))}  \big(1+O(\sqrt h)\big) &\!\!\!\!\!\!\text{ if }  \mathbf j(x_{1})\cap \Omega \neq\emptyset.
\end{cases}
\end{equation*}
Recall that \eqref{eq.hip4-j} consists in assuming that $\mathbf j(x_{1})\cap \Omega =\emptyset$.  
This concludes   the proof of Theorem~\ref{thm-big0}.
\end{proof}  

\section{On the principal eigenfunction of $-L_{f,h}^{D,(0)}$}
\label{section-4}
This section is dedicated to the proof of Proposition~\ref{pr.masse} and Theorem~\ref{thm-big-pauh} stated below which gives respectively   the asymptotic behaviour in the limit $h\to 0$ of $\displaystyle \int  _{\Omega} u_h \ e^{- \frac{2}{h} f } $ and $\pa_nu_h$ on~$\pa \Omega$. 
%

Proposition~\ref{pr.masse} gives a sufficient condition to obtain that $u_h\,e^{-\frac 2h f}$  (and thus the quasi-stationnary distribution $\nu_h$, see Proposition~\ref{uniqueQSD})  concentrates in only one of the wells $(\ft C_k)_{k\in \{1,\ldots,\ft N_1\}}$ when $h\to 0$ in the   $L^1(\Omega)$-norm. 
 
\begin{proposition}
\label{pr.masse}
Assume that the assumptions \eqref{H-M}  and \eqref{eq.hip1-j}  are satisfied.  Let us moreover assume that 
$$\min_{\overline{\ft C_1}}f= \min_{\overline \Omega}f.$$
 Let~$u_h$ be  the eigenfunction associated with the principal eigenvalue $\lambda_h$ of $-L_{f,h}^{D,(0)}$ (see~\eqref{eq.lh})  which satisfies~\eqref{eq.norma}. Let $\ft O$ be an open subset of $\Omega$. 
On the one hand, if 
 $$\ft O\cap \argmin_{\ft C_1}f\neq \emptyset,$$
 one has in the limit $h\to 0$:
 \begin{equation}\label{concentration01}
\int  _{\ft O} u_h \ e^{- \frac{2}{h} f } =h^{\frac{d}{4} }\, \pi^{\frac{d}{4} }\frac{   \sum_{x\in\ft O\cap \argmin_{\ft C_1}f}  \big( {\rm det \ Hess } f   (x)   \big)^{-\frac12}  }{ \Big(\sum_{x\in \argmin_{\ft C_1}f}  \big( {\rm det \ Hess } f   (x)   \big)^{-\frac12}\Big)^{\frac12} }\ e^{-\frac{1}{h} \min_{\overline \Omega}f} \  \big(1+O(h) \big).
 \end{equation}
On the other hand, if
$$\overline{\ft O}\cap \argmin_{\ft C_1}f= \emptyset,$$
 then, there exists $c>0$ such that  when $h \to 0$:
\begin{equation}\label{eq.concentration02}
\int  _{\ft O} u_h \ e^{- \frac{2}{h} f } =O\big ( e^{-\frac{1}{h}( \min_{\overline \Omega}f+c)} \big).
  \end{equation}
\end{proposition}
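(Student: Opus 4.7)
The plan is to prove Proposition~\ref{pr.masse} by approximating the principal eigenfunction $u_h$ of $-L_{f,h}^{D,(0)}$ by the quasi-mode $\widetilde u_1$ associated with $x_1\in\argmin_{\ft C_1}f$ (see Definition~\ref{de.qm-L}), and then evaluating the resulting integral over $\ft O$ by Laplace's method.

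First I would prove the $L^2_w$-approximation
\begin{equation}\label{eq.plan.approx}
u_h=\widetilde u_1 + O\bigl(e^{-c/h}\bigr)\ \text{in}\ L^2_w(\Omega).
\end{equation}
Let $\pi$ denote the orthogonal projection in $L^2_w(\Omega)$ onto $\mathrm{Span}(u_h)$. By Theorem~\ref{thm-big0} (see \eqref{l2}) combined with Theorem~\ref{pp}, there is $b_h\in(\lambda_h,\lambda_{2,h})$ with $b_h\gtrsim h^N e^{-2\ft S_2/h}$ and $\ft S_2<\ft S_1=f(\mbf j(x_1))-f(x_1)$. Applying Lemma~\ref{quadra} to $A=-L^{D,(0)}_{f,h}$ and $b=b_h$ gives
$$
\|(1-\pi)\widetilde u_1\|_{L^2_w}^2 \le \frac{\tfrac{h}{2}\|\nabla\widetilde u_1\|^2_{L^2_w}}{b_h}.
$$
Using the identity $h\|\nabla\widetilde u_1\|_{L^2_w}=\|d_{f,h}\widetilde v_1\|_{L^2}$ and the estimate~\eqref{eq.dv'} from Lemma~\ref{le.dfh-v}, one gets $\|\nabla\widetilde u_1\|_{L^2_w}^2=O(h^{-2}e^{-2(\ft S_1-c\ve)/h})$. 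For $\ve$ small enough so that $\ft S_1-c\ve>\ft S_2$, this yields $\|(1-\pi)\widetilde u_1\|_{L^2_w}=O(e^{-c/h})$ for some $c>0$. Writing $\pi\widetilde u_1=\alpha_h u_h$ with $\alpha_h=\langle\widetilde u_1,u_h\rangle_{L^2_w}$ and using $\|\widetilde u_1\|_{L^2_w}=1$ together with the positivity of $\widetilde u_1$ (from the construction in Definition~\ref{de.v1}) and of $u_h$ (from \eqref{eq.norma}), we conclude $\alpha_h=1+O(e^{-c/h})$, hence \eqref{eq.plan.approx}.

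Next I would evaluate the main term $\int_{\ft O}\widetilde u_1 e^{-\frac{2}{h}f}$ by Laplace's method. By Definition~\ref{de.qm-L},
$$
\int_{\ft O}\widetilde u_1\,e^{-\frac{2}{h}f}
=\frac{\displaystyle\int_{\ft O}\chi_1^{\ve,\ve_1}\,e^{-\frac{2}{h}f}}{\bigl\|\chi_1^{\ve,\ve_1}e^{-\frac{1}{h}f}\bigr\|_{L^2}}.
$$
Owing to \eqref{eq.v1-support2} and the assumption $\min_{\overline{\ft C_1}}f=\min_{\overline\Omega}f$, the minima of $f$ on $\mathrm{supp}(\chi_1^{\ve,\ve_1})$ coincide with $\argmin_{\ft C_1}f\subset\{\chi_1^{\ve,\ve_1}=1\}$. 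When $\ft O\cap\argmin_{\ft C_1}f\neq\emptyset$, Laplace's method applied separately to the numerator and to $\|\chi_1^{\ve,\ve_1}e^{-f/h}\|_{L^2}^2$ immediately produces the formula \eqref{concentration01}. When $\overline{\ft O}\cap\argmin_{\ft C_1}f=\emptyset$, the compact set $\overline{\ft O}\cap\mathrm{supp}(\chi_1^{\ve,\ve_1})$ (possibly empty) avoids the minima of $f$ on $\mathrm{supp}(\chi_1^{\ve,\ve_1})$, so $\min_{\overline{\ft O}\cap\mathrm{supp}(\chi_1^{\ve,\ve_1})}f\ge\min_{\overline\Omega}f+c$ for some $c>0$, and Laplace's method yields $\int_{\ft O}\widetilde u_1 e^{-\frac{2}{h}f}=O(e^{-(\min_{\overline\Omega}f+c)/h})$.

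Finally, the error is controlled by Cauchy--Schwarz:
$$
\Bigl|\int_{\ft O}(u_h-\widetilde u_1)\,e^{-\frac{2}{h}f}\Bigr|
\le \|u_h-\widetilde u_1\|_{L^2_w}\Bigl(\int_{\ft O}e^{-\frac{2}{h}f}\Bigr)^{1/2}
=O\bigl(h^{d/4}\,e^{-(\min_{\overline\Omega}f+c)/h}\bigr),
$$
which is negligible compared with the main term of \eqref{concentration01} and also fits inside the bound \eqref{eq.concentration02}. The main obstacle I anticipate is the fine bookkeeping in the first step: one must choose the parameter $\ve$ in the cut-off $\chi_1^{\ve,\ve_1}$ sufficiently small so that the exponential rate $\ft S_1-c\ve$ appearing in Lemma~\ref{le.dfh-v} strictly exceeds $\ft S_2$, which is precisely what produces a genuine exponential gain in \eqref{eq.plan.approx} and ensures that the $L^2_w$-approximation is accurate enough for the Cauchy--Schwarz estimate to be smaller than the Laplace main term.
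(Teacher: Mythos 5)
Your proposal is correct and follows essentially the same route as the paper: the paper's proof goes through Corollary~\ref{co.thm4} (spectral gap from Theorem~\ref{pp} giving the rank-one projector $\widetilde\pi_h^{(0)}$, plus Lemma~\ref{quadra} with the quasi-mode estimate from Lemma~\ref{le.dfh-v}/Proposition~\ref{ESTIME1-base} item 2a) and then positivity of $u_h$ and $\widetilde u_1$ to get $u_h=\widetilde u_1+O(e^{-c/h})$ in $L^2_w$, followed by Cauchy--Schwarz on the error and Laplace's method on the main term. Your slightly different choice of spectral threshold ($b_h\gtrsim h^Ne^{-2\ft S_2/h}$ rather than $e^{-\frac2h(\ft S_1-\beta)}$) makes no difference and your bookkeeping, including the requirement $\ft S_1-c\ve>\ft S_2$, matches what the paper's argument requires.
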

\noindent
When  \eqref{H-M}  and \eqref{eq.hip1-j}  are satisfied and  when  $\min_{\overline{\ft C_1}}f= \min_{\overline \Omega}f$ holds,  Proposition~\ref{pr.masse} implies that     when $h\to 0$, $u_h\,e^{-\frac 2h f}$ concentrates in the   $L^1$-norm      on the global minima of~$f$ in~$\ft C_{\ft{max}}$.   
Proposition~\ref{pr.masse} together with~\eqref{eq.expQSD} and the fact that $\ft C_1=\ft C_{ \ft{max}  }$ when  \eqref{eq.hip1-j} holds (or equivalently \eqref{eq.hip1}, see Lemma~\ref{equiv-hipo}), imply Proposition~\ref{pr.con}.
Notice that when $\ft O=\Omega$ in Proposition~\ref{pr.masse}, one has from~\eqref{concentration01},  when $h\to 0$:
\begin{equation}\label{eq.concentration1}
\int  _{\Omega} u_h \ e^{- \frac{2}{h} f } = 
h^{\frac{d}{4} }\, \pi^{\frac{d}{4} }e^{-\frac{1}{h} \min_{\overline \Omega}f} 
  \Big(\sum_{x\in\argmin_{ \ft C_1}f}  \big( {\rm det \ Hess } f   (x)   \big)^{-\frac12}\Big)^{\frac12} \ \big(1+O(h) \big).
  \end{equation}

The following theorem shows that, under  the  hypotheses~\eqref{eq.hip1-j}, \eqref{eq.hip2-j},  and~\eqref{eq.hip3-j},     the $L^1_w(\pa \Omega)$-norm of the normal derivative of the principal eigenfunction of~$-L_{f,h}^{D,(0)}$ concentrates when $h\to 0$ on $\pa \Omega \cap \pa \ft C_1$.
\begin{theorem} \label{thm-big-pauh}
Let us assume that the assumptions~\eqref{H-M},~\eqref{eq.hip1-j}, \eqref{eq.hip2-j} and \eqref{eq.hip3-j} are satisfied. Let $u_h$ be  the eigenfunction associated with the principal eigenvalue $\lambda_h$ of $-L_{f,h}^{D,(0)}$  which satisfies~\eqref{eq.norma}.
Let $F\in L^{\infty}(\partial \Omega,\mathbb R)$ and $\Sigma$ be  an open subset of $\pa\Omega$.
\begin{itemize} [leftmargin=1.3cm,rightmargin=1.3cm]
\item[(i)] When $\overline \Sigma \cap \{z_{1},\dots,z_{\ft k_{1}^{\pa \Omega}}\}=\emptyset$, one has  in the limit $h\to 0$:
$$
\int_{\Sigma}   F   \, \partial_{n}u_h \,   e^{-\frac{2}{h}f }   = 
O\left(e^{-\frac{1}{h}\big (2\min_{\pa \Omega}f-\min_{\overline{\Omega}}f+c\big)} \right ),
$$
where $c>0$ is independent of $h$.
\item[(ii)] When $\overline \Sigma \cap \{z_{1},\dots,z_{\ft k_{1}^{\pa\ft  C_1}}\}=\emptyset$, one has  in the limit $h\to 0$:
$$
\int_{\Sigma}   F   \, \partial_{n}u_h\,   e^{-\frac{2}{h}f }   = 
O\left ( h^{\frac{d-6}{4}}   e^{-\frac{1}{h}\big (2\min_{\pa \Omega}f-\min_{\overline{\Omega}}f\big)}\,\sqrt{\ve_{h}}  \right ),
$$
where, for some $c>0$ independent of $h$,\label{page.veh}
 \begin{equation}
 \label{eq.epsilon-h}
 \ve_h=\begin{cases} \sqrt h &\text{ or },\\
 e^{-\frac ch} &\text{ if  (\ref{eq.hip4-j}) is satisfied.}
\end{cases}
\end{equation} 
 \item[(iii)] When,  for some $i\in \{1,\dots,\ft k_1^{\pa \ft C_1}\}$,~$\overline \Sigma \cap \{z_{1},\dots,z_{\ft k_{1}^{\pa \ft C_1}}\}=\{z_{i}\}$,
$z_{i}\in {\Sigma}$,
and
$F$ is~$C^{\infty}$ in a neighborhood  of $z_i$,  one has 
 in the limit $h\to 0$:
\begin{equation*}
 \int_{\Sigma}   F   \, \partial_{n}u_h\,   e^{-\frac{2}{h}f}   =  -\big( F(z_i)+ O(\sqrt{\ve_{h}}) + O(h) \big)C_{i,1} B_{i}  \ h^{\frac{d-6}{4}}  \,  e^{-\frac{1}{h}\big (2\min_{\pa \Omega}f-\min_{\overline{\Omega}}f\big)},  
  \end{equation*} 
  where $\ve_{h}$ satisfies \eqref{eq.epsilon-h}  and the constants $B_{i}$ and $C_{i,1}$ are defined in \eqref{eq.Cip}--\eqref{eq.Bi}.
\end{itemize}
 \end{theorem}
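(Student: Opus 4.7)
The strategy is to exploit that $\nabla u_h$ is an eigen-$1$-form of $-L^{D,(1)}_{f,h}$ with the same exponentially small eigenvalue $\lambda_h$ (by differentiating the eigenvalue equation, see~\eqref{eq:L1_eig}), so $\nabla u_h\in\Ran\pi_h^{(1)}$. By Proposition~\ref{ESTIME1-base}, the family $(\pi_h^{(1)}\widetilde\psi_j)_{j=1,\dots,\ft m_1^{\overline\Omega}}$ is a quasi-orthonormal basis of this space, so one may write
\[
\nabla u_h=\sum_{j=1}^{\ft m_1^{\overline\Omega}}\beta_j(h)\,\pi_h^{(1)}\widetilde\psi_j,\qquad\int_\Sigma F\,\pa_n u_h\,e^{-\frac{2}{h}f}=\sum_j\beta_j\int_\Sigma F\,(\pi_h^{(1)}\widetilde\psi_j)\cdot n\,e^{-\frac{2}{h}f},
\]
which reduces the proof to (A) sharp asymptotics for the coefficients $\beta_j$, and (B) the computation of the basis boundary integrals on the right.

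For step (A), I would first identify $u_h$ with the quasi-mode $\widetilde u_1$ attached to the principal well $\ft C_1=\ft C_{\ft{max}}$. Using Corollary~\ref{eq.co.l2} (which gives $\lambda_{2,h}/\lambda_h\to\infty$ exponentially), the quadratic-form estimate of Proposition~\ref{ESTIME1-base}~2a, and Lemma~\ref{quadra} applied to the projection of $\widetilde u_1$ off $\mathrm{span}(u_h)$, one shows $u_h=\alpha_h\widetilde u_1+r_h$ (with sign fixed by positivity), where $\alpha_h=1+O(e^{-c/h})$, $\|r_h\|_{L^2_w}=O(e^{-c/h})$, and $r_h$ lies in the spectral subspace of $-L^{D,(0)}_{f,h}$ associated with eigenvalues $\ge\lambda_{2,h}$. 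Expressing $u_h=\sum_k a_k\,\pi_h^{(0)}\widetilde u_k$ in the basis of Proposition~\ref{ESTIME1-base}~3a, this is equivalent, via the right singular vector analysis of $\widetilde S\widetilde C_0$ already performed in the proof of Theorem~\ref{thm-big0}, to $a_1=1+O(e^{-c/h})$ and $|a_k|=O(e^{-c/h})$ for $k\ge 2$. Taking gradients and applying the commutation~\eqref{eq.comutation} yields $\nabla u_h=\sum_k a_k\,\pi_h^{(1)}\nabla\widetilde u_k$; expanding the latter in $(\pi_h^{(1)}\widetilde\psi_j)_j$ by inverting the Gram matrix $G_\Psi=I+O(e^{-c/h})$ (Proposition~\ref{ESTIME1-base}~3b) gives $\beta_j=\sum_k a_k S_{j,k}+O(e^{-c/h})\max_i|S_{i,1}|$. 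By Proposition~\ref{pr.S}, the dominant contribution is $a_1 S_{j,1}$ and the error has relative size $\sqrt{\ve_h}$; the dichotomy~\eqref{eq.epsilon-h} arises from whether the interior saddle points in $\mbf j(x_k)\cap\Omega$ (for $k\ge 2$) contribute at order $h^{-1/2}$ or are absent under~\eqref{eq.hip4-j}.

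For step (B), I decompose $\int_\Sigma F\,(\pi_h^{(1)}\widetilde\psi_j)\cdot n\,e^{-\frac{2}{h}f}$ as $\int_\Sigma F\,\widetilde\psi_j\cdot n\,e^{-\frac{2}{h}f}$ minus its $(1-\pi_h^{(1)})$-remainder. The first integral is supplied by Proposition~\ref{gamma1}: it vanishes if $z_j\in\Omega$, is $O(e^{-(f(z_j)+c)/h})$ if $z_j\in\pa\Omega\setminus\overline\Sigma$, and equals $h^{(d-3)/4}B_j F(z_j)\,e^{-f(z_j)/h}(1+O(h))$ when $z_j\in\Sigma$ and $F$ is smooth near $z_j$. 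The remainder is controlled by Proposition~\ref{ESTIME1-base}~2b together with the trace theorem, yielding at most $O(e^{-(\min_{\pa\Omega}f+c)/h})$, which is always negligible compared to the target rate $2\min_{\pa\Omega}f-\min_{\overline\Omega}f$. Assembling the pieces case by case, (i) follows because every $z_j\in\overline\Sigma\cap\pa\Omega$ then satisfies $f(z_j)>\min_{\pa\Omega}f$; (ii) because the surviving indices $j$ satisfy $z_j\in\{z_{\ft k_1^{\pa\ft C_1}+1},\ldots,z_{\ft k_1^{\pa\Omega}}\}\subset\pa\Omega\setminus\pa\ft C_1$, so that $z_j\notin\mbf j(x_1)$ and the leading part of $\beta_j$ vanishes, leaving only the $\sqrt{\ve_h}$-correction; and (iii) by isolating the $j=i$ term, using $\ve_{i,1}=-1$ and $\ft S_1+f(z_i)=2\min_{\pa\Omega}f-\min_{\overline\Omega}f$ to match the stated leading asymptotic. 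The main obstacle is step (A), in particular the fine tracking of the dependence of $\beta_j$ on the $a_k$ with $k\ge 2$: this is precisely the place where~\eqref{eq.hip4-j} is needed to sharpen the remainder from $\sqrt{h}$ to exponential in~\eqref{eq.epsilon-h}.
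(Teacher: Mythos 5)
Your overall strategy (decompose $\nabla u_h\in\Ran\pi_h^{(1)}$ in a quasi-orthonormal basis of projected $1$-form quasi-modes, compute interaction coefficients, and compute boundary integrals via Proposition~\ref{gamma1}) is the one used in the paper, and your step~(B) on boundary integrals is essentially identical to the paper's Proposition~\ref{gamma}. The genuine gap is in step~(A), at precisely the point you yourself flag as the main obstacle.

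The problematic claim is that writing $u_h=\sum_k a_k\pi_h^{(0)}\widetilde u_k$ with $a_1=1+O(e^{-c/h})$ and $|a_k|=O(e^{-c/h})$ for $k\ge 2$ implies $\beta_j=\sum_k a_kS_{j,k}+O(e^{-c/h})\max_i|S_{i,1}|$ with the ``error of relative size $\sqrt{\ve_h}$.'' The exponential smallness of $a_k$ for $k\ge 2$ does \emph{not} by itself control $\sum_{k\ge 2}a_kS_{j,k}$ against $a_1S_{j,1}$: by Proposition~\ref{pr.S}, $|S_{j,k}|\sim h^{q_k}e^{-\frac1h(f(\mbf j(x_k))-f(x_k))}$ and, under~\eqref{eq.hip1-j}, the exponent $f(\mbf j(x_k))-f(x_k)$ is strictly \emph{smaller} than $f(\mbf j(x_1))-f(x_1)$ for $k\ge 2$, so $|S_{j,k}/S_{j,1}|$ is exponentially large in $1/h$. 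Thus $|a_kS_{j,k}|$ is a product of something exponentially small by something exponentially large, and whether their product is negligible compared to $|a_1S_{j,1}|$ — let alone negligible at the precise polynomial rate $\sqrt{\ve_h}$ — depends on a comparison of constants in the exponents that your argument does not provide. The same objection applies to your Gram-matrix error term: the $O(e^{-c/h})$ entries of $G_\Psi^{-1}-I$ multiply the full columns of $S$, so the remainder is $O(e^{-c/h})\max_{i,k}|S_{i,k}|$, not $O(e^{-c/h})\max_i|S_{i,1}|$.

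The paper avoids this entirely by never decomposing $u_h$ in the $\pi_h^{(0)}\widetilde u_k$ basis. Instead (Lemma~\ref{nabla-pi-u1}) it uses the commutation of $\pi_h^{(0)}$ and $\widetilde\pi_h^{(0)}$ with $L^{D,(0)}_{f,h}$ together with $\widetilde\pi_h^{(0)}\pi_h^{(0)}=\widetilde\pi_h^{(0)}$ to obtain the exact cancellation
$$\frac h2\big\|\nabla(\pi_h^{(0)}-\widetilde\pi_h^{(0)})\widetilde u_1\big\|^2_{L^2_w}=\frac h2\big\|\nabla\pi_h^{(0)}\widetilde u_1\big\|^2_{L^2_w}-\lambda_h\big(1+O(e^{-\frac ch})\big),$$
and then exploits that the right-hand side is $\lambda_hO(\ve_h)$ because $\frac h2\|\nabla\pi_h^{(0)}\widetilde u_1\|^2_{L^2_w}$ and $\lambda_h$ are \emph{both} asymptotically equal to $\sum_{j:z_j\in\mbf j(x_1)}\langle\nabla\widetilde u_1,\widetilde\psi_j\rangle^2_{L^2_w}$ up to relative errors $O(\ve_h)$ and $O(e^{-c/h})$ respectively (the first via the Parseval identity plus Lemma~\ref{interaction1}, the second from the proof of Theorem~\ref{thm-big0}, see~\eqref{eq.besoin}). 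The $\sqrt{\ve_h}$ in the interaction terms then comes out by Cauchy–Schwarz in Corollary~\ref{lemm2}. This algebraic/variational step is the essential ingredient your proposal is missing, and I do not see how to reach the stated $\sqrt{\ve_h}$ rate by the coefficientwise decomposition you sketch without effectively reconstructing it.
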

The following  rewriting  of Theorem~\ref{thm-big-pauh} will be useful to prove Theorem~\ref{thm.main}.
Assume that the assumptions \eqref{H-M}, \eqref{eq.hip1-j}, \eqref{eq.hip2-j},  and~\eqref{eq.hip3-j} are satisfied. 
Let~$F\in L^{\infty}(\partial \Omega,\mathbb R)$ and $(\Sigma_{i})_{i\in\{1,\dots,\ft k_{1}^{\pa \Omega} \}}$ be a family  of disjoint open subsets of $\pa \Omega$ such that 
$$ \text{for all } i\in\{1,\dots,\ft k_{1}^{\pa\Omega}\},\  z_{i}\in \Sigma_{i},$$  where we recall that  $\{z_1,\dots,z_{\ft k_{1}^{\pa\Omega}}\}= \ft U_1^{\pa \Omega}\cap \argmin_{\pa \Omega} f$ (see~\eqref{eq.z11}). Then:
\begin{enumerate} [leftmargin=0.8cm,rightmargin=0.5cm]

\item There exists $c>0$ such that in the limit~$h\to 0$,
\begin{equation}\label{eq.dnuh0}
 \int_{\partial \Omega} \!F   \, \partial_n u_h \,   e^{- \frac{2}{h}  f} =
\sum_{i=1}^{\ft k_{1}^{\pa \Omega}}\int_{\Sigma_{i}}\! F  \, \partial_n u_h \,   e^{- \frac{2}{h}  f} +
O\left( e^{-\frac{1}{h}\big(2\min_{\pa \Omega}f-\min_{\overline{\Omega}}f+c\big)}\right ),
\end{equation}
and
\begin{equation}\label{eq.dnuh1}
\sum_{i=\ft k_{1}^{\pa \ft C_1}+1}^{\ft k_{1}^{\pa \Omega}}\int_{\Sigma_{i}} \!F\,  \partial_n u_h \,  e^{- \frac{2}{h}  f} =
O\left(  h^{\frac{d-5}{4}}   e^{-\frac{1}{h}\big(2\min_{\pa \Omega}f-\min_{\overline{\Omega}}f\big)}\right ),
\end{equation}
with the convention~$\sum \limits_{i=n}^{m}=0$ if $n>m$ and where we recall that (see~\eqref{eq.k1-paC1-0}, \eqref{eq.k1-paC1} and \eqref{eq.hip3-j}),
$$ \{z_1,\dots,z_{\ft k_{1}^{\pa \ft C_1}}\}=\pa \ft C_{1}\cap \pa \Omega \subset \argmin_{\pa \Omega}f\cap \ft U_1^{\pa \Omega}.$$
The asymptotic  estimate~\eqref{eq.dnuh0} follows from item $(i)$ in Theorem~\ref{thm-big-pauh}  taking $\Sigma=\pa \Omega\setminus \cup_{i=1}^{\ft k_1^{\pa \Omega}} \Sigma_i$, while~\eqref{eq.dnuh1} follows from item $(ii)$ in Theorem~\ref{thm-big-pauh}  taking $\Sigma=  \cup_{i=\ft k_1^{\pa \ft C_1}+1}^{\ft k_1^{\pa \Omega}} \Sigma_i$. 
\item 
Moreover, when, for some $i\in\{1,\dots,\ft k_{1}^{\pa \ft C_1}\}$,~$F$ is $C^{\infty}$ in a neighborhood  of $z_{i}$, one has  in the limit $h\to 0$:
\begin{equation}\label{eq.dnuh}
\int_{\Sigma_{i}} F \, \partial_n u_h\,  e^{- \frac{2}{h}  f} = A_i\,
\left ( F(z_i) + O\big (h^{\frac14}\big ) \right )\,  h^{\frac{d-6}{4}}\,e^{-\frac{1}{h}\big(2\min_{\pa \Omega}f-\min_{\overline{\Omega}}f\big)},
\end{equation}
 where
\begin{equation}\label{eq.Ai-pa-uh}
A_i=-2\,\partial_nf(z_i)\,\pi^{\frac{d-2}{4}}\,\left(  {\rm det \ Hess } f|_{\partial \Omega}   (z_i)\!\!\!\!\!\! \!\!\!\!\!\sum_{x\in \argmin_{\ft C_1}f}  \!\!\!\!\! \big( {\rm det \ Hess } f   (x)   \big)^{-\frac12}  \right)^{-\frac12} .
\end{equation}
This asymptotic equivalent   follows from item~$(iii)$ in Theorem~\ref{thm-big-pauh}  taking $\Sigma=   \Sigma_i$ for some $i\in \{1,\dots,\ft k_1^{\pa \ft C_1}\}$. 
\item
\begin{sloppypar}
Lastly, when \eqref{eq.hip4-j}  (i.e when~$\mbf j(x_1)\subset \pa  \Omega$), the remainder term~$O\big(  h^{\frac{d-5}{4}}   e^{-\frac{1}{h}(2f(z_{1})- f(x_1))}\big)$ in \eqref{eq.dnuh1}
is  of the order $O\big (e^{-\frac{1}{h}\big(2\min_{\pa \Omega}f-\min_{\overline{\Omega}}f+c\big)}\big )$ for some $c>0$ 
and
the remainder term~$O\big (h^{\frac14}\big )$ in~\eqref{eq.dnuh} is of the order $O(h)$ and   admits a full asymptotic expansion in~$h$. 
\end{sloppypar}
\end{enumerate}
%
%

 According to  Theorem~\ref{thm-big-pauh}, when the function $F$ belongs to $C^{\infty}(\pa \Omega,\mathbb R)$, one has the following equivalent of~\eqref{eq.dnuh0}  in the limit $h\to 0$:
$$
 \int_{\partial \Omega} \!F   \, \partial_n u_h \,   e^{- \frac{2}{h}  f} =
\sum_{i=1}^{\ft k_{1}^{\pa \ft C_1} } A_i\,
\big ( F(z_i) + O(h^{\frac14}) \big )\,  h^{\frac{d-6}{4}}\,e^{-\frac{1}{h}\big(2\min_{\pa \Omega}f-\min_{\overline{\Omega}}f\big)}.
$$
\begin{remark}\begin{sloppypar} 
When the assumption \eqref{eq.hip4-j} is not satisfied,  the remainder terms  in~\eqref{eq.dnuh1}  and    \eqref{eq.dnuh} may not be optimal. 
In~\cite[Section C.4.2.2]{BN2017}, it is proved with a one-dimensional example, that when the assumption \eqref{eq.hip4-j} is not satisfied,    the optimal  remainder term  in~\eqref{eq.dnuh1} is $O\big (  h^{\frac{d-4}{4}}   e^{-\frac{1}{h}\big(2\min_{\pa \Omega}f-\min_{\overline{\Omega}}f\big)}\big )$ and  the optimal  remainder term   in \eqref{eq.dnuh} is~$O(\sqrt h)$.  In higher-dimension, these optimal remainder terms can be obtained in some specific cases, see~\cite[Proposition C.40]{BN2017}.   
 \end{sloppypar} 
\end{remark}
This section is organized as follows. In Section~\ref{sec.masse}, one proves Proposition~\ref{pr.masse}. Section~\ref{sec.thm-big} is then dedicated to the proof of Theorem~\ref{thm-big-pauh}.

\subsection{Proof of Proposition~\ref{pr.masse}}
\label{sec.masse}

This section is dedicated to the proof Proposition~\ref{pr.masse}.  Let us first give a corollary of Theorem~\ref{pp} which is used in the proof of Proposition~\ref{pr.masse}. 

\begin{corollary}\label{co.thm4}
Let us assume that the assumptions~\eqref{H-M} and \eqref{eq.hip1-j} are satisfied. Then, there exists $\beta_0>0$ such that for all $\beta\in (0,\beta_0)$, there exists $h_0>0$ such that  for all $h\in (0,h_0)$, the orthogonal projector \label{page.tildephio}
$$\widetilde  \pi^{(0)}_h:=\pi_{\big [0,e^{-\frac{2}{h}( f(\mbf j(x_1))-f(x_{1})-\beta)}\big )}(-L^{D,(0)}_{f,h}) \text{ has rank } 1.$$
Moreover, choosing the parameter $\ve>0$ appearing  in~\eqref{eq.v1}  small enough, there exists $h_0>0$ such that  for all $h\in (0,h_0)$, one has:
\begin{equation}\label{eq.spa-pi-tilde}
\Ran\, \widetilde  \pi^{(0)}_h=\sspan \big (  \widetilde \pi^{(0)}_h \widetilde u_1\big ),
\end{equation}
where the function $ \widetilde u_1$ is  introduced in Definition~\ref{de.qm-L}.
\end{corollary}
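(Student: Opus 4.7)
The plan is to derive both claims from Theorem~\ref{pp} together with the quasi-modal estimates in Proposition~\ref{ESTIME1-base}. Under the assumption~\eqref{eq.hip1-j}, the reordered sequence $(\ft S_k)$ of Theorem~\ref{pp} satisfies $\ft S_1 = f(\mbf j(x_1)) - f(x_1)$ and, by~\eqref{eq.sj}, $\ft S_1 > \ft S_k$ for every $k \geq 2$. Set $\beta_0 := \tfrac{1}{2}(\ft S_1 - \ft S_2) > 0$ and fix $\beta \in (0, \beta_0)$. The upper bound in Theorem~\ref{pp} applied to $\lambda_{1,h}$ gives $\lambda_{1,h} \leq C h^{1+2q_1} e^{-\frac{2}{h}\ft S_1}$, and since $\beta > 0$ the prefactor $h^{1+2q_1}$ is absorbed by $e^{\frac{2\beta}{h}}$ for $h$ small, so $\lambda_{1,h} < e^{-\frac{2}{h}(\ft S_1 - \beta)}$. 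Likewise the lower bound applied to $\lambda_{2,h}$ yields $\lambda_{2,h} \geq C^{-1} h^{1+2q_2} e^{-\frac{2}{h}\ft S_2}$; since $\ft S_2 < \ft S_1 - \beta$, for $h$ small one gets $\lambda_{2,h} > e^{-\frac{2}{h}(\ft S_1 - \beta)}$. Hence exactly one eigenvalue of $-L^{D,(0)}_{f,h}$ lies in $[0, e^{-\frac{2}{h}(\ft S_1 - \beta)})$, proving $\dim \Ran \widetilde{\pi}^{(0)}_h = 1$.

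For the identification~\eqref{eq.spa-pi-tilde}, since the range is one-dimensional, it suffices to show that $\widetilde{\pi}^{(0)}_h \widetilde{u}_1 \neq 0$ for $h$ small, the relevant $\ve$ in~\eqref{eq.v1} being fixed. I would apply Lemma~\ref{quadra} to $A = -L^{D,(0)}_{f,h}$ with threshold $b = e^{-\frac{2}{h}(\ft S_1 - \beta)}$ and quadratic form $\tfrac{h}{2}\|\nabla \cdot\|_{L^2_w}^2$, yielding
\begin{equation*}
\big\|(1 - \widetilde{\pi}^{(0)}_h)\widetilde{u}_1\big\|_{L^2_w}^2 \leq \frac{h}{2}\, e^{\frac{2}{h}(\ft S_1 - \beta)}\, \big\|\nabla \widetilde{u}_1\big\|_{L^2_w}^2.
\end{equation*}
By item~2a of Proposition~\ref{ESTIME1-base}, one can choose $\ve > 0$ small enough in the definition~\eqref{eq.v1} of $\widetilde u_1$ so that for any fixed $\delta \in (0,\beta)$,
\begin{equation*}
\big\|\nabla \widetilde{u}_1\big\|_{L^2_w}^2 = O\!\left(e^{-\frac{2}{h}(\ft S_1 - \delta)}\right).
\end{equation*}

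Combining these two bounds gives $\|(1 - \widetilde{\pi}^{(0)}_h)\widetilde{u}_1\|_{L^2_w}^2 = O(h\, e^{-\frac{2(\beta - \delta)}{h}})$, which is exponentially small. Since $\|\widetilde{u}_1\|_{L^2_w} = 1$, we conclude $\|\widetilde{\pi}^{(0)}_h \widetilde{u}_1\|_{L^2_w} = 1 + o(1)$, so $\widetilde{\pi}^{(0)}_h \widetilde{u}_1$ is nonzero for all sufficiently small $h$, and therefore spans the one-dimensional space $\Ran \widetilde{\pi}^{(0)}_h$. The only delicate step is the matching of parameters: one must first fix $\beta \in (0, \beta_0)$, then select $\delta \in (0, \beta)$, and finally invoke Proposition~\ref{ESTIME1-base}~2a to choose $\ve$ small enough so that the resulting estimate on $\|\nabla \widetilde{u}_1\|_{L^2_w}^2$ wins against $e^{\frac{2(\ft S_1 - \beta)}{h}}$; this ordering is straightforward but must be respected.
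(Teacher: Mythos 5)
Your proposal is correct and follows essentially the same route as the paper: the rank-one statement via the upper bound on $\lambda_{1,h}$ and lower bound on $\lambda_{2,h}$ from Theorem~\ref{pp} (the paper invokes this through Corollary~\ref{eq.co.l2}), and the identification of the range via Lemma~\ref{quadra} combined with item~2a of Proposition~\ref{ESTIME1-base}, with the same quantifier ordering $\beta$ then $\delta\in(0,\beta)$ then $\ve$. The only cosmetic remark is that your formula $\beta_0=\tfrac12(\ft S_1-\ft S_2)$ presupposes $\ft m_0^{\Omega}\ge 2$; when $\ft m_0^{\Omega}=1$ any $\beta_0>0$ works since $\lambda_{2,h}\ge \tfrac{\sqrt h}{2}$ by Lemma~\ref{ran1}.
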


\begin{proof}
The fact that $\dim \Ran\, \widetilde  \pi^{(0)}_h=1$ is a direct consequence of Corollary~\ref{eq.co.l2}. Let us now prove~\eqref{eq.spa-pi-tilde}.  Using Lemma~\ref{quadra}, Proposition~\ref{fried} and using  item 2a of  Proposition~\ref{ESTIME1-base}, for any   $\delta>0$, there exist   $\ve>0$ (see~\eqref{eq.v1}), $C>0$  and $h_0>0$ such that one has  for all 
$h\in (0,h_0)$, 
\begin{align}
\nonumber
  \big \|    (1-\widetilde  \pi^{(0)}_h ) \widetilde u_1\big\|_{L^2_w}^2  &\leq  e^{\frac{2}{h}(f(\mbf j(x_1))-f(x_{1}) -\beta )} \, \frac h2\big\|   \nabla  \widetilde u_1\big\|_{L^2_w}^2 \\
  \nonumber
 &\leq C\,  e^{\frac{2}{h}(f(\mbf j(x_1))-f(x_{1})-\beta )}\,e^{-\frac{2}{h}(f(\mbf j(x_1))-f(x_{1})-\delta )} \\
 \label{eq.1-pi}
 &\leq   C\,  e^{-\frac{2}{h}(\beta- \delta)}.
 \end{align}
Therefore, choosing $\ve>0$ small enough such that $\delta \in (0,\beta)$,
there
exists $c>0$ and $h_0>0$ such that one has  for all 
$h\in (0,h_0)$, 
\begin{equation}\label{normeu}
 \big \|    \widetilde  \pi^{(0)}_h \widetilde u_1\big\|_{L^2_w}= 1+O ( e^{-\frac{c}{h}}).
\end{equation}
This concludes the proof of~\eqref{eq.spa-pi-tilde} and thus the proof of Corollary~\ref{co.thm4}. 
\end{proof}

Let us now prove Proposition~\ref{pr.masse}.
\begin{proof}[Proof of  Proposition~\ref{pr.masse}]
Let us first assume that only the assumptions~\eqref{H-M} and \eqref{eq.hip1-j} are satisfied. Let us recall that $u_h$ is the eigenfunction associated with the principal eigenvalue $\lambda_h$ of $-L_{f,h}^{D,(0)}$ (see~\eqref{eq.lh}) which satisfies~\eqref{eq.norma}.
As a direct consequence of Corollary~\ref{co.thm4} and~\eqref{normeu}, one has since
 the functions $ u_{h}$ and $\widetilde u_{1}$ are non negative,
\begin{equation}\label{eq.uh=}
u_h= \frac{\widetilde  \pi^{(0)}_h \widetilde u_{1}}{\| \widetilde  \pi^{(0)}_h \widetilde u_{1} \|_{L^2_w}}=\widetilde u_1+O ( e^{-\frac{c}{h}})\ \text{  in  } L^2_w(\Omega).
\end{equation} 
Let $\ft O$ be an open subset of $\Omega$. Using \eqref{eq.uh=} and thanks to the Cauchy-Schwarz inequality, one obtains  in the limit $h\to 0$:
\begin{align}
\nonumber
\int  _{\ft O}u_h \ e^{-\frac{2}{h} f} &= \int  _{\ft O}  \widetilde u_{1} \,e^{-\frac{2}{h} f} + O ( e^{-\frac{c}{h}})  \sqrt{  \int  _{\ft O}  e^{-\frac{2}{h} f} }\\
\label{eq.equa0}
&= \int  _{\ft O}  \widetilde u_{1} \,e^{-\frac{2}{h} f} + O \Big ( e^{-\frac{1}{h}\, \big (\min_{\overline \Omega} f + c\big )}\Big).
\end{align}
Let us  recall that by construction (see Definition~\ref{de.qm-L} and~\eqref{eq.v1}),
$$\widetilde u_{1} =\frac{\chi_{1}^{\ve,\ve_1}}{\|\chi_{1}^{\ve,\ve_1}\|_{L^{2}_{w}}}.$$
 Then, 
from the definition of $\chi_{1}^{\ve,\ve_1}$ (see~\eqref{eq.v1} and the lines below)
and  using Laplace's method, one has
  in the limit $h\to 0$,
\begin{equation}
\label{eq.er7}
  \int  _{\Omega} (\chi_{1}^{\ve,\ve_1})^2\, e^{-\frac{2}{h} f }  = (h \pi) ^{\frac{d}{2} }e^{-\frac{2}{h} f(x_1)}  \!\! \!\!\!\!\sum_{x\in\argmin_{\ft C_{1}}f }\!\!\!\!\! \!\! \!\!\big( {\rm det \ Hess } f   (x)   \big)^{-\frac12}    (1+O(h) )
\end{equation}
Let us assume that  $$\ft O\cap \argmin_{\ft C_1}f\neq \emptyset.$$
Then, using Laplace's method, one has when $h\to 0$,
\begin{equation}
\label{eq.er7'}
  \int  _{\ft O} \chi_{1}^{\ve,\ve_1} \, e^{- \frac{2}{h}f }  =(h \pi) ^{\frac{d}{2} }e^{-\frac{2}{h} f(x_1)} \!\!\!\!\!\!\! \sum_{x\in\ft O\cap \argmin_{\ft C_{1}}f}  \!\!\!\!\!\big( {\rm det \ Hess } f   (x)   \big)^{-\frac12}    (1+O(h) ),
\end{equation}
where  we recall that $x_1\in \argmin_{\ft C_{1}}f$. 
Thus, from \eqref{eq.equa0},~\eqref{eq.er7},  and~\eqref{eq.er7'}, one has when $h\to 0$:
\begin{align}
\nonumber
\int  _{\ft O} u_h \ e^{- \frac{2}{h} f } &= h^{\frac{d}{4} }\, \pi^{\frac{d}{4} }\frac{   \sum_{x\in\ft O\cap \argmin_{\ft C_1}f}  \big( {\rm det \ Hess } f   (x)   \big)^{-\frac12}  }{ \Big(\sum_{x\in \argmin_{\ft C_1}f}  \big( {\rm det \ Hess } f   (x)   \big)^{-\frac12}\Big)^{\frac12} }\ e^{-\frac{1}{h} f(x_1)} \  \big(1+O(h) \big)\\
\label{last-e}
&\quad+  O \left ( e^{-\frac{1}{h}\big (\min_{\overline \Omega} f + c\big )}\right).
\end{align}
Let us  assume moreover  that 
$$\min_{\overline{\ft C_1}}f=\min_{\overline \Omega}f.$$
Then,~\eqref{concentration01} in Proposition~\ref{pr.masse} is a consequence of~\eqref{last-e}. Let us now consider the case where
  $$\overline{\ft O}\cap \argmin_{\ft C_1}f=\emptyset.$$
Then, it holds 
\begin{equation}
\label{eq.plus haut}
\min_{\overline {\ft O}\cap  \overline {\ft C_1} } f>\min_{\overline{\ft C_1}} f=\min_{\overline \Omega} f.
\end{equation}
Since in the limit $h\to 0$:
$$
 \int  _{\ft O} \chi_{1}^{\ve,\ve_1} \, e^{- \frac{2}{h}f }  =O\big ( e^{- \frac{2}{h}\min_{  \overline {\ft O}\cap  \overline {\ft C_1} }f}\big),
$$
one obtains using~\eqref{eq.plus haut},~\eqref{eq.equa0},  and~\eqref{eq.er7}, that there exist $c>0$ and $\tilde c>0$ such that when $h\to 0$:
\begin{align}
\nonumber
\int  _{\ft O} u_h \ e^{- \frac{2}{h} f } &= O\Big ( h^{-\frac d4}\, e^{- \frac{2}{h}\min_{\overline {\ft O\cap  \ft C_1}} f }\, e^{\frac{1}{h}\min_{\overline \Omega} f  }\Big) +  O \Big ( e^{-\frac{1}{h}\big (\min_{\overline \Omega} f + c\big )} \Big )\\
&=O \Big ( e^{-\frac{1}{h}\big (\min_{\overline \Omega} f + \tilde c\big )} \Big ).
\end{align} 
This proves~\eqref{eq.concentration02} and concludes the proof of proposition~\ref{pr.masse}.
\end{proof}

\subsection{Proof of Theorem~\ref{thm-big-pauh}}
\label{sec.thm-big}

Let us briefly explain
the strategy for the proof of Theorem~\ref{thm-big-pauh}.
The basic idea is to notice that, since $\nabla u_{h}$ belongs to $\Ran\, \pi_{h}^{(1)}$ (according to \eqref{eq.nablain}),
one has for any open set $\Sigma$ of $\pa\Omega$ and for any $L_{w}^{2}$-orthonormal basis $(\psi_{1},\dots,\psi_{\ft m_1^{\overline \Omega}})$ of $\Ran\, \pi_{h}^{(1)}$,
\begin{equation}
\label{eq.rere}
 \int_{\Sigma}   F   \, \partial_{n}u_h\,  \   e^{-\frac{2}{h}f}  =\sum_{i=1}^{\ft m_1^{\overline \Omega} }
 \langle \nabla u_{h},\psi_{i}\rangle_{L^{2}_{w}}
 \int_{\Sigma}   F    \, \psi_{i}\cdot n \   e^{-\frac{2}{h}f}  .
\end{equation}
Notice that this decomposition of $\nabla u_h$ is valid on~$\pa \Omega$. Indeed, for all $i\in\{1,\ldots,\ft m_1^{\overline \Omega}\}$, $\psi_{i}$ has a smooth trace on $\pa \Omega$ since~$\psi_{i}\in \Lambda^1C^{\infty}(\overline\Omega)$ (due to the fact that  the eigenforms of~$L^{D,(1)}_{f,h}$ belongs to~$C^{\infty}(\overline\Omega)$ and~$\pi_{h}^{(1)}$ is a projector onto a finite number of eigenforms of $-L^{D,(1)}_{f,h}$).
In the rest of this section, one first 
 introduces such a family $\{\psi_{1},\dots,\psi_{\ft m_1^{\overline \Omega}}\}$
 using a Gram-Schmidt orthonormalization of  the family~$\big\{\pi_{h}^{(1)}\widetilde\psi_{1},\dots,\pi_{h}^{(1)}\widetilde\psi_{\ft m_1^{\overline \Omega}}\big\}$.
Then, one gives estimates of the terms $\langle \nabla u_{h},\psi_{i}\rangle_{L^{2}_{w}}$
 appearing in~\eqref{eq.rere}.  Finally, one concludes  
the proof of Theorem~\ref{thm-big-pauh} in Section~\ref{sec:estim_bound}, with estimations
of the boundary terms $\int_{\Sigma}   F    \, \psi_{i}\cdot n \   e^{-\frac{2}{h}f}  $.

\subsubsection{Gram-Schmidt orthonormalization}\label{sec:gram1}
Let us assume that the hypothesis~\eqref{H-M} holds, and assume $h>0$
small enough such that the family $\left\{\pi_h^{(1)}\widetilde \psi_i,i=1,\dots,\ft m_1^{\overline \Omega}\right \}$ is independent
(which is guaranteed for small $h$ by item 3b in  Proposition~\ref{ESTIME1-base}). Using a Gram-Schmidt procedure, there exists, 
for all $j\in \big \{1,\dots,\ft m_1^{\overline \Omega}\big \}$, a family
 $(\kappa_{ji})_{i=1,\dots,j-1}\subset \mathbb R^{j-1}$ such that the $1$-forms\label{page.kji}
\begin{equation}\label{eq.gramfj}
 f_j:=\pi_h^{(1)}  \Big [ \widetilde \psi_j + \sum \limits_{i=1}^{j-1} \kappa_{ji} \widetilde \psi_i \  \Big]
\end{equation}
satisfy:
\begin{itemize} [leftmargin=1.3cm,rightmargin=1.3cm]
\item[(i)] for all $k\in  \{1,\dots, \ft m_1^{\overline \Omega}\}$,~${\rm Span}\big(\{  f_i,i=1,\dots,k\}\big)={\rm Span}\big(\{\pi_h^{(1)}\widetilde \psi_i,i=1,\dots,k \}\big )$,
\item[(ii)] for all $i\neq j$,~$\lp f_i , f_j \rp_{L^2_w}=0$.
\end{itemize}
One defines moreover, for $j\in \left \{1,\dots,\ft m_1^{\overline \Omega} \right \}$, \label{page.zjpsij}
\begin{equation}\label{eq.psij}
Z_j:= \Vert f_j \Vert_{L^2_w}\ \text{and}\  \psi_j:=\frac{1}{Z_j}f_{j},
 \end{equation}
so that $(\psi_j)_{j\in \big \{1,\dots,\ft m_1^{\overline \Omega}\big \}}$ is a $L^{2}_{w}$-orthonormal basis of
$\Ran\, \pi_{h}^{(1)}$.
By reasoning by induction (see~\cite[Section 2]{di-gesu-le-peutrec-lelievre-nectoux-16} for a similar proof), Proposition~\ref{ESTIME1-base} easily leads 
to the following estimates showing in particular that the family $(\pi_h^{(1)}\widetilde \psi_i)_{i\in\{1,\dots,\ft m_1^{\overline \Omega}\}}$ is close to the family  $( \psi_i)_{i\in\{1,\dots,\ft m_1^{\overline \Omega} \}}$.
\begin{lemma}\label{le.e1}   
Let us assume that the assumption~\eqref{H-M} is satisfied.
Then, there exists $c>0$ such that for all  $j\in \big \{1,\dots,\ft m_1^{\overline \Omega} \big \}$,~$i\in \{1,\dots,j-1\}$ and $h>0$ small enough,
\begin{align*}
Z_j=1+O ( e^{-\frac{c}{h}} ) \ {\rm and}\ \kappa_{ji}&= O (e^{-\frac{c}{h}}).
\end{align*}
\end{lemma}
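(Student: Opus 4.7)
The plan is to prove Lemma~\ref{le.e1} by induction on $j\in\{1,\dots,\ft m_1^{\overline\Omega}\}$, relying almost exclusively on the near-orthonormality estimate
\[
 \big\langle \pi_h^{(1)}\widetilde\psi_i,\pi_h^{(1)}\widetilde\psi_{i'}\big\rangle_{L^2_w}\,=\,\delta_{i,i'}+O\!\big(e^{-\frac ch}\big)
\]
provided by item~3b of Proposition~\ref{ESTIME1-base}. Throughout, let us write $P_i:=\pi_h^{(1)}\widetilde\psi_i$ and $G_{i,i'}:=\langle P_i,P_{i'}\rangle_{L^2_w}$, so that $G=I+E_h$ with $E_h=O(e^{-\frac ch})$ (coefficientwise, equivalently in operator norm).

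\emph{Base case $j=1$.} By definition, $f_1=P_1$, hence $Z_1^2=G_{1,1}=1+O(e^{-c/h})$ and the square root yields $Z_1=1+O(e^{-c/h})$, as claimed. There are no $\kappa_{1i}$ to control.

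\emph{Inductive step.} Fix $j\geq 2$ and assume $Z_k=1+O(e^{-c/h})$ and $\kappa_{ki}=O(e^{-c/h})$ for every $k<j$ and $i<k$. Writing $f_j=P_j+\sum_{i=1}^{j-1}\kappa_{ji}P_i$, the orthogonality conditions $\langle f_j,f_k\rangle_{L^2_w}=0$ for $k\in\{1,\dots,j-1\}$ become a linear system in the unknown vector $K_j:=(\kappa_{j,1},\dots,\kappa_{j,j-1})^T$: after expanding and using the induction hypothesis on the $\kappa_{ki}$, one gets
\[
 \big(I+\widetilde E_h\big)\,K_j\,=\,-\,V_j,\qquad V_j:=\big(\langle P_j,f_k\rangle_{L^2_w}\big)_{k=1,\dots,j-1},
\]
where $\widetilde E_h$ is an $(j-1)\times(j-1)$ matrix of order $O(e^{-c/h})$ built from $E_h$ and from the $\kappa_{ki}$'s with $k<j$. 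By the same estimates, each entry of $V_j$ is $O(e^{-c/h})$ (the leading term $\langle P_j,P_k\rangle_{L^2_w}$ is $O(e^{-c/h})$ by item~3b of Proposition~\ref{ESTIME1-base}, and the corrections involving $\kappa_{ki}$ are also $O(e^{-c/h})$ by induction). Since $I+\widetilde E_h$ is invertible for $h$ small enough, with inverse of the form $I+O(e^{-c/h})$, one concludes $K_j=O(e^{-c/h})$, i.e.\ $\kappa_{ji}=O(e^{-c/h})$ for all $i<j$, possibly with a slightly smaller constant $c>0$.

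\emph{Estimate of $Z_j$.} Once the $\kappa_{ji}$'s are controlled, expanding
\[
 Z_j^2\,=\,\|f_j\|_{L^2_w}^2\,=\,G_{j,j}\,+\,2\sum_{i=1}^{j-1}\kappa_{ji}\,G_{j,i}\,+\sum_{i,i'=1}^{j-1}\kappa_{ji}\kappa_{ji'}\,G_{i,i'}
\]
and using $G_{j,j}=1+O(e^{-c/h})$, $G_{j,i}=O(e^{-c/h})$ and $\kappa_{ji}=O(e^{-c/h})$ immediately gives $Z_j^2=1+O(e^{-c/h})$, hence $Z_j=1+O(e^{-c/h})$. This completes the induction. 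The only technical point to be careful with is the bookkeeping of the (finitely many) successive reductions of the constant $c>0$ through the induction steps, but since $j$ ranges over a fixed finite set $\{1,\dots,\ft m_1^{\overline\Omega}\}$ this causes no issue and a single positive constant $c$ valid for all $j$ and $i$ can be extracted at the end.
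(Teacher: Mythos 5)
Your proof is correct and follows the same route the paper indicates (the paper itself omits the argument, citing \cite[Section~2]{di-gesu-le-peutrec-lelievre-nectoux-16} for a similar induction based on the near-orthonormality estimate of item~3b in Proposition~\ref{ESTIME1-base}). Your derivation carries this out explicitly and is sound; the induction-on-$j$ structure, the uniformly invertible $I+O(e^{-c/h})$ coefficient matrix, and the final bookkeeping of constants over the finite index range are all exactly what is needed.
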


\subsubsection{Estimates of the interaction terms $\left(\lp    \nabla
    u_h ,   \psi_j \rp_{L^2_w}\right)_{j \in \{1,\ldots,\ft m_1^{\overline \Omega}\}}$       }\label{sec:estim_inter1}
Let us begin with the estimates of the terms  $\lp    \nabla  \pi_h^{(0)} \widetilde u_k,   \psi_j \rp_{L^2_w}$, where $j\in \big \{1,\dots,\ft m_1^{\overline \Omega}\big \}$ and $k\in \{1,\ldots,\ft m_0^{ \Omega}\}$.
\begin{lemma} \label{interaction1}  
Let us assume that the assumption \eqref{H-M} holds. Then, there exists $c>0$ such that  for all for all $k\in \{1,\ldots,\ft m_0^{ \Omega}\}$, $j\in\big\{1,\dots,\ft m_1^{\overline \Omega}\big\}$ and $h>0$ small enough, it holds: 
\begin{align*} 
  \lp       \nabla \pi_h^{(0)}\widetilde u_k ,      \psi_j   \rp_{L^2_w} &=\begin{cases}  \lp       \nabla \widetilde u_k ,    \widetilde \psi_j \rp_{L^2_w}\big(1+O(e^{-\frac ch}))   &  \text{ if } z_j\in\mathbf j(x_{k}) ,\\
O \left(e^{-\frac{1}{h}\big (f(\mbf j(x_k))- f(x_k)+c\big )}\right )   &   \text{ if } z_j\notin\mathbf j(x_{k}),
  \end{cases} 
\end{align*}
  where we recall that  the asymptotic expansion of the term $ \lp       \nabla \widetilde u_k ,    \widetilde \psi_j \rp_{L^2_w}$ is given in Proposition~\ref{pr.S}.

\end{lemma}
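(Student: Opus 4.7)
The plan is to reduce the estimate to the matrix coefficients $S_{j,k}$ already analyzed in Proposition~\ref{pr.S}, by exploiting the intertwining of $\nabla$ with the spectral projectors together with the Gram--Schmidt decomposition \eqref{eq.gramfj}--\eqref{eq.psij}. First, since $\psi_j\in \Ran\,\pi_h^{(1)}$ and using the commutation relation
$d^{(0)}\pi_h^{(0)}=\pi_h^{(1)}d^{(0)}$ from \eqref{eq.comutation} together with the self-adjointness of $\pi_h^{(1)}$, I would write
\begin{equation*}
\lp\nabla\pi_h^{(0)}\widetilde u_k,\psi_j\rp_{L^2_w}
=\lp\pi_h^{(1)}\nabla\widetilde u_k,\psi_j\rp_{L^2_w}
=\lp\nabla\widetilde u_k,\psi_j\rp_{L^2_w}.
\end{equation*}

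Next, I would insert the Gram--Schmidt expression $\psi_j=Z_j^{-1}\pi_h^{(1)}\big[\widetilde\psi_j+\sum_{i<j}\kappa_{ji}\widetilde\psi_i\big]$ and use once more that $\pi_h^{(1)}$ is an $L^2_w$-orthogonal projector commuting with $\nabla$ (via \eqref{eq.comutation}) to obtain
\begin{equation*}
\lp\nabla\pi_h^{(0)}\widetilde u_k,\psi_j\rp_{L^2_w}
=\frac{1}{Z_j}\Big(S_{j,k}+\sum_{i=1}^{j-1}\kappa_{ji}\,S_{i,k}\Big),
\end{equation*}
where $S_{i,k}=\lp\nabla\pi_h^{(0)}\widetilde u_k,\pi_h^{(1)}\widetilde\psi_i\rp_{L^2_w}$ is the matrix entry introduced in Definition~\ref{de.S}. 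At this point Lemma~\ref{le.e1} supplies $Z_j=1+O(e^{-c/h})$ and $\kappa_{ji}=O(e^{-c/h})$, so everything boils down to controlling the $S_{i,k}$ via Proposition~\ref{pr.S}.

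Now I would split into the two cases. When $z_j\in\mathbf{j}(x_k)$, Proposition~\ref{pr.S} gives $S_{j,k}=\lp\nabla\widetilde u_k,\widetilde\psi_j\rp_{L^2_w}(1+O(e^{-c/h}))$ of order $h^{p_{j,k}}e^{-(f(\mathbf{j}(x_k))-f(x_k))/h}$, while for each $i<j$ the term $\kappa_{ji}S_{i,k}$ is bounded, depending on whether $z_i\in\mathbf{j}(x_k)$ or not, either by $O(e^{-c/h})\cdot h^{p_{i,k}}e^{-(f(\mathbf{j}(x_k))-f(x_k))/h}$ or by $O(e^{-(f(\mathbf{j}(x_k))-f(x_k)+c')/h})$, which in both cases is an $e^{-c''/h}$-factor smaller than the main contribution; combining with $Z_j^{-1}=1+O(e^{-c/h})$ yields the first stated asymptotic. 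When $z_j\notin\mathbf{j}(x_k)$, Proposition~\ref{pr.S} already gives $S_{j,k}=O(e^{-(f(\mathbf{j}(x_k))-f(x_k)+c)/h})$, and the same estimate holds for every $\kappa_{ji}S_{i,k}$ (uniformly in $i<j$) after possibly shrinking $c$, thanks to the exponential smallness of $\kappa_{ji}$; this yields the second stated asymptotic.

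There is no genuine obstacle here: the only point requiring a tiny bit of care is to check that the $e^{-c/h}$ factor coming from $\kappa_{ji}$ is strong enough to absorb the (at most polynomially large) prefactors of $S_{i,k}$ when $z_i\in\mathbf{j}(x_k)$, which is immediate since $\|\nabla\widetilde u_k\|_{L^2_w}=O(e^{-(f(\mathbf{j}(x_k))-f(x_k)-\delta)/h})$ from Lemma~\ref{le.dfh-v} and $\|\widetilde\psi_i\|_{L^2_w}=1$ provide the uniform polynomial bounds on $|S_{i,k}|$ via Cauchy--Schwarz.
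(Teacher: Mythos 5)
Your proposal is correct and follows essentially the same route as the paper: expand $\psi_j$ via the Gram--Schmidt formula \eqref{eq.gramfj}--\eqref{eq.psij}, invoke Lemma~\ref{le.e1} for $Z_j$ and $\kappa_{ji}$, and conclude with Proposition~\ref{pr.S}; your preliminary reduction via the commutation relation \eqref{eq.comutation} is the same identity the paper records just after Definition~\ref{de.S}. Your explicit check that the exponentially small $\kappa_{ji}$ absorbs the polynomial prefactors of $S_{i,k}$ is exactly the (omitted) verification behind the paper's ``follows immediately.''
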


\begin{proof} 
Using \eqref{eq.gramfj}, \eqref{eq.psij}, and  Lemma~\ref{le.e1}, one has for some $c>0$ and for all $j\in\big \{1,\dots,\ft m_1^{\overline \Omega}\big \}$ and $h>0$ small enough,
      \begin{align*} 
 \lp    \nabla \pi_h^{(0)} \widetilde u_k ,   \psi_j \rp_{L^2_w}    &=Z^{-1}_j \left [    \lp       \nabla \pi_h^{(0)}\widetilde u_k ,     \pi_h^{(1)} \widetilde \psi_j   \rp_{L^2_w}   +\sum \limits_{i=1}^{j-1}\kappa_{ji} \,   \lp       \nabla \pi_h^{(0)}\widetilde u_k ,    \pi_h^{(1)} \widetilde \psi_i   \rp_{L^2_w} \  \right]\\
 &= \big (1+O(e^{-\frac ch} )\big) 
 \left [    \lp       \nabla \pi_h^{(0)}\widetilde u_k ,     \pi_h^{(1)} \widetilde \psi_j   \rp_{L^2_w}   +\sum \limits_{i=1}^{j-1}O(e^{-\frac ch} ) \,   \lp       \nabla \pi_h^{(0)}\widetilde u_k ,    \pi_h^{(1)} \widetilde \psi_i   \rp_{L^2_w} \  \right].
  \end{align*} 
Using Proposition~\ref{pr.S}, the statement of Lemma~\ref{interaction1} follows immediately.
\end{proof}

\noindent 
In view of~\eqref{eq.rere},  we need to give an asymptotic estimate of the terms
$$ \lp       \nabla u_h ,      \psi_j \rp_{L^2_w} \text{ for } j\in\big \{1,\dots,\ft m_1^{\overline \Omega}\big \}.$$
 To do that, we would like to prove that $ \lp       \nabla u_h ,      \psi_j \rp_{L^2_w}$ is well approximated by~$ \lp       \nabla \pi_h^{(0)} \widetilde u_1 ,      \psi_j \rp_{L^2_w}$. 
 To this end, let us show  that~$\pi_h^{(0)} \widetilde u_1$ is an accurate
approximation of~$u_{h}$ in~$H^{1}_{w}(\Omega)$.\medskip

\noindent
Before, let us recall that when \eqref{H-M} and~\eqref{eq.hip1-j} hold,   Corollary~\ref{co.thm4} implies that there exists $\beta_0>0$ such that for all $\beta\in (0,\beta_0)$, there exists $h_0>0$ such that  for all $h\in (0,h_0)$, the orthogonal projector 
$$ \widetilde  \pi^{(0)}_h=\pi_{[0,e^{-\frac{2}{h}( f(\mbf j(x_{1}))-f(x_{1})-\beta)})}(-L^{D,(0)}_{f,h}) \text{ has rank } 1.$$
 Therefore,~$\widetilde  \pi^{(0)}_h$ is the orthogonal projector onto $\sspan \, u_h$.  
Moreover, from the second equality in~\eqref{eq.uh=} and item 3a in Proposition~\ref{ESTIME1-base}, one has
$$u_h= \pi^{(0)}_h \widetilde u_1  +O ( e^{-\frac{c}{h}}) \ \text{  in  } L^2_w(\Omega).$$
Therefore,  
 $\pi^{(0)}_h \widetilde u_1$ is an accurate approximation of
$u_{h}$ in~$L^{2}_{w}(\Omega)$.
The following result extends this result in~$H^{1}_{w}(\Omega)$  when assuming   \eqref{eq.hip2-j} in addition to \eqref{H-M} and~\eqref{eq.hip1-j}. 

\begin{lemma} \label{nabla-pi-u1}
Assume  that \eqref{H-M},~\eqref{eq.hip1-j} and \eqref{eq.hip2-j}   hold. Then, it holds in the limit $h\to 0$:
\begin{equation*}
 \big\Vert \nabla  \pi_h^{(0)} \widetilde u_1  \big\Vert^{2}_{L^2_w}=\frac{2}{h}\lambda_h\,\big(1+O(\ve_h)\big)
\end{equation*} 
and
  \begin{equation*}
  \big\Vert \nabla  (\pi_h^{(0)}-\widetilde \pi_h^{(0)} ) \widetilde u_1  \big\Vert^{2}_{L^2_w}  =\frac{2}{h}\lambda_h\,O(\ve_h)
 =O\big(h^{-\frac32}\,e^{-\frac2h(f(\mbf j(x_{1}))-f(x_{1}))}\,\ve_h\big),
 \end{equation*} 
 where   in the limit $h\to 0$,~$\ve_h$  satisfies~\eqref{eq.epsilon-h}
  \end{lemma}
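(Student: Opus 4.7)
The overall idea is to combine the commutation $\pi_h^{(1)} d = d \pi_h^{(0)}$ on $\Lambda^0 H^1_{w,T}(\Omega)$ with Parseval's identity in the orthonormal basis $(\psi_j)_{j=1,\dots,\ft m_1^{\overline\Omega}}$ of $\Ran\,\pi_h^{(1)}$ constructed in Section~\ref{sec:gram1}. More precisely, from $\widetilde u_1\in \Lambda^0 H^1_{w,T}(\Omega)$ and $\pi_h^{(1)}\nabla\widetilde u_1=\nabla\pi_h^{(0)}\widetilde u_1$, one has
$$
\big\|\nabla\pi_h^{(0)}\widetilde u_1\big\|^2_{L^2_w}=\big\|\pi_h^{(1)}\nabla\widetilde u_1\big\|^2_{L^2_w}=\sum_{j=1}^{\ft m_1^{\overline\Omega}} \big\langle \nabla\widetilde u_1,\psi_j\big\rangle_{L^2_w}^2.
$$
The key input is then Lemma~\ref{interaction1}, which (using the self-adjointness of $\pi_h^{(1)}$ and the commutation above) yields, for some $c>0$ independent of $h$,
$$
\big\langle \nabla\widetilde u_1,\psi_j\big\rangle_{L^2_w}=\big\langle \nabla\pi_h^{(0)}\widetilde u_1,\psi_j\big\rangle_{L^2_w}=\begin{cases}\big\langle \nabla\widetilde u_1,\widetilde\psi_j\big\rangle_{L^2_w}\bigl(1+O(e^{-c/h})\bigr) & \text{if } z_j\in\mbf j(x_1),\\[2pt] O\bigl(e^{-(\ft S_1+c)/h}\bigr) & \text{otherwise,}\end{cases}
$$
with $\ft S_1=f(\mbf j(x_1))-f(x_1)$. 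This reduces everything to computing $\sum_j \langle \nabla\widetilde u_1,\widetilde\psi_j\rangle_{L^2_w}^2$.

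To prove the first identity, I would insert the sharp asymptotics from Proposition~\ref{ESTIME1} into the previous display. Since $\langle\nabla\widetilde u_1,\widetilde\psi_j\rangle_{L^2_w}=0$ as soon as $z_j\notin\mbf j(x_1)$, only the terms indexed by $z_j\in\mbf j(x_1)\cap\pa\Omega$ (of size $h^{-3/2}C_{j,1}^{2}e^{-2\ft S_1/h}$) and, when $\mbf j(x_1)\cap\Omega\neq\emptyset$, the terms indexed by $z_j\in\mbf j(x_1)\cap\Omega$ (of size $h^{-1}C_{j,1}^{2}e^{-2\ft S_1/h}$, hence a factor $\sqrt h$ smaller) contribute. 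Collecting these, and combining with the above estimate for the cross/negligible contributions (which are $O(e^{-2(\ft S_1+c)/h})$ and absorbed in a relative error of order $e^{-c'/h}$), one gets
$$
\sum_{j=1}^{\ft m_1^{\overline\Omega}}\big\langle \nabla\widetilde u_1,\psi_j\big\rangle_{L^2_w}^2= h^{-\frac32}\Bigl(\sum_{z_j\in \mbf j(x_1)\cap\pa\Omega}C_{j,1}^{2}\Bigr)e^{-\frac{2}{h}\ft S_1}\bigl(1+O(\ve_h)\bigr),
$$
with $\ve_h$ as in \eqref{eq.epsilon-h}. I would then compare this prefactor, via~\eqref{eq.Cip}--\eqref{eq.Bi}, with the explicit formula for $\lambda_h$ given by Theorem~\ref{thm-big0} (see also~\eqref{eq.besoin}): both are equal to $\frac{2}{h}\lambda_h(1+O(\ve_h))$, which yields the first assertion.

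For the second identity, I would use that $\widetilde\pi_h^{(0)}\widetilde u_1\in\mathrm{Span}(u_h)$ and $(\pi_h^{(0)}-\widetilde\pi_h^{(0)})\widetilde u_1\in \mathrm{Span}(u_{2,h},\dots,u_{\ft m_0^\Omega,h})$ are spanned by eigenfunctions of $-L_{f,h}^{D,(0)}$ with disjoint eigenvalues, so they are orthogonal both in $L^2_w(\Omega)$ and for the inner product $\langle \nabla\cdot,\nabla\cdot\rangle_{L^2_w}$ (since $\langle \nabla\phi,\nabla\psi\rangle_{L^2_w}=\tfrac{2}{h}\mu_\phi\langle \phi,\psi\rangle_{L^2_w}=0$ for eigenpairs with $\mu_\phi\neq\mu_\psi$). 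Hence
$$
\big\|\nabla\pi_h^{(0)}\widetilde u_1\big\|^2_{L^2_w}=\big\|\nabla\widetilde\pi_h^{(0)}\widetilde u_1\big\|^2_{L^2_w}+\big\|\nabla(\pi_h^{(0)}-\widetilde\pi_h^{(0)})\widetilde u_1\big\|^2_{L^2_w}.
$$
From~\eqref{normeu} one has $\widetilde\pi_h^{(0)}\widetilde u_1=\beta_1 u_h$ with $\beta_1^2=1+O(e^{-c/h})$, so $\|\nabla\widetilde\pi_h^{(0)}\widetilde u_1\|^2_{L^2_w}=\beta_1^2\tfrac{2}{h}\lambda_h=\tfrac{2}{h}\lambda_h(1+O(e^{-c/h}))$. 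Subtracting this from the first identity already established gives the second one, and the final rewriting follows from the estimate $\frac{2}{h}\lambda_h=O(h^{-3/2}e^{-2\ft S_1/h})$ of Theorem~\ref{thm-big0}.

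The main obstacle will be bookkeeping the $\ve_h$ dichotomy: under \eqref{eq.hip4-j} the sum above contains only the $\pa\Omega$-saddle points and Proposition~\ref{ESTIME1} provides a full asymptotic expansion in $h$, leading to a relative error $O(e^{-c/h})$; without \eqref{eq.hip4-j} the interior contributions $z_j\in\mbf j(x_1)\cap\Omega$ introduce the genuine $O(\sqrt h)$ correction. Carefully tracking this case distinction, and checking that the identification of the prefactors with the constant appearing in Theorem~\ref{thm-big0} is exact (and not merely up to a universal constant), is where most of the computation lies.
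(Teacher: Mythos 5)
Your proof is correct and follows essentially the same route as the paper: Parseval in the $(\psi_j)$ basis, the interaction estimates of Lemma~\ref{interaction1}/Proposition~\ref{ESTIME1}, and identification of the prefactor with $\tfrac{2}{h}\lambda_h$ via~\eqref{eq.besoin}. The only place where you diverge is the proof of the second identity: you derive the Pythagorean decomposition $\|\nabla\pi_h^{(0)}\widetilde u_1\|^2 = \|\nabla\widetilde\pi_h^{(0)}\widetilde u_1\|^2 + \|\nabla(\pi_h^{(0)}-\widetilde\pi_h^{(0)})\widetilde u_1\|^2$ from eigenspace orthogonality (the cross term vanishes because $\langle \nabla\phi,\nabla\psi\rangle_{L^2_w}=\tfrac{2}{h}\mu_\phi\langle\phi,\psi\rangle_{L^2_w}=0$ for eigenfunctions with distinct eigenvalues), then compute $\|\nabla\widetilde\pi_h^{(0)}\widetilde u_1\|^2=\beta_1^2\tfrac{2}{h}\lambda_h$ directly; the paper instead manipulates the quadratic form $\langle\cdot,-L^{D,(0)}_{f,h}\cdot\rangle_{L^2_w}$ using the projector identities $\widetilde\pi_h^{(0)}\pi_h^{(0)}=\widetilde\pi_h^{(0)}$ and idempotency of $\pi_h^{(0)}-\widetilde\pi_h^{(0)}$ to reach the same cancellation. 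These are two phrasings of the same algebra; your formulation makes the orthogonality geometrically transparent, while the paper's avoids invoking the spectral decomposition into individual eigenfunctions and stays at the level of spectral projectors, but neither buys a genuinely different result or saves any real work.
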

\begin{proof}
Applying the Parseval identity to 
$\nabla  \pi_h^{(0)} \widetilde u_1\in \Ran\, \pi_h^{(1)}$ (see~\eqref{eq.comutation}), one gets
$$ \big\Vert \nabla  \pi_h^{(0)} \widetilde u_1  \big\Vert^2_{L^2_w}=\sum \limits_{j=1}^{\ft m_1^{\overline \Omega}}\langle \nabla\pi^{(0)}_h\widetilde u_1, \psi_j  \rangle_{L^2_w}^2.$$
Using Lemma~\ref{interaction1}, there exists consequently $c>0$ such that for all $h>0$ small enough, 
 \begin{align*}
 \big \Vert \nabla  \pi_h^{(0)} \widetilde u_1  \big\Vert^2_{L^2_w}  = \sum \limits_{j:\, z_{j}\in\mathbf j(x_{1})} \big\langle \nabla \widetilde u_1, \widetilde \psi_j   \big\rangle_{L^2_w}^2 \, (1+O (e^{-\frac ch}  )) \end{align*} 
Using in addition~\eqref{eq.besoin}, one then obtains the first part of 
Lemma \ref{nabla-pi-u1}: 
\begin{equation}
\label{eq.first-estim}
 \big\Vert \nabla  \pi_h^{(0)} \widetilde u_1 \big\Vert^2_{L^2_w}=\frac{2}{h}\lambda_h\big(1+O(\ve_{h}) \big),
\end{equation}
 where, in the limit $h\to 0$,  $\ve_h$ satisfies~\eqref{eq.epsilon-h}.\medskip
 
\noindent 
Let us  recall  that from~\eqref{eq.uh=} and~\eqref{normeu}, one has for any $h$ small enough  
 \begin{equation}
  \label{eq.uh-tildeu1}
 u_h= \frac{\widetilde  \pi^{(0)}_h \widetilde u_{1}}{\| \widetilde  \pi^{(0)}_h \widetilde u_{1} \|_{L^2_w}} \, \text{ where }\, \big  \| \widetilde  \pi^{(0)}_h \widetilde u_{1} \big \|_{L^2_w}= 1+O ( e^{-\frac{c}{h}}) .
 \end{equation}
Now, since the projectors $ \pi_h^{(0)}$ and $\widetilde \pi_h^{(0)} $ commute with $L^{D,(0)}_{f,h}$, and $\widetilde \pi_h^{(0)}\pi_h^{(0)}=\widetilde \pi_h^{(0)}$, one has
\begin{align*}
 \frac h2  \big\Vert \nabla  (\pi_h^{(0)}-\widetilde \pi_h^{(0)}  ) \widetilde u_1  \big\Vert^2_{L^2_w}  &=  \big  \langle   (\pi_h^{(0)}-\widetilde \pi_h^{(0)} ) \widetilde u_1, -L^{D,(0)}_{f,h}\,(\pi_h^{(0)}-\widetilde \pi_h^{(0)} ) \widetilde u_1   \big \rangle_{L^2_w} \\
 &=  \big\langle   \pi_h^{(0)} \widetilde u_1, -L^{D,(0)}_{f,h}\,  (\pi_h^{(0)}-\widetilde \pi_h^{(0)}) \widetilde u_1  \big \rangle_{L^2_w} \\
 &=   \big \langle   \pi_h^{(0)} \widetilde u_1, -L^{D,(0)}_{f,h} \pi_h^{(0)} \widetilde u_1  \big \rangle_{L^2_w}-  \big \langle   \widetilde \pi_h^{(0)} \widetilde u_1, -L^{D,(0)}_{f,h} \widetilde \pi_h^{(0)} \widetilde u_1 \big   \rangle_{L^2_w}\\
 &= \frac h2  \big\Vert \nabla \pi_h^{(0)} \widetilde u_1  \big \Vert^2_{L^2_w}-\lambda_h\big (1+O ( e^{-\frac{c}{h}})\big ),
  \end{align*} 
  where the last line follows from \eqref{eq.uh-tildeu1}.
  Using  in addition \eqref{eq.first-estim}, one   obtains in the limit $h\to 0$: 
 \begin{align*}
\frac h 2  \big\Vert \nabla  (\pi_h^{(0)}-\widetilde \pi_h^{(0)}  ) \widetilde u_1 \big  \Vert^2_{L^2_w}   &=\lambda_h \big(1+O(\ve_h)\big )
-\lambda_h\big (1+O ( e^{-\frac{c}{h}})\big )=\lambda_h\,O(\ve_h),
 \end{align*} 
 which proves Lemma~\ref{nabla-pi-u1}, 
 using also the asymptotic estimate of $\lambda_{h}$ given in Theorem~\ref{thm-big0}, see~\eqref{eq.lambda_h}. \end{proof}

\noindent
 We are now in position to estimate  the interaction terms $\left(\lp    \nabla
    u_h ,   \psi_j \rp_{L^2_w}\right)_{j \in \{1,\ldots,\ft m_1^{\overline \Omega}\}}$.
\begin{corollary} \label{lemm2}
Let us assume that the assumptions~\eqref{H-M},~\eqref{eq.hip1-j} and \eqref{eq.hip2-j}   hold.  Let $u_h$ be the eigenfunction associated with the principal eigenvalue $\lambda_h$ of $-L_{f,h}^{D,(0)}$ (see~\eqref{eq.lh})  which satisfies~\eqref{eq.norma}.
Then,  
in the limit $h\to0$:
\begin{enumerate} [leftmargin=1.3cm,rightmargin=1.3cm]
\item[(i)] for all $j\in \big\{1,\dots,\ft m_1^{\overline \Omega}\big \}$ such that $z_{j}\in\mathbf j(x_{1})\cap\pa\Omega$
(i.e. for all $j\in\big \{1,\dots,\ft k_{1}^{\pa \ft C_1}\big\}$, see~\eqref{eq.k1-paC1-0} and~\eqref{eq.k1-paC1}),
  \begin{align*}
   \big \lp    \nabla   u_h,   \psi_j  \big \rp_{L^2_w}&=   \big \langle \nabla  \widetilde u_1, \widetilde \psi_j  \big \rangle_{L^2_w} \, \big (1+O(\sqrt{\ve_h}) \big) \\ 
&=-C_{j,1} \ h^{-\frac34}  e^{-\frac{1}{h}(f(\mbf j(x_{1}))-f(x_{1}))}   \, \big (1+O(\sqrt{\ve_h})  +     O(h )   \big),
\end{align*} 
\item[(ii)] for all $j\in \big \{1,\dots,\ft m_1^{\overline \Omega}\big \}$ such that $z_{j}\in\mathbf j(x_{1})\cap\Omega$,
 \begin{align*}
   \big \lp    \nabla   u_h,   \psi_j  \big \rp_{L^2_w}&=    \big \langle \nabla  \widetilde u_1, \widetilde \psi_j  \big \rangle_{L^2_w} \, \big (1+O(h^{-\frac14}\sqrt{\ve_h}) \big) \\ 
&=O\big(h^{-\frac12}  e^{-\frac{1}{h}(f(\mbf j(x_{1}))-f(x_{1}))}\big),
\end{align*} 
\item[(iii)]  and for all $j\in \big \{1,\dots,\ft m_1^{\overline \Omega}\big \}$ such that $z_{j}\notin\mathbf j(x_{1})$,
  \begin{equation*}
   \big \lp    \nabla    u_h,   \psi_j  \big \rp_{L^2_w}   =O\big( h^{-\frac34}  e^{-\frac{1}{h}(f(\mbf j(x_{1}))-f(x_{1}))}\,\sqrt{\ve_{h}}  \big ),
  \end{equation*} 
  \end{enumerate}
   where  in the limit $h\to 0$,~$\ve_h$ satisfies~\eqref{eq.epsilon-h}. 
\end{corollary}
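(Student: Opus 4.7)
The plan is to exploit the fact, already established in \eqref{eq.uh-tildeu1}, that $u_h = (1+O(e^{-c/h}))\,\widetilde\pi^{(0)}_h\widetilde u_1$ in $L^2_w(\Omega)$, and to transfer this closeness to the $H^1_w$ level using Lemma~\ref{nabla-pi-u1}. Writing
\[
\pi_h^{(0)}\widetilde u_1 = \widetilde\pi_h^{(0)}\widetilde u_1 + \big(\pi_h^{(0)}-\widetilde\pi_h^{(0)}\big)\widetilde u_1,
\]
one decomposes
\[
\big\lp \nabla u_h,\psi_j\big\rp_{L^2_w} = \big(1+O(e^{-c/h})\big)\Big[\big\lp \nabla \pi_h^{(0)}\widetilde u_1,\psi_j\big\rp_{L^2_w} - \big\lp \nabla\big(\pi_h^{(0)}-\widetilde\pi_h^{(0)}\big)\widetilde u_1,\psi_j\big\rp_{L^2_w}\Big].
\]
The first term on the right-hand side is controlled by Lemma~\ref{interaction1} together with Proposition~\ref{pr.S}, and provides the leading contribution when $z_j\in\mathbf j(x_1)$. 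The second term is estimated by Cauchy--Schwarz in $L^2_w$, using $\|\psi_j\|_{L^2_w}=1$ and the key $H^1_w$ estimate
\[
\big\|\nabla\big(\pi_h^{(0)}-\widetilde\pi_h^{(0)}\big)\widetilde u_1\big\|_{L^2_w} = O\big(h^{-\frac34}\,e^{-\frac1h(f(\mbf j(x_1))-f(x_1))}\sqrt{\ve_h}\,\big)
\]
 provided by Lemma~\ref{nabla-pi-u1}.

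First I would handle case $(i)$, i.e.\ $z_j\in\mathbf j(x_1)\cap\pa\Omega$. In that case Lemma~\ref{interaction1} yields $\lp \nabla \pi_h^{(0)}\widetilde u_1,\psi_j\rp_{L^2_w} = \lp \nabla \widetilde u_1,\widetilde\psi_j\rp_{L^2_w}\big(1+O(e^{-c/h})\big)$, and by Proposition~\ref{pr.S} this is of exact order $h^{-3/4}e^{-(f(\mbf j(x_1))-f(x_1))/h}$, with explicit prefactor $-C_{j,1}$. The Cauchy--Schwarz error term is of the same exponential order but smaller by a factor $\sqrt{\ve_h}$, which yields the relative error $O(\sqrt{\ve_h})$ claimed in $(i)$.

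For case $(ii)$, where $z_j\in\mathbf j(x_1)\cap\Omega$, Proposition~\ref{pr.S} gives the main term at order $h^{-1/2}e^{-(f(\mbf j(x_1))-f(x_1))/h}$, whereas the error term remains at order $h^{-3/4}e^{-(f(\mbf j(x_1))-f(x_1))/h}\sqrt{\ve_h}$. The ratio is $h^{-1/4}\sqrt{\ve_h}$, which produces the slightly weaker relative error stated there. Finally, case $(iii)$, with $z_j\notin\mathbf j(x_1)$, follows immediately because Lemma~\ref{interaction1} shows that $\lp \nabla \pi_h^{(0)}\widetilde u_1,\psi_j\rp_{L^2_w}$ is exponentially smaller than $e^{-(f(\mbf j(x_1))-f(x_1))/h}$, so the whole interaction is dominated by the $H^1_w$-error term $O(h^{-3/4}e^{-(f(\mbf j(x_1))-f(x_1))/h}\sqrt{\ve_h})$.

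The only delicate point is justifying the Cauchy--Schwarz estimate cleanly: one must use that $\psi_j\in\Ran\pi_h^{(1)}$ is $L^2_w$-normalized (by its Gram--Schmidt construction in Section~\ref{sec:gram1}), so the inner product with a $1$-form is bounded by the $L^2_w$-norm of that $1$-form, and we then invoke $\|\nabla(\pi_h^{(0)}-\widetilde\pi_h^{(0)})\widetilde u_1\|_{L^2_w}$ directly. This is the main obstacle in the sense that it is precisely the spectral gap encoded by Lemma~\ref{nabla-pi-u1} (and ultimately Theorem~\ref{thm-big0}) that controls the difference $\pi_h^{(0)}-\widetilde\pi_h^{(0)}$ in $H^1_w$; once that lemma is in hand, the remaining arguments are purely algebraic bookkeeping of exponential scales against the prefactors provided by Proposition~\ref{pr.S}.
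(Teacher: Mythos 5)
Your proposal is correct and follows essentially the same route as the paper: replace $u_h$ by $\widetilde\pi_h^{(0)}\widetilde u_1$ via \eqref{eq.uh-tildeu1}, decompose into $\nabla\pi_h^{(0)}\widetilde u_1$ plus the difference $\nabla(\pi_h^{(0)}-\widetilde\pi_h^{(0)})\widetilde u_1$, estimate the latter by Cauchy--Schwarz and Lemma~\ref{nabla-pi-u1}, and read off the leading terms from Lemma~\ref{interaction1} together with Proposition~\ref{pr.S}. The bookkeeping of scales and the resulting relative errors in each of the three cases match the paper's argument.
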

\begin{proof}
Using \eqref{eq.uh-tildeu1}, there exists $c>0$ such that for all $j\in \big \{1,\dots,\ft m_1^{\overline \Omega}\big \}$,  in the limit $h\to 0$: 
\begin{equation}\label{eq.lpuh}
 \big \lp    \nabla u_h,   \psi_j \big  \rp_{L^2_w}=  \big\lp    \nabla  \widetilde \pi_h^{(0)} \widetilde u_1,   \psi_j  \big \rp_{L^2_w}\,\big(1+O(e^{-\frac{c}{h}})\big).
\end{equation}
\begin{sloppypar}
\noindent
In addition, using the Cauchy-Schwarz inequality and the second statement in Lemma~\ref{nabla-pi-u1}, it holds for all $j\in \big \{1,\dots,\ft m_1^{\overline \Omega}\big \}$, in the limit $h\to 0$:
\end{sloppypar}
\begin{align}
\nonumber
  \big \lp    \nabla  \widetilde \pi_h^{(0)} \widetilde u_1,   \psi_j  \big \rp_{L^2_w} &= \big \lp    \nabla   \pi_h^{(0)} \widetilde u_1,   \psi_j  \big \rp_{L^2_w}+ \big \lp    \nabla   (\widetilde \pi_h^{(0)}- \pi_h^{(0)} )\widetilde u_1,   \psi_j  \big \rp_{L^2_w}\\
 \label{eq.ici}
 &= \big \lp    \nabla   \pi_h^{(0)} \widetilde u_1,   \psi_j  \big \rp_{L^2_w}+O\big(h^{-\frac34}\,e^{-\frac1h(f(\mbf j(x_{1}))-f(x_{1}) )}\,\sqrt{\ve_h}\big),
\end{align}
 where $\ve_h$ is of the order given by~\eqref{eq.epsilon-h}.
Then, the statement of Corollary~\ref{lemm2} follows by injecting \eqref{eq.ici}
into \eqref{eq.lpuh} and by using the estimates of the terms $ \big \lp    \nabla   \pi_h^{(0)} \widetilde u_1,   \psi_j  \big \rp_{L^2_w}$ ($j\in\{1,\dots,\ft m_1^{\overline \Omega}\}$) given in 
Lemma~\ref{interaction1}. \end{proof}

\subsubsection{Estimates of the boundary terms $\Big(\displaystyle{\int_{\Sigma}   \,  F  \, \psi_j \cdot n  \,   e^{- \frac{2}{h} f} } \Big)_{j\in  \{1,\ldots,\ft m_1^{\overline \Omega} \}}$}
\label{sec:estim_bound}

\begin{proposition} \label{gamma}  
Let us assume that the assumption \eqref{H-M} is satisfied. Let us consider  $i\in\{1,\dots,\ft m_1^{\overline \Omega}\}$, 
an open set $\Sigma$ of $\pa\Omega$, 
 and  $F\in L^{\infty}(\partial \Omega,\mathbb R)$. Then, there exists $c>0$ such that   in the limit $h\to 0$:
\begin{equation*}
  \int_{\Sigma}   F \, \psi_i \cdot n  \   e^{- \frac{2}{h} f}   =\begin{cases}  O \big (e^{-\frac{1}{h} (\min_{\pa\Omega}f+c)} \big)   &   \text{ if } i\in  \big  \{\ft k_1^{\pa \Omega}+1,\dots,\ft m_1^{\overline \Omega} \big \} , \\
O \big (e^{-\frac{1}{h} (\min_{\pa\Omega}f+c)} \big )   &  \text{ if } i\in  \big \{1,\dots,\ft k_1^{\pa \Omega} \big \} \text{ and } z_{i}\notin\overline\Sigma, \\
 O \big (h^{\frac{d-3}{4}}     e^{-\frac{1}{h} \min_{\pa\Omega}f} \big )  &  \text{ if } i\in  \big \{1,\dots,\ft k_1^{\pa \Omega}  \big \}  \text{ and } z_{i}\in\overline\Sigma,
  \end{cases} 
  \end{equation*}
  where we recall that $\{z_1,\ldots,z_{\ft k_1^{\pa \Omega}}\}= \ft U_1^{\pa \Omega}\cap \argmin_{\pa \Omega} f$ (see~\eqref{eq.z11}). Moreover, when $i\in  \big\{1,\dots,\ft k_1^{\pa \Omega} \big \}$,~$z_{i}\in {\Sigma}$, and  $F$ is  $C^{\infty}$ in a neighborhood  of $z_i$, it holds in the limit $h\to 0$:
  \begin{equation*}
  \int_{\Sigma}   F \, \psi_i \cdot n  \   e^{- \frac{2}{h} f}   = \, h^{\frac{d-3}{4}}   \     e^{-\frac{1}{h} \min_{\pa\Omega}f}  \    \big( B_{i}\,  F(z_i)  +     O(h )    \big) ,
  \end{equation*}
  where the constant $B_{i}$ is defined in \eqref{eq.Bi}.
  \end{proposition}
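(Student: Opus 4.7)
The plan is to deduce Proposition~\ref{gamma} from Proposition~\ref{gamma1} (which gives the analogous estimates for the raw quasi-modes $\widetilde\psi_j$) by exploiting the fact that the Gram-Schmidt coefficients $Z_j - 1$ and $\kappa_{ji}$ are exponentially small (Lemma~\ref{le.e1}), and that the spectral remainders $(\pi_h^{(1)} - 1)\widetilde\psi_j$ have $H^1_w$-norm $O(e^{-c/h})$ (item 2b of Proposition~\ref{ESTIME1-base}).

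First, combining the Gram-Schmidt formulas~\eqref{eq.gramfj}--\eqref{eq.psij}, I would decompose
\begin{equation*}
Z_j \int_\Sigma F\,\psi_j\cdot n\,e^{-\frac{2}{h}f}
= \int_\Sigma F\,\widetilde\psi_j\cdot n\,e^{-\frac{2}{h}f}
+ \sum_{i=1}^{j-1}\kappa_{ji}\int_\Sigma F\,\widetilde\psi_i\cdot n\,e^{-\frac{2}{h}f}
+ \int_\Sigma F\,\mathcal{R}_j\cdot n\,e^{-\frac{2}{h}f},
\end{equation*}
where $\mathcal{R}_j := (\pi_h^{(1)}-1)\big[\widetilde\psi_j + \sum_{i<j}\kappa_{ji}\widetilde\psi_i\big]$. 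The first term is the ``main'' one and will directly produce the desired asymptotics via Proposition~\ref{gamma1}; the terms in the sum are of the same form but weighted by $\kappa_{ji} = O(e^{-c/h})$, and hence will be absorbed in the remainder. The remainder integral with $\mathcal{R}_j$ must be shown to be of order $O(e^{-\frac{1}{h}(\min_{\partial\Omega} f + c)})$.

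Second, to handle the $\mathcal{R}_j$ term, I would use the unitary conjugation~\eqref{U}: writing $\widetilde\psi_i = e^{\frac{1}{h}f}\widetilde\phi_i$, one has $e^{-\frac{1}{h}f}(\pi_h^{(1)}-1)\widetilde\psi_i = \big(\pi_{[0,h^{3/2})}(\Delta_{f,h}^{D,(1)}) - 1\big)\widetilde\phi_i$, and the intermediate estimate established during the proof of item 2b of Proposition~\ref{ESTIME1-base} shows that the latter has $H^1(\Omega)$-norm of order $O(e^{-c/h})$. Applying the trace theorem together with Cauchy--Schwarz on $\Sigma \subset \partial\Omega$, then
\begin{equation*}
\left|\int_\Sigma F\,\mathcal{R}_j\cdot n\,e^{-\frac{2}{h}f}\right|
\leq C\|F\|_{L^\infty}\,\big\|e^{-\frac{1}{h}f}\mathcal{R}_j\big\|_{H^1(\Omega)}\,\big\|e^{-\frac{1}{h}f}\big\|_{L^2(\Sigma)} = O\big(e^{-\frac{1}{h}(\min_{\partial\Omega} f + c)}\big),
\end{equation*}
using $\kappa_{ji} = O(e^{-c/h})$ to absorb the sum inside $\mathcal{R}_j$.

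Third, applying Proposition~\ref{gamma1} to the leading term $\int_\Sigma F\,\widetilde\psi_j\cdot n\,e^{-\frac{2}{h}f}$, the case analysis transfers directly. For $i \in \{\ft m_1^{\pa\Omega}+1,\ldots,\ft m_1^{\overline\Omega}\}$ (interior saddle), the integral vanishes; for $i \in \{1,\ldots,\ft m_1^{\pa\Omega}\}$ with $z_i \notin \overline\Sigma$, the integral is $O(e^{-\frac{1}{h}(f(z_i)+c)})$; for $z_i \in \overline\Sigma$, it is $O(h^{\frac{d-3}{4}} e^{-\frac{1}{h}f(z_i)})$. Since for $i \in \{\ft k_1^{\pa\Omega}+1,\ldots,\ft m_1^{\pa\Omega}\}$ we have $f(z_i) > \min_{\partial\Omega} f$ (by definition~\eqref{eq.z11}), there exists $c > 0$ such that $f(z_i) \geq \min_{\partial\Omega} f + c$, which gives the first case of Proposition~\ref{gamma}. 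For $i \in \{1,\ldots,\ft k_1^{\pa\Omega}\}$, one has $f(z_i) = \min_{\partial\Omega} f$, yielding the second and third cases directly. The sharpened estimate with the constant $B_i$ when $F$ is $C^\infty$ near $z_i$ transfers verbatim from the corresponding refinement in Proposition~\ref{gamma1}, the $O(h^{\frac{d-3}{4}} h)$ correction from the WKB expansion dominating the exponentially small Gram-Schmidt and $\mathcal{R}_j$ contributions. No step of this plan is truly hard; the only mildly delicate point is converting the $H^1_w$ spectral remainder estimate into an $H^1$ estimate usable with the trace inequality, which is precisely what the conjugation by $U$ cleanly achieves.
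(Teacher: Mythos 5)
Your proof is correct and follows essentially the same route as the paper: decompose $\psi_j$ via the Gram--Schmidt formulas~\eqref{eq.gramfj}--\eqref{eq.psij}, bound the spectral remainders $(\pi_h^{(1)}-1)\widetilde\psi_i$ and the cross terms weighted by $\kappa_{ji}$ using Lemma~\ref{le.e1} and item~2b of Proposition~\ref{ESTIME1-base}, and then apply Proposition~\ref{gamma1} to the leading term. The one small departure from the paper is in how you control the remainder integral: the paper applies trace and Cauchy--Schwarz directly in the weighted setting, paying the price of an extra $O(h^{-1})$ factor when converting $\|w\,e^{-f/h}\|_{H^1(\Omega)}$ to $\|w\|_{H^1_w}$, and then quoting the stated conclusion $\|(1-\pi_h^{(1)})\widetilde\psi_j\|_{H^1_w} = O(e^{-c/h})$; you instead invoke the unitary conjugation $U$ to work with the intermediate unweighted estimate $\|(1-\pi_{[0,h^{3/2})}(\Delta^{D,(1)}_{f,h}))\widetilde\phi_j\|_{H^1(\Omega)} = O(e^{-c/h})$ established inside the proof of item~2b, which sidesteps the polynomial loss. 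Both routes give the same conclusion, since $O(e^{-c/h})$ swallows any power of $h$, but yours is marginally cleaner. The case analysis in the final step (interior saddles vanishing, $f(z_i)>\min_{\pa\Omega}f$ for $i>\ft k_1^{\pa\Omega}$ supplying the extra exponential gain, and the refined $B_i$ asymptotics surviving the exponentially small corrections) is handled correctly.
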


\begin{proof}
Let $F\in L^{\infty}(\partial \Omega,\mathbb R)$.
Using~\eqref{eq.gramfj}, \eqref{eq.psij}, the trace theorem, and the Cauchy-Schwarz inequality,  one has for all $j\in \{1,\dots,\ft m_1^{\overline \Omega}\}$,
 \begin{align*}
 Z_{j}\int_{\Sigma} F \,  \psi_j \cdot n  \, e^{-\frac{2}{h}f}  &=\left [ \int_{\Sigma}  F \, \widetilde \psi_j \cdot n \, e^{- \frac{2}{h}f}   +\int_{\Sigma}  F \, \big((\pi_{h}^{(1)}-1)\widetilde \psi_j \big)\cdot n \, e^{- \frac{2}{h}f}\right ] \\
&\quad +\sum \limits_{i=1}^{j-1}\kappa_{ji} \ \left[ \int_{\Sigma}F \,  \widetilde  \psi_i \cdot n  \, e^{- \frac{2}{h} f}    + \int_{\Sigma}  F \, \big((\pi_{h}^{(1)}-1)\widetilde \psi_i \big)\cdot n \, e^{- \frac{2}{h}f} \right] \\
&=\left [ \int_{\Sigma}  F \, \widetilde \psi_j \cdot n \, e^{- \frac{2}{h}f}  +\Vert(1-\pi_h^{(1)})\widetilde \psi_j\Vert_{H^1_w}O \big(h^{-1}\,e^{-\frac{\min_{\pa\Omega}f}{h}}  \big)\right ] \\
&\quad +\sum \limits_{i=1}^{j-1}\kappa_{ji} \ \left[ \int_{\Sigma} F \, \widetilde  \psi_i \cdot n  \, e^{- \frac{2}{h} f}       + \Vert(1-\pi_h^{(1)})\widetilde \psi_i\Vert_{H^1_w}O \big(h^{-1}\,e^{-\frac{\min_{\pa\Omega}f}{h}}  \big)\right].
  \end{align*} 
 From Lemma~\ref{le.e1} and item 2b in Proposition~\ref{ESTIME1-base}, there exists  $c>0$ such that for all  $j\in \big \{1,\dots,\ft m_1^{\overline \Omega}\big \}$,~$i\in \{1,\dots,j-1\}$,   in the limit $h\to 0$:
  $$Z_j=1+O ( e^{-\frac{c}{h}} ),\ \kappa_{ji}=O(  e^{-\frac{c}{h}} ),  \ {\rm and }\  \big \Vert(1-\pi_h^{(1)})\widetilde \psi_j  \big\Vert_{H^1_w} =O(  e^{-\frac{c}{h}} ).$$
Therefore, using   Proposition~\ref{gamma1}, there exists  $c>0$ such that for all  $j\in \{1,\dots,\ft m_1^{\overline \Omega}\}$, in the limit $h\to 0$:
  $$\int_{\Sigma} F \,  \psi_j \cdot n  \, e^{-\frac{2}{h}f}= \int_{\Sigma}  F \, \widetilde \psi_j \cdot n \, e^{- \frac{2}{h}f} + O \big (e^{-\frac{1}{h} (\min_{\pa\Omega}f+c)} \big ).$$
  The statement of Proposition~\ref{gamma} is then a straightforward consequence of 
 Proposition~\ref{gamma1}.
\end{proof}

\noindent We are now in position to prove Theorem~\ref{thm-big-pauh}.
 
\begin{proof}[Proof of Theorem~\ref{thm-big-pauh}]
\begin{sloppypar}
 Let us assume that the assumptions \eqref{H-M},~\eqref{eq.hip1-j},~\eqref{eq.hip2-j}  and \eqref{eq.hip3-j}   hold. Recall  that in this case,   for all $x\in \ft U_0^\Omega\setminus\{ x_1\}$, one has
$$
 f(\mathbf j(x ))-f(x )<f(\mathbf j(x_{1}))-f(x_{1})
$$
and 
$$ \mbf j(x_1) \cap\pa \Omega=  \pa\ft  C_{1}\cap\pa\Omega =\{z_1,\ldots,z_{\ft k_1^{\pa \ft C_1}}\} \subset \argmin_{\pa \Omega}f\cap \ft U_1^{\pa \Omega}.$$ 
Moreover,   from~\eqref{hs2-c1}, it holds
$$
x_1\in \argmin_{\ft C_1}f=\argmin_{\Omega}f=\argmin_{\overline \Omega}f.$$
Thus, one  has 
\begin{equation}\label{just1}
 f(\mathbf j(x_{1}))=\min_{\pa\Omega}f \text{  and  } f(x_{1})= \min_{ \overline \Omega}f.
\end{equation} 
\end{sloppypar}
\noindent
Let us now consider $F\in L^{\infty}(\partial \Omega,\mathbb R)$ and $\Sigma$  an open  subset of $\pa\Omega$.
 First, since $ \big \{\psi_j, \, j=1,\dots,\ft m_1^{\overline \Omega} \big\}$ is an orthonormal basis of $\Ran \, \pi^{(1)}_h$ and $\nabla u_h\in \Ran \, \pi^{(1)}_h$, one has the following decomposition:
$$
\int_{\Sigma}  F \,  \partial_{n}u_h\,  e^{-\frac{2}{h} f}  = \sum \limits_{j=1}^{\ft m_1^{\overline \Omega}} \big \langle \nabla u_h , \psi_j \big \rp_{L^2_w}  \int_{\Sigma}  F \,  \psi_j \cdot n \  e^{- \frac{2}{h}  f}.
$$
Using in addition Corollary~\ref{lemm2}, Proposition~\ref{gamma},  and~\eqref{just1}, there exists $c>0$ such that for all $h>0$ small enough, 
\begin{align}
\nonumber
\int_{\Sigma} F   \partial_{n}u_h \,  e^{-\frac{2}{h} f} &= \sum \limits_{j=1}^{\ft k_{1}^{\pa \Omega}}  \big\langle \nabla u_h , \psi_j \big \rp_{L^2_w}  \int_{\Sigma}  F \,  \psi_j \cdot n \  e^{- \frac{2}{h}  f} \\
\label{eq.decomp-minima}
&\quad+\sum \limits_{j=\ft k_{1}^{\pa \Omega}+1}^{\ft m_1^{\overline \Omega}} O\big( h^{-\frac12}  e^{-\frac{1}{h}(\min_{\pa\Omega}f - \min_{\overline \Omega}f )}  \big ) \, O \big(e^{-\frac{1}{h} (\min_{\pa\Omega}f +c)} \big).
\end{align}  
Hence, when $\overline{\Sigma}$ does not contain any of the $z_{i}$,~$i\in\{1,\dots,\ft k_{1}^{\pa \Omega}\}$,
 from \eqref{eq.decomp-minima}, Corollary~\ref{lemm2},  Proposition~\ref{gamma},  and~\eqref{just1}, one deduces the following relation
for some $c>0$ independent of $h$ and every $h>0$ small enough:
\begin{align*}
\int_{\Sigma} F\,\partial_{n}u_h   e^{-\frac{2}{h} f} &= \sum \limits_{j=1}^{\ft k_1^{\pa \Omega}}O\big(h^{-\frac 34}  e^{-\frac{1}{h}(\min_{\pa\Omega}f- \min_{ \overline \Omega}f)}\big) O \big(e^{-\frac{1}{h} (\min_{\pa\Omega}f+c)} \big)\\
&\quad +O\big(  e^{-\frac{1}{h}(2\min_{\pa\Omega}f- \min_{\overline \Omega}f+c)}  \big )\\
&= O\big( e^{-\frac{1}{h}(2\min_{\pa\Omega}f- \min_{\overline \Omega}f+\frac c2)} \big ).
\end{align*}
This proves item $(i)$ in Theorem~\ref{thm-big-pauh}.\medskip

\noindent Assume now that  $\overline{\Sigma}$ does not contain any of the $z_{i}$,~$i\in\{1,\dots,\ft k^{\pa \ft C_1}_{1}\}$.
Then ,  from \eqref{eq.decomp-minima}, Corollary~\ref{lemm2}, Proposition~\ref{gamma},  and~\eqref{just1}, one deduces that
in the limit $h\to 0$:
\begin{align*}
\int_{\Sigma} F\, \partial_{n}u_h\,   e^{-\frac{2}{h} f} &= \sum \limits_{j=1}^{\ft k_1^{\pa\ft C_1}}O\big(h^{-\frac 34}  e^{-\frac{1}{h}(\min_{\pa\Omega}f- \min_{ \overline \Omega}f)}\big) O \big(e^{-\frac{1}{h} (\min_{\pa\Omega}f+c)} \big) \\
 &\quad+\sum_{j=\ft k_{1}^{\pa \ft C_1}+1}^{\ft k_{1}^{\pa \Omega}}O\big( h^{-\frac{3}{4}}   e^{-\frac{1}{h}(\min_{\pa\Omega}f- \min_{ \overline \Omega}f)}\,\sqrt{\ve_{h}}  \big )
\ O\big( h^{\frac{d-3}4} e^{-\frac{1}{h}\min_{\pa\Omega}f} \big)\\
&\quad +O\big(  e^{-\frac{1}{h}(2\min_{\pa\Omega}f- \min_{ \overline \Omega}f+c)}  \big )\\
&=O\big( e^{-\frac{1}{h}(2\min_{\pa\Omega}f- \min_{ \overline  \Omega}f+\frac c2)} \big )+ O\big( h^{\frac{d-6}{4}}   e^{-\frac{1}{h}(2\min_{\pa\Omega}f- \min_{\overline  \Omega}f)}\,\sqrt{\ve_{h}}  \big ),
\end{align*}
where the constant $c>0$ is  independent of $h$ and  $\ve_h$ satisfies~\eqref{eq.epsilon-h}. This proves item $(ii)$ in Theorem~\ref{thm-big-pauh}.\medskip

\noindent
Assume lastly that $\overline{\Sigma}\cap\{z_{1},\dots,z_{\ft k^{\pa \ft C_1}_{1}}\}=\{z_{i}\}$,  $F$ is $C^{\infty}$ in a neighborhood  of $z_{i}$ and  $z_{i}\in{\Sigma}$. From 
\eqref{eq.decomp-minima}, Corollary~\ref{lemm2}, Proposition~\ref{gamma},  and~\eqref{just1}, one then deduces that in the limit $h\to0$,
it holds for some $c>0$ and $\ve_{h}$ which satisfies \eqref{eq.epsilon-h}, 
\begin{align*}
\int_{\Sigma} F\, \partial_{n}u_h\,   e^{-\frac{2}{h} f} &= \big\langle \nabla u_h , \psi_i \big\rp_{L^2_w} \ \int_{\Sigma}  F \,  \psi_i \cdot n \  e^{- \frac{2}{h}  f} +
O\big( h^{\frac{d-6}{4}}   e^{-\frac{1}{h}(2\min_{\pa\Omega}f- \min_{ \overline \Omega}f)}\,\sqrt{\ve_{h}}  \big )\\
&=
 -B_{i} \,C_{i,1} \  h^{\frac{d-6}{4}}\,  e^{-\frac{1}{h}(2\min_{\pa\Omega}f- \min_{ \overline \Omega}f)} \   \big (F(z_{i})+O(\sqrt{\ve_h})  +     O(h )   \big) ,
\end{align*}
where the constants $B_{i}$ and $C_{i,1}$ are defined in~\eqref{eq.Cip}--\eqref{eq.Bi}.
This concludes the proof of item $(iii)$ in Theorem~\ref{thm-big-pauh}.
\end{proof}

\section{On the law of $X_{\tau_\Omega}$}
\label{section-5}
The main goal of this section is to prove Theorem~\ref{thm.main}. In Section~\ref{sec-xo-nuh}, one proves Theorem~\ref{thm.main} when $X_0\sim \nu_h$ (where $\nu_h$ is the quasi-stationary distribution of the process~\eqref{eq.langevin} in~$\Omega$, see Definition~\ref{defqsd}). In Section~\ref{sec-xo-nuh2}, one proves Theorem~\ref{thm.main} when $X_0=x\in   \mathcal A(\ft C_{\ft{max}})$.

\subsection{Proof of Theorem~\ref{thm.main}  when $X_0\sim \nu_h$}
\label{sec-xo-nuh}
The proof of Theorem~\ref{thm.main} when $X_0\sim \nu_h$  is a straightforward consequence of Theorem~\ref{thm-big0},  Proposition~\ref{pr.masse}  and Theorem~\ref{thm-big-pauh}. Indeed, let us recall that  from~\eqref{eq.dens}, one has:
\begin{equation}\label{eq.dens2}
\mathbb E_{\nu_h}\left [ F\left (X_{\tau_{\Omega}} \right )\right]=- \frac{h}{2\lambda_h} \frac{ \displaystyle \int_{\pa \Omega} F\,  \partial_n
  u_h  e^{-\frac{2}{h} f }}{\displaystyle \int_\Omega u_h  e^{-\frac{2}{h} f }  }.
  \end{equation}
Moreover, recall that~\eqref{eq.hip1},~\eqref{eq.hip2},  and~\eqref{eq.hip3} (see Section~\ref{sec:hip} and more precisely Lemma~\ref{equiv-hipo}) are equivalent to   the assumptions~\eqref{eq.hip1-j},~\eqref{eq.hip2-j},  and~\eqref{eq.hip3-j} . In addition,  under \eqref{eq.hip1-j}, one has $\ft C_1=\ft C_{\ft{max}}$ (see  Lemma~\ref{equiv-hipo}),~$\ft k_1^{\pa \ft  C_{1}}=\ft k_1^{\pa \ft  C_{\ft{max}}}$ (see~\eqref{eq.k1-paC-m}) and 
$$f(\mbf j(x_1))=\min_{\pa \Omega}f \ \ \text{(see~\eqref{eq.hip3-j} together with the fact that } \mbf j(x_1)\subset \pa \ft C_1)$$
and
$$f( x_1)=\min_{\overline  \Omega}f \ \ \text{(see~\eqref{hs2-c1})}.$$ 
Thus,  injecting the results of  Theorem~\ref{thm-big0} (and more precisely~\eqref{eq.lambda_h}),  Proposition~\ref{pr.masse} (applied to $\ft O=\Omega$, see~\eqref{eq.concentration1}) and Theorem~\ref{thm-big-pauh} in~\eqref{eq.dens2}, one obtains the statement of Theorem~\ref{thm.main} when $X_0\sim \nu_h$.

\subsection{Proof of Theorem~\ref{thm.main}  when $X_0=x\in  \mathcal A(\ft C_{\ft{max}})$} 
\label{sec-xo-nuh2}

Recall that, under~\eqref{eq.hip1},~$\ft C_{\ft{max}}=\ft C_1$ (see Lemma~\ref{equiv-hipo}). 
To prove Theorem~\ref{thm.main} when $X_0=x\in  \mathcal A(\ft C_1)$,  one first proves that a sufficiently accurate leveling property (as introduced in~\cite{Day4}) holds  in~$\ft C_1$ for $x\mapsto \mathbb E_x[F(X_{\tau_\Omega})]$,  see Proposition~\ref{level} in Section~\ref{sec.level}. Then, combining Theorem~\ref{thm.main} when $X_0\sim \nu_h$ with Proposition~\ref{level}, one proves Theorem~\ref{thm.main}  when $X_0=x\in \mathcal A(\ft C_{1})$  in Section~\ref{se3}. 

\subsubsection{Leveling results}\label{sec.level}
The leveling property  for $x\mapsto \mathbb E_x[F(X_{\tau_\Omega})]$ is defined as follows. 

\begin{definition}\label{level0}
Let $K$ be a compact subset of $\Omega$ and $F\in C^{\infty}(\partial \Omega,\mathbb R)$. We say that $x\mapsto \mathbb E_x[F(X_{\tau_\Omega})]$ satisfies a leveling property on $K$ if  
\begin{equation}  \label{eq.leveling}
\lim_{h\to 0}\big \vert \,\mathbb E_{x} \left [ F\left (X_{\tau_{\Omega}} \right )\right]-\mathbb E_{y} \left [ F\left (X_{\tau_{\Omega}} \right )\right] \, \big  \vert=0
\end{equation} 
and this limit holds uniformly with respect to $(x,y)\in K\times K$.
\end{definition}

The leveling property~\eqref{eq.leveling} has been widely studied in the literature in various geometrical settings, see for example  \cite{Per,Kam,DeFr,Day4,FrWe,Eiz}. 
We prove the following proposition which is a leveling property in our framework. 

\begin{proposition} \label{level}
Let us assume that the assumption \eqref{H-M} holds. Let $\lambda \in \mathbb R$ and $\ft C$ be a connected component of $\{f<\lambda\}$ such that $\ft C\subset \Omega$. Then, for any path-connected compact set $K\subset\ft C$ and for any $F\in  C^{\infty}(\partial \Omega,\mathbb R)$, there exist $c>0$ and $M>0$, such that for all $(x,y)\in K\times K$,
\begin{equation}\label{eq.LP}
\big \vert \,\mathbb E_{x} \left [ F\left (X_{\tau_{\Omega}} \right )\right]-\mathbb E_{y} \left [ F\left (X_{\tau_{\Omega}} \right )\right] \, \big \vert \leq M e^{-\frac{c}{h}}.
\end{equation}
\end{proposition}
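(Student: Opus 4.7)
The plan is to reduce the statement to a simpler leveling on a relatively compact sub-domain $D\subset \Omega$ containing $K$, via the strong Markov property.

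First, using Sard's theorem, I would choose a regular value $\mu$ of $f$ with $\max_K f < \mu < \lambda$, and let $D$ be the connected component of $\{f<\mu\}$ containing the path-connected compact set $K$. Then $K\subset D\subset \ft C$, the boundary $\partial D = \{f=\mu\}\cap \overline D$ is a smooth compact hypersurface, and $\partial_n f =|\nabla f|>0$ on $\partial D$. The key topological point is that $\overline D\subset \Omega$: if $z\in \overline D\cap \partial \Omega$, one would have $f(z)\leq \mu<\lambda$ and $z\in \overline{\ft C}$; since $\ft C\cap \partial \Omega=\emptyset$ by hypothesis, $z$ would lie in $\partial \ft C\cap \partial \Omega$, but any such point must have $f(z)=\lambda$ (otherwise $z$ would lie in the open set $\{f<\lambda\}\cap \overline\Omega$ and, being a limit of points in $\ft C$, would already belong to $\ft C$ by connectedness), a contradiction.

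Setting $u(x):=\mathbb E_x[F(X_{\tau_\Omega})]$, the inclusion $\overline D\subset \Omega$ gives $\tau_D<\tau_\Omega$ almost surely, and the strong Markov property at $\tau_D$ yields $u(x)=\mathbb E_x[u(X_{\tau_D})]$ for all $x\in D$. Using $\|u\|_{L^\infty(\partial D)}\leq \|F\|_\infty$, proving \eqref{eq.LP} thus reduces to the following leveling-at-first-exit estimate on $D$: there exist $c,M>0$ independent of $h$ such that, for every $g\in L^\infty(\partial D)$,
$$
\bigl|\mathbb E_x[g(X_{\tau_D})]-\mathbb E_y[g(X_{\tau_D})]\bigr|\leq M\,\|g\|_\infty\,e^{-c/h},\qquad x,y\in K.
$$
To obtain this auxiliary estimate, I would apply the spectral and quasi-stationary analysis of Sections~\ref{section-2}-\ref{section-4} to the Dirichlet realization $-L_{f,h}^{D,(0)}$ associated with $D$. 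Because $\partial_n f>0$ on $\partial D$ and $\overline D\subset \Omega$, the situation on $D$ is the classical one covered by~\cite{HeSj4,HeNi1}: there is a spectral gap between the principal eigenvalue $\lambda_h^D$ of $-L_{f,h}^{D,(0)}$ and the rest of its spectrum, the associated principal eigenfunction $u_h^D$ and quasi-stationary distribution $\nu_h^D$ concentrate on $\argmin_{\overline D}f$, and sharp asymptotics for $\partial_n u_h^D$ analogous to Theorem~\ref{thm-big-pauh} are available. Writing $\mathbb E_x[g(X_{\tau_D})]$ via the Green formula on $D$, expanding the Dirichlet Green function in eigenmodes of $-L_{f,h}^{D,(0)}$ and applying Lemma~\ref{quadra} at a cut-off energy located inside the spectral gap, one obtains that $\mathbb E_x[g(X_{\tau_D})]$ equals $\mathbb E_{\nu_h^D}[g(X_{\tau_D})]$ up to a relative error $O(e^{-c/h})$, uniformly for $x$ in compact subsets of $D$ on which $u_h^D(x)$ is not exponentially degenerate; subtracting the analogous identity at $y$ yields the claimed bound.

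The main obstacle is precisely this uniformity in $x\in K$. The principal eigenfunction $u_h^D$ can be exponentially small in wells of $f$ contained in $D$ other than the deepest one, so a na\"\i ve division by $u_h^D(x)$ is not permissible for arbitrary $x\in K$. To circumvent this, I would exploit the path-connectedness of $K\subset \ft C$ via a chaining argument: joining $x,y\in K$ by a path $\gamma\subset \ft C$, covering $\gamma$ by finitely many overlapping sub-domains $D_1,\dots,D_N$ of the type constructed above (with $\mu$ possibly further reduced along the path so that each $D_i$ contains only one well of $f$, or at least so that its QSD is concentrated in a well overlapping the next sub-domain), and iterating the strong Markov property together with the leveling on each $D_i$. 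Making this chaining quantitative while preserving an exponential rate uniform in $x,y\in K$ is the crux of the proof and is genuinely where the hypothesis that $K$ lies inside a single connected component of a sublevel set of $f$ is used.
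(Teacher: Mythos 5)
Your proposal takes a genuinely different route from the paper, but it contains a gap that you yourself flag without closing: the chaining step that is supposed to deliver uniformity of the leveling estimate over all of $K$.

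The paper's proof is much more elementary and avoids the spectral machinery entirely. Writing $v_h(x)=\mathbb{E}_x[F(X_{\tau_\Omega})]$, it first establishes the crude a priori bound $\|v_h\|_{L^\infty}+\|\nabla v_h\|_{L^\infty}\le Ch^{-\alpha}$ by elliptic regularity applied directly to~\eqref{vh}. The key step is then to rewrite $L^{(0)}_{f,h}v_h=0$ in divergence form, $\operatorname{div}(e^{-\frac{2}{h}f}\nabla v_h)=0$, and integrate by parts over the sub-sublevel set $\ft C_{r/2}=\{f<\lambda-\tfrac{r}{2}\}\cap\ft C$. Since $f=\lambda-\tfrac{r}{2}$ on $\partial\ft C_{r/2}$ while $f\le\lambda-r$ on $\ft C_r$, one gets a factor $e^{-r/h}$ for free: $\int_{\ft C_r}|\nabla v_h|^2\le Ch^{-\alpha}e^{-r/h}$. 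This $L^2$ exponential smallness of $\nabla v_h$ is then upgraded to an $L^\infty$ exponential smallness by bootstrapping (elliptic regularity for $\Delta v_h$ together with Gagliardo--Nirenberg interpolation), and the conclusion follows by integrating $\nabla v_h$ along a path in $K$. No spectral gap, no quasi-stationary distribution, no quasi-modes: the structure $\operatorname{div}(e^{-\frac{2}{h}f}\nabla\cdot)$ alone is doing all the work, and the hypothesis that $K$ lies in a single connected component of a sublevel set enters only in the final path-integration step.

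In contrast, your approach needs to invoke the whole apparatus of Sections~3--5 (spectral gap, concentration of the QSD, sharp asymptotics of $\partial_n u_h^D$) on each auxiliary domain $D_i$, which requires re-verifying assumptions \eqref{eq.hip1}--\eqref{eq.hip3} for each $D_i$; and more importantly, the QSD-based leveling $\mathbb{E}_x[g(X_{\tau_D})]\approx\mathbb{E}_{\nu_h^D}[g(X_{\tau_D})]$ is simply false with an exponentially small error when $x$ lies in a shallow well of $D$: the process typically exits $D$ before equilibrating to the QSD, and the error is $O(1)$, not $O(e^{-c/h})$. Your proposed fix --- covering a path $\gamma\subset\ft C$ by overlapping subdomains $D_1,\dots,D_N$ each containing a single well --- is exactly the hard point, and you do not carry it out. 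Sublevel sets of $f$ at lower energy break into disjoint components around each well, so consecutive $D_i$'s need not overlap, and making them overlap while keeping each $D_i$ single-well and keeping $\mu_i$ strictly below $\lambda$ for $\partial D_i\subset\Omega$ is a nontrivial geometric construction you would need to supply. Until that chaining is made precise, the proposal is incomplete. The paper's divergence-form argument is both shorter and strictly more robust: it gives exponential leveling on the entire sublevel component $\ft C$ at once, without any well-by-well decomposition.
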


\begin{proof}
 
\noindent The proof is inspired from techniques used in \cite{DeFr}.  The proof of Proposition~\ref{level} is divided into two steps. 
In the following $C>0$ is a constant which can change from one occurrence to another and which does not depend on $h$.

\medskip
 \noindent
 \textbf{Step 1.}
 Let $F\in  C^{\infty}(\partial \Omega, \mathbb R)$. Let us denote by~$v_h\in H^1(\Omega)$ the unique weak solution to the elliptic boundary value problem
\begin{equation} \label{vh}
\left\{
\begin{aligned}
 \frac{h}{2}  \ \Delta v_h -\nabla f \cdot \nabla v_h  &=  0    \ {\rm on \ }  \Omega  \\ 
v_h&= F \ {\rm on \ } \partial \Omega .\\
\end{aligned}
\right.
\end{equation}
Then,~$v_h$ belongs to $ C^{\infty}(\overline \Omega, \mathbb R)$ and    for all $k\in \mathbb N$, there exist $C>0$,~$n \in \mathbb N$ and $h_0>0$ such that for all $h\in (0,h_0)$, it holds
\begin{equation} \label{est1}
\Vert v_h\Vert_{H^{k+2}(\Omega)}\leq \frac{C}{h^n} \left ( \Vert \nabla v_h\Vert_{L^2(\Omega)}+1\right).
\end{equation} 
Moreover, the Dynkin's formula implies that\label{page.vh}
\begin{equation} \label{eq.vh-dynkin}
\forall x\in \overline \Omega, \ v_h(x)=\mathbb E_{x} \left [F\left ( X_{\tau_{\Omega}} \right)\right].
\end{equation}
Let us prove that $v_h$ belongs to $ C^{\infty}(\overline \Omega, \mathbb R)$ and~\eqref{est1}. Since $F$ is $C^\infty$, for all $k\geq 1$, the exists $\widetilde F\in H^k(\Omega)$ such that $\widetilde F=F$ on $\pa \Omega$ and $\Vert \widetilde F\Vert_{H^k}\le C\Vert F\Vert_{H^{k-\frac 12}(\pa \Omega)}$.
From~\eqref{vh}, the function $\widetilde v_h=v_h-\widetilde F\in H^1(\Omega)$ and is the weak solution to
\begin{equation} \label{est1-2}
\left\{
\begin{aligned}
  \Delta \widetilde v_h &=  \frac{2}{h}  \nabla f \cdot \nabla v_h-\Delta \widetilde F      \ {\rm on \ }  \Omega  \\ 
\widetilde v_h&= 0 \ {\rm on \ } \partial \Omega .
\end{aligned}
\right.
\end{equation}
Thus, using~\cite[Theorem 5, Section 6.3]{Eva},~$ \widetilde v_h\in H^2(\Omega)$ (and thus $v_h\in H^2(\Omega)$)  and there exist $C>0$ and $h_0>0$ such that for all $h\in (0,h_0)$
\begin{align}  
\nonumber
  \Vert \widetilde v_h\Vert_{H^2(\Omega)}&\leq C \left (  \frac 1h \Vert \nabla v_h \Vert_{L^2(\Omega)}  +\Vert  \widetilde F \Vert_{H^2(\Omega)} \right) \leq \frac Ch \Big ( \Vert \nabla v_h \Vert_{L^2(\Omega)}+\Vert  F \Vert_{H^{\frac 32}(\pa \Omega)}\Big),
\end{align} 
and thus
\begin{equation} \label{est-H2}
\Vert v_h\Vert_{H^2(\Omega)} \le \frac Ch \left (  \Vert \nabla v_h \Vert_{L^2(\Omega)}+ 1\right).
\end{equation} 
This proves~\eqref{est1} for $k=0$ The inequality~\eqref{est1} is then obtained by a bootstrap argument (by induction on $k$). This implies by Sobolev embeddings that  $v_h$ belongs to $ C^{\infty}(\overline \Omega, \mathbb R)$. 
 Let us  now prove that there exist $\alpha>0$ and $C>0$ such that 
\begin{equation} \label{est2}
\Vert v_h\Vert_{L^{\infty}(\Omega)}+ \Vert \nabla v_h\Vert_{L^{\infty}(\Omega)} \leq C h^{-\alpha}.
\end{equation}
  Notice that from~\eqref{eq.vh-dynkin}, one has that for all $h>0$,~$\Vert v_h\Vert_{L^{\infty}(\Omega)}\leq \Vert F\Vert_{L^\infty(\pa \Omega)}$. 
From~\eqref{vh} and~\eqref{est-H2} there exists $C>0$ such that for any $\ve>0$ and $\ve'>0$,

\begin{align*} 
h \int  _{\Omega} \vert  \nabla v_h \vert^2   &\leq C \left (h \int_{\partial \Omega} \vert F\, \partial_n v_h \vert \, d\sigma+ \Vert F\Vert_{L^\infty(\pa \Omega)}\int  _{\Omega} \vert \nabla f \cdot \nabla v_h  \vert   \right)\\
&\leq C \left (h \frac{\Vert F\Vert_{L^2(\partial \Omega)}^2}{4\ve} +  \, h  \, \ve\, \Vert v_h\Vert_{H^2(\Omega)}^2 + \frac{\Vert \nabla f\Vert_{L^2( \Omega)}^2}{4\ve'} +   \ve'\, \Vert \nabla v_h\Vert_{L^2(\Omega)}^2  \right)\\
&\leq C \left (h \frac{\Vert F\Vert_{L^2(\partial \Omega)}^2}{4\ve} +  \, h  \, \ve\, C_1\, h^{-2}\left ( \Vert \nabla v_h\Vert_{L^2(\Omega)}^2+ 1\right) + \frac{\Vert \nabla f\Vert_{L^2( \Omega)}^2}{4\ve'} +   \ve'\, \Vert \nabla v_h\Vert_{L^2(\Omega)}^2  \right).
\end{align*}
Choosing $\ve= \frac{h^2}{4(CC_1+1)}$ and $\ve'= \frac{h}{4(C+1)}$ we get 
$$\Vert \nabla v_h\Vert_{L^{2}(\Omega)} \leq  \frac{C}{h}.$$
Therefore, using~\eqref{est1}, one obtains that for all $k\geq 0$, there exist $C>0$,~$n\in \mathbb N$ and $h_0>0$ such that for all $h\in (0,h_0)$
  $$\Vert v_h\Vert_{H^{k}(\Omega)} \leq  \frac{C}{h^n}.$$
 Let $k\geq 0$ such that $k-\frac{d}{2}>1$. Then, one obtains~\eqref{est2} from the continuous Sobolev injection $H^k(\Omega)\subset W^{1,\infty}(\Omega)$. 

\medskip
 \noindent
 \textbf{Step 2.}  
Let us assume that the assumption~\eqref{H-M} holds. Let $\lambda \in \mathbb R$ and $\ft C$ be a connected component of $\{f<\lambda\}$ such that $\ft C\subset \Omega$. To prove Proposition~\ref{level}, we will  prove that for  any compact subset $K$ of $\ft C$, there exists $c_K>0$ such that 
\begin{equation} \label{eq1}\Vert \nabla   v_h\Vert_{L^{\infty}(K)}\leq C\,e^{-\frac{c_K}{h}}.\end{equation}
Indeed, since $K$ is  path-connected,  the following inequality 
$$\forall (x,y)\in K\times K, \  \vert v_h(x)-v_h(y)\vert \leq C_K\Vert \nabla   v_h\Vert_{L^{\infty}(K)},$$
where $C_K>0$ depends on $K$, will then conclude the proof of~\eqref{eq.LP}. \\Let us now define the set $\ft C_r$ by \label{page.cr}
\begin{equation}\label{pr.cr}
\ft C_r=\{f<\lambda-r\}\cap\ft C\subset \Omega
\end{equation}
which is not empty  and $C^{\infty}$  for all $r\in (0,r_1)$, for some $r_1>0$. Indeed, the boundary of $\ft C_r$ is   the set $\{f=\lambda-r\}\cap \ft C$ (since $\ft C\subset \Omega$)  which contains no critical points of~$f$  for $r\in (0,r_1)$, with  $r_1>0$ small enough (since there is  a finite number of critical points under the assumption \eqref{H-M}). 
We now prove that for all $r_0\in (0,r_1)$ there exists $\alpha_0>0$ such that 
\begin{equation}\label{est3}
\Vert \nabla v_h\Vert_{L^{\infty}(\ft C_{r_0})} \leq  e^{-\frac{\alpha_0}{h}}.
\end{equation}
Let $r$ be such that $2^nr=r_0$ where $n\in \mathbb N$ will be fixed later (since $r\le r_0$,~$r\in (0,r_1)$). Equation~\eqref{vh} rewrites 
$$
\left\{
\begin{aligned}
 {\rm div}\, \big (e^{-\frac{2}{h}f} \nabla v_h \big) &=  0    \ {\rm on \ }  \Omega  \\ 
v_h&= F \ {\rm on \ } \partial \Omega.\\
\end{aligned}
\right.
$$
 Using~\eqref{est2},  there exist $C>0$  and $\alpha>0$ such that,
$$
\left \vert \, \,  \int_{\ft C_{r/2}} \vert  \nabla v_h \vert^2 \, e^{-\frac{2}{h}f}\,  \right \vert = \left \vert \ \int_{\partial \ft C_{r/2}} \, e^{-\frac{2}{h}f} v_h\, \partial_n v_h  \, d\sigma \right\vert \leq \frac{C}{h^{\alpha}}\, e^{-\frac{2}{h}(\lambda-\frac r2)},
$$
where we used  the Green formula (valid since $\ft C_r$ is $C^{\infty}$  for all $r\in (0,r_1)$) and the inclusion $\partial \ft C_{r/2}\subset \{f=\lambda-\frac r2\}$. 
In addition, since $\ft C_r\subset \ft C_{r/2}$ it holds,
\begin{align*} e^{-\frac{2}{h}(\lambda-r)}\, \int_{\ft C_{r}} \vert  \nabla v_h \vert^2 \,  \leq \int_{\ft C_{r}} \vert  \nabla v_h \vert^2 \, e^{-\frac{2}{h}f}\,  
&\leq \int_{\ft C_{r/2}} \vert  \nabla v_h \vert^2 \, e^{-\frac{2}{h}f}\leq \frac{C}{h^{\alpha}}\ e^{-\frac{2}{h}(\lambda-\frac r2)}.
\end{align*}
Therefore, there exists $\beta>0$ such that for $h$ small enough,
$$\int_{\ft C_{r}} \vert  \nabla v_h \vert^2 \,  \leq \frac{C}{h^{\alpha}}\, e^{-\frac{r}{h}}\leq  C\,e^{-\frac{\beta}{h}},$$
and from~\eqref{vh}, we then have $\Vert \Delta  v_h\Vert_{L^{2}(\ft C_{r})}\leq C\, e^{-\frac{\beta}{h}}$ for some constant  $\beta>0$ which  has been reduced.  In the following,~$\beta>0$ is a  constant which may change from one occurrence to another and does not depend on $h$. Let $\chi_1 \in C_c^{\infty}(\ft C_r)$ be such that $\chi_1 \equiv 1$ on $\ft C_{2r}$.  Since $\Delta (\chi_1 v_h)=\chi_1 \, \Delta v_h+v_h\, \Delta \chi_1 +2\nabla \chi_1 \cdot \nabla v_h$, there exists $C$, such that $\Vert \Delta (\chi_1 v_h)\Vert_{L^{2}(\ft C_{r})}\leq C$ for $h$ small enough. By  elliptic regularity (see~\cite[Theorem 5, Section 6.3]{Eva}) it comes 
$$\Vert  v_h\Vert_{H^{2}(\ft C_{2r})}\leq C.$$
Let $\alpha\in (0,1)$ be such that  $p_1=\frac{2d}{d-2\alpha}>0$. From the Gagliardo-Nirenberg interpolation inequality (see \cite[Lecture II]{Nir}), the following inequality holds
$$
\Vert \nabla v_h\Vert_{L^{p_1}(\ft C_{2r})} \leq C \Vert v_h\Vert_{H^{2}(\ft C_{2r})}^\alpha\Vert \nabla v_h\Vert_{L^{2}(\ft C_{2r})}^{1-\alpha}+C\Vert \nabla v_h\Vert_{L^{2}(\ft C_{2r})}\leq C\, e^{-\frac{\beta}{h}}. 
$$
From~\eqref{vh},~$\Vert \Delta  v_h\Vert_{L^{p_1}(\ft C_{2r})}\leq C\, e^{-\frac{\beta}{h}}$. Using a cutoff  function $\chi_2 \in C_c^{\infty}(\ft C_{2r})$ such that $\chi_2 \equiv 1$ on $\ft C_{4r}$, we get, as previously, from the elliptic regularity $\Vert  v_h\Vert_{W^{2,p_1}(\ft C_{4r})}\leq C$. Let $p_2=\frac{2d}{d-4\alpha}$ (i.e. $1/p_2=1/p_1-\alpha/d$). If $p_2<0$, then~\cite[Lecture II]{Nir} implies
$$
\Vert \nabla v_h\Vert_{L^{\infty}(\ft C_{4r})} \leq C \Vert v_h\Vert_{W^{2,p_1}(\ft C_{4r})}^\alpha\Vert \nabla v_h\Vert_{L^{p_1}(\ft C_{4r})}^{1-\alpha}+C\Vert \nabla v_h\Vert_{L^{p_1}(\ft C_{4r})}\leq C\, e^{-\frac{\beta}{h}}. 
$$
Thus,~\eqref{est3} is proved  (and one then  chooses $n=2$, i.e. $2^2r=r_0$). Otherwise, we prove~\eqref{est3} by induction as follows. 
From the Gagliardo-Nirenberg interpolation inequality (see \cite[Lecture II]{Nir}), we get
$$
\Vert \nabla v_h\Vert_{L^{p_2}(\ft C_{4r})} \leq C \Vert v_h\Vert_{W^{2,p_1}(\ft C_{4r})}^\alpha\Vert \nabla v_h\Vert_{L^{p_1}(\ft C_{4r})}^{1-\alpha}+C\Vert \nabla v_h\Vert_{L^{p_1}(\ft C_{4r})}\leq C\, e^{-\frac{\beta}{h}}. 
$$ 
We repeat this procedure $n$ times where $n$ is the first integer such that $d-2n\alpha< 0$ and  the Gagliardo-Nirenberg interpolation inequality  implies that $\Vert \nabla v_h\Vert_{L^{\infty}(\ft C_{2^nr})} \leq  C\, e^{-\frac{\beta}{h}}$ which ends the proof of~\eqref{est3}. Since for  any compact $K\subset \ft C$, there exists $r_0\in (0,r_1)$ such that  $K\subset \ft C_{r_0}$,  the inequality~\eqref{eq1} is proved. This concludes the proof of Proposition~\ref{level}. 
\end{proof}

\subsubsection{End of the proof of Theorem~\ref{thm.main}} \label{se3}

To prove Theorem~\ref{thm.main} when $X_0=x\in  \mathcal A(\ft C_{1})$, the following lemma will be needed.
 \begin{lemma}\label{le.exp-qsd-x}
Assume that the assumptions \eqref{H-M}  and \eqref{eq.hip1-j}  are satisfied.  Let $\ft C_1$ be as in~\eqref{eq.hip1-j}. Let us moreover assume that 
$$\min_{\overline{\ft C_1}}f= \min_{\overline \Omega}f.$$
 Let $K$ be  a compact subset of $\Omega$ such that $K\subset \mathcal A(\ft C_{1})$ and let $F\in C^{\infty}(\partial \Omega,\mathbb R)$. Then, there exists $c>0$ such that for all $y\in K$, one has  in the limit $h \to 0$:
$$\mathbb E_{\nu_h}  \left [ F\left (X_{\tau_{\Omega}}\right)\right ]=\mathbb E_{y}  \left [ F\left (X_{\tau_{\Omega}}\right)\right ]  +O\big (e^{-\frac{c}{h} }\big )$$
 uniformly in~$y \in K$.
\end{lemma}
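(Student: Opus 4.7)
The plan is to reduce both quantities to the common value $v_h(x_1)$ where $v_h(x):=\mathbb E_x[F(X_{\tau_\Omega})]$. By the Dynkin formula and elliptic regularity (as in Step~1 of the proof of Proposition~\ref{level}), $v_h$ will be the unique $C^\infty(\overline\Omega)$-solution of $L^{(0)}_{f,h}v_h=0$ on $\Omega$ with $v_h=F$ on $\partial\Omega$, and will satisfy $\|v_h\|_{L^\infty(\overline\Omega)}\leq\|F\|_{L^\infty(\partial\Omega)}$. Using that $\ft C_1$ is an open connected component of the sublevel set $\{f<f(\mbf j(x_1))\}$ included in $\Omega$ (Proposition~\ref{pr.p1}), Proposition~\ref{level} will provide the following leveling estimate: for every path-connected compact $K'\subset\ft C_1$ there exist constants $c,M>0$ such that $|v_h(x)-v_h(x')|\leq Me^{-c/h}$ for all $x,x'\in K'$. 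The strategy will then be to combine the two estimates
$$\mathbb E_{\nu_h}[F(X_{\tau_\Omega})]=v_h(x_1)+O(e^{-c/h})\quad\text{and}\quad v_h(y)=v_h(x_1)+O(e^{-c/h}),$$
the latter uniformly in $y\in K$.

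The first estimate will follow from the concentration of $\nu_h$ on $\argmin_{\ft C_1}f$. Writing $\mathbb E_{\nu_h}[F(X_{\tau_\Omega})]=\int_\Omega v_h\,d\nu_h$ and choosing a path-connected open neighborhood $\mathcal V$ of $\argmin_{\ft C_1}f$ whose closure is a compact subset of $\ft C_1$ containing $x_1$, Proposition~\ref{pr.con} (whose additional assumption $\min_{\overline{\ft C_1}}f=\min_{\overline\Omega}f$ is granted by hypothesis and is consistent with Lemma~\ref{le.11}) will yield $\nu_h(\Omega\setminus\mathcal V)=O(e^{-c/h})$ and $\nu_h(\mathcal V)=1+O(e^{-c/h})$. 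Applying the leveling estimate on $\overline{\mathcal V}$ and splitting the integral while using the uniform bound on $v_h$ on $\Omega\setminus\mathcal V$ will give $\int_\Omega v_h\,d\nu_h=v_h(x_1)+O(e^{-c/h})$.

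For the second estimate, I will use the deterministic flow $\varphi_t$ of $-\nabla f$ combined with a large-deviation tube estimate. Since $K\subset\mathcal A(\ft C_1)$ is compact and $\omega(y)\subset\ft C_1$ for every $y\in K$, continuity of the flow and compactness will furnish a time $T>0$, a compact set $\widetilde K\subset\Omega$, a compact set $K_T\subset\ft C_1$, and a radius $\delta>0$ such that, for every $y\in K$ and $t\in[0,T]$, $\varphi_t(y)\in\widetilde K$ and $\varphi_T(y)\in K_T$, while the closed $\delta$-neighborhood of $\widetilde K$ stays in $\Omega$ and the closed $\delta$-neighborhood of $K_T$ is contained in a path-connected compact set $K'\subset\ft C_1$ containing $x_1$. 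A uniform Freidlin--Wentzell tube estimate for the SDE~\eqref{eq.langevin} (see e.g.~\cite{FrWe}) will then yield $c>0$ such that, uniformly in $y\in K$,
$$\mathbb P_y\!\left[\sup_{t\in[0,T]}|X_t-\varphi_t(y)|>\delta\right]=O(e^{-c/h}).$$
Denoting the complementary event by $A$, on $A$ one has $T<\tau_\Omega$ and $X_T\in K'$. The strong Markov property at time $T\wedge\tau_\Omega$ together with $L^{(0)}_{f,h}v_h=0$ will give $v_h(y)=\mathbb E_y[v_h(X_{T\wedge\tau_\Omega})]$. Splitting this expectation over $A$ and $A^c$, bounding $|v_h|$ by $\|F\|_\infty$ on $A^c$, and using the leveling estimate on $K'$ to replace $v_h(X_T)$ by $v_h(x_1)+O(e^{-c/h})$ on $A$, we will conclude $v_h(y)=v_h(x_1)+O(e^{-c/h})$ uniformly in $y\in K$.

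The main obstacle will be the uniform Freidlin--Wentzell tube estimate: it is the essential new input beyond Propositions~\ref{level} and~\ref{pr.con}, and it will play a dual role by simultaneously forcing the process to remain inside $\Omega$ up to time $T$ and by placing $X_T$ inside a compact subset of $\ft C_1$ on which the leveling applies. The uniformity in the initial point $y\in K$ is precisely what will deliver the uniformity claim in the lemma; it rests on the joint continuity of the Freidlin--Wentzell rate function in the endpoints together with the compactness of $K$.
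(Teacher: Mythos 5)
Your proposal is correct and follows essentially the same two-step route as the paper's own proof. The ingredients are identical: (a) use the concentration of $\nu_h$ (the paper phrases it via $u_h e^{-2f/h}$ and Proposition~\ref{pr.masse}, you use the equivalent Proposition~\ref{pr.con}) together with the leveling estimate of Proposition~\ref{level} on a path-connected compact subset of $\ft C_1$ containing $\argmin_{\ft C_1}f$, and (b) propagate to general initial points $y\in K\subset\mathcal A(\ft C_1)$ via the deterministic $-\nabla f$ flow, a uniform Freidlin--Wentzell tube estimate, and the Markov property. Your only organizational difference is that you pivot both quantities through the single value $v_h(x_1)$, whereas the paper first proves $\mathbb E_{\nu_h}[F(X_{\tau_\Omega})]=v_h(y)+O(e^{-c/h})$ uniformly for $y$ in the shrunk well $\overline{\ft C_1(\alpha)}$ and then uses that whole estimate as the pivot in the flow argument; this is a cosmetic rearrangement, not a different method. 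One small point worth being careful about: in your concentration step you should ensure that the open neighborhood $\mathcal V$ of $\argmin_{\ft C_1}f$ can be taken so that $\overline{\ft O}\cap\argmin_{\ft C_1}f=\emptyset$ for a suitable open $\ft O\supset\Omega\setminus\mathcal V$ (which is what Proposition~\ref{pr.con} requires), and that $\overline{\mathcal V}$ is path-connected — this is possible because $\ft C_1$ is open, connected, and $\argmin_{\ft C_1}f$ is finite; the paper sidesteps this by working directly with $\overline{\ft C_1(\alpha)}$, which is automatically connected.
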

Lemma~\ref{le.exp-qsd-x} implies Proposition~\ref{pr.exp-qsd-x} since we recall that when~\eqref{H-M} and  \eqref{eq.hip1-j}  are satisfied, $\ft C_1=\ft C_{\ft{max}}$ (see~Lemma~\ref{equiv-hipo}).

\begin{proof}
Assume that the assumptions \eqref{H-M}  and \eqref{eq.hip1-j}  are satisfied.  Let us  assume that 
$$\min_{\overline{ \ft C_1} }f= \min_{\overline \Omega}f.$$
 \textbf{Step 1.}    For $\alpha>0$ small enough, let $\ft C_1(\alpha)$ be  as introduced in Definition~\ref{de.ve-E} (see~\eqref{eq.ve-E}): 
\begin{equation}\label{eq.c1a}
\ft C_1(\alpha)= \ft C_1\cap \big\{f<\max_{\overline{\ft  C_1} } f-\alpha\big\}.
\end{equation} 
Let $F\in C^{\infty}(\partial \Omega,\mathbb R)$. In this first step, we will prove that ~$\exists \alpha_0>0$,  $\forall \alpha\in (0,\alpha_0)$,~$\exists c>0$,~$\forall y\in \overline{\ft C_1(\alpha)}$:
\begin{equation}\label{eq.step1-e}
\mathbb E_{\nu_h}  \left [ F\left (X_{\tau_{\Omega}}\right)\right ]=\mathbb E_{y}  \left [ F\left (X_{\tau_{\Omega}}\right)\right ]  +O\big (e^{-\frac{c}{h} }\big )
\end{equation}
in the limit $h \to 0$ and uniformly in~$y \in \overline{\ft C_1(\alpha)}$.

Let us recall that from the notation of Proposition~\ref{level}, for all $x\in \overline \Omega$:
$$v_h(x)=\mathbb E_{x}  \left [ F\left (X_{\tau_{\Omega}}\right)\right ].$$
From~\eqref{eq.expQSD}, one has:
\begin{align}
\nonumber
\mathbb E_{\nu_h}  \left [ F\left (X_{\tau_{\Omega}}\right)\right ]&= \left (\int_{\Omega} u_h\, e^{-\frac{2}{h}f}\right )^{-1}\ \int_{\Omega}v_h\, u_h\, e^{-\frac{2}{h}f} \\
\label{eq.equation1} 
&= \frac{1}{Z_h(\Omega)}\, \int_{\overline{\ft C_1(\alpha)}   } \ v_h\, u_h\, e^{-\frac{2}{h}f}
 + \frac{1}{Z_h(\Omega)}\int_{ \Omega \setminus \overline{\ft C_1(\alpha)}  } \ v_h\, u_h\, e^{-\frac{2}{h}f} ,
\end{align}
where $Z_h(\Omega):=\displaystyle \int  _{\Omega} u_h \ e^{- \frac{2}{h} f } $ and $u_h$ is the eigenfunction associated with the principal eigenvalue $\lambda_h$  of $-L^{D,(0)}_{f,h}$ (see~\eqref{eq.lh}) which satisfies~\eqref{eq.norma}. Let us first deal with the second term in~\eqref{eq.equation1}. Since~\eqref{H-M} and \eqref{eq.hip1-j} hold, and because it is assumed that $\min_{\overline{\ft C_1}}f= \min_{\overline \Omega}f$, one obtains from~Proposition~\ref{pr.masse} (applied to  $\ft O=\Omega$, see~\eqref{eq.concentration1}) that   there exists $C>0$ such that for   $h$ small enough:
$$\frac{1}{Z_h(\Omega) }  \le Ch^{-\frac d4} e^{\frac 1h \min  \limits_{\overline \Omega} f}. $$
For all  $\alpha>0$ small enough, one has $\big(\overline{\Omega} \setminus  \ft C_1(\alpha) \big)\cap \argmin_{\ft C_1} f=\emptyset$. 
Therefore, using~\eqref{eq.concentration02}  (applied for $\alpha$ small enough with $\ft O=\Omega \setminus \overline{\ft C_1(\alpha)}$), one has from~\eqref{eq.c1a}, that for all $\alpha>0$ small enough,  there exists $c>0$ such that  when $h\to 0$:
$$ \int_{ \Omega \setminus \overline{\ft C_1(\alpha)}  }  u_h\, e^{-\frac{2}{h}f}=O\Big(e^{-\frac 1h \big(\min  \limits_{\overline \Omega} f+c\big)}\Big).$$
 Thus,  there exists $\alpha_0>0$ such that for all $\alpha\in (0,\alpha_0)$ there exists $c>0$ such that  when $h\to 0$:
\begin{equation}
\label{eq.pp2}
\frac{1}{Z_h(\Omega) } \int_{ \Omega \setminus \overline{\ft C_1(\alpha)}  }  u_h\, e^{-\frac{2}{h}f} =O\big (e^{-\frac ch}\big).
\end{equation}
Then, since $\Vert v_h\Vert_{L^{\infty}(\overline \Omega)}\leq \Vert F\Vert_{L^{\infty}(\partial \Omega)}$,  one obtains that
 \begin{equation}
\label{eq.pp2-bis}
\frac{1}{Z_h(\Omega) }  \int_{ \Omega \setminus \overline{\ft C_1(\alpha)}  }  \ v_h\, u_h\, e^{-\frac{2}{h}f}=O\big (e^{-\frac ch}\big).
\end{equation}
Let us now deal with the first term in~\eqref{eq.equation1}. Let us recall that 
 $\ft C_1\subset \Omega$ is a connected component of $ \ft \{f<\max_{\overline{\ft C_1}}f\}$.  
Moreover, for  $\alpha\in (0,\alpha_0)$ ($\alpha_0>0$ small enough),  
the compact set $\overline{\ft C_1(\alpha)}$ is   connected and  $\overline{\ft C_1(\alpha)}\subset \ft C_1$. Therefore, from Proposition~\ref{level} applied to  $K=\overline{\ft C_1(\alpha)}$ for $\alpha\in (0,\alpha_0)$, one obtains that there exists $\delta_{\alpha}>0$ such that for all $y\in \overline{\ft C_1(\alpha)}$,
\begin{equation}\label{eq.equality}
\frac{1}{Z_h(\Omega) } \int_{\overline{\ft C_1(\alpha)}} \  v_h\, u_h\, e^{-\frac{2}{h}f}=\, \frac{v_h(y)}{Z_h(\Omega) } \int_{\overline{\ft C_1(\alpha)}}  \ u_h\, e^{-\frac{2}{h}f}+ \frac{O\big( e^{-\frac{\delta_{\alpha}}{h}} \big ) }{Z_h(\Omega) }  \displaystyle{\int_{\overline{\ft C_1(\alpha)}}  \  u_h\, e^{-\frac{2}{h}f} } 
\end{equation}
 in the limit $h\to 0$ and uniformly with respect to $y \in \overline{\ft C_1(\alpha)}$.
Moreover,  for all $\alpha\in (0,\alpha_0)$ there exists $c>0$ such that  in the limit $h\to 0$:
\begin{equation}\label{eq.1-ka}
 \frac{1}{Z_h(\Omega) }  \int_{\overline{\ft C_1(\alpha)}}  u_h\, e^{-\frac{2}{h}f} =1+O\left (e^{-\frac{c}{h}}\right ).
\end{equation}
which follows from the fact that 
$$\frac{1}{Z_h(\Omega) }  \int_{\overline{\ft C_1(\alpha)}}  u_h\, e^{-\frac{2}{h}f}    =1- \frac{1}{Z_h(\Omega) } \int_{\Omega\setminus \overline{\ft C_1(\alpha)}}  u_h\, e^{-\frac{2}{h}f},$$
together with~\eqref{eq.pp2}. 
Let us now fix $\alpha\in (0,\alpha_0)$. Then, using~\eqref{eq.equality} and~\eqref{eq.1-ka},  $\exists c>0$,  $\exists \delta_\alpha>0$,  $\forall y\in  \overline{\ft C_1(\alpha)}$:
\begin{equation}\label{eq.equ1}
\frac{1}{Z_h(\Omega) }  \, \int_{\overline{\ft C_1(\alpha)}} \ v_h\, u_h\, e^{-\frac{2}{h}f}=v_h(y)\left (1+O\left (e^{-\frac{c}{h}}\right )\right ) +O\left ( e^{-\frac{\delta_{\alpha}}{h}}  \right)
\end{equation}
in the limit $h \to 0$ and uniformly with respect to~$y \in \overline{\ft C_1(\alpha)}$. Therefore, using~\eqref{eq.equation1}, \eqref{eq.pp2-bis} and \eqref{eq.equ1},~$\exists \alpha_0>0$,  $\forall \alpha\in (0,\alpha_0)$,~$\exists c>0$,~$\forall y\in \overline{\ft C_1(\alpha)}$:
$$
\mathbb E_{\nu_h}  \left [ F\left (X_{\tau_{\Omega}}\right)\right ]= \mathbb E_{y}  \left [ F\left (X_{\tau_{\Omega}}\right)\right ]+O\big (e^{-\frac{c}{h} }\big ),
$$
in the limit $h \to 0$ and uniformly with respect to  $y \in \overline{\ft C_1(\alpha)}$. 
This concludes the proof of~\eqref{eq.step1-e}.
\medskip

\noindent
 \textbf{Step 2.}  Let us now conclude the  proof of Lemma~\ref{le.exp-qsd-x} by considering a compact subset $K$ of $\Omega$ such that $ K\subset \mathcal A(\ft C_{1})$. Let us recall that (see~\eqref{eq.ad}):
$$\mathcal A(\ft C_{1})=\{ x\in \Omega, \,t_x=+\infty \text{ and } \omega(x)\subset \ft C_{1}\}.$$ 
Since $\ft C_{1}$ is open and stable by the flow $\varphi_t(\cdot)$ (see~\eqref{hbb}), the continuity of  $\varphi_t(\cdot)$   implies that  there exists  $T_K\ge 0$ such that for all  $x\in K$, 
$$\varphi_{T_K}(x)\in \ft C_1.$$ 
Moreover, since $K$ is compact and for all $x\in K$, $t_x=+\infty$ (i.e. $\varphi_t(x)\in \Omega$ for all $t\ge 0$), there exists $\delta>0$ such that  all continuous curves $\gamma: [0,T_K]\to \overline \Omega$ such that  
$$\exists x\in K, \  \sup_{t\in [0,T_K]} \big \vert \gamma(t)-\varphi_t(x) \big \vert \le \delta,  $$
satisfy: 
\begin{equation}\label{eq.incluo}
\forall t\in [0,T_K], \ \gamma(t)\in  \Omega.
\end{equation}
Furthermore, up to choosing $\delta>0$ smaller, there exists $\alpha_K>0$ such that 
\begin{equation}\label{eq.incluo2}
\big \{\varphi_{T_K}(x)+z,  \ x\in K \text{ and } \vert z\vert \le \delta \big  \}\subset \ft C_1( {\alpha_K})\,\,  \text{ (see \eqref{eq.c1a})}.
\end{equation}
Let us now recall the following estimate of  Freidlin and Wentzell (see~\cite[Theorems 2.2 and 2.3 in Chapter 3, and Theorem 1.1 in Chapter 4]{FrWe},~\cite{Day2},~\cite[Theorem 3.5]{DZ} and~\cite[Theorem 5.6.3]{friedman2012stochastic}). For all $x\in K$, it holds:
\begin{equation}\label{eq.WFr-est}
\limsup_{h\to 0} h\, \ln \mathbb P_x\Big [ \sup_{t\in [0,T_K]} \big \vert X_t-\varphi_t(x) \big \vert \ge \delta   \Big ]\le - I_{x,T_K},
\end{equation}
where 
$$I_{x,T_K}=  \frac 12\,  \inf_{ \gamma \in H^1_{x,T_K} (\delta) } \,  \int_0^{T_K}\Big \vert \frac{d}{dt}  \gamma(t)+ \nabla f( \gamma(t))\Big \vert^2dt \ \ \in \mathbb R_+^*\cup \{+\infty\},$$
and $H^1_{x,T_K}(\delta)$ is the set of curves $ \gamma: [0,T_K]\to \Omega$ of regularity $H^1$ such that $ \gamma(0)=x$ and $\sup_{t\in [0,T_K]} \big \vert  \gamma(t)-\varphi_t(x) \big \vert \ge \delta$. 
Since  $K$ is compact, there exists $\eta_K>0$ such that for    $h$ small enough, it holds:
\begin{equation}\label{eq.WFr}
\sup_{x\in K}\mathbb P_x\Big [ \sup_{t\in [0,T_K]} \big \vert X_t-\varphi_t(x) \big \vert \ge \delta    \Big ]\le e^{-\frac{\eta_K}{h}}.
\end{equation}
Notice that  when $X_0=x\in K$ and  $\sup_{t\in [0,T_K]} \big \vert X_t-\varphi_t(x) \big \vert \le \delta$, it holds from~\eqref{eq.incluo} and~\eqref{eq.incluo2}:
\begin{equation}\label{eq.TtC}
\tau_\Omega>T_K \text{ and } X_{T_K}\in \ft C_1( {\alpha_K}).
\end{equation}
Let us now consider $F\in C^\infty(\pa \Omega,\mathbb R)$. Let  $x\in K$. Then, 
$$ \mathbb E_{x}  \left [ F\left (X_{\tau_{\Omega}}\right)\right ]=  \mathbb E_{x}  \left [ F\left (X_{\tau_{\Omega}}\right) \mathbf{1}_{\sup_{t\in [0,T_K]} \big \vert X_t-\varphi_t(x) \big \vert \le \delta} \right ]+ \mathbb E_{x}  \left [ F\left (X_{\tau_{\Omega}}\right) \mathbf{1}_{\sup_{t\in [0,T]} \big \vert X_t-\varphi_t(x) \big \vert \ge \delta} \right ].$$
Using~\eqref{eq.WFr}, it holds for $h$ small enough:
$$\Big \vert \mathbb E_{x}  \left [ F\left (X_{\tau_{\Omega}}\right) \mathbf{1}_{\sup_{t\in [0,{T_K}]} \big \vert X_t-\varphi_t(x) \big \vert \ge \delta} \right ]\Big \vert  \le  \Vert F\Vert_{L^{\infty}}    \,  e^{-\frac{\eta_K}{h}}.$$
Using~\eqref{eq.TtC},~\eqref{eq.step1-e} (with $\alpha= \alpha_K$),~\eqref{eq.WFr}, and  the Markov property of the process~\eqref{eq.langevin},  there exists $c>0$ such that for all $x\in K$, one has when $h\to 0$:
\begin{align*}
  \mathbb E_{x}  \left [ F\left (X_{\tau_{\Omega}}\right) \mathbf{1}_{\sup_{t\in [0,{T_K}]} \big \vert X_t-\varphi_t(x) \big \vert \le \delta} \right ]&=  \mathbb E_{x}  \left [   \mathbb E_{X_{T_K}}\big[F\left (X_{\tau_{\Omega}}\right)\big] \mathbf{1}_{\sup_{t\in [0,{T_K}]} \big \vert X_t-\varphi_t(x) \big \vert \le \delta} \right ] 
  \\
  &=\Big(\mathbb E_{\nu_h}  \left [ F\left (X_{\tau_{\Omega}}\right)\right ]  +O\big (e^{-\frac{c}{h} }\big )\Big)\\
  &\quad \times \mathbb P_x\Big  [ \sup_{t\in [0,{T_K}]} \big \vert X_t-\varphi_t(x) \big \vert \le \delta    \Big  ] \\
  &=\mathbb E_{\nu_h}  \left [ F\left (X_{\tau_{\Omega}}\right)\right ]  +O\big (e^{-\frac{c}{h} }\big ),
  \end{align*}
  uniformly in $x\in K$. This concludes the proof of Lemma~\ref{le.exp-qsd-x}.
\end{proof}

 Let us now end the proof of Theorem~\ref{thm.main} when $X_0=x\in K$, where  $K$ is  a compact subset of $\Omega$ such that $K\subset \mathcal A(\ft C_{1})$.  Recall that, when~\eqref{eq.hip1} holds, which is equivalent to~\eqref{eq.hip1-j}, one has   $\ft C_1=\ft C_{\ft{ max}}$, see Lemma~\ref{equiv-hipo}. 
 
 \begin{proof} Let $K$ be  a compact subset of $\Omega$ such that $$K\subset \mathcal A(\ft C_{1})$$
 and let us consider that the process starts from $X_0=x\in K$.
Let $F\in C^{\infty}(\partial \Omega,\mathbb R)$.
 Let us notice that the proof is not a direct consequence of Lemma~\ref{le.exp-qsd-x} since in Theorem~\ref{thm.main}, less regular test functions $F$ are considered. 
 The proof of Theorem~\ref{thm.main} is  divided into three steps. In the following we assume that  \eqref{H-M}, \eqref{eq.hip1-j}, \eqref{eq.hip2-j} and \eqref{eq.hip3-j} are satisfied.

\medskip

\noindent
\textbf{Step 1.} Proof of~\eqref{eq.t1}  and \eqref{eq.t2} when  $X_0=x\in K$. 
\medskip

\noindent
 Let    us first show that if $\Sigma\subset \partial \Omega$ is open and   there exists $\beta>0$ such that $\Sigma\cap \bigcup_{i=1}^{k_1^{\pa \Omega}}B_{\partial \Omega}(z_i,\beta)=\emptyset$ (where $B_{\partial \Omega}(z_i,\beta)$ is the open ball in~$\partial \Omega$ of radius $\beta$ centered at $z_i$), then, for all $x\in K$, 

\begin{equation}\label{eq.sigma-x}
\mathbb P_{x}  \left [ X_{\tau_{\Omega}}\in \Sigma \right ]=O\left(e^{-\frac{c}{h}}\right )
\end{equation}
in the limit $h \to 0$ and uniformly in~$x \in K$. 
 To this end, let us 
 consider $\tilde F\in C^{\infty}(\partial \Omega,[0,1])$ be such that 
 $$\tilde F=1 \text{ on } \Sigma \ \text{ and } \ \tilde F=0 \text{ on }\, \bigcup_{i=1}^{\ft k_1^{\pa \Omega}}B_{\partial \Omega}(z_i,\frac{\beta}{2}).$$
  Using Lemma~\ref{le.exp-qsd-x}, 
there exists $  c>0$ such that for all $x\in K$:
 $$\mathbb P_{x}  \left [ X_{\tau_{\Omega}}\in \Sigma \right ]\le \mathbb E_{x}  \left [ \tilde F(X_{\tau_{\Omega}})\right]\le \mathbb E_{\nu_h}  \left [ \tilde F(X_{\tau_{\Omega}})\right]+O\left(e^{-\frac ch}\right )$$
 in the limit $h \to 0$ and uniformly in~$x \in K$. 
Then,~\eqref{eq.sigma-x} follows from~\eqref{eq.t1} applied to  $\tilde F$ and the family of sets $\Sigma_i=B_{\partial \Omega}(z_i,\frac{\beta}{2})$ for $i\in\{1,\ldots,\ft k_1^{\pa \Omega}\}$ when $X_0\sim  \nu_h$. 

Let us now prove~\eqref{eq.t1} and~\eqref{eq.t2}. 
Let $F\in L^{\infty}(\partial \Omega,\mathbb R)$ and for all $i\in\{1,\dots,\ft k_{1}^{\pa \Omega}\}$, let $\Sigma_{i}\subset \pa \Omega$ be an open set which contains  $z_{i}$.  Let us assume in addition that $\Sigma_i\cap \Sigma_j=\emptyset$ if $i\neq j$. One has for any 
 $x\in K$
$$\mathbb E_{x}  \left [ F(X_{\tau_{\Omega}})\right]=\sum \limits_{i=1}^{\ft k_1^{\pa \Omega}}
\mathbb E_{x} \left [ \mathbf{1}_{\Sigma_{i}}F\left (X_{\tau_{\Omega}} \right )\right]  +  \mathbb E_{x} \Big  [ \mathbf{1}_{\pa \Omega\setminus \bigcup_{i=1}^{\ft k_1^{\pa \Omega}}\Sigma_{i} }F\left (X_{\tau_{\Omega}} \right )\Big ].$$
Moreover, one has:
$$\Big \vert  \mathbb E_{x} \Big  [ \mathbf{1}_{\pa \Omega\setminus\bigcup_{i=1}^{\ft k_1^{\pa \Omega}}\Sigma_{i}}F\left (X_{\tau_{\Omega}} \right )\Big  ]\Big \vert \le \Vert F\Vert_{L^{\infty}}\, \mathbb P_{x} \Big [ X_{\tau_{\Omega}} \in \pa \Omega\setminus \bigcup_{i=1}^{\ft k_1^{\pa \Omega}}\Sigma_{i}   \Big].$$
Using~\eqref{eq.sigma-x} with $\Sigma=\pa \Omega\setminus \bigcup_{i=1}^{\ft k_1^{\pa \Omega}}\Sigma_{i} $, one gets~\eqref{eq.t1}. 

Let us now prove~\eqref{eq.t2}. Let $j\in \{\ft k_1^{\pa \ft C_1}+1,\ldots,\ft k_1^{\pa \Omega}\}$. Let us introduce $\ft k_1^{\pa \Omega}$  disjoint open  sets $\tilde \Sigma_{i}\subset \pa \Omega$ ($i\in \{1,\ldots,\ft k_1^{\pa \Omega}\}$)  and  a  smooth function $G$ supported  in $\tilde \Sigma_j$ such that $\mathbf{1}_{\Sigma_{j}}F \le \Vert F\Vert_{L^{\infty}} \, G$ (in order to apply Lemma~\ref{le.exp-qsd-x} with $G$). 
To this end, let $\delta>0$ be such that for any $k\in \{1,\ldots,\ft k_1^{\pa \Omega}\}$ with $k\neq j$, the sets $\tilde \Sigma_k  := B_{\partial \Omega}(z_k, \delta) $  and $\tilde \Sigma_j:=\cup_{z\in \Sigma_j} B_{\partial \Omega}(z, \delta)$ are disjoint. Let us consider 
$$G\in C^{\infty}_c(\tilde \Sigma_j,[0,1]) \text{ such that } G=1 \text{ on } \Sigma_j.$$
 Using Lemma~\ref{le.exp-qsd-x}, there exists $  c>0$ such that for all $x\in K$
\begin{align*}
\big\vert \mathbb E_{x}  \left [ \mathbf{1}_{\Sigma_{j}}F(X_{\tau_{\Omega}})\right] \big \vert \le \Vert F\Vert_{L^{\infty}}\, \mathbb P_{x} [ X_{\tau_{\Omega}} \in  \Sigma_j  ]
&\le \Vert F \Vert_{L^{\infty}}\,\mathbb E_{x}  \left [ G(X_{\tau_{\Omega}})\right]\\
&= O(\mathbb E_{\nu_h}  \left [ G(X_{\tau_{\Omega}})\right]  )+ O(e^{-\frac ch})
\end{align*}
 in the limit $h \to 0$ and uniformly in~$x \in K$.  
Then, using~\eqref{eq.t2} and item 3 in Theorem~\ref{thm.main}  applied with  $X_0\sim  \nu_h$, $(\tilde \Sigma_{i})_{i\in \{1,\ldots,\ft k_1^{\pa \Omega}\}}$, and~$G$, it holds when $h\to 0$: $  \mathbb E_{x}  \left [ \mathbf{1}_{\Sigma_{j}}F(X_{\tau_{\Omega}})\right]=O\big (h^{\frac 14}\big )$ and when \eqref{eq.hip4-j} holds, one has  when $h\to 0$: $$ \mathbb E_{x}  \left [ \mathbf{1}_{\Sigma_{j}}F(X_{\tau_{\Omega}})\right]=O(e^{-\frac ch}),$$
 for some $c>0$. 
 This concludes the proof of~\eqref{eq.t2}. 

\medskip
\noindent
\textbf{Step 2.} Proof of~\eqref{eq.t3} when  $X_0=x\in K$. 
\medskip

\noindent
For all  $j\in\{1,\dots,\ft k_{1}^{\pa \Omega}\}$, let $\Sigma_{j}$ be open subset of $\pa \Omega$ such that $z_{j}  \in \Sigma_{j}$.  Let us assume that $\Sigma_k\cap \Sigma_j=\emptyset$ if $k\neq j$. Let $F\in L^{\infty}(\partial \Omega,\mathbb R)$ be $C^{\infty}$ in a neighborhood  of $z_i$ for some $i\in\{1,\dots,\ft k_{1}^{\pa \ft C_1}\}$.  Let $\beta>0$ be such that $F$ is $C^{\infty}$ on $B_{\partial \Omega}(z_i,2\beta )\subset \Sigma_i$ and let 
$\chi_i\in C^{\infty}(\pa \Omega,[0,1])$ be such that ${\rm supp}\, \chi_i\subset B_{\partial \Omega}(z_i,\beta )$,  $\chi_i=1$ on $B_{\partial \Omega}(z_i,\frac{\beta}{2})$. One has:
$$\mathbb E_{x}  \left [ \mathbf{1}_{\Sigma_i}F(X_{\tau_{\Omega}})\right]=\mathbb E_{x}  \left [ \chi_iF(X_{\tau_{\Omega}})\right]+\mathbb E_{x}  \left [ (\mathbf{1}_{\Sigma_i}-\chi_i)F(X_{\tau_{\Omega}})\right].$$
Using Lemma~\ref{le.exp-qsd-x} with $\chi_iF\in C^\infty$ and~\eqref{eq.t1}-\eqref{eq.t2} with $X_0\sim \nu_h$,~$F\chi_i$ and the family of disjoint open sets $\{ \Sigma_j, j=1,\ldots,\ft k_1^{\pa \Omega}, j\neq i \} \cup \{B_{\partial \Omega}(z_i,\frac{\beta}{2})\}$,  
there exists $  c>0$ such that for all $x\in K$:
\begin{align*}
\mathbb E_{x}  \left [ \chi_iF(X_{\tau_{\Omega}})\right] &=\mathbb E_{\nu_h}  \left [ \chi_iF(X_{\tau_{\Omega}})\right] +O\left (e^{-\frac{c}{h} }\right )\\
&=\mathbb E_{\nu_h}  \left [ \mathbf{1}_{B_{\partial \Omega}(z_i,\frac{\beta}{2})}F(X_{\tau_{\Omega}})\right] +O\left (e^{-\frac{c}{h} }\right )=F(z_i)\, a_i+O\big (h^{\frac 14}\big )
\end{align*}
 in the limit $h \to 0$ and uniformly in~$x \in K$,  
and where $a_i$ is defined in~\eqref{ai}. In addition, using Theorem~\ref{thm.main} when $X_0\sim  \nu_h$, when~\eqref{eq.hip4-j} holds, one can replace $O\big (h^{\frac 14}\big )$ in the last computation by~$O(h)$.
Moreover, using~\eqref{eq.sigma-x} with $\Sigma=\Sigma_i\setminus  B_{\partial \Omega}(z_i,\frac{\beta}{2})$:  there exists $  c>0$ such that for all $x\in K$:
$$\big \vert  \mathbb E_{x}  \left [ (\mathbf{1}_{\Sigma_i}-\chi_i)F(X_{\tau_{\Omega}})\right] \big \vert \le \Vert F\Vert_{L^{\infty}}\, \mathbb P_{x} \left [ X_{\tau_{\Omega}} \in \Sigma_i\setminus  B_{\partial \Omega}\Big (z_i,\frac{\beta}{2}\Big ) \right]= O\left (e^{-\frac ch}\right)$$
in the limit $h \to 0$ and uniformly in~$x \in K$. 
Thus, one has when $h\to 0$ and uniformly with respect to $x\in K$:
$$\mathbb E_{x}  \left [ \mathbf{1}_{\Sigma_i}F(X_{\tau_{\Omega}})\right]=F(z_i)\, a_i+O\big (h^{\frac 14}\big ),$$ and when~\eqref{eq.hip4-j} holds, one has: 
$$\mathbb E_{x}  \left [ \mathbf{1}_{\Sigma_i}F(X_{\tau_{\Omega}})\right]=F(z_i)\, a_i+O(h).$$  This concludes the proof of~\eqref{eq.t3} when $X_0=x \in  K$ 
 and the proof of   Theorem~\ref{thm.main}.
\end{proof}


\subsection{Proof of Theorem~\ref{thm.2}}
In this section, one proves Theorem~\ref{thm.2}.

\begin{proof}[Proof of Theorem~\ref{thm.2}.]
Let us assume that  \eqref{H-M} holds. Let $\ft C\in \mathcal C$. Assume that  (see~\eqref{eq.cc1})
$$
\pa \ft C\cap \pa \Omega\neq \emptyset \  \text{ and } \  \vert \nabla f\vert \neq 0 \text{ on } \pa \ft C.
$$
To prove Theorem~\ref{thm.2}, the strategy consists in using Theorem~\ref{thm.main}  with a    subdomain   $\Omega_{\ft C}$ of $\Omega$ containing $\ft C$ such that in the limit  $h\to 0$,  the most probable places of exit of the process~\eqref{eq.langevin} from $\Omega_{\ft C}$ when $X_0=x\in \ft C$  are the elements of $\pa \ft C \cap \pa \Omega$. This will   imply (since the trajectories of the process \eqref{eq.langevin} are continuous) that the most probable places of exit of the process~\eqref{eq.langevin} from $\Omega$ when $X_0=x\in \ft C$  are the elements of $\pa \ft C \cap \pa \Omega$, which is the statement of Theorem~\ref{thm.2}. \\
The proof of Theorem~\ref{thm.2} is divided into two steps. 
\medskip

\noindent
\textbf{Step 1}: Construction of a domain $\Omega_{\ft C}$ containing $\ft C$.  
\medskip

\noindent
In this step, one  constructs a subset $\Omega_{\ft C}$ of $\Omega$ such that 
\begin{equation}\label{eq.OmegaC}
\left\{
\begin{aligned}
&\text{$\Omega_{\ft C}$ is a $C^\infty$ connected open subset of $\Omega$ containing $\ft C$},\\
 &\text{$\pa \Omega_{\ft C}\cap \pa \Omega $ is a neighborhood of  $\pa \ft  C \cap \pa \Omega$ in $\pa \Omega$},  \\ 
  &\text{argmin}_{\pa \Omega_{\ft C}}f=\pa \ft C\cap \pa \Omega, \\
  &\big \{ x\in  \overline{\Omega_{\ft C}},\,  f(x)<\min_{\pa \Omega_{\ft C}}f\big \}=\ft C,\\
  & \text{the critical points of $f$ in  } \overline{\Omega_{\ft C}} \text{ are included in } \ft C,
\end{aligned}
\right.
\end{equation}
and
\begin{equation}\label{eq.OmegaC2}
f: \pa \Omega_{\ft C} \to \mathbb R \text{ is a Morse function}.
\end{equation}
To construct a domain $\Omega_{\ft C}\subset \Omega$ which satisfies~\eqref{eq.OmegaC} and~\eqref{eq.OmegaC2}, we first briefly recall the local structure of $f$ near $\pa \ft C$ to then    build a neighborhood $\ft V_{\ft C}$ of $ \overline{ \ft C}$ in $\overline \Omega$ (this construction is similar to the the construction of  $\ft V_{k}$ made in Step 5 in the proof of Proposition~\ref{pr.p1}). The set $\ft V_{\ft C}$ is then used to justify the existence of a domain  $\Omega_{\ft C}$ satisfying \eqref{eq.OmegaC} and~\eqref{eq.OmegaC2}.  
Let $\lambda\in \mathbb R$ be such that $\ft C$ is a connected component of $\{f<\lambda\}$ (see~\eqref{eq.Cdef2}).   Then, for $z\in \pa \ft C$, we introduce a ball of radius $\ve_z>0$ centred at $z$ in $\overline \Omega$ as follows.  
\begin{enumerate}
\item If $z\in \pa \ft C\cap  \Omega$: Since $z\in \Omega$ and $\vert \nabla f(z)\vert \neq 0$, there exists  $\ve_z >0$ such that $\overline {B(z,\ve_z)}\subset \Omega$, 
 $\vert \nabla f(z)\vert \neq 0$ on $\overline {B(z,\ve_z)}$, and,  according to~\cite[Section 5.2]{HeNi1}, $ {B(z,\ve_z)}\cap \{f<\lambda \}$ is connected and   $ {B(z,\ve_z)}\cap \pa \{f<\lambda \}= {B(z,\ve_z)}\cap \{f=\lambda \}$ (where we recall that $B(z,\ve_z)=\{x\in \overline \Omega \ \text{s.t.} \ |x-z|<\ve_z\}$).

\item If $z\in \pa \ft C\cap \pa \Omega$: Recall  that $z\in 
\ft U_1^{\pa \Omega}$  (see~\eqref{eq.U1paOmega})  and thus, $\pa_nf(z)>0$ and $z$ is a non degenerate local minimum of $f|_{\pa \Omega}$. Thus, there exists $\ve_z >0$,  such that $\vert \nabla f(z)\vert \neq 0$ on $\overline {B(z,\ve_z)}$  and such that, according to~\cite[Section 5.2]{HeNi1},   
 $ {B(z,\ve_z)}\cap \{f<\lambda \}$ is connected and included in $\Omega$. In addition,   it holds $ {B(z,\ve_z)}\cap \pa \{f<\lambda \}= {B(z,\ve_z)}\cap \{f=\lambda \}$ is included in $\overline \Omega$.
Finally,   up to choosing   $\ve_z>0$ smaller,  one has:
\begin{equation}\label{eq.GAMMAC0}
\argmin_{  \overline{B_{\pa \Omega}(z,\ve_z )} }   f=\{z\},
\end{equation}
where we recall that $B_{\partial \Omega}(z, \ve_z )$ is the open ball of radius $\ve_z$ centred in~$z$ in~$\pa \Omega$, and, 
\begin{equation}\label{eq.panf-b}
\vert \nabla_Tf \vert \neq 0 \text{ on } \overline {B_{\pa \Omega}(z,\ve_z)}\setminus\{z\}   \ \text{ and } \ \pa_nf>0 \text{ on }   \overline{B(z,\ve_z)}\cap \pa \Omega.
\end{equation}
\end{enumerate}
Items 1 an 2 above imply that for all $z\in \pa \ft C$, by definition of $\ft C$ (see Theorem~\ref{thm.2} and Definition~\ref{de.1}), 
\begin{equation}\label{eq.GAMMAC01}
{B(z,\ve_z)}\cap \ft C={B(z,\ve_z)}\cap \{f<\lambda \}  \text{ and thus, }  {B(z,\ve_z)}\cap \pa \ft C={B(z,\ve_z)}\cap \{f=\lambda \}.
\end{equation}  
One then defines:
$$\ft V_{\ft C}:=  \left (\,  \bigcup_{z\in \pa \ft C}  B(z,\ve_z)\,  \right)\bigcup \ft C  .$$
The set $\ft V_{\ft C}$ is an   open neighborhood of   $\overline{\ft C}$ in $\overline \Omega$. Moreover, according to items~1 and~2 above,
\begin{equation}\label{eq.vC-s}
\vert\nabla f\vert \neq 0 \text{ on } \overline{\ft V_{\ft C}}\setminus \ft C,
\end{equation}
and  using in addition~\eqref{eq.GAMMAC01}, 
\begin{equation}\label{eq.vC-s2}
\{f< \lambda\} \cap \ft V_{\ft C}=\ft C\,  \text{ and } \{f\le \lambda\} \cap \ft V_{\ft C}= \overline{\ft C}.
\end{equation} 
The second statement in~\eqref{eq.vC-s2} implies that  $\overline{\ft C}$ is a connected component of $\{f\le \lambda\}$. Thus, for $r>0$ small enough $\overline{ {\ft C}(\lambda+r) }\subset \ft V_{\ft C}$, where ${\ft C}(\lambda+r)$ is the connected component of  $\{f< \lambda+r\}$ which contains $\ft C$. 
 This suggests that a natural candidate to satisfy~\eqref{eq.OmegaC} and~\eqref{eq.OmegaC2}  is  the domain ${\ft C}(\lambda+r)$. However, for $r>0$ small enough, the boundary of  ${\ft C}(\lambda+r)$ is not   $C^\infty$, it is composed  of two smooth pieces     $\overline{\pa\ft C(\lambda+r)\cap \Omega}=\{x\in \pa \ft C(\lambda+r), \, f(x) = \lambda + r\, \} $   and  $  \pa\ft C(\lambda+r) \cap \partial \Omega$. The union of this two sets  gives  rise  to "corners". 
  Moreover, the function $f|_{\pa\ft C(\lambda+r)\cap \Omega}$ is not a Morse function since $f\equiv \lambda+r$ on $\overline{\pa\ft C(\lambda+r)\cap \Omega}$.  
To justify the existence of  a domain~$\Omega_{\ft C}$ which satisfies \eqref{eq.OmegaC} and~\eqref{eq.OmegaC2}, we proceed in two steps, as follows.
\begin{itemize}
\item \underline{Domain  $D_{\ft C}$   containing $\ft C$ which satisfies \eqref{eq.OmegaC} and $\pa_nf>0$ on $\pa D_{\ft C}$}. 
The subdomain $D_{\ft C}$ of $\Omega$ is constructed as a smooth regularization of the set $\ft C(\lambda+r)$ with $r>0$ such that $\overline{ {\ft C}(\lambda+r) }\subset \ft V_{\ft C}$ by modifying ${\ft C}(\lambda+r)$ in a neighborhood of ${ \{x\in \pa \ft C(\lambda+r), \, f(x) = \lambda + r\, \} } \cap \partial \Omega$ (where the two smooth pieces of $\pa  \ft C(\lambda+r)$ intersect each other).      Moreover, $\pa_n f>0$ on   $\overline{\pa\ft C(\lambda+r)\cap \Omega}$  (since there is no critical point of $f$ on $\overline{\pa\ft C(\lambda+r)\cap \Omega}= \{x\in \pa \ft C(\lambda+r), \, f(x) = \lambda + r\, \} $)   and on  $  \pa\ft C(\lambda+r) \cap \partial \Omega$ (since $\overline{ {\ft C}(\lambda+r) }\subset \ft V_{\ft C}$ and $\pa_nf>0$ on $\ft V_{\ft C}\cap \pa \Omega$,  see the second inequality in~\eqref{eq.panf-b}).
 Thus, using in addition~\eqref{eq.vC-s2} together with the fact that $\ft V_{\ft C}$ is an   open neighborhood of   $\overline{\ft C}$ in $\overline \Omega$,  there exists    a $C^\infty$ connected   open   subset  $D_{\ft C}$ of $\Omega$ such that 
\begin{equation}\label{G-DC0}
\ft C\subset D_{\ft C}, \ \overline{D_{\ft C}}\subset \ft V_{\ft C},
\end{equation}
and  
\begin{equation}\label{eq.pan-dc}
\pa_n f>0\text{ on } \pa D_{\ft C},
\end{equation}
which satisfies,  for some $\beta>0$ and $\Sigma_{\ft C}\subset \Omega$, 
\begin{equation}\label{G-DC}
\pa D_{\ft C}=  \left (\,  \bigcup_{z\in \pa \ft C\cap \pa \Omega}  B_{\pa \Omega}(z,\ve_z/2)    \right)\bigcup \,\overline{ \Sigma_{\ft C}}, \,  \text{ where, }  \, f\ge  \lambda+\beta \, \text{ on } \, \overline{\Sigma_{\ft C}}.  
\end{equation}
Finally, according to the first statement in~\eqref{eq.panf-b}, there exists $\delta_0>0$ such that for any  open  $\delta$-neighborhood $U_{\pa \Omega}^\delta$ of $\pa \Omega$ in $\overline \Omega$, with $\delta\in (0,\delta_0)$, one has 
\begin{equation}\label{eq.PC-dc}
\vert  \nabla_T f\vert \neq 0 \text{ on } \overline{ \pa D_{\ft C} \cap  {U_{\pa \Omega}^\delta}} \, \setminus (\pa \ft C\cap \pa \Omega),
\end{equation}
where $\nabla_Tf$ is the tangential gradient of $f$ on $\pa D_{\ft C}$. 

\item \underline{Domain  $\Omega_{\ft C}$   containing $\ft C$ which satisfies \eqref{eq.OmegaC} and~\eqref{eq.OmegaC2}}.  
To construct such a domain, we use Proposition~\ref{Lau1} (see Appendix~C below) with    $D=D_{\ft C}$, $\mathcal V_-=\ft C$,  $\mathcal V_+=\ft V_{\ft C}$,
\begin{itemize}
\item[(i)] $S_1=\pa D_{\ft C} \cap  U^{\delta/2}_{\pa \Omega}$ on which $f$ is a Morse function with no critical point on $\pa S_1$ (see~\eqref{eq.PC-dc} together with the fact that $\pa \ft C \cap \pa \Omega$ is composed of non degenerate critical points of $f|_{\pa \Omega}$),
\item[(ii)]  $S_1'=\pa D_{\ft C} \cap  U^{\delta/4}_{\pa \Omega}$ which satisfies, according to~\eqref{eq.PC-dc}, $\vert \nabla_T f\vert \neq 0$ on $\overline{S_1\setminus S_1'}$. 
\end{itemize}
Therefore,   using in addition the fact that $D_{\ft C}$ satisfies~\eqref{G-DC0}--\eqref{G-DC},    there exists  a $C^\infty$ connected   open   subset  $\Omega_{\ft C}$ of $\Omega$ such that $\ft C\subset \Omega_{\ft C}$, $\overline{\Omega_{\ft C}}\subset \ft V_{\ft C}$, 
  $$f: \pa \Omega_{\ft C} \to \mathbb R \text{ is a Morse function},$$ 
  and for some $r>0$ and $\Gamma_{\ft C}\subset \Omega$, 
 \begin{equation}\label{eq.GAMMAC}
\pa \Omega_{\ft C}=  \left (\,  \bigcup_{z\in \pa \ft C\cap \pa \Omega}  B_{\pa \Omega}(z,\ve_z/2)    \right)\bigcup \,\overline{ \Gamma_{\ft C}}, \,  \text{ where, }  \, f\ge  \lambda+r \, \text{ on } \, \overline{\Gamma_{\ft C}}.  
 \end{equation}
%
%
\end{itemize}
It then remains to check that  $\Omega_{\ft C}$ satisfies~\eqref{eq.OmegaC}. 
From~\eqref{eq.GAMMAC} and~\eqref{eq.GAMMAC0}, $\Omega_{\ft C}$ satisfies the two first statements in~\eqref{eq.OmegaC} and $\min_{\pa \Omega_{\ft C}}f=\lambda$. 
Since  $\ft C\subset \Omega_{\ft C}$ and $\overline{\Omega_{\ft C}}\subset \ft V_{\ft C}$, one deduces from the first statement in~\eqref{eq.vC-s2}, that 
$$\big \{ x\in  \overline{\Omega_{\ft C}},\,  f(x)<\lambda \big \}=\ft C,$$
and from~\eqref{eq.vC-s},   
$$\vert\nabla f\vert \neq 0 \,  \text{ on } \,  \overline{\Omega_{\ft C}}\setminus \ft C.$$
This proves that   $\Omega_{\ft C}$ satisfies the two last statements in~\eqref{eq.OmegaC}. 
This concludes the construction of a domain $\Omega_{\ft C}$ which satisfies~\eqref{eq.OmegaC} and~\eqref{eq.OmegaC2}. 
 A schematic representation of such a domain $ {\Omega_{\ft C}}$ is given in Figure~\ref{fig:oo}.

\medskip

\noindent
\textbf{Step 2}: End of the proof of Theorem~\ref{thm.2}.
\medskip

\noindent
 For all $z \in \pa \ft C\cap \pa \Omega$, let   $\Sigma_{z}$  be an open subset   of~$\pa \Omega$ such that~$ z\in \Sigma_{z}$. Let $K$ be a compact subset of~$\Omega$ such that $K\subset \mathcal A(\ft C)$. 
Let us first consider the case when~$K\subset \ft C$. 
\medskip

\noindent
Let $\Omega_{\ft C}$ be the $C^\infty$ subdomain of $\Omega$ constructed in the previous step and which, we recall, contains $\ft C$ and satisfies~\eqref{eq.OmegaC} and~\eqref{eq.OmegaC2}. Then, one easily deduces  that when~$\Omega$ is replaced by~$\Omega_{\ft C}$, the function $f: \overline{\Omega_{\ft C}}\to \mathbb R$ satisfies~\eqref{H-M} and $\mathcal C=\{\ft C\}$ (see~\eqref{mathcalC-def} for the definition of $\mathcal C$). Thus,  in this case   $\ft C_{\ft{max}}=\ft C$. Moreover, using in addition  the second and third statements in~\eqref{eq.OmegaC},   one obtains that the assumptions~\eqref{eq.hip1},~\eqref{eq.hip2},~\eqref{eq.hip3} and~\eqref{eq.hip4}  are satisfied for the function $f: \overline{\Omega_{\ft C}}\to \mathbb R$. Thus, according Theorem~\ref{thm.main} applied to the function $f: \overline{\Omega_{\ft C}}\to \mathbb R$,  the most probable places of exit of the process~\eqref{eq.langevin} from $\Omega_{\ft C}$ when $X_0=x\in \ft C$,  are $\pa \ft C \cap \pa \Omega$ (and the relative asymptotic probabilities are given by  item 2 in Theorem~\ref{thm.main}). In particular, from items~1 and~3 in Theorem~\ref{thm.main}, for any open subset $\Sigma$ of $\pa \Omega_{\ft C}$ such that 
$$\min_{\overline \Sigma} f>\min_{\pa \Omega_{\ft C}} f \ \, \text{(where we recall $\argmin_{\pa \Omega_{\ft C}} f =\pa \ft C\cap \pa \Omega$, see~\eqref{eq.OmegaC})},$$
there exists $c>0$ such that for $h$ small enough:
\begin{equation}\label{eq.item13}
 \sup_{x\in K} \mathbb P_x \big[  X_{\tau_{\Omega_{\ft C}}}  \in  \Sigma \big]\le e^{-\frac ch},
\end{equation}
where $\tau_{\Omega_{\ft C}}$ is  the first exit time from $\Omega_{\ft C}$ of the process~\eqref{eq.langevin}. 
\medskip

\noindent
\textbf{Step 2a}: Proof of the   first asymptotic estimate in Theorem~\ref{thm.2} when $K\subset \ft C$. 
\medskip

\noindent
Writing $ \pa \Omega =( \pa \Omega \cap \pa \Omega_{\ft C}  )\cup  ( \pa \Omega \setminus \pa \Omega_{\ft C} )$, it holds:
\begin{equation}\label{eq.utdec}
  \Big (\pa \Omega \setminus \bigcup_{z\in \pa\ft C\cap \pa \Omega } \Sigma_z\Big)  \   \subset \  \left (\pa \Omega_{\ft C} \cap \pa \Omega \setminus  \bigcup_{z\in \pa\ft C\cap \pa \Omega } \Sigma_z  \right) \   \bigcup   \   \big (  \pa \Omega \setminus  \pa \Omega_{\ft C}   \big ) . 
  \end{equation}
To prove the first asymptotic estimate in Theorem~\ref{thm.2}, let us prove that when $X_0=x\in K$, the probability that   $X_{\tau_\Omega}$ belongs to each of the two sets in the right-hand side of~\eqref{eq.utdec}, is exponentially small when $h\to 0$. 
Let us recall that~$\tau_{\Omega_{\ft C}}$ is  the first exit time from $\Omega_{\ft C}$ of the process~\eqref{eq.langevin} and thus,  when  $X_0=x\in \Omega_{\ft C}$, $\tau_{\Omega_{\ft C}}\le \tau_{\Omega}$, and  
\begin{equation}\label{eq.tauC}
\text{$\tau_{\Omega_{\ft C}}=\tau_{\Omega}$ if and only if  $X_{\tau_{\Omega_{\ft C}}}\in \pa \Omega_{\ft C}\cap  \pa \Omega$.}
\end{equation} 
Thus, from~\eqref{eq.tauC}, when $X_0=x\in  \Omega_{\ft C}$,  it holds:
 $$\left \{ X_{\tau_\Omega}\in \pa \Omega_{\ft C} \cap \pa \Omega \setminus  \bigcup_{z\in \pa\ft C\cap \pa \Omega } \Sigma_z   \right\}  = \left \{ X_{\tau_{\Omega_{\ft C} }}  \in  \pa \Omega_{\ft C} \cap \pa \Omega \setminus  \bigcup_{z\in \pa\ft C\cap \pa \Omega } \Sigma_z    \right\} .$$
Using~\eqref{eq.item13}, there exists $c>0$ such that for $h$ small enough: 
$$\sup_{x\in K} \mathbb P_x\Big [  X_{\tau_{\Omega_{\ft C} }}  \in  \pa \Omega_{\ft C} \cap \pa \Omega \setminus  \bigcup_{z\in \pa\ft C\cap \pa \Omega } \Sigma_z    \Big]\le e^{-\frac ch}.$$
Thus, there exists $c>0$ such that for $h$ small enough: 
\begin{equation}\label{eq.ut}
\sup_{x\in K} \mathbb P_x\Big [  X_{\tau_{\Omega }}  \in  \pa \Omega_{\ft C} \cap \pa \Omega \setminus  \bigcup_{z\in \pa\ft C\cap \pa \Omega } \Sigma_z    \Big]\le e^{-\frac ch}.
\end{equation}
Let us now consider the case when $X_{\tau_\Omega}\in   \pa \Omega \setminus  \pa \Omega_{\ft C} $ and  $X_0=x\in K$. When $X_0=x\in K$, it holds from~\eqref{eq.tauC}:
$$\big\{X_{\tau_\Omega}\in   \pa \Omega \setminus  \pa \Omega_{\ft C} \big\}\subset \big\{ X_{\tau_{\Omega_{\ft C}}}  \in   \pa \Omega_{\ft C}\setminus (\pa \Omega \cap  \pa \Omega_{\ft C}) \big\}$$
From~\eqref{eq.item13},  there exists $c>0$ such that for $h$ small enough: 
$$
\sup_{x\in K} \mathbb P_x\Big [  X_{\tau_{\Omega_{\ft C}}}  \in    \pa \Omega_{\ft C}\setminus (\pa \Omega \cap \pa \Omega_{\ft C})  \Big]\le e^{-\frac ch}.
$$
Therefore, there exists $c>0$ such that for $h$ small enough: 
  \begin{equation}\label{eq.ut2}
\sup_{x\in K} \mathbb P_x\Big [  X_{\tau_{\Omega }}  \in    \pa \Omega \setminus  \pa \Omega_{\ft C}  \Big]\le e^{-\frac ch}.
\end{equation}
In conclusion, from~\eqref{eq.utdec},~\eqref{eq.ut} and~\eqref{eq.ut2}, one obtains that there exists $c>0$ such that for $h$ small enough: 
\begin{equation}\label{utili1}
\sup_{x\in K} \mathbb P_x\Big [X_{\tau_{\Omega}}\in\bigcup_{z\in \pa\ft C\cap \pa \Omega } \Sigma_z  \Big]\le e^{-\frac ch}.
\end{equation}
This proves the first asymptotic estimate in Theorem~\ref{thm.2} when $K\subset\ft  C$. \medskip

\noindent
\textbf{Step 2b}: Proof of the second  asymptotic estimate in Theorem~\ref{thm.2} when $K\subset\ft  C$.
\medskip

\noindent 
Let us assume   that   the open sets $(\Sigma_{z})_{z\in \pa \ft C\cap \pa \Omega}$  are two by two disjoint. Let us consider  $z\in \pa \ft C\cap \pa \Omega$ and $\beta>0$ such that (see indeed the first statement in~\eqref{eq.OmegaC}), 
\begin{equation}\label{eq.bomegac}
 B_{\partial \Omega}(z, \beta)  \subset   \Sigma_z  \cap \pa \Omega_{\ft C}.
 \end{equation}
Then, one writes:
\begin{equation}\label{eq.dec-o}
\mathbb P_x  [X_{\tau_{\Omega}}\in \Sigma_z   ]=\mathbb P_x\big [X_{\tau_{\Omega}}\in       B_{\partial \Omega}(z, \beta)  \big]+ \mathbb P_x\big [X_{\tau_{\Omega}}\in    \Sigma_z \setminus    B_{\partial \Omega}(z, \beta)  \big].
 \end{equation}
Let us first deal with the second term in the right-hand side~\eqref{eq.dec-o}. It holds (since the sets $(\Sigma_{y})_{y\in \pa \ft C\cap \pa \Omega}$  are two by two disjoint and $B_{\partial \Omega}(z, \beta)  \subset   \Sigma_z  $, see~\eqref{eq.bomegac}), when $X_0=x\in \Omega$:
$$
\mathbb P_x\big [X_{\tau_{\Omega}}\in    \Sigma_z \setminus    B_{\partial \Omega}(z, \beta)  \big]\le \mathbb P_x\left [X_{\tau_{\Omega}}\in    \pa \Omega  \setminus  \Big(    B_{\partial \Omega}(z, \beta) \cup \bigcup_{y\in \pa\ft C\cap \pa \Omega,y\neq z } \Sigma_y\Big)   \right].
$$
Thus, from~\eqref{utili1} (applied with    $B_{\partial \Omega}(z, \beta) $ instead  of   $\Sigma_z$), one obtains   that   there exists $c>0$ such that for $h$ small enough: 
\begin{equation}\label{eq.step2-ch}
\sup_{x\in K} \mathbb P_x\Big [X_{\tau_{\Omega}}\in  \Sigma_z \setminus    B_{\partial \Omega}(z, \beta)     \Big]\le e^{-\frac ch}.
 \end{equation}
Let us now deal with the first term in the right-hand side~\eqref{eq.dec-o}. It holds from~\eqref{eq.bomegac} and~\eqref{eq.tauC}, when $X_0=x\in K$:
\begin{equation}\label{eq.step2-ch2}
\mathbb P_x\big [X_{\tau_{\Omega}}\in       B_{\partial \Omega}(z, \beta)  \big]=\mathbb P_x\big [X_{\tau_{\Omega_{\ft C}}}\in       B_{\partial \Omega}(z, \beta)  \big].
 \end{equation}
Applying item 2 in Theorem~\ref{thm.main} with the function $f: \overline{\Omega_{\ft C}}\to \mathbb R$ and $F=\mbf{1}_{B_{\partial \Omega}(z, \beta)}$, one has:
$$\mathbb P_{x}[X_{\tau_{\Omega_{\ft C}}} \in  B_{\partial \Omega}(z, \beta) ]= \frac{  \partial_nf(z)      }{  \sqrt{ {\rm det \ Hess } f|_{\partial \Omega}   (z) }  } \left (\sum \limits_{y\in\pa \ft C \cap \pa \Omega  } \frac{  \partial_nf(y)      }{  \sqrt{ {\rm det \ Hess } f|_{\partial \Omega}   (y) }  }\right)^{-1}(1+O(h)),$$
in the limit $h \to 0$ and uniformly in~$x \in K$. 
 Together with~\eqref{eq.dec-o},~\eqref{eq.step2-ch}, and~\eqref{eq.step2-ch2}, this concludes the proof of the second asymptotic estimate in Theorem~\ref{thm.2} when $X_0=x\in   K\subset \ft C$. 
 \medskip
 
 \noindent
The case when $X_0=x\in K\subset \mathcal A( \ft C)$ is proved using the estimate of  Freidlin and Wentzell~\eqref{eq.WFr-est} (see the second step of the proof of Lemma~\ref{le.exp-qsd-x}). This concludes the proof of Theorem~\ref{thm.2}.
\end{proof}
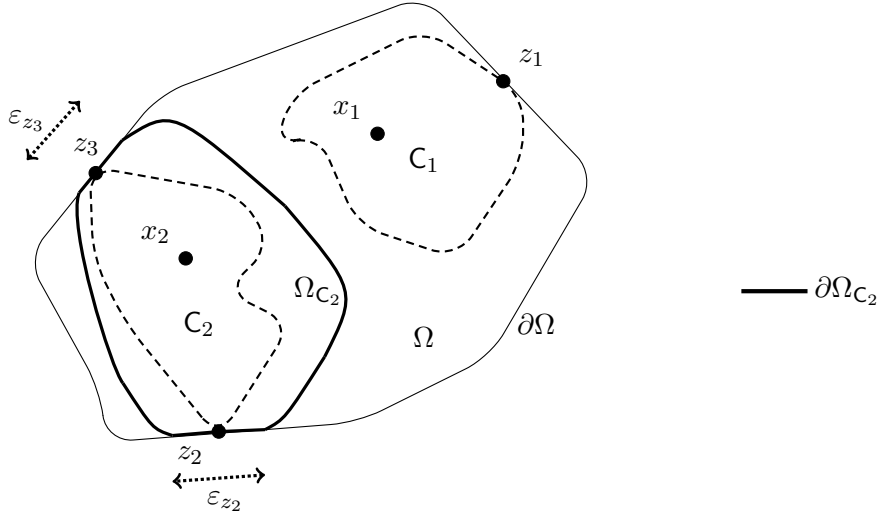
\begin{figure}[h!]
\begin{center}
\begin{tikzpicture}[scale=0.87]
\tikzstyle{vertex}=[draw,circle,fill=black,minimum size=5pt,inner sep=0pt]
\tikzstyle{ball}=[circle, dashed, minimum size=1cm, draw]
\tikzstyle{point}=[circle, fill, minimum size=.01cm, draw]
\draw [rounded corners=10pt] (0.1,0.5) -- (-1,2.5) -- (1,5) -- (5,6.5) -- (7.6,3.75) -- (6,1) -- (4,0) -- (0.2,-0.3) --cycle;
\draw [thick, densely dashed, rounded corners=10pt] (1.9,-0.24) -- (.5,1.5) -- (0,2.5) -- (-0.05,3.9) -- (2.2,3.5) -- (2.7,2.7) -- (2,2) -- (3,1.5) --cycle;
\draw [thick, densely dashed,rounded corners=10pt]    (2.75,4.2)  -- (3.4,4.4) -- (4,3) --(5.5,2.5)--(6.5,4) --(6.5,5)  -- (5,6) -- (3,5)  --cycle;
 \draw (1.6,1.5) node[]{$\ft C_2$};
  \draw (5,4) node[]{$\ft C_1$};
     \draw  (5,1.3) node[]{$\Omega$};
    \draw  (6.7,1.5) node[]{$\pa \Omega$};

\draw [very thick,black] (0.44,4.3)--(-0.21,3.5);
\draw [very thick,black] (1.2,-0.2)--(1.9,-0.14) ; 
\draw [very thick,black] (2.6,-0.11)--(1.9,-0.14) ;

\draw [very thick,black]  (-0.21,3.5) ..controls (-0.4,3)  and (0.28,0.8)  .. (0.45,0.6) ;
\draw [very thick,black]  (2.6,-0.11) ..controls (2.9,0.05)  ..  (3.5,1);
\draw [very thick,black]  (0.44,4.3) ..controls (1.2,4.8)  .. (2.9,3.3);
\draw [very thick,black] (3.5,1)  ..controls (4,2)  .. (2.9,3.3);
\draw [very thick,black] (0.45,0.6)  ..controls (0.9,-0.1)  .. (1.2,-0.2);

     \draw  (-0.1,4.2) node[]{$z_3$};
\draw (0.04,3.8) node[vertex,label=north west: {}](v){};
\draw (1.4,2.5) node[vertex,label=north west: {$x_2$}](v){};
\draw (4.3 ,4.4) node[vertex,label=north west: {$x_1$}](v){};
\draw (1.9,-0.14) node[vertex,label=south west: {$z_2$}](v){};
\draw (6.2,5.2) node[vertex,label=north east: {$z_1$}](v){};
     \draw  (3.4,2) node[]{$ \Omega_{\ft C_2}$};

       \draw [<->,very thick, densely dotted] (-0.2,4.9)--(-1,4) ;
  \draw  (-1,4.6) node[]{$\ve_{z_3}$};

\draw [<->,very thick, densely dotted] (2.6,-0.8)--(1.2,-0.9) ;
  \draw  (2,-1.2) node[]{$\ve_{z_2}$};
     
     \draw[ultra thick] (9.8,2)--(10.8,2);
   \draw (11.4,2)node[]{$\pa \Omega_{\ft C_2}$};

\end{tikzpicture}

\caption{Schematic representation  of $\Omega_{\ft C_2}$ satisfying~\eqref{eq.OmegaC} when $\ft C=\ft C_2$. On the figure, $\pa \ft C_2 \cap \pa \Omega=\{z_2,z_3\}$, $x_2$ is the global minimum of $f$ in $\ft C_2$ and $\ft C_1$ is another element of $\mathcal C$ (see~Definition~\ref{de.1}). }
 \label{fig:oo}
 \end{center}
\end{figure}
 \renewcommand\appendixpagename{\Large{Appendix}}
\begin{appendices}
\addtocontents{toc}{\protect\setcounter{tocdepth}{0}}
 \addappheadtotoc
 \label{appendix-c}

 \medskip

 \noindent
\textbf{A.  On the assumption~\eqref{H-M} and  Lemma~\ref{ran1} }
 \medskip

 \noindent
In this appendix and as stated in Section~\ref{sec.on-A0}, one explains why the conclusion of  Lemma~\ref{ran1}, proved in~\cite[Section 3.4]{HeNi1} (and made for $\Delta^{D,(p)}_{f,h}$, $p\in \{0,1\}$), is still valid when assuming in~\eqref{H-M} that $f: \{x\in \pa \Omega, \, \pa_nf(x)>0\}\to \mathbb R$ is a Morse function   instead of $f|_{\pa \Omega}$ is a Morse function.

For that purpose, let us assume that $f:\overline \Omega\to \mathbb R$ is $C^\infty$, $\vert \nabla f\vert \neq 0$ on $\pa \Omega$ and that, the functions $f:\overline \Omega\to \mathbb R$ and $f: \{x\in \pa \Omega, \, \pa_nf(x)>0\}\to \mathbb R$ are Morse functions.
 From~\eqref{eq.unitary}, we are going to prove that in this case,  for $h$ small enough, one still has:
\begin{equation}\label{eq.change1}
\dim \Ran \, \pi_{[0,h^{\frac 32} )}(\Delta^{D,(0)}_{f,h})= {\ft m_{0}^{\Omega}} \text{ and } \dim \Ran \, \pi_{[0,h^{\frac 32} )}(\Delta^{D,(1)}_{f,h})= {\ft m_{1}^{\overline \Omega}}.
\end{equation}
Let us notice that  there exists an open  neighborhood $\mathcal U\subset \pa\Omega$ of  $\{x\in \pa \Omega, \, \pa_nf(x)\ge  0\}$ such that $f|_{\mathcal U}$ is a Morse function. Therefore, in view of~\cite[Section 3.4]{HeNi1} (and more precisely of the IMS formula used there to prove an upper bound on the
number of small eigenvalues),  to prove~\eqref{eq.change1}, it is sufficient to show  that for all $z\in \{x\in \pa \Omega, \, \pa_nf(x)<0\}$, there exists a neighbourhood $\mathcal V_z$ of $z$ in $\overline \Omega$ such that for any  
$w\in \Lambda^{p}H^1_{T}(\Omega)$ (for $p\in \{0,1\}$) supported in $\mathcal V_z$, 
\begin{equation}\label{expla-1}
\big \Vert  d_{f,h}w\big \Vert_{L^2} ^2  +  \big \Vert  d_{f,h}^*w\big \Vert_{L^2 }^2\ge C\, h  \big \Vert   w\big \Vert_{L^2}^2,
    \end{equation}
    for some $C>0$ independent of $h$ and $w$. 
Let us recall the  two following Green formulas~\cite[Lemma 2.3.2]{HeNi1}.  For all $w\in \Lambda^{0}H^1_{T}(\Omega)$, one has:
\begin{equation}\label{green01}
\big \Vert  d_{f,h}w\big \Vert_{L^2} ^2  +  \big \Vert  d_{f,h}^*w\big \Vert_{L^2 }^2=h^2\big \Vert  d \,  w \big \Vert_{L^2 } ^2  +  h^2\big \Vert  d ^*\, w \big \Vert_{L^2 }^2 +\big  \lp  w, \big(\vert \nabla f\vert^2  +h\Delta_H^{(0)}f \big)\, w\big \rp_{L^2  }.
    \end{equation}
For all $w\in \Lambda^{1}H^1_{T}(\Omega)$, one has:
\begin{align}
\nonumber
\big \Vert  d_{f,h}w\big \Vert_{L^2 } ^2  +  \big \Vert  d_{f,h}^*w\big \Vert_{L^2}^2&=h^2\big \Vert  d \,  w \big \Vert_{L^2 } ^2  +  h^2\big \Vert  d ^*\, w \big \Vert_{L^2 }^2 +\big  \lp  w, \big(\vert \nabla f\vert^2 +h\big (\mathcal L_{\nabla f}+\mathcal L_{\nabla f}^*\big)\big)\, w\big \rp_{L^2 }\\
\label{green02}
    &\quad -h\int_{\pa \Omega}\vert w \vert^2\pa_{ n}f,
    \end{align}
    where $\mathcal L_{\nabla f}$ is the Lie derivative with respect to  the vector field $\nabla f$ and $\mathcal L_{\nabla f}^*$ its formal adjoint in $L^2(\Omega)$. Let us recall that the operator $\mathcal L_{\nabla f}+\mathcal L_{\nabla f}^*$ is a zeroth order operator (see for instance~\cite[Appendice 1]{HeSj4}). 

Since there is no boundary term in~\eqref{green01}, the first equality in~\eqref{eq.change1} is just a consequence of $\vert \nabla f\vert \neq 0$ on $\pa \Omega$. Indeed, 
 there exist    
a neighbourhood $\mathcal V_{\pa \Omega}$ of $\pa \Omega$ in $\overline \Omega$  and  a constant $c>0$  such that $\inf_{\overline{\mathcal V_{\pa \Omega}}} \vert \nabla f\vert \ge  c$.
Then, from~\eqref{green01},  for $h$ small enough, one has for all $w\in \Lambda^{0}H^1_{T}(\Omega)$ supported in $ \mathcal V_{\pa \Omega}$:
 \begin{equation}\label{eq.QF0}
\big \Vert  d_{f,h} w\big \Vert_{L^2} ^2  +  \big \Vert  d_{f,h}^*w\big \Vert_{L^2 }^2\ge \frac c2 \, \big \Vert  w\big \Vert_{L^2}^2.
\end{equation}
Thus,~\eqref{expla-1} is satisfied and for $h$ small enough, it holds:
 $$\dim \Ran \,\pi_{[0,h^{\frac 32} )}(\Delta^{D,(0)}_{f,h})= {\ft m_{0}^{\Omega}}.$$
 Let us now prove the second equality in~\eqref{eq.change1}.   
To this end, let $z\in \pa \Omega$  such that  $\pa_nf(z)<0$.
 Then, 
there exists a neighborhood $\mathcal V_z\subset \mathcal V_{\pa \Omega}$ of $z$ in $\overline \Omega$ such that $\pa_nf<0$ on $\pa \Omega \cap \mathcal V_z$.
 Therefore, using~\eqref{green02}, for $h$ small enough, one has  for all   $w\in \Lambda^{1}H^0_{T}(\Omega)$ supported in $\mathcal V_z$: 
\begin{equation}\label{eq.QF2}
\big \Vert  d_{f,h}w\big \Vert_{L^2} ^2  +  \big \Vert  d_{f,h}^*w\big \Vert_{L^2 }^2\ge \frac c2 \, \big \Vert  w\big \Vert_{L^2}^2-h\int_{\pa \Omega}\vert w\vert^2\pa_{ n}f\ge  \frac c2 \, \big \Vert   w\big \Vert_{L^2}^2.
\end{equation}
The estimate~\eqref{eq.QF2}  
implies, according again to~\eqref{expla-1}, that for $h$ small enough, it  holds
$$\dim \Ran \, \pi_{[0,h^{\frac 32} )}(\Delta^{D,(1)}_{f,h})= {\ft m_{1}^{\overline \Omega}}.$$
This ends the proof of~\eqref{eq.change1}.

  \medskip

 \noindent
 \textbf{B. Proofs of the results of Sections~\ref{sec.A2},~\ref{sec.A3},  and~\ref{sec.A4}}
  \medskip

 \noindent
 In this section, one proves the asymptotic estimates \eqref{discussion1}, \eqref{discussion2}, \eqref{discussion3} and \eqref{discussion4} stated in Section~\ref{discussion-hyp}. Let us start with the following result. Let $z_1<z_2$ and $f$: $[z_1,z_2]\to \mathbb R$ be a $C^{\infty}$ function. Then, for all $x\in [z_1,z_2]$, one has:
\begin{equation}\label{exp}
\mathbb P_{x}  [ X_{\tau_{(z_1,z_2)}}=z_1 ]=\frac{\displaystyle{\int_x^{z_2} e^{\frac 2h f}} }{\displaystyle{\int_{z_1}^{z_2} e^{\frac 2h f}} } \text{ and thus } \mathbb P_{x}  [ X_{\tau_{(z_1,z_2)}}=z_2 ]=\frac{\displaystyle{\int_{z_1}^x e^{\frac 2h f}} }{\displaystyle{\int_{z_1}^{z_2} e^{\frac 2h f}} }.
\end{equation} 
Indeed, let $v\in C^\infty([z_1,z_2],\mathbb R)$  be the unique solution to the elliptic boundary value problem on $(z_1,z_2)$:
$$  \frac{h}{2}  \, \frac{d^2}{dx^2} v(x) -\frac{d}{dx} f(x) \frac{d}{dx}   v(x)  =  0,    \ v(z_1)=1,\  v(z_2)=0.$$
Clearly, one has for all $x\in [z_1,z_2]$,
$$v(x)=\frac{\displaystyle{\int_x^{z_2} e^{\frac 2h f}} }{\displaystyle{\int_{z_1}^{z_2} e^{\frac 2h f}} } .$$
Finally, using the  Dynkin's formula \cite[Theorem 11.2]{karlin1981second}, one has for all $x\in [z_1,z_2]$,
 $$v(x)=\mathbb P_{x} \left [  X_{\tau_{\Omega}}=z_1\right].$$
 This proves~\eqref{exp}. 
 \medskip
 
 \noindent
 Let us now explain how to prove \eqref{discussion1}, \eqref{discussion2}, \eqref{discussion3} and \eqref{discussion4}. 
 The asymptotic estimates~\eqref{discussion1} and \eqref{discussion4}  follow directly from~\eqref{exp} together with  Laplace's method.  
 
 Let us now prove~\eqref{discussion2}. In the example depicted in Figure~\ref{fig:hip2}, the assumption~\eqref{eq.hip1} is satisfied.
Therefore, using~\eqref{eq.uh=}, there exits $\chi\in C^{\infty}_c\left ( (c,d), [0,1]\right )$ such that $\chi=1$ on a neighborhood of $x_1$ and:
\begin{equation}
\label{uh=chi}
u_h=\frac{\chi}{\Vert \chi\Vert_{L^2_w} } + r,
\end{equation}
where $r\in L^2_w(z_1,z_2)$ satisfies $\Vert r \Vert_{L^2_w}=O ( e^{- \frac{c}{h} })$  and  $c>0$ is  independent of $h$. Moreover, one has (see Figure~\ref{fig:hip2}) $$\argmin_{[z_1,z_2]}f =\argmin_{\ft C_{\ft{max}}}f=\{x_1\}.$$
 Thus, from Proposition~\ref{pr.masse} (applied to  $\ft O=(z_1,z_2)$, see~\eqref{eq.concentration1}), one has in the limit $h\to 0$:
$$
\int_{z_1}^{z_2} u_h \, e^{-\frac 2h f}= f''(x_1)^{-\frac 14}(\pi h)^{\frac 14}e^{- \frac{1}{h}f(x_1)} (1+O(h)).
$$
Moreover, if we denote by~$g(x)=\mathbb P_{x}  [ X_{\tau_{(z_1,z_2)}}=z_1 ]$ for $x\in [z_1,z_2]$, since $\chi\in C^{\infty}_c\left ( (c,d), [0,1]\right )$ and $\Vert g\Vert_{L^{\infty}}\le 1$, one has in the limit $h\to 0$ (using~~\eqref{uh=chi} and~\eqref{exp} in the third equality):
\begin{align}
\nonumber
\mathbb P_{\nu_h}  [ X_{\tau_{(z_1,z_2)}}=z_1 ]&=\frac{1}{\displaystyle{\int_{z_1}^{z_2} u_h \, e^{-\frac 2h f}} }\left [ \displaystyle{\int_{z_1}^c u_h ge^{-\frac 2hf} + \int_{c}^{d} u_h   g e^{-\frac 2hf}+ \int_{d}^{z_2} u_h ge^{-\frac 2hf} } \right]\\
\nonumber
&=\frac{1}{\displaystyle \int_{z_1}^{z_2} u_h \, e^{-\frac 2h f}}\left [ \displaystyle \int_{z_1}^c r   g e^{-\frac 2hf}+ \int_{c}^{d} \frac{\chi  ge^{-\frac 2hf}}{\Vert \chi\Vert_{L^2_w} } +  \int_{c}^{d} r   ge^{-\frac 2hf}+ \int_{d}^{z_2} r   g  e^{-\frac 2hf}\right]\\
\nonumber
&= \frac{1}{ \displaystyle \int_{z_1}^{z_2} u_h \, e^{-\frac 2h f}}\left [  \frac{  \displaystyle  \int_{c}^{d} \chi(x) \int_{x}^{z_2} e^{\frac 2h (f(y)-f(x))}dy  dx}{\Vert \chi \Vert_{L^2_w} \displaystyle \int_{z_1}^{z_2} e^{\frac 2h f} }  + O(e^{-\frac 1h (f(x_1)+c)}) \right]\\
\nonumber
&=\frac{1}{\displaystyle \int_{z_1}^{z_2} u_h \, e^{-\frac 2h f}}\left [    \frac{ O(e^{\frac 2h ( f(d)-f(x_1))} ) }{\Vert \chi  \Vert_{L^2_w}\displaystyle  \int_{z_1}^{z_2} e^{\frac 2h f} }   + O(e^{-\frac 1h (f(x_1)+c)}) \right]\\
\label{discussion2-b}
& =O(e^{-\frac ch}),
\end{align}
for some $c>0$ independent of $h$. This proves~\eqref{discussion2}.

Let us now prove~\eqref{discussion3}.   In the example depicted in Figure~\ref{fig:hip3}, the assumption~\eqref{eq.hip1} is satisfied.
Therefore, using~\eqref{eq.uh=}, there exits $\chi\in C^{\infty}_c\left ( (z,z_2), [0,1]\right )$ such that $\chi=1$ on a neighborhood of $x_2$ and:
\begin{equation}
\label{uh=chi2}
u_h=\frac{\chi}{\Vert \chi\Vert_{L^2_w} } + r,
\end{equation}
where $r\in L^2_w(z_1,z_2)$ satisfies $\Vert r \Vert_{L^2_w}=O ( e^{- \frac{c}{h} })$  and  $c>0$ is  independent of $h$. Moreover, one has (see Figure~\ref{fig:hip3}) $$\argmin_{[z_1,z_2]}f =\argmin_{\ft C_{\ft{max}}}f=\{x_2\}.$$
 Thus, from Proposition~\ref{pr.masse} (applied to $\ft O=(z_1,z_2)$, see~\eqref{eq.concentration1}), one has in the limit $h\to 0$:
$$
\int_{z_1}^{z_2} u_h \, e^{-\frac 2h f}= f''(x_2)^{-\frac 14}(\pi h)^{\frac 14}e^{- \frac{1}{h}f(x_2)} (1+O(h)).
$$
 Then, the same computations as those made  to obtain~\eqref{discussion2-b}, imply that when $h\to 0$:
 \begin{equation}\label{eq.discussion3-b}
 \mathbb P_{\nu_h}  [ X_{\tau_{(z_1,z_2)}}=z_1 ]=O(e^{-\frac ch}),
 \end{equation}
 for some $c>0$ independent of $h$. This proves~\eqref{discussion3}.  
 \medskip

   \noindent
 \textbf{C. On the proof of~\eqref{eq.OmegaC2}.  }
  \medskip
  
  \noindent
In this section, we prove the existence of a domain $\Omega_{\ft C}$  which satisfies~\eqref{eq.OmegaC2} in addition to~\eqref{eq.OmegaC}.  To this end, we first  give in Proposition~\ref{pr.Morsesigma} a simple perturbation result to present the main idea of the proof. Then, we extend this result to the setting we are interested in to prove   the existence of such a domain $\Omega_{\ft C}$ in  Proposition~\ref{Lau1}.

\begin{proposition}\label{pr.Morsesigma}
Let $f:\mathbb R^{d}\to \mathbb R$ be a $C^\infty$ function and 
$D$ be a $C^\infty$ open  bounded and connected  subset of $\mathbb R^d$. Let us assume that 
$$\text{for all } x\in \pa D, \ \nabla f(x)\oplus T_x\pa D=\mathbb R^d.$$
Then, for any open sets $ \mathcal V_-$ and $ \mathcal V_+$ such that $\overline{ \mathcal V_-} \subset D \text{ and } \overline{ D} \subset \mathcal V_+$,    
there exists a $C^\infty$ open bounded  and connected  subset $D'$ of~$\mathbb R^d$ such that 
$$\overline{ \mathcal V_-} \subset D', \ \overline{ D'} \subset \mathcal V_+, \ \text{ and } \ f|_{\pa D'} \text{  is a Morse function}.$$
 
\end{proposition}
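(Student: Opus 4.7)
The plan is to realize $D'$ as a small perturbation of $D$ obtained by tilting the zero level set of a defining function in a family with enough free parameters to apply a Sard/transversality argument. Since $\partial D$ is $C^\infty$ and compact, there exists a $C^\infty$ defining function $\rho$ for $D$ on an open neighborhood $W$ of $\overline D$: $W\cap D=\{\rho<0\}$, $\partial D=\{\rho=0\}$, and $\nabla\rho\ne 0$ on $\partial D$ (and, shrinking $W$, on all of $W$). I would choose a cutoff $\chi\in C_c^\infty\big(W\cap(\mathcal V_+\setminus\overline{\mathcal V_-});[0,1]\big)$ equal to $1$ on an open neighborhood $W_0$ of $\partial D$. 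Setting $N=\binom{d+2}{2}$ and indexing a basis of polynomials in $\mathbb R^d$ of degree $\le 2$ by multi-indices $\alpha$, I define for $\tau=(\tau_\alpha)\in\mathbb R^N$
\[
\rho_\tau(x):=\rho(x)-\chi(x)\!\!\sum_{|\alpha|\le 2}\!\tau_\alpha\,x^\alpha,\qquad D_\tau:=\{\rho_\tau<0\}.
\]
A standard compactness/continuity argument, using that $D$ is connected and that $\rho_\tau\to\rho$ in $C^1(\overline W)$, shows that for all sufficiently small $|\tau|$, $D_\tau$ is a $C^\infty$ bounded connected open set satisfying $\overline{\mathcal V_-}\subset D_\tau\subset\overline{D_\tau}\subset\mathcal V_+$, $\partial D_\tau\subset W_0$, and $\nabla\rho_\tau\ne 0$ on $\partial D_\tau$.

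The remaining—and main—task is to exhibit arbitrarily small $\tau$ for which $f|_{\partial D_\tau}$ is Morse. A critical point $x$ of $f|_{\partial D_\tau}$ satisfies $\nabla f(x)=\lambda\nabla\rho_\tau(x)$ for a unique multiplier $\lambda$, and it is non-degenerate iff the bordered Hessian determinant
\[
\Delta(x,\lambda,\tau):=\det\begin{pmatrix} 0 & d\rho_\tau(x) \\ (d\rho_\tau(x))^{\mathsf T} & \mathrm{Hess}(f-\lambda\rho_\tau)(x)\end{pmatrix}
\]
is non-zero. Hence the "bad" parameters for which $f|_{\partial D_\tau}$ fails to be Morse are contained in $\pi\big(\widetilde\Psi^{-1}(0)\big)$, where $\pi:(x,\lambda,\tau)\mapsto\tau$ and
\[
\widetilde\Psi:W_0\times\mathbb R\times B_\varepsilon(0)\to\mathbb R\times\mathbb R^d\times\mathbb R,\quad\widetilde\Psi(x,\lambda,\tau):=\bigl(\rho_\tau(x),\;\nabla f(x)-\lambda\nabla\rho_\tau(x),\;\Delta(x,\lambda,\tau)\bigr),
\]
with $B_\varepsilon(0)\subset\mathbb R^N$ a small enough ball so that the geometric constraints of the first paragraph hold.

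The key step is to verify that $0\in\mathbb R^{d+2}$ is a regular value of $\widetilde\Psi$. At any zero $(x_0,\lambda_0,\tau_0)$, the point $x_0$ belongs to $W_0=\{\chi=1\}$, so near $x_0$ the perturbation $\chi\sum\tau_\alpha x^\alpha$ coincides with the polynomial $\sum\tau_\alpha x^\alpha$. Differentiating $\widetilde\Psi$ in $\tau$ only, the constant monomial ($|\alpha|=0$) controls the first component $\rho_\tau(x_0)$; the $d$ linear monomials ($|\alpha|=1$) provide independent perturbations of $\nabla\rho_\tau(x_0)$, hence, after compensating via the constant direction, independently move the middle $d$ components of $\widetilde\Psi$; and the $\binom{d+1}{2}$ quadratic monomials $x_ix_j$ have second derivatives that span the whole space of symmetric $d\times d$ matrices, so they freely perturb $\mathrm{Hess}\,\rho_\tau(x_0)$ and, after compensating for the changes already absorbed by the first two blocks, freely perturb $\Delta(x_0,\lambda_0,\tau_0)$. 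This upper-triangular structure shows that $\partial_\tau\widetilde\Psi(x_0,\lambda_0,\tau_0)$ is surjective onto $\mathbb R^{d+2}$, whence $\widetilde\Psi^{-1}(0)$ is a $C^\infty$ submanifold of dimension $N-1$. A $C^1$ map from an $(N-1)$-manifold to $\mathbb R^N$ has image of Lebesgue measure zero; therefore $\pi(\widetilde\Psi^{-1}(0))$ has measure zero in $B_\varepsilon(0)$, and any $\tau$ outside this null set furnishes a $D'=D_\tau$ meeting all requirements.

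The main obstacle is the surjectivity verification for $\partial_\tau\widetilde\Psi$—specifically, the linear-algebraic disentanglement of the value/gradient/Hessian blocks outlined above, together with the bookkeeping that the dependence of $\mathrm{Hess}(f-\lambda\rho_\tau)$ on $\tau$ genuinely controls the restriction of this Hessian to $\ker d\rho_\tau(x_0)$ (and hence $\Delta$) in a non-trivial way. The latter point is guaranteed because quadratic perturbations of $\rho_\tau$ produce arbitrary symmetric quadratic perturbations of $\mathrm{Hess}\,\rho_\tau$, and multiplying by $-\lambda_0$ still yields an arbitrary symmetric matrix (the case $\lambda_0=0$ is handled even more easily since then the Hessian of $f-\lambda_0\rho_\tau$ does not feel $\tau$, but then $\nabla f(x_0)=0$ and $x_0$ is a critical point of $f$ itself, which by the transversality hypothesis $\nabla f(x)\oplus T_x\partial D=\mathbb R^d$ cannot happen on $\partial D$, hence on $\partial D_\tau$ for $\tau$ small). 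Once these checks are carried out, the conclusion follows by applying Sard and choosing a small $\tau$ in the complement of the bad null set.
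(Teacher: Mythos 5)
Your overall strategy is the same as the paper's: deform $\partial D$ through a finite-parameter family and apply a Sard-type argument. The paper does this by pushing each $x\in S:=\partial D$ along $\nabla f(x)$ by an amount $\lambda(x,v)$ chosen so that $f$ restricted to the deformed surface $S_v$ pulls back to $F(x,v)=f(x)+v\cdot x$, and then applies the parametric transversality theorem to $G(x,v)=(x,\partial_x F(x,v))$, showing $G$ is a submersion onto a tube around $0_{T^*S}$, hence for a.e.\ $v$ the section $d(f|_{S_v})$ is transverse to $0_{T^*S}$, i.e.\ $f|_{S_v}$ is Morse. This is a transversality statement at the level of the $1$-jet of $f|_{S_v}$, and higher-order degeneracies are handled automatically.

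Your argument has a genuine gap precisely at the point you flag as "the main obstacle": the claim that the quadratic monomials "freely perturb $\Delta$" is false in general. The $\tau$-derivative of $\Delta=\det(\text{bordered Hessian})$ equals $\mathrm{tr}\bigl(\mathrm{adj}(M)\cdot\partial_\tau M\bigr)$ where $M$ is the bordered Hessian; being able to move $M$ in all symmetric directions does not imply being able to move $\det M$ unless $\mathrm{adj}(M)\ne 0$, i.e.\ $\mathrm{rank}\, M\ge d$. For a zero $(x_0,\lambda_0,\tau_0)$ of $\widetilde\Psi$, the rank of the bordered Hessian $M$ is only guaranteed to be $\ge 2$; for $d\ge 3$ it can equal $2$ (e.g.\ if the restricted Hessian of $f|_{\partial D_{\tau_0}}$ at $x_0$ vanishes identically), in which case $\mathrm{adj}(M)=0$ and the \emph{entire} derivative of $\Delta$ — not just $\partial_\tau\Delta$ — vanishes, so $0$ fails to be a regular value of $\widetilde\Psi$ and the dimension count for $\widetilde\Psi^{-1}(0)$ breaks down. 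The natural repair stays within your framework but drops $\Delta$: define $\Psi(x,\lambda,\tau):=\bigl(\rho_\tau(x),\;\nabla f(x)-\lambda\nabla\rho_\tau(x)\bigr)$; the constant and linear monomials already make $\partial_\tau\Psi$ surjective (using $\lambda_0\ne 0$), so $\Psi^{-1}(0)$ is an $N$-manifold, and the Morse property of $f|_{\partial D_\tau}$ is \emph{exactly} the condition that $\tau$ is a regular value of the projection $\pi|_{\Psi^{-1}(0)}:\Psi^{-1}(0)\to\mathbb R^N$. Applying Sard to this projection (rather than trying to exhibit $0$ as a regular value of a map containing the determinant) gives the conclusion; this is essentially the parametric transversality argument the paper uses, rephrased in your defining-function setup.
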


\begin{remark}
We would like to thank Fran\c cois Laudenbach who gave us the main ingredient of the proof of Proposition~\ref{pr.Morsesigma}. The proof is inspired  by a method due to  Ren\'e Thom~\cite{thom} based on Sard's theorem~\cite{sard}, see~\cite[Section 5.6]{laudenbach1993topologie}. 
\end{remark}

\begin{proof}
Let   $ \mathcal V_-$ and $ \mathcal V_+$ be two open subsets of $\mathbb R^d$ such that $\overline{ \mathcal V_-} \subset D \text{ and } \overline{ D} \subset \mathcal V_+$. 
Let us denote by  $S$ the boundary of $D$ which is a  smooth compact hypersurface of $\mathbb R^d$. 
 For $r>0$, one denotes    by $B(0,r) $ the ball of radius $r$ centred at $0$ in $\mathbb R^d$.  Let   $\mathcal V$ be a neighborhood of $S$
in $\mathbb R^{d}$.  
 By assumption on $S$, there exist  $\varepsilon_{0}>0$ and $\varepsilon_{1}>0$ such that the map
$$   (x,\lambda)\in  S\times (-\varepsilon_{0},\varepsilon_{0})     \mapsto x+\lambda
\nabla f(x)\in \mathcal V
$$
is well defined and is a diffeomorphism onto its image, and, for all $ (x,v) \in S\times B(0,\varepsilon_{1}) $, there exists a unique $\lambda(x,v)\in  
(-\varepsilon_{0},\varepsilon_{0})$ such that 
$$
   f\big(x+\lambda(x,v)\nabla f(x)\big) = f(x)+  v\cdot x\,.
$$
Moreover, for every $v\in B(0,\varepsilon_{1}) $, according to the implicit function theorem, the map $x\in S\mapsto \lambda(x,v)\in (-\varepsilon_{0},\varepsilon_{0})$
is smooth and then also is $x\in S\mapsto x+ \lambda(x,v)\nabla f(x)\in \mathbb R^{d}$.
The latter application is then an injective immersion and hence, since $S$ is compact,
it follows that $S_{v}:=\{x+ \lambda(x,v)\nabla f(x)\}$ is a smooth compact   hypersurface. Up to choosing $\ve _1>0$ smaller, for any $v\in B(0,\ve_1)$, $S_{v}$ is the boundary of a $C^\infty$ open bounded  and connected  subset $D_v$ of~$\mathbb R^d$ such that 
$$\overline{ \mathcal V_-} \subset D_v \, \text{ and }  \, \overline{ D_v} \subset \mathcal V_+.$$
To prove Proposition~\ref{pr.Morsesigma},  it remains to show  that there exists   $v\in B(0,\varepsilon_{1})$
such that $f|_{S_{v}}$ is a Morse function. 
Let us   introduce
the function
$$
F:(x,v)\in S\times B(0,\varepsilon_{1})\mapsto  f|_{S_{v}}\big(x+\lambda(x,v)\nabla f(x)\big) =f(x)+  v\cdot x
\in \mathbb R\,.
$$
For all $x\in  S$ and for all $v\in B(0,\varepsilon_{1})$, let   $v^{T}_{x}\in T_xS$ and $v_{x}^{N}\in \mathbb R$ be such that    
\begin{equation}\label{vtt}
v=v^{T}_{x} + v_{x}^{N} n(x),
\end{equation}
where  we recall that $n(x)$ is a unit outward normal to $S$. 
At  $(x,v)\in S\times B(0,\varepsilon_{1})$,   it holds  $\partial _{x}F (x,v): z \in T_xS\mapsto d_xf(x)z+v_x^T\cdot z$, where $\partial_xF(x,v)$ is the $x$-derivative of $F$ at $(x,v)$. 
The function
$G: S\times B(0,\varepsilon_{1})\to T_x^*S $ defined
by
$$G: (x,v)\mapsto (x, \partial_{x}F (x,v))$$
 is a submersion onto a small tube 
around the zero section of $T^*S$. 
This is obvious by considering the $v$-derivative of $G$. 
Hence, $G$ is transverse to the zero section $0_{T^{*}S}$ of $T^{*}S$ (see~\cite[Chapitre~5.1]{laudenbach1993topologie} for the definition of transversality). Using the parametric transversality theorem   (which is a consequence of Sard's theorem, see for instance~\cite[Chapitre~5.3.1]{laudenbach1993topologie}), one obtains that for almost every   $v\in B(0,\varepsilon_{1})$,
$\partial_{x}(F|_{S\times \{v\}})=d (f|_{S_{v}})$ is transverse  to $0_{T^{*}S}$,  which is equivalent to~$f|_{S_{v}}$ is a Morse function. 
This concludes the proof of Proposition~\ref{pr.Morsesigma}. 
\end{proof}

  \noindent  
The next proposition gives sufficient conditions on $D$ and $f$ to modify the result of Proposition~\ref{pr.Morsesigma} so that the perturbed domain $D′$ has the same boundary as $D$ on a prescribed subset $S_1'$ of $\partial D$ on which $f$ is already a Morse function.

\begin{proposition}\label{Lau1}
Let $f:\mathbb R^{d}\to \mathbb R$ be a $C^\infty$ function and 
$D$ be a $C^\infty$ open  bounded and connected  subset of $\mathbb R^d$. Let us assume that 
$$\text{for all } x\in \pa D, \ \nabla f(x)\oplus T_x\pa D=\mathbb R^d.$$
Furthermore, let us assume that there exists  an open subset $S_1$ of $\pa D$ such that  $f: \overline {S_1}\to \mathbb R$ is a Morse function with no critical point on $\pa   S_1$.
Let us now consider an open set  $S_1'$ such that $\overline{S_1'}  \subset S_1$ and  $ f|_{\pa D}$ has no critical point on  $\overline{S_1\setminus S_1'}$.  
Then, for any open sets $ \mathcal V_-$ and $ \mathcal V_+$ such that $\overline{ \mathcal V_-} \subset D\cup S_1' \text{ and } \overline{ D} \setminus S_1' \subset \mathcal V_+$,    
there exists a $C^\infty$ open bounded  and connected  subset $D'$ of~$\mathbb R^d$ such that  $S_1'\subset \pa D'$, 
$$\overline{ \mathcal V_-} \subset D'\cup S_1', \ \overline{ D'}  \setminus S_1' \subset \mathcal V_+, \ \text{ and } \, f|_{\pa D'} \text{  is a Morse function}.$$
\end{proposition}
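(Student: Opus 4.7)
My plan is to adapt the argument of Proposition~\ref{pr.Morsesigma} by localizing the perturbation so that it vanishes identically on a neighborhood of $\overline{S_1'}$, and therefore leaves the piece $S_1'$ of $\partial D$ untouched. Since the transversality hypothesis $\nabla f(x)\oplus T_x\partial D=\mathbb R^d$ holds on the whole of $\partial D$, the tubular flow $(x,\lambda)\mapsto x+\lambda\nabla f(x)$ is still a diffeomorphism from $\partial D\times(-\varepsilon_0,\varepsilon_0)$ onto a neighborhood of $\partial D$, and the implicit function theorem still produces, for any smooth perturbation rule $\Phi(x,v)$ sufficiently small, a unique $\lambda(x,v)\in(-\varepsilon_0,\varepsilon_0)$ solving $f(x+\lambda(x,v)\nabla f(x))=f(x)+\Phi(x,v)$.

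The perturbation rule I would choose is $\Phi(x,v)=\chi(x)\,v\cdot x$, where $\chi\in C^\infty(\partial D,[0,1])$ is a cutoff satisfying $\chi\equiv 0$ on an open neighborhood $W_0$ of $\overline{S_1'}$ in $\partial D$ and $\chi\equiv 1$ outside a slightly larger open set $W_1$, both chosen so that the transition region $\overline{W_1\setminus W_0}$ is contained in $S_1\setminus S_1'$. On $\{\chi=0\}$, the perturbed hypersurface $S_v$ coincides with $\partial D$, hence $S_1'\subset S_v$, and $f|_{S_v}$ restricted to $S_1'$ equals $f|_{S_1'}$, which is Morse by hypothesis (because $\overline{S_1'}\subset S_1$ and $f|_{\overline{S_1}}$ is Morse). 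On the transition region, the hypothesis that $f|_{\partial D}$ has no critical point on the compact set $\overline{S_1\setminus S_1'}$ yields a uniform lower bound $|\nabla_T f|\geq c>0$ there; since the perturbation of the tangential differential is of order $|v|$, choosing $\varepsilon_1>0$ small enough ensures $f|_{S_v}$ has no critical point at all in the image of the transition region.

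It then remains to make $f|_{S_v}$ Morse on $\{\chi=1\}$, and here I would apply the Sard/parametric transversality argument of Proposition~\ref{pr.Morsesigma} verbatim, restricted to the relatively open set $\{\chi=1\}\subset\partial D$: the map $G:\{\chi=1\}\times B(0,\varepsilon_1)\to T^*\partial D$, $(x,v)\mapsto(x,d_x(f+v\cdot x)|_{T_x\partial D})$, is still a submersion onto a tubular neighborhood of the zero section because $\partial_vG$ already hits all tangential directions, hence $G$ is transverse to the zero section, and for almost every $v\in B(0,\varepsilon_1)$ the induced map $d(f|_{S_v})$ is transverse to the zero section of $T^*\partial D$ on $\{\chi=1\}$, which is the Morse condition there. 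Picking such a $v$ and combining the three pieces (Morse on $\{\chi=0\}$ by hypothesis, no critical points on the transition, Morse on $\{\chi=1\}$ by parametric transversality) gives a single $v$ for which $f|_{S_v}$ is globally Morse.

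Finally, I would verify that $S_v$ bounds a $C^\infty$ open bounded connected set $D'$ with $S_1'\subset\partial D'$, $\overline{\mathcal V_-}\subset D'\cup S_1'$ and $\overline{D'}\setminus S_1'\subset\mathcal V_+$: this reduces to choosing $\varepsilon_1$ so small that $|\lambda(x,v)|\,|\nabla f(x)|$ stays below the positive distances $\mathrm{dist}(\overline{\mathcal V_-},\partial D\setminus S_1')$ and $\mathrm{dist}(\overline{D}\setminus S_1',\partial\mathcal V_+)$; the connectedness and topological type of $D'$ are preserved because the perturbation is $C^1$-small and supported away from $\overline{S_1'}$. The main obstacle I anticipate is the coordination of the cutoff $\chi$ with the nested neighborhoods $W_0\Subset W_1$ and the original data $S_1',\,S_1,\,\mathcal V_\pm$: one must simultaneously keep $\overline{S_1'}\subset\{\chi=0\}$, keep $\overline{W_1\setminus W_0}\subset S_1\setminus S_1'$ so as to inherit $|\nabla_T f|\geq c>0$, and keep the normal displacement small enough to preserve the prescribed inclusions. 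Once this geometric setup is put in place, the three contributions fit together and the Sard-type conclusion delivers the desired $D'$.
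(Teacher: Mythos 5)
Your proposal is correct and takes essentially the same approach as the paper: a cutoff $\chi$ vanishing near $\overline{S_1'}$ localizes the perturbation $\chi(x)\,v\cdot x$, and the Morse property of $f|_{\partial D'}$ is obtained via the same three-piece decomposition (Morse near $\overline{S_1'}$ by hypothesis, no critical points on the compact transition region for $|v|$ small, and parametric transversality where $\chi\equiv 1$). The only cosmetic difference is that the paper verifies transversality of $G$ to the zero section globally on $\partial D$ and invokes the parametric transversality theorem once, whereas you apply it locally on $\{\chi\equiv 1\}$ and combine the three pieces afterwards; the content is identical.
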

\noindent

\begin{proof}
Let   $ \mathcal V_-$ and $ \mathcal V_+$ be two open subsets of $\mathbb R^d$ such that $\overline{ \mathcal V_-} \subset D\cup S_1' \text{ and } \overline{ D} \setminus S_1' \subset \mathcal V_+$.  
Let us denote by  $S$ the boundary of $D$ which is a  smooth compact hypersurface of $\mathbb R^d$. 
 Let us introduce a function $\chi\in C^\infty(S)$ such that $\chi(x)=1$ for all   $x\in S\setminus S_1$ and  $\chi(x)=0$ for all   $x\in \mathcal V_{S_1'}$ where $\mathcal V_{S_1'}$ is an open neighborhood of $\overline{S_1'}$ in $S$ such that $\overline{\mathcal V_{S_1'}}\subset S_1$. To prove Proposition~\ref{Lau1}, one  uses the cutoff function $\chi$  in the definition of $\lambda(x,t)$   to ensure that $S_1'\subset S_v$ (see the proof of Proposition~\ref{pr.Morsesigma} for the notation $S_v$). This is made as follows. 
Let us first consider   $\varepsilon_{0}>0$ and $\varepsilon_{1}>0$ such that the map
$$   (x,\lambda)\in  S\times (-\varepsilon_{0},\varepsilon_{0})     \mapsto x+\lambda
\nabla f(x)\in \mathcal V
$$
is well defined and is a diffeomorphism onto its image, and, for all $ (x,v) \in S\times B(0,\varepsilon_{1}) $, there exists a unique $\lambda(x,v)\in  
(-\varepsilon_{0},\varepsilon_{0})$ such that 
$$
   f\big(x+\lambda(x,v)\nabla f(x)\big) = f(x)+  \chi(x)\, v\cdot x\,.
$$
Notice that $\lambda(x,v)=0$ for all $x\in \mathcal V_{S_1'}$ and $v\in B(0,\varepsilon_{1})$ (since $\chi=0$ on $\mathcal V_{S_1'}$). Thus, for all $v\in B(0,\varepsilon_{1})$,  $ \mathcal V_{S_1'}\subset S_v$ which implies that $S_1'\subset  S_v$. 
Again, $S_{v}:=\{x+ \lambda(x,v)\nabla f(x)\}$ is a smooth compact  hypersurface. 
A schematic representation of the function $\chi$ and the hypersurface $S_v$ are given in Figure~\ref{fig:chii}. 
Up to choosing $\ve _1>0$ smaller, for any $v\in B(0,\ve_1)$, $S_{v}$ is the boundary of a $C^\infty$ open bounded  and connected  subset $D_v$ of~$\mathbb R^d$ such that, since  $\mathcal V_{S_1'}\subset S_v$,
$$ \overline{ \mathcal V_-} \subset D_v\cup S_1', \text{ and }  \overline{ D_v}  \setminus S_1' \subset \mathcal V_+.$$
 Let us now show that there exists   $v\in B(0,\varepsilon_{1})$
such that $f|_{S_{v}}$ is a Morse function. 
For that purpose, we    consider 
the function
$$
F:(x,v)\in S\times B(0,\varepsilon_{1})\mapsto  f|_{S_{v}}\big(x+\lambda(x,v)\nabla f(x)\big) =f(x)+ \chi(x)\,  v\cdot x
\in \mathbb R\,,
$$
and the function  
$G: S \times B(0,\varepsilon_{1})\to T_x^*S $ defined
by $G: (x,v)\mapsto (x, \partial_{x}F (x,v))$.
  Notice that   for all  $v\in    B(0,\varepsilon_{1})$, $x\in \overline{S_1'}\mapsto F(x,v)=f(x)$ is already, by assumption,  a Morse function (with no critical point on $\pa S_1'$). 
   This implies that $G$ is  transverse to the zero section  $0_{T^{*}S}$ of $T^{*}S$ along $S_1'\times B(0,\varepsilon_{1})$.
  Thus,  to prove  Proposition~\ref{Lau1}, it remains to study the function   $x\in  S\setminus S'_1\mapsto F(x,v)$, for $v\in    B(0,\varepsilon_{1})$. 
For  $(x,v) \in \overline{S_1\setminus S'_1} \times B(0,\varepsilon_{1})$ and  for all $z\in T_xS$, it holds:
$$\partial_{x}F (x,v)z= d_xf(x)z+ O(\Vert v\Vert)\, z.$$ 
 Since by assumption $d_xf(x)\neq 0_{T^{*}_xS}$ for all $x$ belonging to the compact set $\overline{ S_1\setminus S_1'}$, one has up to choosing $\ve_1>0$  smaller, for all $x\in \overline{S_1\setminus S_1'}$ and $v\in B(0,\varepsilon_{1})$, $\partial_{x}F (x,v)\neq 0_{T^{*}_xS}$.  
Finally, for  $(x,v) \in S\setminus S_1 \times B(0,\varepsilon_{1})$ and  for all $z\in T_xS$, it holds:
$$G(x,v)= (x,d_xf(x)z+  v_x^T\cdot z), \text{ where $v_x^T$ is defined by~\eqref{vtt}}.$$  
Thus, the function $G: S\setminus S_1\times B(0,\varepsilon_{1})\to T_x^*S $ is a submersion onto a small tube 
around the zero section $0_{T^{*}S}$ of $T^*S$. This implies that $G$ is  transverse to the zero section   of $T^{*}S$ along $S\setminus S_1\times B(0,\varepsilon_{1})$. 
 In conclusion, the function  $G: S \times B(0,\varepsilon_{1})\to T_x^*S $ is  transverse to the zero section of  $T^{*}S$.  The parametric transversality theorem  implies that for almost every   $v\in B(0,\varepsilon_{1})$,
$\partial_{x}(F|_{S\times \{v\}})=d (f|_{S_{v}})$ is transverse  to $0_{T^{*}S}$,  which is equivalent to~$f|_{S_{v}}$ is a Morse function. 
 This concludes the proof of       Proposition \ref{Lau1}. 
 
 \end{proof}

 \begin{figure}[h!]
  \begin{center}
 \begin{tikzpicture}
\draw[->] (-6,0)--(6,0) node[right] {$S$} ;
 \draw[->] (0,0)--(0,2);
\draw (6.99 ,1.4) node [anchor=north east] {1};
\draw[dashed] (-2,-1)--(-2,0);
\draw[dashed] (2,-1)--(2,0);
\draw[dashed] (-4.1,-1.7)--(-4.1,0);
\draw[dashed] (4.1,-1.7)--(4.1,0);
\draw[<->](-2,-1)--(2,-1)  node[midway,below] {$S_1'$} ;
\draw[very thick] (-4.7,1.5) ..controls (-4,0.2) and  (-2.9,0) .. (-2.7,0);
\draw[very thick] (5,0.3) ..controls (4,0.6) and  (3.1,0) .. (2.7,0);
   \draw  (-5,1.5) node[]{$S_v$};
\draw[very thick](-2.7,0)--(2.7,0);
\draw[<->](-4.1,-1.7)--(4.1,-1.7)  node[midway,below] {$S_1$} ;
\draw (-4.1,1) ..controls (-3.5,1) and  (-3.5,0) .. (-2.5,0);
\draw (4.1,1) ..controls (3.5,1) and  (3.5,0) .. (2.5,0);
\draw (-6.3,1) -- (-4.1,1);
\draw (6.5,1) -- (4.1,1);
\draw[dashed] (-4.1,-2.5)--(-4.1,0);
\draw[dashed] (-2,-2.5)--(-2,0);
\draw[<->] (-4.1,-2.7)--(-2,-2.7) node[midway,below] {$\nabla_Tf\neq 0$} ;
\draw[dashed] (4.1,-2.5)--(4.1,0);
\draw[dashed] (2,-2.5)--(2,0);
\draw[<->] (4.1,-2.7)--(2,-2.7) node[midway,below] {$\nabla_Tf\neq 0$} ;

   \draw  (-6.6,1) node[]{$\chi$};
\end{tikzpicture}
\caption{The support of $\chi$ on $S$, the compact sets $S_1$ and  $S_1'$, and the hypersurface $S_v$.}
 \label{fig:chii}
  \end{center}
\end{figure}
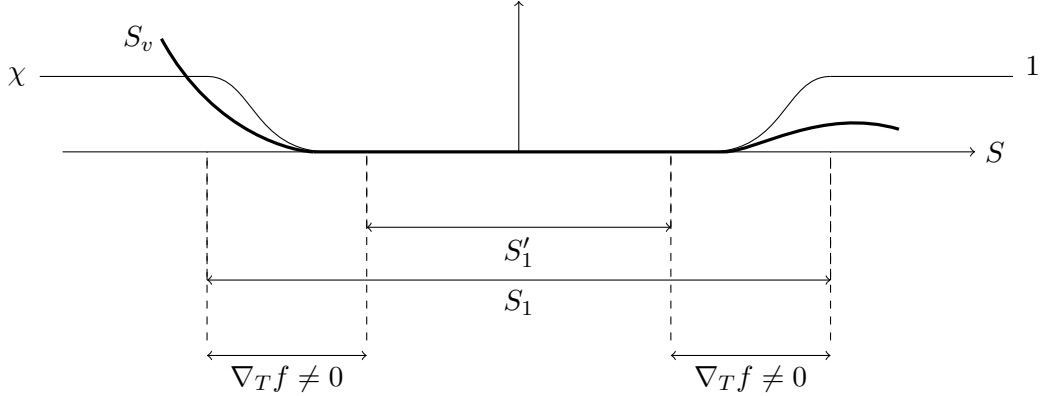

\end{appendices}

\section*{Acknowledgements}
This work is supported by the
European Research Council under the European Union's Seventh Framework
Programme (FP/2007-2013) / ERC Grant Agreement number 614492.
 The authors thank Laurent Michel and Fran\c cois Laudenbach for fruitful discussions. 

\break

\section*{Main notation used in this work}

\begin{multicols}{2}
\begin{itemize}
\item[$\bullet$]  $(\tau_\Omega,X_{\tau_\Omega})$, p.~\pageref{page.tau}

\vspace{-0.1cm}

 \item[$\bullet$]  $L^{(0)}_{f,h}$, p.~\pageref{page.lofh}

\vspace{-0.1cm}

\item[$\bullet$]  $L^{D,(0)}_{f,h}$, p.~\pageref{page.ldofh} (see also p.~\pageref{page.generatord})

\vspace{-0.1cm}

 \item[$\bullet$]  $\lambda_h$, p.~\pageref{page.lambdah}

\vspace{-0.1cm}

\item[$\bullet$]  $u_h$, p.~\pageref{page.uh}

\vspace{-0.1cm}

 \item[$\bullet$]  $\nu_h$, p.~\pageref{page.qsd}

\vspace{-0.1cm}

 \item[$\bullet$]  $\mathbb P_x$, p.~\pageref{page.qsd}

\vspace{-0.1cm}

\item[$\bullet$] Assumption~\eqref{H-M}, p.~\pageref{page.HM}.

\vspace{-0.1cm}

 \item[$\bullet$]   $\mathcal A(D)$,  p.~\pageref{page.AD}
 
 \vspace{-0.1cm}
 
\item[$\bullet$]  $\{f<a\}$, $\{f\le a\}$,  and  $\{f=a\}$,  p.~\pageref{page.fa}

\vspace{-0.1cm}

 \item[$\bullet$]  $\ft H_f(x)$,  p.~\pageref{page.hfx}, 

\vspace{-0.1cm}

\item[$\bullet$]  $\ft C_{\ft{max}}$,  p.~\pageref{page.fa}

\vspace{-0.1cm}

 \item[$\bullet$]  $\mathcal C$,  p.~\pageref{page.c} (see also p.~\pageref{page.c2})

\vspace{-0.1cm}

\item[$\bullet$]  $\ft C(x)$,  p.~\pageref{page.cx}

\vspace{-0.1cm}

\item[$\bullet$] Assumptions~\eqref{eq.hip1},~\eqref{eq.hip2},~\eqref{eq.hip3},  and~\eqref{eq.hip4}, p.~\pageref{page.hypo}

\vspace{-0.1cm}

 \item[$\bullet$]  $ \ft U_0^{  \Omega}=\{x_1,\dots,x_{\ft m_{0}^{\Omega}}\}$ and  $\ft m_0^{\Omega}$, p.~\pageref{page.u0omega}

 \vspace{-0.1cm}

 \item[$\bullet$]  $ \ft U_1^{  \Omega}=\{ z_{\ft m_1^{\pa \Omega}+1},\ldots,z_{\ft m_1^{\overline \Omega}}\}$ and   $\ft m_1^{  \Omega}$, p.~\pageref{page.u1omega}

 \vspace{-0.1cm}

 \item[$\bullet$] $ \ft U_1^{\pa \Omega}=\{z_1,\ldots,z_{\ft m_1^{\pa \Omega}}\}$ and   $\ft m_1^{\pa \Omega}$, p.~\pageref{page.u1paomega}

\vspace{-0.1cm}

 \item[$\bullet$]   $ \ft U_1^{\overline \Omega}=\{z_1,\ldots,z_{\ft m_1^{\pa \Omega}}, z_{\ft m_1^{\pa \Omega}+1},\ldots,z_{\ft m_1^{\overline \Omega}}\}$ and   $\ft m_1^{\overline \Omega}$, p.~\pageref{page.u1overlineomega}
 
 \vspace{-0.1cm}

 \item[$\bullet$]  $\{z_1,\ldots,z_{\ft k_1^{\pa \Omega}}\}$  and   $\ft k_1^{\pa \Omega}$, p.~\pageref{page.k1paomega}

 \vspace{-0.1cm}

 \item[$\bullet$] $\{z_1,\ldots,z_{\ft k_1^{\pa \ft C_{\ft{max}} }}\}$,  and  $\ft k_1^{\pa \ft C_{\ft{max}} }$, p.~\pageref{page.k1pacmax}

 \vspace{-0.1cm}

 \item[$\bullet$] $ a_i$, p.~\pageref{page.ai}

 \vspace{-0.1cm}

 \item[$\bullet$] $\ft C(\lambda,x)$,   p.~\pageref{page.clambdax}

 \vspace{-0.1cm}

 \item[$\bullet$] $\ft C^+(\lambda,x)$, p.~\pageref{page.c+lambdax}

 \vspace{-0.1cm}

 \item[$\bullet$] $ \lambda(x)$, p.~\pageref{page.lambdax}

 \vspace{-0.1cm}

 \item[$\bullet$] $\ft N_{1}$ and  $ \{ \ft C_{1},\dots,   \ft  C_{  N_{1}}\}$, p.~\pageref{page.N1}
 
  \vspace{-0.1cm}

 \item[$\bullet$]  $\ft E_{1,\ell}$ ($\ell\in \{1,\ldots,\ft N_1\}$), p.~\pageref{page.N1}  (labeled with the lexicographic order in p.~\pageref{page.lexico})

 \vspace{-0.1cm}

 \item[$\bullet$] $\ft U_1^{\ft{ssp}}$, page \pageref{page.u1ssp}

 \vspace{-0.1cm}

 \item[$\bullet$] $\mathcal C_{crit}$, page \pageref{page.ccrit}

 \vspace{-0.1cm}

 \item[$\bullet$]    $\mbf{j}$ and  $\mbf{\widetilde j}$, page \pageref{page.j}
 
  \vspace{-0.1cm}

 \item[$\bullet$]  $x_{k,l}$ ($k\ge 1$, $\ell\in \{1,\ldots,\ft N_k\}$), page \pageref{page.xkl} (labeled with the lexicographic order in p.~\pageref{page.lexico})

 \vspace{-0.1cm}

 \item[$\bullet$]  $\ft E_{k,l}$ ($k\ge 2$, $\ell\in \{1,\ldots,\ft N_k\}$), page \pageref{page.ekl2}  (labeled with the lexicographic order in p.~\pageref{page.lexico})
 
  \vspace{-0.1cm}
 
 \item[$\bullet$]  $\ft N_2, \ft N_3, \ft N_4,\ldots$,  page \pageref{page.n2}
 
\vspace{-0.1cm}

\item[$\bullet$] $\Lambda^pC^\infty(\overline \Omega)$, p.~\pageref{page.cinfty}

\vspace{-0.1cm}

\item[$\bullet$] $\Lambda^pC^\infty_T(\overline \Omega)$, p.~\pageref{page.cinftyt}

\vspace{-0.1cm}

\item[$\bullet$] $\Lambda^pL^2_w( \Omega)$ and  $\Lambda^pH^q_w(  \Omega)$,   p.~\pageref{page.wsobolevq}

\vspace{-0.1cm}

\item[$\bullet$]    $\Lambda^pH^q_{w,T}( \Omega)$,   p.~\pageref{page.wsobolevqt}

\vspace{-0.1cm}

\item[$\bullet$] $\Lambda^pL^2(  \Omega)$ and $\Lambda^pH^q(  \Omega)$,   p.~\pageref{page.psl2}

\vspace{-0.1cm}

\item[$\bullet$] $\Lambda^pH^q_{T}(  \Omega)$ and $\Lambda^pH^q_{N}(  \Omega)$,   p.~\pageref{page.psl2}

\vspace{-0.1cm}

\item[$\bullet$] $\Vert .\Vert_{H^q_w}$ and $\langle\, ,\, \rangle_{L^2_w}$,  p.~\pageref{page.psl2w}

\vspace{-0.1cm} 

\item[$\bullet$]$ \Vert .\Vert_{H^q}$ and $\langle\, ,\, \rangle_{L^2}$,  p.~\pageref{page.psl2}

\vspace{-0.1cm}

\item[$\bullet$] $\Delta^{(p)}_{f,h}$,  p.~\pageref{page.wlaplacien}

\vspace{-0.1cm}

\item[$\bullet$] $\Delta^{D,(p)}_{f,h}(\Omega)$,  p.~\pageref{page.wlaplaciend}

\vspace{-0.1cm}

\item[$\bullet$] $L^{D,(p)}_{f,h}(\Omega)$,  p.~\pageref{page.generatord}

\vspace{-0.1cm}

\item[$\bullet$] $\pi_E$,  p.~\pageref{page.piea}

\vspace{-0.1cm}

\item[$\bullet$]  For $p\in \{0,\dots,d\}$ and    $\pi^{(p)}_h$, p.~\pageref{page.pihp}

  \vspace{-0.1cm}

 \item[$\bullet$] $ \widetilde v_{k,\ell} $ and   $ \chi_{k,\ell} $, p.~\pageref{page.tilevkl}   (labeled with the lexicographic order in p.~\pageref{page.lexico})

   \vspace{-0.1cm}

 \item[$\bullet$]  $\Phi_j$, p.~\pageref{page.Phij} and~\pageref{page.Phij2}

  \vspace{-0.1cm}

 \item[$\bullet$] $w_j$, p.~\pageref{page.wj} and~\pageref{page.wj2}

  \vspace{-0.1cm}

   \item[$\bullet$] $\theta_j$ and $\tilde \phi_j$, p.~\pageref{page.tildephij} and~\pageref{page.tildephij2}

  \vspace{-0.1cm}

 \item[$\bullet$]  $u^{(1)}_{j,wkb}$ p.~\pageref{page.u1jwkb} and~\pageref{page.u1jwkb2}  
 
  \vspace{-0.1cm}

 \item[$\bullet$] $c_j(h)$, p.~\pageref{page.cj} and~\pageref{page.cj2}

  \vspace{-0.1cm}

 \item[$\bullet$] $\widetilde \phi_{j,wkb}$, p.~\pageref{page.phiwkb}
 
  \vspace{-0.1cm}

 \item[$\bullet$] $\widetilde u_k$ and  $\widetilde \psi_{j}$, page \pageref{page.qm}
 
  \vspace{-0.1cm}

 \item[$\bullet$] $\lambda_{2,h}$, p.~\pageref{page.lambda2h}
 
   \vspace{-0.1cm}

 \item[$\bullet$] $B_j$, p.~\pageref{page.bj}

   \vspace{-0.1cm}

 \item[$\bullet$] $p_{j,k}$, $C_{j,k}$,  and $\ve_{j,k}$, p.~\pageref{page.pjk}

    \vspace{-0.1cm}

   \item[$\bullet$]  $S$ and  $S_{j,k}$, p.~\pageref{page.sjk}

    \vspace{-0.1cm}

   \item[$\bullet$]  $ \widetilde S$ and  $ \widetilde S_{j,k}$,  p.~\pageref{page.tildesjk}

   \vspace{-0.1cm}

 \item[$\bullet$] $D$, $D_{k,k}$, and  $q_k$ p.~\pageref{page.dkk}

   \vspace{-0.1cm}

 \item[$\bullet$] $\widetilde C$ and  $ \widetilde C_{j,k}$, p.~\pageref{page.tildec}

   \vspace{-0.1cm}

 \item[$\bullet$] $\ft S_k$, p.~\pageref{page.ftsk}
 
 \vspace{-0.1cm}

 \item[$\bullet$] $\mu_i(T)$, p.~\pageref{page.mui}
  
   \vspace{-0.1cm}

 \item[$\bullet$] $C_0$, $C_1$, p.~\pageref{page.c0c1}
 
   \vspace{-0.1cm}

 \item[$\bullet$] $\lambda_{k,h}$, p.~\pageref{page.lambdakh}

    \vspace{-0.1cm}

 \item[$\bullet$] $\ve_h$, p.~\pageref{page.veh}
   
  \vspace{-0.1cm}

 \item[$\bullet$] $\widetilde  \pi^{(0)}_h$, p.~\pageref{page.tildephio}
 
 \vspace{-0.1cm}

 \item[$\bullet$] $\kappa_{ji}$ p.~\pageref{page.kji}
 
  \vspace{-0.1cm}

 \item[$\bullet$]  $Z_j$ and   $\psi_j$,  p.~\pageref{page.zjpsij}
 
  \vspace{-0.1cm}

 \item[$\bullet$]  $v_h$,  p.~\pageref{page.vh}
 
  \vspace{-0.1cm}

 \item[$\bullet$]  $\ft C_r$ ($r>0$),  p.~\pageref{page.cr}
 
 \end{itemize}
\end{multicols}

\bibliography{biblio_schuss} 

\begin{thebibliography}{10}

\bibitem{BEGK}
A.~Bovier, M.~Eckhoff, V.~Gayrard, and M.~Klein.
\newblock Metastability in reversible diffusion processes. {I}. {S}harp
  asymptotics for capacities and exit times.
\newblock {\em J. Eur. Math. Soc. (JEMS)}, 6(4):399--424, 2004.

\bibitem{BGK}
A.~Bovier, V.~Gayrard, and M.~Klein.
\newblock Metastability in reversible diffusion processes. {II}. {P}recise
  asymptotics for small eigenvalues.
\newblock {\em J. Eur. Math. Soc. (JEMS)}, 7(1):69--99, 2005.

\bibitem{Day4}
M.~V. Day.
\newblock Exponential leveling for stochastically perturbed dynamical systems.
\newblock {\em SIAM Journal on Mathematical Analysis}, 13(4):532--540, 1982.

\bibitem{Day2}
M.~V. Day.
\newblock On the exponential exit law in the small parameter exit problem.
\newblock {\em Stochastics: An International Journal of Probability and
  Stochastic Processes}, 8(4):297--323, 1983.

\bibitem{Day}
M.~V. Day.
\newblock Mathematical approaches to the problem of noise-induced exit.
\newblock In {\em Stochastic Analysis, Control, Optimization and Applications},
  pages 269--287. Systems Control Found. Appl., Birkh\~ouser Boston, Boston,
  MA, 1999.

\bibitem{day1984a}
M.V. Day.
\newblock On the asymptotic relation between equilibrium density and exit
  measure in the exit problem.
\newblock {\em Stochastics: an international journal of probability and
  stochastic processes}, 12(3-4):303--330, 1984.

\bibitem{day1987r}
M.V. Day.
\newblock Recent progress on the small parameter exit problem.
\newblock {\em Stochastics: An International Journal of Probability and
  Stochastic Processes}, 20(2):121--150, 1987.

\bibitem{DZ}
A.~Dembo and O.~Zeitouni.
\newblock {\em Large deviations techniques and applications}.
\newblock Springer, {A}pplications of {M}athematics, second edition, 1998.

\bibitem{DeFr}
A.~Devinatz and A.~Friedman.
\newblock Asymptotic behavior of the principal eigenfunction for a singularly
  perturbed {D}irichlet problem.
\newblock {\em Indiana University Mathematics Journal}, 27:143--157, 1978.

\bibitem{di-gesu-le-peutrec-lelievre-nectoux-16}
G.~Di~Ges\`u, T.~Leli\`evre, D.~Le~Peutrec, and B.~Nectoux.
\newblock Sharp asymptotics of the first exit point density, 2017.
\newblock Arxiv version : 1706.08728.

\bibitem{Eiz}
A.~Eizenberg.
\newblock The exponential leveling in elliptic singular perturbation problems
  with complicated attractors.
\newblock {\em Journal d'Analyse Math{\'e}matique}, 55(1):229--249, 1990.

\bibitem{Eva}
L.C. Evans.
\newblock {\em Partial differential equations}, volume~19 of {\em Graduate
  Studies in Mathematics}.
\newblock American Mathematical Society, Providence, RI, second edition, 2010.

\bibitem{FrWe}
M.I. Freidlin and A.D. Wentzell.
\newblock {\em Random Perturbations of Dynamical Systems}.
\newblock Springer-Verlag, 1984.

\bibitem{friedman2012stochastic}
A.~Friedman.
\newblock {\em Stochastic differential equations and applications}.
\newblock Courier Corporation, 2012.

\bibitem{MR1814364}
D.~Gilbarg and N.S. Trudinger.
\newblock {\em Elliptic partial differential equations of second order}.
\newblock Classics in Mathematics. Springer-Verlag, Berlin, 2001.
\newblock Reprint of the 1998 edition.

\bibitem{HKN}
B.~Helffer, M.~Klein, and F.~Nier.
\newblock Quantitative analysis of metastability in reversible diffusion
  processes via a {W}itten complex approach.
\newblock {\em Mat. Contemp.}, 26:41--85, 2004.

\bibitem{HeKoSu}
B.~Helffer, H.~Kova{\v{r}}{\'i}k, and M.~P. Sundqvist.
\newblock On the semiclassical analysis of the ground state energy of the
  {D}irichlet {P}auli operator {III}: magnetic fields that change sign.
\newblock {\em Letters in Mathematical Physics}, 2019.

\bibitem{HeNi1}
B.~Helffer and F.~Nier.
\newblock Quantitative analysis of metastability in reversible diffusion
  processes via a {W}itten complex approach: the case with boundary.
\newblock {\em M\'em. Soc. Math. Fr. (N.S.)}, (105):vi+89, 2006.

\bibitem{HeSj4}
B.~Helffer and J.~Sj{\"o}strand.
\newblock Puits multiples en me\'ecanique semi-classique {IV} {E}tude du
  complexe de {W}itten.
\newblock {\em Communications in partial differential equations},
  10(3):245--340, 1985.

\bibitem{HeHiSj}
F.~H\'erau, M.~Hitrik, and J.~Sj\"ostrand.
\newblock Tunnel effect and symmetries for {K}ramers-{F}okker-{P}lanck type
  operators.
\newblock {\em J. Inst. Math. Jussieu}, 10(3):567--634, 2011.

\bibitem{HKS}
R.~A Holley, S.~Kusuoka, and D.W. Stroock.
\newblock Asymptotics of the spectral gap with applications to the theory of
  simulated annealing.
\newblock {\em Journal of functional analysis}, 83(2):333--347, 1989.

\bibitem{Jost:2293721}
J.~Jost.
\newblock {\em {Riemannian geometry and geometric analysis; 7th ed.}}
\newblock Springer, 2017.

\bibitem{Kam}
S.~Kamin.
\newblock Elliptic perturbation of a 1st order operator with a singular point
  of attracting type.
\newblock {\em Indiana University Mathematics Journal}, 27(6):935--952, 1978.

\bibitem{kamin1979elliptic}
S.~Kamin.
\newblock On elliptic singular perturbation problems with turning points.
\newblock {\em SIAM Journal on Mathematical Analysis}, 10(3):447--455, 1979.

\bibitem{karlin1981second}
S.~Karlin and H.~E. Taylor.
\newblock {\em A second course in stochastic processes}.
\newblock Elsevier, 1981.

\bibitem{landim2017dirichlet}
C.~Landim, M.~Mariani, and I.~Seo.
\newblock {D}irichlet{'}s and {T}homson{'}s principles for non-selfadjoint
  elliptic operators with application to non-reversible metastable diffusion
  processes.
\newblock {\em Archive for rational mechanics and analysis}, 231(2):887--938,
  2019.

\bibitem{laudenbach1993topologie}
Fran{\c{c}}ois Laudenbach.
\newblock Topologie diff{\'e}rentielle.
\newblock {\em Cours donn{\'e} {\`a} l{'}{\'E}cole Polytechnique}, 1993.

\bibitem{le2012mathematical}
C.~Le~Bris, T.~Leli{\`e}vre, M.~Luskin, and D.~Perez.
\newblock A mathematical formalization of the parallel replica dynamics.
\newblock {\em Monte Carlo Methods and Applications}, 18(2):119--146, 2012.

\bibitem{le2009small}
D.~Le~Peutrec.
\newblock Small singular values of an extracted matrix of a {W}itten complex.
\newblock {\em Cubo, A Mathematical Journal}, 11(4):49--57, 2009.

\bibitem{Lep}
D.~Le~Peutrec.
\newblock Small eigenvalues of the {N}eumann realization of the semiclassical
  {W}itten {L}aplacian.
\newblock {\em Ann. Fac. Sci. Toulouse Math. (6)}, 19(3-4):735--809, 2010.

\bibitem{LPN2018}
D.~Le~Peutrec and B.~Nectoux.
\newblock Repartition of the quasi-stationary distribution and first exit point
  density for a double-well potential.
\newblock {\em In preparation}, 2019.

\bibitem{IHPLLN}
T.~Leli\`evre, D.~Le~Peutrec, and B.~Nectoux.
\newblock Exit event from a metastable state and {E}yring-{K}ramers law for the
  overdamped {L}angevin dynamics.
\newblock {\em hal-01696187}, 2018.

\bibitem{LeNi}
T.~Leli{\`e}vre and F.~Nier.
\newblock Low temperature asymptotics for quasistationary distributions in a
  bounded domain.
\newblock {\em Anal. PDE}, 8(3):561--628, 2015.

\bibitem{MS77}
B.J. Matkowsky and Z.~Schuss.
\newblock The exit problem for randomly perturbed dynamical systems.
\newblock {\em SIAM J. Appl. Math.}, 33(2):365--382, 1977.

\bibitem{michel2017small}
L.~Michel.
\newblock About small eigenvalues of {W}itten laplacian.
\newblock {\em arXiv preprint arXiv:1702.01837}, 2017.

\bibitem{miclo-95}
L.~Miclo.
\newblock Comportement de spectres d'op\'erateurs de {S}chr\"odinger \`a basse
  temp\'erature.
\newblock {\em Bulletin des sciences math{\'e}matiques}, 119(6):529--554, 1995.

\bibitem{schuss90}
T.~Naeh, M.M. Klosek, B.J. Matkowsky, and Z.~Schuss.
\newblock A direct approach to the exit problem.
\newblock {\em SIAM J. Appl. Math.}, 50(2):595--627, 1990.

\bibitem{BN2017}
B.~Nectoux.
\newblock {\em Analyse spectrale et analyse semi-classique pour la
  m{\'e}tastabilit{\'e} en dynamique mol{\'e}culaire}.
\newblock PhD thesis, Universit{\'e} {P}aris {E}st, 2017.

\bibitem{Nir}
L.~Nirenberg.
\newblock On elliptic partial differential equations.
\newblock {\em Annali della Scuola Normale Superiore di Pisa - Classe di
  Scienze}, Ser. 3, 13(2):115--162, 1959.

\bibitem{Per}
B.~Perthame.
\newblock Perturbed dynamical systems with an attracting singularity and weak
  viscosity limits in {H}amilton-{J}acobi equations.
\newblock {\em Transactions of the American Mathematical Society},
  317(2):723--748, 1990.

\bibitem{sard}
A.~Sard.
\newblock The measure of the critical values of differentiable maps.
\newblock {\em Bulletin of the American Mathematical Society}, 48(12):883--890,
  1942.

\bibitem{MaSc}
Z.~Schuss and B.J. Matkowsky.
\newblock The exit problem: a new approach to diffusion across potential
  barriers.
\newblock {\em SIAM Journal on Applied Mathematics}, 36(3):604--623, 1979.

\bibitem{GSchw}
G.~Schwarz.
\newblock {\em Hodge decomposition---a method for solving boundary value
  problems}, volume 1607 of {\em Lecture Notes in Mathematics}.
\newblock Springer-Verlag, Berlin, 1995.

\bibitem{simon1979trace}
B.~Simon.
\newblock {\em Trace ideals and their applications}, volume~35.
\newblock Cambridge University Press Cambridge, 1979.

\bibitem{thom}
R.~Thom.
\newblock Les singularit{\'e}s des applications diff{\'e}rentiables.
\newblock {\em Ann. Inst. Fourier}, 6(195556):43--87, 1956.

\bibitem{Wit}
E.~Witten.
\newblock Supersymmetry and {M}orse theory.
\newblock {\em J. Differential Geom.}, 17(4):661--692 (1983), 1982.

\end{thebibliography}
\bibliographystyle{plain}

\end{document}